\newcommand{\somevarsigma}{\varpi}
\newcommand{\somevarpi}{\varsigma}
\newcommand{\solTt}{\Uu}
\newcommand{\solUu}{\Tt}
\newcommand{\Shift}{\mathrm{Shift}}
\newcommand{\Da}{\mathrm{Da}}
\newcommand{\CC}{\mathbb C}
\newcommand{\RR}{\mathbb R}
\newcommand{\PP}{\mathbb P}
\newcommand{\sub}{\subseteq}
\newcommand{\cross}{\times}
\newcommand{\all}{\forall}
\newcommand{\inter}{\cap}
\renewcommand{\int}{\inter}
\newcommand{\om}{\omega}
\newcommand{\pow}{\mathcal{P}}
\newcommand{\OR}{\mathrm{OR}}
\newcommand{\Hull}{\mathrm{Hull}}
\newcommand{\cut}{\backslash}
\newcommand{\Tt}{\mathcal{T}}
\newcommand{\Ss}{\mathcal{S}}
\newcommand{\Uu}{\mathcal{U}}
\newcommand{\Vv}{\mathcal{V}}
\newcommand{\Ww}{\mathcal{W}}
\newcommand{\Ll}{\mathcal{L}}
\newcommand{\treeTt}{\Tt}
\newcommand{\treeUu}{\Uu}
\newcommand{\Mm}{\mathcal{M}}
\newcommand{\Gend}{\mathrm{Gen'd}}
\newcommand{\Ssbar}{{\bar{\Ss}}}
\newcommand{\varthetabar}{\bar{\vartheta}}
\newcommand{\Ttbar}{{\bar{\Tt}}}
\newcommand{\Xxbar}{{\bar{\Xx}}}
\newcommand{\rg}{\mathrm{rg}}
\newcommand{\dom}{\mathrm{dom}}
\newcommand{\zetabar}{\bar{\zeta}}
\newcommand{\ins}{\trianglelefteq}
\newcommand{\nins}{\ntrianglelefteq}
\newcommand{\pins}{\triangleleft}
\newcommand{\npins}{\ntriangleleft}
\newcommand{\crit}{\mathrm{cr}}
\newcommand{\union}{\cup}
\newcommand{\rest}{\!\upharpoonright\!}
\newcommand{\com}{\circ}
\newcommand{\range}{\rg}
\newcommand{\lh}{\mathrm{lh}}
\newcommand{\Ult}{\mathrm{Ult}}
\newcommand{\sats}{\models}
\newcommand{\J}{\mathcal{J}}
\newcommand{\AC}{\mathrm{AC}}
\newcommand{\HOD}{\mathrm{HOD}}
\newcommand{\HC}{\mathrm{HC}}
\newcommand{\ZFC}{\mathrm{ZFC}}
\newcommand{\KP}{\mathrm{KP}}
\newcommand{\ZF}{\mathrm{ZF}}
\newcommand{\pistol}{\P}
\newcommand{\Coll}{\mathrm{Col}}
\newcommand{\es}{\mathbb{E}}
\newcommand{\jbar}{\bar{j}}
\newcommand{\kbar}{\bar{k}}
\newcommand{\tbar}{\bar{t}}
\newcommand{\pbar}{\bar{p}}
\newcommand{\sq}{\mathrm{sq}}
\newcommand{\eps}{\varepsilon}
\newcommand{\Ttvec}{{\vec{\Tt}}}
\newcommand{\thetabar}{{\bar{\theta}}}
\newcommand{\deltabar}{\bar{\delta}}
\newcommand{\gammabar}{\bar{\gamma}}
\newcommand{\etabar}{\bar{\eta}}
\newcommand{\Gbar}{{\bar{G}}}
\newcommand{\exchnu}{\widetilde{\nu}}
\newcommand{\core}{\mathfrak{C}}
\newcommand{\her}{\mathcal{H}}
\newcommand{\pred}{\mathrm{pred}}
\newcommand{\un}{\union}
\newcommand{\id}{\mathrm{id}}
\newcommand{\conc}{\ \widehat{\ }\ }
\newcommand{\bfSigma}{\undertilde{\Sigma}}
\newcommand{\rSigma}{\mathrm{r}\Sigma}
\newcommand{\rPi}{\mathrm{r}\Pi}
\newcommand{\vareps}{\varepsilon}
\newcommand{\supp}{\mathbb{S}}
\DeclareMathOperator{\Th}{Th}
\DeclareMathOperator{\card}{card}
\DeclareMathOperator{\cof}{cof}
\DeclareMathOperator{\wfp}{wfp}
\newcommand{\Two}{\mathrm{II}}
\newcommand{\Dd}{\mathcal{D}}
\newcommand{\bfrSigma}{\undertilde{\rSigma}}
\newcommand{\psub}{\subsetneq}
\newcommand{\Xx}{\mathcal{X}}
\newcommand{\cHull}{\mathrm{cHull}}
\newcommand{\Mbar}{\bar{\M}}
\newcommand{\lpole}{\left\lfloor}
\newcommand{\rpole}{\right\rfloor}
\newcommand{\univ}[1]{\lpole #1\rpole}
\newcommand{\tu}{\textup}
\declaretheoremstyle[bodyfont=\sl]{slanted}
\declaretheorem[name=Definition,style=definition,qed=$\dashv$,
numberwithin=section]{dfn}
\declaretheorem[name=Definition,style=definition,numbered=no,qed=$\dashv$]{dfn*}
\declaretheorem[name=Definition,style=definition,numbered=no]{dfnnoqed*}
\declaretheorem[name=Theorem,style=slanted,sibling=dfn]{tm}
\declaretheorem[name=Theorem,style=slanted,numbered=no]{tm*}
\declaretheorem[name=Lemma,style=slanted,sibling=dfn]{lem}
\declaretheorem[name=Corollary,style=slanted,sibling=dfn]{cor}
\declaretheorem[name=Corollary,style=slanted,numbered=no]{cor*}
\declaretheorem[name=Remark,style=definition,sibling=dfn]{rem}
\declaretheorem[name=Question,style=definition,sibling=dfn]{ques}
\declaretheorem[name=Fact,style=definition,sibling=dfn]{fact}
\declaretheorem[name=Plan,style=definition,numbered=no]{plan*}
\declaretheorem[name=Proposition,style=slanted,sibling=dfn]{prop}
\declaretheoremstyle[headfont=\scshape]{claimstyle}
\declaretheorem[name=Goal,style=claimstyle]{goal}
\declaretheorem[name=Claim,style=claimstyle]{clm}
\declaretheorem[name=Claim,style=claimstyle]{clmone}
\declaretheorem[name=Claim,style=claimstyle]{clmtwo}
\declaretheorem[name=Claim,style=claimstyle]{clmthree}
\declaretheorem[name=Claim,style=claimstyle]{clmfour}
\declaretheorem[name=Claim,style=claimstyle]{clmfive}
\declaretheorem[name=Claim,style=claimstyle]{clmsix}
\declaretheorem[name=Claim,style=claimstyle]{clmseven}
\declaretheorem[name=Claim,style=claimstyle]{clmeight}
\declaretheorem[name=Claim,style=claimstyle,numbered=no]{clm*}
\declaretheorem[name=Subclaim,style=claimstyle,numberwithin=clmone]{sclmone}
\declaretheorem[name=Subclaim,style=claimstyle,numberwithin=clmtwo]{sclmtwo}
\declaretheorem[name=Subclaim,style=claimstyle,numberwithin=clmthree]{sclmthree}
\declaretheorem[name=Subclaim,style=claimstyle,numberwithin=clmfive]{sclmfive}
\declaretheorem[name=Subclaim,style=claimstyle,numberwithin=clmseven]{sclmseven}
\declaretheorem[name=Subclaim,style=claimstyle,numbered=no]{sclm*}
\declaretheorem[name=Subsubclaim,style=claimstyle,numberwithin=sclmthree]
{ssclmthree}
\declaretheorem[name=Subsubclaim,style=claimstyle,numbered=no]{ssclm*}
\declaretheoremstyle[headfont=\scshape]{casestyle}
\declaretheorem[name=Assumption,style=casestyle]{ass}
\declaretheorem[name=Assumption,style=casestyle]{asstwo}
\declaretheorem[name=Case,style=casestyle]{case}
\declaretheorem[name=Case,style=casestyle]{caseone}
\declaretheorem[name=Case,style=casestyle]{casetwo}
\declaretheorem[name=Case,style=casestyle]{casethree}
\declaretheorem[name=Case,style=casestyle]{casefour}
\declaretheorem[name=Case,style=casestyle]{casefive}
\declaretheorem[name=Case,style=casestyle]{casesix}
\declaretheorem[name=Subcase,style=casestyle,numberwithin=case]{scase}
\declaretheorem[name=Subcase,style=casestyle,numberwithin=caseone]{scaseone}
\declaretheorem[name=Subcase,style=casestyle,numberwithin=casethree]{scasethree}
\declaretheorem[name=Subcase,style=casestyle,numberwithin=casefour]{scasefour}
\declaretheorem[name=Subsubcase,style=casestyle,numberwithin=scasefour]
{sscasefour}
\newcommand{\concopy}{\mathrm{concopy}}
\newcommand{\ttilde}{\widetilde{t}}
\newcommand{\stilde}{\widetilde{s}}
\newcommand{\passive}{\mathrm{pv}}
\newcommand{\xibar}{\bar{\xi}}
\newcommand{\ds}{\mathrm{Ds}}
\newcommand{\para}{\mathrm{p}}
\newcommand{\curlyH}{\mathscr{H}}
\renewcommand{\succ}{\mathrm{succ}}
\newcommand{\extcopy}{\mathrm{copy}}
\newcommand{\wcof}{\mathrm{wcof}}
\newcommand{\acof}{\mathrm{acof}}
\renewcommand{\deg}{\mathrm{deg}}
\newcommand{\FF}{\mathbb{F}}
\newcommand{\rep}{\mathrm{rep}}
\newcommand{\reps}{\mathrm{reps}}
\newcommand{\exitside}{\mathrm{exitside}}
\newcommand{\sides}{\mathrm{sides}}
\newcommand{\dropset}{\mathscr{D}}
\newcommand{\simple}{\mathrm{sim}}
\newcommand{\curlyB}{\mathscr{B}}
\newcommand{\movin}{\mathrm{movin}}
\newcommand{\curlyM}{\mathscr{M}}
\newcommand{\dfnemph}{\emph}
\newcommand{\modelset}{\mathfrak{A}}
\newcommand{\rhotilde}{\widetilde{\rho}}
\renewcommand{\Mbar}{\bar{M}}
\newcommand{\Gg}{\mathcal{G}}
\newcommand{\lgcd}{\mathrm{lgcd}}
\renewcommand{\Mbar}{{\bar{M}}}
\newcommand{\scB}{\mathscr{B}}
\newcommand{\nutilde}{\widetilde{\nu}}
\newcommand{\iter}{{\mathrm{iter}}}
\newcommand{\sigmatilde}{\widetilde{\sigma}}
\newcommand{\eqdef}{=_{\mathrm{def}}}
\newcommand{\trivcom}{\mathrm{trvcom}}
\renewcommand{\cut}{\backslash}
\renewcommand{\Ss}{\mathcal{S}}
\newcommand{\pvec}{\vec{p}}
\renewcommand{\Mbar}{\bar{M}}
\newcommand{\successor}{\mathrm{succ}}
\newcommand{\phP}{\mathfrak{P}}
\renewcommand{\Mbar}{{\bar{M}}}
\renewcommand{\hbar}{\bar{h}}
\newcommand{\Dw}{\mathrm{Dw}}
\newcommand{\phU}{\mathfrak{U}}
\newcommand{\phM}{\mathfrak{M}}
\newcommand{\ph}{\mathfrak{H}}
\newcommand{\phW}{\mathfrak{W}}
\newcommand{\tautilde}{\widetilde{\tau}}
\renewcommand{\SS}{\mathbb{S}}
\newcommand{\Fseg}{{F_\downarrow}}
\newcommand{\exit}{\mathrm{exit}}
\newcommand{\scM}{\mathscr{M}}
\renewcommand{\root}{\mathrm{root}}
\def\handgrenade{\scalerel*{\begin{tikzpicture}
\draw[black,fill=black] (0,0) circle (5ex);
      \draw[-]  (0.3,0) -- (0.3,-2.4);
\end{tikzpicture}}{X}}
\begin{document}

\title{Fine structure from normal iterability}

\author{Farmer Schlutzenberg\footnote{afirstname dot alastname at tuwien dot ac dot at, afirstname dot alastname at gmail dot com,
\url{https://sites.google.com/site/schlutzenberg/home-1}}\\
Institute of Discrete Mathematics and Geometry, TU Vienna}

\maketitle

\begin{abstract}We  show that (i) the standard fine structural properties for
premice follow from normal iterability (whereas the classical proof relies on iterability for stacks of normal trees). We also show that (ii) every mouse which is finitely generated above its projectum, is an iterate of its core.

That is, let $m<\om$ and let $M$ be an $m$-sound, $(m,\om_1+1)$-iterable  premouse. Then (i) $M$ is $(m+1)$-solid and $(m+1)$-universal, $(m+1)$-condensation holds for $M$, and if $m\geq 1$ then $M$ is super-Dodd-sound, a slight strengthening of Dodd-soundness. And (ii) if there is $x\in M$ such that $M$ is the $\rSigma_{m+1}$-hull of parameters in $\rho_{m+1}^M\un\{x\}$, then $M$ is a normal iterate of its $(m+1)$th core $C=\core_{m+1}(M)$; in fact, there is an $m$-maximal iteration tree $\Tt$ on $C$, of finite length, such that $M=M^\Tt_\infty$, and $i^\Tt_{0\infty}$ is just the core embedding.

Applying fact (ii), we prove that if $M\sats\ZFC$ is a mouse and $W\subseteq
M$ is a ground of $M$ via a forcing $\PP\in W$ such that $W\models$ ``$\PP$ is
strategically $\sigma$-closed'' and $M|\aleph_1^M\in W$ (that is, the initial
segment of $M$ of height $\aleph_1^M$ is in $W$), then $M\sub W$.

And if there is a measurable cardinal then there is a non-solid premouse.

The results hold for premice with Mitchell-Steel indexing, allowing extenders of superstrong type to appear on the extender sequence.\footnote{Keywords: fine structure; normal; iterable; iteration strategy; inner model; mouse; solid; sound; condensation; Dodd solid; core.

Mathematics Subject Classification 2020: 03E45, 03E55}
\end{abstract}

\setcounter{tocdepth}{2}
\tableofcontents

\section{Introduction}\label{sec:intro}
\subsection{Background and goals}
The large cardinal hierarchy constitutes a central focus in set theory. Our understanding of large cardinals has been greatly enriched through the discovery and study of fine structural inner models which exhibit them.
These models are highly organized and admit a precise analysis, enabling a detailed understanding of their combinatorial properties, far beyond what can be achieved just working in ZFC plus large cardinals.
They provide our key tool for establishing consistency strength lower bounds for various kinds of principles, particularly through \emph{core model} arguments.
And they have  been shown to arise naturally in other contexts, particularly in descriptive set theory, where they are intimately connected with models of determinacy.

The models come in two main varieties: \emph{pure extender mice}
and \emph{strategy mice}. From now on, this article will deal exclusively with the pure extender variety, which we will  refer to as \emph{mice}. Before we  describe the aims of the paper, we give a brief outline of the basic features of mice.

Large cardinals at the level of measurability and beyond are typically exhibited by some kind of elementary (or partially elementary) embedding
$j:P\to Q$
between  structures $P,Q$ for the language of set theory (usually $P,Q$ will be transitive, and often $P=V$, the entire set-theoretic universe).
Mice $M$ are  set-theoretic structures
having a transitive universe of the form $\J_\alpha[\es]$ where $\es$ is a sequence of \emph{extenders} with special properties.\footnote{A mouse has the form $M=(\univ{M},{\in}\rest{\univ{M}},\es,F)$
where $\univ{M}=\J_\alpha[\es]$
is the universe of $M$,
$\es$ is the aforementioned sequence of extenders, and $F\sub\univ{M}$ is another extender.} Extenders are essentially fragments of elementary embeddings. If $\beta\in\dom(\es)$
then $\beta<\alpha$ and
 $E=\es_\beta$ is an extender over $P=\J_\beta[\es\rest\beta]$, and $E$ is essentially equivalent to the corresponding
ultrapower embedding
$i_E:P\to Q$
where  $Q=\Ult(P,E)$ is the ultrapower of $P$ by $E$. In some cases,
$E$ will in fact be a \emph{total} extender over  the universe $\J_\alpha[\es]$ of $M$, and then $\Ult(M,E)$ can be formed, and $E$ determines an ultrapower embedding $i^M_E:M\to\Ult(M,E)$.
But in general this can fail,
and then we say that $E$ is only a \emph{partial} extender (in the sense of $M$). We also write $\es^M=\es$. A mouse also comes equipped with a further predicate $F\sub\J_\alpha[\es]$,
and we write $F^M=F$
and  $\es_+^M=\es^M\conc\left<F\right>$.
The predicate $F$
is either $\emptyset$
or an extender over $\J_\alpha[\es]$. (So $F$ would be $\es_\alpha$, if we were to extend $M$ above height $\alpha$.)

The keys to our understanding of mice stem from their \emph{iterability} and their \emph{fine structure}. These keys are central to our analysis of the internal combinatorial properties of mice, the relationship between mice and  the wider set-theoretic universe, and also their canonicity.
 A \emph{premouse}
is transitive and satisfies the same first order properties as does a mouse, but for a premouse there is no iterability requirement.

The iterability of $M$ requires, roughly, that we can transfinitely iterate the process of forming ultrapowers via extenders, always resulting in transitive models. It is defined precisely in  terms of the \emph{iteration game},
which is defined basically as follows, glossing over some details.
The game is played between two players, I and II,
and runs through some ordinal number $\lambda$ of stages.
A run of the game
produces a sequence $\left<M_\alpha\right>_{\alpha<\lambda}$ of premice, amongst other things.
We start with $M_0=M$. Given $M_\alpha$ where $\alpha+1<\lambda$, player I selects some $E_\alpha$ from the extender sequence $\es^{M_\alpha}_+$ of $M_\alpha$, and also some $\beta\leq\alpha$
and some $\gamma\leq\OR\cap M_\beta$
such that $\Ult(M_\beta|\gamma,E_\alpha)$ makes sense,
and we define $M_{\alpha+1}$ to be this ultrapower.
We simultaneously define an associated tree order on $\lambda$,
setting here $\beta$ to be the tree-predecessor of $\alpha+1$.
At limit ordinal stages $\gamma<\lambda$, player II
must select some set $b\sub\gamma$,
such that $b$ is  cofinal in $\gamma$ and linearly ordered by the tree ordering. We declare $b$ to be the set of tree-predecessors of $\gamma$,
and set $M_\gamma$ to be the direct limit of the models $M_\alpha$
for $\alpha\in b$ (under ultrapower maps). Player II must ensure that all models produced are well-defined and transitive, and wins iff these conditions are  maintained throughout.
The entire array of information produced in a run of the game is called an \emph{iteration tree on $M$}. An iteration tree with last model $M_\alpha$ yields a $\Sigma_1$-elementary \emph{iteration map} $i_{0\alpha}:M\to M_\alpha$, given by composing ultrapower maps,
as long as the  extenders used along the branch from $0$ to $\alpha$ are total (enough).
The \emph{$\lambda$-iterability} of $M$ requires the existence of a winning strategy for player II in this game (through $\lambda$ stages, as above), which is called an \emph{iteration strategy} for $M$. For example if $M$ is countable then a winning strategy exhibiting $\om_1$-iterability
is essentially a set of reals.
Iterability is not in general simply a feature of the first order theory of $M$.

The most fundamental consequence of iterability is that any two sufficiently iterable mice $M,N$ can be \emph{compared}, which ensures that there are iteration trees $\Tt$ and $\Uu$ on $M$ and $N$ respectively, with last models $M_\infty$ and $N_\infty$, and either $M_\infty\ins N_\infty$ (that is, $M_\infty$ is an \emph{initial segment} of $N_\infty$)\footnote{For premice $R$ and $S$, $R\ins S$ iff $\alpha\leq\OR\cap S$ and $\es_+^R=\es_+^S\cap(\alpha+1)$ where $\alpha=\OR\cap R$. Here $\OR$ denotes the class of all ordinals.} and the iteration map $i^\Tt:M\to M_\infty$ is defined, or $N_\infty\ins M_\infty$ and  $i^\Uu:N\to N_\infty$ is defined. This implies, for example, that
for any two sufficiently iterable mice $M,N$, either $\RR\cap M\sub\RR\cap N$ or vice versa,
and $M,N$ agree with each other on the order of constructibility of their common reals. Therefore all reals in sufficiently iterable mice are ordinal definable, exhibiting  the canonicity of mice.  Arguments involving comparison in various forms are essential and ubiquitous in  inner model theory.

Canonical iteration strategies $\Sigma$ for a mouse $M$ tend to have some form of the \emph{Dodd-Jensen} property,
and when available, this is very useful. Roughly, it says that if $\Tt$ is an iteration tree on $M$ formed according to $\Sigma$
with last model $M_\infty$
and $j:M\to M_\infty$
has the kind of elementarity
that the iteration map $i^\Tt:M\to M_\infty$ would have if it existed, then  $i^\Tt$ \emph{does} exist, and $i^\Tt(\alpha)\leq j(\alpha)$ for all ordinals $\alpha$ in $M$. Given this property in general, one can use it to define the \emph{mouse order} $\leq^*$,
a natural prewellorder
of mice, with $M\leq^*N$
iff the comparison of $M$ with $N$ results in iterates $M_\infty$ and $N_\infty$
with $M_\infty\ins N_\infty$
and an iteration map $i^\Tt:M\to M_\infty$ defined.

Fine structure comprises a collection of properties, of which G\"odel's condensation lemma for $L$ is a fairly prototypical instance.\footnote{The main properties we refer to here are
 condensation, solidity and universality of the standard
parameter, Dodd-solidity, and the Initial Segment Condition (ISC).
The definitions are recalled
in \S\ref{sec:notation} and elsewhere in the paper.}  These properties are all \emph{first order}: for each such property, there is a formula $\varphi$ (or sometimes a recursive theory $T$) such that given any structure $M$ with the first order signature of a mouse, the property holds of $M$ iff $M\sats\varphi$ (respectively, $M\sats T$).\footnote{For example, for $k<\om$, $k$-soundness is expressible with a single formula $\varphi_k$,
whereas $\om$-soundness is expressed by the recursive theory $T=\{\varphi_k\}_{k<\om}$.} So iterability is the more subtle notion. However, fine structure is very central to our understanding of mice, so central that it is even built heavily into the very definition of \emph{mouse}: a mouse is stratified in an increasing hierarchy of initial segments, each of which are also  mice themselves,
and part of the definition  is that every proper segment must satisfy certain key fine structural requirements. In particular, they must be \emph{sound}; this requirement
 is a strong and local form of the GCH. These properties are essential to the general development of the theory from the outset.

Typical constructions of mice are recursive in nature, building them, roughly, by recursion on their initial segments. Having produced a certain mouse $M$ at a stage of the construction, one must verify that it also has the right fine structural properties before proceeding. The proofs that $M$ has these properties rely heavily on the iterability of $M$.
This paper gives new proofs of these these properties,
using an (at least superficially)
weaker iterability hypothesis than do the classical proofs.

To explain the lesser iterability hypothesis, we need to refine our discussion of iteration trees that we began earlier.
If $E=\es_\alpha$ for some extender sequence $\es$,
then the \emph{length} $\lh(E)$ of $E$ is $\alpha$. The most fundamental kinds of iteration trees are \emph{normal}\footnote{The description of \emph{normal} we give here is a slight simplification of the precise one. By a \emph{normal} iteration tree we will formally mean one which is
$k$-maximal for some $k\leq\om$.} trees, in which
\begin{enumerate}[label=(\roman*)]\item for all $\alpha\leq\beta$, we have $\lh(E_\alpha)\leq\lh(E_\beta)$,
\item  given $E_\alpha$, the $\Tt$-predecessor $\beta$ of $\alpha+1$ is always chosen as small as possible that we can use $E_\alpha$ to form the next ultrapower, and
\item given $E_\alpha$ and $\beta$,
$M_{\alpha+1}=\Ult(P,E_\alpha)$ with $P$ the largest possible initial segment of $M_\beta$ over which  $E_\beta$ is an extender.
\end{enumerate}

The second main kind of iteration
allows us to iterate linearly
the process of forming normal trees. With this kind of iteration, we could, for example, first
form a normal tree $\Tt_0$ on $N_0=M$,
with last model $N_1=M^{\Tt_0}_{\alpha_0}$,
and then form a normal tree $\Tt_1$  on $N_1$, with last model $N_2=M^{\Tt_1}_{\alpha_1}$, etc, taking direct limits of the models $\left<N_\alpha\right>_{\alpha<\eta}$ at limit stages $\eta$ of the process,
and producing overall a (transfinite) sequence $\left<\Tt_\alpha\right>_{\alpha<\theta}$ of normal trees $\Tt_\alpha$.
This is called a \emph{stack} of normal trees.\footnote{This description is also slightly simplified. We will formally use the term \emph{stack} in the sense of a $k$-maximal stack, for some $k\leq\om$.}

In the last few years, there has been significant work regarding and exploiting the relationship between normal trees and stacks of normal trees,
particularly on the reduction of stacks to normal trees.
This work is highly important in Steel's  recent progress in the analysis of HOD in models of determinacy \cite{ACPFMP}.
At essentially the same time as Steel's work,
the author
 showed \cite{iter_for_stacks}
that  normal iteration strategies satisfying inflation condensation
can be extended to strategies for stacks of normal trees.\footnote{Versions of the main results in this paper
were actually worked out prior to much of that for  \cite{iter_for_stacks}.}
A significant part of this result was
independently worked out by Steel
\cite{ACPFMP}.
Steel and the author also worked out the more refined process of full normalization, establishing for example under large cardinals that $V_\Theta\cap\HOD^{L(\RR)}$ is the universe of a \emph{normal} iterate of $M_\om|\delta_0$, where $M_\om$ is the  canonical proper class mouse with infinitely many Woodin cardinals,
and $\delta_0$ its least Woodin; see \cite{fullnorm}.\footnote{It was already known \cite{hod_as_core_model} that this model was the \emph{direct limit} of iterates of $M_\om|\delta_0$ given by \emph{stacks} of trees; the new fact was the normality.}
 Siskind and Steel
have also developed this process in the context of comparison of strategy mice \cite{siskind_steel}. Such techniques have  been useful in work of Sargsyan, Schindler and the author \cite{vm2_v2}, and the author \cite{vmom_v2},
on Varsovian models, which explore self-iterability in mice and connections between inner models of mice and HOD in determinacy models.

The classical proof
\footnote{See \cite{fsit},
\cite{outline}, \cite{deconstructing},
\cite{combin}, \cite{imlc}, \cite{zeman_dodd}, \cite{operator_mice_v3}.}
of the basic fine structural properties in a mouse $M$ starts from the hypothesis that $M$ is iterable for stacks of normal trees. It is first observed that we may assume DC and that $M$ is countable. Given this, an iteration strategy for $M$ with the  \emph{weak} Dodd-Jensen property is constructed. Such a strategy is then used to guide a kind of comparison, and the outcome of the comparison is analysed to prove the desired result.
Weak Dodd-Jensen is used significantly in the analysis.
Comparison only requires normal iterability,
and the only role of non-normal iterability is in the first step, achieving weak Dodd-Jensen.
It is actually essential in this step, by \cite{iter_for_stacks}.\footnote{\label{ftn:wDJ_necessary}Suppose $M$ is a countable $m$-sound premouse. By
\cite[Theorem 1.2]{iter_for_stacks}, if there  is an $(m,\om_1+1)$-strategy for $M$ with the weak Dodd-Jensen property, then $M$ is $(m,\om_1,\om_1+1)^*$-iterable. Conversely, by \cite{wDJ}, assuming DC,
if $M$ is $(m,\om_1,\om_1+1)^*$-iterable then there is an $(m,\om_1+1)$-strategy for $M$ with weak Dodd-Jensen.}
In this paper we  consider
the problem of  proving the fine structural properties assuming only that $M$ is normally iterable,
and thus, without having weak Dodd-Jensen at our disposal.

The recent work mentioned above says a lot about the relationship between the two types of iterability, but whether normal
iterability implies iterability for stacks in general,
without the assumption of inflation condensation,
seems to be open.
So while the result of \cite{iter_for_stacks} mentioned above heads in the right direction, it does not suffice to establish fine structure from normal iterability.  Doing so should provide a basic advance in our understanding of
two of the fundamental building blocks of inner model theory.

On the other hand, an almost complete version of condensation
(which generalizes G\"odel's theorem on condensation for levels of $L$)
was proven in
\cite[Theorem 5.2]{premouse_inheriting},
from normal iterability.
The incompleteness of this version
is due to the fact that in some cases it assumes a hypothesis
on the solidity of the mouse $M$ in question
(see clause 1f of the cited theorem). (Solidity is another of the fine structural properties,
and is related to condensation.)

These (partial) results  suggest
that normal iterability might indeed suffice to prove all of the fine structure properties, in full.
The main goal of this paper is to show that it does. We show that normal iterability proves
condensation (without the extra solidity assumption mentioned above),
and proves solidity, universality, Dodd-solidity, and for pseudo-premice, the initial segment condition.
To achieve this, we develop
further the methods of
\cite{premouse_inheriting}.
As well as  providing a formal improvement of the classical results,
the proof we give
uses methods which do not feature in the classical proof.\footnote{
The proof of solidity and universality we give here is modelled on that of
condensation in
\cite[5.2]{premouse_inheriting},
using structures related to the bicephali of
\cite{premouse_inheriting} in
place of the
phalanxes of the classical proofs,
and certain kinds of calculations from \cite{premouse_inheriting}.
However,
there are new fine structural difficulties
to be handled here.

For Dodd-solidity, there is an easy trick
to reduce to the case where classical style arguments work. (This simplicity is probably an artifact of
the assumption of $1$-soundness; that is, one might formulate a generalization
of Dodd-solidity which holds more generally, and the proof would then probably
not be as direct.) Actually, the proof for Dodd-solidity we give here
still involves significant work, but this is just due to the nature of the
classical proof
itself; the reduction to the (roughly) classical case itself is short.}
Some related methods have also
 been important in
fine structural arguments in
work of Schindler \cite{cmfali},
Woodin (on mice with long extenders), the author \cite{premouse_inheriting}, and
Steel \cite{ACPFMP}. These connections are described in a little more detail in  \S\ref{sec:outline_solidity},
Remark \ref{rem:history_gen_biceph}.

We also prove some other facts of independent interest.
In particular, we establish a simple criterion which ensures that a mouse is a normal iterate of its core. (The core of a mouse $M$ is a  natural hull of $M$, corresponding
to the ``least'' set $A$ of ordinals which is definable from parameters over $M$, but such that $A\notin M$.)
It has long been known that, roughly, every mouse which is below the level of a ``cardinal which is strong to a measurable''  is a normal iterate of its core,
and that there are counterexamples to this phenomenon a little beyond that point (see \S\ref{sec:iterates_of_cores} for discussion). However, the criterion we establish has no restriction on large cardinal complexity.

Using an argument related to that
for the previous one, we
also show that  if $M$ is a mouse,
$\mu\in M$ and $M\sats$ ``$\mu$ is a countably complete ultrafilter''
then $\mu$ is equivalent to a finite normal iteration tree on $M$,
and hence has a simple description in terms of extenders in $\es^M$.
This shows that no such ultrafilters can appear in a mouse except for those which were essentially put directly in it.\footnote{This is essentially (and literally below superstrong) by a proof from the author's dissertation \cite{mim}; see \S\ref{sec:results}.}
The result generalizes Kunen's classical one for $L[U]$,
in which the only countably complete ultrafilters are those which are equivalent to  finite iterates of $U$.

Finally, we establish a restriction
on the possible kinds of set-forcing grounds of mice which model ZFC.

\subsection{Results}\label{sec:results}

We now list the main theorems we will prove.
We work throughout  with pure $L[\es]$
premice with Mitchell-Steel indexing,
but allowing extenders of superstrong type in their extender sequence.
The background theory is ZF (though it is straightforward
to see that most of the results do not require much of ZF).

\begin{tm*}[Solidity and universality, \ref{thm:solidity}]
Let $m<\om$. Then every $m$-sound, $(m,\om_1+1)$-iterable premouse is
$(m+1)$-solid and
$(m+1)$-universal.\footnote{Note that we follow Zeman \cite{imlc}
in our use of the terminology \emph{$(m+1)$-solid},
in that we do not  incorporate $(m+1)$-universality into it;
see also \cite[\S1.2(Fine structure)]{extmax}.
This is in contrast to Mitchell-Steel \cite{fsit} and Steel \cite{outline},
where $(m+1)$-solidity incorporates $(m+1)$-universality by
definition.}\end{tm*}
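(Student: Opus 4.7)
My plan is to emulate the classical proof of $(m+1)$-solidity and $(m+1)$-universality, but to replace the phalanx-based comparisons (which require iterability for stacks of normal trees) by bicephalus-style comparisons that can be run using only the normal $(m,\om_1+1)$-strategy $\Sigma$ of $M$, in the spirit of \cite{premouse_inheriting}. The overall scheme is by induction on $m$ and by contradiction.

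Assume $M$ is $m$-sound and $(m,\om_1+1)$-iterable but either $(m+1)$-solidity fails (so for some $i<\ell(p_{m+1}^M)$ the $i$-th solidity witness is missing in $M$) or $(m+1)$-universality fails (so the $(m+1)$-core $C=\core_{m+1}(M)$ has a subset of $\rho=\rho_{m+1}^M$ not in $M$). In either case one constructs a candidate model $\Mbar$: for solidity it is the transitive collapse of the $\rSigma_{m+1}$-Skolem hull in $M$ generated by $\rho\union\{p_{m+1}^M\rest i\}$; for universality it is $C$ itself. In both cases $\Mbar$ and $M$ project to $\rho$ and agree strictly below $\rho$, so they can be organized into a bicephalus-style structure $B=(\Mbar,M)$, together with a canonical collapse map $\sigma:\Mbar\to M$.

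The next step is to invoke the bicephalus copying machinery from \cite{premouse_inheriting} to pull back $\Sigma$ to a strategy for iterating both constituents of $B$ simultaneously by $m$-maximal normal trees, and to run a comparison $(\Tt,\Uu)$ of the two sides. Lemma \ref{lem:finite_gen_hull} (projectum-finite generation) will be used to bound each side of the iteration and force termination in countably many stages. The heart of the argument is then a bicephalus rigidity lemma of the type developed in \cite{premouse_inheriting}: one shows that the terminal models on the two sides must coincide, and from this coincidence one reads off either the missing solidity witness, or the missing subset of $\rho$ in the universality case, directly inside $M$. This contradicts the assumed failure and closes the induction.

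The main obstacle, as flagged in the introduction, lies in executing the bicephalus rigidity argument in a genuine fine-structural setting in which $M$ may carry extenders of superstrong type and in which the witness $\Mbar$ lives essentially at the projectum, so that extenders with critical point equal to $\rho$ or just above can be applied on either side of $B$. In that situation one has to show that any extender used on the $\Mbar$-side is mirrored appropriately on the $M$-side and vice versa, while keeping both trees $m$-maximal, and that the comparison cannot terminate with a drop on the "core" side that would contradict $m$-soundness of $M$. This is where the techniques carried over from \cite{premouse_inheriting} and \cite{str_comparison}, together with the simplification in the proof of solidity attributed to Steel and the sharpening afforded by \cite[Theorem 9.6]{iter_for_stacks}, will do the bulk of the work.
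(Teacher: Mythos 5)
Your high-level plan---bicephalus comparison, normal iterability only, and careful handling of superstrongs---points in the right direction, but there are several concrete gaps that separate the proposal from a correct argument.

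First, the induction is not on $m$. The paper fixes $k$ and proves solidity/universality for a fixed premouse $M$ by induction on the mouse-parameter order ${<^\para_k}$ (introduced in \S\ref{sec:mouse_order} and shown wellfounded via \cite[Theorem 9.6]{iter_for_stacks}). This order is the substitute for weak Dodd-Jensen: the inductive hypothesis that every $H<^\para_k M$ is $(k+1)$-solid and $(k+1)$-universal is precisely what makes iteration maps on $M$ (and on the relevant bicephali) $p_{k+1}$-preserving (Lemma \ref{lem:p-pres_for_premouse_from_para_order} and Subclaim \ref{sclm:p_k+1_pres}). Without this, the subsequent comparison analysis cannot track $p_{k+1}$ and the argument does not close. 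Your proposal invokes \cite[Theorem 9.6]{iter_for_stacks} only as a ``sharpening,'' whereas it is the engine of the whole induction.

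Second, the candidate model for the solidity step is misidentified. Writing $q = p^M_{k+1}\rest n$ and $\gamma = \max(p^M_{k+1}\cut q)$, the collapsed solidity-witness hull is $H=\cHull^M_{k+1}(\gamma\cup\{q,\pvec^M_k\})$, i.e.\ generated by ordinals below $\gamma$, not below $\rho_{k+1}^M$. Your hull ``generated by $\rho\cup\{p_{m+1}^M\rest i\}$'' is a different object (it is instead what appears in the reduction to $\bar M = \core_{k+1}(M)$). Because the solidity witness lives above $\rho$, the structure you compare cannot be the standard bicephalus $(\rho,\Mbar,M)$ used elsewhere in the paper; it is a ``relevant bicephalus'' $B=(\delta,\gamma,\pi,H,M)$ whose ultrapowers, iteration rules, and dropping/anomalous behavior (\S\ref{sec:solidity}) are all tailored to the fact that $\pi:H\to M$ has critical point $\gamma$ (not $\rho$). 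And the comparison is between $B$ and the premouse $M$ itself, not between ``the two sides of $B$.''

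Third, Lemma \ref{lem:finite_gen_hull} is used for a different purpose than you describe. Termination of the comparison is proved by the usual reflection/hull argument; projectum-finite-generation (actually its corollary, Lemma \ref{lem:actual_lemma_needed}) is used to show $H\in M$ in certain cases (e.g.\ Claim \ref{clm:p^Mbar=pbar} deducing $p^{\bar M}=q$, and Subclaim \ref{sclm:p_k+1_pres}(\ref{item:inner_rho^H_alpha>rho^M_alpha})). That deduction in turn is what ties the solidity proof back to the Dodd-soundness machinery of \S\S\ref{sec:Dodd_proof}--\ref{sec:finite_gen_hull}. Finally, the terminal analysis of the comparison---which of $\Tt,\Uu$ ends on the $H$-side, whether $\alpha\in\curlyB^\Tt$, the case split on whether $M|\gamma$ is active, and the delicate continuity/cofinality arguments at $(\gamma^+)^H$---is where the contradiction is actually extracted, and none of this is present in the proposal.
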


As an immediate corollary of the conjunction of this result
with \cite[Theorem 5.2]{premouse_inheriting},
we will also obtain the following result:
\begin{tm}[Condensation]\label{thm:condensation}
Let $k<\om$. Let $M$ be a $k$-sound, $(k,\om_1+1)$-iterable premouse. Let $H$ be a $k$-sound premouse and $\rho\in[\rho_{k+1}^H,\rho_k^H)$ be an $H$-cardinal such that $H$ is $\rho$-sound. Let $\delta=\card^M(\rho)$. Let $\pi:H\to M$ be $k$-lifting with $\crit(\pi)\geq\rho$. Then:
\begin{enumerate}[label=\arabic*.,ref=\arabic*]
\item If $H\notin M$ then $\rho_{k+1}^H=\rho_{k+1}^M\leq\delta$, $H$ is the $\rho$-core of $M$, $\pi$ is the $\rho$-core map and $\pi(\pvec_{k+1}^H)=\pvec_{k+1}^M$.

\item\label{item:condensation_H_in_M} If $H\in M$ then exactly one of the following holds:

\begin{enumerate}[label=\tu{(}\alph*\tu{)}]
\item $H\pins M$,
\item\label{item:M|rho_active_and_H_pseg_of_Ult} $M|\rho$ is active  and $H\pins\Ult(M|\rho,F^{M|\rho})$,
\item\label{item:M|rho_passive,M|rho^+_active_type_1,H=Ult_k(Q)} $M|\rho$ is passive, $N=M|\rho^{+H}$ is active type 1 and $H=\Ult_k(Q,F^N)$
where $Q\pins M$ is such that $\rho=\delta^{+Q}$ and $\rho_{k+1}^Q=\delta<\rho_k^Q$,
\item\label{item:k=0_and_H,M,M|rho_type_2_and_R=Ult(M|rho,F)_F_type_1} $k=0$ and $H,M$ are active type 2
and $M|\rho$ is active type 2 and letting $R=\Ult(M|\rho,F^{M|\rho}$, then $N=R|\rho^{+H}$ is active type 1
and $H=\Ult_0(M|\rho,F^{N})$.
\end{enumerate}
\end{enumerate}
\end{tm}

Note that in clauses \ref{item:condensation_H_in_M}\ref{item:M|rho_active_and_H_pseg_of_Ult}--\ref{item:condensation_H_in_M}\ref{item:k=0_and_H,M,M|rho_type_2_and_R=Ult(M|rho,F)_F_type_1} above,
$\delta<\rho=\delta^{+H}$, so $\rho$ is not an $M$-cardinal. The following fact
is just an abridged version of \cite[Theorem 5.2]{premouse_inheriting}.
We state it here for ease of comparison with Theorem \ref{thm:condensation}.

\begin{fact}[\cite{premouse_inheriting}]\label{fact:condensation}
Let $k,M,H,\rho,\delta,\pi$ be as in Theorem \ref{thm:condensation}. Then:
\begin{enumerate}[label=\arabic*.,ref=\arabic*]
\item\label{item:fact_condensation_H_notin_M} If $H\notin M$ then:
\begin{enumerate}[label=(\roman*)]

\item\label{item:cond_fact_abbr_i} If $M$ is $(k+1)$-solid then $\rho_{k+1}^H=\rho_{k+1}^M$.
\item\label{item:cond_fact_abbr_ii} If $\rho_{k+1}^H=\rho_{k+1}^M$ then
$H$ is the $\rho$-core of $M$, $\pi$ is the $\rho$-core map and $\pi(\pvec_{k+1}^H)=\pvec_{k+1}^M$.
\item\label{item:cond_1d} If $\rho^H_{k+1}=\rho$ and $\rho^{+H}<\rho^{+M}$ then $M|\rho$ is active.
\end{enumerate}
\item\label{item:condensation_from_normal_H_in_M} Part \ref{item:condensation_H_in_M} of Theorem \ref{thm:condensation} holds.
\end{enumerate}
\end{fact}

Note that Theorem \ref{thm:condensation}
follows immediately from Theorem \ref{thm:solidity}
and Fact \ref{fact:condensation}.
We will also use Fact \ref{fact:condensation} in the proof of Theorem \ref{thm:solidity}.

The next theorem is essentially due to Steel and Zeman,
almost via their
classical proofs. Note, however, that Steel's proof is less general,
as it is below superstrong extenders,
and Zeman's is with $\lambda$-indexing, not Mitchell-Steel. Super-Dodd-soundness is a strengthening of Dodd-soundness;
see Definitions \ref{dfn:super_Dodd_param_proj} and \ref{dfn:super_Dodd-sound}.

\begin{tm*}[Super-Dodd-soundness, \ref{thm:super-Dodd-soundness}]
Let $M$ be an active, $(0,\om_1+1)$-iterable premouse, let $\kappa=\kappa^M$,
and suppose that $M$
is either $1$-sound or $\kappa^{+M}$-sound.
Then $M$ is super-Dodd-sound.
\end{tm*}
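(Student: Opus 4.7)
The plan is to argue by contradiction, following the Steel--Zeman classical template for Dodd-soundness, with care taken to handle the super-Dodd strengthening and the Mitchell--Steel indexing with superstrong extenders via the bicephalus methodology of \cite{premouse_inheriting} as adapted in this paper. Since the hypothesis of $1$-soundness or $(\kappa^+)^M$-soundness places the relevant hulls inside or at the boundary of $M$, no genuine iterability for stacks should be required: all comparison trees arising in the argument can be arranged to be normal of length $\leq \om_1+1$, so that normal $(0,\om_1+1)$-iterability suffices. This is the ``easy trick'' the introduction alludes to.

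First I would isolate the alleged counterexample: assume $M$ is not super-Dodd-sound and extract the least generator $\nu<\nu^{F^M}$ of the active extender $F=F^M$ witnessing failure, together with the associated Dodd hull $H$ built from $\rho_1^M$, the fragment of $F$ whose generators lie below $\nu$, and the relevant initial segment of the Dodd parameter of $M$. Using the soundness hypothesis on $M$, I would check that $H$ is a sound premouse whose ordinal size is controlled (bounded below $(\kappa^+)^M$ in the second case, or reduced via $\rho_1^M$ in the first), and that $H$ comes equipped with a natural factor embedding into $M$. Next I would form a bicephalus in the sense of \cite{premouse_inheriting} out of $M$ and $H$, and compare it with $M$ using normal $(0,\om_1+1)$-iterability. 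The ordinary Dodd-soundness conclusion then follows by tracking generators on each side of the comparison and invoking the condensation theorem (Theorem \ref{thm:condensation}) and solidity (Theorem \ref{thm:solidity}), already established, to control the hull structures and to rule out drops; one concludes that $H$ must in fact arise as an initial segment of the Dodd decomposition of $F$, contradicting the choice of $\nu$. The super strengthening is then obtained by iterating this argument generator-by-generator, exploiting that each entry of the Dodd parameter behaves solidly under the same bicephalus setup.

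The main obstacle I expect is verifying the ISC along the comparison for extenders of superstrong type: the classical proofs either stay below superstrongs (Steel) or use $\lambda$-indexing (Zeman), so one must redo the case analysis at superstrong exit extenders using the techniques of \cite{premouse_inheriting} for handling ISC under Mitchell--Steel indexing with superstrongs on the sequence. A secondary difficulty is ensuring that on neither side of the comparison an illicit drop occurs which would take us outside the range of $(0,\om_1+1)$-iterability; this I would control by arranging $H$ to be $1$-sound by construction and by using that the bicephalus tree, being essentially a coiteration of two sound premice with closely related active extenders, remains $0$-maximal throughout.
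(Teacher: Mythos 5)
Your proposal has a genuinely different architecture from the paper's proof, and several of the differences would cause it to fail if pursued.

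First, the ``easy trick'' the introduction alludes to is not about the placement of hulls relative to $(\kappa^+)^M$; it is the reduction, via Lemma~\ref{lem:sound_hull} (self-solid parameters), to a mouse $\Mbar=\cHull_1^M(\{q\})$ that is $1$-sound with $\rho_1^{\Mbar}=\om$. Once $\Mbar$ is generated by a single finite parameter, iteration maps out of $\Mbar$ are uniquely determined by where they send $q$, and this uniqueness is what substitutes for weak Dodd-Jensen in the remainder of the argument. Without that reduction you have no mechanism at all to control the final branch of the comparison, and ``arranging the trees to be normal'' does nothing to supply it.

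Second, the paper does not form a bicephalus for this proof, and doing so as you describe would not make sense. The structures $(\rho,M,N)$ of \cite{premouse_inheriting} require both $M$ and $N$ to be premice; but the Dodd hull $H=M_X$ (associated to $X=w\cup\zeta$ where $\zeta$ is the problematic generator) \emph{fails the ISC} whenever $M$ is not Dodd-solid at $\zeta$ (Claim~\ref{clm:H_props}), so it is not a premouse. Your plan treats $H$ as a sound premouse and ``coiterates two sound premice with closely related active extenders,'' but the entire proof is a reductio against exactly the claim that $H$ is a premouse. What the paper actually compares is $M$ against the phalanx $\phU=((M,{<\zeta}),U,\zeta)$ (or its three-model variant) where $U=\Ult_0(M,G)$ for the Dodd-witness $G$; note $U$ \emph{is} a premouse even when $H$ is not, which is why this works. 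The iterability of $\phU$ is established by reduction to $\phW=((M,{<\gamma}),W,\gamma)$ with $W=\Ult_0(M,F^M)$, a simple normalization argument, not by a bicephalus lift.

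Third, invoking Theorem~\ref{thm:solidity} and Theorem~\ref{thm:condensation} as ``already established'' runs into the paper's dependency chain: solidity is proved using projectum-finite generation (Lemma~\ref{lem:finite_gen_hull}), which in turn uses Dodd-soundness. The paper deliberately proves Lemma~\ref{lem:super-Dodd-soundness} with the extra hypothesis that $\core_1(M)$ is $1$-sound (rather than assuming full solidity) precisely to break this circle; only afterward, in \S\ref{sec:conclusion}, is the unconditional theorem deduced. Your proposal silently assumes the conclusion of the circle is available when you begin, which is exactly what the paper is structured to avoid.

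Finally, the ``super'' strengthening is not a routine generator-by-generator iteration of the ordinary argument. The extra content is whether the critical point $\mu=\crit(F^M)$ belongs to the super-Dodd parameter $\ttilde^M$ (which can only happen when $F^M$ has a superstrong proper segment), and the corresponding witness is a component \emph{measure} $F^M_x$. The case $\zeta=\kappa$ is handled separately in the paper precisely because the hull $H$ can satisfy the ISC there and the argument must instead reduce the witness to a component of an already-known Dodd-solidity witness; your sketch does not touch this. Likewise, the real superstrong-specific difficulty is not ISC verification along the comparison but the classification of $\Tt$-special extenders along the main branch (Lemma~\ref{lem:Tt-special}, the analysis around pre-simply-$\zeta$-strongly finite trees, and Claim~\ref{clm:varepsilon_non-Tt-special}), none of which appears in your outline.
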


And the next  is essentially due to Mitchell-Steel, by reducing to  their
classical proof from \cite[\S10]{fsit}:

\begin{tm*}[Initial Segment Condition, \ref{thm:ISC}]
Every $(0,\om_1+1)$-iterable pseudo-premouse is a premouse.
\end{tm*}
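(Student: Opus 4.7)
The plan is to follow the classical Mitchell-Steel argument from \cite{fsit} that pseudo-premice are premice, checking that it only ever needs a normal strategy on $M$ (together with copying onto the natural companion structure), so that $(0,\om_1+1)$-iterability is enough. Suppose for contradiction that $M$ is a $(0,\om_1+1)$-iterable pseudo-premouse that is not a premouse. Then the active extender $F$ of $M$ violates the ISC at some generator $\nu<\nu(F)$; fix the least such $\nu$. Let $G$ be the trivial completion of $F\rest\nu$. The ISC failure means that $G$ is neither on $M$'s extender sequence in the expected position nor equal to the last extender of $\Ult(M|\alpha,F\rest\nu)$ for the appropriate initial segment $M|\alpha$, while minimality of $\nu$ gives that all strictly smaller generators behave correctly.

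Next, I would build a bicephalus-style configuration $B$ in the spirit of \cite{premouse_inheriting} (and in direct parallel with the bicephali used in \cite[\S10]{fsit}): a single passive premouse (essentially $M|\lh(F)$ or the appropriate completion level) carrying both candidate top extenders, namely $F$ and a top extender derived from $G$ sitting on top of $M|\alpha$. By minimality of $\nu$ and the agreement properties forced by pseudo-premouse-hood, the two sides of $B$ agree well below their common critical ordinals, so $B$ really is a legal bicephalus in the relevant sense.

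Then I would inherit a normal iteration strategy for $B$ from the $(0,\om_1+1)$-strategy $\Sigma$ on $M$: every tree $\Tt$ on $B$ splits into two normal trees $\Tt_F$ and $\Tt_G$ above the common background, and each can be copied/lifted to a normal tree on $M$ via the natural embeddings; one plays by lifting, runs $\Sigma$ on the $M$-side, and pulls branches back. The key point is that no stack is ever needed — each of $\Tt_F, \Tt_G$ is $0$-maximal on $M$. Finally, I would coiterate the two ``halves'' of $B$ against each other; the standard bicephalus argument (as in \cite{fsit} and reworked in \cite{premouse_inheriting}) shows that in a successful coiteration between two presentations of the same underlying premouse, no extender can ever be used on either side, so the two top extenders $F$ and $G$ are forced to be compatible in the strong sense that contradicts the initial assumption that ISC failed at $\nu$.

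The main obstacle is verifying that the bicephalus $B$ actually admits the required normal iteration strategy inherited from $\Sigma$, i.e.\ that the copying construction goes through in both of the two branches of the bicephalus simultaneously without requiring the ability to iterate a stack on $M$. This is precisely where the methods of \cite{premouse_inheriting} (tracking how branches on the bicephalus correspond to single normal trees on $M$) are used, and where the present paper's reduction of the ISC to the Mitchell-Steel proof lives; once this lifting is in place, the termination of the coiteration and the derivation of the contradiction follow by the classical calculation.
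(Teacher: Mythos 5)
Your proposal takes a genuinely different route from the paper, but it has a structural gap at its core. The paper's proof is a short reduction: pick the largest generator $\gamma$ of $F^M$ and $G = F^M\rest\lgcd(M)$, pass to the hull $\Mbar=\cHull_1^M(\{\gamma,G\})$, observe that for a pseudo-premouse of this form any two cofinal $\Sigma_1$-elementary embeddings into another pseudo-premouse must agree (since they agree on the generating parameters), and then note that this uniqueness does exactly the job weak Dodd-Jensen does in the classical phalanx proof of the ISC from \cite{fsit}. No bicephalus is ever formed.

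The concrete problem with your approach: the object $B$ you describe is not a bicephalus. A bicephalus requires both candidate top extenders to sit over the same passive structure, and hence to have the same index/length. But here $G$ is the trivial completion of $F\rest\nu$ for $\nu<\nu(F)$, so $\lh(G)<\lh(F)$, and the two active structures $(M||\lh(F),F)$ and $(M||\lh(G),G)$ have different ordinal heights. There is no common passive ``background'' over which both $F$ and the derived $G$-top extender live, so the object you want simply does not satisfy the defining conditions of a (standard or simple pre-exact) bicephalus, and the bicephalus comparison theorem cannot be applied. The classical argument instead forms a \emph{phalanx} (e.g.~$((M,{<\nu}),\Ult(M,G))$ or a variant with a drop to the right segment) and compares it with $M$ --- and it is precisely that phalanx comparison that requires weak Dodd-Jensen, which is what the paper's hull trick circumvents.

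There is also a separate confusion at the end: you claim the comparison forces $F$ and $G$ to be ``compatible in the strong sense that contradicts ISC failure.'' But $G$ is already a sub-extender of $F$, so compatibility is automatic and is not what the ISC asserts. The ISC says $G$ must appear on $\es^M$ (or be the last extender of the appropriate ultrapower); the contradiction in the phalanx proof is produced by analysing how the comparison must terminate and showing this forces $G$ into $M$ (or the relevant ultrapower), not by comparing $F$ with $G$ for compatibility. So even setting aside the bicephalus issue, the intended contradiction at the end does not follow from the property you would extract.
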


\begin{rem}\label{rem:traditional_bicephali}
One further fact should be  mentioned along with those above,
and this regards bicephali, in their traditional, or almost traditional, form.
By \cite[Theorem 9.2]{fsit} and
\cite[Lemma 5.4]{extmax}, there
are no iterable bicephali
$(N_0,N_1)$ for which $N_0\neq N_1$ and neither $N_0$ nor $N_1$ has a superstrong extender on its sequence.\footnote{\label{ftn:bicephalus_types}We refer here to the bicephali considered in
\cite[Definition 9.1.1]{fsit}, and in
\cite[Definition 5.2]{extmax}.
In \cite{fsit},
a bicephalus is not allowed to have $N_0$ type 1/3 and $N_1$ type 2, or vice versa. In \cite{extmax},
this restriction on types is removed, so \cite{extmax} deals with these  ``mixed type'' bicephali.}
For the argument in \cite{extmax},
in the case that $N_0$ is type 2 and $N_1$ is type 3, or vice versa,
it is important that we are below superstrong.
By \cite[Theorem 4.3]{premouse_inheriting}, if we remove the superstrong restriction, then either $N_0$ is superstrong and $N_1$ is type 2, or vice versa (and more is established). However,
this situation is not ruled out,
 and the author expects that it is possible. Each of these results already proceeded from only normal iterability.
\end{rem}

$0^\pistol$ is the least active mouse $M$ such that $M|\crit(F^M)\sats$ ``there is a strong cardinal''.
Every mouse below $0^\pistol$
is a normal iterate of its core
(by the proof of \cite[Theorem 8.13]{cmip}). But not too far beyond $0^\pistol$,
there are mice for which this fails.
These things are discussed in detail in \S\ref{sec:iterates_of_cores}. The next result establishes a simple criterion which guarantees that a mouse is  a normal iterate of its core, but without any limit on
large cardinal complexity.

\begin{dfn}\label{dfn:almost-above}
 Let $\Tt$ be a successor length $m$-maximal iteration tree
 on an $m$-sound premouse $M$
 such that $b^\Tt$ does not drop in model or degree.
 Let $\rho\in\OR$.
 We say that $\Tt$ is \dfnemph{almost-above $\rho$}
 iff for every $\alpha+1<\lh(\Tt)$,
 if $\crit(E^\Tt_\alpha)<\rho$
 then  $M$ is active type 2,
 $m=0$, $\alpha\in b^\Tt$, and $\crit(i^\Tt_{\alpha\infty})$
 is the largest cardinal of $M^\Tt_\alpha$.
\end{dfn}

Let $\Tt$ be almost-above
$\rho$. Note that if $\crit(E^\Tt_\alpha)<\rho$ then $E^\Tt_\alpha=F(M^\Tt_\alpha)$ and $(0,\alpha]^\Tt$ does not drop, so $E^\Tt_\alpha$ is a non-dropping image of $F^M$. Moreover, by taking $\alpha$ least such, we have $\crit(i^\Tt_{0\alpha})\geq\rho$ and $\crit(F^M)=\crit(E^\Tt_\alpha)<\rho$. Since $\alpha\in b^\Tt$, it also follows that $\rho\leq\crit(i^\Tt_{0\infty})$. The definition of \emph{strongly finite} below
is given in
\ref{dfn:strongly_finite}, but it implies
that $\Tt$ has finite length
and every extender used along
$b^\Tt$ is equivalent to a single measure
(there is a finite set of generators which
generates the extender).

\begin{tm*}[Projectum-finite generation, \ref{thm:finite_gen_hull}]
Let $m<\om$ and let $M$ be an $m$-sound, $(m,\om_1+1)$-iterable premouse.
Suppose that
\[ M=\Hull_{m+1}^M(\rho_{m+1}^M\un \{x\})\]
for some $x\in M$.
Then $M$ is an iterate of its $(m+1)$st core $\core_{m+1}(M)$.
In fact, there is a
successor length
$m$-maximal tree $\Tt$
on $\core_{m+1}(M)$,
which is strongly
finite and
almost-above $\rho_{m+1}^M$,
with $M=M^\Tt_\infty$. Moreover,
$i^\Tt_{0\infty}$ is the core map.\end{tm*}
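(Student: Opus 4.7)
The plan is to compare $C := \core_{m+1}(M)$ with $M$, using the iteration strategy pulled back from $M$ via the core embedding, and to exploit the finite generation hypothesis to force the comparison to be one-sided (trivial on the $M$-side) and to have finite length on the $C$-side. Write $\rho = \rho_{m+1}^M = \rho_{m+1}^C$ and let $\pi : C \to M$ denote the core map, so $\pi \restriction \rho = \id$, $\pi$ is $\rSigma_{m+1}$-elementary, and $\pi(p_{m+1}^C) = p_{m+1}^M$. By the standard copying construction, $C$ inherits an $(m, \om_1+1)$-strategy from that of $M$.

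First I would run the usual least-disagreement coiteration of $C$ against $M$, producing $m$-maximal trees $\Tt$ on $C$ and $\Uu$ on $M$ sharing a common last model $N$. Since $C$ and $M$ agree below $\rho$, every extender used must have critical point at least $\rho$, outside the type-2 exception built into Definition \ref{dfn:almost-above}. The main claim is that $\Uu$ uses no extender, so $N = M$. The argument here is in the spirit of the bicephalus-style condensation analysis of \cite{premouse_inheriting}: a nontrivial $\Uu$-move yields a natural embedding from a hull of $N$ into $M$, violating condensation. To avoid a circular appeal to the Solidity Theorem (which is proved later using this lemma), I would invoke the conditional $(m+1)$-condensation result \cite[Theorem 5.2]{premouse_inheriting} via its alternative hypothesis that the relevant hull $H$ lies in $M$; the finite generation $M = \Hull_{m+1}^M(\rho\cup\{x\})$ is precisely what makes this alternative available at each relevant stage.

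With $\Uu$ trivial, $M^\Tt_\infty = M$ and $b^\Tt$ does not drop in model or degree; hence $i := i^\Tt_{0\infty} : C \to M$ is $\rSigma_{m+1}$-elementary, fixes $\rho$ pointwise, and sends $p_{m+1}^C$ to $p_{m+1}^M$. Since $C = \Hull_{m+1}^C(\rho\cup\{p_{m+1}^C\})$, we get $i = \pi$, identifying the branch map with the core map. To establish that $\Tt$ is strongly finite, I would use that $x = \pi(f)(a)$ for some $f\in C$ and some finite tuple $a$ of generators produced along $b^\Tt$; since $a$ is finite and every generator along $b^\Tt$ must appear in the hull defining $M$, only finitely many extenders can occur on $b^\Tt$, each generated by its portion of $a$. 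The almost-above $\rho$ clause then follows from the $C$--$M$ agreement below $\rho$, with the active type-2, $m=0$ exception handled exactly as in \cite{premouse_inheriting}.

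The main obstacle is Step 2 --- ruling out $\Uu$-extenders without assuming $(m+1)$-solidity of $M$, which is not yet available at this point in the development. The workaround is precisely the $H\in M$ clause of \cite[Theorem 5.2]{premouse_inheriting}, whose applicability must be verified carefully at each potential $\Uu$-move; this is where the finite generation hypothesis is essential, as it ensures the hulls entering the condensation argument actually belong to $M$. A secondary technical point is confirming that the generators absorbed along $b^\Tt$ can be bounded uniformly by the finite tuple witnessing generation of $x$, giving not just finiteness but \emph{strong} finiteness of $\Tt$, and correctly treating the exceptional almost-above case arising from active type-2 premice when $m=0$.
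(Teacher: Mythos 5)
Your overall strategy --- comparing $C=\core_{m+1}(M)$ against $M$ directly by least disagreement, using normal iterability pulled back through $\pi$ --- is precisely the approach that the paper's proof does \emph{not} take, and the paper explains (in a footnote at the start of the comparison in the proof of Lemma \ref{lem:finite_gen_hull}) why a direct comparison of $C$ with $M$, or even of the bicephalus $B=(\rho,C,M)$ with $C$ or with $M$, seems not to yield anything usable. Without weak Dodd--Jensen, a standard coiteration of $C$ against $M$ can terminate in unhelpful ways: one cannot force the $M$-side to be trivial, and you cannot control where the generators go. The paper instead forms the bicephalus $B=(\rho,C,M)$ of degree $(m,m)$ and compares $B$ \emph{with itself}, with a carefully designed ``move into'' rule that chooses between the $C$-model and the $M$-model at stages where the two agree up to the relevant projectum successor; the whole point of the bicephalus is that a total extender with critical point below $\rho$ shifts \emph{both} components, so one never prematurely falls off the $C$-side.

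Your claim that ``every extender used must have critical point at least $\rho$'' is a gap: the first extender used has \emph{length} at least $(\rho^+)^C$, but its critical point may be far below $\rho$, and subsequent extenders may as well. That is exactly the scenario the bicephalus is built to handle, and a plain $C$-versus-$M$ coiteration provides no mechanism for it. Likewise, the step in which you rule out $\Uu$-extenders by appealing to the $H\in M$ clause of \cite[Theorem 5.2]{premouse_inheriting} is the crux of the matter, and your argument there is not one that goes through: a nontrivial $\Uu$-branch gives an embedding $M\to N$, and there is no natural hull of $N$ sitting inside $M$ to which the cited condensation result applies. The paper's actual analysis of the terminal configuration instead relies on (i) the rules of the bicephalus self-comparison (the ``move into'' mechanism and Claim \ref{clm:move_into_C_alpha_not_later_pseg}), (ii) the core-strongly-finite tree machinery of Lemma \ref{lem:core-strongly-finite} together with the commuting-diagram/Dodd-decomposition argument imported from the measures-in-mice theorem \ref{thm:measures_in_mice}, and (iii) a final contradiction via Steel's dropdown lifting lemma \ref{lem:dropdown_lifting}, which shows a certain reverse-copy must be infinite, contradicting the finiteness obtained in (ii). None of these ingredients appear in your proposal.

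Finally, your argument for \emph{strong} finiteness is too coarse. Strong finiteness requires that the Dodd-cores $\core_{\ds}(E^\Tt_\gamma)$ along the main branch are each finitely generated (Definition \ref{dfn:strongly_finite}), and establishing this needs the Dodd-soundness of $C$, i.e. the super-Dodd-soundness Lemma \ref{lem:super-Dodd-soundness}, together with the index-minimization argument of Lemma \ref{lem:core-strongly-finite} (or the earlier \ref{lem:non-dropping-strongly_finite}); it is not a consequence of the finite generation $M=\Hull_{m+1}^M(\rho\cup\{x\})$ alone plus counting generators. You should also be aware that the lemma as used inductively in the solidity proof (Lemma \ref{lem:finite_gen_hull}) carries the extra hypotheses that $M$ is $(m+1)$-universal and $C$ is $(m+1)$-solid, which are only discharged afterwards via Theorem \ref{thm:solidity}; your proof does not engage with that bootstrapping either.
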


We will also deduce a related fact in Corollary \ref{cor:ult_is_it}.
For now note that Theorem \ref{thm:finite_gen_hull}
has the following corollary:

\begin{cor}\label{cor:def_over_core}
Let $M$ be any $m$-sound, $(m,\om_1+1)$-iterable premouse.
Let $A\sub\rho_{m+1}^M$ be $\bfrSigma_{m+1}^M$-definable.
Let $C=\core_{m+1}(M)$.
Then $A$ is $\bfrSigma_{m+1}^C$-definable.
\end{cor}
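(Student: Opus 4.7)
The plan is to reduce to Theorem \ref{thm:finite_gen_hull} by passing to a well-chosen hull of $M$. Let $q \in M$ be a parameter such that $A$ is $\rSigma_{m+1}^M$-definable from $q$, and form
\[ N = \cHull_{m+1}^M(\rho_{m+1}^M \cup \{q, p_{m+1}^M\}), \]
with inverse collapse $\pi\colon N \to M$ and $\bar{q} = \pi^{-1}(q)$. Standard fine structure yields: $N$ is an $m$-sound premouse with $\rho_{m+1}^N = \rho_{m+1}^M$; $\pi$ is $\rSigma_{m+1}$-elementary and the identity on $\rho_{m+1}^M$; $N$ inherits $(m,\omega_1+1)$-iterability from $M$ by copying along $\pi$; $A$ is $\rSigma_{m+1}^N$-definable from $\bar{q}$; and $N = \Hull_{m+1}^N(\rho_{m+1}^N \cup \{x\})$ for a finite $x \in N$ coding $\bar{q}$ together with $\pi^{-1}(p_{m+1}^M)$. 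Preservation of the standard parameter through $\pi$, using $(m+1)$-solidity of both $M$ and $N$ from Theorem \ref{thm:solidity} together with the $\leq_*$-leastness characterization of $p_{m+1}$, gives $\pi(p_{m+1}^N) = p_{m+1}^M$, and hence $\core_{m+1}(N) = C$.

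Next, apply Theorem \ref{thm:finite_gen_hull} to $N$ to obtain a strongly finite, almost-above-$\rho_{m+1}^M$, successor length $m$-maximal tree $\Tt$ on $C$ with $N = M^\Tt_\infty$ and core map $i := i^\Tt_{0\infty}\colon C \to N$. Strong finiteness means $\Tt$ has finite length and each extender used along $b^\Tt$ is equivalent to a single measure, so $i$ is a finite composition of measure ultrapowers whose critical points are $\geq \rho_{m+1}^M$ (the narrow type 2 exception in the definition of almost-above is handled analogously).

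The remaining step is to pull the $\rSigma_{m+1}^N$-definition of $A$ back to $C$ using $\Tt$. Write $\bar{q} = i(f)(\vec{a})$ for some $f$ in $C$ (or $\rSigma_m^C$-definable) and a finite tuple $\vec{a}$ of extender generators used along $b^\Tt$. The generators $\vec{a}$ lie below the lengths of the relevant extenders, which are ordinals of $C$ since the extenders ultimately come from $C$'s extender sequence; the corresponding measures $\vec{U}$ likewise lie in $C$. Iterating the \L{}o\'s theorem finitely along $b^\Tt$, for each $\alpha < \rho_{m+1}^C = \rho_{m+1}^N$,
\[ \alpha \in A \iff N \sats \varphi(\alpha, \bar{q}) \iff C \sats \varphi^*(\alpha, f, \vec{a}, \vec{U}) \]
for a suitable $\rSigma_{m+1}$ formula $\varphi^*$, exhibiting $A$ as $\bfrSigma_{m+1}^C$-definable. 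The main obstacle is ensuring this \L{}o\'s reduction respects the fine-structural $\rSigma_{m+1}$ level through a (possibly non-trivial) finite tree; the key inputs are the sufficient elementarity of $i$ as the iteration map of a non-dropping $m$-maximal iteration from an $(m+1)$-sound base, together with strong finiteness, which keeps the stepwise analysis tractable.
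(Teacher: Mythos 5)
Your approach is the same as the paper's: pass to a suitable hull $N=\cHull_{m+1}^M(\rho_{m+1}^M\cup\{q,p_{m+1}^M\})$ which is projectum-finitely generated, check that $\core_{m+1}(N)=C$ (via solidity and parameter preservation), invoke Theorem \ref{thm:finite_gen_hull} to realize $N$ as a strongly finite iterate of $C$, and pull the definition back along the finite tree. The paper compresses all of this into ``take a hull; the theorem applies; standard calculations,'' so you have simply unpacked the intended argument, and the dependencies you invoke (Theorems \ref{thm:solidity} and \ref{thm:finite_gen_hull}) are indeed available at the point where the corollary is proved.

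One imprecision in the final pull-back is worth flagging. You assert that the measures $\vec{U}$ ``likewise lie in $C$'' and then treat them as parameters in a $\rSigma_{m+1}^C$ formula $\varphi^*$. This is not literally correct: the Dodd-core measures appearing in the decomposition of $i^\Tt_{0\infty}$ live on the extender sequences of the \emph{intermediate} iterates $M^\Tt_\xi$ (or can be the top extender of such a model), not on $\es_+^C$, and similarly the generators $\vec{a}$ need not be ordinals below $\OR^C$. What is true is that the tree $\Tt$ is finite and each step's extender is $\rSigma_{m+1}$-representable over $C$ via the earlier ultrapowers (the first extender is in $\es_+^C$, so either in $C$ or amenably coded; each later one is an $[\vec{b},g]^C$-image of data in $C$), so that iterated \L{}o\'s does give a $\bfrSigma_{m+1}^C$-definition of $A$. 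Alternatively one can appeal directly to the uniform coding of finite $m$-maximal trees that the paper itself cites from \cite[3.15--3.19]{extmax} in the proof of Lemma \ref{lem:measures_in_mice}. So the shape of your final step is right, but the justification for where the parameters live should be phrased in terms of representability over $C$ rather than membership in $C$.
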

\begin{proof}
We may assume that $M=\Hull_{m+1}^M(\rho_{m+1}^M\cup\{x\})$
 for some $x$, by taking a hull. So the theorem applies. But then standard
calculations
 show that $A$ is $\bfrSigma_{m+1}^C$.
\end{proof}

The weaker version of the corollary
in which we assume
$(m,\om_1,\om_1+1)^*$-iterability\footnote{The distinction between $(k,\alpha,\beta)$-
and
$(k,\alpha,\beta)^*$-iterability is specified in
\cite[p.~1202]{cmwmwc}.}
is already clear via standard methods (take $M$ countable
and a strategy with weak Dodd-Jensen, compare $M$
with the phalanx
$((M,{<\rho_{m+1}^M}),C)$, and analyse the outcome).
One can also use classical methods to prove the conclusion of
Theorem
\ref{thm:finite_gen_hull}
from $(m,\om_1,\om_1+1)^*$-iterability, and
this classical style proof is easier.

We also incorporate
a proof of the following theorem,
which was established below the superstrong level in the author's thesis
\cite[Theorem 4.8]{mim}.
Its proof, modulo the validity of Dodd-soundness at the superstrong level,
is essentially the same as in \cite{mim}, but we will use an embellishment of
the argument
to prove both the super-Dodd-soundness and projectum-finite-generation
theorems. So it serves as a good warm-up to those proofs.

\begin{tm*}[Measures in mice, \ref{thm:measures_in_mice}]
 Let $M$ be a $(0,\om_1+1)$-iterable premouse and $\mu\in M$ be such that
 $M\sats$ ``$\mu$ is a countably complete ultrafilter''.
 Then there is a strongly finite
 $0$-maximal iteration tree on $M$
 such that $b^\Tt$ does not drop, $\Ult_0(M,\mu)=M^\Tt_\infty$
 and the ultrapower map $i^{M,0}_\mu$ is the iteration map $i^\Tt_{0\infty}$.
\end{tm*}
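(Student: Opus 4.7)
Set $\kappa=\crit(D)^M$ and let $j=i^{M,0}_D\colon M\to N:=\Ult_0(M,D)$ be the ultrapower embedding. The plan is to realize $j$ as the main branch embedding of a successful coiteration of $M$ with $N$, and then to analyze the resulting tree to establish strong finiteness.

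I would first verify that $N$ is wellfounded and inherits $(0,\om_1+1)$-iterability from $M$. Wellfoundedness is a standard consequence of countable completeness of $D$ in $M$ and $(0,\om_1+1)$-iterability: if $\Ult_0(M,D)$ were illfounded, take a countable $X\elem M$ containing $D$ and the functions witnessing illfoundedness, transitively collapse to $\Mbar$ with $\bar D$; then $\Mbar$ is still $(0,\om_1+1)$-iterable, and iterating $\Mbar$ by $\bar D$ through $\om_1$-many stages produces an illfounded direct limit, a contradiction. Iterability of $N$ transfers from that of $M$ via the copying construction along $j$, since $j$ is fully elementary.

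I would then coiterate $M$ with $N$ in the $0$-maximal fashion, producing trees $\Tt$ on $M$ and $\Uu$ on $N$. By $(0,\om_1+1)$-iterability on both sides the coiteration terminates successfully, and the central claim is that $\Uu$ is trivial, $b^\Tt$ does not drop, $M^\Tt_\infty=N$, and $i^\Tt_{0\infty}=j$. For triviality of $\Uu$: $M$ and $N$ agree below $\kappa$, so any first disagreement must occur at critical point $\geq\kappa$; any extender $F^N$ that $\Uu$ would want to apply at its first step can be analyzed via $j$-elementarity in terms of $M$'s extender sequence near the corresponding index, and one argues that $\Tt$ on the $M$-side is forced to apply the corresponding extender first, contradicting $F^N$ being the first disagreement on the $N$-side. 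Once $\Uu$ is trivial, $M^\Tt_\infty\ins N$, and comparing lengths of extender sequences yields equality. Non-dropping along $b^\Tt$ follows because $j$ is fully elementary and totally defined, forcing $i^\Tt_{0\infty}$ to be so. Finally, $i^\Tt_{0\infty}=j$ is read off by comparing both maps on a canonical generating set for $N$ over $\range(j)$, namely ordinals $<\kappa$ together with the seed $[\id]_D$.

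For strong finiteness of $\Tt$: since $D$ is a single $\kappa$-complete ultrafilter, one has $N=\Hull_1^N(\range(j)\cup\{[\id]_D\})$, so the branch extender of $\Tt$ needs to produce only the one additional generator $[\id]_D$ over $\range(j)$; unpacking through the tree structure then forces $b^\Tt$ to have finite length and each extender used along $b^\Tt$ to have a finite set of generators, which is strong finiteness as in Definition \ref{dfn:strongly_finite}. The main obstacle will be rigorously establishing triviality of $\Uu$ and the precise bookkeeping for strong finiteness under Mitchell-Steel indexing with possible superstrong extenders, where the comparison with classical $\lambda$-indexed or sub-superstrong arguments is not entirely direct; one must track carefully how $j$ interacts with $M$'s extender sequence near $\kappa$, and use the fine-structural comparison methods developed in this paper (and its predecessor \cite{premouse_inheriting}) in place of any stacks-iterability assumption.
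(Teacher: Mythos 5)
Your proposal diverges from the paper's argument in a way that makes it unworkable in this setting, because its central step relies implicitly on weak Dodd--Jensen, which is precisely the tool the paper cannot use.

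The key issue is your claim that the coiteration of $M$ with $N=\Ult_0(M,D)$ results in a trivial tree $\Uu$ on the $N$ side. You give only a heuristic for this (``one argues that $\Tt$ on the $M$-side is forced to apply the corresponding extender first''), but in fact there is no such direct argument from normal iterability alone. The classical route to proving that the $N$ side does not move in this comparison goes through weak Dodd--Jensen (or equivalently, iterability for stacks), using the embedding $j\colon M\to N$ to conclude minimality of the $M$-side iteration map; without that, both sides of the comparison can move, and ruling this out is a genuine difficulty. The paper's proof (Lemma \ref{lem:measures_in_mice}) does \emph{not} assert triviality of $\Uu$ at the outset; triviality is established only at the very end, after a Dodd-decomposition bootstrapping argument.

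The paper's substitute for weak Dodd--Jensen is an explicit fine-structural reduction that your proposal omits entirely. One first passes to a proper segment $M'\pins N$ with $D\in\J(M')$, takes a parameter-free collapsed hull $\Mbar=\cHull^{M'}_{m+1}(\{\pvec_n^{M'}\})$ with $\rho_{m+1}^{\Mbar}=\om$ and trivial $(m+1)$st parameter, realizes $\Ubar=\Ult_m(\Mbar,\bar D)$ as countable and iterable via a realization map into $M'$, and then compares $\Ubar$ with $\Mbar$ (not $M$ with $N$). The soundness and parameter-freeness of $\Mbar$ is what makes the iteration maps in the comparison controllable: one gets $\crit(i^\Uu)>\crit(i^\Tt)$ by a compatibility argument, then builds a strongly finite tree $\Ttbar$ capturing the comparison tree via Lemma \ref{lem:non-dropping-strongly_finite}, and iteratively applies Lemma \ref{lem:s_j,sigma_j_pres} and the Dodd-decomposition lemma \ref{lem:Dodd_decomposition} to show $\bar Q=\Ubar$ and $i^\Uu=\id$. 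Your proposal's sketch of strong finiteness (``the branch extender needs to produce only one additional generator, which forces finite length and finitely generated extenders'') does not engage with the actual Definition \ref{dfn:strongly_finite}, which demands finite generation of the Dodd-cores $\core_{\ds}(E^\Tt_\gamma)$ for all $\gamma\leq^\Tt_{\d}\alpha$; a single generator over $\rg(j)$ does not entail this, and the paper must deliberately construct $\Ttbar$ to have this property, it does not fall out of the comparison. Finally, your proposal ignores that the theorem depends on $M$ being sub-Dodd-sound: Theorem \ref{thm:measures_in_mice} is proved via Lemma \ref{lem:measures_in_mice} which has sub-Dodd-soundness as a hypothesis, and the proof of sub-Dodd-soundness (Lemma \ref{lem:super-Dodd-soundness}) is itself nontrivial; without it the Dodd-decomposition machinery is unavailable.
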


Using projectum-finite generation, we
prove the following theorem, which relates to inner model theoretic geology,
and some questions of Gabriel Goldberg and Stefan Miedzianowski;
see \cite{gironaconfproblems}.
We write $\univ{M}$ for the universe of $M$.

\begin{tm*}[\ref{thm:sigma_grounds}]
 Let $M$ be a $(0,\om_1+1)$-iterable premouse such that $\univ{M}\sats\ZFC$
 \tu{(}here $M$ might be proper class
 \tu{)}.
Let $W$ be a ground of  $\univ{M}$ via  a forcing $\PP\in W$
 such that $W\sats$ ``$\PP$ is $\sigma$-strategically-closed''.
 Suppose
 $M|\aleph_1^M\in W$.\footnote{Of course, as $\PP$ is
$\sigma$-strategically-closed in
$W$,
 automatically $\RR^M\sub W$. But note that $M|\om_1^M$ doesn't just give $\HC^M$, but also
 the restriction of the extender sequence to $\om_1^M$.}
Then $W=\univ{M}$.
\end{tm*}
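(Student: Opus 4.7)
The plan is to show $M\subseteq W$, which gives $W=M$ since $W\subseteq M$ by virtue of being a ground. Because $\PP$ is $\sigma$-strategically-closed in $W$, it preserves $\omega_1$ and adds no new countable sequences of ground-model ordinals, so $\omega_1^W=\omega_1^M$, $\RR^W=\RR^M$, and $H_{\omega_1}^W=H_{\omega_1}^M$; combined with $M|\aleph_1^M\in W$, every countable (in $M$) premouse arising as the transitive collapse of a hull of an initial segment of $M$ lies in $W$.

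I would proceed by induction on the $M$-cardinality $\kappa$ of $\alpha$, showing that each $M|\alpha$ with $|\alpha|^M\leq\kappa$ is in $W$; for $\kappa<\omega_1^M$ this is immediate. For the inductive step, fix $\alpha$ with $|\alpha|^M=\kappa$, pick a sufficiently large sound $M|\beta$ with $\alpha\in M|\beta$ and some $m<\om$ with $\rho_{m+1}^{M|\beta}\leq\alpha$, and let $p\in M|\beta$ be a single element coding $p_{m+1}^{M|\beta}$. Form the transitive collapse $\bar N$ of $\Hull_{m+1}^{M|\beta}(\alpha\cup\{p\})$, with uncollapse $\sigma:\bar N\to M|\beta$; because $\alpha$ is a subset of the hull, the collapse fixes $\alpha$, so $\bar N|\alpha=M|\alpha$. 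By construction $\bar N=\Hull_{m+1}^{\bar N}(\rho_{m+1}^{\bar N}\cup\{\bar p\})$ with $\rho_{m+1}^{\bar N}=\alpha$, and $\bar N$ is $(m,\om_1+1)$-iterable by $\sigma$-copying. Applying Theorem~\ref{thm:finite_gen_hull} to $\bar N$ then yields a strongly finite, $m$-maximal, almost-above-$\alpha$ iteration tree $\Tt$ on $C:=\core_{m+1}(\bar N)$ with $\bar N=M^\Tt_\infty$ and $i^\Tt_{0\infty}$ the core embedding; since $\Tt$ is almost-above $\alpha$, we have $C|\alpha=\bar N|\alpha=M|\alpha$, so it suffices to show $C\in W$.

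The main obstacle is showing $C\in W$. The core has $M$-cardinality $\kappa$, so it is not automatically in $W$. I would realize $C$ in $W$ as a direct limit of its elementary substructures of cardinality $<\kappa$, which — by the inductive hypothesis, or, in the base case $\kappa=\omega_1^M$, by the fact that they lie in $H_{\omega_1}^M=H_{\omega_1}^W$ — all lie in $W$, together with the induced embeddings. The directed system and its limit are definable in $W$ using fine-structural uniqueness coming from solidity, universality, and mouse comparison, noting that the relevant iterability for countable premice is determined by data in $H_{\omega_1}^W=H_{\omega_1}^M\subseteq W$. Once $C\in W$, since $\Tt$ is of finite length with extenders on iterates of $C$'s sequence and finite trees have absolutely wellfounded last models, $\Tt\in W$, hence $\bar N=M^\Tt_\infty\in W$, and therefore $M|\alpha=\bar N|\alpha\in W$, closing the induction.
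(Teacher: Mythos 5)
Your proposal diverges from the paper's argument, which is an indirect one: one supposes the forcing is nontrivial, forces with $\PP\cross\PP$, finds the least index $\eta$ at which the two generically-defined extender sequences $\dot{S}^\lambda_0$ and $\dot{S}^\lambda_1$ can disagree, then uses the strategy for $\PP$ to build an $\omega$-sequence of conditions in each coordinate that forces the theories of the two generic premice (in a common parameter $z$) to coincide. This produces two countable collapses $K_0,K_1\in H_{\om_1}^M$ with a common core $C_0=C_1\pins M|\om_1^M$, and Theorem~\ref{thm:finite_gen_hull} realizes both as finite normal iterates of that common core; the least disagreement at $\bar\eta$ then contradicts normality of those trees. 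That is quite different from your bottom-up induction.

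The genuine gap in your proposal is the step ``The main obstacle is showing $C\in W$,'' which you dispose of in a few sentences but which is exactly the heart of the problem. You propose to realize $C$ inside $W$ as a direct limit of its elementary substructures of $M$-cardinality $<\kappa$, asserting that the directed system and its limit are definable in $W$ ``using fine-structural uniqueness coming from solidity, universality, and mouse comparison.'' But $W$ does not have $C$ in hand — that is what you are trying to prove — so $W$ has no way to pick out, from among the many countable (or $<\kappa$-sized) sound iterable mice in $H_{\om_1}^W=H_{\om_1}^M$, precisely those that occur as hulls of $C$, nor to determine which embeddings between them are the restriction maps of a system converging to $C$. Mere iterability of the candidates is far too weak a filter: there are plenty of mutually incomparable countable mice at any given level, and choosing the right ones is equivalent to knowing $C$. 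Comparison alone gives you minimality, not $C$. Nothing in the hypotheses gives $W$ an iteration strategy for countable mice with which to run a directed-limit (HOD-analysis-style) construction, and the paper's own proof carefully avoids needing one.

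A secondary but telling sign that something is wrong: your argument uses the strategic $\sigma$-closure of $\PP$ only to conclude $H_{\om_1}^W=H_{\om_1}^M$, which already follows from $\om$-distributivity. If your construction were correct, the theorem would hold for arbitrary $\om$-distributive forcings, a substantially stronger statement than the one proved. The paper's proof uses the strategy $\Sigma$ for $\PP$ in an essential way, to diagonalize through conditions deciding the $\rSigma_n$-theories of $\dot S^\lambda_0$ and $\dot S^\lambda_1$, and this step has no analogue in your sketch.
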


 If $M$ is also tame
then the assumption that $M|\aleph_1^M\in W$
is superfluous. See Footnote \ref{ftn:sigma-closed_forcing_tame} and Question \ref{ques:sigma-closed_forcing_ground_of_mouse}.

As an aside, we observe that solidity is non-trivial:
\begin{tm*}[\S\ref{sec:non-solid}] If there is a measurable cardinal then
 there is a non-solid premouse.\end{tm*}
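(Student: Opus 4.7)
The plan. Without any large cardinal at all (say $V=L$), every premouse has empty extender sequence and is essentially a level of $L$, hence automatically $(m+1)$-solid for every $m$; the measurable cardinal is therefore needed precisely to supply a non-trivial extender on the sequence. Starting from a normal measure $U$ on $\kappa$, my goal is to exhibit a specific premouse $M$ with non-empty extender sequence whose $(m+1)$-standard parameter lacks an internal solidity witness for some $m<\om$.

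Construction sketch. Let $i\colon V\to N=\Ult(V,U)$, and consider an active premouse $P$ arising from the $N$-extender sequence at a height just above $i(\kappa)$, with top extender derived from $i(U)$. From $P$ I would extract a candidate non-solid $M$ either (a) by taking an initial segment at a level where $\rho_{m+1}^M<\kappa$, which forces $\kappa\in p_{m+1}^M$, or (b) by taking a $\rSigma_{m+1}$-hull that collapses some ordinal above $\kappa$ down below $\kappa$ without closing under enough definability to supply the relevant witness internally. The concrete steps are: (1) fix such a candidate $M$; (2) verify that $M$ is a Mitchell-Steel premouse (coherence, acceptability, and the ISC on passive levels); (3) compute $\rho_{m+1}^M$ and $p_{m+1}^M$; (4) identify a specific parameter element whose candidate solidity witness $\Th^M_{m+1}(\cdots)$ provably lies outside $M$, by a cardinality or definability-gap argument.

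The main obstacle. Steps (2) and (4) pull in opposite directions: the Mitchell-Steel axioms are engineered precisely to rule out the kind of internal incoherence that naturally gives rise to non-solid standard parameters, which is exactly why a genuinely iterable premouse must be solid, as established by the main theorem of this paper. The bulk of the work is therefore to identify an object that genuinely satisfies the strict premouse axioms yet fails to carry the required solidity theory as an element. A natural way to produce such an object is to truncate a hypothetical iterate at a level just short of where iterability would have supplied the missing witness---the whole point of this corollary is that, without iterability, one is free to stop short of the closure required for solidity. Verifying that the truncation still passes the strict premouse axioms while provably lacking the relevant theory as an element is the crux.
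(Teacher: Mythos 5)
Your plan correctly identifies the tension at the heart of the problem, but neither of your concrete suggestions survives scrutiny, and the key mechanism that makes the paper's construction work is absent. Suggestion (a), taking an initial segment of a premouse $P$ at a level where $\rho_{m+1}^M<\kappa$, cannot produce a non-solid premouse: an initial segment of an iterable premouse is iterable (trivially), hence solid by the main theorem of the paper. Suggestion (b), taking a $\rSigma_{m+1}$-hull of an iterable premouse, also cannot work, since such hulls embed into iterable premice and are therefore iterable by copying, again forcing solidity. In both cases you are working \emph{inside} the category of iterable premice, where solidity is automatic; ``truncating a hypothetical iterate just short of where iterability supplies the witness'' has no chance, because every proper segment and every definable hull of a mouse is still a mouse.

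The construction the paper actually uses lives outside this category, and the enabling observation is Lemma \ref{lem:basic_fs_pres}: if $M$ is $k$-sound and $E$ is a short extender weakly amenable to $M$ with $\rho_{k+1}^M\le\crit(E)<\rho_k^M$ and $U=\Ult_k(M,E)$ is wellfounded with $\rho_{k+1}^U<\sup i_E``\rho_{k+1}^M$, then $U$ is \emph{automatically} non-$(k+1)$-solid. The premouse axioms hold for $U$ because $E$ is weakly amenable and agrees with a genuine extender on $M^\passive$; wellfoundedness holds because $E$ is derived from an actual embedding; but $U$ need not be iterable. Concretely, the paper starts from a $1$-sound iterable type~1 premouse $M$ with $\rho_1^M=\om_1^M$, $p_1^M=\emptyset$, and $\Hull_1^M(\{x\})$ bounded for every $x\in M$ (obtained by careful hull-taking inside a type~1 mouse). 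It then forms $U=\Ult_0(M,F^M)$ with ultrapower map $i$, and---by a genericity/tree argument using $\cof(\OR^U)>\om$---finds a \emph{different} cofinal $\Sigma_1$-elementary $j:M^\passive\to U^\passive$ with $\crit(j)=\crit(i)$. Taking $E=E_j$, the ultrapower $U'=\Ult_0(M,E)$ (including the active extender) is a wellfounded premouse, and the crux is showing $\Hull_1^{U'}(\{\mu\})$ is cofinal, hence all of $U'$, so $\rho_1^{U'}=\om<\rho_1^M$. It is precisely the fusion of the canonical data in $F^M$ with the non-canonical embedding $j$ that drops the projectum and destroys solidity. Your plan never introduces such a non-canonical weakly amenable extender, and without it there is no route to a wellfounded premouse that escapes the main theorem.
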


\begin{rem}
As mentioned above, the background theory for the paper is $\ZF$.
However, for the theorems listed above, we may
assume $\ZFC$.
This is because the theorems only assert something about the first order theory
of a premouse, assuming that it is $(k,\om_1+1)$-iterable,
for some given $k<\om$. But then fixing the premouse $M$
and a $(k,\om_1+1)$-iteration strategy $\Sigma$ for $M$,
we can pass into $\HOD_{\Sigma,M}$, or $L[\Sigma,M]$, appropriately defined,
where we have $\ZFC$ and the original assumptions.
Moreover, the only way that $\AC$ comes in is when proving that comparisons and related processes
terminate; in those arguments we need that $\om_1$ is regular
and to be able to take an appropriate hull of $V_\alpha$.
The rest of the paper goes through with $\ZF$
(we point out some details
regarding this when proving
 $<^{\para*}_k\rest\Mm^{\iter}_k$ is wellfounded,
Lemma
\ref{lem:param_order_wfd}).
\end{rem}

The author thanks the organizers for the opportunity to present key parts of  an earlier version of the main arguments in this paper
 at the   the \emph{3rd M\"unster conference
on inner model theory, the core model induction, and hod mice},
at the University of M\"unster, Germany, in July 2015; cf.~\cite{ralf_notes_solidity_talk}.
The overall argument is still the same as what was outlined there,
but there are some significant differences in components.
One of these is in the proof of
of Lemma \ref{lem:finite_gen_hull} (projectum-finite generation),
where some methods from \cite{premouse_inheriting}
and from \cite{ACPFMP}
turned out to  give a somewhat simpler approach to prove that lemma;
this is explained in more detail at the beginning of
\S\ref{sec:finite_gen_hull}. Second, there are some simplifications
in the proof of solidity, one adopting
a suggestion made by John Steel following the 2015 talk;
this is explained in \S\ref{sec:solidity}. Also,
the idea for \cite[Theorem
9.6]{iter_for_stacks} was only found during the conference;
the author was only aware of a weaker version of this result prior to the
conference, which was however
enough to prove a weakened
version of Lemma \ref{lem:param_order_wfd}, which sufficed for the main
theorems.

\subsection{Notation and terminology}\label{sec:notation}

We assume general familiarity
with \cite{outline} and/or \cite{fsit}, which
develop the basic theory of Mitchell-Steel (MS) indexed premice.\footnote{The author has not thought through
adapting the arguments presented here to Jensen indexed
premice or other forms of fine structure
(such as in  \cite{imlc} or \cite{jensen_book}).}
In particular, we work in MS-indexing,
but allowing superstrong extenders on the extender sequence
(see Remark \ref{rem:superstrong_diff}). We
work with MS fine
structure, but modified as in
 \cite[\S5]{V=HODX_pub}, meaning that we define the \emph{$n$th standard parameter} $p_n$ to be what is denoted $q_n$ there, and the other fine structural notions are defined as there. We
assume familiarity with  generalized
solidity witnesses (see \cite[\S1.12 + p.~326]{imlc}), and also with \cite[\S1.1, \S2]{premouse_inheriting}.
Familiarity with other parts of
\cite{premouse_inheriting}, and also
 \cite[\S2]{extmax}, might help, but  these can be referred to as necessary.

The reader should look through \cite[\S1.1]{premouse_inheriting}
for a summary of the notation and terminology we use.
We include some of the most central notions here,
and some further terminology.

\subsubsection{General}\label{sec:notation_general}

See \cite[\S1.1.1]{premouse_inheriting}.
Given finite sequences $p,q$, we write $p\ins q$
iff $p=q\rest\lh(p)$. We write $q\pins p$ iff $q\ins p$ and $q\neq p$.

 We order $[\OR]^{<\om}$ in the ``top-down'' lexicographic ordering.
 That is, given $p,q\in[\OR]^{<\om}$,
 then
 \[ p<q\iff p\neq q\text{ and } \max(p\Delta q)\in q.\]
 We sometimes identify $[\OR]^{<\om}$ with the strictly
 descending sequences of ordinals, so if
 $p=\{p_0,\ldots,p_{n-1}\}$ where $p_0>\ldots>p_{n-1}$
 then $p$ is identified with $(p_0,\ldots,p_{n-1})$,
 and $p\rest i$ denotes $\{p_0,\ldots,p_{i-1}\}$ for $i\leq n$,
 and $p\rest i$ denotes $p$ for $i\geq n$.

 We denote with $\widetilde{\OR}$\footnote{In
 \cite[Definition 2.6]{extmax},
the notation ``$\mathcal{D}$'' is used instead of ``$\widetilde{\OR}$''.} the class of pairs
 $(z,\zeta)\in[\OR]^{<\om}\cross\OR$
such that $z=\emptyset$ or $\zeta\leq\min(z)$.\footnote{Note that it is
$\zeta\leq\min(z)$, not $\zeta<\min(z)$.}
We order $\widetilde{\OR}$ as follows.
Let $(z,\zeta),(y,\upsilon)\in\widetilde{\OR}$.
Write
\[ z=\{z_0>\ldots>z_{m-1}\}\text{ and }z_m=\zeta,\]
\[ y=\{y_0>\ldots>y_{n-1}\}\text{ and }y_n=\upsilon.\]
Then $(z,\zeta)<(y,\upsilon)$
iff either:
\begin{enumerate}[label=--]
 \item $m<n$ and $z_i=y_i$ for all $i\leq m$, or
 \item there is $k\leq\min(m,n)$ such that $z\rest k=y\rest k$
and $z_k<y_k$.
\end{enumerate}
Whenever we discuss the ordering of such tuples/pairs,
it is with respect to these orderings. Note that they are wellfounded
and set-like.

\subsubsection{Premice and phalanxes}\label{sec:premice_and_phalanxes}

See \cite[\S1.1.2]{premouse_inheriting}
for our use of the terminology \emph{premouse} (with
superstrongs)
and associated notation.
Here is a brief summary:
For a premouse
$M$,
 $F^M$ denotes its active extender,  $\es^M$ is its internal extender sequence (without $F^M$),
 $\es_+^M=\es^M\conc\left<F^M\right>$, and if $M$ is type 2, then $F_\downarrow^M$ denotes  the largest non-type Z proper segment of $F^M$. We write $M^{\passive}$ for its passivization; that is, just like $M$ except with $F^{M^\passive}=\emptyset$. And $M|\alpha$ is the initial segment of $M$ of ordinal height $\alpha$, including the extender active there, and $M||\alpha=(M|\alpha)^{\passive}$. We abbreviate \emph{premouse}
with \emph{pm}.
Such notation is also employed with analogous meaning for other related structures $M$, such as pre-ISC-premice, which
are like premice,
but without any form of ISC (initial segment condition) required:

\begin{dfn}\label{dfn:pre-ISC-premouse}
A \dfnemph{pre-ISC-premouse}\footnote{\label{ftn:pre-ISC_1}These are similar to the segmented-premice defined in
\cite[Definition 2.9]{extmax}.
(Segmented-premice are \emph{not} defined in
\cite[\S5]{extmax},
the implication to the contrary in \cite[\S1.1.2]{premouse_inheriting} notwithstanding.)
But in \cite[Definition 2.9]{extmax}, it was demanded that $\nu(F^M)\leq\lgcd(M)$, which is not demanded for pre-ISC-premice.} is a structure  $M=(\J_\alpha[\es],\in,\es,G)$
such that there are $\delta,F,\widetilde{F},U,i,\kappa$
such that:
\begin{enumerate}[label=--]
\item $M^{\passive}=(\J_\alpha[\es],\in,\es,\emptyset)$ is a passive premouse with  largest cardinal $\delta$,
\item $F$ is a weakly amenable short extender  over $\J_\alpha[\es]$,
\item $U=\Ult(\J_\alpha[\es],F)$ and
$i:\J_\alpha[\es]\to U$
is the ultrapower map,
\item $\kappa=\crit(F)$ and $\J_\alpha[\es]\sats$ ``$\kappa^+$ exists'', so $\kappa^{+M}<\alpha$,
\item $M^{\passive}=U|\delta^{+U}$,
\item either: \begin{enumerate}\item $i(\kappa)>\alpha$ and $F$ is the $(\kappa,\alpha)$-extender derived from $i$
and there is $\nu<\alpha$ such that all generators of $F$ are $<\nu$,
or
\item $i(\kappa)=\delta$ and $F$ is the $(\kappa,\delta)$-extender derived from $i$,
\end{enumerate}
and
\item $G=\widetilde{F}$ is the standard
amenable predicate coding $F$,
as in \cite[2.9--2.10]{outline},
except that letting $\nu=\nu(F)$ be the natural length of $F$ (so $\nu=\max(\kappa^{+M},\nu^-)$ where $\nu^-$ is the strict sup of generators of $F$), we use $\nu'=\max(\delta,\nu)$ in place of $\nu$ in the definition of $\widetilde{F}$. That is, $\widetilde{F}$
is the set of pairs $(A,E)$
such that $A\pins M|\kappa^{+M}$
and
$E=F\rest (A\cross[\nu']^{<\om})$.\qedhere
\end{enumerate}
\end{dfn}

Clearly every premouse is a pre-ISC-premouse.\footnote{\label{ftn:pre-ISC_2}But a type 2 premouse
is not a  segmented-premouse (\cite[Definition 2.9]{extmax}), because
of the requirement there that $\nu(F^M)\leq\delta$ for segmented-premice.
However, it is stated in \cite[Remark 2.10]{extmax}
that every premouse is a segmented-premouse,
which was an oversight. It appears the author had two distinct notions in mind, which became
blurred. The items \cite[2.11, 2.12, 2.13]{extmax} are stated for segmented-premice $P$,
but they in fact also work fine for type 2 premice, which is a case of key interest, and
apparently when the author wrote them, he had forgotten that a segmented-premouse $M$ was required to have $\nu(F^M)\leq\lgcd(M)$. The assumption that $\nu(F^M)\leq\lgcd(M)$ is important for the seg-pms in \cite[\S5]{extmax} though. In any case, while the present paper depends on some of the proofs from \cite[\S2]{extmax}, it does not literally depend on those results themselves.}

When we write, for example, \emph{type 1/2}, we mean ``(an active premouse of) type 1 or type 2''.
If $M$ is an active pre-ISC-pm,  $\nu(M)=\nu(F^M)$ denotes
the natural length of $F^M$ (so $\nu(F^M)=\max(\kappa^{+M},\nu^-)$
where $\kappa=\crit(F^M)$ and $\nu^-$ is the strict sup of generators of $F^M$),
and $\lgcd(M)$ the largest cardinal of $M$.

 If $N$ is an $n$-sound premouse,
 we write
$(M,m)\ins(N,n)$ iff $M\ins N$
and if $M=N$ then $m\leq n$. If $N$ is a pre-ISC-premouse,
we write $(M,m)\ins (N,0)$ iff $M\ins N$ and if $M=N$ then $m=0$.

If $M$ is a passive premouse and $\xi<\OR^M$,
then $M\wr\xi$ is the corresponding
segment of $M$ in the $\mathcal{S}$-hierarchy. For example,
if $M=\J(N)$ (one step in the Jensen hierarchy above $N$),
$\lambda=\OR^N$ and $n<\om$ then $M\wr(\lambda+n)=\Ss_n(N)$.
If $M=(M^\passive,F^M)$ is an active type 1/2 premouse,
then $M\wr\xi$ is the corresponding segment of
$(M^\passive,\widetilde{F^M})$,
i.e.~$(M^\passive||\xi,\widetilde{F^M}\cap(M^\passive||\xi))$.
And if $M$ is active type 3 and $\xi<\rho_0^M=\nu(F^M)$,
then $M\wr\xi=(M^\passive||\xi,F^M\rest\xi)$.

Regarding phalanxes, we write for
example $\phP=((P,{<\kappa}),(Q,\kappa),R,\lambda)$,
where $\kappa<\lambda$,
for the phalanx consisting of  3 models $P,Q,R$,
where trees $\Tt$ on $\phP$ have $M^\Tt_0=R$,
first extender $E^\Tt_0\in\es_+^R$
with $\lambda\leq\lh(E^\Tt_0)$, extenders $E^\Tt_\alpha$
with $\crit(E^\Tt_\alpha)<\kappa$ apply to $P$,
those with $\crit(E^\Tt_\alpha)=\kappa$ apply to $Q$,
and others apply to some segment of $M^\Tt_\beta$ for some $\beta\geq 0$.
A \emph{$(p,q,r)$-maximal} tree $\Tt$ on $\phP$
has $\deg^\Tt_0=r$, and degrees $p,q$ associated to $P,Q$,
so $M^\Tt_{\alpha+1}=\Ult_p(P,E^\Tt_\alpha)$ if $\crit(E^\Tt_\alpha)<\kappa$,
etc, and $\Tt$ is otherwise formed according to the rules for $n$-maximal trees. We write $M^\Tt_{-2}=P$ and $M^\Tt_{-1}=Q$, and $\root^\Tt(\alpha)\in\{-2,-1,0\}$ denotes the root of $\alpha$ in the tree order. We also write, for example, $\phP'=(((P,p),{<\kappa}),((Q,q),\kappa),(R,r),\lambda)$ for the
same phalanx, coupled with the specification of degrees $p,q,r$ associated to the models $P,Q,R$ of the phalanx. A \emph{degree-maximal} tree on $\phP'$ is just a $(p,q,r)$-maximal tree on $\phP$.
\subsubsection{Fine structure}\label{sec:notation_fine_structure}

Let $M$ be a $k$-sound premouse.
See \cite[\S1.1.3]{premouse_inheriting}
for notation such as
$\Hull^M_{k+1}(X)$ (the $\rSigma_{k+1}$-hull of $M$ from parameters in $X\sub\core_0(M)$), $\cHull_{k+1}^M(X)$
(the transitive collapse variant) and the corresponding theory $\Th_{\rSigma_{k+1}}^M(X)$, and also $\pvec_{k+1}=(p_{k+1},\ldots,p_1)$.
Let  $q\in[\rho_k^M]^{<\om}$.
The \emph{solidity witnesses} for $(M,q)$
are the hulls
$H_{q_i}=\cHull_{k+1}^M(q_i\cup\{q\rest i,\pvec_k^M\})$,
or essentially equivalently, the theories
$T_{q_i}=\Th_{\rSigma_{k+1}}^M(q_i\cup\{q\rest i,\pvec_k^M\})$,
where $i<\lh(q)$. We say that $(M,q)$ is \emph{$(k+1)$-solid}
iff $H_{q_i}\in M$ (equivalently, $T_{q_i}\in M$)
for each $i<\lh(q)$.
And $M$ is
\emph{$(k+1)$-solid}
iff $(M,p_{k+1}^M)$ is $(k+1)$-solid.
We say that $M$ is \emph{$(k+1)$-universal}
iff
\[
\pow(\kappa)\cap M\sub \cHull_{k+1}^M(\rho_{k+1}^M\cup\{\pvec_{k+1}^M\}).\]
Note here that we follow \cite{imlc}, not \cite{outline},
in our definition of \emph{$(k+1)$-solid},
in that we do not build $(k+1)$-universality into it.

Whenever we refer to terms and their interpretations (in a fine structural context), we mean a pair $(k+1,\varphi)$ such that $k<\om$, $\varphi$ is an $\rSigma_{k+1}$ formula, and we interpret $\varphi$ as in
\cite[Definition 5.2]{V=HODX_pub},
where $k$ is as there. We say that $k+1$ is the \emph{degree} of this term. If $k,\varphi,R,n,q$ are as in \cite[Definition 5.2]{V=HODX_pub},
we define the partial function $\mathrm{m}\tau^R_{\varphi,q}:_{\mathrm{part}}\core_0(R)^n\to\core_0(R)$ as there.
We will, moreover, always use $\pvec_k^R$ for the parameter $q$.
Given an $k$-sound premouse $R$, $x\in\core_0(R)$ and a term $\tau=(k+1,\varphi)$ of  $n>0$ variables (therefore $k,\varphi,R,n,q=\pvec_k^R$ are as in \cite[Definition 5.2]{V=HODX_pub}), we write $f^M_{\tau,x}$ for the partial function $f^M_{\tau,x}:_{\mathrm{part}}\core_0(R)^{n-1}\to\core_0(R)$ where $f^R_{\tau,x}(\vec{y})=\mathrm{m}\tau^R_{\varphi,\pvec_k^R}(x,\vec{y})$.

The following is just the variant
of \cite[Theorem 5.1]{outline}
with the iterability hypothesis
being only normal iterability,
and (as throughout the paper)
allowing superstrong extenders in $\es_+^H$ and $\es_+^M$:

\begin{fact}[Condensation for $\om$-sound mice]\label{fact:om_condensation} Let $H,M$ be $(\om,\om_1+1)$-iterable $\om$-sound premice. Let $\pi:H\to M$ be elementary with $\crit(\pi)=\rho$ where $\rho=\rho_\om^H$.
Then either
\begin{enumerate}[label=(\alph*)]\item $H\pins M$, or
\item $M|\rho$ is active and $H\pins \Ult(M|\rho,F^{M|\rho})$.
\end{enumerate}
\end{fact}
\begin{proof}
Note that since $\rho=\rho_\om^H=\crit(\pi)$
and by elementarity, we have $\rho<\rho_\om^M$, so $H\in M$.
So Fact \ref{fact:condensation} part \ref{item:condensation_from_normal_H_in_M} applies,
and gives the desired conclusion,
since clauses \ref{item:M|rho_passive,M|rho^+_active_type_1,H=Ult_k(Q)}
and \ref{item:k=0_and_H,M,M|rho_type_2_and_R=Ult(M|rho,F)_F_type_1} of Theorem \ref{thm:condensation} part \ref{item:condensation_H_in_M}
both require that $H$ is non-sound.
\end{proof}

\subsubsection{Extenders and ultrapowers}\label{sec:notation_extenders_and_ultrapowers}

See \cite[\S1.1.4]{premouse_inheriting}.
Let $E$ be a short extender
over a  pre-ISC-premouse $M$, $\kappa=\crit(E)$ and $U=\Ult(M,E)$.
We say  $E$ is  \emph{weakly amenable} to $M$ if
$\pow(\kappa)\cap U=\pow(\kappa)\cap M$.
Note that if all proper segments of $M||\kappa^{+M}$ satisfy the conclusions
of condensation for $\om$-sound mice
(Fact \ref{fact:om_condensation}), then
weak amenability implies
$U|\kappa^{+U}
= M||\kappa^{+M}$.
We will have such condensation available when we form such
ultrapowers, and we may use
the stronger agreement implicitly at times. Let $x\in U$.
 A set $X$ of ordinals \emph{$E$-generates} $x$
 (or just \emph{generates} $x$, if $E$ is determined by context)
 if there is $f\in M$ and $a\in[X]^{<\om}$ such that
 $x=[a,f]^M_E$. We say $X$ \emph{generates} $E$
 iff every $x\in U$ is generated by $X$.
And $E$ is \emph{finitely generated} or is a \emph{measure}
 if there is a finite set $X$ generating $E$.

We write $\rho_{(-1)}^M=\OR^M$,
and say $M$ is
$(-1)$-sound. We write
$\Ult_{(-1)}(M,E)=\Ult(M,E)$ (formed
using functions in $M$, and without squashing), and $i^{M,(-1)}_E=i^M_E$. Note that if
$M$ is a type 3 premouse, we have
$\rho_0^M<\rho_{(-1)}^M$.

For a sufficiently elementary embedding $j$, $E_j$ is the extender derived from $j$.

\subsubsection{Embeddings}\label{sec:notation_embeddings}

See \cite[\S1.1.5]{premouse_inheriting},
in particular for the terminology \emph{c-preserving} (\emph{cardinal-preserving}),
\emph{$\rho_n$-preserving}, \emph{$p_n$-preserving},
\emph{$\pvec_n$-preserving}, and
the notation $i_{MN}:M\to N$ for a context determined
map $i:M\to N$. One difference with
 \cite[\S1.1.5]{premouse_inheriting}
 is that, given a $\Sigma_0$-elementary embedding $\pi:M\to N$ between  premice $M,N$, we write $\Shift(\pi)$
 for the embedding denoted $\psi_\pi$
there. That is, if $M,N$ are active then
 \[\Shift(\pi):\Ult(M,F^M)\to\Ult(N,F^N) \]  is the embedding induced by $\pi$ via the Shift Lemma, whereas if $M,N$ are passive then $\Shift(\pi)=\pi$.
 We sometimes write $\widehat{\pi}=\Shift(\pi)$ to save space. So if
$M,N$ are type 1/2 premice, then $M\sub\dom(\pi)$ and $\pi=\Shift(\pi)\rest M$, but if $M,N$ are type 3 then,
as is the usual convention, fine structural maps $\pi:M\to N$ literally have domain $M^\sq=\core_0(M)$,
whereas $M\sub\dom(\Shift(\pi))$,
so $\Shift(\pi)$ acts directly on the elements of $M\cut M^\sq$ (such as $\nu^M=\nu(F^M)$), whereas $\pi$ does not.
 In case $M,N$ are type 3, $\pi$ is \emph{$\nu$-low} iff $\Shift(\pi)(\nu^M)<\nu^N$,
\emph{$\nu$-preserving} iff $\Shift(\pi)(\nu^M)=\nu^N$, and \emph{$\nu$-high} iff $\Shift(\pi)(\nu^M)>\nu^N$. In case $M,N$ are passive or type 1/2, we say $\pi$ is \emph{$\nu$-preserving}.

 The \emph{$n$-lifting}
embeddings (a weakening of the near $n$-embeddings
of \cite[Remark 4.3]{outline}), are introduced in
\cite[Definition 2.1]{premouse_inheriting} and their basic properties developed in \cite[\S2]{premouse_inheriting}.

Let $M,N$ be active
pre-ISC-premice and $\pi:M\to N$.\footnote{When we consider embeddings $\pi:M\to N$ of active pre-ISC-premice
at ``degree $-1$'', we have $\dom(\pi)=M$.
Note that this differs from the conventions
when $M,N$ are active type 3 premice,
where fine structural maps $\pi:M\to N$
are literally of the form $\pi:M^\sq\to N^\sq$.} We say $\pi$ is a \emph{$(-1)$-embedding} iff
 $\pi$ is $\Sigma_1$-elementary  and cofinal in $\rho_{(-1)}^N=\OR^N$. We say $\pi$ is a \emph{near $(-1)$-embedding} iff it is $\Sigma_1$-elementary, and
\emph{$(-1)$-lifting} iff $\pi$ is  $\Sigma_0$-elementary. (Note that, for example, a near $m$-embedding between type 3
premice induces an
$\rSigma_{m+2}$-elementary embedding between the unsquashed structures.
Likewise, a near
$(-1)$-embedding is $\rSigma_1$-elementary between the unsquashed structures.
Degree $(-1)$ sits immediately below degree $0$  in the squashed hierarchy.)

\subsubsection{Iteration trees and iterability}\label{sec:notation_iteration_trees}

See \cite[\S1.1.6]{premouse_inheriting}
and \cite[\S1.1.5]{iter_for_stacks}.
Let $\Tt$ be an iteration tree.
We write $\lh(\Tt)$ for the length  of $\Tt$,
 $\lh(\Tt)^-=\{\alpha\bigm|\alpha+1<\lh(\Tt)\}$, and
$<^\Tt$ for the tree order on $\lh(\Tt)$. We write $(M^\Tt_\alpha,\deg^\Tt_\alpha)$ for the $\alpha$th model and  degree of $\Tt$, for $\alpha<\lh(\Tt)$.
Given $\alpha+1<\lh(\Tt)$, $\beta=\pred^\Tt(\alpha+1)$ denotes
the $<^\Tt$-predecessor of $\alpha+1$,
$M^{*\Tt}_{\alpha+1}$ is the model $N\ins M^\Tt_\beta$ such that $M^\Tt_{\alpha+1}=\Ult_d(N,E^\Tt_\alpha)$,
where $d=\deg^\Tt_{\alpha+1}$,
and $i^{*\Tt}_{\alpha+1}:M^{*\Tt}_{\alpha+1}\to M^\Tt_{\alpha+1}$ is the  ultrapower map. We write $\mathscr{D}^\Tt$ for the set of nodes at which $\Tt$ drops in model, so $\alpha+1\in\mathscr{D}^\Tt$ iff $M^{*\Tt}_{\alpha+1}\pins M^\Tt_\alpha$;
and $\mathscr{D}^\Tt_{\deg}$ is likewise,
but for drops in model or degree.
If $\gamma\leq^\Tt\delta$ then $[\gamma,\delta]^\Tt$, $(\gamma,\delta]^\Tt$, etc, are the corresponding $<^\Tt$-intervals.  If $\gamma\leq^\Tt\delta$ and $(\gamma,\delta]^\Tt\cap\dropset^\Tt=\emptyset$
then $i^\Tt_{\gamma\delta}:M^\Tt_\gamma\to M^\Tt_\delta$ is the iteration map,
and if $\gamma=\alpha+1$
then $i^{*\Tt}_{\alpha+1,\delta}=i^\Tt_{\alpha+1,\delta}\com i^{*\Tt}_{\alpha+1}$.
If $\gamma<^\Tt\delta$ then $\succ^\Tt(\gamma,\delta)$ denotes the $<^\Tt$-successor of $\gamma$ in the interval $(\gamma,\delta]^\Tt$.
A tree of length $1$ is \emph{trivial} in that it uses no extenders. In the context of an iteration tree $\Tt$ of
successor length $\xi+1$, $\infty$ denotes $\xi$. Given such a tree, $b^\Tt$ denotes $[0,\infty]^\Tt$, and if $(0,\infty]^\Tt\cap\dropset^\Tt=\emptyset$ then $i^\Tt$ denotes $i^\Tt_{0\infty}$.
We say $\Tt$ is \emph{terminally-non-dropping} iff $\Tt$ has successor
length and $b^\Tt\cap\dropset^\Tt_{\deg}=\emptyset$; $\Tt$ is
\emph{terminally-non-model-dropping}
iff $\Tt$ has successor length and $b^\Tt\cap\dropset^\Tt=\emptyset$.
We will also use much of the foregoing terminology and slight variants thereof
for iteration trees on bicephali (see \S\ref{sec:bicephali} and \S\ref{sec:solidity}), without necessarily mentioning it explicitly.

Let $M$ be an $m$-sound premouse.
Recall that an iteration tree $\Tt$ on $M$ is \emph{$m$-maximal}
if (i) $\lh(E^\Tt_\alpha)\leq\lh(E^\Tt_\beta)$ for all
$\alpha<\beta$,
(ii) $(M^\Tt_0,\deg^\Tt_0)=(M,m)$,
(iii) for all $\alpha+1<\lh(\Tt)$, $\pred^\Tt(\alpha+1)$ is the least $\beta$ such that
$\crit(E^\Tt_\alpha)<\nu(E^\Tt_\beta)$, and
(iv)  for all such $\alpha+1,\beta$,
$(M^{*\Tt}_{\alpha+1},\deg^\Tt_{\alpha+1})$
is the largest $(N,n)\ins(M^\Tt_\beta,\deg^\Tt_\beta)$
such that $\Ult_n(N,E^\Tt_\alpha)$ is well-defined.
An $(m,\alpha)$-iteration strategy
is one for $m$-maximal trees of length $\leq\alpha$
(that is, player II wins if a tree of length $\alpha$ is produced).

For the definitions of \emph{$m$-maximal stacks}
see \cite[\S1.1.5]{iter_for_stacks},
and for the iteration game $\mathscr{G}_{\mathrm{fin}}(M,m,\omega_1+1)$
see \cite[Definition 1.1]{iter_for_stacks}.

\begin{rem}\label{rem:padding}
In comparison arguments, in which we have two trees $\Tt,\Uu$,
we will make the conventional use of padding, setting $E^\Tt_\alpha=\emptyset$
or $E^\Uu_\alpha=\emptyset$
at stages $\alpha$ of the comparison
at which we do not use an extender in $\Tt$ or $\Uu$ respectively. In some comparison arguments we will also have stages $\alpha$
at which $E^\Tt_\alpha=\emptyset=E^\Uu_\alpha$.
We will only explicitly define
the rules for determining
$\pred^\Tt(\alpha+1)$ for non-padded trees.  Consider a padded tree $\Tt$. If $E^\Tt_\alpha=\emptyset$ then we set $\pred^\Tt(\alpha+1)=\alpha$ (and $M^\Tt_{\alpha+1}=M^\Tt_\alpha$ etc).
If $E^\Tt_\alpha\neq\emptyset$,
we have
an interval $[\beta_0,\beta_1]$
of ordinals $\beta$ which might
serve as $\pred^\Tt(\alpha+1)$.
But here we will have  $E^\Tt_\beta=\emptyset$ for all $\beta\in[\beta_0,\beta_1)$,
and $M^\Tt_\beta=M^\Tt_{\beta_0}$
and $\deg^\Tt_\beta=\deg^\Tt_{\beta_0}$
for all $\beta\in[\beta_0,\beta_1]$.
So it does not actually matter
which $\beta\in[\beta_0,\beta_1]$
is used as $\pred^\Tt(\alpha+1)$.
But for specificity, we set $\beta=\beta_1$ (the unique ordinal $\beta'$ such that $E^\Tt_{\beta'}\neq\emptyset$ and the rules for non-padded trees are satisfied at $\beta'$).\end{rem}

\section{Proof outline}\label{sec:plan}

In the classical proof that sufficiently iterable countable premice $M$ possess the various standard fine structural properties,  iterability with respect to transfinite stacks of normal trees is used to show that there is a normal strategy for $M$ with the weak Dodd-Jensen property (cf., for example,
\cite[\S4.3]{outline}). Such a strategy is then used for
various comparison arguments, and the weak Dodd-Jensen property is of central importance to the analysis of those comparisons.

Since we assume only normal iterability,
we must prove the fine structural facts
without relying on  weak Dodd-Jensen (cf.~Footnote \ref{ftn:wDJ_necessary}).
 In this section we will give an outline of the main arguments in the paper, highlighting the methods for getting around the lack of weak Dodd-Jensen.

As discussed in \S\ref{sec:intro}, condensation (Theorem \ref{thm:condensation}) was almost already dealt with in \cite[Theorem 5.2]{premouse_inheriting}, and it will follow from that theorem and  solidity (Theorem \ref{thm:solidity}). The proofs of solidity (\ref{thm:solidity})
 and projectum-finite generation (Theorem \ref{thm:finite_gen_hull}) which we give here are very related to the methods used in \cite{premouse_inheriting}. So we won't say anything further directly regarding the details of
the proof of condensation.

\subsection{Solidity and universality}\label{sec:outline_solidity}
We begin  with the most central results of the paper, solidity and universality, primarily focusing on solidity.
Let $k<\om$ and let $M$ be a $k$-sound, $(k,\om_1+1)$-iterable premouse.
Say we want to prove that $M$ is $(k+1)$-solid. Let $\gamma\in p_{k+1}^M$ and let
\[ H=\cHull^M_{k+1}(\gamma\cup\{p_{k+1}^M\cut\{\gamma\},\pvec_k^M\}).\]
We want to see  $H\in M$. We may assume that $M$ is countable,
and that for $\gamma'\in p_{k+1}^M$
with $\gamma'>\gamma$,
the corresponding hull $H'$ is in $M$.
For the purposes of this outline, let us  also assume that $\gamma$ is an $M$-cardinal.

In the classical proof that $H\in M$ under these hypotheses, assuming also that there is an $(k,\om_1+1)$-iteration strategy $\Sigma$ for $M$ with the weak Dodd-Jensen property, the phalanx $\phP=((M,{<\gamma}),H,\gamma)$ is compared with $M$, producing iteration trees $\Uu$ on $\phP$ and $\Tt$ of $M$. The first extender $E$ used in $\Uu$ (if there is one) is taken from $\es_+^H$, with $\gamma<\lh(E)$,
and extenders $F$ with $\crit(F)<\gamma$ apply to $M$,
whereas if $\crit(F)\geq\gamma$
then $F$ applies to $H$ or a later model of $\Uu$, with tree order determined by the rules of normality. Suppose that $\Uu$ is formed by lifting to
a tree $\Uu'$ on $M$ via  $\Sigma$,
and also that $\Tt$
is via $\Sigma$.
Then
one shows that
 the comparison
terminates with  main branch $b^\Uu$
of $\Uu$ above $H$;
 let $M^\Uu_\infty$ be the final iterate.
 One shows that there is no drop in model or degree along the main branch $b^\Uu$ of $\Uu$,
 so we have an iteration map
 $i^\Uu:H\to M^\Uu_\infty$,
 which is a $k$-embedding,
 and $\crit(i^\Uu)\geq\gamma$, because of the rule that
 extenders $E$ with $\crit(E)<\gamma$ apply to $M$.
  (Possibly $\Uu$ is trivial, in which case $M^\Uu_\infty=H$
  and $i^\Uu$ is the identity.)
 One also shows that either $M^\Uu_\infty\pins M^\Tt_\infty$  or $b^\Tt$ drops in model and $M^\Uu_\infty=M^\Tt_\infty$. These  facts (together with fine structural analysis) lead to the conclusion that $H\in M$.
But the arguments involved (showing that $b^\Uu$ is above $H$, etc),
make significant use of the weak Dodd-Jensen property.

The first key to working without weak Dodd-Jensen is that we prove solidity and universality  by
induction
on a certain mouse order $<^{\para}_k$. (The top-down induction on the elements of $p_{k+1}^M$ made implicit earlier is a sub-induction of this global one.) This  order is related to  Dodd-Jensen, and the induction
provides a partial substitute for weak Dodd-Jensen, one which is provable from
normal iterability alone.
Given $k$-sound premice $P,Q$,
we set $P<^{\para}_kQ$ iff there is a
finite $k$-maximal stack $\vec{\Tt}=\left<\Tt_i\right>_{i<n}$ on $Q$,
consisting
of finite trees $\Tt_i$, non-dropping in model and degree
on their main branches,
and a $k$-embedding
$\pi:P\to M^{\vec{\Tt}}_\infty$ such that
$\pi(p_{k+1}^P)<i^{\vec{\Tt}}(p_{k+1}^Q)$.
This order is wellfounded within the $(k,\om_1+1)$-iterable
premice, so the induction makes sense (Lemma
\ref{lem:param_order_wfd}).

Since we are working by induction on $<^{\para}_k$,
for our particular $M$ introduced above,
we may assume that  all $P<^{\para}_kM$ are $(k+1)$-solid and $(k+1)$-universal. This has useful consequences
analogous to Dodd-Jensen, and  is  useful in showing that $M$ is $(k+1)$-solid and $(k+1)$-universal.
In particular, we will see that $H<^{\para}_kM$,  and so
$H$ is
$(k+1)$-solid and $(k+1)$-universal.
In certain cases, we will be able to use this to deduce that $H\in M$.
This deduction, however, seems to require the application of
Lemma \ref{lem:finite_gen_hull}
to premice $P<^{\para}_kM$;
this lemma is just like the projectum-finite generation
theorem \ref{thm:finite_gen_hull},
except that it has extra solidity and universality hypotheses
(which we know hold of $P<^{\para}_kM$).
\footnote{
The proof of  Lemma \ref{lem:finite_gen_hull}
relies on  Dodd-soundness, so this has a role in our proof of solidity and universality, albeit indirect.}
\footnote{Actually, it suffices
to apply Corollary \ref{cor:def_over_core_from_extra_hypos}, which is a less fine consequence of
 Lemma \ref{lem:finite_gen_hull}.}

The inductive hypothesis is also
useful in a second manner:
it has the consequence that degree $k$ iteration maps on $M$
preserve $p_{k+1}$ (Lemma \ref{lem:p-pres_for_premouse_from_para_order}). This preservation
fact is also used in establishing that $M$ is $(k+1)$-solid (whereas
 usually $p_{k+1}$-preservation
is taken as a consequence of $(k+1)$-solidity).
It helps ensure that certain comparison arguments  we perform (to be discussed below)
end in a useful fashion, and also helps in their analysis.

The fact that $<^{\para}_k$ is wellfounded within  $(k,\om_1+1)$-iterable
premice follows easily from
\cite[Theorem 9.6]{iter_for_stacks}. The main point of \cite{iter_for_stacks}
was the construction of an iteration strategy for
stacks
of normal trees from a normal iteration strategy which has (a certain kind of) condensation.
Here, we do not have any such strategy condensation assumption, but (as shown
in
\cite{iter_for_stacks}) such an assumption  is unnecessary here, because
we can prove that $<^{\para}_k$ is wellfounded by considering
stacks of length $\om$ consisting of normal trees of finite length
(and condensation is only relevant for stacks incorporating  infinite trees, where the
strategy's branch choices come into play).\footnote{A length $\om$ stack
of finite trees lifts, via the methods of \cite{iter_for_stacks},
into an infinite tree, of length some countable ordinal,
but no strategy condensation is needed for that calculation.}

Using the inductive hypothesis (with $<^{\para}_k$) and some fine structural calculations,
making use of condensation as in Facts \ref{fact:condensation}
and \ref{fact:om_condensation},
we will reduce to the case that \[ M=\Hull_{k+1}^M(\rho_{k+1}^M\cup\{\pvec_{k+1}^M\}), \]
along with some further easy restrictions which we will not detail here. And as mentioned above, the induction hypothesis leads to the fact that
$H$ is $(k+1)$-solid and $(k+1)$-universal.

After these initial steps, the second key to working without weak Dodd-Jensen enters the picture,
which is to make use of  \emph{\tu{(}generalized\tu{)} bicephalus comparisons}
in place of phalanx comparisons. They are motivated by both the classical bicephalus comparison arguments such as those used to establish the ``uniqueness of the next extender'' results in \cite[\S9]{fsit} (see Remark \ref{rem:traditional_bicephali}), and also by the classical phalanx comparison arguments. Recall that the bicephali considered in \cite{fsit}   have (or can be viewed as having) form
$(N_0,N_1)$ where $N_0$ and $N_1$ are active premice such that $N_0^\passive=N_1^\passive$, and in particular $N_0,N_1$ have the same universe. The classical bicephalus argument involves a comparison
of   $(N_0,N_1)$ with itself.
In our present context,
letting $\pi:H\to M$ be the uncollapse map,
we will consider $B=(\gamma,\gamma,\pi,H,M)$ as a  bicephalus,
consisting of two structures $H,M$, and some auxiliary information given by the ``exchange ordinal'' $\gamma$
and the map $\pi:H\to M$.
(The repetition of $\gamma$ is just for consistency with the more general case dealt with in \S\ref{sec:solidity}.)
Note that $H$ and $M$ have distinct universes, so $(H,M)$ is certainly not a bicephalus in the sense of \cite{fsit}.
We will form a comparison of $B$
with $M$, forming trees $\Tt$ on $B$ and $\Uu$ on $M$.
The first extender $E$ used in $\Tt$ will again have $\gamma<\lh(E)$.
If we use an extender $E^\Tt_\alpha$ in $\Tt$
with $\crit(E^\Tt_\alpha)<\gamma$ then we will have $\pred^\Tt(\alpha+1)=0$ and
$E^\Tt_\alpha$ will apply  to the entire bicephalus $B$, not just $M$ (nor just $H$). This will produce an ultrapower
\[ B^\Tt_{\alpha+1}=\Ult(B,E^\Tt_\alpha)=(\gamma',\gamma',\pi',H',M'),\]
with similar properties to those of $B$. Here $M'=\Ult_k(M,E)$,
$\gamma'=i^{M,k}_E(\gamma)$,
and $H',\pi'$ come from the hull of $M'$ at $\gamma'$, analogous to the manner in which $H,\pi$ come from the hull of $M$ at $\gamma$. See \S\ref{sec:solidity} for the precise definition.
Similarly, if we have a node $\beta<\lh(\Tt)$ and $B^\Tt_\beta=(\gamma^\Tt_\beta,\gamma^\Tt_\beta,\pi^\Tt_\beta,H^\Tt_\beta,M^\Tt_\beta)$
is a bicephalus, $\pred^\Tt(\xi+1)=\beta$, $\crit(E^\Tt_\xi)<\gamma^\Tt_\beta$
and $E^\Tt_\xi$ is total over $M^\Tt_\beta$,
then $B^\Tt_{\xi+1}$ will be the bicephalus $\Ult(B^\Tt_{\beta},E^\Tt_\xi)$.
Other  extenders will apply to just a single model, forming an ultrapower in the usual fashion.
For example, if $\gamma\leq\crit(E^\Tt_0)$, then $E^\Tt_0$ will apply to a segment of either $H$ or $M$, depending on the details of the situation.
We will show that the comparison terminates, by basically the usual argument.
Having a bicephalus $B^\Tt_\beta$
indexed at $\beta$ helps to keep the comparison going at stage $\beta$,
analogously to comparison arguments with classical bicephali. This helps to prevent the comparison from terminating for unuseful reasons, and substitutes partially for the fact, in a phalanx argument using weak Dodd-Jensen, that for the tree $\Tt'$ on the phalanx $\phP$, $b^{\Tt'}$ does not end up above $M$. So the comparison must terminate for ``useful'' reasons. But on the other hand,
assuming that $H\notin M$,
we will show that the iteration maps connecting $B$ to later bicephali $B^\Tt_\beta$ will preserve fine structure nicely enough that
the comparison \emph{cannot} terminate for such ``useful'' reasons. This contradiction will give that $H\in M$. The material in \S\ref{sec:fine_structure_prelim}
will help establish this preservation of fine structure.

This completes our discussion of
the plan for proving
solidity and universality.

\begin{rem}\label{rem:history_gen_biceph}
 Prior to the development of the general ideas for the proof of solidity and universality we give (in 2014--15), forms of generalized bicephali had also been used by
 Schindler \cite[\S5, Definition 5.3]{cmfali} (on the core model), Woodin (on fine structure for mice with long extenders), and the author
 (for condensation \cite[Theorem 5.2]{premouse_inheriting} and the premouse inheriting strong cardinals \cite[\S6]{premouse_inheriting}).
 Around  the same time as the present work,
Steel used related methods in
fine structural proofs for strategy mice
\cite{ACPFMP} (when ``lifting a phalanx'' in comparison arguments involving iteration to a background construction).
\end{rem}

\subsection{Measures, Dodd-soundness, projectum-finite generation}\label{sec:meas_Dodd_prof_fin_plan}

We now give an outline of the kinds of arguments to be used in the main results in \S\ref{sec:mim},
\S\ref{sec:Dodd_proof}
and \S\ref{sec:finite_gen_hull}.
Each of these rely on a common kind of analysis of a comparison, but  in somewhat different contexts.

As a representative  case, which is also the simplest, let us focus on the setup for the argument for Theorem \ref{thm:measures_in_mice} in \S\ref{sec:mim}.
So let $M$ be a passive mouse and $\mu\in M$
be such that $M\sats$ ``$\mu$ is a countably complete ultrafilter''.
We want to see that there is a $0$-maximal tree $\Tt$ on $M$
with $M^\Tt_\infty=\Ult_0(M,\mu)$, $i^\Tt$ exists and $i^\Tt=i^{M}_\mu$,
and moreover, $\Tt$ has finite length and every extender used in $\Tt$ is finitely generated,
meaning that it is equivalent to its restriction to some finite set of generators.

Let $U=\Ult_0(M,\mu)$.
For the present outline, let us make some simplifying assumptions. Suppose  that $U$ is wellfounded and iterable. Compare $M$ with $U$,
with trees  $\Uu$ on $U$ and $\Tt$ on $M$. Suppose that we get $M^\Uu_\infty=Q$ where $Q=M^\Tt_\infty$, the final branches $b^\Uu,b^\Tt$ are non-dropping
and $k\com i^M_\mu=j$,
where $k=i^\Uu$ and $j=i^\Tt$. See Figure \ref{fgr:sketch} (some of the components of which are yet to be defined).
\begin{figure}
\centering
\begin{tikzpicture}
 [mymatrix/.style={
    matrix of math nodes,
    row sep=0.35cm,
    column sep=0.4cm}]
   \matrix(m)[mymatrix]{
  U&          {} &       {}& {} & Q& \\
   {} & {} \\
   {} & {} & \bar{Q}\\
   \\
 M\\};
 \path[->,font=\scriptsize]
(m-5-1) edge[bend right] node[below] {$\ \ \ \ j$} (m-1-5)
(m-5-1) edge node[left] {$i^M_\mu$} (m-1-1)
(m-1-1)edge node[above] {$\sigma$} (m-3-3)
(m-5-1)edge node[above] {$\bar{j}$} (m-3-3)
(m-3-3)edge node[above] {$\somevarpi$} (m-1-5)
(m-1-1) edge node[above] {$k$} (m-1-5);
\end{tikzpicture}
\caption{The diagram commutes, where $k=i^\Uu$, $j=i^\Tt$,  $\bar{j}=i^{\bar{\Tt}}$.}
\label{fgr:sketch}
\end{figure}
We want to see then that $\Uu$ is trivial;
 this will give that $U=Q$ and $k=\id$ and $i^M_\mu=j$. We also want to see  $\Tt$ is finite and every extender used in $\Tt$ is finitely generated.

 Now since $k\com i^M_\mu=j$, we have $\crit(j)=\min(\crit(i^M_\mu),\crit(k))\leq\crit(k)$.
Because $\Tt,\Uu$ arise from comparison and by considerations of compatibility of extenders, it follows that $\crit(j)=\crit(i^M_\mu)<\crit(k)$. Let $x\in U$ be the seed of $\mu$, represented by the identity function.
The expectation is that $\Tt$ is the minimal tree which ``captures $x$''.
Using a routine kind of finite support computation, to be discussed in \S\ref{sec:sub_simple_embeddings}, we can find a tree $\bar{\Tt}$ of finite length with last model $\bar{Q}$, such that $b^{\bar{\Tt}}$
is non-dropping, and find a (suitably elementary) map $\somevarpi:\bar{Q}\to Q$
such that $\somevarpi\com \bar{j}=j$
where $\bar{j}=i^{\bar{\Tt}}$,
and with $k(x)\in\rg(\somevarpi)$.
Because of this and since $U=\{i^M_\mu(f)(x)\bigm|f\in M\}$,
we can define a map $\sigma:U\to \bar{Q}$
with $\sigma\com i^M_\mu=\bar{j}$ and $\somevarpi\com\sigma=k$.
By the commutativity, we have
$\crit(\bar{j})=\crit(j)<\crit(k)$.

Suppose for simplicity that the first extender used along $b^{\bar{\Tt}}$ is a normal measure $E^{\bar{\Tt}}_{\bar{\alpha}}$. By the commutativity, $E^{\bar{\Tt}}_{\bar{\alpha}}$ must be the normal measure derived from the first extender $E^\Tt_\alpha$ used along $b^\Tt$. And since $k$ is an iteration map with $\crit(\bar{j})<\crit(k)=\min(\crit(\sigma),\crit(\somevarpi))$,
we get that $U$, $\bar{Q}$ and $Q$
agree below $\crit(k)$,
and since $E^{\bar{\Tt}}_{\bar{\alpha}}\notin \bar{Q}$, therefore
 $E^{\bar{\Tt}}_{\bar{\alpha}}\notin Q$. Considering the ISC, it follows that $E^{\bar{\Tt}}_{\bar{\alpha}}=E^\Tt_\alpha$ is a normal measure. So we have made progress toward  our goals: the first extender used along $b^\Tt$  is finitely generated, and its generator is contained in the range of $k$. (If we get that all generators of all extenders used along $b^\Tt$ and in the range of $k$,
 then $U=Q$, so $\Uu$ is trivial.)

  The last paragraph is too much of a simplification, however, as will not be able to arrange that $E^{\bar{\Tt}}_{\bar{\alpha}}$  is a normal measure in general (neither the later extenders used along $b^{\bar{\Tt}}$). But  we will be able to arrange that all extenders used along $b^{\bar{\Tt}}$ (in fact in $\bar{\Tt}$ at all, in this case) are finitely generated, via a somewhat more careful finite support computation,
 detailed in \S\ref{sec:capturing_elements}. But this is not enough for the preceding argument to work, since
 we won't yet be able to rule out the possibility that $k\neq\id$ and $\crit(k)\leq\gamma$ for some generator $\gamma$ of $E^{\bar{\Tt}}_{\bar{\alpha}}$.
 In order to deal with this, we will need to be able to analyze the movement of certain key generators under the relevant kinds of embeddings (such as iteration embeddings).
 For this, we will need to know that all extenders in $\es^M$
 are Dodd-sound; the Dodd-solidity of the extenders will allow us to track images of Dodd parameters from Dodd-sound extenders (see \S\ref{sec:Dodd_prelim}).
 Using this, we will end up being able to show that $E^{\bar{\Tt}}_{\bar{\alpha}}=E^\Tt_\alpha$
 and this extender is generated by a finite set
 which is in the range of $k$.
 (This can in general take multiple steps of analysis.
 It might be that $E^{\bar{\Tt}}_{\bar{\alpha}}$ itself is non-Dodd-sound, but it will equivalent to the composition of  a finite sequence of Dodd-sound extenders, and each step of analysis will correspond to one of those Dodd-sound extenders.)
 This argument can be iterated all the way along $b^{\bar{\Tt}}$,
 and this will eventually give that $U=\bar{Q}=Q$ and $\Tt$ is as desired.

 Along with Theorem \ref{thm:measures_in_mice}, we will use variants of the argument outlined above to prove
 Theorems \ref{thm:super-Dodd-soundness}
 (super-Dodd soundness)
 and \ref{thm:finite_gen_hull}
 (projectum-finite generation),
 and also Theorem  \ref{tm:if_U_not_iterate_of_core}.
 In \S\ref{sec:fine_structure_prelim} and \S\S\ref{sec:bicephali}--\ref{sec:capturing_elements}
we cover in detail
the tools needed to complete the sketch outlined above,
and the background for their adaptations to the other theorems just mentioned.
(The results of the careful finite support computation in \S\ref{sec:capturing_elements} will
 only be used directly in \S\ref{sec:mim}.
 But this key material will be adapted to and applied in the other contexts
directly within \S\S\ref{sec:Dodd_proof},\ref{sec:finite_gen_hull} themselves.)
The proof of Theorem \ref{thm:finite_gen_hull}
in particular will involve a kind of bicephalus comparison (but the bicephali here will be different to those considered in the proof of solidity mentioned earlier). For this reason,
 much of the material in \S\S\ref{sec:bicephali}--\ref{sec:Dodd_prelim} deals with these bicephali. In order to motivate the role of bicephali within that material, let us also say a little about the  proof of  \ref{thm:finite_gen_hull}.

 Let $M$ satisfy the assumptions of Theorem \ref{thm:finite_gen_hull}
 and $C=\core_{m+1}(M)$. Supposing also that $M$ is $(m+1)$-universal
 and $C$ is $(m+1)$-solid
 (hence $C$ is $(m+1)$-sound),
 we will use a comparison argument
 to prove the conclusions of the lemma, comparing a certain kind of bicephalus with itself.
 Letting $\rho=\rho_{m+1}^M=\rho_{m+1}^C$, the bicephalus will be $B=(\rho,C,M)$,
 consisting of the two models $C$ and $M$, and an ``exchange ordinal'' $\rho$.
 In the comparison,
 we will form two iteration trees $\Tt,\Uu$,  both on $B$. At nodes $\alpha$ of $\Tt$, we will have either have a bicephalus $B^\Tt_\alpha=(\rho^\Tt_\alpha,C^\Tt_\alpha,M^\Tt_\alpha)$,
 with fine structural properties  much like those of $B$, or a single premouse $C^\Tt_\alpha$, or a single premouse $M^\Tt_\alpha$.
Much like in the solidity argument,
if we have a bicephalus $B^\Tt_\alpha$ and
 $\pred^\Tt(\beta+1)=\alpha$
 and $\crit(E^\Tt_\beta)<\rho^\Tt_\alpha$ and $E^\Tt_\beta$ is $C^\Tt_\alpha$-total (equivalently, $M^\Tt_\alpha$-total), then $B^\Tt_{\beta+1}$ will be a bicephalus $\Ult(B^\Tt_\alpha,E^\Tt_\beta)$.
 Other extenders will just apply to a single model.
 We will arrange the comparison such that it terminates in the following fashion: there is some node $\alpha$ of $\Uu$ or of $\Tt$, but let us assume it is $\Uu$, such that:
 \begin{enumerate}[label=(\roman*)]\item there is a bicephalus $B^\Uu_\alpha=(\rho^\Uu_\alpha,C^\Uu_\alpha,M^\Uu_\alpha)$ indexed at $\alpha$ in $\Uu$,
 \item\label{item:C^Tt_alpha_is_a_seg_of_model_of_Uu} $C^\Uu_\alpha$ is an initial segment of one of the models of $\Tt$ indexed at stage $\alpha$
 (there might be one or two of them),
 \item $\Uu\rest[\alpha,\infty]$ is trivial (meaning that it uses no extenders),
 and
 \item\label{item:structure_of_tail_of_Uu} $\Tt\rest[\alpha,\infty]$
 is equivalent to  an $m$-maximal ``tree'' $\Tt'$ on $C^\Uu_\alpha$\footnote{\label{ftn:wrinkles_1}Although $\Tt$ has wellfounded models, at this stage of the proof, we will have to allow the possibility that $\Tt'$  has illfounded models. (Note $\Tt'$ is  on $C^\Uu_\alpha$,
 as opposed to being a normal continuation of $\Tt\rest(\alpha+1)$, and we don't seem to know here that $C^\Uu_\alpha$ is itself iterable.) But that illfoundedness will be strictly above the part of the model relevant to the comparison (that is, least disagreements will always occur in the wellfounded part, and the last model of $\Tt'$ will just be $M^\Uu_\alpha$, which is wellfounded). Actually, instead of working with $\Tt'$, we will work with a (real) tree on a certain phalanx, avoiding any possibility of illfoundedness.} (recall point \ref{item:C^Tt_alpha_is_a_seg_of_model_of_Uu} in this connection), has finite length
 (so $\infty=\alpha+n$ for some $n<\om$),  uses only finitely generated extenders, and has last model  $M^\Uu_\alpha$.
 \end{enumerate}
 Point \ref{item:structure_of_tail_of_Uu} says that the theorem holds with respect to $C^\Uu_\alpha,M^\Uu_\alpha$
 in place of $C,M$ (except for the wrinkle mentioned in Footnote \ref{ftn:wrinkles_1}).
 From here, we show that the picture reflects back to $C,M$ themselves,
 proving the theorem (under the extra hypotheses of universality and solidity).
 \footnote{The possibility of illfoundedness mentioned in Footnote \ref{ftn:wrinkles_1}
 has no bearing on the tree on $C$ which we get, since $C$ is iterable, and this tree is only finite in length.} The details on how the comparison is formed, and the reflection back to $C,M$, will be discussed in \S\ref{sec:proj_fin_gen_theorem},
 and we won't say anything more about those things here.
  The analysis of the tail end $(\Tt,\Uu)\rest[\alpha,\infty]$ of the comparison
 is very much like the analysis of the comparison in the proof of Theorem \ref{thm:measures_in_mice}, which we outlined above.
 So in \S\S\ref{sec:simple_embeddings},\ref{sec:Dodd_prelim} we develop  tools needed for that argument
 both for iteration trees on premice
 and on (this kind of) bicephali,
 and
   within  \S\S\ref{sec:Dodd_proof},\ref{sec:finite_gen_hull}, we develop variants of the more careful finite support argument from \S\ref{sec:capturing_elements}.
The bicephali themselves and iteration trees on them are first described in
 \S\ref{sec:bicephali}.

\subsection{Structure of main results}

The logical structure of the proof of the central fine structural results is as follows:
\begin{enumerate}[label=--]
 \item In
\S\ref{sec:fine_structure_prelim}
and \S\S\ref{sec:bicephali}--\ref{sec:capturing_elements}
 we lay out basic definitions and prove various lemmas.
 Parts of this material are quite standard or a small variant of standard theory,
 but its inclusion  will hopefully make for  a better reading experience.  The material
 in \S\ref{sec:fine_structure_prelim}
 will be used throughout,
 the material in \S\S \ref{sec:bicephali}--\ref{sec:capturing_elements} used in \S\S\ref{sec:mim}--\ref{sec:finite_gen_hull}.
 \item In \S\ref{sec:mim},
 we prove Lemma \ref{lem:measures_in_mice},
 which is equivalent to Theorem \ref{thm:measures_in_mice} on measures in mice, except that it has the additional hypothesis that  all proper segments of $M$ are Dodd-sound.
 This lemma is not needed for the main fine structural results;
 it is only used to deduce the actual Theorem \ref{thm:measures_in_mice}
 later in \S\ref{sec:Dodd_proof},
 and Theorem \ref{thm:measures_in_mice} is itself not needed. However,
  parts of the proof of Lemma \ref{lem:measures_in_mice}
  are referred to in \S\S\ref{sec:Dodd_proof},\ref{sec:finite_gen_hull}.
 \item In \S\ref{sec:Dodd_proof}, we prove Lemma \ref{lem:super-Dodd-soundness},
 which is equivalent to the super-Dodd-soundness Theorem
\ref{thm:super-Dodd-soundness}, except that it has the additional hypothesis
that the 1st core $\core_1(M)$ is $1$-sound (this is not immediate
without $1$-universality and $1$-solidity).
\item In \S\ref{sec:finite_gen_hull}, we prove Lemma \ref{lem:finite_gen_hull},
which is equivalent to  projectum-finite generation
\ref{thm:finite_gen_hull}, except that it has the additional hypothesis
that $M$ is $(m+1)$-universal
and its $(m+1)$st core $\core_{m+1}(M)$ is $(m+1)$-solid.
The proof will rely on Lemma \ref{lem:super-Dodd-soundness}.
We also deduce Corollary \ref{cor:def_over_core_from_extra_hypos} from Lemma \ref{lem:finite_gen_hull}.
\item In \S\ref{sec:mouse_order}, we introduce the mouse
order $<^{\para}_k$, and prove some facts about it.
\item In \S\ref{sec:solidity}, we prove the solidity and universality Theorem
\ref{thm:solidity}, using Fact \ref{fact:condensation},
the material in \S\ref{sec:fine_structure_prelim}, \S\ref{sec:mouse_order},
and Corollary \ref{cor:def_over_core_from_extra_hypos}.\footnote{The
only proof that we know
of for Corollary \ref{cor:def_over_core_from_extra_hypos}
is via Lemma \ref{lem:finite_gen_hull},
the proof of which relies on significant fine structure,
including Dodd-solidity.
This is the only way in which
Dodd-solidity
comes up in the solidity and universality.
So a simpler proof of Corollary \ref{cor:def_over_core_from_extra_hypos} might simplify the proof of solidity and universality
overall.

The \emph{super} aspect of super-Dodd-solidity is not used the proof, however; only standard Dodd-solidity is relevant.}

\item In \S\ref{sec:conclusion}, we deduce that the full versions
of Theorems \ref{thm:super-Dodd-soundness}, \ref{thm:finite_gen_hull}
and \ref{thm:condensation} hold.
\end{enumerate}

If one takes Corollary \ref{cor:def_over_core_from_extra_hypos} as a black box,
the rest of the proof of solidity and universality is covered
by just
 \S\ref{sec:fine_structure_prelim},
 \S\ref{sec:mouse_order}
 and \S\ref{sec:solidity}.

 In \S\ref{sec:Dodd_proof} we also prove Theorem \ref{thm:ISC} (on
pseudomice and the ISC) outright; it does not depend
 on the other results.

\section{Fine structural preliminaries}
\label{sec:fine_structure_prelim}

We begin by laying out some general
fine structural facts which we will need throughout.
This extends somewhat the theory established in \cite{fsit} and
\cite{outline}.

\begin{rem}[Superstrongs]\label{rem:superstrong_diff}
Nearly all of the general MS theory, including
that of \cite{fsit} and \cite{outline}, goes through at the
superstrong level, with very little change to proofs.
(Actually, we only need part of it, since we will be
proving solidity etc here anyway.)
Likewise for the results in \cite{extmax},
excluding  \cite[\S5]{extmax}, which
is appropriately generalized in \cite{premouse_inheriting}.
We restate some of the theorems from those papers in this section,
but without
the superstrong
restriction.

The main ubiquitous new feature
is that if $\Tt$ is an $m$-maximal
 tree on an $m$-sound premouse, we can have  $\alpha+1<\lh(\Tt)$ with
$\lh(E^\Tt_\alpha)=\lh(E^\Tt_{\alpha+1})$, but only
under special circumstances (in particular that $E^\Tt_\alpha$
is superstrong); see
\cite[\S1.1.6]{premouse_inheriting}.
This implies that when forming a comparison
$(\Tt,\Uu)$ of premice, if $\gamma$ indexes the least disagreement
between $M^\Tt_\alpha,M^\Uu_\alpha$,
one should only set $E^\Tt_\alpha\neq\emptyset$
if $F(M^\Tt_\alpha|\gamma)\neq\emptyset$
and either $F(M^\Uu_\alpha|\gamma)=\emptyset$
or $\nu(F(M^\Tt_\alpha|\gamma))\leq\nu(F(M^\Uu_\alpha|\gamma))$,
and likewise symmetrically for $E^\Uu_\alpha$.
In some variant situations, such as comparisons involving a tree $\Uu$ on a phalanx, we may have some models $M^\Uu_\alpha$ which fail to be pre-ISC-premice;
but they will be pre-ISC-premice.
For any active pre-ISC-premouse $S$,
let $\nutilde(S)=\max(\lgcd(S),\nu(F^S))$.
Then we modify the decision procedure just mentioned (for determining whether $E^\Tt_\alpha\neq\emptyset$, etc) by using $\nutilde(M^\Tt_\alpha|\gamma)$ and $\nutilde(M^\Uu_\alpha|\gamma)$
in place of $\nu(M^\Tt_\alpha|\gamma)$
and $\nu(M^\Uu_\alpha|\gamma)$.
We proceed in such a manner
in all comparisons in this paper (whether mentioned explicitly or not).

One significant exception to the MS theory readily adapting
is that Steel's proof of Dodd-solidity does not immediately
generalize to the superstrong level. Zeman  proved
the analogue for Jensen-indexed mice (at the superstrong level)
in \cite{zeman_dodd}.
We establish the MS version in Theorem \ref{thm:super-Dodd-soundness}.

There is also one small tweak required in proofs of iterability of  certain phalanxes and/or bicephali involved in the proofs of solidity, Dodd-solidity, etc. We discuss this in the proof of Claim \ref{clm:B_is_almost_iterable} in the proof of Theorem \ref{thm:solidity},
and (toward the end of) the proof of Claim \ref{clm:Ds_phU_iterable} in the proof of Lemma \ref{lem:super-Dodd-soundness}.\footnote{The same issue arises in the proof of condensation. This was unfortunately overlooked by the author when writing the proof of \cite[Theorem 5.2]{premouse_inheriting}.
But it is easily corrected, as discussed in the last paragraph of the proof of Claim \ref{clm:B_is_almost_iterable} of  Theorem \ref{thm:solidity}.}

A premouse $M$ is \emph{superstrong-small}
if there is no $\alpha\leq\OR^M$ with $F^{M|\alpha}$ superstrong.
\end{rem}

\begin{lem}
The fine structure
  theory of \cite[\S4]{fsit} goes through
routinely \tu{(}allowing for superstrong extenders\tu{)}.
Likewise \cite[6.1.5]{fsit}: If $\Tt$ is a $k$-maximal tree
and  $\alpha+1<\lh(\Tt)$, then
 $E$ is close to $M^{*\Tt}_{\alpha+1}$.
\end{lem}
\begin{proof}
We leave the verification to the reader; the presence
 of superstrong extenders has no substantial impact. (But one should
 bear  Remark \ref{rem:superstrong_diff} in mind.)
\end{proof}

This paper relies heavily on a result from \cite{extmax}, the proof of which is
related to the
calculations used in the proof of \emph{Strong Uniqueness} in \cite[\S
6]{fsit}. We now explain this; the definition
below follows \cite[Definition 2.19]{extmax}, and is a variant of $(\rho_{m+1}^M,p_{m+1}^M)$:

\begin{dfn}\label{dfn:z,zeta}
 Let $M$ be an $m$-sound premouse and $x\in\core_0(M)$. Then
$(z_{m+1}^M(x),\zeta_{m+1}^M(x))$
 denotes the least $(z,\zeta)\in\widetilde{\OR}$
 (see \S\ref{sec:notation_general}) such that $\zeta\geq\om$ and
 \[ \Th_{\rSigma_{m+1}}^M(\zeta\cup\{z,\pvec_m^M,x\})\notin M.\]
 And $(z_{m+1}^M,\zeta_{m+1}^M)$ denotes
$(z_{m+1}^M(\emptyset),\zeta_{m+1}(\emptyset))$.
\end{dfn}
The relationship between $(z_{m+1}^M,\zeta_{m+1}^M)$, $(p_{m+1},\rho_{m+1}^M)$ and $(m+1)$-solidity is made clear in the following fact, which is an easy exercise
(or see \cite[Remark 2.20]{extmax}):

\begin{fact}\label{fact:z_zeta}
 Let $M$ be as above and $(z,\zeta)=(z_{m+1}^M,\zeta_{m+1}^M)$
 and $(p,\rho)=(p_{m+1}^M,\rho_{m+1}^M)$.
Then:
\begin{enumerate}[label=--]
 \item $(z,\zeta)\leq(p,\rho)$ and $\rho\leq\zeta$ and $z\leq p$,
 \item $M$ is $(m+1)$-solid\footnote{Recall here
 that, as in \cite{imlc} but not \cite{fsit},
 we do not incorporate $(m+1)$-universality
 into $(m+1)$-solidity.}$\iff (z,\zeta)=(p,\rho)\iff
z=p\iff \zeta=\rho$.
\item $M$ is non-$(m+1)$-solid $\iff [\rho<\zeta\text{ and
}z<p]\iff\rho<\zeta\iff z<p$.
\end{enumerate}
\end{fact}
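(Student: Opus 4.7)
The plan is to derive everything from direct manipulation of the definitions together with a careful case analysis of the ordering on $\widetilde{\OR}$. To organize the bookkeeping, I think of each $(z,\zeta)\in\widetilde{\OR}$ as the ``augmented sequence'' $(z_0,\ldots,z_{m-1},\zeta)$, and check that the $\widetilde{\OR}$ order is essentially the top-down lex order on such augmented sequences.

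First I would establish item 1. Since $(p,\rho)\in\widetilde{\OR}$ (because $\rho\leq\min(p)$) and $\Th_{\rSigma_{m+1}}^M(\rho\cup\{p,\pvec_m^M\})\notin M$ by definition, the pair $(p,\rho)$ is a candidate in the definition of $(z,\zeta)$, so $(z,\zeta)\leq(p,\rho)$ by minimality. If $\zeta<\rho$ then the non-$M$-definability of $\Th_{\rSigma_{m+1}}^M(\zeta\cup\{z,\pvec_m^M\})$ would exhibit a finite parameter making a theory with constants in an ordinal strictly below $\rho$ undefinable, contradicting the standard minimality of $\rho_{m+1}^M$; hence $\rho\leq\zeta$. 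For $z\leq p$, I argue by contradiction, assuming $p<z$ in top-down lex. The two subcases (strictly, $p$ a proper initial segment of $z$, or a first disagreement $p_k<z_k$) are both incompatible with $(z,\zeta)\leq(p,\rho)$: in the first subcase, checking clauses (a) and (b) of the $\widetilde{\OR}$ order forces $z_m<\rho$, violating $\zeta\leq z_m$ and $\rho\leq\zeta$; in the second subcase, the first difference between the augmented sequences is at position $k$, where $z_k>p_k$, so neither clause applies.

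For the equivalences of items 2 and 3, the central link is $M$ is $(m+1)$-solid $\iff (z,\zeta)=(p,\rho)$. For the forward direction I assume $(z,\zeta)<(p,\rho)$ strictly and read off a missing solidity witness: clause (a) of the $\widetilde{\OR}$ order with $z_i=p_i$ for $i<|z|$ and $\zeta=p_{|z|}$ gives directly that the witness $T_{p_{|z|}}=\Th_{\rSigma_{m+1}}^M(p_{|z|}\cup\{p\rest{|z|},\pvec_m^M\})$ is not in $M$; clause (b) splits into $k=|z|$ (where the target theory is a definable restriction of the solidity witness $T_{p_k}$, using $\zeta<p_k$) and $k<|z|$ (where all remaining constants lie below $p_k$, so the undefinable theory is a definable restriction of $T_{p_k}$). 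For the converse, if the $k$th solidity witness $T_{p_k}$ is missing, then $(p\rest k,p_k)\in\widetilde{\OR}$ is a pair strictly below $(p,\rho)$ (via clause (a)) whose theory is not in $M$, contradicting minimality of $(z,\zeta)$. Finally, $z=p\iff\zeta=\rho\iff(z,\zeta)=(p,\rho)$ follows by combining item 1 with the trivial comparison of $(z,\zeta)$ and $(z,\rho)$ in $\widetilde{\OR}$: if $z=p$, then $(z,\zeta)\leq(p,\rho)=(z,\rho)$ forces $\zeta\leq\rho$, hence $\zeta=\rho$; if $\zeta=\rho$, then $z<p$ would yield a smaller-than-$p$ parameter for the same projectum $\rho$ whose $\rSigma_{m+1}$-theory is not in $M$, contradicting the minimality of $p=p_{m+1}^M$. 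Item 3 is then the contrapositive.

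The main obstacle is not conceptual, but rather to keep the several clauses of the $\widetilde{\OR}$ order straight and to match each clause of strict inequality $(z,\zeta)<(p,\rho)$ with the appropriate solidity witness or its definable restriction. Organizing the argument around the augmented sequences $(z_0,\ldots,z_{m-1},\zeta)$ and $(p_0,\ldots,p_{n-1},\rho)$, and systematically invoking the fact that each solidity witness $T_{p_k}$ uniformly dominates all $\rSigma_{m+1}$-theories with constants from $p_k\cup\{p\rest k,\pvec_m^M\}$, should keep the case analysis short.
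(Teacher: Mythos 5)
Your argument is correct. The paper itself declines to prove Fact~\ref{fact:z_zeta}, stating it is "an easy exercise" and deferring to a remark in an external reference, so there is no paper proof to compare against line-by-line; but your proof carries out exactly the definition-chase that the paper intends, and it is organized in the natural way (via the augmented-sequence reading of the $\widetilde{\OR}$ order, then matching each clause of strict $\widetilde{\OR}$-inequality to a missing solidity witness or a definable restriction thereof).

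One small notational slip worth flagging: in the first-subcase argument for $z\leq p$ (where $p$ is a proper initial segment of $z$), the forced inequality coming from clause (b) of the $\widetilde{\OR}$ order is $z_n<\rho$ with $n=\lh(p)<m=\lh(z)$, not $z_m<\rho$; the point is then that $z_n\geq z_{m-1}\geq\zeta\geq\rho$ (the middle inequality from the $\widetilde{\OR}$ membership constraint $\zeta\leq\min(z)$, the last from the already-established $\rho\leq\zeta$), giving the contradiction. Also, in the case split for clause (b) with $k=\lh(z)=m$, you should note explicitly that $k=m=n$ is impossible (as it would give $\zeta<\rho$, contradicting item~1), so that $k=m<n$ is forced; you implicitly use this when treating the theory as a restriction of $T_{p_k}$. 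With these clarifications the proof is complete.
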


The following definition
slightly generalizes the picture
of a sequence of extenders being applied along the branch of an iteration tree. In particular,
when iterating bicephali in  the proof of Lemma \ref{lem:finite_gen_hull} (toward the proof of  \ref{thm:finite_gen_hull},
projectum-finite generation),
we will have such situations in which it seems we can't assume that the extenders are close to the model to which they apply.

\begin{dfn}\label{dfn:abstract_iteration}
Let $N$ be a $k$-sound premouse. A \dfnemph{degree $k$ abstract
weakly amenable
iteration of $N$} is a pair
$\left(\left<N_\alpha\right>_{\alpha\leq\lambda},\left<E_\alpha\right>_{
\alpha<\lambda}\right)$
such that $N_0=N$,
for all $\alpha<\lambda$,
$N_\alpha$ is a $k$-sound premouse,
$E_\alpha$ is a short extender weakly amenable to $N_\alpha$,
$\crit(E_\alpha)<\rho_k^{N_\alpha}$,
$N_{\alpha+1}=\Ult_k(N_\alpha,E_\alpha)$,
and for all limits $\eta\leq\lambda$,
$N_\eta$ is the resulting direct limit.
We say the iteration is \dfnemph{wellfounded}
if $N_\lambda$ is also wellfounded (in which case $N_\lambda$ is a
$k$-sound premouse).
\end{dfn}

Lemma \ref{lem:amenability_pres}
and Fact \ref{fact:z,zeta_pres} below are proved
essentially as is \cite[Lemma 2.21]{extmax}
(the presence of superstrong extenders has no relevance to the proof). Together with Lemma \ref{lem:basic_fs_pres}, they will help us  analyse the effect of  iteration maps on critical objects (such as standard parameters and projecta)
in comparison arguments such as the proof of solidity, Dodd-solidity, and projectum-finite-generation.

\begin{lem}\label{lem:amenability_pres}
 Let $M$ be a $k$-sound premouse and $\eta<\rho_k^M$
 be a limit ordinal. Let $S\sub\eta^{<\om}$ be $M$-amenable;
 that is, for each $\alpha<\eta$, we have $S\cap\alpha^{<\om}\in M$.
 Suppose $S\notin M$.
 Let $E$ be a short extender, weakly amenable to $M$.
 Let $U=\Ult_k(M,E)$ and $i=i^{M,k}_E$. Suppose $U$ is wellfounded.
 Let $S^U=\bigcup_{\alpha<\eta}i(S\cap\alpha^{<\om})$.
 Then $S^U\notin U$.

 Likewise, if
$\left(\left<M_\alpha\right>_{\alpha\leq\lambda},\left<E_\alpha\right>_{
\alpha<\lambda}\right)$ is a wellfounded degree $k$ abstract weakly amenable
iteration of $M_0=M$, then
$S^{M_\lambda}=\left(\bigcup_{\alpha<\eta}j_{0\lambda}(S\cap\alpha^{<\om}
)\right)\notin M_\lambda$.
\end{lem}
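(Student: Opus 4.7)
\emph{Plan.} I treat the single-step case $\lambda=1$ first, by extracting from a representative of $S^U$ a definable-in-$M$ function reconstructing $S$; the iteration case follows by induction on $\lambda$, invoking the single-step case at successors. For the single-step case, suppose for contradiction $S^U\in U$, and pick a representative $S^U=[c,h]^M_E$ with $h\in M$ (when $k\geq 1$, $h$ is $\rSigma_k^M$-definable, which does not affect the argument). Using $(S\cap\gamma^{<\om})\cap\alpha^{<\om}=S\cap\alpha^{<\om}$ for $\gamma\geq\alpha$ and $i$-elementarity, one checks
\[S^U\cap i(\alpha)^{<\om}=i(S\cap\alpha^{<\om})\qquad(\alpha<\eta),\]
and \L{}o\'s's theorem translates this to: for each $\alpha<\eta$,
\[X_{\alpha,Y}:=\{b\in[\kappa]^{|c|}:h(b)\cap\alpha^{<\om}=Y\}\in E_c \iff Y=S\cap\alpha^{<\om}.\]
By weak amenability, $E_c\cap M\in M$, so inside $M$ I can define
\[G(\alpha):=\text{the unique }Y\in M\text{ with }X_{\alpha,Y}\in E_c\cap M\qquad(\alpha<\eta).\]
Uniqueness in $M$ is by the filter property: if $Y_1,Y_2$ both qualify then $X_{\alpha,Y_1}\cap X_{\alpha,Y_2}\in E_c\cap M$ is nonempty, forcing $Y_1=Y_2$; existence is witnessed by $Y=S\cap\alpha^{<\om}\in M$ ($M$-amenability, using $\alpha<\eta$). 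Each $X_{\alpha,Y}$ is a $\rSigma_k^M$-definable subset of $[\kappa]^{|c|}$ with $\kappa<\rho_k^M$, hence in $M$; thus $G$ is $\rSigma_k^M$-definable on bounded domain $\eta<\rho_k^M$, so $G\in M$ by $k$-soundness. But $G(\alpha)=S\cap\alpha^{<\om}$, so $S=\bigcup_{\alpha<\eta}G(\alpha)\in M$, a contradiction.

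For the iteration case, I induct on $\lambda$. The case $\lambda=0$ is given. At a successor $\lambda=\mu+1$, set $\eta':=\sup_{\alpha<\eta}j_{0\mu}(\alpha)$, a limit ordinal with $\eta'\leq j_{0\mu}(\eta)<\rho_k^{M_\mu}$; the identities $S^{M_\mu}\cap j_{0\mu}(\alpha)^{<\om}=j_{0\mu}(S\cap\alpha^{<\om})\in M_\mu$ make $S^{M_\mu}\sub(\eta')^{<\om}$ an $M_\mu$-amenable set with parameter $\eta'$, not in $M_\mu$ by induction. The single-step case applied to $(M_\mu,\eta',S^{M_\mu},E_\mu)$ yields $(S^{M_\mu})^{M_{\mu+1}}\notin M_{\mu+1}$; using cofinality of $\{j_{0\mu}(\alpha):\alpha<\eta\}$ in $\eta'$ and $j_{\mu,\mu+1}\com j_{0\mu}=j_{0,\mu+1}$, this set is exactly $S^{M_{\mu+1}}$. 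For limit $\lambda$, suppose $S^{M_\lambda}=j_{\mu\lambda}(T)\in M_\lambda$ with $T\in M_\mu$. Factoring $j_{0\lambda}=j_{\mu\lambda}\com j_{0\mu}$ in the identity $S^{M_\lambda}\cap j_{0\lambda}(\alpha)^{<\om}=j_{0\lambda}(S\cap\alpha^{<\om})$ and using injectivity of $j_{\mu\lambda}$ forces $T\cap j_{0\mu}(\alpha)^{<\om}=j_{0\mu}(S\cap\alpha^{<\om})$ for each $\alpha<\eta$; hence $S^{M_\mu}=T\cap(\eta')^{<\om}\in M_\mu$, contradicting the induction.

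The main difficulty is the single-step argument; the essential ingredient is weak amenability, which provides $E_c\cap M\in M$ and so lets one ``invert'' $i$ on each slice $i(S\cap\alpha^{<\om})$ definably in $M$ via the uniqueness of $Y=S\cap\alpha^{<\om}$. Without weak amenability, $M$ could not identify $G(\alpha)$ internally, even though \L{}o\'s's theorem determines it in $V$.
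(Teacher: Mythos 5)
The structure of the argument and the two Łoś-type facts — that $S^U\cap i(\alpha)^{<\omega}=i(S\cap\alpha^{<\om})$, and that $X_{\alpha,Y}\in E_c\iff Y=S\cap\alpha^{<\om}$ — are correct, as is the reduction of the iteration case to the single-step case. The inductive passage to the abstract iteration case is also fine. But there is a gap at the central step.

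You assert ``by weak amenability, $E_c\cap M\in M$.'' This is not what weak amenability gives you, and it is in fact false in general. Weak amenability, as defined in this paper, is $\pow(\kappa)\cap U=\pow(\kappa)\cap M$; equivalently (modulo condensation), the \emph{bounded} fragments $E_c\cap (M|\gamma)$ are in $M$ for each $\gamma<(\kappa^+)^M$. It does not follow that $E_c\cap M\in M$. For a concrete counterexample: take $M$ an appropriate level of $L$ and let $E$ be the restriction to $M$ of a genuine normal measure on $\kappa$ (where $\kappa$ is measurable in $V$). Then $\Ult_0(M,E)$ is again a level of $L$ and has the same $\pow(\kappa)$ as $M$, so $E$ is weakly amenable to $M$; but $E\cap M\notin M$, else $M$ would contain a $\kappa$-complete ultrafilter on $\kappa$. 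Consequently the function $G$ cannot be defined inside $M$ simply by consulting the parameter $E_c\cap M$, and the claim that $G$ is $\rSigma_k^M$-definable does not follow as stated.

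To repair the argument along these lines one must show, using only the amenable fragments $E_c\cap(M|\gamma)\in M$, that $G$ (or $\alpha\mapsto S\cap\alpha^{<\om}$, or $S$ directly) is $\bfrSigma_k^M$-definable. The obstacle is that the sets $X_{\alpha,Y}\sub[\kappa]^{|c|}$, while each in $M$, need not all lie in $M|\gamma$ for any single $\gamma<(\kappa^+)^M$ once $\eta\geq(\kappa^+)^M$, and the map $\gamma\mapsto E_c\cap(M|\gamma)$ is only amenable to $M$, not an element of $M$. So a search ``$\exists\gamma\,[X_{\alpha,Y}\in M|\gamma\wedge X_{\alpha,Y}\in E_c\!\rest\!\gamma]$'' is over an amenable predicate and it is not automatic that the resulting definition of $G$ stays $\bfrSigma_k^M$. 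This definability bookkeeping is the genuine technical content of the lemma and is precisely what the proposal passes over; the paper defers it to the cited proof rather than spelling it out, and your write-up needs to address it rather than replace it with the (false) assertion that $E_c\cap M\in M$.
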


\begin{fact}[$(z,\zeta)$-preservation]\label{fact:z,zeta_pres}
 Let $N$ be $k$-sound and
$\left(\left<N_\alpha\right>_{\alpha\leq\lambda},\left<E_\alpha\right>_{
\alpha<\lambda}\right)$ a wellfounded degree $k$ abstract weakly amenable
iteration
of $N$. Let $j:N\to N_\lambda$ be the final iteration map
and $x\in\core_0(N)$.
Then
\[ z^{N'}_{k+1}=j(z^N_{k+1})\text{ and }\zeta^{N'}_{k+1}=\sup
j``\zeta^N_{k+1} \]
and  generally,
$z^{N'}_{k+1}(j(x))=j(z^N_{k+1}(x))\text{ and }\zeta^{N'}_{k+1}(j(x))=
\sup j``(\zeta_{k+1}^N(x))$.
\end{fact}

\begin{lem}[Preservation of fine structure]\label{lem:basic_fs_pres}
Let $N$ be a $k$-sound premouse\footnote{Note that we do not assume
any iterability, nor that $N$ is $(k+1)$-solid
or $(k+1)$-universal.}, $\rho=\rho_{k+1}^N$ and $p=p_{k+1}^N$. Let $E$ be a short extender weakly amenable to
$N$, with $\kappa=\crit(E)<\rho$.
Let $N'=\Ult_k(N,E)$ and $j=i^{N,k}_E$.
Suppose that $N'$ is wellfounded.
Let $x\in\core_0(N)$.
Let $z'_{j(x)}=j(z^N_{k+1}(x))$, $\zeta'_{j(x)}=\sup j``\zeta_{k+1}^N(x)$,
 $p'=j(p)$ and $\rho'=\sup j``\rho^N_{k+1}$.
Then:
\begin{enumerate}[label=\arabic*.,ref=\arabic*]
 \item\label{item:j_is_k-embedding}
$N'$ is $k$-sound and $j$ is a $k$-embedding.
\item\label{item:theory_missing} $\Th_{\rSigma_{k+1}}^{N'}(\rho'\un p')\notin
N'$.
\item\label{item:if_rho_k+1=rho'}  $\rho_{k+1}^{N'}\leq\rho'$ and if
$\rho_{k+1}^{N'}=\rho'$ then
$p_{k+1}^{N'}\leq p'$.
\item\label{item:kappa<rho_k+1_or_E_close} If $\kappa<\rho$ or $E$ is
close to $N$ then $\rho_{k+1}^{N'}=\rho'$ \tu{(}and so $p_{k+1}^{N'}\leq
p'$\tu{)}.
\item\label{item:characterize_N'_solid} $N'$ is $(k+1)$-solid
iff
\tu{[}$N$ is $(k+1)$-solid and $\rho_{k+1}^{N'}=\rho'$\tu{]}.
\item\label{item:p_pres_if_N'_solid} If $N'$ is $(k+1)$-solid then
$p_{k+1}^{N'}=p'$.
\item\label{item:characterize_N'_sound} $N'$ is $(k+1)$-sound
iff \tu{[}$N$ is $(k+1)$-sound and
$\kappa<\rho$\tu{]}.
\end{enumerate}
\end{lem}
\begin{proof}
Part \ref{item:j_is_k-embedding} is standard.
Part \ref{item:theory_missing} follows from Fact \ref{fact:z,zeta_pres}
applied with $x=p$;
and part \ref{item:if_rho_k+1=rho'} is an immediate consequence of part
\ref{item:theory_missing}.

Part \ref{item:kappa<rho_k+1_or_E_close}:
If $\kappa\geq\rho$ and $E$ is close to $N$,
the conclusion is standard. So suppose $\kappa<\rho$.
Let $\alpha<\rho'$ and $x\in
N'$. Let $\beta\in[\kappa,\rho)$ and $y\in
N$ be such that $i_E(\beta)\geq\alpha$ and $x=[a,f]^{N,k}_E$ for some function $f$
which is
$\rSigma_{k}^N(\{y\})$ (let $y=f$ if $k=0$). Let $t=\Th_{\rSigma_{k+1}}^N(\beta\un\{y\})$.
Then because $a\in[i_E(\kappa)]^{<\om}$ and using the
proof that generalized witnesses compute
solidity witnesses (see \cite[\S1.12 + p.~326]{imlc}) one can
show that $i_E(t)$ computes $t'=\Th_{\rSigma_{k+1}}^{N'}(\alpha\un\{x\})$,
so $t'\in N'$.

Part \ref{item:characterize_N'_solid}: Suppose first that
$N$ is $(k+1)$-solid and $\rho_{k+1}^{N'}=\rho'$.
So $p_{k+1}^{N'}\leq p'$ by part \ref{item:if_rho_k+1=rho'}.
But $(N',p')$ is $(k+1)$-solid, which implies
that $p_{k+1}^{N'}\geq p'$, so $p_{k+1}^{N'}=p'$,
so $N'$ is $(k+1)$-solid.

Now suppose that either $N$ is non-$(k+1)$-solid
or $\rho_{k+1}^{N'}<\rho'$.
Let $(z,\zeta)=(z_{k+1}^N,\zeta_{k+1}^N)$
and
$(z',\zeta')=(j(z),\sup j``\zeta)=(z_{k+1}^{N'},\zeta_{k+1}^{N'})$
(the second equality by Fact \ref{fact:z,zeta_pres}).
Then by Fact \ref{fact:z_zeta},
if $N$ is non-$(k+1)$-solid
 then $\rho<\zeta$, so $\rho_{k+1}^{N'}\leq\rho'<\zeta'$.
On the other hand, if $\rho_{k+1}^{N'}<\rho'$, then
since $\rho\leq\zeta$, we have
$\rho_{k+1}^{N'}<\rho'\leq\zeta'$.
So in either case, $\rho_{k+1}^{N'}<\zeta'=\zeta_{k+1}^{N'}$,
so again by Fact \ref{fact:z_zeta}, $N'$ is non-$(k+1)$-solid.

Part \ref{item:p_pres_if_N'_solid}: Suppose $N'$ is $(k+1)$-solid. Then by part \ref{item:characterize_N'_solid}, $N$ is also $(k+1)$-solid. So by Fact \ref{fact:z_zeta}, $p=z_{k+1}^N$
and
$p_{k+1}^{N'}=z_{k+1}^{N'}$.
But by Fact \ref{fact:z,zeta_pres}, $j(z_{k+1}^N)=z_{k+1}^{N'}$, so $p'=j(p)=p_{k+1}^{N'}$, as desired.

Part \ref{item:characterize_N'_sound}:
If $N$ is $(k+1)$-sound
and $\kappa<\rho$, then by the previous parts,
$\rho'=\rho_{k+1}^{N'}$ and $p'=p_{k+1}^{N'}$.
But $j(\kappa)<\rho'$, so
\[ N'=\Hull_{k+1}^{N'}(j(\kappa)\cup\rg(j))=\Hull_{k+1}^{N'}(\rho'\cup\{p'\}),\]
so $N'$ is sound.\footnote{This direction
(that $N$ being $(k+1)$-sound and $\kappa<\rho$ implies $N'$ is $(k+1)$-sound)
can also be proved more
directly, as opposed
to going through the previous parts, and was observed
earlier by Steve Jackson and the author.}

Conversely, suppose first that $N$ is non-$(k+1)$-sound
but $\kappa<\rho$. We may assume that $N'$ is $(k+1)$-solid,
so by the previous parts, $N$ is $(k+1)$-solid,
 $\rho'=\rho_{k+1}^{N'}$ and $p'=p_{k+1}^{N'}$;
 moreover,
there is some $x\in\core_0(N)$ such that
\[ x\notin\Hull_{k+1}^N(\rho\cup\{p,\pvec_k^N\})\]
But this statement is $\rPi_{k+1}$ in these parameters
(note that $\rho<\rho_0^N$, so $\rho\in\dom(j)$, as $N$ is non-$(k+1)$-sound), so
\[ j(x)\notin\Hull_{k+1}^{N'}(j(\rho)\cup\{p_{k+1}^{N'},\pvec_k^{N'}\}).\]
Since $\rho_{k+1}^{N'}=\rho'\leq j(\rho)$, it follows that $N'$ is
non-$(k+1)$-sound.

Finally suppose that $\rho\leq\kappa$. We may assume  $N'$ is $(k+1)$-solid,
so $\rho'=\rho=\rho_{k+1}^{N'}$ and $p'=p_{k+1}^{N'}$.
But then $\kappa\notin\Hull_{k+1}^{N'}(\rho'\cup\{p',\pvec_k^{N'}\})$,
so $N'$ is non-$(k+1)$-sound.
\end{proof}
The preceding lemma extends easily to  abstract weakly amenable iterations:
\begin{cor}\label{cor:basic_fs_pres}
Let $k<\om$ and $N$ be a $k$-sound premouse. Let
$\left(\left<N_\alpha\right>_{\alpha\leq\lambda},\left<E_\alpha\right>_{
\alpha<\lambda}\right)$ be a wellfounded degree $k$ abstract weakly amenable
iteration
on $N$, with iteration maps
$j_{\alpha\beta}:N_\alpha\to N_\beta$.
Then the obvious
generalizations of the conclusions of Lemma \ref{lem:basic_fs_pres}  hold
for the maps $j_{\alpha\beta}:N_\alpha\to N_\beta$,
where the hypothesis of part \ref{item:kappa<rho_k+1_or_E_close}
is modified to say ``if for each
$\alpha<\lambda$, either $\crit(E_\alpha)<\rho_{k+1}(N_\alpha)$
or $E_\alpha$ is close to $N_\alpha$'',
and the characterization of soundness
in part \ref{item:characterize_N'_sound} modified to say
``$N$ is $(k+1)$-sound and $\crit(E_\alpha)<\rho_{k+1}(N_\alpha)$
for each $\alpha<\lambda$''.
\end{cor}

\begin{lem}\label{lem:iteration_p_rho_pres}
 Let $N$ be a $k$-sound premouse and $\Tt$ be a terminally non-dropping
$k$-maximal tree on
$N$.
 Let $N'=M^\Tt_\infty$. Then $\rho_{k+1}^{N'}=\sup i^\Tt``\rho_{k+1}^N$
 and $p_{k+1}^{N'}\leq i^\Tt(p_{k+1}^N)$.
 Moreover, let $q\ins p_{k+1}^N$ be such that $q$ is
$(k+1)$-solid for $N$.
 Then $i^\Tt(q)\ins p_{k+1}^{N'}$, and $q=p_{k+1}^N$ iff
$i^\Tt(q)=p_{k+1}^{N'}$.
\end{lem}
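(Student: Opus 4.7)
The plan is to view $\Tt$ as a degree-$k$ abstract weakly amenable iteration along $b^\Tt$, apply the moreover clause of Lemma~\ref{lem:basic_fs_pres}, and finish by a short lex-order argument using (pushed-forward) generalized solidity witnesses.

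Enumerate $b^\Tt$ in increasing $\Tt$-order as $\{\beta_\alpha\}_{\alpha\leq\lambda}$ and set $N_\alpha:=M^\Tt_{\beta_\alpha}$. Since $b^\Tt$ does not drop in model or degree, any two consecutive $b^\Tt$-points $\beta_\alpha,\beta_{\alpha+1}$ satisfy $\beta_{\alpha+1}=\gamma+1$ with $\pred^\Tt(\gamma+1)=\beta_\alpha$ and $N_{\alpha+1}=\Ult_k(N_\alpha,E^\Tt_\gamma)$, while at limit $\alpha$, $N_\alpha$ is the direct limit. By Lemma~\ref{lem:closeness_for_superstrong} each such $E^\Tt_\gamma$ is close, hence weakly amenable, to $M^{*\Tt}_{\gamma+1}=N_\alpha$, and $\crit(E^\Tt_\gamma)<\rho_k^{N_\alpha}$ by $k$-maximality with no dropping. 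So $((N_\alpha)_{\alpha\leq\lambda},(E^\Tt_{\gamma_\alpha})_{\alpha<\lambda})$ is a wellfounded degree-$k$ abstract weakly amenable iteration in the sense of Definition~\ref{dfn:abstract_iteration}, with all extenders close to their base. Applying the moreover of Lemma~\ref{lem:basic_fs_pres} with closeness in place of the $\crit<\rho_{k+1}$ hypothesis yields $\rho_{k+1}^{N'}=\sup i^\Tt``\rho_{k+1}^N$ and $p_{k+1}^{N'}\leq i^\Tt(p_{k+1}^N)$.

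Next let $q\ins p_{k+1}^N$ be $(k+1)$-solid for $N$. Because $i^\Tt$ is a $k$-embedding (hence $\rSigma_{k+1}$-elementary), applying it to the generalized solidity witnesses $w_{q_i}^N\in N$ for $i<\lh(q)$ produces generalized $(k+1)$-solidity witnesses $i^\Tt(w_{q_i}^N)\in N'$ for $(N',i^\Tt(q))$. Write $p':=p_{k+1}^{N'}$ and suppose for contradiction that $i^\Tt(q)\nins p'$. Then, using $p'\leq i^\Tt(p_{k+1}^N)$ in top-down lex together with $i^\Tt(q)\ins i^\Tt(p_{k+1}^N)$, there is a least position $j<\lh(q)$ at which $p'$ breaks from $i^\Tt(q)$, giving $p'\rest j=i^\Tt(q\rest j)$ with all remaining entries of $p'$ (if any) strictly below $i^\Tt(q_j)$. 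The witness $i^\Tt(w_{q_j}^N)\in N'$ codes $\Th_{\rSigma_{k+1}}^{N'}(i^\Tt(q_j)\cup\{i^\Tt(q\rest j),\pvec_k^{N'}\})$, from which, by plugging in the finitely many parameters $p'(j),\ldots,p'(\lh(p')-1)<i^\Tt(q_j)$, one reads off $\Th_{\rSigma_{k+1}}^{N'}(\rho_{k+1}^{N'}\cup\{p',\pvec_k^{N'}\})$ as an element of $N'$; this contradicts the bad-theory property defining $p'$. Hence $i^\Tt(q)\ins p'$.

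For the final iff, the forward direction is immediate from the generalizations of parts~(5)--(6) of Lemma~\ref{lem:basic_fs_pres}: $q=p_{k+1}^N$ makes $N$ be $(k+1)$-solid, so $N'$ is $(k+1)$-solid with $p_{k+1}^{N'}=i^\Tt(p_{k+1}^N)=i^\Tt(q)$. Conversely, if $i^\Tt(q)=p_{k+1}^{N'}$ then $\Th_{\rSigma_{k+1}}^{N'}(\rho_{k+1}^{N'}\cup\{i^\Tt(q),\pvec_k^{N'}\})\notin N'$; were $\Th_{\rSigma_{k+1}}^N(\rho_{k+1}^N\cup\{q,\pvec_k^N\})$ in $N$, $\rSigma_{k+1}$-elementarity of $i^\Tt$ together with $i^\Tt(\rho_{k+1}^N)\geq\rho_{k+1}^{N'}$ would force the $N'$-theory into $N'$; so $q$ has bad theory in $N$, giving $p_{k+1}^N\leq q$ in top-down lex, which together with $q\ins p_{k+1}^N$ yields $q=p_{k+1}^N$. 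The main obstacle is the lex/witness computation in the middle paragraph---checking that the pushed-forward solidity witness for $q_j$ really does allow computation of the bad theory of $p'$ once the extra ``low'' parameters $p'(j),\ldots$ are folded in through the witness's variable range.
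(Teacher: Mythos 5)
Your proof is correct and follows essentially the same route as the paper's own (very terse) argument: the main clauses come from the ``moreover'' part of Lemma~\ref{lem:basic_fs_pres} applied to the abstract degree-$k$ iteration along $b^\Tt$ (using closeness of the branch extenders, via Lemma~\ref{lem:closeness_for_superstrong}, so that part~\ref{item:kappa<rho_k+1_or_E_close} applies even when $\crit\geq\rho_{k+1}$), and the ``moreover'' clause of the lemma itself is handled via pushed-forward generalized solidity witnesses, which is exactly what the paper means when it says ``follows routinely, using generalized witnesses.'' The only cosmetic difference is that the paper first reduces to a single ultrapower $\Ult_k(N,E)$ and then remarks the generalization is easy, whereas you apply the abstract-iteration form of Lemma~\ref{lem:basic_fs_pres} directly to the whole branch; both are fine. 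The lex/witness computation you flag as the main obstacle is indeed the only place requiring care, and your handling of it is correct: since $q\ins p_{k+1}^N$ forces every $q_j\geq\rho_{k+1}^N$, we get $\rho_{k+1}^{N'}=\sup i^\Tt``\rho_{k+1}^N\leq i^\Tt(q_j)$, so the tail of $p'$ together with all ordinals below $\rho_{k+1}^{N'}$ lie inside the witness's variable range.
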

\begin{proof}
We just consider the case $N'=\Ult_k(N,E)$ where $E$ is a short extender which is close to $N$ with $\kappa=\crit(E)<\rho_k^N$; the
generalization to the
full lemma is then easy.

If $\kappa<\rho_{k+1}^N$
then the fact that $\rho_{k+1}^{N'}=\sup i_E^{N,k}``\rho_{k+1}^N$
and $p_{k+1}^{N'}\leq i_E^{N,k}(p_{k+1}^N)$ follows from
\ref{lem:basic_fs_pres}.
If $\rho_{k+1}^N\leq\kappa$,
use the closeness of $E$ to $N$  and
\cite[\S4]{fsit} as usual.
The ``moreover'' clause follows routinely, using generalized witnesses.
\end{proof}

Finally, the following well-known notion will be relevant in \S\ref{sec:non-solid}, and also in the proof of solidity in \S\ref{sec:solidity}:
\begin{dfn}\label{dfn:hull_prop}
For a $n$-sound premouse $N$
and
$\rho_{n+1}^M\leq\rho<\rho_n^M$,
we say that $N$ has the \dfnemph{$(n+1,x)$-hull
property at $\rho$} iff
$\pow(\rho)^N\sub\cHull_{n+1}^N(\rho\un\{x,\pvec_n^N\})$
(note the hull is collapsed).
\end{dfn}

\section{Prologue: A non-solid premouse}\label{sec:non-solid}

Before really embarking on our endeavour of proving
that iterable premice are (among other things) solid,
it is maybe motivating to see that the goal cannot be
overly trivial, by finding examples of non-solid premice.
We do that in this section, assuming there is a measurable
cardinal.
The results here are not needed
in later sections, however. The basic motivation for
the construction is Lemma \ref{lem:basic_fs_pres}:
If we can a find $k$-sound premouse $M$
and an extender weakly amenable to $M$
such that $\rho_{k+1}^M\leq\crit(E)<\rho_k^M$ and $U=\Ult_k(M,E)$ is wellfounded
and
$\rho_{k+1}^U<\rho_{k+1}^M$,
 then $U$ is non-$(k+1)$-solid. In order to achieve
this,
$E$ must at least be non-close to $M$ (though it seems this might
not be enough by itself), and one might encode new information into some
component measure of $E$,
and this information can then be present in $U$. Actually we don't need
to work hard to encode new information; starting with some useful
fine structural circumstances, things essentially take care of themselves.

\begin{tm}\label{tm:non-solid}
If there is a measurable cardinal then there is a non-solid premouse.
\end{tm}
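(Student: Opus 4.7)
The plan is to invoke Lemma~\ref{lem:basic_fs_pres}: we seek a $k$-sound premouse $M$ and an extender $E$ weakly amenable to $M$, with $\rho_{k+1}^M\leq\crit(E)<\rho_k^M$, such that $U=\Ult_k(M,E)$ is wellfounded and $\rho_{k+1}^U<\rho_{k+1}^M$. Since $\crit(E)\geq\rho_{k+1}^M$, the map $i^{M,k}_E$ is the identity on $\rho_{k+1}^M$, so $\sup i^{M,k}_E``\rho_{k+1}^M=\rho_{k+1}^M$, and part~\ref{item:characterize_N'_solid} of that lemma then immediately yields that $U$ is non-$(k+1)$-solid, proving the theorem.

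To supply $M$ and $E$, I would fix a normal measure $W$ on a measurable cardinal $\kappa$ in $V$ and take $E:=W$, viewed as a single-generator extender; wellfoundedness of the ultrapower follows from countable completeness of $W$ in $V$. The choice of $M$ is the substantive content: take $M$ to be an appropriate initial segment or suitable hull of an externally built $L[W]$-like premouse, cut off at a fine-structural level $\alpha$ where $\rho_{k+1}^M\leq\kappa<\rho_k^M$ and where $W$ is weakly amenable to $M$ but \emph{not} close to $M$. Non-closeness is essential here: by part~\ref{item:kappa<rho_k+1_or_E_close} of Lemma~\ref{lem:basic_fs_pres}, if $E$ were close to $M$ then $\rho_{k+1}^U=\rho_{k+1}^M$ and no drop could occur.

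The heart of the argument is the strict drop $\rho_{k+1}^U<\rho_{k+1}^M$. Heuristically, the ultrapower construction $[a,f]^M_E$ makes the external measure $W$ partially readable inside $U$, and since $W\notin M$ this yields a new $\rSigma_{k+1}^U$-definable subset of some ordinal $\eta<\rho_{k+1}^M$ not present in $M$; choosing $M$ at the correct fine-structural level ensures that such a new subset first becomes $\rSigma_{k+1}$-definable over $U$ at an ordinal strictly below $\rho_{k+1}^M$. This is the content of the excerpt's remark that, given the right fine-structural setup, ``things essentially take care of themselves''.

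The main obstacle is precisely this last step: non-closeness of $E$ by itself is not known to suffice, so $M$ must be pinned down carefully so that the ultrapower really does produce a new $\rSigma_{k+1}$-definable subset below $\rho_{k+1}^M$. Once that is verified, Lemma~\ref{lem:basic_fs_pres} closes the argument.
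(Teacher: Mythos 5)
Your opening move---invoke Lemma~\ref{lem:basic_fs_pres}(\ref{item:characterize_N'_solid}) by finding $k$-sound $M$ and weakly amenable $E$ with $\rho_{k+1}^M\leq\crit(E)<\rho_k^M$ and $\rho_{k+1}^{\Ult_k(M,E)}<\rho_{k+1}^M$, noting that $E$ must fail to be close to $M$---is precisely the motivating observation stated at the start of \S\ref{sec:non-solid}, and the algebra $\sup i``\rho_{k+1}^M=\rho_{k+1}^M$ is fine. But past that point you have essentially restated the target, and the concrete construction you sketch points in the wrong direction. Taking $E$ to be an external $V$-normal measure $W$ applied to a segment of an $L[W]$-like premouse has no mechanism to ensure $U$ is a premouse at all, and your heuristic for the projectum drop---that $W$ becomes ``partially readable'' inside $U$---does not survive weak amenability, which literally says $\pow(\crit E)\inter U=\pow(\crit E)\inter M$, so no new subsets of the critical point appear.

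The paper's construction does not use a $V$-measure as the extender; the measurable cardinal is used only to produce the starting premouse. One first builds a $1$-sound, iterable, active type~1 premouse $M$ with $\rho_1^M=\om_1^M$ and $p_1^M=\emptyset$, in which every finitely generated $\Sigma_1$-hull is bounded. Setting $U=\Ult_0(M,F^M)$ (which does \emph{not} drop the projectum), the key idea is to produce a \emph{second} cofinal $\Sigma_1$-elementary $j:M^\passive\to U^\passive$ with the same external shape as $i\rest M^\passive$ but $j\neq i$, obtained by a genericity argument: the tree of attempts to build such a $j$ is illfounded (since $i$ witnesses that) and lives in a generic extension of $L[U^\passive]$ in which $i$ itself is not available, so other branches exist. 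The derived $E_j$ agrees with $F^M$ on the passive side but lifts $F^M$ to a different top extender, and $U'=\Ult_0(M,E_j)$ \emph{is} a premouse because $j$ is cofinal $\Sigma_1$-elementary between premice. One then must show $\Hull_1^{U'}(\{\mu\})$ is cofinal, hence equals $U'$, forcing $\rho_1^{U'}=\om$; this is where essentially all the work lies, using distinctness of the normal measures derived from $j$ versus from extenders on $\es_+^M$ (Claims~\ref{clm:D_j_not_on_M-seq}, \ref{clm:D_jbar_not_on_Mbar-seq}) and, in general, Theorem~\ref{thm:finite_gen_hull}. This is the gap you flag as ``the main obstacle'': it is not a routine choice of the level $\alpha$, but a substantive argument your proposal does not begin to supply.
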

\begin{proof}
 Suppose there is a measurable cardinal.

\begin{clm}\label{clm:type_1_mouse_with_bounded_1-hulls} There is a $(0,\om_1+1)$-iterable $1$-sound
countable type 1  premouse $M$ such that $\rho_1^M=\om_1^M$,
$p_1^M=\emptyset$, and $\Hull_1^M(\{x\})$ is bounded
 in $M$, for each $x\in M$.
\end{clm}
\begin{proof}
Let $L[D]$ be Kunen's model for one measurable cardinal at $\kappa$.
So $D\in L[D]$ and $L[D]\sats$ ``$D$ is the unique normal measure on $\kappa$''.
Let $P$ be the rearrangement of $L[D]$
as a premouse.
Let $E\in\es^P$ be the extender given by $D$,
and let $N=P|\lh(E)$. Then
 $N$  is $1$-sound and $(0,\om_1+1)$-iterable,
with $\rho_1^N=\kappa^{+N}=\kappa^{+L[D]}$.

Given
$\alpha<\kappa^{+N}$, let $H_\alpha=\Hull_1^N(\alpha)$
(recall that this denotes the uncollapsed hull). Then  $H_\alpha\cap\kappa^{+N}$
is bounded in $\kappa^{+N}$,
because otherwise $\rho_1^N\leq\kappa$.
Since $N$ is type 1, we have the usual $\Sigma_1^N$ cofinal function $f:\kappa^{+N}\to\OR^N$. Therefore $H_\alpha\cap\OR^N$
is bounded in $\OR^N$, so
 $H_\alpha\in N$ and $N_\alpha=\cHull_1^N(\alpha)\in N$
and $\pi_\alpha\in N$ where $\pi_\alpha:N_\alpha\to H_\alpha$
is the uncollapse map. Note $N_\alpha$ is a premouse
and $\pi_\alpha$ is $\rSigma_1$-elementary,
and $N_\alpha$ is $\alpha$-sound.

Now say $\alpha$ is \emph{good} iff $\alpha<\om_1^N$
and $\alpha=\crit(\pi_\alpha)$, so $\alpha=\om_1^{N_\alpha}$. Let $\alpha$ be good.
Easily by condensation,  $N_\alpha\pins N$,
so $N_\alpha\ins Q$ where $Q$ is least with $\alpha=\om_1^{N_\alpha}\leq\OR^Q$
and $\rho_\om^Q=\om$. Therefore if $\alpha<\beta$ are both good,
then $N_\alpha\pins N_\beta$ and $N_\alpha\in H_\beta$,
and also
\[
\sup(H_\alpha\cap\OR)<\sup(H_\beta\cap\OR)\]
(because $\Th_{\rSigma_1}^{N}(\alpha)=\Th_{\rSigma_1}^{N_\alpha}(\alpha)$,
so the parameter $N_\alpha\in H_\beta$ can be used to define the object
$H_\alpha$ in a $\Sigma_1$ fashion).
Note that the good ordinals form a club $C\sub\om_1^N$,
and $C\in N$. Let $\eta$ be the supremum of the first $\om$-many
good ordinals, so $\eta$ is also good.

Let $M=N_\eta$. We claim that $M$ is as desired.
Let us verify that $\rho_1^{N_\eta}=\eta=\om_1^{N_\eta}$.
 Clearly $\rho_1^{N_\eta}\leq\eta$, so we show
 $\om_1^{N_\eta}\leq\rho_1^{N_\eta}$.
Let $p\in[\OR]^{N_\eta}$. Since
$H_\eta=\bigcup_{\alpha<\eta}H_\alpha$,
we can fix $\alpha<\eta$ with $\pi_\eta(p)\in H_\alpha$.
But then
\[ t\eqdef\Th_1^{N_\eta}(\{p\})=\Th_1^{H_\eta}(\{\pi_\eta(p)\})=\Th_1^{H_\alpha}(\{\pi_\eta(p)\})=\Th_1^{N_\alpha}(\{\pi_\alpha^{-1}(\pi_\eta(p))\}),\]
and since
$N_\alpha\pins N|\eta$, therefore $t\in N_\eta$, which suffices. It easily follows that $p_1^{N_\eta}=\emptyset$. And $\Hull_1^{N_\eta}(\{x\})$ is bounded in $N_\eta$ for each $x\in N_\eta$, by the
 properties of good $\alpha,\beta<\eta$, which reflect into
$M$. Note that $M=\Hull_1^M(C\cap\eta)$, so $M$ is countable. The rest is clear.
\end{proof}

For the rest of the construction, we just need a mouse $M$
as in Claim \ref{clm:type_1_mouse_with_bounded_1-hulls}. So fix such an $M$.\footnote{If
$M$ is beyond $0^\pistol$, then the proof
to follow will depend on Theorem \ref{thm:finite_gen_hull}.}
Let $U=\Ult_0(M,F^M)$ and $i:M\to U$ the ultrapower map. Then by standard
preservation of fine structure,
$\rho_1^U=\rho_1^M=\om_1^M=\om_1^U$.
Recall the notation $M\wr\xi$ from \S\ref{sec:notation}.

\begin{clm}\label{clm:unbounded_hull_of_M}  $\om_1^M\sub\Hull_1^M(X)$ for any
unbounded $X\sub\OR^M$,
and $\om_1^M\sub\Hull_1^{U}(X)$
for any unbounded $X\sub\OR^{U}$.
\end{clm}
\begin{proof}
For $\xi<\OR^M$, let $\eta_\xi$ be the least $\eta>\xi$ such that
\[
\Th_{\rSigma_1}^{M\wr\eta}(\om_1^M)\neq\Th_{\rSigma_1}^{M\wr\xi}(\om_1^M),\]
and let $\gamma_\xi$ be the least $\gamma<\om_1^M$ such that
\[
\Th_{\rSigma_1}^{M\wr\eta_\xi}(\gamma+1)\neq\Th_{\rSigma_1}^{M\wr\xi}(\gamma+1).
\]
Now let $X\sub\OR^M$ be unbounded.
Let $\gamma<\om_1^M$.
Then there is $\xi\in X$
such that $\Hull_1^M(\gamma)$ is bounded in $\xi$,
so
$\Th_{\rSigma_1}^M(\gamma)=\Th_{\rSigma_1}^{M\wr\xi}(\gamma)$.
But then note that $\gamma_\xi\geq\gamma$,
and $\gamma_\xi\in\Hull_1^M(X)$,
but then since $\gamma_\xi<\om_1^M$, we have $\gamma_\xi+1\sub\Hull_1^M(X)$.

For $U$ it is likewise, as $i:M\to U$ is
$\Sigma_1$-elementary and cofinal.
\end{proof}

Let $\mu=\crit(F^M)$.
Since $U=\Ult_0(M,F^M)$, we have
$U=\Hull_1^U(\rg(i)\cup\{\mu\})$,
and  $\rg(i)=\Hull_1^U(\om_1^U)$.
It follows that  $H'=\Hull_1^U(\{\mu,x\})$ is bounded in $U$ for every $x\in U$;
for otherwise, Claim \ref{clm:unbounded_hull_of_M} implies that $\om_1^U\sub H'$,
and hence $\rg(i)\sub H'$, and hence $U=H'$,
but then $\rho_1^U=\om$, a contradiction.

Recall that $M^\passive$ was defined in \S\ref{sec:premice_and_phalanxes}.
We will now find another embedding $j:M^\passive\to U^\passive$
with $\crit(j)=\mu$,
use it to lift $F^M$ to $F'$ over $U^\passive$,
defining a premouse $U'=(U^\passive,F')$,
and deduce from this that $K=\Hull^{U'}(\{\mu\})$
is unbounded in $U'$ and $U'$ is non-solid.
It seems to be the fusion of the (canonical) information
in $F^M$ together with the (non-canonical) information
in $j$ which  leads to the failure of solidity.

\begin{clm}\label{clm:generic_embedding_j}There is $j:M^\passive\to U^\passive$ such that $j$
 is
cofinal and
elementary
 (in the language of passive premice),
 $\crit(j)=\mu$,  $j$ is continuous at $\mu^{+M}$,
$U$ has universe $\{j(f)(\mu)\bigm|f\in M\}$, and $j\neq i$.\end{clm}
\begin{proof}Note that $i$ has all desired properties, except that
 $i=i$.
So let $G$ be  $(L[U^\passive],\PP)$-generic where $\PP=\Coll(\om,\OR^{U})$.
(We get such  a $G\in V$.
For let $M_1=U$ and $M_2=\Ult_0(M_1,F^{M_1})$.
Then $M_2$ is countable and $L[U^\passive]\sub L[(M_2)^{\passive}]$ and $L[(M_2)^{\passive}]$
has no segment projecting strictly across $\OR^{M_2}$ (consider the length $\OR$ iteration of $M$ using $F^M$ and its images). So $\pow(\OR^U)\cap L[U^\passive]\sub M_2$, and so is countable.)
Fix a surjection $g:\om\to U^\passive$ with $g\in L[U^\passive,G]$.
Let $T=T_g$ be the natural tree of attempts
to build a $j$ with the desired properties, excluding
that $j\neq i$, with cofinality/surjectivity
requirements
arranged by using $g$ for bookkeeping in the natural way.
Clearly $T$ is illfounded, since $i$
is such an embedding. But we have $T\in L[U^\passive,G]$,
whereas $i\notin L[U^\passive,G]$, so we get (many)  such $j\neq i$
with $j\in L[U^\passive,G]$.
\end{proof}

Fix any $j$ as in Claim \ref{clm:generic_embedding_j} (it need not be generic over $L[U^\passive]$).
Let $E=E_j$. Note that $U^\passive=\Ult_0(M^\passive,E)$
and $E$ is generated by $\{\mu\}$.
Let $U'=\Ult_0(M,E)$, i.e.~including the  active extender.
Then $j:M\to U'$ is cofinal $\Sigma_1$-elementary,
and  $U'$ is a premouse. We have (and let)
\begin{equation}\label{eqn:H_=_rg(j)} H=\rg(j)=\Hull_1^{U'}(\om_1^M)=\Hull_1^{U'}(\mu),\end{equation}
and  $M$ is the transitive collapse of $H$.
(Note that if $j\in L[U^\passive,G]$ where $G$
is set-generic over $L[U^\passive]$, then $U'\notin L[U^\passive,G]$,
because from $j$ and $U'$ one can recover $M$,
but
 $M\notin L[U^\passive,G]$.)

 \begin{clm}\label{clm:unbounded_hull_of_U'}
$\om_1^M=\om_1^{U'}\sub\Hull_1^{U'}(X)$
for any unbounded $X\sub\OR^{U'}$.
\end{clm}
\begin{proof}
 Like for $U$ in the proof of Claim \ref{clm:unbounded_hull_of_M}.
\end{proof}

Let $K=\Hull_1^{U'}(\{\mu\})$. The plan is to show
that $K$ is unbounded in $U'$, and hence (by Claim \ref{clm:unbounded_hull_of_U'}, line (\ref{eqn:H_=_rg(j)}) and choice of $j$ (i.e., as in Claim \ref{clm:generic_embedding_j}))
$K=U'$, so $\rho_1^{U'}=\om<\rho_1^M$,
and so by Lemma \ref{lem:basic_fs_pres}, $U'$ cannot be $1$-solid (actually
we will establish the failure of solidity  without
appeal to \ref{lem:basic_fs_pres}).

Let $\bar{U}$ be the transitive collapse of $K$ and $\pi:\bar{U}\to U'$
the uncollapse.
Write $\pi(\bar{\mu})=\mu$, $\pi(\bar{\lambda})=\lambda=\crit(F^{U'})$,
etc. Let $\bar{H}=\Hull_1^{\bar{U}}(\om_1^{\bar{U}})$
and $\bar{M}$ be its transitive collapse
and $\bar{j}:\bar{M}\to\bar{U}$ the uncollapse.

Recall the \emph{hull property} from Definition \ref{dfn:hull_prop}.

\begin{clm}\label{clm:p_1_leq_kappa}
We have:
\begin{enumerate}[label=\arabic*.,ref=\arabic*]
\item\label{item:Ubar_is_gend} $\bar{U}$ is a premouse and
$\bar{U}=\Hull_1^{\bar{U}}(\{\bar{\mu}\})$,
so $\rho_1^{\bar{U}}=\om$ and $p_1^{\bar{U}}\leq\{\bar{\mu}\}$.
\item\label{item:Hull_om_1=Hull_kappabar}
$\bar{H}=\Hull_1^{\bar{U}}(\om_1^{\bar{U}})=\Hull_1^{\bar{U}}(\bar{\mu})$.
\item $\bar{\lambda}$ is the least ordinal in
$\bar{H}\cut\bar{\mu}$, and therefore $\crit(\bar{j})=\bar{\mu}$ and
$\bar{j}(\bar{\mu})=\bar{\lambda}$.\label{item:crit(jbar)}
\item\label{item:hull_property_for_Ubar}
$\bar{U}$ has the $(1,\emptyset)$-hull
property at $\bar{\mu}$,
so $\bar{M}||(\bar{\mu}^+)^{\bar{M}}=\bar{U}||(\bar{\mu}^+)^{\bar{U}}$,
so
\[
\bar{M}=\cHull_1^{\bar{U}}(\om_1^{\bar{U}})=
\cHull_1^{\bar{U}}(\bar{\mu})\notin\bar{U}.\]
\end{enumerate}
\end{clm}
\begin{proof}
Part \ref{item:Ubar_is_gend}: This is routine.

Parts \ref{item:Hull_om_1=Hull_kappabar}, \ref{item:crit(jbar)}: These follow
easily from the analogous properties of $H,U'$ and $\Sigma_1$-elementarity.

Part \ref{item:hull_property_for_Ubar}: As $U'$ has the
$(1,\emptyset)$-hull property at $\mu$, this also reflects.
 (Given $A\sub\mu$
 with $A\in\rg(\pi)$, $U'\sats$ ``There is $\alpha<\om_1$
 and $A'\sub\lambda=\crit(\dot{F})$ such that $A'\in\Hull_1(\{\alpha\})$
 and $A'\cap\mu=A$''. So $\bar{U}$ satisfies this regarding
$\bar{A}=\pi^{-1}(A)$.
 This gives the $(1,\emptyset)$-hull property.)
\end{proof}

\begin{clm}\label{clm:D_j_not_on_M-seq}
 The normal measure $D_j$ derived from $j$ is not that of any $M$-total
$F\in\es_+^M$.
\end{clm}
\begin{proof}
We already know  $D_j\neq F^M$.
But if $F\in\es^M$ is $M$-total type 1 then $F\neq D_j$ since
\[ \Ult_0(M^\passive,F)\neq\Ult_0(M^\passive,F^M)=U^\passive=\Ult_0(M^\passive,
D_j).\qedhere\]
\end{proof}

\begin{clm}\label{clm:D_jbar_not_on_Mbar-seq}
The normal  measure $D_{\bar{j}}$ derived from $\bar{j}$
is not that of any $\bar{M}$-total  $\bar{F}\in\es_+^{\bar{M}}$.
\end{clm}
\begin{proof}
Let $\bar{F}\in\es_+^{\bar{M}}$ be $\bar{M}$-total type 1
with $\crit(\bar{F})=\bar{\mu}$.
Let $F=j^{-1}(\pi(\bar{j}(\bar{F})))$, or $F=F^M$
if $\bar{F}=F^{\bar{M}}$. Suppose for simplicity of notation that $\bar{F}\in\es^{\bar{M}}$ (so $\bar{F}\neq F^{\bar{M}}$); otherwise it is essentially the same.
So $F\in\es^M$ is $M$-total type 1 with $\crit(F)=\mu$.
By Claim \ref{clm:D_j_not_on_M-seq}, $E=E_j$ and $F$ have distinct derived normal
measures.
Fix $A\sub\mu$ with $A\in E_{\{\mu\}}\cut F_{\{\mu\}}$.
Then letting $D'=j(F_{\{\mu\}})\in\Hull_1^{U'}(\mu)$, we have
\[ U'\sats\exists A'\ [A'\sub\lambda\text{ and }A'\in\Hull_1^{U'}(\mu)\text{
and
}\mu\in A'\text{ but }A'\notin D']\]
(as witnessed by $A'=j(A)$). This is a $\Sigma_1$-assertion of the parameter
$(\mu,D',\lambda)$,
so reflects into $\bar{U}$ regarding $(\bar{\mu},\bar{D},\bar{\lambda})$
where $\pi(\bar{D})=D'$. But $\bar{j}(\bar{F}_{\{\bar{\mu}\}})=\bar{D}$,
and this shows that $\bar{F}_{\{\bar{\mu}\}}$ and $D_{\bar{j}}$
are distinct measures.\end{proof}

The following claim is the key thing to prove:

\begin{clm}\label{clm:H_is_cofinal}
$K$ is cofinal in $\OR^{U'}$, and hence $\bar{U}=K=U'$.
\end{clm}
\begin{proof}
 Suppose $\xi=\sup K\cap\OR^{U'}<\OR^{U'}$.
Let $\widetilde{U}=U'\wr\xi$. Then $\pi:\bar{U}\to \widetilde{U}$ is
$\Sigma_1$-elementary. But since $\widetilde{U}\in U'$
and $\pi$ is determined by $(\pi(\bar{\mu}),\widetilde{U})$,
therefore $\bar{U},\pi\in U'$ also. Also $\rho_1^{\bar{U}}=\om$
and in fact
$\bar{U}=\Hull_1^{\bar{U}}(\{\bar{\mu}\})$,
so by Theorem \ref{thm:finite_gen_hull}
and the elementarity of $j$, it follows that
$U'\sats$ ``$\bar{C}=\core_1(\bar{U})\pins U'|\om_1^{U'}$ and $\bar{U}$ is
an iterate of $\bar{C}$, via a $0$-maximal tree $\bar{\Tt}$ of finite length''.\footnote{\label{ftn:mice_below_0^pistol}If $M$ is below $0^\pistol$
then instead of  Theorem \ref{thm:finite_gen_hull},
we can just use the fact
that every mouse below $0^\pistol$ is an iterate of its core (see \S\ref{sec:iterates_of_cores}). Note that just because $\bar{U}=\Hull_1^{\bar{U}}(\{\bar{\mu}\})$,
$\bar{\Tt}$ must have finite length.}
That is, whenever we have
such a triple $(\widetilde{M},\pi^*,M^*)\in M$
(in particular, with $\pi^*:M^*\to\widetilde{M}=M\wr\zeta$ for some $\zeta$), then $M^*$ is an
iterate of $C^*=\core_1(M^*)\pins M|\om_1^M$
via a finite $0$-maximal tree
(by Theorem \ref{thm:finite_gen_hull} and because $M^*$
is $(0,\om+1)$-iterable, since we have $\pi^*$).
But the assertion that this happens for all such triples is $\rPi_1$,
so lifts to $U'$. (Note that the assertion
refers to $F^{U'}$, not just $\es^{U'}=\es^{U^\passive}=\es^U$.)

Since $\bar{C}$ is $1$-sound and $\bar{U}$ is an iterate thereof, $\bar{U}$ is $1$-solid and $1$-universal. By Claim \ref{clm:p_1_leq_kappa}
parts \ref{item:Ubar_is_gend} and \ref{item:hull_property_for_Ubar},
$p_1^{\bar{U}}\leq\{\bar{\mu}\}$ but
$\bar{M}=\cHull_1^{\bar{U}}(\om_1^{\bar{U}})\notin\bar{U}$,
so since $\rho_1^{\bar{U}}=\om$ and by $1$-solidity, either $p_1^{\bar{U}}=\emptyset$
or $p_1^{\bar{U}}=\{\alpha\}$ for some $\alpha<\om_1^{\bar{U}}$.
But in either case and by $1$-universality,
\[ \bar{C}=\core_1(\bar{U})=\cHull_1^{\bar{U}}(\om_1^{\bar{U}})=\bar{M},\]
and $\bar{j}$ is the core map, and $\bar{j}=i^{\bar{\Tt}}$ (where $\bar{\Tt}$ is the finite tree on $\bar{C}=\bar{M}$ with
 last model $\bar{U}$ mentioned above). So $\crit(i^{\bar{\Tt}})=\bar{\mu}$
and since $\bar{U}=\Hull_1^{\bar{U}}(\{\bar{\mu}\})$,
therefore $\bar{\Tt}$ uses only extender $\bar{E}$,
which is type 1, $\bar{E}\in\es_+(\bar{M})$, and $\crit(\bar{E})=\bar{\mu}$.
But we also have $\bar{j}=i^{\bar{\Tt}}$,
and this contradicts Claim \ref{clm:D_jbar_not_on_Mbar-seq}.

So $K$ is cofinal in $U'$,
so by Claim \ref{clm:unbounded_hull_of_U'},
$\om_1^{U'}=\om_1^M\sub K$, but $\mu\in K$,
so $K=U'$.
\end{proof}

\begin{clm} $\rho_1^{U'}=\om$ and $p_1^{U'}=\{\mu\}$
and $U'$ is non-$1$-solid.
\end{clm}
\begin{proof}
 By the previous claim,  $\rho_1^{U'}=\om<\om_1^M=\rho_1^M$.
Since $E$ is weakly amenable to $M$
 and by Lemma \ref{lem:basic_fs_pres} part \ref{item:characterize_N'_solid}, it follows that
  $U'$ is non-$1$-solid. But actually, we will argue
  directly, without using Lemma \ref{lem:basic_fs_pres}.
  It suffices to see $p_1^{U'}=\{\mu\}$,
  since $M=\cHull_1^{U'}(\mu)$ and $M\notin U'$.
For this, it suffices to see $p_1^{U'}\geq\{\mu\}$,
by Claim \ref{clm:p_1_leq_kappa} and since $U'=\bar{U}$.
So let $p\in[\mu]^{<\om}$.
Then  $A\eqdef\cHull_1^{U'}(\{p\})=\cHull_1^{M}(\{p\})$,
since $\crit(j)=\mu$, and since $\rho_1^M=\om_1^M$, $A\in M|\om_1^M\sub U'$,
so we are done.
 \end{proof}
 Since $U'$ is a non-solid premouse, this completes the proof.
\end{proof}

We finish this section with an observation
regarding uniqueness of
active extenders in $\lambda$-indexed premice,
which relates somewhat to the foregoing argument.

\begin{tm}\label{tm:uniqueness_of_premouse_extender}
Let $M$ be an active $\lambda$-indexed premouse
with $\cof(\OR^M)$ uncountable.
 Then $F^{M|\alpha}$ is the unique $E$ such that $(M||\OR^M,E)$
is a $\lambda$-indexed premouse.
\end{tm}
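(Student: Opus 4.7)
The plan is to reduce uniqueness of the active extender to agreement of all proper initial segments of $\lambda = \OR^M$, and then to invoke the strong initial segment condition (ISC) built into $\lambda$-indexing together with the uncountable cofinality hypothesis. Writing $\kappa = \crit(F^M)$, I would first show that any candidate $E$ must have $\crit(E) = \kappa$, since $\lambda$-indexing forces the critical point of an active extender at index $\lambda$ to be the largest cardinal of the passive part $M||\lambda$ (because $\lambda = i_F(\kappa)$ leaves no $M$-cardinals strictly between $\kappa$ and $\lambda$). This largest cardinal is determined by $M||\lambda$ alone, so the critical points of $F^M$ and $E$ agree.

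Next, since every $a \in [\lambda]^{<\om}$ has $\max(a) < \lambda$ (as $\lambda = \lambda(F^M) = \lambda(E)$ is by definition the strict supremum of generators), one has $(E)_a = (E \rest \nu)_a$ for any $\nu$ with $\max(a) < \nu < \lambda$, and similarly for $F^M$. Thus it suffices to prove $F^M \rest \nu = E \rest \nu$ for a cofinal set of $\nu < \lambda$. Here I would invoke the ISC of $\lambda$-indexing: at each generator $\bar{\nu} < \lambda$ of an active $\lambda$-indexed extender, the appropriate trivial completion of the corresponding initial segment must appear on the extender sequence of the passive part $M||\lambda$. Applied to both $F^M$ and $E$, this forces their initial segments at any common generator $\bar{\nu}$ to coincide, each being determined by the same predicate on the shared $M||\lambda$.

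To conclude, I would leverage $\cof^V(\lambda) > \om$ to show the sets of generators of $F^M$ and of $E$ intersect cofinally in $\lambda$: each is cofinal in $\lambda$ by definition, and a club-intersection-style argument, applied to subsets of $\lambda$ of uncountable $V$-cofinality, produces cofinally many common generator slots. Uncountable cofinality also obstructs the sort of $L[U^\passive]$-generic construction of alternative embeddings exhibited in \S\ref{sec:non-solid}, where countable cofinality of $\OR^U$ was essential to obtain illfoundedness of the relevant tree over $L[U^\passive]$; so heuristically, any alternative extender $E$ would require precisely the kind of small-cofinality construction ruled out by hypothesis. The main obstacle will be the cofinal-intersection step and the careful handling of cases where $F^M$ and $E$ might have different types (e.g.~one superstrong and the other not), so that the ISC must be invoked more delicately at boundary generators; here uncountable cofinality should provide the crucial leverage to align the two extenders' generator structures on a cofinal set.
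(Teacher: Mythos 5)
Your proposal has several genuine gaps, and the paper's actual proof goes by a rather different route (a zipper argument, not initial-segment agreement). The paper in fact mentions, right after the proof, that the argument is reminiscent of the Zipper Lemma.

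The most serious problem is your very first step. You claim that $\lambda$-indexing forces $\crit(E)=\crit(F^M)$ because ``$\lambda=i_F(\kappa)$ leaves no $M$-cardinals strictly between $\kappa$ and $\lambda$.'' This is false: there are always $M$-cardinals between $\kappa$ and $\lambda(F)$, namely $(\kappa^+)^M,(\kappa^{++})^M,\dots$, and the passive part $N=M^{\mathrm{pv}}$ alone does not determine the critical point. The paper explicitly allows $\kappa_0\neq\kappa_1$ for the two candidate extenders $E_0,E_1$ and handles that case separately. (You would do well to note that the paper's $\lambda$ is the largest cardinal of $N$, \emph{not} $\OR^M$; your identification $\lambda=\OR^M$ conflates the index with the supremum of generators and muddies the rest of the argument.)

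The ISC step is also not valid as you state it. The $\lambda$-indexing ISC says that whole proper initial segments of the active extender appear on the sequence (indexed appropriately), but it does not say that two different active extenders over the same passive part must have matching restrictions at a common generator; nothing forces $E_0\rest\bar\nu=E_1\rest\bar\nu$ from the fact that each satisfies ISC separately, since the resulting segments would in general sit at different indices. The paper uses ISC quite differently: assuming $\rg(i_0)\subseteq\rg(i_1)$ with $\kappa_0<\kappa_1$, the $(\kappa_0,\kappa_1)$-extender derived from $i_0$ is a whole initial segment of $E_0$, so ISC puts it in $N$ itself, and then the contradiction is that it singularizes $(\kappa_1^+)^N$. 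This is what yields the key Claim~2 that $\rg(i_0)\cap\rg(i_1)$ is bounded in $N$. Your ``club-intersection-style argument'' for a cofinal set of common generators is unsupported --- two unbounded subsets of an ordinal of uncountable cofinality need not have unbounded intersection --- and the appeal to uncountable cofinality as an obstruction to \S5-type generic constructions is a heuristic, not a proof.

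Finally, the actual use of $\cof(\OR^M)>\om$ in the paper is concrete and different: fix $\gamma$ above the bounded intersection of ranges, build the alternating $\om$-sequence $\alpha_0,\beta_0,\alpha_1,\beta_1,\dots$ by leapfrogging the two embeddings, note that the sups $\alpha,\beta$ are still below $(\kappa_n^+)^N$ by the cofinality hypothesis, use weak amenability to see $i_n\rest(\alpha+1)\in N$ so that the construction is internal (hence $\cof^N(\alpha)=\cof^N(\beta)=\om$), and then continuity of the ultrapower maps at $\om$-cofinal points yields $i_0(\alpha)=i_1(\beta)$, contradicting the choice of $\gamma$. If you want to salvage your approach you would first need to abandon the claim that the critical points agree, and then replace the ISC-forces-agreement step with a genuine mechanism; the zipper argument is that mechanism.
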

\begin{proof}
Let $N=M^\passive$. Let
 $M_0=(N,E_0)$ and $M_1=(N,E_1)$ be $\lambda$-indexed
 premice with $E_0\neq E_1$. We will derive a contradiction.
 Let $i_n=i^{N}_{E_n}$ and $\kappa_n=\crit(i_n)$ for $n=0,1$.
 We may assume that $\kappa_0\leq\kappa_1$.
 Let $\lambda$ be the largest cardinal of $N$.

 \begin{clmeight} Let
$\alpha\in[\lambda,\OR^N)\cap\rg(i_0)\cap\rg(i_1)$.
Let $i_n(\bar{\alpha}_n)=\alpha$ for $n=0,1$.
Then:
\begin{enumerate}[label=\arabic*.,ref=\arabic*]
\item $i_0``(\bar{\alpha}_0+1)\sub\rg(i_1)$
\item  if $\kappa_0=\kappa_1$ then
  $\bar{\alpha}_0=\bar{\alpha}_1$
and
$i_0\rest(\bar{\alpha}_0+1)=i_1\rest(\bar{\alpha}_1+1)$.
\end{enumerate}
\end{clmeight}
\begin{proof}
Let $\beta\geq\alpha$
be least with $\rho_\om^{N|\beta}=\lambda$.
By elementarity, $\beta\in\rg(i_0)\cap\rg(i_1)$.
Let $i_n(\bar{\beta}_n)=\beta$.
Then  $\kappa_n\leq\bar{\alpha}_n\leq\bar{\beta}_n<(\kappa_n^+)^N$ and
$\rho_\om^{N|\bar{\beta}_n}=\kappa_n$.
But then note that
\[\Th^{N|\bar{\beta}_0}(\kappa_0)=
\Th^{N|\beta}(\kappa_0)=\Th^{N|\bar{\beta}_1}(\kappa_0). \]
If $\kappa_0=\kappa=\kappa_1$, it follows that
$\bar{\beta}_0=\bar{\beta}_1$
and similarly, because $i_0\rest\kappa=\id=i_1\rest\kappa$,
it follows that $i_0\rest(N|\bar{\beta}_n)=i_1\rest(N|\bar{\beta}_n)$,
which suffices in this case. And if $\kappa_0<\kappa_1$,
it is similar, but we get
$i_0``(N|\bar{\beta}_0)\sub i_1``(N|\bar{\beta}_1)$.
\end{proof}

\begin{clmeight}
 $\rg(i_0)\cap\rg(i_1)$ is bounded in $N$.
\end{clmeight}
\begin{proof}
Suppose not. As $E_0\neq E_1$ and by the previous claim,
it follows that $\kappa_0<\kappa_1$
and so $\rg(i_0)\sub\rg(i_1)$.
But then letting $E$ be the $(\kappa_0,\kappa_1)$-extender
derived from $i_0$, note that $E$ is a whole segment
of $E_0$ with $\lambda(E)=\kappa_1$. So by the initial segment
condition for $\lambda$-indexed premice,
$E\in N$. But $E$ singularizes $(\kappa_1^+)^N$, a contradiction.
\end{proof}

Now
fix $\gamma<\OR^N$ above $\sup(\rg(i_0)\cap\rg(i_1))$.
So let $\alpha_0<(\kappa_0^+)^N$
be least with $i_0(\alpha_0)>\gamma$.
Let $\beta_0<(\kappa_1^+)^N$ be least
with $i_1(\beta_0)>i_0(\alpha_0)$.
Let $\alpha_1<(\kappa_0^+)^N$ be least
 with $i_0(\alpha_1)>i_1(\beta_0)$. Etc.
 Let $\alpha=\sup_{n<\om}\alpha_n$
 and $\beta=\sup_{n<\om}\beta_n$.

Since $\cof(\OR^N)=\cof(\kappa_n^+)^N>\om$,
we have $\alpha<(\kappa_0^+)^N$ and $\beta<(\kappa_1^+)^N$.
But then  $i_0\rest(\alpha+1)\in N$ and $i_1\rest(\beta+1)\in N$,
by weak
amenability. So the construction of $\left<\alpha_n,\beta_n\right>_{n<\om}$ can
be done inside
$N$, so $\cof^N(\alpha)=\om=\cof^N(\beta)$.
So $i_0$ is continuous
at $\alpha$ and $i_1$ continuous at $\beta$.
But then $i_0(\alpha)=i_1(\beta)\in\rg(i_0)\cap\rg(i_1)$,
a contradiction, completing the proof.
\end{proof}

Note the preceding
argument is reminiscent of the Zipper Lemma.
Given also the known methods of translating between extenders
and iteration strategies, there is in fact a significant connection there.
For the question of identifying the extender
sequence of $L[\es]$ when $V=L[\es]$ for a Mitchell-Steel
indexed premouse $L[\es]$ see \cite{extmax} and \cite{V=HODX_pub}.

\section{Standard bicephali}\label{sec:bicephali}

As sketched in \S\ref{sec:meas_Dodd_prof_fin_plan},
the proof of projectum-finite-generation (Theorem \ref{thm:finite_gen_hull},
via Lemma \ref{lem:finite_gen_hull})
 will use a
 comparison argument involving a
(generalized) bicephalus
of the form $B=(\rho,C,M)$,
where $M$ is $k$-sound and $(k+1)$-universal, $C=\core_{k+1}(M)$ is
$(k+1)$-solid and $\rho=\rho_{k+1}^M$. This bicephalus is therefore
like the exact bicephali of
\cite[Definition 3.1]{premouse_inheriting}, except
that $M$ need not be $\rho$-sound (and is not in the case of interest).
So the development in \cite{premouse_inheriting}
does not literally apply to $B$; however, it all adapts
in an obvious manner.\footnote{In fact, $M$ will be projectum-finitely-generated,
as in the statement of Theorem \ref{thm:finite_gen_hull}.
So we could augment the premouse language
with a new constant symbol $\dot{x}$, and interpret $\dot{x}$ as the least
$x\in[\rho_k^M]^{<\om}$ such that $M=\Hull_{k+1}^M(\rho\cup\{x\})$.
Then $M$ is  $\rho$-sound
(in fact $(k+1)$-sound) in the expanded language (with $p_{k+1}=\emptyset$ in
the new language),
and the iteration maps $i$ which act on bicephali will all easily
preserve the interpretation of the symbol (when shifting $\rho$
to $\sup i``\rho=\rho_{k+1}^{M'}$, where $M'$ is the iterate). Under this
interpretation, the theory of \cite{premouse_inheriting} goes through
regarding the segments of iteration trees which shift bicephali.
However, one obviously needs to adapt things for the parts
which shift a non-dropping iterate $M'$ of $M$ above $\rho_{k+1}^{M'}$.}
Iteration trees on $B$ will also be formed just as in
\cite{premouse_inheriting}.

For some self-containment, we give the basic definitions in this section, but
omit various calculations which appear in \cite{premouse_inheriting}.
In \S\S\ref{sec:simple_embeddings}--\ref{sec:capturing_elements},
we will introduce and deal with
with with embeddings between iteration trees on
premice and on bicephali,
finite support for such trees,
 and certain decompositions of iteration maps.
These will be important
in the main arguments in \S\S\ref{sec:mim},\ref{sec:Dodd_proof},\ref{sec:finite_gen_hull}, and for parts of this material (in \S\S\ref{sec:simple_embeddings},\ref{sec:Dodd_prelim}),
it will use the basic definitions and notation relating to bicephali.

In the proof of solidity in \S\ref{sec:solidity}, we will work with bicephali of a different
kind, and also the trees on those bicephali will be formed somewhat differently
to those we are about to describe.
We will wait until \S\ref{sec:solidity} to introduce those bicephali formally,
but the main idea and much of the setup and notation is the same.

\begin{dfn}
 Let $M$ be an $m$-sound premouse
 and $\rho<\rho_m^M$. Say that
 $M$ is \dfnemph{$(m+1,\rho)$-finitely generated}
 iff there is some $g\in[\OR^M]^{<\om}$
 such that
 $M=\Hull_{m+1}^M(\rho\cup\{g,\pvec_m^M\})$.
Let $g^M(\rho)$ be the least such $g$.
\end{dfn}

The following is a variant
of \cite[Definition 3.1]{premouse_inheriting}.

\begin{dfn}\label{dfn:3-simple_pre-exact_bicephalus}
A \dfnemph{3-simple pre-exact bicephalus}\footnote{But we normally
just write \emph{bicephalus}.} is a
structure
$B=(\rho,M,N)$ such that $M,N$ are premice,
$\rho<\min(\OR^M,\OR^N)$ and $\rho$
is a cardinal of both $M,N$,
 $M||\rho^{+M}=N||\rho^{+N}$,
for some $m,n\in\om$,
we have $\rho<\rho_m^M$
and $M$ is $m$-sound and $(m+1,\rho)$-finitely generated
(hence $\rho_{m+1}^M\leq\rho$)
and $\rho_{n+1}^N\leq\rho<\rho_n^N$ and $N$ is $n$-sound
and $(n+1,\rho)$-finitely generated.

We say $B$ has \dfnemph{degree} $(m,n)$.
We write $(\rho^B,M^{0B},M^{1B})=(\rho,M,N)$
and $B||\rho^{+M}=M||\rho^{+M}=N||\rho^{+N}$.
We say $B$ is \dfnemph{trivial} iff $M=N$.

Let $\widetilde{B}=(\widetilde{M},\widetilde{N},\widetilde{\rho})$ be a triple
such that $\widetilde{M},\widetilde{N}$ are structures for the premouse language satisfying the premouse axioms
and $\widetilde{\rho}$ is a linear order.
We say that $\widetilde{B}$ is \dfnemph{wellfounded} if $\widetilde{M},\widetilde{N}$ are both wellfounded, and hence premice, and $\widetilde{\rho}$
is a wellorder
  (and in this case we take $\widetilde{M},\widetilde{N},\widetilde{\rho}$
to be transitive).
\end{dfn}

\begin{rem}
 Note that in \cite[Definition 3.1]{premouse_inheriting},
 it is allowed for example that $m=-1$,
 which means that $M$ is type 3 with $\rho=\rho_0^M=\lgcd(M)$;
 likewise for $N,n$. Although we could have developed
 the material in this section also for that case,
 doing so would have added complications (like those
 in \cite[Definition 3.14]{premouse_inheriting}, for example).
 We actually have no application for this case in this paper,
so we have instead restricted our attention to $m,n\geq 0$
(hence the adjective ``3-simple'').
 On the other hand, in \cite[Definition 3.1]{premouse_inheriting},
 it is also assumed that $M,N$ are $\rho$-sound,
whereas this assumption is weakened here.
Actually we only need to use these bicephali
under the added assumptions
that $m=n$, $\rho=\rho_{n+1}^N=\rho_{n+1}^M$,
$N$ is $(n+1)$-universal and
$M=\core_{n+1}(N)$ is $(n+1)$-solid
(hence $(n+1)$-sound),
but $N$ is not $(n+1)$-sound.

From now on in this section,
and until \S\ref{sec:solidity},
the only bicephali we consider will be 3-simple pre-exact,
so we drop these two adjectives and just write \emph{bicephalus}.
\end{rem}

\begin{dfn}\label{dfn:bicephalus_ult}
 Let $B$ be a bicephalus of degree $(m,n)$.
A short extender $E$ is
\dfnemph{weakly amenable} to
$B$ iff $\crit(E)<\rho^B$ and $E$ is weakly amenable to $M^B$ (equivalently, to
$N^B$). Suppose $E$ is weakly amenable to $B$. Define
\[ B'=\Ult_{m,n}(B,E)=(M',N',\rho') \]
where $M'=\Ult_m(M,E)$, $N'=\Ult_n(N,E)$ and $\rho'=\sup i_E``\rho$.
\end{dfn}

We generalize abstract iterations of premice in the obvious manner:
\begin{dfn}\label{dfn:bicephalus_abstract_it}
Let $B$ be a standard bicephalus
of degree $(m,n)$. A \dfnemph{degree $(m,n)$ abstract
weakly amenable
iteration of $B$} is a pair
$\left(\left<B_\alpha\right>_{\alpha\leq\lambda},\left<E_\alpha\right>_{
\alpha<\lambda}\right)$
such that $B_0=B$,
for all $\alpha<\lambda$,
$B_\alpha$ is a degree $(m,n)$ standard bicephalus,
$E_\alpha$ is a short extender weakly amenable to $B_\alpha$,
$\crit(E_\alpha)<\rho^{B_\alpha}$,
$B_{\alpha+1}=\Ult_{m,n}(B_\alpha,E_\alpha)$,
and for all limits $\eta\leq\lambda$,
$B_\eta$ is the resulting direct limit.
We say the iteration is \dfnemph{wellfounded}
if $B_\lambda$ is wellfounded.
\end{dfn}

\begin{lem}\label{lem:bicephalus_ult}
 With notation as in \ref{dfn:bicephalus_ult},
 suppose $B'=(\rho',M',N')$
 is wellfounded.
 Let $i:M\to M'$ and $j:N\to N'$ be the ultrapower maps.
 Then:
 \begin{enumerate}[label=\arabic*.,ref=\arabic*]
  \item\label{item:B'_is_biceph_deg_pres} $B'$ is a bicephalus of degree $(m,n)$.
  \item\label{item:B'_trivial_iff_B_trivial} $B'$ is trivial iff $B$ is trivial.
  \item\label{item:g_pres} $i(g^M(\rho))=g^{M'}(\rho')$
  and $j(g^N(\rho))=g^{N'}(\rho')$,
  \item\label{item:i,j_agree_cont_cof} $i\rest(B||\rho^{+B})\sub j$
  and $i,j$ are continuous/cofinal at $\rho^{+B}$.
 \end{enumerate}
 Likewise for $B_\lambda$ etc if the abstract iteration in
\ref{dfn:bicephalus_abstract_it}
 is wellfounded.
\end{lem}
\begin{proof}
This is mostly as in
\cite[Lemma 3.15]{premouse_inheriting}, but
the augmented bicephalus defined in
\cite[Definition 3.14]{premouse_inheriting}
is not relevant, and there are other small differences.
Let us first verify
that $M'$ is $(m+1,\rho')$-finitely generated and likewise for $N',n$,
and part \ref{item:g_pres} holds.
Let $g'=i(g^M(\rho))$. Then
\begin{equation}\label{eqn:M'_is_Hull_pres} M'=\Hull_{m+1}^{M'}(\rho'\cup\{g',\pvec_m^{M'}\}) \end{equation}
as usual. Moreover,
$g'$ is least such, because otherwise
there is $g''\in[\OR^{M'}]^{<\om}$
and $\beta<\rho$
with
\[ g'\in\Hull_{m+1}^{M'}(i(\beta)\cup\{g'',\pvec_m^{M'}\}),\]
but this is an $\rSigma_{m+1}$ assertion over $M'$
about $(i(\beta),g',\pvec_m^{M'})$, so
it pulls back to $M$ about $(\beta,g^M(\rho),\pvec_m^M)$,
a contradiction. Likewise for $N',n$.

 Parts \ref{item:B'_is_biceph_deg_pres} and \ref{item:i,j_agree_cont_cof} are quite routine consequences of what we have established above.
 For part \ref{item:B'_trivial_iff_B_trivial}, suppose $B$ is non-trivial but $m=n$. Then we can fix an $\rSigma_{m+1}$ formula $\varphi$ and $\vec{x}\in\rho^{<\om}$ such that
$M\sats\varphi(\vec{x},g^M(\rho))$ iff $N\sats\neg\varphi(\vec{x},g^N(\rho))$.
But then by part \ref{item:g_pres},
$M'\sats\varphi(i(\vec{x}),g^{M'}(\rho'))$ iff $ N'\sats\neg\varphi(j(\vec{x}),g^{N'}(\rho'))$,
and since $i(\vec{x})=j(\vec{x})$ by part \ref{item:i,j_agree_cont_cof}, it follows that $M'\neq N'$.
\end{proof}

We next define degree-maximal iteration trees $\Tt$ on bicephali $B=(M,N,\rho)$,
and associated notation and terminology.
This follows \cite[Definition 3.20]{premouse_inheriting}: as there,
these are much like iteration trees on premice, but
associated to each node $\alpha<\lh(\Tt)$ of $\Tt$,
 will be a structure $B_\alpha$,
 which is either a bicephalus
or a premouse. We  write $\mathscr{B}$
for the set of nodes $\alpha$ such that $B_\alpha$ is a bicephalus. If $\alpha\in\mathscr{B}$ then we write $B_\alpha=(M^0_\alpha,M^1_\alpha,\rho_\alpha)$ for this bicephalus.
  Associated to each node $\alpha$ will also be a set $\sides_\alpha\sub\{0,1\}$, with $\sides_\alpha\neq\emptyset$, indicating
which ``sides'' of the base bicephalus $B$ are associated to  $B_\alpha$.
We will have
 $\sides_\alpha=\{0,1\}$ iff $\alpha\in\mathscr{B}$.

 Note that for $\alpha<\lh(\Tt)$, we have $\alpha\notin\mathscr{B}$
 iff $\sides_\alpha=\{e\}$ for some $e$, and  here $e\in\{0,1\}$.
 If $\sides_\alpha=\{e\}$
 then $B_\alpha$ will be a premouse, and we also write $B_\alpha=M^e_\alpha$.
In general, $\mathscr{B}\cap[0,\alpha]^\Tt$ will be a closed initial segment of $[0,\alpha]^\Tt$. Let $\beta=\max(\mathscr{B}\cap[0,\alpha]^\Tt)$, so $B_\beta=(M^0_\beta,M^1_\beta,\rho_\beta)$ is a bicephalus. Suppose $\beta<\alpha$
 and $\sides_\alpha=\{e\}$. Then we can think of $B_\alpha=M^e_\alpha$ as being ``above $M^e_\beta$'', analogous to how models of an iteration tree on a phalanx are ``above'' a model of the phalanx.

 If $\alpha+1<\lh(\Tt)$
 then we will have an integer $\exitside_\alpha\in\sides_\alpha$
 which indicates the ``side''
from which the exit extender $E^\Tt_\alpha$ is taken, and in general this ``side'' is unrestricted. That is, we must just have $E^\Tt_\alpha\in\es_+(M^{e\Tt}_\alpha)$ where $e=\exitside_\alpha$.
So if $\sides_\alpha=\{e\}$ then  $\exitside_\alpha=e$.
But if  $\alpha\in\mathscr{B}$ then in general, we are free to choose any $e\in\{0,1\}$ as $\exitside_\alpha$, and choose $E^\Tt_\alpha\in\es_+(M^{e\Tt}_\alpha)$.

 Suppose  $\beta\in\mathscr{B}=\pred^\Tt(\alpha+1)$.
 If $\crit(E^\Tt_\alpha)<\rho_\beta$
 and $E^\Tt_\alpha$
 is total over $M^0_\beta$ (equivalently, total over $M^1_\beta$),
 then we will put $\alpha+1\in\mathscr{B}$ and set $B_{\alpha+1}=\Ult_{m,n}(B_\beta,E^\Tt_\alpha)$ where $B$ has degree $(m,n)$. If
  $\rho_\beta\leq\crit(E^\Tt_\alpha)$
  then $\alpha+1\notin\mathscr{B}$, so $B_{\alpha+1}$ will just be a premouse.
In the latter case it will be important that we set $\sides_{\alpha+1}=\{e\}$
  and $M^{*\Tt}_{\alpha+1}\ins M^e_\beta$
  where $\exitside^\Tt_\beta=e$.
  Otherwise things work mostly as for $k$-maximal iteration trees on $k$-sound premice. We now give all the  details.

\begin{dfn}\label{dfn:bicephalus_tree}
 Let $B=(M^0,M^1,\rho)$ be a bicephalus of degree $(m^0,m^1)$.
Let $\lambda\in\OR\cut\{0\}$.
A \dfnemph{degree-maximal iteration tree on  $B$} of
\dfnemph{length $\lambda$} is a  system
\[
\Tt=\left({<^\Tt},\curlyB,
\dropset,\dropset_{\deg},\left<B_\alpha,\rho_\alpha,\sides_\alpha\right>_{
\alpha<\lambda},
\left<M^{e}_{\alpha},\deg^e_\alpha,i^e_{\alpha\beta}\right>_{\alpha\leq
\beta<\lambda\text{ and }e<2}, \right. \]
\[ \left.\left<\exitside_\alpha,\exit_\alpha,E_\alpha,
B^*_{\alpha+1}\right>_{\alpha+1<\lambda},
\left<M^{e*}_{\alpha+1},
i^{e*}_{\alpha+1},i^{e*}_{\alpha+1,\beta}\right>_{\alpha+1\leq\beta<\lambda\text{ and }e< 2}\right), \]
with the following properties
for all $\alpha<\lambda$:
\begin{enumerate}[label=\arabic*.,ref=\arabic*]
 \item  ${<^\Tt}$ is an iteration tree order on $\lambda$,
 \item $\dropset\sub\lambda$ is the set of \dfnemph{dropping nodes},
 \item $\emptyset\neq\sides_\alpha\sub\{0,1\}$,
 \item $\alpha\in\mathscr{B}$
 iff $\sides_\alpha=\{0,1\}$,
\item $0\in\curlyB\sub\lambda$ and $\curlyB$ is closed
in $\lambda$ and closed downward under $<^\Tt$,
\item $B_0=B$ and $(\deg^0_0,\deg^1_0)=(m^0,m^1)$,
\item If
$\alpha\in\curlyB$
then $B_\alpha=(M^0_\alpha,M^1_\alpha,\rho_\alpha)$
is a degree $(m^0,m^1)$ bicephalus
and $(\deg^0_\alpha,\deg^1_\alpha)=(m^0,m^1)$.
\item If $\alpha\notin\curlyB$
and $\sides_\alpha=\{e\}$
then $B_\alpha=M^e_\alpha$ is a $\deg^e_\alpha$-sound premouse,
and $M^{1-e}_\alpha=\emptyset$,
\item If $\alpha+1<\lambda$
then letting $e=\exitside_\alpha$, we have
$e\in\sides_\alpha$ and $\exit_\alpha\ins M^e_\alpha$.
\item $E_\alpha=F^{\exit_\alpha}\neq\emptyset$.
\item If $\alpha+1<\beta+1<\lh(\Tt)$ then $\lh(E_\alpha)\leq\lh(E_\beta)$.
\item\label{item:it_tree_on_biceph_clause_succssor_step} If $\alpha+1<\lh(\Tt)$
then $\beta=\pred^\Tt(\alpha+1)$ is least
with $\crit(E_\alpha)<\nu(E_\beta)$. Moreover,
\begin{enumerate}
\item $\sides_{\alpha+1}\sub\sides_\beta$
\item\label{item:it_tree_on_biceph_clause_when_alpha+1_in_curlyB} $\alpha+1\in\curlyB$ iff
[$\beta\in\curlyB$ and $\crit(E_\alpha)<\rho_\beta$
and $E_\alpha$ is $B_\beta$-total].
\item If $\alpha+1\in\curlyB$ then $B^{*}_{\alpha+1}=B_\beta$,
$B_{\alpha+1}=\Ult_{m^0,m^1}(B_\beta,E_\alpha)$
and $i^{0*}_{\alpha+1},i^{1*}_{\alpha+1}$ are the associated
ultrapower maps
(as in \ref{dfn:bicephalus_ult}).
\item If $\alpha+1\notin\curlyB$
then
$\sides_{\alpha+1}=\{e\}$
where $e=\exitside_\beta$, and moreover,
 $M^{e*}_{\alpha+1}\ins M^e_\beta$ and $\deg^e_{\alpha+1}$
are determined as for degree-maximal trees on premice
(with $\deg^e_{\alpha+1}\leq m_e$ if
$(0,\alpha+1]^\Tt\cap\dropset=\emptyset$),
\[ M^e_{\alpha+1}=\Ult_{d}(M^{e*}_{\alpha+1},E_\alpha) \]
where $d=\deg^e_{\alpha+1}$,
and $i^{e*}_{\alpha+1}$ is the ultrapower map.
\item $\alpha+1\in\dropset$ iff $\sides_{\alpha+1}=\{e\}$
and $M^{e*}_{\alpha+1}\pins M^e_\beta$.
\item\label{item:it_tree_on_biceph_clause_when_alpha+1_in_dropset_deg} $\alpha+1\in\dropset_{\deg}$ iff
 either $\alpha+1\in\dropset$
 or ($\sides_{\alpha+1}=\{e\}$ and
$\deg^e_{\alpha+1}<\deg^e_\beta$).
\end{enumerate}
\item\label{item:biceph_tree_it_maps}The iteration map $i^e_{\alpha\beta}:M^e_\alpha\to
M^e_\beta$ is defined iff [$\alpha\leq^\Tt\beta$
and $\dropset\cap(\alpha,\beta]^\Tt=\emptyset$],
and is then defined as usual;
likewise for $i^{e*}_{\alpha+1}$ and $i^{e*}_{\alpha+1,\beta}=i^e_{\alpha+1,\beta}\com i^{e*}_{\alpha+1}$ if $\alpha+1<\lh(\Tt)$.
\item\label{item:bicephalus_tree_limit_alpha} Suppose $\alpha$ is a limit. Then $\dropset\cap(0,\alpha)^\Tt$
is finite and $\alpha\notin\dropset\cup\dropset_{\deg}$,
$\sides_\alpha=\lim_{\beta<^\Tt\alpha}\sides_\beta$,
and the models $M^e_\alpha$ are the resulting direct limits
under the iteration maps $i^e_{\beta\gamma}$
for $\beta\leq^\Tt\gamma<^\Tt\alpha$, etc,
determining $B_\alpha$, etc.
And $\deg^e_\alpha=\lim_{\beta<^\Tt\alpha}\deg^e_\beta$. Here if $\alpha\in\curlyB$
then $\rho_\alpha=\sup_{\beta<^\Tt\alpha}i^{e\Tt}_{\beta\alpha}``\rho_\beta$.
\end{enumerate}

Note that even if
$E_\beta\in\es_+(M^{0}_\beta)\cap\es_+(M^{1}_\beta)$,
we still specify $\exitside_\beta$, and this is used
to determine $\sides_{\alpha+1}$ when $\pred^\Tt(\alpha+1)=\beta$.

Superscript $\Tt$ denotes an object associated
to $\Tt$, so $B^\Tt_\alpha=B_\alpha$, $M^{e\Tt}_\alpha=M^e_\alpha$, etc.
We employ other notation associated to iteration trees (in particular as described in \S\ref{sec:notation_iteration_trees})
in the obvious manner; for example, if $\lh(\Tt)=\xi+1$ then $\infty$ may be used to denote $\xi$.

Also define
$\curlyM^{e\Tt}=\{\alpha<\lambda\bigm|\sides_\alpha=\{e\}\}$.
Given for example a bicephalus $(P,Q,\rho)$,
we  may also write  $\mathscr{P}^\Tt=\curlyM^{0\Tt}$
and
$P_\alpha=M^{0\Tt}_\alpha$
and $i_{\alpha\beta}=i^{0\Tt}_{\alpha\beta}$,
and likewise  $\mathscr{Q}^\Tt=\curlyM^{1\Tt}_\alpha$
and $Q_\alpha=M^{1\Tt}_\alpha$
and $j_{\alpha\beta}=i^{1\Tt}_{\alpha\beta}$.

A \dfnemph{putative degree-maximal iteration tree}
on $B$ is likewise, but if $\lambda=\alpha+1$,
then we do not demand that $B_\alpha$ is wellfounded
or that it be a bicephalus/premouse.
\end{dfn}

The following  lemma summarizes
some basic  facts.
The proof
is left to the reader,
but it is much as for trees on premice,
 using Lemmas \ref{lem:bicephalus_ult}
and \ref{cor:basic_fs_pres} (it is also like
\cite[Lemma 3.21]{premouse_inheriting}).

\begin{lem}\label{lem:deg-max_tree_biceph_basic_props}
 Let $\Tt$ be a putative degree-maximal tree on a
 degree $(m^0,m^1)$ bicephalus $(M^0,M^1,\rho)$ with wellfounded models. Then it is a degree-maximal
 tree, i.e.~satisfies all conditions of Definition \ref{dfn:bicephalus_tree}.
 Moreover, writing
$\mathscr{B}=\curlyB^\Tt$, etc, we have:

\begin{enumerate}[label=\arabic*.,ref=\arabic*]
\item\label{item:Ff-iteration_tree_agmt}
Let $\alpha<\beta<\lh(\Tt)$. Then:
 \begin{enumerate}[label=--]
  \item If $\beta\in\scB$ then $\lh(E_\alpha)\leq\rho_\beta$ and
$\lh(E_\alpha)$ is
a cardinal of $B_\beta$.

  \item $\exit_\alpha||\lh(E_\alpha)=B_\beta||\lh(E_\alpha)$.

  \item If $e\in\{0,1\}$ and  $\beta\in\scM^{e}$ then either:
  \begin{enumerate}[label=--]
   \item $\lh(E_\alpha)<\OR(M^{e}_\beta)$ and
   $\lh(E_\alpha)$ is a
cardinal of
$M^{e}_\beta$, or
\item $\lh(E_\alpha)=\OR(M^{e}_\beta)$ and
$\beta=\alpha+1$ and
$E_\alpha$ is superstrong
and $M^{e}_\beta$ is active type 2.
 \end{enumerate}
\end{enumerate}
 \item \label{item:closeness}
If $\alpha+1<\lh(\Tt)$ then $E_\alpha$ is weakly amenable to
$B^{*}_{\alpha+1}$ and if
$\alpha+1\notin\scB$
then $E_\alpha$ is close to $B^{*}_{\alpha+1}$.

\item Let $\alpha<\lh(\Tt)$ and $e\in\sides_\alpha$ and
$\delta\leq^\Tt\alpha$ with $(\delta,\alpha]^\Tt\cap\dropset_{\deg}=\emptyset$. Then
$i^{e}_{\delta\alpha}$ is a $\deg^{e}_\alpha$-embedding,
and if
$\delta$ is a
successor then
$i^{e*}_{\delta\alpha}$ is a $\deg^{e}_\alpha$-embedding.
\item\label{item:biceph_tree_basic_props_when_b^Tt_no_drop_in_model_or_degree} Suppose $b^\Tt$ does not drop in model or degree
and $e\in\sides_\infty$. Let $m=m^e$ and $M=M^e$.
Suppose $\rho_{m+1}^{M}=\rho$ and $M$ is $(m+1)$-sound.
Let $\alpha=\max(\curlyB\cap[0,\infty]^\Tt)$.
Then $M^{e}_\alpha$ is $(m+1)$-sound.
If $\alpha<^\Tt\infty$ then $M^{e}_\infty$
is $m$-sound, $(m+1)$-solid and $(m+1)$-universal
but not
$(m+1)$-sound, $\core_{m+1}(M^{e}_{\infty})=M^{e}_\alpha$
and $i^{e}_{\alpha\infty}$ is the core map,
which is $p_{m+1}$-preserving.
\item Suppose
$b^\Tt$
drops in
model or degree and let $\deg^{e}_\infty=k$ where $\sides_\infty=\{e\}$.
Let
$\alpha+1\in b^\Tt$ be least such that $(\alpha+1,\infty]^\Tt$ does not drop in
model or degree.
 Then  $M^{e}_\infty$ is $(k+1)$-universal
and $(k+1)$-solid,
but
not $(k+1)$-sound,
and $M^{e*}_{\alpha+1}=\core_{k+1}(M^e_\infty)$ and
$i^{e*}_{\alpha+1,\infty}$ is the core
embedding, which is $p_{k+1}$-preserving.
\end{enumerate}
\end{lem}

\begin{dfn}
The \dfnemph{$(\om_1+1)$-iteration game}
for a bicephalus $B$ is defined in the obvious manner,
with players I and II building a putative degree-maximal
tree on $B$, player I choosing the extenders $E^\Tt_\alpha$, and $\exitside^\Tt_\alpha$, when it is ambiguous, and player II
the branches, with player II having to ensure wellfoundedness.
\dfnemph{Iteration strategies}
and \dfnemph{iterability} associated
to the game are then as usual.
\end{dfn}

\section{Simple embeddings of iteration trees}\label{sec:simple_embeddings}

In this section we discuss the construction
of finite hulls $\bar{\Tt}$ of iteration trees $\Tt$,
capturing a given element $x\in M^\Tt_\infty$, in the sense that we have a natural  copy map $\somevarpi:M^{\bar{\Tt}}_\infty\to M^\Tt_\infty$ with  $x\in\rg(\somevarpi)$.
More generally, we will also consider hulls which are finite after some stage of $\Tt$,
allowing some control over the critical point of $\somevarpi$. The tree $\Tt$ will be on a premouse or a bicephalus,
and $\bar{\Tt}$ on the same structure. The methods will be used in the main arguments in \S\S\ref{sec:mim}--\ref{sec:finite_gen_hull},
as outlined in \S\ref{sec:meas_Dodd_prof_fin_plan}. They  mostly involve quite routine calculations like in the copying construction, but some of the details will be less routine, and in \S\ref{sec:projecta_cofinality}
and \S\ref{sec:essentially_degree-max} below we make some preparations for the less routine aspects.

\subsection{Projecta and definable cofinality}\label{sec:projecta_cofinality}

When considering a hull $\bar{\Tt}$ of a tree $\Tt$,
we will keep track of the correspondence of fine-structural degrees  $\deg^\Tt_\alpha$ and $\deg^{\bar{\Tt}}_{\bar{\alpha}}$  between nodes $\alpha$ of $\Tt$ and corresponding nodes of $\bar{\alpha}$ of $\bar{\Tt}$,
by keeping track of the relevant projecta of $M^\Tt_\alpha$ and $M^{\bar{\Tt}}_{\bar{\alpha}}$ and how they are shifted by iteration maps.\footnote{Around the  time the author worked these things out, Steel worked out some related calculations.} \footnote{One might  instead be able to use the methods of \cite{fs_tame}
to see that the copy maps associated to nodes at degree $n$ are near
$n$-embeddings.
But this would require some work also,
particularly as we do have to allow the possibility that for
trees on bicephali,
some ultrapowers $\Ult_n(M,E)$ will be formed without
$E$ being close to $M$.}  To keep track of the projecta, it is important to consider their definable cofinality, at the appropriate level of definability.

We are presently interested in $\rho_m^M$ where  $M$ is an $m$-sound premouse, and how $\rho_m^M$ shifts under degree $m$ iteration maps $M\to\Ult_m(M,E)$,
and also, how these things relate to a copied ultrapower map $M'\to\Ult_m(M',E')$.
The ``$\bfrSigma_m^M$-cofinality'' of $\rho_m^M$ is relevant to this.
There are 3 natural ways to define this notion of cofinality, and we will show that, in fact, they are all identical:
\begin{dfn}\label{dfn:wcof_etc}
 Let $M$ be an $m$-sound premouse and $\rho=\rho^M_m$.

  The \dfnemph{weak-$\bfrSigma^M_m$-cofinality
 of $\rho$}, denoted $\wcof^{\bfrSigma^M_m}(\rho)$, is the least
$\kappa\in[\om,\rho]$
 such that either (i) $\kappa=\rho$,
 or (ii) $\rho\in M$ and $\kappa=\cof^M(\rho)$,
 or (iiiw) $m>0$ and there is $x\in\core_0(M)$
 such that $\rho\cap\Hull_m^M(\kappa\cup\{x\})$ is unbounded in $\rho$.

 The
\dfnemph{$\bfrSigma^M_m$-cofinality of $\rho$},
denoted $\cof^{\bfrSigma_m^M}(\rho)$,
is likewise, but with (iiiw) replaced by (iiic)
 $m>0$ and there is a cofinal normal function $f:\kappa\to\rho$
  which is $\bfrSigma_m^M$.

  The
\dfnemph{amenable-$\bfrSigma^M_m$-cofinality of $\rho$},
denoted $\acof^{\bfrSigma_m^M}(\rho)$,
is likewise, but with (iiiw) replaced by (iiia) $m>0$ and there is a
cofinal normal function $f:\kappa\to\rho$
such that $f\rest\alpha\in M$ for each $\alpha<\kappa$,
and the function $F:\kappa\to M$
is $\bfrSigma_m^M$, where $F(\alpha)=f\rest\alpha$.

We say  $F$ is \dfnemph{$m$-good for $M$}
iff $F$ is as in (iiia),
or
letting $\kappa=\acof^{\bfrSigma_m^M}(\rho)$,
then $F\in M$, $F:\kappa\to M$,
$F(\alpha):\alpha\to\rho$ for each $\alpha<\kappa$, and $f:\kappa\to\rho$
is cofinal normal where $f=\bigcup_{\alpha<\kappa} F(\alpha)$.
\end{dfn}

\begin{rem}If $m=0$ and $M$ is non-type 3 then
$\rho_0^M=\OR^M=\cof^{\bfrSigma_0^M}(\rho_0^M)$.
\end{rem}

\begin{lem}\label{lem:when_m-good_F_exists}Let $M$ be an $m$-sound premouse.
 Then there is an $F$ which is $m$-good for $M$ iff either
$m>0$ or $M$ is active type 3.
 \end{lem}
 \begin{proof}
 Let $\rho=\rho_m^M$ and $\kappa=\acof^{\bfrSigma_m^M}(\rho)$.
 If $m=0$ and $M$ is active type 3,
 then (iiia) fails trivially as $m=0$,
 and $\rho=\nu(F^M)\in M$ (though $\rho\notin M^{\mathrm{sq}}$),
 so $\kappa=\cof^M(\rho)$,
 and it follows there there is $F\in M$
 as required.
 Now suppose $m>0$, but there is no $F$ as in (iiia) for $\kappa$. Then note that also (i) does not attain for $\kappa$, so (ii) attains instead. Therefore there is $F\in M$ as required. Conversely, if $m=0$ but $M$ is not active type 3, then (iiia) fails, and $\rho=\OR^M\notin M$, so there is no $F\in M$ as above, so there is no $F$ which is $m$-good for $M$.
\end{proof}

\begin{lem}
 Let $M$ be an $m$-sound premouse and $\rho=\rho^M_m$.
 Then
 \[
\kappa\eqdef\wcof^{\bfrSigma_m^M}(\rho)=\cof^{\bfrSigma_m^M}(\rho)=
\acof^{\bfrSigma_m^M}(\rho),\qedhere\]
and if $\kappa\in M$ then $\kappa$ is regular in $M$.
\end{lem}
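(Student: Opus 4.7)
First I would observe that the chain $\wcof^{\bfrSigma_m^M}(\rho) \leq \cof^{\bfrSigma_m^M}(\rho) \leq \acof^{\bfrSigma_m^M}(\rho)$ is immediate by comparing the three defining clauses: (iii'') obviously implies (iii'), and if $f : \kappa \to \rho$ is $\bfrSigma_m^M$-definable, cofinal and normal with defining parameter $x \in \core_0(M)$, then $\rg(f) \sub \Hull_m^M(\kappa \cup \{x\}) \cap \rho$ is unbounded in $\rho$, so (iii') implies (iii). The content of the lemma is therefore the reverse inequality $\acof^{\bfrSigma_m^M}(\rho) \leq \wcof^{\bfrSigma_m^M}(\rho)$, together with the regularity clause.

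Setting $\kappa = \wcof^{\bfrSigma_m^M}(\rho)$, the cases $\kappa = \rho$ and $\kappa = \cof^M(\rho)$ are handled at once: the identity on $\rho$ and a cofinal $M$-sequence, respectively, yield $m$-good witnesses. So the main work assumes $m > 0$, $\kappa < \rho$ (hence $\kappa \in \core_0(M)$, since $\rho \leq \rho_0^M$), and that $\kappa$ is witnessed only by clause (iii) via some $x \in \core_0(M)$. I would first note that $\kappa$ must be a limit ordinal, since otherwise $\kappa = \kappa'+1$ gives a smaller witness using the enlarged parameter $\{x, \kappa'\} \in \core_0(M)$. Then, for $\alpha < \kappa$, I would set $H_\alpha = \Hull_m^M(\alpha \cup \{x\})$ and $g(\alpha) = \sup(H_\alpha \cap \rho)$. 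Since $\kappa$ is a limit, $\bigcup_{\alpha<\kappa} H_\alpha = \Hull_m^M(\kappa \cup \{x\})$ is unbounded in $\rho$, so $\sup_{\alpha<\kappa} g(\alpha) = \rho$; and for each $\alpha < \kappa$ one has $g(\alpha) < \rho$, since otherwise $\alpha$ would itself witness (iii) via $x$, violating minimality.

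Next I would convert $g$ into a normal cofinal $f : \kappa \to \rho$ by the standard recursion: $f(0) = g(0)$, $f(\alpha+1) = \max(f(\alpha)+1, g(\alpha+1))$, and $f(\lambda) = \sup_{\beta<\lambda} f(\beta)$ at limit $\lambda < \kappa$. This yields a strictly increasing, continuous, hence normal $f$ which dominates $g$ and so is cofinal in $\rho$. The crucial verification is $f(\lambda) < \rho$ at each limit $\lambda < \kappa$: otherwise $f \rest \lambda$ would be $\bfrSigma_m^M$-definable cofinal from $\lambda$ into $\rho$ with parameter $\{x,\kappa\} \in \core_0(M)$, making $\lambda$ a witness to (iii) strictly below $\kappa$. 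For amenability, for each $\alpha < \kappa$ the graph of $f \rest \alpha$ lies in $\alpha \times (f(\alpha)+1)$ and is $\bfrSigma_m^M$-definable, so via a G\"odel pairing it codes as a bounded $\bfrSigma_m^M$ subset of some $\eta < \rho = \rho_m^M$, hence lies in $M$ by the defining property of the projectum. Thus $f$ is $m$-good, giving $\acof^{\bfrSigma_m^M}(\rho) \leq \kappa$.

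For the regularity clause, I would observe that if $\kappa \in M$ were $M$-singular, fixing a cofinal $h : \lambda \to \kappa$ in $M$ with $\lambda < \kappa$, the composite $f \circ h : \lambda \to \rho$ is a $\bfrSigma_m^M$-definable cofinal map (with extra parameter $h$), producing a witness to (iii') strictly below $\kappa$, contradicting minimality. The main obstacle I anticipate is the confirmation that the recursive definition of $f$ from $g$ really yields a $\bfrSigma_m^M$ function, which rests on the standard $\rSigma_m$-recursion available for $m \geq 1$; everything else reduces to the defining property of $\rho_m^M$ that bounded $\bfrSigma_m^M$ subsets of $\rho_m^M$ lie in $M$.
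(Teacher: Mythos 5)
Your preliminary reductions, the construction of the cofinal $\bfrSigma_m^M$ function $g$ from the minimal witness to clause (iii), the observation that $g``\alpha$ is bounded for each $\alpha<\kappa$ by minimality of $\kappa$, and the treatment of the regularity clause via composing with an $M$-singularizing map are all fine and agree with the paper in spirit. The gap is exactly where you flag it, and it is not cosmetic: you define $f$ by a transfinite recursion of length $\kappa$ and then need both that $f$ is $\bfrSigma_m^M$ and, crucially, that the function $F(\alpha)=f\rest\alpha$ is $\bfrSigma_m^M$ (this is what clause (iii'') actually demands). Neither follows from the recursion as stated. A recursive clause like ``$f(\lambda)=\sup_{\beta<\lambda}f(\beta)$'' requires quantifying over all $\beta<\lambda$ of a $\rSigma_m$ condition involving $g$, and $\rSigma_m$ over a premouse (or over its $(m-1)$-reduct) is \emph{not} in general closed under bounded universal quantification preceding the $\rSigma_m$ matrix; this is precisely the kind of complexity blow-up that fine-structural arguments must route around. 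Your appeal to ``bounded $\bfrSigma_m^M$ subsets of $\rho_m^M$ lie in $M$'' to get $f\rest\alpha\in M$ presupposes that $f\rest\alpha$ is already $\bfrSigma_m^M$, which is circular, and even granting each $f\rest\alpha\in M$ would leave open whether the assignment $\alpha\mapsto f\rest\alpha$ is $\bfrSigma_m^M$.

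The paper avoids the recursion entirely by stratifying $\rSigma_m^M$ truth: for $\beta<\rho_{m-1}^M$ one forms the partial theory $t_\beta$ (a proper segment of $M$ when $m=1$, or $\Th_{\rSigma_{m-1}}^M(\beta\cup\{x,\pvec_{m-1}^M\})$ when $m>1$), and extracts $g_\beta\subseteq g$ as the part of $g$ witnessed within $t_\beta$. Because $\beta<\rho_{m-1}^M$, each $t_\beta\in M$ and hence each $g_\beta\in M$; regularity of $\rho$ in $M$ then gives $\rg(g_\beta)$ bounded, producing a $\bfrSigma_m^M$ monotone cofinal $h:\rho_{m-1}^M\to\rho$. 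A minimality argument (composing $h$ with a $\bfrSigma_m^M$ cofinal $j:\alpha\to\rho_{m-1}^M$) shows every $g\rest\alpha$ is captured by some $g_\beta$. This gives, uniformly and at the $\rSigma_m$ level, an element of $M$ coding $g\rest\alpha$ -- a single bounded existential over $\beta<\rho_{m-1}^M$ suffices rather than a universal over $\beta<\alpha$ of $\rSigma_m$ matrices. From there $f'(\alpha)=\sup g``\alpha$ and its normalization can be computed \emph{inside $M$} from $g_\beta$, which is what makes $F$ genuinely $\bfrSigma_m^M$. This stratification step is the missing ingredient in your argument; without it the amenability clause of $\acof$ has no proof.
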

\begin{proof}
If $m=0$ then the three ordinals are the same by definition, and clearly if $\kappa\in M$ then $M\sats$ ``$\kappa$ is regular''.

So suppose $m>0$. If $\rho\in M$ and $\rho$ is singular in $M$
then it is easy to see that all three values
agree with
$\cof^M(\rho)$.
So suppose that $\rho\notin M$, or $\rho\in M$
and $\rho$ is regular in $M$.
Clearly
\[ \wcof^{\bfrSigma_m^M}(\rho)\leq\cof^{\bfrSigma_m^M}(\rho)\leq\acof^{
\bfrSigma_m^M}(\rho)\leq\rho.\]
So suppose $\kappa=\wcof^{\bfrSigma_m^M}(\rho)<\rho$;
we must find a function $f:\kappa\to\rho$
witnessing that $\acof^{\bfrSigma_m^M}(\rho)=\kappa$.

Since $\kappa<\rho_m^M=\rho$,
we can easily find an $\bfrSigma_m^M$ function $g:\kappa\to\rho$
which is cofinal. (Take any $D\sub\rho$
and function $g':D\to\rho$ which is cofinal and $\bfrSigma_m^M$, so in particular, $D$ is $\bfrSigma_m^M$. Since $\kappa<\rho_m^M$, therefore $D\in M$.
By the minimality of $\kappa$,
$D$ has ordertype $\kappa$, so $g'$ is easily modified to produce $g$ as desired.)
Let $x\in\core_0(M)$
be such that $g$ is $\rSigma_m^M(\{x\})$.
By the minimality of $\wcof^{\bfrSigma_m^M}(\rho)$,
note that $g``\alpha$ is bounded in $\rho$
for each $\alpha<\kappa$.

We claim there is an $\bfrSigma_m^M$ function
$h:\rho_{m-1}^M\to\rho$ which is cofinal
and monotone increasing. To see this we stratify
$\rSigma_m^M$ truth as in \cite[Appendix to \S2]{fsit}. If $m=1$
and $M$ is passive and has no largest proper segment,
then we just look at the $\rSigma_1^M$ truths
witnessed by proper segments $t_\alpha=M|\alpha\pins M$,
for various $\alpha<\OR^M$. If $m=1$ and $M$ has a largest
proper segment $N$, it is likewise,
but letting $\OR^N=\eta$, then $t_{\eta+n}=M|(\eta+n)$
denotes $\Ss_{n}(N)$, for $n<\om$.
And if $m=1$ and $M$ is active, one
needs to augment proper segments of $M$ with proper segments
of the active extender of $M$.
If $m>1$, one considers theories of the form
\[ t_\alpha=\Th_{\rSigma_{m-1}}^M(\alpha\cup\{x,\pvec_{m-1}^M\}),\]
for various $\alpha<\rho_{m-1}^M$, as coding witnesses
to $\rSigma_m$ truths
(here $x$ is the parameter from which we defined $g$ above).
See \cite[\S2]{fsit} for more details.
Now given $\beta<\rho_{m-1}^M$,
let $A_\beta\sub\kappa$
be the set of all $\alpha<\kappa$
such that there is $\xi$ such that $t_\beta$ codes a witness
to the ($\rSigma_m(\{x\})$) statement
``$g(\alpha)=\xi$''. Let $g_\beta:A_\beta\to\rho$
be the resulting function (so $g_\beta\sub g$).
Then $g_\beta\in M$ since $t_\beta\in M$.
Since $\rho$ is regular in $M$,
therefore $\rg(g_\beta)$ is bounded in $\rho$.
Now let $h:\rho_{m-1}^M\to\rho$
be $h(\beta)=\sup(\rg(g_\beta))$.
So $h$ is $\bfrSigma_m^M$, and clearly
$h$ is cofinal and monotone increasing.

We now claim that for each $\alpha<\kappa$,
there is $\beta<\rho_{m-1}^M$ such that $g\rest\alpha\sub g_\beta$,
where $g_\beta$ is as above.
For otherwise, letting $\alpha$ be the least counterexample,
note that there is a cofinal $\bfrSigma_m^M$ function
$j:\alpha\to\rho_{m-1}^M$, but then $h\com j:\alpha\to\rho$
is cofinal, contradicting the minimality of $\kappa$.

It follows that the function $f':\kappa\to\rho$ defined
$f'(\alpha)=\sup g``\alpha$, is as desired,
except that $f'$ need not be strictly increasing.
But from $f'$ we can easily get an $f$ as desired.
\end{proof}

\begin{rem}\label{rem:i-hat}
 Recall $\widehat{i}=\Shift(i)$ was defined in \S\ref{sec:notation_embeddings}.  If $M,F$ are as in the lemma below and $M$ is type 3, it can be that $F\in M\cut M^\sq$, in which case $F\in\dom(\widehat{i})$ but $F\notin\dom(i)$,
 which is why we need to deal with $\widehat{i}$ here.
\end{rem}

\begin{lem}\label{lem:d-preserving}
Let $M$ be an $m$-sound premouse
and $E$ be an extender weakly amenable to $M$ with $\mu=\crit(E)<\rho_m^M$.
Suppose $U=\Ult_m(M,E)$  is wellfounded. Let $i=i^{M,m}_E$.
Write $\kappa^M=\cof^{\bfrSigma_m^M}(\rho_m^M)$ and
$\kappa^U=\cof^{\bfrSigma_m^U}(\rho_m^U)$.
Then:

\begin{enumerate}[label=\arabic*.,ref=\arabic*]
\item\label{item:kappa^U=sup} $\kappa^U=\sup i``\kappa^M$
\tu{(}recall also $\rho_m^U=\sup i``\rho_m^M$\tu{)}.
\end{enumerate}

Moreover, suppose either $m>0$ or $M$ is active type 3, and let $F:\kappa^M\to M$ be $m$-good for $M$ \tu{(}see Lemma \ref{lem:when_m-good_F_exists}\tu{)}.
Then:

\begin{enumerate}[resume*]
\item\label{item:mu_neq_kappa^M} Suppose $m>0$ and $\mu\neq\kappa^M$.
Then:
\begin{enumerate}[label=\tu{(}\alph*\tu{)}]
 \item If $\rho_m^M=\rho_0^M$ then $\rho_m^U=\rho_0^U$.
 \item If $\rho_m^M<\rho_0^M$ then $\rho_m^U=i(\rho_m^M)$.
 \item If $\kappa^M=\rho_0^M$ then $\kappa^U=\rho_0^U$.
 \item If $\kappa^M<\rho_0^M$ then $\kappa^U=i(\kappa^M)$.
\item  If $F\in M$ then $\Shift(i)(F):\kappa^U\to U$ is $m$-good for $U$.
\item  Given any  $\rSigma_m$ term $t$ and
$x\in\core_0(M)$ such that $F=t^M_x$,
we have that $t^U_{i(x)}:\kappa^U\to U$ is
$m$-good for $U$.
\end{enumerate}
\item\label{item:mu=kappa^M} Suppose $m>0$ and $\mu=\kappa^M$ \tu{(}so
$\kappa^M<\rho_m^M$\tu{)}. Then:
\begin{enumerate}[label=\tu{(}\alph*\tu{)}]
 \item If $\rho_m^M=\rho_0^M$ then $\rho_m^U<\rho_{m-1}^U=\rho_0^U$.
 \item If $\rho_m^M<\rho_0^M$ then $\rho_m^U<i(\rho_m^M)$.
 \item $\kappa^U=\mu=\kappa^M<i(\kappa^M)$.
\item  If $F\in M$ then $\Shift(i)(F)\rest\mu=i\com F$ is $m$-good
for $U$.
\item Given any  $\rSigma_m$ term $t$ and
$x\in\core_0(M)$ such that  $F=t^M_x$, we have that
$t^U_{i(x)}\rest\mu=i\com F$ is
$m$-good for $U$.
\end{enumerate}

\item\label{item:m=0_type_3_mu_neq_kappa^M} Suppose $m=0$ and $M$ is type 3 and
$\mu\neq\kappa^M$.
Then $i$ is $\nu$-preserving
and $\Shift(i)(\kappa^M)=\kappa^U$,
and  $\Shift(i)(F):\kappa^U\to U$
is $0$-good for $U$.

\item\label{item:m=0_type_3_mu=kappa^M} Suppose $m=0$ and $M$ is type 3 and
$\mu=\kappa^M$.
Then $i$ is $\nu$-high
and $\kappa^U=\mu=\kappa^M$,
and  $\Shift(i)(F)\rest\mu=i\com F$
is $0$-good for $U$.
\end{enumerate}
\end{lem}
\begin{proof}
Part \ref{item:mu_neq_kappa^M}:
If $M$ is type 3, then because $m>0$, $i$ is $\nu$-preserving.

Suppose first that $\mu<\kappa^M$.
Then since $\kappa=\wcof^{\bfrSigma_m^M}(\rho_m^M)$,
all $g:[\kappa]^{<\om}\to\rho_m^M$
 used in forming $\Ult_m(M,E)$
are all bounded in $\rho_m^M$.
All the parts now follow easily,
as does that $\kappa^U=\sup i``\kappa^M$ in this case.

Now suppose instead that $\kappa^M<\mu$,
so $\kappa^M<\mu<\rho_m^M$.
Let $g:[\mu]^{<\om}\to\rho_m^M$
be $\bfrSigma_m^M$ and $a\in[\nu_E]^{<\om}$.
Let $f=\bigcup\rg(F)$.
Note that
$h:[\mu]^{<\om}\to\kappa^M$ is $\bfrSigma_m^M$,
where $h(u)$ is the least $\alpha<\kappa^M$
such that $h(u)<f(\alpha)$.
Since $\kappa^M<\mu<\rho_m^M$,
therefore $h\in M$.
Therefore there is an $E_a$-measure one set
$A\sub[\mu]^{<\om}$ such that $h\rest A$ is constant.
Everything now follows easily, including again that $\kappa^U=\sup i``\kappa^M$.

Part \ref{item:mu=kappa^M}: This is straightforward.

Parts \ref{item:m=0_type_3_mu_neq_kappa^M},\ref{item:m=0_type_3_mu=kappa^M}:
By calculations as in \cite[\S9]{fsit} (one considers
the correspondence between the simple ultrapower $\Ult(M,E)$ (formed
without squashing) and $\Ult_0(M,E)$ (the unsquash of the ultrapower of the
squash), and the respective ultrapower embeddings).

Part \ref{item:kappa^U=sup}: If $m=0$ and $M$ is non-type 3, then $\rho_m^M=\rho_0^M=\OR^M=\kappa^M$
and $\rho_m^U=\rho_0^U=\OR^U=\kappa^U$,
and also $i$ is cofinal, which suffices. Otherwise see the discussion above.
\end{proof}

\begin{dfn}\label{dfn:m-preserving}
 We say that $\pi:\core_0(M)\to\core_0(N)$ is \dfnemph{$m$-preserving}
 iff $\pi$ is
$m$-lifting and:
 \begin{enumerate}[label=--]
  \item $\pi$ is $\nu$-preserving, and
  \item if $m>0$ then for each $i\in(0,m]$, we have:
  \begin{enumerate}[label=--]
  \item $\pi(p_i^M)=p_i^N$,
  \item if $\rho_i^M<\rho_0^M$
  then $\pi(\rho_i^M)=\rho_i^N$,
  \item if $\rho_i^M=\rho_0^M$
  then $\rho_i^N=\rho_0^N$,
  \item if $\cof^{\bfrSigma_i^M}(\rho_i^M)=\rho_0^M$
  then $\cof^{\bfrSigma_i^N}(\rho_i^N)=\rho_0^N$,
  \item if $\kappa=\cof^{\bfrSigma_i^M}(\rho_i^M)<\rho_0^M$
  then $\pi(\kappa)=\cof^{\bfrSigma_i^N}(\rho_i^N)$,
  and there is an $\rSigma_i$-term
  $t$ and $x\in\core_0(M)$ such that
  $f^M_{t,x}$
  is $i$-good for $M$ (where $f^M_{t,x}(u)=t^M(x,u)$)
  and
   $f^{N}_{t,\pi(x)}$ is $i$-good for $N$.\qedhere
 \end{enumerate}
 \end{enumerate}
\end{dfn}

\begin{lem}\label{lem:stronger_d-preserving}
 Let $\pi:\core_0(M)\to\core_0(N)$
 be $m$-preserving where $m>0$. Suppose that
$\kappa^M=\cof^{\bfrSigma_m^M}(\rho_m^M)<\rho_m^M$.
Let $t_1,x_1$ be such that
$f_{t_1,x_1}^M$
 is $m$-good for $M$. Then $f_{t_1,\pi(x_1)}^N$
 is $m$-good for $N$.
\end{lem}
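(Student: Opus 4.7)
The plan is to exploit the witness pair $(t_0,x_0)$ provided by the hypothesis that $\pi$ is $m$-preserving: by that definition there exist $t_0,x_0$ such that $f^M_{t_0,x_0}$ is $m$-good for $M$ and $f^N_{t_0,\pi(x_0)}$ is $m$-good for $N$. Write $\kappa^N=\cof^{\bfrSigma_m^N}(\rho_m^N)$ (so $\pi(\kappa^M)=\kappa^N$ by the $m$-preserving clause, since $\kappa^M<\rho_m^M\leq\rho_0^M$). Let $g^M,h^M\colon\kappa^M\to\rho_m^M$ denote the cofinal normal functions coded by $f^M_{t_0,x_0}$ and $f^M_{t_1,x_1}$, and $g^N,h^N\colon\kappa^N\to\rho_m^N$ the candidate analogues derived from $f^N_{t_0,\pi(x_0)}$ and $f^N_{t_1,\pi(x_1)}$. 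By hypothesis, $g^M,h^M$ are cofinal normal in $\rho_m^M$, $g^N$ is cofinal normal in $\rho_m^N$; we must establish the same for $h^N$, along with the amenability clause.

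The key observation is that since $g^M$ and $h^M$ are both cofinal normal from $\kappa^M$ to $\rho_m^M$ with $\kappa^M$ regular in $M$ (by the preceding lemma), the diagonal set $C^M:=\{\alpha<\kappa^M\st g^M(\alpha)=h^M(\alpha)\}$ is a club in $\kappa^M$. I would express the assertions (i) $h^M$ is strictly increasing and continuous, (ii) $h^M\rest\alpha\in M$ for each $\alpha<\kappa^M$, and (iii) $C^M$ is unbounded in $\kappa^M$, as suitable $\bfrSigma_m^M$/$\bfrPi_m^M$ conditions with parameters from $\core_0(M)$. Using the $m$-lifting property of $\pi$, combined with the $m$-preserving clauses $\pi(\kappa^M)=\kappa^N$ and the appropriate preservation of $\rho_m$ (either $\pi(\rho_m^M)=\rho_m^N$ when $\rho_m^M<\rho_0^M$, or $\sup\pi``\rho_m^M=\rho_0^N=\rho_m^N$ when $\rho_m^M=\rho_0^M$), these assertions transfer to $N$: $h^N$ is strictly increasing, continuous, amenable, and the set $C^N:=\{\alpha<\kappa^N\st g^N(\alpha)=h^N(\alpha)\}$ is unbounded in $\kappa^N$.

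To finish: since $g^N$ is cofinal in $\rho_m^N$ and $g^N$ agrees with $h^N$ on the unbounded set $C^N$, $\rg(h^N)$ is also cofinal in $\rho_m^N$, hence $h^N$ is $m$-good for $N$. The main obstacle is making the elementarity transfer fully precise, particularly in the subcase $\rho_m^M=\rho_0^M$, where $\rho_m^N=\rho_0^N$ is a supremum rather than a $\pi$-image; there, cofinality in $\rho_m^N$ is not a single first-order sentence with $\rho_m^M$ as parameter, so one instead verifies cofinality above each bounded level $\beta<\rho_0^N$ separately, using that $\pi$ maps cofinally into $\rho_0^N$ to pick $\xi<\rho_0^M$ with $\pi(\xi)\geq\beta$ and reducing to the corresponding assertion about $\xi$ in $M$.
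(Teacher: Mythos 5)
Your proposal diverges from the paper's argument, and it has two gaps.

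First, the agreement-set-is-club argument fails when $\kappa^M=\omega$, which is allowed (the $\bfrSigma_m^M$-cofinality of $\rho_m^M$ can be $\omega$). Two strictly increasing cofinal functions $g^M,h^M\colon\omega\to\rho_m^M$ can have empty intersection of graphs. You invoke ``$\kappa^M$ regular in $M$'' from the preceding lemma, but $\omega$ is regular, so this gives nothing. The correct stand-in for your diagonal idea, which works even at $\kappa^M=\omega$, is the interleaving condition $\sup\rg(\bigcup\rg(f^M_{t,x}))=\sup\rg(\bigcup\rg(f^M_{t_1,x_1}))$; this is what the paper lifts (its fact (iii)), and it in particular suffices for cofinality of $h^N$ without any agreement at all.

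Second, and more fundamentally, the upward transfer step is not justified. You say the assertions (normality, amenability, unboundedness of $C^M$) ``transfer to $N$'' via the $m$-lifting property. But $m$-lifting is only $\rSigma_m$-elementarity, which sends $\rSigma_m$-truths upward and $\rPi_m$-truths downward. Statements like ``$h^M$ is strictly increasing and continuous at every $\alpha<\kappa^M$'' and ``$C^M$ is unbounded'' are naturally $\rPi_m$ (or worse), so they do not transfer upward through $\pi$. The paper avoids this entirely by packaging everything into the single object
\[
T=\Th_{\rSigma_m}^M(\kappa^M\cup\{x,x_1,\rho,\kappa^M\})\in\core_0(M),
\]
which is in $\core_0(M)$ precisely because $\kappa^M<\rho_m^M$. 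The facts (i) each $t_1^M(x_1,\alpha)$ is a normal function, (ii) these restrictions are monotone in $\alpha$, and (iii) the two suprema agree, are all low-complexity, first-order statements \emph{about $T$ as a set}, uniformly so. Since $\pi$ moves $T$ to $\pi(T)=\Th_{\rSigma_m}^N(\kappa^N\cup\{\pi(x),\pi(x_1),\pi(\rho),\kappa^N\})$ (this is where $\pi(\kappa^M)=\kappa^N$ and $\pi(\rho_m^M)=\rho_m^N$, or the $\rho_m^M=\rho_0^M$ variant, are used), these facts about $T$ transfer to the same facts about $\pi(T)$, at the level of simple elementarity, with no $\rPi_m$ upward transfer needed. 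Together with the $m$-goodness of $f^N_{t,\pi(x)}$ coming from the $m$-preserving hypothesis, this yields $m$-goodness of $f^N_{t_1,\pi(x_1)}$. If you want to salvage your argument, replace the club claim by the sup-agreement claim, and replace the direct $\bfrPi_m$-transfer by the theory-coding trick.
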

\begin{proof}
 Fix $t,x$ such that $f^M_{t,x}$ is $m$-good
 for $M$ and $f^N_{t,\pi(x)}$ is $m$-good for $N$.
 If $\rho_m^M<\rho_0^M$ then let $\rho=\rho_m^M$,
 and otherwise let $\rho=0$.
 Let
 \[ T=\Th_{\rSigma_m}^M(\kappa^M\cup\{x,x_1,\rho,\kappa^M\}).\]
 Then $T\in\core_0(M)$, since $\kappa^M<\rho_m^M$.
 Note that the facts that
\begin{enumerate}[label=(\roman*)]
 \item\label{item:first_fact_t1_x1} $\all\alpha<\kappa^M\
[t_1^M(x_1,\alpha):\alpha\to\rho_m^M\text{ is a normal function}]$, and
\item $\all\alpha<\beta<\kappa^M\
[t_1^M(x_1,\alpha)\sub t_1^M(x_1,\beta)]$,
\end{enumerate}
are simply expressed facts about $T$.
Also, note that there is a fixed $\rSigma_m$ formula $\varphi$ such that
\[
\all\alpha,\beta<\kappa^M\ [t^M(x,\alpha+1)(\alpha)<t_1^M(x_1,
\beta+1)(\beta)\iff
\text{``}\varphi(x,x_1,\alpha,\beta)\text{''}\in T].\]
Therefore,  that
\begin{enumerate}[resume*]
\item\label{item:third_fact_t1_x1}
$\sup\Big(\rg\big(\bigcup(\rg(f^M_{t,x}))\big)\Big)=
\sup\Big(\rg\big(\bigcup(\rg(f^M_{t_1,x_1}))\big)\Big)$
\end{enumerate}
is also a simple fact about $T$.

Now let $\kappa^N=\cof^{\bfrSigma_m^N}(\rho_m^N)$.
 Since $\pi$ is $m$-preserving, if $\rho=0$ then $\rho_m^N=\rho_0^N$,
 and if $\rho>0$ then $\rho_m^N=\pi(\rho)$, and in any case,
 \[\pi(T)=\Th_{\rSigma_m}^N
(\kappa^N\cup\{\pi(x),\pi(x_1),\pi(\rho),\kappa^N\}).\]
But then \ref{item:first_fact_t1_x1}--\ref{item:third_fact_t1_x1}
about $T$
lift to $\pi(T)$. Since $f^N_{t,\pi(x)}$ is $m$-good for $N$ by assumption,
 \ref{item:first_fact_t1_x1}--\ref{item:third_fact_t1_x1} for $\pi(T)$
ensure that $f^N_{t_1,\pi(x_1)}$
is also $m$-good for $N$.
\end{proof}

\subsection{Essentially degree-maximal trees}\label{sec:essentially_degree-max}

The kinds of hulls of iteration trees we consider will lead to the following slight generalization of degree-maximal trees:
\begin{dfn}\label{dfn:ess-deg-max}
 Let $\Tt$ be an iteration tree on a premouse or bicephalus,
 satisfying the requirements of degree-maximality,
 except for the monotone increasing length condition
 (i.e.~that $\lh(E^\Tt_\alpha)\leq\lh(E^\Tt_\beta)$
 for $\alpha<\beta$).
 We say that $\Tt$ is \dfnemph{essentially-degree-maximal}
 iff $\nu(E^\Tt_\alpha)\leq\nu(E^\Tt_\beta)$
 for all $\alpha+1<\beta+1<\lh(\Tt)$.
 Likewise \dfnemph{essentially-$m$-maximal}.

 Let $\Tt$ be essentially-degree-maximal
 and $\alpha+1<\lh(\Tt)$. We say
 that $E^\Tt_\alpha$ is \dfnemph{$\Tt$-stable}
 iff $\lh(E^\Tt_\alpha)\leq\lh(E^\Tt_\beta)$
 for all $\beta\geq\alpha$.
\end{dfn}

\begin{dfn}\label{dfn:ess_strat}
Given an $m$-sound premouse $M$,  an \dfnemph{essentially-$(m,\om_1+1)$-iteration strategy for $M$}
is just like an $(m,\om_1+1)$-strategy for $M$,
except that it works with essentially-$m$-maximal trees instead of $m$-maximal ones.
\end{dfn}

To calibrate expectations, the reader might consider the natural example. Suppose $\Tt$ is essentially $m$-maximal on $M$, and uses just two extenders $E^\Tt_0$ and $E^\Tt_1$,
with $\nu(E^\Tt_0)\leq\nu(E^\Tt_1)<\lh(E^\Tt_1)<\lh(E^\Tt_0)$. Let $\bar{\Tt}$ be the $m$-maximal tree on $M$ that  uses just one extender, $E^{\bar{\Tt}}_0=E^\Tt_1$.
Then we claim $M^\Tt_2=M^{\bar{\Tt}}_1$.
For if $\crit(E^\Tt_1)<\nu(E^\Tt_0)$
then $\pred^\Tt(2)=0=\pred^{\bar{\Tt}}(1)$, and clearly $M^{*\Tt}_2=M^{*\bar{\Tt}}_1\ins M$
and $\deg^{\Tt}_2=\deg^{\bar{\Tt}}_1$,
so $M^{\Tt}_2=M^{\bar{\Tt}}_1$.
On the other hand, suppose $\nu(E^\Tt_0)\leq\crit(E^\Tt_1)$. Then $\pred^\Tt(2)=1$. Moreover, $2\in\dropset^\Tt$, with $M^{*\Tt}_2\pins M^\Tt_1|\lh(E^\Tt_0)$. For $\lh(E^\Tt_0)=\nu(E^\Tt_0)^{+M^\Tt_1}$
and \[ \nu(E^\Tt_0)\leq\crit(E^\Tt_1)<\crit(E^\Tt_1)^{+\exit^\Tt_1}<\lh(E^\Tt_1)<\lh(E^\Tt_0)=\nu(E^\Tt_0)^{+M^\Tt_1},\]
and so $E^\Tt_1$ is not $M^\Tt_1$-total.
In $\bar{\Tt}$,
we have $\pred^{\bar{\Tt}}(1)=0$,
and since $M||\lh(E^\Tt_0)=M^\Tt_1||\lh(E^\Tt_0)$,
therefore $M^{*\bar{\Tt}}_1=M^{*\Tt}_2$
and $\deg^{\bar{\Tt}}_1=\deg^{\Tt}_2$,
so $M^{\bar{\Tt}}_1=M^\Tt_2$.

The lemmas below generalize this example.

\begin{lem}\label{lem:ess-deg-max_stability}
 Let $\Tt$ be an essentially-degree-maximal tree
 and $\alpha+1<\lh(\Tt)$.
 Then $E^\Tt_\alpha$ is $\Tt$-stable
 iff either:
 \begin{enumerate}[label=--]
  \item  $\alpha+2=\lh(\Tt)$,\footnote{Note that $\alpha+2=\lh(\Tt)$ iff $M^\Tt_{\alpha+1}$ is the last model of $\Tt$ iff $E^\Tt_\alpha$ is the last extender of $\Tt$.} or
  \item $\lh(E^\Tt_\alpha)<\lh(E^\Tt_{\alpha+1})$, or
\item $\lh(E^\Tt_\alpha)=\lh(E^\Tt_{\alpha+1})$
 and either \tu{[}$\alpha+3=\lh(\Tt)$ or
$\lh(E^\Tt_{\alpha+1})<\lh(E^\Tt_{\alpha+2})$\tu{]}.
\end{enumerate}
\end{lem}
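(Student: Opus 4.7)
The plan is a direct analysis combining essentially-degree-maximality (which supplies $\nu(E^\Tt_\alpha)\leq\nu(E^\Tt_\beta)$ for $\alpha<\beta$) with the MS-indexing constraint from \cite[\S1.1.6]{premouse_inheriting}, recalled in Remark \ref{rem:superstrong_diff}, governing when two consecutive extenders in a degree-maximal tree can share the same length. Both directions reduce to propagating length bounds along the tree via the agreement of models.

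First I would handle the reverse implication. The case $\alpha+1=\lh(\Tt)$ is vacuous. Under the second alternative $\lh(E^\Tt_\alpha)<\lh(E^\Tt_{\alpha+1})$, the coherence of models along $\Tt$ (as in clause~\ref{item:Ff-iteration_tree_agmt} of the summary lemma for bicephalus trees, and the analogous standard fact for trees on premice from \cite[\S1.1.6]{premouse_inheriting}) makes $\lh(E^\Tt_\alpha)$ a cardinal of $\exit^\Tt_{\alpha+1}$ strictly below $\lh(E^\Tt_{\alpha+1})$; this forces $\lh(E^\Tt_\alpha)\leq\nu(E^\Tt_{\alpha+1})$. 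Monotonicity of $\nu$ then yields
\[ \lh(E^\Tt_\alpha)\leq\nu(E^\Tt_{\alpha+1})\leq\nu(E^\Tt_\beta)\leq\lh(E^\Tt_\beta) \]
for every $\beta>\alpha$, so $E^\Tt_\alpha$ is stable. The third alternative is handled in two steps: $\lh(E^\Tt_\alpha)=\lh(E^\Tt_{\alpha+1})\leq\lh(E^\Tt_{\alpha+1})$ is automatic, and applying the same cardinal-and-$\nu$ argument to the step $\alpha+1\to\alpha+2$ propagates the bound to all $\beta\geq\alpha+2$.

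For the forward direction, assume $E^\Tt_\alpha$ is $\Tt$-stable and $\alpha+1<\lh(\Tt)$. Stability gives $\lh(E^\Tt_\alpha)\leq\lh(E^\Tt_{\alpha+1})$. If strict, we land in alternative~2. Otherwise $\lh(E^\Tt_\alpha)=\lh(E^\Tt_{\alpha+1})$, and I must verify the $\alpha+2$-clause. Stability already gives $\lh(E^\Tt_{\alpha+1})\leq\lh(E^\Tt_{\alpha+2})$ whenever $\alpha+2<\lh(\Tt)$; I would rule out equality by invoking the MS fact that $\lh(E^\Tt_\alpha)=\lh(E^\Tt_{\alpha+1})$ forces $E^\Tt_\alpha$ to be superstrong with $M^\Tt_{\alpha+1}$ active type 2 and $E^\Tt_{\alpha+1}=F(M^\Tt_{\alpha+1})$. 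Iterating this characterization at $\alpha+1$ would require $E^\Tt_{\alpha+1}$ to be simultaneously superstrong and type 2 with the same equality pattern; but $M^\Tt_{\alpha+2}$ already arises as an ultrapower by $E^\Tt_{\alpha+1}$, so its top extender, if any, is indexed strictly above $\lh(E^\Tt_{\alpha+1})$, contradicting a third equal length.

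The main obstacle is exactly this ``no three equal lengths in a row'' step for the forward direction; it is the only place superstrong-specific bookkeeping is needed, and it is precisely the MS constraint highlighted in Remark \ref{rem:superstrong_diff}. Everything else is routine propagation of $\nu$-monotonicity together with the standard cardinal structure of exit segments.
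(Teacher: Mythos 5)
Your proof is correct and takes essentially the same route as the paper's. Both directions use the identical key facts: for the backward direction, that $\lh(E^\Tt_\alpha)<\lh(E^\Tt_{\alpha+1})$ makes $\lh(E^\Tt_\alpha)$ a cardinal of $\exit^\Tt_{\alpha+1}$ (hence $\leq\nu(E^\Tt_{\alpha+1})$), after which $\nu$-monotonicity finishes; and for the forward direction, the observation that $\lh(E^\Tt_\alpha)=\lh(E^\Tt_{\alpha+1})$ entails $\lh(E^\Tt_{\alpha+1})<\OR(M^\Tt_{\alpha+2})$, which rules out a third equal length. The paper compresses the forward direction into a single sentence (``these are the only options''); your version just spells out the same argument. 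One minor aside: the remark about $E^\Tt_{\alpha+1}$ being ``simultaneously superstrong and type 2'' is not where the contradiction actually lives and could safely be dropped -- what does the work is the observation that $M^\Tt_{\alpha+2}$, being an ultrapower, has $\OR$ strictly above $\lh(E^\Tt_{\alpha+1})$, so that index cannot also be $\OR(M^\Tt_{\alpha+2})$.
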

\begin{proof}
 Suppose $\alpha+2<\lh(\Tt)$, so $E^\Tt_{\alpha+1}$ exists. If $\lh(E^\Tt_\alpha)<\lh(E^\Tt_{\alpha+1})$
 then $\lh(E^\Tt_\alpha)$ is a cardinal
 of $\exit^\Tt_{\alpha+1}$,
 so $\lh(E^\Tt_\alpha)\leq\nu(E^\Tt_{\alpha+1})\leq\nu(E^\Tt_\beta)<\lh(E^\Tt_\beta)$
 for all $\beta>\alpha$,
 so $E^\Tt_\alpha$ is $\Tt$-stable.
 Now suppose
 $\lh(E^\Tt_\alpha)=\lh(E^\Tt_{\alpha+1})<\lh(E^\Tt_{\alpha+2})$
 (in particular, $E^\Tt_{\alpha+2}$ exists).
 Then $E^\Tt_{\alpha+1}$ is $\Tt$-stable
 as above, and it follows that $E^\Tt_\alpha$ is also $\Tt$-stable.
 Conversely, these are the only options for $E^\Tt_\alpha$ to be $\Tt$-stable,
 since it is impossible to have $\lh(E^\Tt_\alpha)=\lh(E^\Tt_{\alpha+1})=\lh(E^\Tt_{\alpha+2})$.
 (If $\lh(E^\Tt_\alpha)=\lh(E^\Tt_{\alpha+1})$ then $E^\Tt_\alpha$ is superstrong and $E^\Tt_{\alpha+1}$ is type 2,
 so $\lh(E^\Tt_{\alpha+1})<\lh(E^\Tt_{\alpha+2})$.)
\end{proof}

\begin{lem}\label{lem:essentially_to_normal}
Let $\Tt$ be an essentially-$m$-maximal tree on a premouse $M$
of length $\alpha+1$. Then there is an $m$-maximal
tree $\bar{\Tt}$ on $M$, with $\lh(\bar{\Tt})=\bar{\alpha}+1\leq\alpha+1$,
such that $M^{\bar{\Tt}}_{\bar{\alpha}}=M^\Tt_\alpha$,
and if $(0,\alpha]^\Tt$ is non-\tu{(}model, degree\tu{)}-dropping
then so is $(0,\bar{\alpha}]^{\bar{\Tt}}$
and $i^{\bar{\Tt}}=i^\Tt$.

Let $\Tt$ be an essentially-degree-maximal tree on a bicephalus
$B$. Then there is, analogously, a corresponding degree-maximal tree
$\bar{\Tt}$
on $B$.
\end{lem}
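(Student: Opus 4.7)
The plan is to induct on $\lh(\Tt)=\alpha+1$. The base case $\alpha=0$ is immediate. For the successor step, apply the inductive hypothesis to $\Tt\rest\alpha$ to obtain an $m$-maximal tree $\Tt_0$ on $M$ of length $\alpha_0+1\leq\alpha$ with $M^{\Tt_0}_{\alpha_0}=M^\Tt_{\alpha-1}$, and with $i^{\Tt_0}=i^{\Tt\rest\alpha}$ when the latter exists. The goal is to form $\bar{\Tt}$ by appending $E=E^\Tt_{\alpha-1}$ to a possibly-truncated version of $\Tt_0$.

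To construct the truncation, set $\lambda=\lh(E)$ and let $\beta^*$ be the least $\gamma\leq\alpha_0$ such that some $E^{\Tt_0}_\delta$ with $\delta\geq\gamma$ satisfies $\lh(E^{\Tt_0}_\delta)>\lambda$, taking $\beta^*=\alpha_0$ if no such index exists. Define $\bar{\Tt}=(\Tt_0\rest(\beta^*+1))\conc\langle E\rangle$. By the choice of $\beta^*$, all extenders used in $\bar{\Tt}$ have lengths $\leq\lambda$ except $E$ itself, so the length-monotonicity clause of $m$-maximality holds. The predecessor, starting model $M^{*\bar\Tt}_{\beta^*+1}$, and degree of the appended node are determined as usual by $\crit(E)$ and the agreement structure of $M^{\bar\Tt}_{\beta^*}=M^{\Tt_0}_{\beta^*}$.

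The main obstacle is to verify $M^{\bar\Tt}_{\beta^*+1}=M^\Tt_\alpha$ with matching $M^*$ and degree (and that the main-branch data are preserved when $b^\Tt$ is non-dropping). The essential observation is that all extenders $E^{\Tt_0}_\gamma$ for $\gamma\geq\beta^*$ have length $>\lambda$, so by standard agreement-of-models for $m$-maximal trees, $M^{\Tt_0}_{\beta^*}$ agrees with $M^{\Tt_0}_{\alpha_0}=M^\Tt_{\alpha-1}$ below $\lambda$. Moreover, the $\nu$-monotonicity of $\Tt$ implies that $M^\Tt_{\alpha-1}$ agrees with $M^\Tt_{\pred^\Tt(\alpha)}$ throughout the region relevant for locating $M^{*\Tt}_\alpha$ and determining $\deg^\Tt_\alpha$. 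Chasing these agreements, the ultrapower of $M^{*\bar\Tt}_{\beta^*+1}$ by $E$ coincides with that of $M^{*\Tt}_\alpha$ by $E$, yielding the required equality; preservation of the iteration map follows by tracing the resulting composition of ultrapower maps down $b^{\bar\Tt}$.

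For the bicephalus case the same induction applies, using Lemma \ref{lem:bicephalus_ult} to synchronize the action of each extender on both component premice. One must additionally track the sets $\sides^\Tt_\alpha$ and the model-/degree-dropping data through the truncation, but the core argument — truncate $\Tt_0$ at the first index where a too-long extender appears, then append the final extender — is unchanged.
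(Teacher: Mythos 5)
Your proposal and the paper's proof both implement the same underlying idea: delete the extenders that later get undercut in length (the paper's $\Tt$-stable characterization makes this explicit), keep the rest with the tree order inherited from $\Tt$, and then check that nothing relevant to the last model or iteration map was lost. But there is a genuine gap in the way you have organized this as an induction on $\lh(\Tt)$: your inductive hypothesis records only that $M^{\Tt_0}_{\alpha_0}=M^\Tt_{\alpha-1}$ and (in the non-dropping case) $i^{\Tt_0}=i^{\Tt\rest\alpha}$, and this is not enough information to carry out the successor step.

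The successor step requires you to know how the \emph{intermediate} data of $\Tt_0$ lines up with that of $\Tt\rest\alpha$. After truncating at $\beta^*$, you must verify $M^{*\bar\Tt}_{\beta^*+1}=M^{*\Tt}_\alpha$ together with the degree, and in the non-dropping case you must recover $i^\Tt = i^{M^{*\Tt}_\alpha,d}_E\com i^\Tt_{0,\pred^\Tt(\alpha)}$. The $*$-model $M^{*\Tt}_\alpha$ is a segment of $M^\Tt_{\pred^\Tt(\alpha)}$ that in general extends well past $\lh(E^\Tt_{\pred^\Tt(\alpha)})$ (e.g.\ when there is no model-drop, $M^{*\Tt}_\alpha=M^\Tt_{\pred^\Tt(\alpha)}$), so the ``agreement below $\lambda$'' between $M^{\Tt_0}_{\beta^*}$ and $M^\Tt_{\alpha-1}$ does not pin it down; and the map $i^\Tt_{0,\pred^\Tt(\alpha)}$ is an intermediate iteration map, about which the inductive hypothesis says nothing. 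You are implicitly using that $M^{\Tt_0}_{\beta^*}$ is itself a model $M^{\Tt\rest\alpha}_\gamma$ with the corresponding iteration map, but that is not part of what you assumed inductively — a priori, $\Tt_0$ could have the right last model and final map while its intermediate nodes bear no relation to those of $\Tt\rest\alpha$.

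The repair is to strengthen the statement being proved by induction to assert a full correspondence: there is an order- and tree-order-preserving injection $\varphi:\lh(\bar\Tt)\to\lh(\Tt)$ with $\varphi(0)=0$ such that $M^{\bar\Tt}_\gamma=M^\Tt_{\varphi(\gamma)}$, degrees and drop data match, and $i^{\bar\Tt}_{\gamma\gamma'}=i^\Tt_{\varphi(\gamma)\varphi(\gamma')}$ wherever defined (this is the $\lambda$-simple embedding apparatus of Definition \ref{dfn:lambda-simple_embedding} with identity copy maps). Once you carry that, the truncate-and-append step does go through: $M^{\Tt_0}_{\beta^*}$ really is some $M^\Tt_\gamma$ and the predecessor, $*$-model, degree, and composed maps line up by construction. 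But notice that this strengthening exactly recovers the paper's non-inductive construction, where $\bar\Tt$'s nodes are \emph{by definition} the $\Tt$-stable indices with the tree order and models directly inherited from $\Tt$ — so the induction becomes a detour rather than a genuinely different route.
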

\begin{proof}
 This is a straightforward generalization of the example above; the extenders
 used in $\bar{\Tt}$ are  those $E$
 which are $\Tt$-stable, and
 the branches of $\bar{\Tt}$ are  those determined by $\Tt$
 in the obvious manner.
\end{proof}

These observations also yield the following lemma:
\begin{lem}\label{lem:ess_iter}
Let $M$ be an $m$-sound premouse and $\Sigma$ be an $(m,\om_1+1)$-strategy for $M$.
Then there is an essentially-$(m,\om_1+1)$-strategy $\Sigma'$, with $\Sigma\sub\Sigma'$, and the trees $\Tt$ on $M$ via $\Sigma'$ are exactly those which determine a tree $\Ttbar$ via $\Sigma$
as in Lemma \ref{lem:essentially_to_normal}
and its proof.
\end{lem}
\subsection{Simple embeddings}\label{sec:sub_simple_embeddings}
In this subsection we describe and construct the first kind of hulls of iteration trees to be used in \S\S\ref{sec:mim}--\ref{sec:finite_gen_hull}
(cf.~the discussion at the start of \S\ref{sec:simple_embeddings}).
 In some of the definitions/lemmas,
 we  state things literally only for iteration trees on bicephali;
 in those cases the version for trees on premice is just the obvious
simplification thereof.
\begin{dfn}\label{dfn:extcopy}
 Let $\somevarpi:M\to N$ be a non-$\nu$-high
  embedding between
premice.
 Let $E\in\es_+^M$. Then $\extcopy(\somevarpi,E)$ denotes $F$,
 where either:
 \begin{enumerate}[label=(\roman*)]
 \item $E\in\es^M$ and
$F=\Shift(\somevarpi)(E)$
(see  \S\ref{sec:notation_embeddings} and Remark \ref{rem:i-hat}), or
\item $E=F^M$, $\somevarpi$ is non-$\nu$-low and $F=F^N$, or
 \item $E=F^M$, $\somevarpi$ is $\nu$-low and $F=F^N\rest\Shift(\somevarpi)(\nu(F^M))$.\qedhere
\end{enumerate}
\end{dfn}

If $\somevarpi$ is a copy map arising in a typical copying construction, then $F=\extcopy(\somevarpi,E)$ is the
natural copy of $E$ to an extender in $F\in\es_+^N$.
It is important that $\somevarpi$ is non-$\nu$-high here,
since otherwise if $F\in\es^N\cut\core_0(N)$ then $\extcopy(\somevarpi,E)\notin\es_+^N$.
Note that in general, we have $\Shift(\somevarpi)(\nu(E))\geq\nu(\extcopy(\somevarpi,E))$.
We define a variant copying process in \ref{dfn:concopy}, which
handles $\nu$-high embeddings, assuming enough condensation.

\begin{dfn}[$\lambda$-simple embedding]\label{dfn:lambda-simple_embedding}
Let $M$ be an $m$-sound premouse,
 $\Tt$ an $m$-maximal tree on $M$
and $\Ttbar$  essentially-$m$-maximal on $M$;
or let $B=(\rho,M,N)$ be a degree $(m,n)$ bicephalus,
$\Tt$ be degree-maximal on $B$,
and
$\bar{\Tt}$
essentially-degree-maximal
on $B$.
Let $\lambda<\lh(\Ttbar)$.
We say that
  \[
\Phi=(\Ttbar,\varphi,\left<\sigma^0_\alpha,\sigma^1_\alpha\right>_{
i<\lh(\Ttbar)})\]
  is a \dfnemph{$\lambda$-simple embedding \tu{(}of $\Ttbar$\tu{)} into
$\Tt$}, written
$\Phi:\Ttbar\hookrightarrow_{\lambda\text{-}\simple}\Tt$,  iff the following conditions hold, where
we write $\widehat{\sigma}=\Shift(\sigma)$ for maps $\sigma$ in clauses \ref{item:drop_star_matches}--\ref{item:shift_lemma_gives_sigma'}:
 \begin{enumerate}[label=\arabic*.,ref=\arabic*]
  \item $\lambda+1\leq\lh(\Ttbar)<\lambda+\om$
  and $\lh(\Ttbar)\leq\lh(\Tt)$.
      \item $\Ttbar\rest(\lambda+1)=\Tt\rest(\lambda+1)$.
  \item $\varphi:\lh(\Ttbar)\to\lh(\Tt)$ is order-preserving with
  $\varphi\rest(\lambda+1)=\id$.
    \item If $\Ttbar,\Tt$ are on $B$, then
    $\varphi$ preserves bicephalus/model structure; that is:
\begin{enumerate} \item
$\sides^{\Ttbar}_\alpha=\sides^\Tt_{\varphi(\alpha)}$.
 \item if $\sides^{\Ttbar}_{\alpha}=\{n\}$
 then $\{\beta,\beta+1,\pred^\Tt(\beta+1)\}\sub\rg(\varphi)$ where $\beta$ is
least such that
  $\beta+1\leq^\Tt\varphi(\alpha)$ and $\sides^\Tt_{\beta+1}=\{n\}$.
\end{enumerate}
  \item  $\varphi$ preserves tree,  drop and
degree structure.
  More precisely:
  \begin{enumerate}
  \item $\alpha<^\Ttbar\beta$ iff $\varphi(\alpha)<^\Tt\varphi(\beta)$.
  \item If $\beta+1\in(0,\varphi(\alpha)]^\Tt\cap\dropset_{\deg}^\Tt$ then
$\{\beta,\beta+1,\pred^\Tt(\beta+1)\}\sub\rg(\varphi)$.
  \item $\beta+1\in\dropset^\Ttbar$/$\dropset_{\deg}^\Ttbar$ iff
$\varphi(\beta+1)\in\dropset^\Tt$/$\dropset^\Tt_{\deg}$,
  \item $\deg^{n\Ttbar}_\alpha=\deg^{n\Tt}_{\varphi(\alpha)}$.
  \end{enumerate}
  \item\label{item:varrho_i_maps}   If $n\in\sides^{\Ttbar}_\alpha$ then
$\sigma^n_\alpha:\core_0(M^{n\Ttbar}_\alpha)\to
\core_0(M^{n\Tt}_{\varphi(\alpha)})$  is a
 $\deg^{n\Ttbar}_\alpha$-preserving embedding.
\item $\sigma^n_\alpha=\id$ for $\alpha\leq\lambda$.
\item\label{item:copy_extenders} For $\alpha+1<\lh(\Ttbar)$, we have
$\exitside^{\Ttbar}_\alpha=\exitside^\Tt_{\varphi(\alpha)}$,
and letting $n=\exitside^{\Ttbar}_\alpha$, we have
 $E^\Tt_{\varphi(\alpha)}=\extcopy(\sigma^n_\alpha,
E^\Ttbar_\alpha)$.
\item Let $\beta=\pred^\Ttbar(\alpha+1)$. Then:
\begin{enumerate}
\item $\varphi(\beta)=\pred^\Tt(\varphi(\alpha)+1)$,
\item $\varphi(\alpha)+1\leq^\Tt\varphi(\alpha+1)$

(and note  that also
$\sides^\Tt_{\varphi(\alpha)+1}=\sides^\Tt_{\varphi(\alpha+1)}=\sides^{\Ttbar}
_{\alpha+1}$ and also
$(\varphi(\alpha)+1,\varphi(\alpha+1)]^\Tt\cap\dropset^\Tt_{\deg}=\emptyset$),
  \item
$i^{n\Tt}_{\varphi(\alpha)+1,\varphi(\alpha+1)}$ is
$i$-preserving for each $(n,i)$ with
$n\in\sides^{\Ttbar}_{\alpha+1}$
and $i\leq\deg^{n\bar{\Tt}}_{\alpha+1}$.

\end{enumerate}
\item\label{item:drop_star_matches} Let $\alpha+1\in\dropset^\Ttbar$
and $\beta=\pred^\Ttbar(\alpha+1)$ and $\{n\}=\sides^{\Ttbar}_{\alpha+1}$.
Then
$\widehat{\sigma^n_\beta}(M^{n*\Ttbar}_{\alpha+1})=
M^{n*\Tt}_{\varphi(\alpha)+1}$
 (as mentioned above,  $\widehat{\sigma}$ denotes $\Shift(\sigma)$ here and below).
\item\label{item:varrho_agmt} Let $\beta<\alpha<\lh(\Ttbar)$
and $n=\exitside^{\Ttbar}_\beta$ and $\ell\in\sides^{\Ttbar}_\alpha$.
Then:
 \begin{enumerate}[label=--]
\item  $\widehat{\sigma^n_\beta}\rest\nu^\Ttbar_\beta\sub\sigma^\ell_\alpha$,
and
\item
if $E^\Ttbar_\beta$ is type 1 or 2
then
$\widehat{\sigma^n_\beta}
\rest\lh(E^\Ttbar_\beta)\sub\sigma^\ell_\alpha$
\end{enumerate}
(and recall $\sigma\sub\widehat\sigma$ in general).
\item\label{item:shift_lemma_gives_sigma'} Let $\beta=\pred^\Ttbar(\alpha+1)$ and $n\in\sides^{\Ttbar}_{\alpha+1}$.
Then
$\sigma^n_{\alpha+1}=i^{n\Tt}_{\varphi(\alpha)+1,\varphi(\alpha+1)}\com\sigma'$,
where $\sigma'$ is defined as in the Shift Lemma. That is, let
$d=\deg^{n\Ttbar}_{\alpha+1}$, $\bar{M}^*=M^{n*\Ttbar}_{\alpha+1}$,
$M^*=M^{n*\Tt}_{\varphi(\alpha)+1}$, $\bar{E}=E^\Ttbar_\alpha$,
$E=E^\Tt_{\varphi(\alpha)}$,
 $\ell=\exitside^{\Ttbar}_\alpha$.
Then for each $a\in[\nu(\bar{E})]^{<\om}$, if  $d=0$, then for each $f\in\core_0(M^*)$ with $f:[\crit(\bar{E})]^{|a|}\to \core_0(M^*)$,
\[
\sigma'\left(\left[a,f\right]^{\bar{M}^*,0}_{\bar{E}}\right)=
\left[\widehat{\sigma^\ell_\alpha}(a),
\widehat{\sigma^n_\beta}(f)\right]^{M^*,0}_{E},\]
whereas if $d>0$, then for each
 $x\in\core_0(\bar{M}^*)$ and  degree $d$ term $t$ (see \S\ref{sec:notation_fine_structure}),
\[
\sigma'\left(\left[a,f_{t,x}^{\bar{M}^*}\right]^{\bar{M}^*,d}_{\bar{E}}\right)=
\left[\widehat{\sigma^\ell_\alpha}(a),
f_{t,\widehat{\sigma^n_\beta}(x)}^{M^*}\right]^{M^*,d}_{E}.\]

 \end{enumerate}

Actually conditions
 \ref{item:drop_star_matches} and \ref{item:varrho_agmt} follow automatically
from the others;
 we have stated them explicitly here as they help to make sense of the others.

Let $\Phi:\Ttbar\hookrightarrow_{\lambda\text{-}\simple}\Tt$ with
$\Phi=(\Ttbar,\varphi,\vec{\sigma})$ where $\vec{\sigma}=\left<\sigma^0_\alpha,\sigma^1_\alpha\right>_{\alpha<\lh(\Ttbar)}$.
We write
$\varphi^\Phi=\varphi$ and $(\sigma^n_\alpha)^\Phi=\sigma^n_\alpha$.
Given $\beta=\alpha+1\in(0,\lh(\Ttbar)]$, write
$\Phi\rest \beta=(\Ttbar\rest\beta,\varphi\rest \beta,\vec{\sigma}\rest \beta)$.
\end{dfn}

We now record a few more properties of $\lambda$-simple embeddings,
writing $\widehat{\sigma}=\Shift(\sigma)$:
\begin{lem}\label{lem:lambda-simple_embedding}
 Let $\Phi:\Ttbar\hookrightarrow_{\lambda\text{-}\simple}\Tt$,
  let $\varphi=\varphi^\Phi$ and
$\sigma_\alpha^e=\sigma_\alpha^{\Phi,e}$. Let
$\alpha,\beta,\xi<\lh(\Ttbar)=\gamma+1$. Then:
 \begin{enumerate}[label=\arabic*.,ref=\arabic*]
\item If $\alpha<^\Ttbar \beta$ and
$(\alpha,\beta]^\Ttbar\cap\dropset^\Ttbar=\emptyset$
and $e\in\sides^{\Ttbar}_\beta$ then
$\sigma^e_\beta\com
i^{e\Ttbar}_{\alpha\beta}=i^{e\Tt}_{\varphi(\alpha)\varphi(\beta)}
\com\sigma^e_\alpha$.

\item\label{item:drop_comm} If $\alpha+1\in\dropset^\Ttbar$
\tu{(}so $\varphi(\alpha+1)=\varphi(\alpha)+1$\tu{)} and
$\xi=\pred^\Ttbar(\alpha+1)$ and $\sides^{\Ttbar}_{\alpha+1}=\{e\}$
then
\[ \sigma^e_{\alpha+1}\com
i^{e*\Ttbar}_{\alpha+1}=
i^{e*\Tt}_{\varphi(\alpha+1)}\com\widehat{\sigma^e_\xi}\rest\core_0(M^{
e*\Ttbar }_{\alpha+1}).\]
\item\label{item:when_lh_dec} Suppose
$\bar{\iota}=\lh(E^{\Ttbar}_{\alpha+1})<\lh(E^{\Ttbar}_\alpha)$.
Then $\varphi(\alpha)+1<^\Tt\varphi(\alpha+1)$ and $E^{\Ttbar}_\alpha$ is type
3.
Let $\bar{\nu}=\nu(E^{\Ttbar}_\alpha)$,
$\nu=\nu(E^\Tt_{\varphi(\alpha)})$,
$d=\exitside^{\Ttbar}_\alpha$,
$e\in\sides^{\Ttbar}_{\alpha+1}$,
 $j^e=i^{e\Tt}_{\varphi(\alpha)+1,\varphi(\alpha+1)}$,
$\kappa=\crit(j^e)$, $\lambda=j^e(\kappa)$ and
$\iota=\lh(E^\Tt_{\varphi(\alpha+1)})$.
Then
\[ \widehat{\sigma^d_\alpha}(\bar{\nu})=\nu=
\kappa<\lambda=\sigma^e_{\alpha+1}(\bar{\nu})<
\iota<\lambda^{+M^{e\Tt}_{\varphi(\alpha+1)}}. \]
\item Suppose $\varphi(\alpha)+1<^\Tt\varphi(\alpha+1)$.
Let $\nu=\nu(E^\Tt_{\varphi(\alpha)})$
and $\delta=\sup_{\xi<\varphi(\alpha+1)}\nu(E^\Tt_\xi)$.
Let $\beta+1<\lh(\Ttbar)$ and $\kappa=\crit(E^{\Ttbar}_\beta)$
and $d=\exitside^{\Ttbar}_\beta$.
Then $\widehat{\sigma^d_\beta}(\kappa)\notin[\nu,\delta)$.
 \item\label{item:agmt_below_at_nu} Let $\alpha<\beta$
 and $d=\exitside^{\Ttbar}_\alpha$,
 $e\in\sides^{\Ttbar}_\beta$,
  $\nu=\nu(E^\Ttbar_\alpha)$,
 $\iota=\lh(E^\Ttbar_\alpha)$.
Then:
\begin{enumerate}[label=--]
 \item $\widehat{\sigma^d_\alpha}
\rest\nu=\sigma^e_\beta\rest\nu=
\widehat{\sigma^e_\beta}\rest\nu$,
\item $\widehat{\sigma^d_\alpha}(\nu)\leq
\sigma^e_\beta(\nu)=\widehat{\sigma^e_\beta}(\nu)$,
\item $\widehat{\sigma^d_\alpha}(\nu)=\nu(E^\Tt_{\varphi(\alpha)})$,
\item if $E^\Ttbar_\alpha$ is type 1/2 then
$E^\Ttbar_\alpha$ is $\Ttbar$-stable,
 $\widehat{\sigma^d_\alpha}\rest\iota=
\sigma^e_\beta\rest\iota$
and
$\sigma^e_\beta(\iota)=\lh(E^\Tt_{
\varphi(\alpha)})$
\end{enumerate}
\end{enumerate}
\end{lem}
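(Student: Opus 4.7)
The plan is induction on the indices $\alpha$ and $\beta$, working through the five clauses in the order stated, with the main tool being Definition~\ref{dfn:lambda-simple_embedding} itself, particularly the Shift Lemma formula of condition~(12) and the agreement condition~(11).

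First I would handle parts~(1) and~(2) together, since (1) depends inductively on~(2). Part~(2) is essentially immediate from condition~(12): that condition expresses $\sigma^e_{\alpha+1}$ as $i^{e\Tt}_{\varphi(\alpha)+1,\varphi(\alpha+1)}\com\sigma'$ where $\sigma'$ is the Shift Lemma map, and $\sigma'$ commutes with $i^{*e\Ttbar}_{\alpha+1}$ and $i^{*e\Tt}_{\varphi(\alpha)+1}$ in the standard Shift Lemma way. For part~(1), I would induct on $\beta$ up the branch, with non-dropping successor steps handled by combining condition~(12) with part~(2), and limit steps handled by continuity of the iteration maps at the direct limits.

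For part~(3), the strategy is to first extract the structural consequences of the hypothesis $\bar{\iota} < \lh(E^\Ttbar_\alpha)$. Combined with essentially-degree-maximality (which gives $\nu(E^\Ttbar_\alpha) \leq \nu(E^\Ttbar_{\alpha+1}) < \bar{\iota}$), this forces enough slack between $\nu$ and $\lh$ of $E^\Ttbar_\alpha$ to make it type~3 in Mitchell-Steel indexing. Since $\Tt$ is genuinely degree-maximal, its lengths are monotone non-decreasing modulo the superstrong exception, so the image $E^\Tt_{\varphi(\alpha+1)}$ cannot sit at index $\varphi(\alpha)+1$: we must have $\varphi(\alpha)+1 <_\Tt \varphi(\alpha+1)$, with $\Tt$ inserting extra extenders to traverse the resulting ``gap''. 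The chain $\widehat{\sigma^d_\alpha}(\bar{\nu}) = \nu = \kappa < \lambda = \sigma^e_{\alpha+1}(\bar{\nu}) < \iota$ then falls out of the Shift Lemma formula applied at $\bar{\nu}$, using that the first extender along $(\varphi(\alpha),\varphi(\alpha+1)]_\Tt$ has critical point exactly $\nu$ by the predecessor-choice rule for degree-maximal trees, together with the Mitchell-Steel convention for the length of $E^\Tt_{\varphi(\alpha+1)}$.

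Part~(4) I would deduce by tracking how $\widehat{\sigma^d_\beta}$ interacts with the gap $[\nu,\delta)$ produced by part~(3): critical points of later $\Ttbar$-extenders lie either strictly below $\bar{\nu}$ (hence map strictly below $\nu$) or weakly above $\bar{\nu}$ (hence map to at least $\lambda$), with condition~(11) ensuring coherent behaviour at $\bar{\nu}$. Part~(5) then follows from the iteration tree agreement lemma applied within $\Ttbar$, combined with condition~(11) to lift the agreement to the $\sigma$ maps; the type-1/2 refinement uses that $\lh(E^\Ttbar_\alpha)$ is then a cardinal of $M^{e\Ttbar}_\beta$, which gives $\Ttbar$-stability and full $\lh$-agreement of the copy maps. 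The main obstacle will be part~(3), where the essentially-degree-maximal versus degree-maximal mismatch must be explicitly paid for: getting the type-3 conclusion and the precise chain $\nu = \kappa < \lambda < \iota$ requires simultaneously juggling the indexing conventions, the predecessor rule, and the Shift Lemma formula; the remaining clauses are essentially routine propagation of these same ingredients through the induction.
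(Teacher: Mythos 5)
Your proposal follows the paper's route: the paper discusses only part~(3) explicitly (declaring the other clauses routine), and your treatment of part~(3) -- the Shift Lemma factorization $\sigma^e_{\alpha+1} = j^e \com \sigma'$, the $\nu$-preservation of $\sigma^d_\alpha$, and the monotone-length condition in the degree-maximal $\Tt$ forcing $\varphi(\alpha)+1 <_\Tt \varphi(\alpha+1)$ -- is the paper's argument. One small tightening: the predecessor rule alone only gives $\kappa \geq \nu$; to pin down $\kappa = \nu$ (and from there that $E^\Ttbar_\alpha$ must be type~3) the paper additionally uses that $\kappa < \iota' = \sigma'(\bar\iota) < \lh(E^\Tt_{\varphi(\alpha)})$, derived from $j^e(\iota') = \iota = \lh(E^\Tt_{\varphi(\alpha+1)})$, together with the fact that $\kappa$ is an inaccessible cardinal of the relevant agreeing models, while the interval $(\nu, \lh(E^\Tt_{\varphi(\alpha)}))$ contains none.
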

\begin{proof}
We just discuss part \ref{item:when_lh_dec};
the rest is routine and left to the reader. Since
$\sigma^d_\alpha$ is $\nu$-preserving,
letting
$\sigma':\core_0(M^{e\bar{\Tt}}_{\alpha+1})\to
\core_0(M^{e\Tt}_{\varphi(\alpha)+1})$ be defined
via the Shift Lemma (as in condition \ref{item:shift_lemma_gives_sigma'} of \ref{dfn:lambda-simple_embedding}),
$\sigma'(\bar{\nu})=\nu$ and
 either \begin{enumerate}[label=(\roman*)]
\item $\lh(E^{\bar{\Tt}}_\alpha)<\rho_0(M^{e\bar{\Tt}}_{\alpha+1})$ and $\sigma'(\lh(E^{\bar{\Tt}}_\alpha))=\lh(E^{\Tt}_{\varphi(\alpha)})$
(so $\lh(E^{\Tt}_{\varphi(\alpha)})<\rho_0(M^{e\Tt}_{\varphi(\alpha)+1})$),
or
\item $\bar{\nu}$ is the largest cardinal of $M^{e\bar{\Tt}}_{\alpha+1}$,  $\lh(E^{\bar{\Tt}}_\alpha)=\rho_0(M^{e\bar{\Tt}}_{\alpha+1})$, $\nu$ is the largest cardinal of $M^{e\Tt}_{\varphi(\alpha)+1}$
and  $\lh(E^{\Tt}_{\varphi(\alpha)})=\rho_0(M^{e\Tt}_{\varphi(\alpha)+1})$.\footnote{
This holds iff $E^{\bar{\Tt}}_\alpha$ is of superstrong type and letting $\bar{\mu}=\crit(E^{\bar{\Tt}}_\alpha)$ and $\bar{M}^*=M^{e*\bar{\Tt}}_{\alpha+1}$ , either $\bar{M}^*$ is active type 2 with largest cardinal $\bar{\mu}$ or is active type 3 with largest cardinal $\bar{\mu}^{+\bar{M}^*}=\nu(F^{\bar{M}^*})$.}
\end{enumerate}

So  letting $\iota'=\sigma'(\bar{\iota})$, we have
$ \iota'<\lh(E^\Tt_{\varphi(\alpha)})\leq\lh(E^\Tt_{\varphi(\alpha)+1})$
as $\Tt$ is degree-maximal (not just essentially-degree-maximal).
Note that this is independent of $e\in\{0,1\}$.
So we may assume $e=\exitside^{\bar{\Tt}}_{\alpha+1}$.
But then if $\varphi(\alpha)+1=\varphi(\alpha+1)$
then $\sigma^e_{\alpha+1}=\sigma'$
and we must have that $E^\Tt_{\varphi(\alpha)+1}=\extcopy(\sigma',E^{\bar{\Tt}}_{\alpha+1})$, so $\lh(E^\Tt_{\varphi(\alpha)+1})=\iota'$,
contradicting that $\iota'<\lh(E^\Tt_{\varphi(\alpha)+1})$.
So $\varphi(\alpha)+1<\varphi(\alpha+1)$
and $\kappa<\iota'$
as $j^e(\iota')=\iota=\lh(E^\Tt_{\varphi(\alpha+1)})$.
So $\nu=\kappa$, and the rest follows.
\end{proof}

\begin{dfn}[$\reps$]
Let $M$ be a premouse. We define $\reps^M:M\to\pow(\core_0(M))$.
If $M$ is non-type 3, then $\reps^M(x)=\{x\}$.
If $M$ is type 3, then $\reps^M(x)$
is the set of pairs $(a,f)$
such that $x=[a,f]^M_{F^M}$.
For an iteration tree $\Tt$, $\reps^\Tt_\alpha$ denotes $\reps^{M^\Tt_\alpha}$.
\end{dfn}

A natural way to produce $\lambda$-simple embeddings is through is via finite support:
\begin{dfn}[Finite Support]\label{dfn:finite_support}
Let $\Tt$ be either an $m$-maximal tree on $m$-sound premouse $M$,
or a degree-maximal tree on a bicephalus
 $B=(\rho,M,N)$.

 A \dfnemph{finite selection of $\Tt$} is a finite sequence
$\FF=\left<\FF^n_\theta\right>_{(\theta,n)\in J}$ such that
$J\sub\lh(\Tt)\cross\{0,1\}$ and
for all $(\theta,n)\in J$, we have $n\in\sides^\Tt_\theta$, $\FF^n_\theta\sub\core_0(M^n_\theta)$ and $\FF^n_\theta$
is finite and non-empty.
Write $J_\FF=J$ and $I_\FF=\{\alpha\bigm|\exists n\ [(\alpha,n)\in J]\}$.

A \dfnemph{finite support of $\Tt$}
is a finite selection $\supp=\left<\supp_\alpha^n\right>_{(\alpha,n)\in J}$ of
$\Tt$
such that letting $J=J_\FF$ and $I=I_\FF$, we have:
\begin{enumerate}[label=\arabic*.,ref=\arabic*]
 \item $0\in I$.
 \item $J=\{(\alpha,n)\bigm|\alpha\in I\text{ and }n\in\sides^\Tt_\alpha\}$
\item\label{item:nu_in_rg}
For each $(\alpha,n)\in J$ we have:
\begin{enumerate}[label=--]
 \item $\reps^{n\Tt}_\alpha(\nu(M^{n\Tt}_\alpha))\cap
 \supp^{n}_\alpha\neq\emptyset$, and
\item letting
$d=\deg^{n\Tt}_\alpha$, if $\rho_d(M^{n\Tt}_\alpha)<\rho_0(M^{n\Tt}_\alpha)$
then $\rho_d(M^{n\Tt}_\alpha)\in\supp^n_\alpha$.
\end{enumerate}
\item Let $(\beta+1,e)\in J$ and $E=E^\Tt_\beta$
and $\ell=\exitside^\Tt_\beta$
and $\gamma=\pred^\Tt(\beta+1)$ and
$m=\deg^{e\Tt}_{\beta+1}$
and $M^*=M^{e*\Tt}_{\beta+1}\ins M^{e\Tt}_\gamma$.
Then:
\begin{enumerate}
\item $\beta,\gamma,\gamma+1\in I$.
\item\label{item:x_output_of_term_gens} For all $x\in\supp^e_{\beta+1}$ there
is $(t,a,y)$ such that $t$ is an $\rSigma_m$ term,
$a\in[\nu_E]^{<\om}$,
$\reps^{\ell\Tt}_\beta(a)\cap\supp^\ell_\beta\neq\emptyset$,
$y\in\core_0(M^*)$,
$\reps^{e\Tt}_\gamma(y)\cap\supp^e_\gamma\neq\emptyset$, and
\[ x=[a,f_{t,y}^{M^*}]^{M^*,m}_{E}.\]
\item If $E\neq F(M^\ell_\beta)$ then $\reps^\ell_\beta(E)\cap
\supp^\ell_\beta\neq\emptyset$.
\end{enumerate}
\item\label{item:support_limit_case} Let $(\alpha,e)\in J$ with
$\alpha$ a limit ordinal
(so
$0\in I\cap\alpha$) and $\beta=\max(I\cap\alpha)$. Then
$\beta<^\Tt\alpha$, $\beta$ is a successor,
$\sides^\Tt_\alpha=\sides^\Tt_\beta$,
$(\beta,\alpha)^\Tt$ does not drop
in model or degree, and
$\supp^e_\alpha\sub i^{e\Tt}_{\beta\alpha}``\supp^e_\beta$.
\end{enumerate}

Let $\supp,J,I$ be as above.
We write
\[ I'_\supp=\{\alpha\in I\mid \alpha=\max(I)\text{ or }\alpha+1\in I\}.\]
Given a finite selection $\FF$ of $\Tt$,
we say that $\supp$ is a \dfnemph{finite support of $\Tt$ for $\FF$}
iff $I_\FF\sub I'_\supp$ and
$\FF^e_\beta\sub\supp^e_\beta$ for each $(\beta,e)\in
J_\FF$.
\end{dfn}

\begin{rem}
 The above definition is mostly like that of
 \cite[Definition 2.7]{hsstm},
with the
following small
differences. First, we use $\reps_\alpha$ in place of $\rep_\alpha$. Second, we
have added condition \ref{item:nu_in_rg}, in order to ensure that we get
degree-preserving copy maps.
Third, condition (g) of \cite[2.7]{hsstm}
has been modified because our mice can have superstrong extenders.
The point of (g)(iv) of \cite[2.7]{hsstm}  was to ensure that finite support
trees have the monotone length condition, which one cannot quite achieve here,
and which is why we must allow essentially-degree-maximal,
instead of degree-maximal trees.
\end{rem}

We can now state the basic lemma on production of $\lambda$-simple embeddings mapping into a given tree $\Tt$:

\begin{lem}\label{lem:simple_embedding_exists}
Let $M,m,\Tt$ or $B=(\rho,M,N),\Tt$ be as in Definition
\ref{dfn:finite_support}. Then:
\begin{enumerate}[label=\arabic*.,ref=\arabic*]
\item\label{item:finite_support_exists} For every finite selection $\FF$  of
$\Tt$, there is a finite support of
$\Tt$ for $\FF$.
\item\label{item:lambda-simple_embedding_exists}
Let $\supp=\left<\supp_\alpha^n\right>_{(\alpha,n)\in J}$
be a finite support
of $\Tt$, and $\lambda\in I'_\FF$.
Then there is a unique pair $(\bar{\Tt},\Phi)$
with
\[ \Phi:\Ttbar\hookrightarrow_{\lambda\text{-}\simple}\Tt
\text{ and }\rg(\varphi^\Phi)=(\lambda+1)\cup I'_\FF.\] Moreover,
for each $\alpha<\lh(\bar{\Tt})$
and $e\in\sides^{\bar{\Tt}}_\alpha$,
we have $\SS_{\varphi^\Phi(\alpha)}^e\sub\rg(\sigma^{e\Phi}_\alpha)$.
\end{enumerate}
\end{lem}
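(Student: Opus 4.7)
The plan is to first build the finite support by closing the initial selection $\FF$ under the requirements of Definition \ref{dfn:finite_support}, and then construct $(\bar\Tt, \Phi)$ by a recursion along the reduced index set $(\lambda+1) \cup I'_\FF$.

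For part \ref{item:finite_support_exists}, close $\FF$ iteratively. First adjoin $0$ to $I$, and for each existing $(\alpha,e) \in J$ place a $\reps^{e\Tt}_\alpha$-representative of $\nu(M^{e\Tt}_\alpha)$ into $\supp^e_\alpha$, together with $\rho^{M^{e\Tt}_\alpha}_{\deg^{e\Tt}_\alpha}$ when the latter lies below $\rho_0$. For each successor $(\beta+1, e) \in J$, adjoin $\beta$, $\gamma = \pred^\Tt(\beta+1)$, and $\gamma+1$ to $I$, and for each $x \in \supp^e_{\beta+1}$ pick an ultrapower decomposition $x = [a, f^{M^*}_{t,y}]^{M^*,m}_E$ and feed $\reps$-representatives of $a$, $y$, and of $E$ (the latter unless $E$ is the active extender of $M^{\ell\Tt}_\beta$) into the appropriate supports. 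For each limit $(\alpha, e) \in J$, fix a successor $\beta <_\Tt \alpha$ sufficiently close to $\alpha$ with $(\beta,\alpha)_\Tt$ non-dropping and $\sides^\Tt_\beta = \sides^\Tt_\alpha$, so that the finite set $\supp^e_\alpha$ lies in $i^{e\Tt}_{\beta\alpha}``\core_0(M^{e\Tt}_\beta)$, and adjoin $\beta$ to $I$ with preimages. Each recursive step strictly decreases the $\Tt$-index of a newly introduced ordinal, so the process terminates in finitely many rounds.

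For part \ref{item:lambda-simple_embedding_exists}, let $\lh(\bar\Tt) = \ot((\lambda+1) \cup I'_\FF)$ and let $\varphi^\Phi$ be the order-isomorphism. Below $\lambda+1$ set $\sigma^e_\alpha = \id$ and $\bar\Tt \rest (\lambda+1) = \Tt \rest (\lambda+1)$. Proceed by induction on $\alpha \in (\lambda, \lh(\bar\Tt))$. At a successor $\alpha = \beta+1$, set $\ell := \exitside^\Tt_{\varphi(\beta)}$ and define $E^{\bar\Tt}_\beta$ as the unique extender with $\extcopy(\sigma^\ell_\beta, E^{\bar\Tt}_\beta) = E^\Tt_{\varphi(\beta)}$; this is well-defined because the support puts a $\reps$-representative of $E^\Tt_{\varphi(\beta)}$ into $\rg(\sigma^\ell_\beta)$ in the non-$F$ case, while in the $F$-case the extender is determined by the exit model. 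The tree predecessor is forced into $\rg(\varphi^\Phi)$ by the closure of $I'_\FF$ under $\pred^\Tt$ built into the support. Define $\sigma^e_\alpha$ via the Shift Lemma formula from Definition \ref{dfn:lambda-simple_embedding}, postcomposing with $i^{e\Tt}_{\varphi(\beta)+1, \varphi(\alpha)}$ when $\varphi(\alpha) > \varphi(\beta)+1$; this second case arises precisely in the $\Tt$-unstable type 3 scenario of Lemma \ref{lem:lambda-simple_embedding}(\ref{item:when_lh_dec}), when intervening $\Tt$-extenders were skipped because their critical points are absorbed into the image of $\nu(E^{\bar\Tt}_\beta)$. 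At limit $\alpha$, the condition \ref{item:support_limit_case} of the support furnishes a cofinal non-dropping branch segment, and we take direct limits on each side.

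The main obstacle is verifying clause \ref{item:varrho_i_maps}: each $\sigma^e_\alpha$ is $\deg^{e\bar\Tt}_\alpha$-preserving. The $\nu$-preserving and $m$-lifting aspects are routine from the Shift Lemma together with the built-in $\reps$-representative of $\nu$ in $\supp^e_\alpha$. Preservation of $p_i$, $\rho_i$, and of some $m$-good cofinal function for $i = \deg^{e\bar\Tt}_\alpha$ uses Lemmas \ref{lem:d-preserving} and \ref{lem:stronger_d-preserving}: at each successor step the appropriate case of \ref{lem:d-preserving} applies, since the relevant ultrapower is weakly amenable and the critical point relative to $\rho$ and $\cof^{\bfrSigma_m}(\rho)$ behaves correctly because $\rho_{\deg}$ was placed into the support, putting it in $\rg(\sigma^e_\alpha)$; goodness of a witnessing term-function then transfers by \ref{lem:stronger_d-preserving}. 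At limits, preservation is automatic by taking direct limits of degree-preserving maps along the non-dropping tail of the branch. Uniqueness of $(\bar\Tt, \Phi)$ is then immediate: once $\rg(\varphi^\Phi)$ is specified, the side, tree, drop, and degree structure of $\bar\Tt$ is determined by restricting $\Tt$'s structure to $\rg(\varphi^\Phi)$, the extenders $E^{\bar\Tt}_\beta$ are forced as $\extcopy$-preimages, and the maps $\sigma^e_\alpha$ are forced by the Shift Lemma at successors and by commutativity with the iteration maps at limits.
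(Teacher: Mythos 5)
Your overall architecture matches the paper's: close the selection to a support, then build $(\bar\Tt,\Phi)$ by recursion on the reduced index set, with the real work being the maintenance of degree-preservation via Lemmas \ref{lem:d-preserving} and \ref{lem:stronger_d-preserving}. But the treatment of the case $\varphi(\alpha)+1<_\Tt\varphi(\alpha+1)$ contains a genuine error. You assert this gap arises ``precisely in the $\Tt$-unstable type 3 scenario'' of Lemma \ref{lem:lambda-simple_embedding}(\ref{item:when_lh_dec}); that is backwards. The type-3 length-decrease gives one \emph{sufficient} condition for such a gap, but the gap arises in general whenever $\varphi(\alpha+1)$ is a limit ordinal of $\Tt$ (the usual situation when the support skips a cofinal stretch of a branch), and this has nothing to do with type 3 exit extenders. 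The paper explicitly isolates this: if $\varphi(\alpha)+1<_\Tt\varphi(\alpha+1)=\eta$, then $\eta$ is a limit, and one must argue that the long iteration map $i^{e\Tt}_{\varphi(\alpha)+1,\eta}$ is $d$-preserving before postcomposing.

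More seriously, your handling of that step — ``preservation is automatic by taking direct limits of degree-preserving maps along the non-dropping tail of the branch'' — is wrong, because the individual iteration maps along a non-dropping branch are \emph{not} automatically degree-preserving in the sense of the paper's definition. If some $\crit(E^\Tt_\gamma)$ along $(\varphi(\alpha)+1,\eta)_\Tt$ equals $\cof^{\bfrSigma_d}(\rho_d)$ at the corresponding model, then by part \ref{item:mu=kappa^M} of Lemma \ref{lem:d-preserving} that ultrapower strictly decreases $\rho_d$ below its pointwise image, destroying $d$-preservation; the maps are only near-$d$-embeddings. The correct argument, which you need and do not supply, is the one the paper gives: condition \ref{item:nu_in_rg} of Definition \ref{dfn:finite_support} places $\rho_d(M^{e\Tt}_\eta)$ into $\supp^e_\eta$ when it is below $\rho_0$, condition \ref{item:support_limit_case} propagates this into $\rg(i^{e\Tt}_{\varphi(\alpha)+1,\eta})$, and having $\rho_d$ in the range forces $i^{e\Tt}_{\varphi(\alpha)+1,\eta}$ to be $d$-preserving. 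Your earlier observation that ``$\rho_{\deg}$ was placed into the support'' is the right ingredient, but you deploy it only at the single-ultrapower step and not where it is actually needed, namely across the skipped limit segment. (There is also a minor confusion in having a separate ``at limit $\alpha$'' case in $\bar\Tt$: since $\lh(\bar\Tt)<\lambda+\om$, there are no limit nodes of $\bar\Tt$ above $\lambda$; the only limits that arise are $\varphi$-images in $\Tt$, which belong to the successor step of the induction as above.)
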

\begin{proof}[Proof Sketch]
Part \ref{item:finite_support_exists}: This is a straightforward construction;
a very similar argument is
given in \cite[Lemma 2.8]{hsstm}. (Here it is actually slightly
easier, because of the
changes mentioned in
the remark above.)

Part \ref{item:lambda-simple_embedding_exists}: This is mostly a routine
copying
argument, maintaining
the properties in \ref{dfn:lambda-simple_embedding} by induction,
and using the properties in \ref{lem:lambda-simple_embedding},
and noting that every step is uniquely determined by the set $I_\FF$,
and that the closure of a finite support given by \ref{dfn:finite_support} is
enough to keep
things going. However, we will discuss the maintenance
of the fact that $\sigma^{d}_\alpha$ is
a $\deg^{d\bar{\Tt}}_\alpha$-preserving embedding,
since this is not completely
standard (though straightforward).
(Note that this property ensures that the degrees in $\bar{\Tt}$
match those in $\Tt$.)

So let $\alpha\geq\lambda$ and $\beta=\pred^{\bar{\Tt}}(\alpha+1)$.
Let $e\in\sides^{\bar{\Tt}}_{\alpha+1}$
and $\bar{M}^*=M^{e*\bar{\Tt}}_{\alpha+1}$
and $M^*=M^{e*\Tt}_{\varphi(\alpha)+1}$
and $\pi:\core_0(\bar{M}^*)\to\core_0(M^*)$
be $\pi=\Shift(\sigma^e_\beta)\rest\core_0(\bar{M}^*)$.
Let $d=\deg^{\bar{\Tt}}_{\alpha+1}$.
Just because $\pi$ is $d$-lifting,
if $\alpha+1\notin\dropset_\deg^{\bar{\Tt}}$
then $\varphi(\alpha)+1\notin\dropset_\deg^{\Tt}$.
If $\alpha+1\in\dropset_\deg^{\bar{\Tt}}$,
then by induction, $\pi$ is $(d+1)$-preserving,
 which ensures $\varphi(\alpha)+1\in\dropset^\Tt_\deg$
 and $\deg^{e\Tt}_{\varphi(\alpha)+1}=d$.
Let $\bar{M}'=M^{e\bar{\Tt}}_{\alpha+1}$
and $M'=M^{e\Tt}_{\varphi(\alpha)+1}$.
Let
$i_{\bar{\Tt}}=i^{e*\bar{\Tt}}_{\alpha+1}$,
$i_\Tt=i^{e*\Tt}_{\varphi(\alpha)+1}$
and
$\sigma':\core_0(\bar{M}')\to\core_0(M')$
be defined via the Shift
Lemma. So
$\sigma'$ is $d$-lifting and
$\sigma'\com i_{\bar{\Tt}}=i_\Tt\com\pi$.

We claim that $\sigma'$ is $d$-preserving. For
let $\mu=\crit(E^{\bar{\Tt}}_\alpha)$.
Then $\pi(\mu)=\crit(E^\Tt_{\varphi(\alpha)})$.
We have $\mu<\rho_d^{\bar{M}^*}$ and $\pi(\mu)<\rho_d^{M^*}$.
Let $\kappa^{\bar{M}^*}=\cof^{\bfrSigma_d^{\bar{M}^*}}(\rho_d^{\bar{M}^*})$
and
$\kappa^{M^*}=\cof^{\bfrSigma_d^{{M}^*}}(\rho_d^{{M}^*})$.
Suppose first that $\mu\neq\kappa^{\bar{M}^*}$.
Then since $\pi$ is
$d$-preserving, $\pi(\mu)\neq\kappa^{M^*}$.
By Lemma \ref{lem:d-preserving} and some easy calculations then,
$i_{\bar{\Tt}}$
and $i_\Tt$ are $d$-preserving.
Since $\sigma'\com i_{\bar{\Tt}}=
i_\Tt\com\pi$,
together with Lemma \ref{lem:stronger_d-preserving},
it follows
that $\sigma'$ is $d$-preserving, as desired.
Now suppose instead that
$\mu=\kappa^{\bar{M}^*}$.
Then since $\pi$ is
$d$-preserving,
$\pi(\mu)=\kappa^{M^*}$
and moreover
letting $f^{\bar{M}^*}_{t,x}$ be
$d$-good for $\bar{M}^*$
(here $t$ is an $\rSigma_d$-term and $x\in\core_0(\bar{M}^*)$),
then $f^{M^*}_{t,\pi(x)}$ is also $d$-good for $M^*$.
So by Lemma \ref{lem:d-preserving},
$f^{\bar{M}'}_{t,i_{\bar{\Tt}}(x)}\rest\kappa^{\bar{M}^*}$ is $d$-good
for $\bar{M}'$ and $f^{M'}_{t,i_{\Tt}(\pi(x))}\rest\kappa^{M^*}$ is $d$-good
for $M'$. But $\mu=\kappa^{\bar{M}^*}$
and $\pi(\mu)=\kappa^{M^*}$, so $\sigma'(\kappa^{\bar{M}^*})=\kappa^{M^*}$,
so with the natural term $t'$ and letting
$x'=(i_{\bar{\Tt}}(x),\kappa^{\bar{M}^*})$ we get
that $f^{\bar{M}'}_{t',x'}$ is $m$-good for $\bar{M}'$
and
\[
f^{M'}_{t',\sigma'(x')}=f^{M'}_{t,\sigma'(i_{\bar{\Tt}}(x))}
\rest\sigma'(\kappa^{\bar{M}^*})
 =f^{M'}_{t,i_{\Tt}(\pi(x))}\rest\kappa^{M^*},\]
which is $d$-good for $M'$.  The rest follows easily.

So $\sigma'$ is $d$-preserving. If $\varphi(\alpha+1)=\varphi(\alpha)+1$
then $\sigma^e_{\alpha+1}=\sigma'$, so we are done.
Suppose instead that $\varphi(\alpha)+1<^\Tt\varphi(\alpha+1)$.
Note then that by \ref{dfn:finite_support}, $\varphi(\alpha+1)=\eta$ is a limit.
So using properties \ref{item:nu_in_rg}
and \ref{item:support_limit_case} of \ref{dfn:finite_support},
if $\rho=\rho_d(M^{e\Tt}_{\eta})<\rho_0(M^{e\Tt}_\eta)$
then $\rho\in\rg(i^{e\Tt}_{\varphi(\alpha)+1,\eta})$,
which by \ref{lem:d-preserving} easily implies
that $i^{e\Tt}_{\varphi(\alpha)+1,\eta}$
is $d$-preserving.
Since
$\sigma^e_{\alpha+1}=i^{e\Tt}_{\varphi(\alpha)+1,\eta}\com\sigma'$,
this suffices.
\end{proof}

\section{Super-Dodd structure}\label{sec:Dodd_prelim}

 As sketched in \S\ref{sec:meas_Dodd_prof_fin_plan},
 the analysis of comparisons
  in \S\S\ref{sec:mim}--\ref{sec:finite_gen_hull}
  will rely on keeping track of how embeddings such as iteration maps shift certain critical generators of extenders. The key to understanding this is the analysis of the Dodd structure of extenders, which is the topic of this section.
  We will actually describe a slight refinement,  \emph{super-Dodd} structure;
  this is relevant if the mice in question have extenders of superstrong type on their sequence.

Thee \emph{Dodd parameter} and \emph{projectum}  of an active premouse $M$ were introduced in
\cite[\S3]{combin} and
\cite[\S4]{deconstructing}.
The definitions we give for these objects below are stated differently,
but they are equivalent. 	 The \emph{super-Dodd parameter} and \emph{projectum} are refinements of these notions.

\begin{dfn}\label{dfn:super_Dodd_param_proj} Let $M$
	be an active premouse, $F=F^M$ and $\mu=\crit(F)$.

	Recall that  $(t^M,\tau^M)$,
	the \dfnemph{Dodd parameter} and \dfnemph{projectum} of $M$, are
	the least
	$(t,\tau)\in\widetilde{\OR}$ such that $\mu^{+M}\leq\tau$ and $F$ is generated by
	$t\un\tau$ (see \S\ref{sec:notation_general} for the notation $\widetilde{\OR}$ and ordering thereof).

	We define $(\ttilde^M,\tautilde^M)$,
	the \dfnemph{super-Dodd parameter} and \dfnemph{projectum} of $M$, to be the
	least
	$(\ttilde,\tautilde)\in\widetilde{\OR}$ such that $F$ is generated by $\ttilde\un\tautilde$.
\end{dfn}

\begin{rem}\label{rem:Dodd_parameter}Note that the difference between the definition of $(t^M,\tau^M)$ and that of  $(\widetilde{t}^M,\widetilde{\tau}^M)$ is that the former includes the clause ``$\mu^{+M}\leq\tau$'', whereas the latter does not.

	Let $(\ttilde,\tautilde)=(\ttilde^M,\tautilde^M)$. Then
	$\ttilde\sub[\mu,\nu_F)$ and if $\ttilde\neq\emptyset$ then
	$\tautilde\leq\min(\ttilde)$.
	Either $\tautilde=0$ or $\tautilde>\mu^{+M}$. Let
	$(t,\tau)=(t^M,\tau^M)$.
	Then $t=\ttilde\cut\{\mu\}$ and
	$\tau=\max(\tautilde,\mu^{+M})$.
	Steel observed that $\tau$
	is a cardinal of $M$, so $\tautilde$ is also
	(in fact, either $0$ or an infinite cardinal).

	We consider the super-Dodd parameter and projectum  as, assuming the existence of mice with sufficient large cardinals, it is possible
	that
	$\{\mu\}\psub\ttilde^M$.
	This information is recorded in $\ttilde^M$, but not in $(t^M,\tau^M)$.
	So in this case, $(\ttilde^M,\tautilde^M)$
	records more information,
	and also in this case, super-Dodd-soundness, to be defined below, is more demanding than  Dodd-soundness. In case
	$\{\mu\}\psub\ttilde^M$,  $F$ has a superstrong proper
	segment (see
	\cite[Remark 2.5]{extmax}, in the context of which, \emph{premice} are
	superstrong-small). So
	$\ttilde^M$ is just $t^M$ in case $M$ is  superstrong-small
	(but even in this case, if $\tautilde^M=0$ then $\tautilde^M\neq\tau^M$). In
	Zeman
	\cite{zeman_dodd} (which uses $\lambda$-indexing), premice can have
	superstrong extenders, but the notion of \emph{Dodd parameter} used there is
	analogous to
	\cite{combin}; it never includes the critical point.\end{rem}

\begin{dfn}\label{dfn:E_rest_x_cup_alpha}
 Let $\pi:M\to N$ be a $\Sigma_0$-elementary embedding between
premice,\footnote{\label{ftn:assume_passive}This definition does not depend on $F^M,F^N$, so we may assume that $M,N$ are passive. We may therefore also apply it to structures $M',N'$
for a language extending that of passive premice,
which are not themselves premice, but whose reducts $M,N$ to the language of passive premice are premice.}
 where $M||\kappa^{+M}=N||\kappa^{+N}$
and $\kappa=\crit(\pi)$.
Let
$\alpha\leq\beta\leq\pi(\kappa)$ and
$x\in[\beta]^{<\om}$ and let $E$ be the $(\kappa,\beta)$-extender derived from
$\pi$. Then
$E\rest(x\un\alpha)$
denotes the set $F$ of pairs $(A,y)$ such that $A\in M|\kappa^{+M}$ and
$y\in[x\un\alpha]^{<\om}$ and $y\in\pi(A)$ (this differs from the
 notation
in \cite{combin} when $\alpha<\kappa^{+M}$). So letting $F=E\rest(x\cup\alpha)$, $F$ is (or can be
treated as) an extender
over $M$. Let $U=\Ult(M,F)$ and $x'=[\id,x]^M_F$ and:
\begin{enumerate}[label=--]
\item if $\xi=\max(x'\cup\alpha)<i_F(\kappa)$ then let
$\gamma=\xi^{+U}$, and
\item otherwise let $\gamma=i_F(\kappa)$
 (note in this case $x'\sub\alpha=i_F(\kappa)$).
 \end{enumerate}
Then the \dfnemph{trivial completion} $\trivcom(F)$ of $F$ is the
$(\kappa,\gamma)$-extender derived from $i_F$ (this agrees with the standard
notion
when $x\sub\alpha$). The \dfnemph{transitive collapse} of $F$ is
$\trivcom(F)\rest(x'\un\alpha)$.

We may identify $F$, $\trivcom(F)$, and the transitive collapse of
$F$.
\end{dfn}

\begin{dfn}\label{dfn:super_Dodd-sound}
	Let $M$ be an active pm, $F=F^M$,
	$\alpha\in\OR^M\cap\lambda^M$ and
	$x\in[\OR^M\cap\lambda^M]^{<\om}$. The
	\dfnemph{Dodd-witness} for $(M,(x,\alpha))$ is the extender
	\[ \Dw^M(x,\alpha)=F\rest x\un\alpha. \]
	We say that $M$ is
	\dfnemph{Dodd-solid} iff for each $\alpha\in t^M$ with $\alpha>\kappa_F$,
	letting
	$x=t^M\cut(\alpha+1)$, we have $\Dw^M(x,\alpha)\in M$. We say that $M$ is
	\dfnemph{super-Dodd-solid} iff $M$ is Dodd-solid and, if $\kappa_F\in \widetilde{t}^M$,
	then
	letting
	$x=\widetilde{t}^M\cut(\kappa_F+1)$, we have the component
	measure $F_x\in M$ (or in notation as for the other
	witnesses, $\Dw^M(x,\kappa_F)\in M$). We say that $M$ is
	\dfnemph{Dodd-amenable} iff either
	$\tau^M=\mu^{+M}$ or
	$\Dw^M(t^M,\alpha)\in M$ for every $\alpha<\tau^M$.

	For
	passive premice $P$ we say that $P$ is (trivially) \dfnemph{Dodd-solid} and
	\dfnemph{super-Dodd-solid} and \dfnemph{Dodd-amenable}.
	A premouse $M$ is \dfnemph{Dodd-sound} iff $M$ is
	Dodd-solid and
	Dodd-amenable, and $M$ is \dfnemph{super-Dodd-sound} iff $M$ is
	super-Dodd-solid and
	Dodd-amenable.
\end{dfn}

\begin{rem}\label{rem:super-Dodd-solidity}
	By Remark \ref{rem:Dodd_parameter}, for superstrong-small premice $M$, super-Dodd-solidity
	(respectively, -soundness) is equivalent
	to Dodd-solidity (respectively, -soundness). But in case $\{\mu\}\psub\ttilde^M$ where $\mu=\crit(F^M)$,
	super-Dodd-solidity demands that $F^M\rest(\ttilde^M\cut\{\mu\})\in M$,
	which is not demanded by Dodd-solidity alone. We consider super-Dodd-solidity simply because it strikes the author as the more natural notion in the non-superstrong-small setting, and anyway, its proof (under appropriate hypotheses) is just slightly different from that for usual Dodd-solidity. In our application of Dodd structure, we need only Dodd-solidity, not super-Dodd-solidity.

	Steel \cite[Theorem 4.1]{deconstructing}\footnote{The same result was claimed earlier in
	\cite[Theorem 3.2]{combin}, but the supposed proof there had a gap, which was filled in \cite{deconstructing}. Recall that in \cite{combin} and \cite{deconstructing}, \emph{premice}
	are by definition superstrong-small.} proved that
	every $(0,\om_1,\om_1+1)^*$-iterable $1$-sound superstrong-small premouse is Dodd-sound (hence all of its proper segments are also).
	In light of the previous paragraph,
	Steel's result immediately implies super-Dodd-soundness for such premice.
	Zeman \cite[Theorems 1.1, 1.2]{zeman_dodd} then proved the corresponding fact for $\lambda$-indexed premice (with corresponding iterability hypothesis), but without the ``superstrong-small'' restriction. Zeman also proved some other related facts in that context.

	We will establish in Theorem \ref{thm:super-Dodd-soundness} super-Dodd-soundness for  $(0,\om_1+1)$-iterable $1$-sound premice (of course, here as elsewhere in the paper, this means with Mitchell-Steel indexing and without the superstrong-smallness restriction). Thus, in comparison with Steel's result, we will assume only normal iterability, will not assume superstrong-smallness, and establish the stronger conclusion of super-Dodd-soundness. Actually, the main extra work required beyond Steel's argument will be in handling superstrong extenders; the rest requires only minor modifications. It seems highly likely that the argument we give will be almost contained in the combination of Steel's and Zeman's. But the proof we give will follow more the lines of Steel's.

	Toward the proof of super-Dodd-soundness, and also toward \S\S\ref{sec:mim},\ref{sec:finite_gen_hull}, we will  develop various properties of  Dodd and super-Dodd parameters and projecta.
	Some of the notions and facts come from or are slight variants of material from \cite{combin} and
	\cite{extmax}, and
	many properties of Dodd
	parameters and projecta established in those papers
	carry over to the present context; that is, they generalize to premice with
	superstrongs, and to super-Dodd parameters and projecta.
	We will also establish, in a certain context, an analogue (Lemma \ref{lem:Dodd_param_p_1}) of Zeman's analysis in  \cite{zeman_dodd} of the relationship between the Dodd parameter $t^M$ and $p_1^M$. We begin by summarizing some generally useful notation and facts regarding weak  hulls of type 2 premice, in Definition \ref{dfn:type_2_factor_embedding} and Lemma \ref{lem:type_2_factor_embedding} below;
	this is as in \cite[Claim 1, proof of Theorem 3.2(A), p.~176]{combin} and similar calculations in the proof of \cite[Lemma 2.15]{extmax}.
\end{rem}
\begin{dfn}\label{dfn:type_2_factor_embedding}
Let $N$ be a type $2$ premouse, $F=F^N$, and $X=\alpha\cup x$
where $\alpha\leq\nu=\nu_F$
and $x\in[\nu]^{<\om}$. Then define
$\Gend^N(X)$ to be the set of all $x\in N$
such that $x$ is $F$-generated by some $t\in
[X]^{<\om}$, or equivalently,
\[
\Gend^N(X)=N\cap\big\{i^N_F(f)(t)\bigm|f\in N\wedge t\in [X]^{<\om}\big\}
\]
(cf.~\S\ref{sec:notation_extenders_and_ultrapowers}). Let $\kappa=\crit(F)$,  $G=F\rest X$ and
 $U_X=\Ult(N|\kappa^{+N},G)$.
Let $\pi:U_X\to U_{\nu}$
 be the natural factor map (here $U_\nu$ is just the special case of $U_X$ when $X=\nu$;
 equivalently, $U_\nu=\Ult(N|\kappa^{+N},F)$). That is, let $\eta$ be the ordertype
 of $\Gend^N(X)\cap\OR$ and $\sigma:\eta\to \Gend^N(X)$ be the isomorphism.
 Then
 \[ \pi([a,f]^{N|\kappa^{+N}}_G)=[\sigma(a),f]^{N|\kappa^{+N}}_{F}.\]
 So
$\Gend^N(X)=\rg(\pi)\cap N$.

Now suppose further that $\max(x)+1=\nu$ and let
$\pi(\nu_X)=\nu$. Then we will define a structure $N_X$ for the language of premice excluding the constant symbol $\dot{F}_{\downarrow}$, and an embedding
$\pi_X:N_X\to N$
with $\rg(\pi_X)=\Gend^N(X)$.  Let $F_X$
be the trivial completion of (the transitive collapse of)
$G$. So $\nu(F_X)=\nu_X$. We then define
\[ N_X=(U_X|\nu_X^{+U_X},\widetilde{F_X}), \]
where $\widetilde{F_X}$ is the amenable code of $F_X$
(as in \cite[between 2.9 and 2.10]{outline}), and let
$\pi_X:N_X\to N$
be the restriction $\pi_X=\pi\rest N_X$.
\end{dfn}
\begin{lem}\label{lem:type_2_factor_embedding}
Let $N,\nu,X,x,\kappa$, etc, be as in \ref{dfn:type_2_factor_embedding} with $\max(x)+1=\nu$.
Then $\crit(\pi_X)\geq\kappa$.
Suppose  $\crit(\pi_X)>\kappa$.\footnote{It can
be that $\crit(\pi_X)=\kappa$ if
there are superstrong extenders on $\es^N$. If $\crit(\pi_X)=\kappa$ then
$\rg(\pi_X)\cap\widetilde{F^N}=\emptyset$, and so $\pi_X$ is not
$\Sigma_1$-elementary in the
language with $\dot{\in},\dot{F}$.} Then:
\begin{enumerate}[label=\arabic*.,ref=\arabic*]
 \item $N_X$ is a pre-ISC-premouse.
 \item $\pi_X$ is cofinal in $\OR^N$ and is
$\Sigma_1$-elementary in the language of premice excluding the constant symbol
$\dot{\Fseg}$.\footnote{Recall that this includes
	constant symbols for the sup of generators of the active extender, so $\pi_X(\nu(F^{N_X}))=\nu(F^N)$.
(It also includes a constant symbol for the critical point, but by hypothesis, $\crit(\pi_X)>\kappa=\crit(F^{N_X})=\crit(F^N)$,
so $\pi_X(\kappa)=\kappa$ already anyway.)}
\item $F_X$ has largest
 generator $\nu_X-1>\kappa^{+N_X}$,
and is not type Z.
\item\label{item:pi_X_rSigma_1-elem_iff_etc} The following are equivalent:
\begin{enumerate}[label=--]
\item $N_X$ can be expanded to a structure $N'_X$ for the full premouse language such that $\pi_X:N_X'\to N$ is  $\rSigma_1$-elementary \tu{(}for this language\tu{)}.
\item $\Fseg^N\in\rg(\pi_X)$.
\item $N_X$ satisfies the weak ISC.\footnote{That is,
the largest non-type Z segment of $F^{N_X}\rest(\nu_X-1)$
is in $N_X$.}
\item $N_X$ satisfies the ISC \tu{(}so $N_X$ is a premouse\tu{)}.
\end{enumerate}
\end{enumerate}
\end{lem}
\begin{proof}
This is mostly as in
	\cite[Claim 1, proof of Theorem 3.2(A), p.~176]{combin}.
But part \ref{item:pi_X_rSigma_1-elem_iff_etc} was not covered there, so we just discuss that. It is straightforward
to see that if
$\Fseg^N\in\rg(\pi_X)$
then $N_X$ satisfies the ISC and $\pi_X(\Fseg^{N_X})=\Fseg^N$ (and so $\pi_X$
is
$\rSigma_1$-elementary). Now suppose that $N_X$ satisfies the weak ISC,
as witnessed by $G\in N_X$. If $F^{N_X}\rest(\nu_X-1)$ is non-type Z,
so $G$ is the trivial completion of that extender,
then it is straightforward
see that $\pi_X(G)=\Fseg^N$
(and this is also non-type Z).
Now suppose that $G'=F^{N_X}\rest(\nu_X-1)$ is type Z,
with largest generator $\gamma$,
so $G$ is the trivial completion of $G'\rest\gamma$,
 $\gamma$ is a limit of generators of $F^{N_X}$,
and $\gamma^{+U}=\gamma^{+U'}$
where $U=\Ult(N_X,G)$ and $U'=\Ult(N_X,G')$.
The elementarity of $\pi_X$
gives that $\pi_X(\gamma)$ is a generator
of $F$ and $\pi_X(G\rest\gamma)=F\rest\pi_X(\gamma)$,
so $\pi_X(\gamma)$ is also a limit of generators of $F$.
So it suffices to see that $\lh(G)=\gamma^{+U}=\nu_X-1$, because
then
\[ \lh(\pi_X(G))=\pi_X(\gamma)^{+\Ult(N,\pi_X(G))}=\nu-1,\]
which easily implies $F\rest(\nu-1)$ is type
Z and $\pi_X(G)=\Fseg^N\in\rg(\pi_X)$.
But we have the factor map
 $\sigma:U'\to U''$ where $U''=\Ult(N_X,F^{N_X})$,
and note $\crit(\sigma)=\nu_X-1$.
So if $\gamma^{+U'}<\nu_X-1$
then
\[ \gamma^{+U}=\gamma^{+U'}=\gamma^{+U''}=\gamma^{+N_X},\]
but as $G\in N_X$, in fact $\gamma^{+U}<\gamma^{+N_X}$.
So $\gamma^{+U}=\gamma^{+U'}=\nu_X-1$, as desired.
\end{proof}

The following lemma is a slight generalization
of \cite[Corollary 2.17]{extmax}, which was an
improvement of \cite[Lemma 4.4]{combin}.
\begin{lem}\label{lem:tau=rho_1}
	Let $M$ be a type 2 premouse, $\mu=\crit(F^M)$,
	and
	$\gamma=\max(\rho_1^M,\mu^{+M})$.
	Then $\tau^M\geq\gamma$.
	If $M$ is either Dodd-amenable
	or $1$-sound then
	$\tau^M=\gamma$.
\end{lem}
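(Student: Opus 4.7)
The lower bound $\tau^M \geq (\mu^+)^M$ is immediate from the definition of the Dodd parameter-projectum, so the inequality $\tau^M \geq \gamma$ reduces to $\tau^M \geq \rho_1^M$. Suppose for contradiction $\tau := \tau^M < \rho_1^M$ and let $t := t^M$. Since $X := t \cup \tau$ generates $F := F^M$, Remark \ref{rem:type_2_factor_embedding} (applied with the ambient $X = \OR^M$ and then restricted to $X$-generated elements) gives that every $x \in M$ is of the form $[a,f]_F$ for some $a \in [X]^{<\omega}$ and $f \in M|(\mu^+)^M$. Consider $H := \Hull_1^M(\tau \cup \{t\})$. Since $(\mu^+)^M \leq \tau$, Skolem closure under $\rSigma_1^M$-functions together with the $\rSigma_1^M$-definability of the $\Ss$-hierarchy levels from their ordinal yields $M|(\mu^+)^M \subseteq H$. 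The amenable coding $\widetilde{F}$ makes $(a,f) \mapsto [a,f]_F$ an $\rSigma_1^M$ operation, so every such value also lies in $H$. Hence $H = M$, which contradicts the minimality built into $\rho_1^M$.

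For the second claim under $1$-soundness, we have $M = \Hull_1^M(\rho_1^M \cup \{p_1^M\}) \subseteq \Hull_1^M(\gamma \cup \{p_1^M\})$. Set $t' := p_1^M \setminus (\mu^+)^M$ and $X := t' \cup \gamma$. A direct computation using Remark \ref{rem:type_2_factor_embedding} applied to this $X$ shows the factor map $\pi_X : N_X \to M$ is surjective: each $x = \sigma^M(p_1^M, \vec{\alpha})$ for an $\rSigma_1$-term $\sigma$ and $\vec{\alpha} \in [\gamma]^{<\omega}$ lifts to a representative $[a,f]_F$ with $a \in [X]^{<\omega}$ (the ordinals in $\vec\alpha$ together with the part of $p_1^M$ above $(\mu^+)^M$), and $f$ obtained from $\sigma$ via the canonical coding in $M|(\mu^+)^M$. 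So $X^* = M$, $X$ generates $F$, and therefore $\tau^M \leq \gamma$.

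Now suppose instead that $M$ is Dodd-amenable. If $\tau^M = (\mu^+)^M$, then $\tau^M \leq \gamma$ trivially. Otherwise, Dodd-amenability gives $G_\alpha := F \restriction (t^M \cup \alpha) \in M$ for every $\alpha < \tau^M$. Assume for contradiction $\tau^M > \gamma$ and pick $\alpha \in [\gamma, \tau^M)$. The aim is to show $t^M \cup \alpha$ generates $F$, contradicting the minimality of $\tau^M$, i.e., that the factor map $\pi_Y : N_Y \to M$ for $Y := t^M \cup \alpha$ is surjective. Since $G_\alpha \in M$, the set $Y^*$ of elements $F$-generated by finite tuples from $Y$ is $\Sigma_0^M$-definable from $G_\alpha$, hence $Y^* \in M$; one then argues that every $x \in M$ already lies in $Y^*$, by expressing $x = [a,f]_F$ with $a \in [t^M \cup \tau^M]^{<\omega}$ and, when $a$ has components in $[\alpha,\tau^M)$, using the amenability of the $G_\beta$ ($\beta < \tau^M$) together with the inequality $\alpha \geq \rho_1^M$ to replace those components by tuples from $Y$.

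The main obstacle is this Dodd-amenable subcase: unlike the $1$-sound setting, there is no explicit hull representation of $M$ by parameters below $\gamma$ to invoke, so the surjectivity of $\pi_Y$ must be extracted from the amenability of $F$ itself, threaded through the partial extenders $G_\alpha \in M$ and the $\rSigma_1$-projectum $\rho_1^M \leq \alpha$. The inequality $\tau^M \geq \rho_1^M$ is by contrast the clean dual, pivoting on the observation that if the Dodd projectum undercut the $\rSigma_1$-projectum then $F^M$ (and hence all of $M$) would already be captured in the $\rSigma_1$-hull of $\tau^M \cup \{t^M\}$.
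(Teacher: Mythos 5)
Your argument for the lower bound $\tau^M\geq\gamma$ is essentially the paper's, phrased dually: the paper observes that $F^M\rest(\tau^M\cup t^M)$ codes a $\bfrSigma_1^M$ subset of $\tau^M$ that is not in $M$, hence $\rho_1^M\leq\tau^M$; you instead show $M=\Hull_1^M(\tau^M\cup\{t^M\})$ and conclude $\rho_1^M\leq\tau^M$ from that. These are equivalent. The $1$-sound case also follows the paper's route (it too goes through Remark~\ref{rem:type_2_factor_embedding} to show $F^M\rest(\rho_1^M\cup t)$ generates $F^M$ for some finite $t$; your specific choice $t'=p_1^M\cut(\mu^+)^M$ is slightly optimistic, since one may need to enlarge $t'$ so that $\Fseg^M\in X^*$ before $\pi_X$ is fully $\rSigma_1$-elementary, but this does not affect the conclusion $\tau^M\leq\gamma$, which only needs \emph{some} finite $t$).

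The Dodd-amenable case, however, has a real gap. The paper handles it by citing the proof of \cite[2.17]{extmax}; your direct sketch does not work, and you essentially say so yourself (``the main obstacle''). Concretely, the step ``$Y^*\in M$'' is false. By Remark~\ref{rem:type_2_factor_embedding}, $Y^*=\rg(\pi)\cap M$ where $\pi$ is cofinal in $\OR^M$ (one has $\crit(\pi_Y)>\mu$ since $\mu\in Y$), so $Y^*$ is cofinal in $\OR^M$ and hence a proper class from $M$'s point of view. Indeed, if your argument succeeded you would be concluding $Y^*=M$, which certainly is not in $M$. What \emph{does} lie in $M$ is the transitive collapse $N_Y$ of the bounded part, not the range $Y^*$. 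Relatedly, $Y^*$ is not $\Sigma_0^M$-definable from $G_\alpha$ alone: deciding whether $x=[a,f]_F$ requires the full amenable predicate $\widetilde{F^M}$, not merely the fragment $G_\alpha$. Finally, the proposed ``replace those components by tuples from $Y$'' step is circular: replacing a coordinate of $a$ lying in $[\alpha,\tau^M)$ by a tuple from $Y$ presupposes that ordinal is already $Y$-generated, which is exactly what is to be shown. So this subcase needs the actual argument of \cite[2.17]{extmax} (or a correct substitute), not the sketch as written.
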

\begin{proof}
	To see that $\rho_1^M\leq\tau^M$,
	observe  $F^M\rest(\tau^M\cup t^M)\notin M$.
	Therefore $\gamma\leq\tau^M$ by definition.
	If $M$ is Dodd-amenable, we get $\tau^M=\gamma$
	by using Lemma \ref{lem:type_2_factor_embedding} as in  the proof of
	\cite[2.17]{extmax}.
	If $M$ is $1$-sound, note that by
	Lemma \ref{lem:type_2_factor_embedding},
	$\rho_1^M\cup t$ generates $F^M$ for some $t\in[\OR^M]^{<\om}$,
	and deduce that $\tau^M\leq\gamma$.
\end{proof}

The simple lemma below is a condensation-based variant of the initial segment condition, which will be useful
in \S\ref{sec:capturing_elements}, where we will develop
some  key technical setup for our proofs of super-Dodd-soundness and Projectum-finite-generation  (Theorem \ref{thm:finite_gen_hull}).
\begin{lem}\label{lem:sub-extender} Let $M$ be a
$(0,\om_1+1)$-iterable $1$-sound type 2 premouse, $\mu=\crit(F^M)$
and $\nu=\nu(F^M)$.
Suppose $\mu^{+M}<\tau^M$.
Then for every
$r\in\nu^{<\om}$ and $\theta<\tau^M$ there is
$g\in\nu^{<\om}$ and $\Mbar\pins M$ such that $r\un\{\mu,\nu-1\}\sub
g$, $F^M_\downarrow$ is generated by $g$, $\Mbar$ is active type 2 and
$F^{\Mbar}$ is the trivial completion of $F^M\rest g\un\theta$.
\end{lem}
\begin{proof}
By Lemma \ref{lem:tau=rho_1},
 $\rho_1^M=\tau^M>\mu^{+M}$. We may assume
$\theta\geq\mu^{+M}$ and $\theta$ is
a cardinal in $M$. Applying \cite[Lemma 2.3]{V=HODX_pub} with $\theta$ and
$r$, we get an $\Mbar,g$ as required.
\end{proof}

\begin{rem} Let $M,\theta$, etc be as in
\ref{lem:sub-extender},
with $\mu^{+M}\leq\theta$. Then
$\rho<\OR^\Mbar<\rho^{+M}$, and because $\kappa,\nu,F^M_\downarrow$ are
generated by $g$
and by Lemma \ref{lem:type_2_factor_embedding}, the natural
$\pi:\Mbar\to M$ is a
$0$-embedding with
$\crit(\pi)\geq\theta$.\end{rem}
Under mild large cardinal hypotheses, it can be that
every extender in  the sequence of a mouse $M$ is Dodd-sound, and yet iteration trees on $M$ use non-Dodd-sound extenders. We next analyse the nature of such ``Dodd-unsoundness'', using
the preservation of Dodd-solidity parameters and projecta described in Lemma \ref{lem:type_2_s,sigma_pres}.

We now analyze the
relationship between
$p_1^M$ and $t^M$, in a certain context;
this is very similar to that in Jensen indexing,
due to Zeman, in \cite{zeman_dodd}.
We will use the analysis in the proof of Dodd-solidity.

\begin{lem}[Dodd parameter characterization]\label{lem:Dodd_param_p_1} Let $M$
be a type 2 premouse
with $F^M$ finitely generated;
so $\rho_1^M\leq\kappa^{+M}$ where $\kappa=\crit(F^M)$.
Suppose $M$ is $\kappa^{+M}$-sound.
Let  $p=p_1^M\cut\kappa^{+M}$, $\ttilde=\ttilde^M$ and $\xi\in\OR$.
 Then:
 \begin{enumerate}[label=\arabic*.,ref=\arabic*]
   \item\label{item:p_sub_t}  $p\sub\ttilde$,
   \item\label{item:t_charac} $\ttilde$ is the least tuple $t'$ such that
$t'$ generates \tu{(}that is, $F^M$-generates\tu{)}
$(p,F_\downarrow^M)$,
\item $u=\ttilde\cut p$ is the least tuple $u'$ such that $u'\cup p$
generates $F^M_\downarrow$
\tu{(}therefore if $\xi\in u$ then $\xi\cup(\ttilde\cut(\xi+1))$ does not
generate $F^M_\downarrow$\tu{)}.
\item\label{item:if_zeta_in_p}
if $\xi\in p$ then:
\begin{enumerate}[label=\tu{(}\alph*\tu{)}]
\item $(\ttilde\cut p)\cut(\xi+1)=s\cut(\xi+1)$
where $s$ is least such that $s\cup(p\cut(\xi+1))$
generates $F^M_\downarrow$.
\item $\ttilde\cut(\xi+1)\in\Hull_1^M(\{p\cut(\xi+1)\})$.
 \end{enumerate}
 \end{enumerate}
\end{lem}
\begin{proof}
Part \ref{item:t_charac}:
Certainly $\ttilde$ generates $(p,F_\downarrow^M)$,
since $\ttilde$ generates every element of $M$.
Now let $t'\leq \ttilde$ and suppose $t'$ generates $(p,F_\downarrow^M)$.
Let $X=t'$ and
 $\pi_X:M_X\to M$  be as in Definition \ref{dfn:type_2_factor_embedding}, so $\rg(\pi_X)=\Gend^M(X)$.
Then $p,F_\downarrow^M\in\rg(\pi_X)$.
Since $\kappa=\crit(F_\downarrow^M)$,
therefore $\kappa\in\rg(\pi_X)$,
which easily gives that $\kappa^{+M}\sub\rg(\pi_X)$.
So by Lemma \ref{lem:type_2_factor_embedding}, $M_X$ is a premouse and $\pi_X$
is $\rSigma_1$-elementary (in the full premouse language)
with
\[ \kappa^{+M}\cup\{p\}\sub \Gend^M(X)=\rg(\pi_X).\]
So $p_1^M\in\rg(\pi_X)$, and since $M$ is $1$-sound
with $\rho_1^M\leq\kappa^{+M}$,
therefore
$M\sub\rg(\pi_X)$, so
$t'$ generates $F^M$.
Therefore $t'=\ttilde$.

We now prove the remaining parts together.
Let
\[ p=p_1^M\cut\kappa^{+M}=\{\eta_0>\eta_1>\ldots>\eta_{n-1}\}\]
and $\eta_n=0$ (here $n=0$ if $p=\emptyset$).

\begin{clm*} For each $i\leq n$, we have:
\begin{enumerate}[label=\arabic*.,ref=\arabic*]
 \item $p\rest i=\{\eta_0,\ldots,\eta_{i-1}\}\sub\ttilde$,
 \item\label{item:t_cut_p_rest_i_charac}$(\ttilde\cut(p\rest
i))\cut(\eta_i+1)=s\cut(\eta_i+1)$,
 where $s$ is the least tuple such that $s\cup(p\rest i)$ generates
 $F^M_\downarrow$.
\end{enumerate}\end{clm*}
\begin{proof}The proof is by induction on $i\leq n$.
Trivially $p\rest0\sub\ttilde$.

Now fix $i\leq n$ and suppose $p\rest i\sub\ttilde$,
and let $s$ be the least tuple as in clause \ref{item:t_cut_p_rest_i_charac}.
Since $\ttilde$ generates $F^M$,
clearly $s'=\ttilde\cut(p\rest i)$ is such that $s'\cup(p\rest i)$
generates $F^M_\downarrow$.
So suppose $s\cut(\eta_i+1)<(\ttilde\cut(p\rest i))\cut(\eta_i+1)$.
Then note that $s\cup p$ generates $(F^M_\downarrow,p_1^M)$
(here $s\cup p$ generates $\kappa$ because it generates $F^M_\downarrow$,
and hence generates all points ${\leq\kappa^{+M}}$).
Therefore $s\cup p$ generates $F^M$, but note $s\cup p<\ttilde$, a
contradiction.

Now suppose  $i<n$, i.e.~$\eta_i>\kappa^{+M}$.
We show  $\eta_i\in \ttilde$.
Suppose not. Then
\[ \eta_i\in\Hull_1^M(\eta_i\cup\{\ttilde\cut(\eta_i+1)\}).\]
But  by part \ref{item:t_cut_p_rest_i_charac} of the claim,
$\ttilde\cut(\eta_i+1)\in\Hull_1^M(\{p\rest i\})$.
(Use the standard trick
for minimization.
That is, we clearly get some $s_0\in\Hull_1^M(\{p\rest i\})$
such that $s_0\cup(p\rest i)$ generates
$F^M_\downarrow$. But if $s_0$ is not the least,
then we also get some $s_1<s_0$ with this property, and so on.
Eventually some $s_m$ is the least.)
Therefore
\[ \eta_i\in\Hull_1^M(\eta_i\cup\{p\rest i\}),\]
contradicting the minimality of $p_1^M$.
\end{proof}

The remaining parts of the lemma follow easily from the claim
and its proof.
\end{proof}

\begin{rem}\label{rem:Dodd-appropriate}
	In  \S\S\ref{sec:mim}--\ref{sec:finite_gen_hull}, we will need to understand the action of iteration maps on Dodd parameters and projecta, and variants thereof. Suppose
	 $M$ is a Dodd-sound type 2 premouse
	and $E$ is a weakly amenable extender over $M$,
	with $\tau^M\leq\crit(E)$. Suppose that $N=\Ult_0(M,E)$ is wellfounded. Then $(t^N,\tau^N)$ relates
	to $(t^M,\tau^M)$, but the relationship depends heavily on $E$, as, for example, all generators of $E$ are generators of $F^N$. (So, for example, letting $j=i^{M,0}_E:M\to N$, if $E$ is a normal measure and $\kappa=\crit(E)$ then $t^N=j(t^M)\cup\{\kappa\}$ and $\tau^M=\tau^N$;
	if $E$ is type 3 then $t^N=j(t^M)\cut\nu(E)$ and $\tau^N=\nu(E)$.) A certain variant of the Dodd parameter and projectum, considered in \cite{extmax}, is preserved in a fashion more analogous to the standard parameter and projectum. We recall that next, and define the ``super-'' variant thereof.

	Let $M$ be a premouse and $N$ a structure for the premouse language.\footnote{\label{ftn:assume_passive_2}Like in  \ref{dfn:E_rest_x_cup_alpha},
	$F^M,F^N$ are not relevant here, so we may assume $M,N$ are passive, and we may also apply the definition to structures $M',N'$ for a larger language, as long as the reduct $M$ to the language of passive premice is a premouse.}
	Recall from \cite[Definition 2.7]{extmax}
	that an embedding $j:M\to N$ is \emph{Dodd-appropriate} iff $j$ is $\Sigma_0$-elementary,
	cardinal preserving, $M||\mu^{+M}=N|\mu^{+N}$ where  $\mu=\crit(j)$ (so $\mu$ is inaccessible in $M$), and there is $\lambda\in\OR\cap\wfp(N)$ such that $\lambda\leq j(\mu)$
	and $E_j\rest\lambda\notin N$. Note that this does not require that $N$ be wellfounded.

	Suppose $j:M\to N$ is Dodd-appropriate and let $E=E_j\rest\xi$ where $\xi\leq j(\mu)$
	and $E_j$ has no generators in $[\xi,j(\mu))$, so $N|j(\mu)=\Ult(M,E)|i^M_E(\mu)$.
	Also essentially from \cite[Definition 2.7]{extmax},
	the \emph{Dodd-solidity parameter and projectum}
	of $j$ (or of $E$), denoted $(s_j,\sigma_j)$, are the least $(s,\sigma)\in\widetilde{\OR}$ such that
	$\sigma\geq\mu^{+M}$ and $E\rest s\cup \sigma\notin N$. We also write $(s_E,\sigma_E)=(s_j,\sigma_j)$.
	\end{rem}

\begin{dfn}\label{dfn:super-Dodd-solidity_parameter}
Let $M$ be a premouse and $j:M\to N$ be Dodd-appropriate.\footnote{Remarks analogous to those in  Footnote \ref{ftn:assume_passive} hold here also.}
Let $\mu=\crit(j)$ and let $E=E_j\rest \xi$ where $\xi\leq j(\mu)$ and $E$ has no generators in $[\xi,j(\mu))$.

Almost as in \cite[Definition 2.29]{extmax}, the \dfnemph{Dodd-solidity core} of $E$,\footnote{In
\cite{extmax},
the  \emph{Dodd-core} was formally defined to be the transitive collapse of $E\rest\sigma\cup s$ (as opposed to its trivial completion, but these are essentially equivalent). That terminology was not so good,
as the \emph{Dodd-core} of extender $E$ should, by analogy
with the standard projectum and parameter,
be defined by restricting the extender to $\tau'\cup t'$,
where $\tau'$ is least such that for some $t''$,
$E\rest(\tau'\cup t'')\notin\Ult(M,E)$,
and $t'$ is then the minimal witness for $\tau'$. So we opted here for \emph{Dodd-solidity core} instead.}
denoted $\core_{\ds}(E)$, is the extender $\trivcom(E\rest \sigma_E\cup s_E)$.
And if $E=F^P$ for some active premouse $P$, then $(s^P,\sigma^P)$ denotes $(s_E,\sigma_E)$
and
 $\core_{\ds}(P)$ denotes the pre-ISC-pm (Definition \ref{dfn:pre-ISC-premouse}) $\bar{P}$ such that $\bar{P}|\mu^{+\bar{P}}=P|\mu^{+P}$, $F^{\bar{P}}=\core_{\ds}(E)$
 and $\nu(F^{\bar{P}})\geq\delta$, where $\delta$ is the largest cardinal of $\bar{P}$ (this works via calculations as in Definition \ref{dfn:type_2_factor_embedding}
 and Lemma \ref{lem:type_2_factor_embedding}).

The \dfnemph{super-Dodd-solidity
parameter} and
\dfnemph{projectum} of $j$ (or of $E$), denoted
$(\stilde_j,\sigmatilde_j)=(\stilde_E,\sigmatilde_E)$, are the least
$(\stilde,\sigmatilde)\in\widetilde{\OR}$ such that
$E\rest \stilde\un\sigmatilde\notin N$.
And if $E=F^P$ for some active premouse $P$, then we write
$(\stilde^P,\sigmatilde^P)=(\stilde_E,\sigmatilde_E)$.
\end{dfn}

Note that if $j:M\to N$ is Dodd-appropriate and $E,\mu$ as in \ref{dfn:super-Dodd-solidity_parameter} then
either
$\sigmatilde_E>\mu^{+M}$ or $\sigmatilde_E=0$.
Various facts
regarding
Dodd-solidity
parameters  and projecta established in \cite{extmax}
generalize to mice with superstrongs, and to super-Dodd-solidity
parameters and
projecta.
Note also that the (super-)Dodd-solidity parameter and projectum is an analogue
of $(z_{k+1},\zeta_{k+1})$ from Definition \ref{dfn:z,zeta}.
 The following facts are clear:

\begin{fact}\label{fact:Dodd-sound_characterization}
Let $M$ be a type 2 premouse.
Then:
\begin{enumerate}[label=--]\item  $(s^M,\sigma^M)\leq(t^M,\tau^M)$,
	\item  $M$ is Dodd-sound iff $(s^M,\sigma^M)=(t^M,\tau^M)$,
	\item $(\widetilde{s}^M,\widetilde{\sigma}^M)\leq(\widetilde{t}^M,\widetilde{\tau}^M)$,
\item  $M$ is super-Dodd-sound iff
$(\widetilde{s}^M,\widetilde{\sigma}^M)=(\widetilde{t}^M,\widetilde{\tau}^M)$.
\end{enumerate}
\end{fact}
We now give a couple of lemmas which describe how the (super-)Dodd-solidity parameter and projectum are shifted by iteration maps.
\begin{lem}\label{lem:s_j,sigma_j_pres}
 Let $M,U$ be premice, $i:M\to U$ be Dodd-appropriate and
 $\kappa=\crit(i)$.
Let $u<\om$ be such that $U$ is $u$-sound. Let  $F$ be weakly amenable to $U$
with
$\kappa<\crit(F)<\rho_u^U$.
Suppose $W=\Ult_u(U,F)$  is wellfounded.
Let $j=i^{U,u}_F:U\to W$. Then
\begin{equation}\label{eqn:s_sigma_pres_j_com_i} s_{j\com i}=j(s_i) \text{ and }\sigma_{j\com i}=\sup
j``\sigma_i, \end{equation}
\begin{equation}\label{eqn:stilde_sigmatilde_pres_j_com_i} \widetilde{s}_{j\com i}=j(\widetilde{s}_i)\text{ and
}\widetilde{\sigma}_{j\com i}=\sup
j``\widetilde{\sigma}_{j\com i}.\end{equation}

Moreover, if $j':U\to W'$ is the iteration map of some wellfounded abstract
degree $u$
weakly amenable iteration on $U$ \tu{(}Definition \ref{dfn:abstract_iteration}\tu{)}, via extenders $F'$ each with
$\crit(F')>\kappa$, then $j'$ preserves
$(s,\sigma)$ and $(\widetilde{s},\widetilde{\sigma})$  analogously; that is, lines \tu{(}\ref{eqn:s_sigma_pres_j_com_i}\tu{)} and \tu{(}\ref{eqn:stilde_sigmatilde_pres_j_com_i}\tu{)} hold after replacing $j$ with $j'$ throughout.
\end{lem}
\begin{proof}
We literally just discuss $(\widetilde{s},\widetilde{\sigma})$-preservation for $j:U\to W$.
For $(s,\sigma)$ it is almost the same,
and the last paragraph of the lemma is an easy corollary.

Let $E=E_i\rest i(\kappa)$, so $E\notin U$.
If $\crit(j)>i(\kappa)$ then (by weak amenability) the conclusion is
immediate,
so suppose $\crit(j)\leq i(\kappa)$. Let $G=E_{j\com i}\rest j(i(\kappa))$.
If $X\sub i(\kappa)$ and $E\rest X\in U$, then
$j(E\rest X)=G\rest j(X)$,
as $\kappa^{+M}<\crit(j)$. So we just need to see
that $W$ does not contain any
fragments of $G$ which are too large.

Suppose that $\widetilde{\sigma}_i>\kappa^{+M}$. It suffices to see
that
\[ G\rest(j(\widetilde{s}_i)\cup\sup j``\widetilde{\sigma}_i)\notin W.\]
But this follows from Lemma \ref{lem:amenability_pres},
applied to the set
$E\rest(\widetilde{s}_i\cup\widetilde{\sigma}_i)$,
 coded as a subset of $\widetilde{\sigma}_i$ (which is  appropriately amenable
to
$U$,
 but not in $U$).

If instead $\widetilde{\sigma}_i=0$, then the measure
$E\rest\widetilde{s}_i\notin
U$,
but this is a subset of $\kappa^{+M}$, and $\pow(\kappa^{+M})\cap
U=\pow(\kappa^{+M})\cap W$, so we are done.
\end{proof}

We also need the following slight variant of the preceding lemma:

\begin{lem}\label{lem:type_2_s,sigma_pres}
 Let $M$ be a type 2 premouse.
Let $d\in\{0,1\}$ be such that $M$ is $d$-sound,
and if $d=1$, suppose $M$ is Dodd-sound.
Let $E$ be weakly amenable to $M$ with $\crit(E)<\rho_d^M$.
 Suppose that $M'=\Ult_d(M,E)$ is wellfounded.
Let $i=i^{M,d}_E$.
 Let $F=F^M$ and $F'=F^{M'}$.
 Then:
 \[ s_{F'}=i(s_F)\text{ and }\sigma_{F'}=\sup i``\sigma_F, \]
 \[ \widetilde{s}_{F'}=i(\widetilde{s}_F)\text{ and
}\widetilde{\sigma}_{F'}=\sup
i``\widetilde{\sigma}_F.\]
\end{lem}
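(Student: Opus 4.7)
The result is a close variant of Lemma \ref{lem:s_j,sigma_j_pres}, with the ``external'' derived extender $E_i$ there replaced by the ``internal'' top extender $F = F^M$, and the further ultrapower $j : U \to W$ replaced by the ultrapower $i : M \to M'$ coming from $E$. Write $\mu = \crit(F)$, $\kappa = \crit(E)$, $U = \Ult(M, F)$, $U' = \Ult(M', F')$, and let $k : U \to U'$ be the Shift-Lemma-induced map, so that $k \circ i^M_F = i^{M'}_{F'} \circ i$. I plan to treat the $(s, \sigma)$ and $(\widetilde{s}, \widetilde{\sigma})$ cases in parallel; in the $d = 1$ subcase the Dodd-soundness hypothesis together with Lemma \ref{lem:tau=rho_1} places the relevant parameters strictly below $\rho_d^M$, ensuring that the amenability machinery of Lemma \ref{lem:amenability_pres} applies.

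For the upper bound $(\widetilde{s}_{F'}, \widetilde{\sigma}_{F'}) \leq (i(\widetilde{s}_F), \sup i``\widetilde{\sigma}_F)$, I would take any $(s', \sigma') < (i(\widetilde{s}_F), \sup i``\widetilde{\sigma}_F)$ in the ordering on $\widetilde{\OR}$. Then $(s', \sigma') \leq (i(\widetilde{s}_F \rest n), \sup i``\beta)$ for some $n$ and $\beta < \widetilde{\sigma}_F$ with $(\widetilde{s}_F \rest n, \beta) < (\widetilde{s}_F, \widetilde{\sigma}_F)$. By minimality, $F \rest (\widetilde{s}_F \rest n \cup \beta) \in U$, and $\Sigma_0$-elementarity of $k$ transfers this to $F' \rest (i(\widetilde{s}_F \rest n) \cup \sup i``\beta) \in U'$, yielding the bound.

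For the lower bound, I would encode $F \rest (\widetilde{s}_F \cup \widetilde{\sigma}_F)$ as an amenable set $S$: either $S \subseteq \widetilde{\sigma}_F$, or $S \subseteq (\mu^+)^M$ in the $\widetilde{\sigma}_F = 0$ measure case. Weak amenability of $F$ places bounded pieces of $S$ in $M$; by definition of $(\widetilde{s}_F, \widetilde{\sigma}_F)$, $S \notin U$, and the agreement $M || (\mu^+)^M = U || (\mu^+)^U$ then forces $S \notin M$ as well. Applying Lemma \ref{lem:amenability_pres} to the ultrapower $i : M \to M'$ yields $i(S) := \bigcup_\alpha i(S \cap \alpha^{<\omega}) \notin M'$, and $i(S)$ is exactly the amenable code of $F' \rest (i(\widetilde{s}_F) \cup \sup i``\widetilde{\sigma}_F)$. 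A final bookkeeping step, mirroring the implicit argument in \ref{lem:s_j,sigma_j_pres}, uses the agreement $M' || (i(\mu)^+)^{M'} = U' || (i(\mu)^+)^{U'}$ to upgrade $i(S) \notin M'$ to $i(S) \notin U'$, which gives $(\widetilde{s}_{F'}, \widetilde{\sigma}_{F'}) \geq (i(\widetilde{s}_F), \sup i``\widetilde{\sigma}_F)$.

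The main obstacle is this upgrade together with verifying that $i(S)$ really is the correct amenable code of the target $F'$-fragment; both reduce to weak-amenability bookkeeping essentially identical to that appearing in the proof of Lemma \ref{lem:s_j,sigma_j_pres}, and the required compatibility between the four maps $i^M_F$, $i$, $i^{M'}_{F'}$, $k$ is just the elementarity built into the Shift Lemma.
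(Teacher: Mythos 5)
Your $d=0$ argument is sound and matches the paper's (which simply reduces to the argument of Lemma~\ref{lem:s_j,sigma_j_pres}, i.e.\ the amenability machinery). The problem is your $d=1$ case.

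You assert that the Dodd-soundness hypothesis together with Lemma~\ref{lem:tau=rho_1} ``places the relevant parameters strictly below $\rho_d^M$.'' This is backwards. Lemma~\ref{lem:tau=rho_1} gives $\tau^M=\max(\rho_1^M,(\mu^+)^M)$, and Fact~\ref{fact:tau=rho_1} (Dodd-soundness) then gives $\sigma_F=\tau^M\geq\rho_1^M$. If $\widetilde\sigma_F>0$ then $\widetilde\sigma_F=\sigma_F\geq\rho_1^M$; if $\widetilde\sigma_F=0$ then your amenable code $S$ lives on $(\mu^+)^M$, and in that subcase one checks $\sigma_F=(\mu^+)^M\geq\rho_1^M$ as well. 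In every subcase the ordinal $\eta$ over which your set $S$ is amenable is $\geq\rho_1^M=\rho_d^M$, so the hypothesis ``$\eta<\rho_k^M$'' of Lemma~\ref{lem:amenability_pres} fails, and your lower-bound step has no support. (One cannot rescue this by running Lemma~\ref{lem:amenability_pres} at degree $0$ either: that yields $S\notin\Ult_0(M,E)$, and the factor map $\Ult_0(M,E)\to\Ult_1(M,E)=M'$ is only $\rho_1$-preserving $0$-lifting, not the sort of elementarity that would let you transfer non-membership across it.)

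The paper's proof of the $d=1$ case is of a genuinely different character. When $(\mu^+)^M<\rho_1^M$, Dodd-soundness forces $(s_F,\sigma_F)=(\widetilde s_F,\widetilde\sigma_F)=(p_1^M,\rho_1^M)$, and one then argues via the factor-embedding observations of Remark~\ref{rem:type_2_factor_embedding}: since $i$ is a $1$-embedding, $F'\restriction(i(\widetilde s_F)\cup\sup i``\widetilde\sigma_F)$ generates $(F^{M'}_\downarrow,p_1^{M'})$ and hence, by the remark and $1$-soundness of $M'$, generates all of $M'$; so it cannot lie in $M'$ (nor, by agreement below $\OR(M')$, in $U'$). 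When $\rho_1^M\leq(\mu^+)^M$, one has $\widetilde\sigma_F=0$ and $F^M\restriction\widetilde s_F\notin M$ is a $\rPi_2^M$ statement in $\widetilde s_F$; $\rSigma_2$-elementarity of the $1$-embedding $i$ then transfers it directly. Neither argument is an instance of the amenability lemma, and the case split on $(\mu^+)^M$ vs.\ $\rho_1^M$ is essential. You would need to supply an argument along one of these lines (or another that actually handles $\eta\geq\rho_1^M$) to close the gap.
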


\begin{proof}[Proof sketch]
A complete argument is given in \cite[\S2]{extmax} (formally below
superstrong), but we give a sketch here.
We literally just discuss $(\widetilde{s},\widetilde{\sigma})$-preservation.

If $d=0$, the proof is basically as in the proof of Lemma
\ref{lem:s_j,sigma_j_pres}.
(Maybe $\crit(E)\leq\crit(F)$, but that is fine,
considering the definition of $F'$. If $\widetilde{\sigma}_F=0$
then the measure $F\rest\widetilde{s}_F\notin M$,
and this is coded amenably as a subset of $\mu^{+M}$ where $\mu=\crit(F)$.)

Suppose $d=1$. So $\crit(E)<\rho_1^M$.
Suppose $\mu^{+M}<\rho_1^M$,
so by Lemma \ref{lem:tau=rho_1}
and Fact \ref{fact:Dodd-sound_characterization},
$\rho_1^M=\tau^M=\sigma^M=\tautilde^M=\sigmatilde^M$
and $t^M=s^M=\ttilde^M=\stilde^M$.
 We have  $p_1^{M'}=i(p_1^{M})$
and $\rho_1^{M'}=\sigma'$ where $\sigma'=\sup i``\sigmatilde^M$, so
\[ M'=\Hull_1^{M'}(i(p_1^{M'})\cup\sigma').\]
But because $M$ is Dodd-sound,
$i(F^M_\downarrow,p_1^M)=(F^{M'}_\downarrow,p_1^{M'})$
is generated
by $F'\rest s'\cup\sigma'$ where $s'=i(\stilde^M)$.
But then by Lemma \ref{lem:type_2_factor_embedding},
$F'\rest(s'\cup\sigma')$ generates all of $M'$,
so $F'\rest(s'\cup\sigma')\notin M'$.

Now suppose  $\rho_1^M\leq\mu^{+M}$. Since $M$ is Dodd-sound,
$\tau^M=\sigma^M=\mu^{+M}$
and $t^M=s^M$ and $\sigmatilde^M=0$ and either $\stilde^M=s^M$
or $\stilde^M=s^M\cup\{\mu\}$.
Either way, $F^M\rest\stilde^M\notin M$,
which is an $\rPi_2^M$ statement
about the parameter $\stilde^M$,
and since $i$ is $\rSigma_2$-elementary,
therefore $F^{M'}\rest s'\notin M$
where $s'=i(\stilde^M)$. It follows
that $\stilde^{M'}=s'$ and $\sigmatilde^{M'}=0$.
\end{proof}

If $\Tt$ is a $k$-maximal iteration tree on a $k$-sound premouse $M$,
the extenders $E^\Tt_\alpha$ need not be Dodd-sound, even if every extender in $\es_+^M$ is Dodd-sound. But often in cases of interest (always, if every extender in $\es_+^M$ is Dodd-sound) $E^\Tt_\alpha$ has a natural decomposition into a sequence of Dodd-sound extenders.
That sequence can be derived in a fairly simple manner from extenders used in $\Tt$. We discuss this next.

\begin{dfn}\label{dfn:Dodd-nice}
Let $M$ be an $m$-sound  premouse all of whose proper segments are Dodd-sound,
and $\Tt$ be  $m$-maximal  on $M$.
Let $\beta+1<\lh(\Tt)$. We say $\beta$ is
\dfnemph{Dodd-nice (for $\Tt$)}
iff
either
 $(0,\beta]^\Tt$ drops in model,
 or $E^\Tt_\beta\in\es(M^\Tt_\beta)$,
 or $F^M$ is Dodd-sound.

Similarly, let $\Tt$ a degree-maximal tree on
a bicephalus $B=(\rho,M^0,M^1)$ where
all proper segments of $M^0,M^1$
are Dodd-sound. Let $\beta+1<\lh(\Tt)$.
We say $\beta$ is \dfnemph{Dodd-nice (for $\Tt$)}
iff either
 $(0,\beta]^\Tt$ drops in model, or letting $e=\exitside^\Tt_\beta$,
 we have
   $E^\Tt_\beta\in\es(M^{e\Tt}_\beta)$ or
 $M^e$ is Dodd-sound.

Let $\Tt$ be a tree as in one of the cases above.
Let $\lambda<\lh(\Tt)$. We say that $\Tt$ is
\dfnemph{${\geq\lambda}$-Dodd-nice} iff $\beta$
is Dodd-nice for
every $\beta\geq\lambda$ with $\beta+1<\lh(\Tt)$.
If $b^\Tt$ exists, we
say that $b^\Tt$ is \dfnemph{${\geq\lambda}$-Dodd-nice} iff
$\beta$ is Dodd-nice for every
$\beta\geq\lambda$ with $\beta+1\in b^\Tt$.
\end{dfn}

\begin{rem}\label{rem:Dodd-nice_not_implies_Dodd-sound}
The Dodd-niceness of $\beta$ does \emph{not} imply
that $E^\Tt_\beta$ is Dodd-sound (assuming some large cardinals).
For example, suppose that $M$ is active type 2, $(0,\om_1+1)$-iterable,
all initial segments of $M$ are Dodd-sound,
and there is
 $\kappa$ such that $\tau^M\leq\kappa<\kappa^{+M}<\OR^M$,
and $\kappa$ is $M$-measurable via some $E\in\es^M$.
Let $\Tt$ be the $0$-maximal tree on $M$ with $E^\Tt_0=E$ and $E^\Tt_1=F(M^\Tt_1)$.
Then $1$ is Dodd-nice but $E^\Tt_1$ is not Dodd-sound, by Fact \ref{fact:Dodd-sound_characterization} and Lemma \ref{lem:type_2_s,sigma_pres}. Indeed, letting
$F=F^M$ and $F'=F(M^\Tt_1)=E^\Tt_1$, by \ref{fact:Dodd-sound_characterization} and \ref{lem:type_2_s,sigma_pres},  $\tau_F=\sigma_F=\sigma_{F'}\leq\kappa=\crit(i^\Tt_{01})$ and $s_{F'}=i^\Tt_{01}(s_F)=i^\Tt_{01}(t_F)$. But then it easily follows that
the elements of $M^\Tt_1$ which are $F'$-generated by $\sigma_{F'}\cup s_{F'}$ are precisely those in $\range(i^\Tt_{01})$, and so $E^\Tt_1$ is not Dodd-sound (since Dodd-soundness would require that all elements of $M^\Tt_1$ were $F'$-generated by $\sigma_{F'}\cup\{s_{F'}\}$).

The next lemma analyzes failures of Dodd-soundness in general; is a routine variant of results in \cite[\S2]{extmax} (particularly
\cite[Remark 2.30]{extmax}), so we leave the direct adaptation to the reader.
\end{rem}

\begin{lem}\label{lem:Dodd_cores_appear}
Let either
\begin{enumerate}[label=--]
\item  $m<\om$, $M$ be an $m$-sound
premouse all of whose proper segments are Dodd-sound,
and $\Tt$   be
$m$-maximal
on $M$, or
\item
 $B=(\rho,M^0,M^1)$ be a  bicephalus
such that all proper segments
of $M^0,M^1$  are Dodd-sound,
and $\Tt$  degree-maximal on $B$.
\end{enumerate}
Let $\beta+1<\lh(\Tt)$ be such that
$\beta$ is Dodd-nice for $\Tt$. Let $e=\exitside^\Tt_\beta$ and
 $E=E^\Tt_\beta$. Then $E$ fails to be Dodd-sound iff we have:
\begin{enumerate}[label=\tu{(}\roman*\tu{)}]
\item $M^{e\Tt}_\beta$ is active type 2 and $E=F(M^{e\Tt}_\beta)$, and
\item there is $\vareps+1\leq^\Tt\beta$
such that $(\vareps+1,\beta]^\Tt\cap\dropset^\Tt=\emptyset$  \tu{(}so $M^{e*\Tt}_{\vareps+1}$ is active type 2\tu{)}
and $\crit(E^\Tt_\vareps)\geq\sigma_{\bar{E}}$ where $\bar{E}=F(M^{e*\Tt}_{\vareps+1})$.
\end{enumerate}

Moreover, suppose $E$ is non-Dodd-sound and $\vareps$ is least as above. Let $P=M^{e\Tt}_\beta$, $\bar{P}=M^{e*\Tt}_{\varepsilon+1}$, $\bar{E}=F^{\bar{P}}$ and $\delta=\pred^\Tt(\vareps+1)$. Then:
\begin{enumerate}[label=\arabic*.,ref=\arabic*]
 \item\label{item:E-bar=Dodd-core_and_P-bar=Dodd-core} $\bar{E}=\core_{\ds}(E)$ and $\bar{P}=\core_{\ds}(P)$.
  \item $\bar{E}=F^{\bar{P}}$ is Dodd-sound.
 \item $\delta$ is the unique $\delta'<\lh(\Tt)$ such that $\bar{P}\ins M^{e\Tt}_{\delta'}$.
\item
$\deg^\Tt(\eps+1)=0$.
\item\label{item:tau_E-bar=sigma_E-bar=sigma_E} $\rho_1^{\bar{P}}\leq\tau_{\bar{E}}=\sigma_{\bar{E}}=\sigma_E\leq\crit(E^\Tt_\varepsilon)=\crit(i^{e*\Tt}_{\vareps+1,\beta})$.

 \item\label{item:s_E_is_image_of_s_E-bar} $s_E=i^{e*\Tt}_{\eps+1,\beta}(s_{\bar{E}})=i^{e*\Tt}_{\eps+1,\beta}(t_{\bar{E}})$.
\item\label{item:sigma_E<nu(E_delta)<lh(E_delta)<=OR^P-bar}
$\sigma_{E}<\nu(E^\Tt_\delta)<\lh(E^\Tt_\delta)\leq\OR^{\bar{P}}$.
 \item\label{item:delta_least_with_sigma_E<lh(E_delta)} $\delta$ is the least $\delta'$ such that $\sigma_E<\lh(E^\Tt_{\delta'})$.
\end{enumerate}
\end{lem}

Note that if $E=E^\Tt_\beta\in\es_+(M^\Tt_\beta)$ is Dodd-sound, then
there is also a unique
$\xi$ such that
$\core_\ds(E)\in\es_+(M^{e\Tt}_\xi)$, because $\core_\ds(E)=E$; in
fact, $\xi=\beta$.

\begin{proof}
 The characterization of Dodd-soundness
 and parts \ref{item:E-bar=Dodd-core_and_P-bar=Dodd-core}--\ref{item:s_E_is_image_of_s_E-bar} in the ``Moreover'' clause follow by a straightforward induction, using Lemma
\ref{lem:type_2_s,sigma_pres}.
Let us just discuss parts \ref{item:sigma_E<nu(E_delta)<lh(E_delta)<=OR^P-bar}
and \ref{item:delta_least_with_sigma_E<lh(E_delta)},
assuming the rest. Part \ref{item:sigma_E<nu(E_delta)<lh(E_delta)<=OR^P-bar}:
We have $\sigma_{\bar{E}}<\crit(E^\Tt_\varepsilon)<\nu(E^\Tt_\delta)$ by hypothesis
and normality, and $\sigma_E=\sigma_{\bar{E}}$ by part \ref{item:tau_E-bar=sigma_E-bar=sigma_E}. And $\nu(E^\Tt_\delta)<\lh(E^\Tt_\delta)\leq\OR^{\bar{P}}$ since $\bar{P}=M^{e*\Tt}_{\vareps+1}$, so $E^\Tt_\vareps$ measures exactly $\pow(\kappa)\cap\bar{P}$
where $\kappa=\crit(E^\Tt_\vareps)$,
but $\rho_1^{\bar{P}}\leq\sigma_{\bar{E}}\leq\kappa$, so $\lh(E^\Tt_\delta)\leq\OR^{\bar{P}}$.
Part \ref{item:delta_least_with_sigma_E<lh(E_delta)}:
Let
$\delta'$ be least
with $\sigma_E<\lh(E^\Tt_{\delta'})$. So $\delta'\leq\delta$ by part  \ref{item:sigma_E<nu(E_delta)<lh(E_delta)<=OR^P-bar};
we need to see $\delta'=\delta$. This is like part of the proof of Closeness \cite[Lemma 6.1.5]{fsit}.
Suppose that $\delta'<\delta$ and let $\gamma$ be least
such that $\gamma\geq\delta'$ and $\gamma+1\leq^\Tt\delta$. So $\sigma_E<\lh(E^\Tt_{\delta'})\leq\lh(E^\Tt_\gamma)$.
We have $\bar{P}\ins M^{e\Tt}_{\delta}$ and $\rho_1^{\bar{P}}\leq\sigma_{\bar{E}}=\sigma_E<\lh(E^\Tt_{\delta'})$. It follows from the rules of degree-maximal trees that $\bar{P}=M^{e\Tt}_\delta$ (that is, otherwise $\bar{P}=M^{e*\Tt}_{\vareps+1}\pins M^{e\Tt}_\delta$ and $\lh(E^{\Tt}_{\delta'})\leq\OR^{\bar{P}}$, but then $\lh(E^{\Tt}_{\delta'})$ is a cardinal in $M^{e\Tt}_{\delta}$, contradicting the fact that $\rho_1^{\bar{P}}<\lh(E^{\Tt}_{\delta'})\leq\OR^{\bar{P}}$). So by the minimality of $\vareps$,
$M^{e*\Tt}_{\gamma+1}$ is Dodd-sound (and active type 2)
and $\crit(E^\Tt_\gamma)<\sigma(F(M^{e*\Tt}_{\gamma+1}))$. But then by
Lemma \ref{lem:type_2_s,sigma_pres}
and straightforward elementarity considerations in  case $\deg^{e\Tt}_{\gamma+1}>1$, it follows that \[\sigma_{F(M^{e\Tt}_{\gamma+1})}\geq\sup i^{e*\Tt}_{\gamma+1}``\sigma_{F(M^{e*\Tt}_{\gamma+1}))}\geq\lh(E^\Tt_\gamma)\geq\lh(E^\Tt_{\delta'})>\sigma_E.\]Likewise for all $\gamma'$ with $\gamma+1<^\Tt\gamma'+1\leq^\Tt\delta$,
giving that \[\sigma_{\bar{E}}=\sigma_{F(M^{e\Tt}_{\delta})}\geq\sup i^{e*\Tt}_{\gamma+1,\delta}``\sigma_{F(M^{e*\Tt}_{\gamma+1})}>\sigma_E,\] contradicting
part \ref{item:tau_E-bar=sigma_E-bar=sigma_E}.
\end{proof}

We now adapt Dodd ancestry and Dodd
decomposition, described in \cite[2.31--2.36]{extmax}, to our  context. They provide a decomposition of non-Dodd-sound extenders used in degree-maximal iteration trees into linear sequences of Dodd-sound extenders,
and also, corresponding decompositions of iteration maps. The \emph{Dodd ancestry} relation $\alpha<^\Tt_{\Da}\beta$, introduced first, indicates when an extender $E^\Tt_\alpha$ of $\Tt$  ``hereditarily contributes generators'' to an extender $E^\Tt_\beta$ which are not absorbed into  the Dodd-solidity-core of $E^\Tt_\beta$.

\begin{dfn}\label{dfn:Dodd_ancestry}
Let $(m,M,\Tt)$ or $(B,\Tt)$ be as in \ref{lem:Dodd_cores_appear}. We define the \dfnemph{Dodd ancestry} relation $<^{\Tt}_{\Da}$ on $\lh(\Tt)^-$. Let
$\alpha+1,\beta+1<\lh(\Tt)$. Say $\alpha<^*\beta$ iff
$\beta$ is Dodd-nice for $\Tt$,
$E^\Tt_\beta$ is non-Dodd-sound and  $\vareps+1\leq^\Tt\alpha+1\leq^\Tt\beta$
where $\vareps$
is  least  as in
\ref{lem:Dodd_cores_appear}. Then
$<^\Tt_{\Da}$ denotes
the
transitive closure of $<^*$,
and $\leq^{\Tt}_{\Da}$ the reflexive closure of $<^{\Tt}_{\Da}$.

Given $\alpha\leq^{\Tt}_{\Da}\beta$,
let the \dfnemph{trace} of $(\beta,\alpha)$
be the sequence $(\beta_0,\beta_1,\ldots,\beta_n)$
where $\beta_0=\beta$, $\beta_n=\alpha$
and $\beta_{i+1}<^*\beta_i$ for $i<n$
(so  $\alpha=\beta$ iff $n=0$).
\end{dfn}

\begin{rem}\label{rem:second_example}
It is easy to see that if $\alpha<^\Tt_{\Da}\beta$ (so $\beta$ is Dodd-nice by definition) then $\alpha$ is also
Dodd-nice. In the example
discussed in Remark \ref{rem:Dodd-nice_not_implies_Dodd-sound}, $0<^*1$, so
 $0<^\Tt_{\Da}1$. And as described in the
 following lemma (and cf.~\cite[2.27, 2.28]{extmax}), $E^\Tt_1$ is equivalent to the composition $E^\Tt_0\com F^M$, and so
 \[ M^\Tt_2=\Ult_0(M,E^\Tt_1)=\Ult_0(\Ult_0(M,F^M),E^\Tt_0),\]
 and the ultrapower maps agree;
 this decomposes the Dodd-unsound extender $E^\Tt_1$ into composition of two Dodd-sound extenders.
 Let us consider a slightly more complex example.
 With the hypotheses as in the same example,
 suppose further that $E$ is type 2 and there is $\mu$
 which is $M|\lh(E)$-measurable via some $D\in\es^M$,
 and $\tau_E\leq\mu<\mu^{+M|\lh(E)}<\lh(E)$.
 Let $\Uu$ be the $0$-maximal tree with $\lh(\Uu)=4$, $E^\Uu_0=D$,
 $E^\Uu_1=F(M^\Uu_1)$ and $E^\Uu_2=F(M^\Uu_2)$.
 (Note that $M^{*\Uu}_1=M|\lh(E)$
 and $\deg^\Uu_1=0$, since $\rho_1^{M|\lh(E)}\leq\tau_E\leq\mu$, so $M^\Uu_1=\Ult_0(M|\lh(E),D)$ is indeed active with the image of $E$,  $\kappa=\crit(E)=\crit(E^\Uu_1)<\mu$ and $E^\Uu_1=F(M^\Uu_1)$ is $M$-total and $\pred^\Uu(2)=0$ and $M^{*\Uu}_2=M$,
 so $M^\Uu_2=\Ult_0(M,E^\Uu_1)$, and then likewise,
 $M^\Uu_3=\Ult_0(M,E^\Uu_2)$ is active with the image of $F^M$,
 and $\crit(E^\Uu_2)=\crit(F^M)$, so $M^\Uu_3=\Ult_0(M,E^\Uu_2)$.) Then $0<^*1<^*2$,
 and so $0<^{\Uu}_{\Da}1<^\Uu_{\Da}2$ and $0<^\Uu_{\Da}2$. And we have
 \[ M^\Tt_3=\Ult_0(M,E^\Tt_2)=\Ult_0(\Ult_0(\Ult_0(M,F^M),E),D) \]
 and the ultrapower maps agree,
 and again, $E^\Tt_2$ is Dodd-unsound,
 but $F^M,E,D$ are each Dodd-sound.

 The following lemma  generalizes this kind of analysis, decomposing any extender $E^\Tt_\xi$ such that $\xi$ is Dodd-nice for $\Tt$ into a sequence $\left<G_\gamma\right>_{\gamma<\lambda}$ of Dodd-sound extenders, whose composition is equivalent to $E^\Tt_\xi$.
 The sequence will have strictly increasing critical points, but need not be ``normal'', in that we can have $\gamma<\delta<\lambda$ with $\crit(G_\delta)<\nu(G_\gamma)$.
 By composing such sequences,  we in fact analyze a full iteration map $i^\Tt_{\alpha\beta}$ (assuming all the relevant ordinals are Dodd-nice).
\end{rem}

\begin{lem}[Dodd decomposition] \label{lem:Dodd_decomposition}
 Let $(m,M,\Tt)$ or $(B,\Tt)$ be as in \ref{lem:Dodd_cores_appear}.
Let $\alpha<^\Tt\beta$ and $e\in\sides^\Tt_\beta$
be such that
$(\alpha,\beta]^\Tt$ does not drop
 in model or degree, $k=\deg^{e\Tt}_\alpha=\deg^{e\Tt}_\beta$,
 and suppose
 $\xi$ is Dodd-nice for $\Tt$
 for every  $\xi+1\in(\alpha,\beta]^\Tt$.
 Let $\vec{G}=\left<G_\gamma\right>_{\gamma<\lambda}$
 enumerate, in order of increasing critical point, the set of all extenders of
the form $\core_{\ds}(E^\Tt_\delta)$
 where $\delta\leq^\Tt_{\Da}\xi$ for some $\xi+1\in(\alpha,\beta]^\Tt$.
 Then:
 \begin{enumerate}[label=\arabic*.,ref=\arabic*]
 \item Let $\xi+1,\zeta+1\in(\alpha,\beta]^\Tt$
 and $\theta\leq^\Tt_{\Da}\xi$ and $\delta\leq^\Tt_{\Da}\zeta$  with $\theta\neq\delta$. Then $\crit(E^\Tt_\theta)\neq\crit(E^\Tt_\delta)$,
 so the enumeration of $\vec{G}$ mentioned in the lemma statement is well-defined. In fact, let $\vec{\theta}=(\theta_0,\ldots,\theta_m)$ be the trace of $(\xi,\theta)$
 and $\vec{\delta}=(\delta_0,\ldots,\delta_n)$ the trace of $(\zeta,\delta)$. Then:
 \begin{enumerate}[label=\tu{(}\alph*\tu{)}]
 \item If $m<n$ and $\vec{\theta}=\vec{\delta}\rest m$ then
 $\delta<^\Tt_{\Da}\theta$, so $\delta<\theta$ and \[\crit(E^\Tt_\theta)<\sigma_{E^\Tt_\theta}\leq\crit(E^\Tt_\delta)<\nu(E^\Tt_\delta)<\nu(E^\Tt_\theta).\]
 \item If $n<m$ and $\vec{\delta}=\vec{\theta}\rest n$
 then likewise \tu{(}with $\delta,\theta$ exchanged, so $\theta<^\Tt_{\Da}\delta$, etc\tu{)}.
 \item Suppose there is $k<\min(m,n)$ such that $\vec{\theta}\rest k=\vec{\delta}\rest k$ but $\theta_k\neq\delta_k$. If $\theta_k<\delta_k$ then $\theta<\delta$ and \[ \crit(E^\Tt_\theta)<\nu(E^\Tt_\theta)\leq\crit(E^\Tt_\delta)<\nu(E^\Tt_\delta);\]
 if $\delta_k<\theta_k$ then likewise.
 \end{enumerate}
  \item
There is an abstract degree $k$ weakly amenable iteration
 of $M^{e\Tt}_\alpha$ of the form
$\left(\left<N_\gamma\right>_{\gamma\leq\lambda},\left<G_\gamma\right>_{
\gamma<\lambda}\right)$, with abstract iteration maps
$j_{\gamma_1\gamma_2}:N_{\gamma_1}\to N_{\gamma_2}$.
\item $N_\lambda=M^\Tt_\beta$
and $j_{0\lambda}=i^{e\Tt}_{\alpha\beta}$.
\item For $\gamma\in[0,\lambda)$
and $\delta$ with
$G_\gamma=\core_{\ds}(E^\Tt_\delta)$
and $\mu=\crit(E^\Tt_\delta)=\crit(G_\gamma)$, we have:
\begin{enumerate}[label=--]
\item  $E^\Tt_\delta\rest\nu(E^\Tt_\delta)$
is the $(\mu,\nu(E^\Tt_\delta))$-extender
derived from $j_{\gamma\lambda}$.
\item For any $\beta'<\lh(\Tt)$ such that $\beta<^\Tt\beta'$ and $(\beta,\beta']^\Tt\cap\dropset^\Tt=\emptyset$,
 $E^\Tt_\delta\rest\nu(E^\Tt_\delta)$
is the $(\mu,\nu(E^\Tt_\delta))$-extender
derived from $i^\Tt_{\beta\beta'}\com j_{\gamma\lambda}$.
\item Let $j=j_{\gamma\lambda}$ and
$N'=\cHull_{k+1}^{N_\lambda}(\rg(j)\cup\sigma_j\cup\{s_j\})$
and $j':N'\to N_\lambda$ be the uncollapse map.
Then $N'=N_{\gamma+1}$ and $j'=j_{\gamma+1,\lambda}$.
\end{enumerate}
\end{enumerate}
Moreover, if also, $\alpha$ has form $\vareps+1$
where $\vareps$ is Dodd-nice for $\Tt$,
and $\vec{G}^*$ enumerates, in order of increasing critical point, all those extenders either in $\vec{G}$
or of form $\core_{\ds}(E^\Tt_\delta)$
where $\delta\leq_{\Da}^\Tt\vareps$,
then the analogous facts hold of $\vec{G}^*$,
$M^{e*\Tt}_{\vareps+1}$, $i^{e*\Tt}_{\vareps+1,\beta}$ \tu{(}replacing $\vec{G}$, $M^{e\Tt}_{\vareps+1}$, $i^{e\Tt}_{\vareps+1,\beta}$ respectively\tu{)}.
\end{lem}
\begin{proof}
This is as in \cite[2.32, 2.35, 2.36]{extmax}.
Note that the proof uses the associativity of extenders described in \cite[2.27, 2.28]{extmax};
these facts continue to hold, with the same proofs, without the superstrong-smallness assumption  formally present in \cite{extmax}.
\end{proof}

\begin{dfn}[Dodd decomposition]
In the context of the lemma above,
the sequence
$\vec{G}$
is called the \dfnemph{Dodd decomposition}
of $i^{e\Tt}_{\alpha\beta}$,
and in the context of the ``Moreover'' clause, $\vec{G}^*$ is that of $i^{e*\Tt}_{\vareps+1,\beta}$.
\end{dfn}

\section{Capturing with strongly finite trees}\label{sec:capturing_elements}

In this section we discuss
 constructions of
finite (of finite after some stage)
support trees $\bar{\Tt}$ of iteration trees $\Tt$,
capturing a given $x\in M^\Tt_\infty$
with a copy map $\somevarpi:M^{\bar{\Tt}}_\infty\to M^\Tt_\infty$ having $x\in\rg(\somevarpi)$
like in \S\ref{sec:simple_embeddings},
but  with more specialized properties than those considered there.
In particular, under some Dodd-soundness assumptions, we will show how to arrange that every extender used
in $\bar{\Tt}$ is finitely generated
(and also slight variations thereof).
Recall from  \S\ref{sec:meas_Dodd_prof_fin_plan} that we want such methods for the  argument
in \S\ref{sec:mim}, so the results in this section will be used there. Although we won't literally use the results  after \S\ref{sec:mim}, their proofs  will be used.
This is because will use slight variants of the results in \S\S\ref{sec:Dodd_proof},\ref{sec:finite_gen_hull}.
These variants will be described  in those sections,
but the proofs, being only embellishments on  those presented here, will only be sketched there.

\begin{dfn}\label{dfn:captures}
Let $m<\om$, let $M$ be $m$-sound and let $\bar{\Tt},\Tt$ be terminally non-dropping $m$-maximal trees on $M$.
Suppose $\Ttbar,\Tt$ have successor lengths $\xibar+1,\xi+1$ respectively
and let $x\in\core_0(M^\Tt_\xi)$ and $\alpha<\rho_0(M^\Tt_\xi)$.
Then we say that $\Ttbar$ \dfnemph{captures $(\Tt,x,\alpha)$}
iff there is $\lambda\in b^\Tt\cap b^\Ttbar$ and $\sigma$ such that:
\begin{enumerate}[label=--]
 \item $\Ttbar\rest(\lambda+1)=\Tt\rest(\lambda+1)$,
\item   $\lambda\leq^\Ttbar\xibar$ and $\lambda\leq^\Tt\xi$, and
\item $\sigma:M^{\Ttbar}_{\xibar}\to M^\Tt_\xi$ is an $m$-embedding
with $\{x\}\cup\alpha\sub\rg(\sigma)$
and $\sigma\com i^{\Ttbar}_{\lambda\xibar}=i^{\Tt}_{\lambda\xi}$.
\end{enumerate}
We say that $\Ttbar$ \dfnemph{captures} $(\Tt,x)$ iff $\Ttbar$ captures
$(\Tt,x,0)$.
\end{dfn}

The following definition and lemma will be used in \S\ref{sec:mim}:
\begin{dfn}[Strongly finite]\label{dfn:strongly_finite}
Let $M$ be an $m$-sound premouse
all of whose initial segments
are Dodd-sound (including $M$ itself), and
 $\Tt$ be a terminally-non-dropping successor length $m$-maximal tree on $M$.
We say that $\Tt$ is \dfnemph{strongly finite}
iff
$\lh(\Tt)<\om$
and for each $\alpha+1\in b^\Tt$
and each $\gamma\leq_{\Da}^\Tt\alpha$, $\core_{\ds}(E^\Tt_\gamma)$ is finitely
generated
(so $\sigma_{E^\Tt_\gamma}=\crit(E^\Tt_\gamma)^{+\exit^\Tt_\gamma}$).
\end{dfn}

It is not completely obvious from the definition, but in strongly finite trees $\Tt$, all extenders $E^\Tt_\chi$ used either
 feed generators into the eventual iteration map, in that $\chi\leq^\Tt_{\Da}\vareps$
 for some $\vareps+1\leq^\Tt\infty$,
 or $E^\Tt_\chi$ is used for
a rather
trivial reason
to do with indexing.  In this trivial case, basically $E^\Tt_\chi$
is type 2 and just has to be used in order to reveal the next measure which does contribute generators into the eventual iteration map, and the critical point of that measure is $\lgcd(\exit^\Tt_\chi)$.
(If we were working with $\lambda$-indexing instead of Mitchell-Steel indexing,
and $\Tt'$ were the natural version of $\Tt$ in that hierarchy,
then the $\lambda$-indexed version $E$ of $E^\Tt_\chi$ would not be used in  $\Tt'$, as we could produce the next measure without using $E$.) We make this precise with the following definition and lemma:

\begin{dfn}
Let $M$ be  $m$-sound and $\Tt$ be $m$-maximal  on $M$.
Let $\chi<\chi'<\lh(\Tt)$. We say that \dfnemph{$\chi$ is $\chi'$-transient \tu{(}with respect to $\Tt$\tu{)}}
iff for some $\eta$, we have:
\begin{enumerate}[label=--]\item $M^{\Tt}_{\chi'}$ is active type 2,
\item  $\chi=\pred^\Tt(\eta+1)<^\Tt\eta+1\leq^\Tt\chi'$,
\item $(\eta+1,\chi']^\Tt$ does not drop, and
 \item $\crit(i^{*\Tt}_{\eta+1,\chi'})=\lgcd(M^{*\Tt}_{\eta+1})$.\qedhere
 \end{enumerate}
\end{dfn}
\begin{lem}\label{lem:non-dropping-strongly_finite_Tt_structure}
Let $m,M$ be as in Definition \ref{dfn:strongly_finite}.
Let $\Tt$ be a strongly finite terminally-non-dropping
$m$-maximal tree on $M$.
Then for each $\chi+1<\lh(\Tt)$,  one of the following options holds,
where $\xi+1=\lh(\Tt)$:
\begin{enumerate}[label=\tu{(}\roman*\tu{)}]
\item\label{item:feeds_in_strongly_finite_Tt_structure} there are $\vareps,\chi'$ such that   $ \eps+1\leq^\Tt\xi$ and
$\chi'\leq^\Tt_{\Da}\eps$, and either:
\begin{enumerate}
\item\label{item:feeds_in_strongly-finite} $\chi=\chi'$, or
\item\label{item:feeds_in_extra_strongly-finite} $\chi<^\Tt\chi'$ and $\chi$ is $\chi'$-transient,
\end{enumerate}
or

\item\label{item:chi_along_main_branch_transient_strongly_finite_Tt_structure} $\chi<^\Tt\xi$ and $\chi$ is $\xi$-transient.
\end{enumerate}
Moreover, the three options \ref{item:feeds_in_strongly-finite}, \ref{item:feeds_in_extra_strongly-finite}, \ref{item:chi_along_main_branch_transient_strongly_finite_Tt_structure} are mutually exclusive.
\end{lem}

\begin{proof}[Proof sketch]
The proof is like that
for Subclaim \ref{sclm:Tt_structure_mim} in
the proof of Lemma \ref{lem:non-dropping-strongly_finite} below,
and so we leave it to the reader
to extract it from there.
(Below superstrong,
the argument appeared
within the proof of
\cite[Theorem 4.8]{mim}.)
\end{proof}

Although we do not need it, it is natural to observe that if $M$ is as above and non-type 2 then strongly finite trees on $M$ produce models which can also be produced via linear (but possibly non-normal) iterations,
via Dodd-sound measures.
It is similar if $M$ is type 2, but there is a slight wrinkle.
Cf.~\cite[Theorem 4.8(d)]{mim}:
\begin{prop}\label{prop:strongly_finite_linear_it}
Let $M,m$ be as in  \ref{dfn:strongly_finite}
and let $\Tt$ be a strongly finite terminally non-dropping $m$-maximal tree on $M$. Then there is a \tu{(}possibly non-normal, possibly non-$m$-maximal\tu{)} tree $\Ll$ on $M$
which has no drops in model or degree \tu{(}anywhere\tu{)},
$\deg^{\Ll}_\alpha=m$ for all $\alpha<\lh(\Ll)$,
with
$M^{\Ll}_\infty=M^\Tt_\infty$ and $i^\Ll=i^{\Tt}$ and every extender used
along the main branch $b^{\Ll}$ of $\Ll$ is finitely generated and Dodd-sound,
 $\crit(E^{\Ll}_\alpha)<\crit(E^{\Ll}_\beta)$ for $\alpha+1,\beta+1\in b^\Ll$ with $\alpha<\beta$, and moreover:
\begin{enumerate}[label=--]\item
if $M$ is non-type 2 then $\Ll$ is linear,
so $\alpha+1\in b^{\Ll}$ for every $\alpha+1<\lh(\Ll)$,
\item if $M$ is type 2,
then for each $\alpha+1<\lh(\Ll)$,
if $\alpha+1\notin b^{\Ll}$
then $\alpha\in b^{\Ll}$
and $\crit(i^{\Ll}_{\alpha\infty})=\lgcd(M^{\Ll}_\alpha)$
and $E^{\Tt}_\alpha=F(M^{\Ll}_\alpha)$.
\end{enumerate}
\end{prop}
\begin{proof}
 Let $\vec{G}=\left<G_\alpha\right>_{\alpha<\lambda}$ be the Dodd decomposition of $i^\Tt$
and $M_\alpha=\Ult_m(M,\vec{G}\rest\alpha)$.
Then for each $\alpha<\lambda$,
either:
\begin{enumerate}[label=(\roman*)]
\item $\crit(G_\alpha)^{+M_\alpha}<\OR^{M_\alpha}$ and $G_\alpha\in\es_+^{M_\alpha}$, or
\item $\crit(G_\alpha)=\lgcd(M_\alpha)$,
$M$ is active type 2, and $G_\alpha\in\es_+(\Ult(M^{\alpha},F^{M_\alpha}))$.
\end{enumerate}
This is similar to the proof of Subclaim \ref{sclm:Tt_structure_mim} in
the proof of Lemma \ref{lem:non-dropping-strongly_finite} below, so we omit further discussion. The proposition follows easily from this observation;
for example, if $M$ is non-type 2
then
$\Ll$ is just the degree $m$ linear iteration with $E^{\Ll}_\alpha=G_\alpha$.
\end{proof}
\begin{lem}\label{lem:non-dropping-strongly_finite}
Let $m,M$ be as in Definition \ref{dfn:strongly_finite}.
Let $\Uu$ be a terminally-non-dropping
$m$-maximal tree on $M$ and $x\in M^\Uu_\infty$.
Then there is a  strongly finite
terminally-non-dropping $m$-maximal tree
$\Tt$ on $M$
capturing $(\Uu,x)$.
\end{lem}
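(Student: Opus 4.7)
The plan has three steps: reduce $\Uu$ to finite length via finite support, then refine to Dodd-sound main-branch extenders via Dodd decomposition, and finally refine to finitely-generated ones via Lemma \ref{lem:sub-extender}.

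\emph{Step 1 (Finite length).} Let $\xi+1=\lh(\Uu)$. Take a finite selection $\FF$ of $\Uu$ with $x\in\FF_\xi$, and extend to a finite support $\supp$ with $0\in I_\supp$ via Lemma \ref{lem:simple_embedding_exists}(\ref{item:finite_support_exists}); at each limit $\alpha\in I_\supp$, condition \ref{item:support_limit_case} is met by including a predecessor $\beta\in I_\supp\cap b^\Uu$ with $\beta<_\Uu\alpha$, using that $b^\Uu$ is non-dropping. Apply \ref{lem:simple_embedding_exists}(\ref{item:lambda-simple_embedding_exists}) with $\lambda=0$ to obtain a $0$-simple embedding $\Phi:\bar\Uu\hookrightarrow_{0\text{-}\simple}\Uu$, where $\bar\Uu$ is essentially-$m$-maximal on $M$ of finite length $\bar\xi+1$. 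Since $\Phi$ preserves tree structure, drops and degrees and $b^\Uu$ is non-dropping, so is $b^{\bar\Uu}$, and the copy $\sigma_{\bar\xi}:\core_0(M^{\bar\Uu}_{\bar\xi})\to\core_0(M^\Uu_\xi)$ is $\deg^\Uu_\xi$-preserving with $x\in\rg(\sigma_{\bar\xi})$. Convert $\bar\Uu$ to $m$-maximal form via Lemma \ref{lem:essentially_to_normal} to obtain a finite $m$-maximal terminally-non-dropping tree $\Tt_0$ on $M$ capturing $(\Uu,x)$.

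\emph{Step 2 (Dodd-sound refinement).} Since $M$ is fully-Dodd-sound, every $\beta+1<\lh(\Tt_0)$ is Dodd-nice for $\Tt_0$. Apply Dodd decomposition (Lemma \ref{lem:Dodd_decomposition}) to the non-dropping main branch of $\Tt_0$: the iteration map $i^{\Tt_0}_{0\bar\xi}$ is the final map of an abstract degree $m$ weakly amenable iteration on $M$ by Dodd-sound extenders $\langle G_\gamma\rangle_{\gamma<\lambda}$, ordered by increasing critical point. By Lemma \ref{lem:Dodd_cores_appear}, each $G_\gamma=F(\core_\ds(M^{\Tt_0}_{\beta_\gamma}))$ sits on the extender sequence of a segment of a model of $\Tt_0$. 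Realize this abstract iteration as a new $m$-maximal tree $\Tt_1$ on $M$ whose main-branch extenders are exactly $\langle G_\gamma\rangle_{\gamma<\lambda}$; length-monotonicity of the $G_\gamma$ and the correctness of the tree-predecessor function follow from the critical-point ordering combined with the location information from \ref{lem:Dodd_cores_appear}. Then $\Tt_1$ is a finite $m$-maximal terminally-non-dropping tree with last model $M^{\Tt_0}_{\bar\xi}$, still capturing $(\Uu,x)$, and every main-branch extender of $\Tt_1$ is Dodd-sound.

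\emph{Step 3 (Finite generation).} Refine $\Tt_1$ by reverse induction on $\beta+1\in b^{\Tt_1}$: assuming every later main-branch extender has been replaced by a finitely generated one, consider $E=E^{\Tt_1}_\beta$. If $E$ is already finitely generated, do nothing; otherwise, by Lemma \ref{lem:tau=rho_1}, $\exit^{\Tt_1}_\beta$ is a type 2, $1$-sound, $(0,\om_1+1)$-iterable premouse with $(\mu^+)^{\exit^{\Tt_1}_\beta}<\tau_E$, where $\mu=\crit(E)$. Let $r\in\nu_E^{<\om}$ consist of (i) the critical points of subsequent main-branch extenders of $\Tt_1$ lying below $\nu_E$, and (ii) the generators of $E$ needed, via the Shift Lemma propagated upward along the main branch, to represent $\sigma_{\bar\xi}^{-1}(\{x\})$ as outputs of $E$ together with outputs of subsequent extenders. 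Since all later main-branch extenders are now finitely generated, this set $r$ is finite. Apply Lemma \ref{lem:sub-extender} with this $r$ and $\rho=(\mu^+)^{\exit^{\Tt_1}_\beta}$ to obtain $\bar M\pins\exit^{\Tt_1}_\beta$ of active type 2 with $F^{\bar M}$ finitely generated by some finite $g\supseteq r\cup\{\mu,\nu_E-1\}$. Replace $E^{\Tt_1}_\beta$ by $F^{\bar M}\in\es_+(\exit^{\Tt_1}_\beta)$; since $F^{\bar M}$ agrees with $E$ on $r$ and has the same critical point, the modified tree remains $m$-maximal and terminally-non-dropping and still captures $(\Uu,x)$. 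The resulting tree $\Tt$ is strongly finite.

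The main obstacle is Step 2 --- verifying that the critical-point-ordered sequence of Dodd cores identified by \ref{lem:Dodd_cores_appear} genuinely arranges into a legal $m$-maximal tree on $M$, with length-monotonicity and the correct tree-predecessor function. Step 3's bookkeeping is easier, but requires the careful choice of $r$ described above, which is possible precisely because the reverse induction has already made later extenders finitely generated so that the Shift-Lemma propagation of $\sigma_{\bar\xi}^{-1}(\{x\})$ touches only finitely many generators of each $E^{\Tt_1}_\beta$.
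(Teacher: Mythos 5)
Your Step 1 is correct and matches the paper's approach: the finite support machinery of Lemma \ref{lem:simple_embedding_exists} together with Lemma \ref{lem:essentially_to_normal} reduces to a finite-length candidate tree capturing $(\Uu,x)$. The difficulties begin at Step 2.

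Step 2 contains a genuine gap that you yourself flag as ``the main obstacle'' but do not resolve --- and it is not merely a detail. You propose to replace the candidate $\Tt_0$ by a new $m$-maximal tree $\Tt_1$ whose main-branch extenders are exactly the Dodd-sound extenders $\langle G_\gamma\rangle_{\gamma<\lambda}$ arising from Lemma \ref{lem:Dodd_decomposition}. But Lemma \ref{lem:Dodd_decomposition} only gives an \emph{abstract} weakly amenable iteration: the $G_\gamma$ are weakly amenable to the abstract-iteration models $N_\gamma$, not necessarily members of $\es_+(N_\gamma)$ or of the extender sequences of segments of $N_\gamma$. Lemma \ref{lem:Dodd_cores_appear} locates $\core_\ds(E^\Tt_\delta)$ on the sequence of a model $M^\Tt_\xi$ of the \emph{original} tree, which generally differs from the $N_\gamma$ in the abstract iteration. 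So there is no reason the abstract iteration assembles into a legal $m$-maximal tree on $M$, and nothing in the cited lemmas supports length-monotonicity or predecessor correctness for your proposed $\Tt_1$. Notice also that Definition \ref{dfn:strongly_finite} does \emph{not} require the main-branch extenders themselves to be Dodd-sound; it only requires that all Dodd cores $\core_\ds(E^\Tt_\gamma)$ with $\gamma\leq_\d^\Tt\alpha$ (for $\alpha+1\in b^\Tt$) be finitely generated. Step 2 is thus aimed at a stronger, possibly unattainable target, and it is not needed.

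Step 3 has a second issue, independent of Step 2: replacing $E^{\Tt_1}_\beta$ by $F^{\bar M}$ produces a different ultrapower at stage $\beta+1$, so the remainder of the tree does not simply carry over ``since $F^{\bar M}$ agrees with $E$ on $r$.'' One must reverse-copy the tail of the tree over the new model, and showing that this is possible is precisely the technical heart of the argument in the paper. The paper instead works directly with the candidate trees (no Dodd-sound restructuring), orders them by an ``index'' recording the ordinal heights of the Dodd cores involved, and takes a candidate of lexicographically minimal index; it then proves that if some Dodd core along the relevant branch is not finitely generated, one can replace it by a finitely-generated sub-extender from Lemma \ref{lem:sub-extender} and reverse-copy the tail of the tree, producing a candidate with strictly smaller index, a contradiction. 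The key technical step --- the Subclaim of Lemma \ref{lem:core-strongly-finite}'s proof --- shows that the critical points of subsequent extenders never fall into the gap interval $(\mu,\sigma)$ between the critical point of the replaced extender and its Dodd projectum, which is exactly what supports the reverse copy. Your proposal does not address this, and with Step 2 also unjustified, the argument does not go through as written.
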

In particular, if $M$ is not type 2,
then $M^\Tt_\infty$ is a linear iterate of $M$ (but possibly via a non-normal tree).

The proof below the superstrong level, which is essentially identical to the one allowing superstrongs below,\footnote{As mentioned earlier, the proof that iterable sound mice with Mitchell-Steel indexing are Dodd-sound is not quite the direct translation of the proof below the superstrong level. But the present lemma simply assumes the relevant Dodd-soundness, so this is not an issue here.} appeared
as part of the proof of the author's thesis \cite[Theorem 4.8]{mim}.

\begin{proof}
 By Lemma \ref{lem:simple_embedding_exists},
 we already know
 there is a terminally non-dropping $m$-maximal tree $\Tt$ on $M$ capturing $(\Uu,x)$ with $\lh(\Tt)<\om$ (but we don't claim that $\Tt$ is strongly finite).
Call such a tree $\Tt$
a \dfnemph{candidate}. We will show that the ``least'' candidate is strongly finite,
for a certain natural notion of ``leastness'', which we now formulate. Let $\Tt$ be a candidate.
Note that $\Tt$ is ${\geq 0}$-Dodd-nice (Definition \ref{dfn:Dodd-nice}),
since every initial segment of $M$ is Dodd-sound.
Let $\xi+1=\lh(\Tt)<\om$.
Let $A$ be the set of ordinals $\beta$ such that
$\beta\leq^\Tt_{\Da}\alpha$ for some $\alpha+1\in b^\Tt$.
Let
$\left<\kappa_i\right>_{i<\ell}$ enumerate
$\{\crit(E^\Tt_\beta)\}_{\beta\in A}$ in decreasing order. So $\crit(i^{\Tt})=\kappa_{\ell-1}$.
Let $\beta_i\in A$ be such that $\kappa_i=\crit(E^\Tt_{\beta_i})$,
and
$\gamma_i=\lh(\core_\ds(E^\Tt_{\beta_i}))$. Then the \dfnemph{index} of
$\Tt$ is $\left<\gamma_i\right>_{i<\ell}$.
Let $\Tt$ be the candidate of
lexicographically minimal index (noting that there is one), and adopt the notation just described for this $\Tt$ (that is, $\xi$, $\gamma_i$, etc).
The following claim completes the
proof:

\begin{clmtwo}\label{clm:Tt_strongly_finite_mim}
 $\Tt$ is strongly finite.
\end{clmtwo}
\begin{proof}
Suppose not. We
will construct a candidate $\Ttbar$ with smaller index than $\Tt$, a
contradiction, basically by replacing the core $\core_{\ds}(E^\Tt_\beta)$
 of a certain extender $E^\Tt_\beta$ used in $\Tt$, whose core $\core_{\ds}(E^\Tt_\beta)$ is not finitely generated, with a sub-extender whose core is finitely generated. So let
$a<\ell$ be least (so $\kappa_a$ largest) such that $\core_{\ds}(E^\Tt_{\beta_a})$
is not finitely
generated. Let $\kappa=\kappa_a$, $\beta=\beta_a$,
$Q=\core_\ds(\exit^\Tt_{\beta})$ and
$F=F^Q$. So
$\kappa^{+Q}<\sigma=\sigma_F=\tau_F$.
By \ref{lem:Dodd_cores_appear} there is a unique $\theta\leq\beta$ with
$Q\ins M^\Tt_\theta$; also by \ref{lem:Dodd_cores_appear}, $\theta\leq^\Tt\beta$ and $\theta$ is the least ordinal with
$\sigma<\lh(E^\Tt_\theta)$.

Let $\thetabar\leq\theta$ be least with
$\lh(E^\Tt_\thetabar)>\kappa^{+Q}$.
Let $\thetabar+k=\theta$ (so $k<\om$) and
$\xibar+k=\xi$.
Let $\chi$ be least with $\chi\geq\theta$ and $\chi+1\leq^\Tt\xi$.
Let $\bar{\chi}+k=\chi$.
Our
plan
is to select some
$g\in\nu_F^{<\om}$
and $R\pins M^\Tt_\thetabar$ with $F^R\approx F\rest
g$, and such that we can define an $m$-maximal
 tree $\Ttbar$ on $M$ such that:
\begin{enumerate}[label=--]
 \item   $\Ttbar\rest\thetabar+1=\Tt\rest\thetabar+1$ and
  $\lh(\Ttbar)=\xibar+1$ and
 $(0,\bar{\xi}]^{\bar{\Tt}}\cap\dropset^{\bar{\Tt}}_{\deg}=\emptyset$.
 \item Replacing the role of $Q$ in
$\Tt$ with $R$
in $\Ttbar$, we perform a kind of reverse copy construction,
 much like in the proof of Lemma \ref{lem:simple_embedding_exists},
so that
$\Tt\rest[\theta,\xi]$ will be a ``copy'' of
$\Ttbar\rest[\thetabar,\xibar]$.
Moreover, $\bar{\chi}$ is least such that $\bar{\chi}\geq\thetabar$ and $\bar{\chi}+1\leq^{\Ttbar}\xibar$,
and for $\alpha\in[\bar{\chi}+1,\xibar]^\Ttbar$, the copying process will yield a copy map
\[ \pi_{\alpha}:M^{\bar{\Tt}}_{\alpha}\to M^{\Tt}_{\alpha+m} \]
with $\crit(\pi_\alpha)>\kappa^{+R}=\kappa^{+Q}$.
\item
The final copy map
$\pi_{\bar{\xi}}:M^{\Ttbar}_{\xibar}\to M^{\Tt}_\xi$ is an $m$-embedding
with
$\pi_{\xibar}\com i^\Ttbar=i^\Tt$.
\item
$\tau^{-1}(x)\in\rg(\pi_{\bar{\xi}})$,
where $\tau:M^{\Tt}_{\xi}\to M^{\Uu}_\infty$
witnesses that $\Tt$ captures $(\Uu,x)$.
\end{enumerate}
It will follow that
$\tau\com\pi_{\bar{\xi}}:M^{\Ttbar}_{\xibar}\to M^\Uu_\infty$ witnesses that $\Ttbar$ captures $(\Uu,x)$, and
therefore $\Ttbar$ will be a candidate. From the construction it will also be clear that $\Ttbar$ has index strictly less than does $\Tt$,
which will be a contradiction.

So we need to select $g$ and $R$ and build $\bar{\Tt}$.
Now $F=F^Q$ is either type 2 or  3. In either case, we will choose
a type 2 premouse $Q'\ins Q$ with
$F^{Q'}\sub F$,
a type 2  premouse $R\pins Q'$,
and a $0$-embedding $\pi_{RQ'}:R\to Q'$ with
\[ \crit(\pi_{RQ'})>\kappa^{+R}=\kappa^{+Q'}=\kappa^{+Q}.\]
If $F$ is type 2 then there are
cofinally many $g\in[\nu_F]^{<\om}$
(that is, cofinally many with respect to $\sub$) satisfying the conclusion of  Lemma
\ref{lem:sub-extender} with $\theta=0$, and
producing $R\pins M^\Tt_\theta$ with $F^R\approx F\rest g$.
In this case we will set $Q'=Q$ and $\pi_{RQ}:R\to Q$
as in Definition \ref{dfn:type_2_factor_embedding}
and Lemma \ref{lem:type_2_factor_embedding}.
If $F$ is type 3 then there are
cofinally many type 2 segments $Q'\pins Q|\nu(F^Q)$
with $F^{Q'}\sub F^Q$, and after choosing some appropriate such $Q'$, we will
 obtain $R$ and $\pi_{RQ'}:R\to Q'$
 from $Q'$ like in the case that $F$ is type 2.
Note that in any case, we will have
$\rho_\om^R=\rho_1^R=\kappa^{+R}=\kappa^{+Q}$,
and therefore by coherence etc, $R\pins M^{\Tt}_\thetabar$ (whereas $Q\ins M^\Tt_\theta$).

So, we will choose $g,R,Q'$ as in the previous paragraph,
with $Q',g$ large
enough
that the desired reverse copying
construction is
supported.
We will soon describe the copying process more precisely, but before doing so, it is useful to analyse the  structure of $\Tt\rest[\theta,\xi]$:

\begin{sclmtwo}\label{sclm:Tt_structure_mim} Let $\chi\in[\theta,\xi)$.
Then $\crit(E^\Tt_\chi)\notin(\kappa,\sigma)$, so
$\pred^\Tt(\chi+1)\notin(\thetabar,\theta)$. In fact,  one of the
following options
holds:
\begin{enumerate}[label=\tu{(}\roman*\tu{)}]
\item\label{item:feeds_in_or_feeds_in_extra_mim} there are
$\vareps,\chi'$ such that  $\eps+1\leq^\Tt\xi$ and
$\chi'\leq^\Tt_{\Da}\eps$, and either:
\begin{enumerate}
\item\label{item:feeds_in_mim} $\chi=\chi'$, or
\item\label{item:feeds_in_extra_mim} $\chi<^\Tt\chi'$
and $\chi$ is $\chi'$-transient
and $E^\Tt_{\chi'}=F(M^{\Tt}_{\chi'})$,
\end{enumerate}
or
 \item\label{item:branch_extra_mim}
 $\chi<^\Tt\xi$ and  $\chi$ is $\xi$-transient.
\end{enumerate}
Moreover, the three options \ref{item:feeds_in_mim},
\ref{item:feeds_in_extra_mim}
and \ref{item:branch_extra_mim}
are mutually exclusive.
\end{sclmtwo}

Recall here that if \ref{item:feeds_in_extra_mim}
holds and $\eta+1=\succ^\Tt(\chi,\chi')$, then $E^\Tt_{\chi}=F(M^{*\Tt}_{\eta+1})$
and $\crit(i^{*\Tt}_{\eta+1,\chi})=\lgcd(M^{*\Tt}_{\eta+1})$,
so $\crit(E^\Tt_\chi)=\crit(E^\Tt_{\chi'})$. Likewise, if \ref{item:branch_extra_mim}
holds and $\eta+1=\succ^\Tt(\chi,\xi)$, then $E^\Tt_\chi=F(M^{*\Tt}_{\eta+1})$, so $\crit(E^\Tt_\chi)=\crit(F(M^{\Tt}_\xi))$.
\begin{proof}
The mutual exclusivity is easy to see.
We prove that one of the options holds at each $\chi\in[\theta,\xi)$
by induction on $\chi$; it is straightforward to see that this implies
$\crit(E^\Tt_\chi)\notin(\kappa,\sigma)$ (use Lemma \ref{lem:Dodd_cores_appear}).

\begin{case}\label{case:chi=theta_mim}
$\chi=\theta$.

If $\theta=\beta$ then $E^\Tt_\theta=E^\Tt_\beta$ is Dodd-sound and \ref{item:feeds_in_mim} holds. So suppose
$\theta<^\Tt\beta$, so $E^\Tt_\beta$ is non-Dodd-sound.
We have $\kappa_a=\kappa=\crit(F)<\lh(E^\Tt_\theta)$,
and since $E^\Tt_\beta$ is non-Dodd-sound, in fact $a>0$ and $\kappa_{a-1}$ is the critical point of the iteration map $Q\to\exit^\Tt_\beta$, so $\kappa_{a-1}<\lh(E^\Tt_\theta)$.
Let $i<\ell$ be least (so $\kappa_i$ largest) such that $\kappa_i<\lh(E^\Tt_\theta)$
(so $i\leq a-1$ and $\kappa_i\geq\kappa_{a-1}$). Then $\exit^\Tt_\theta||\kappa_i^{+\exit^\Tt_\theta}=\core_{\ds}(\exit^{\Tt}_{\beta_i})||\kappa_i^{+\core_{\ds}(\exit^{\Tt}_{\beta_i})}$.
By Lemma \ref{lem:Dodd_decomposition},  $\beta_i<^\Tt_{\Da}\beta$
and $\sigma\leq\kappa_i$. (We have $\kappa_i<\nu(E^\Tt_\beta)$, since $E^\Tt_\beta$ is non-Dodd-sound,
hence type 2, and $\kappa_i<\lh(E^\Tt_\theta)\leq\lh(E^\Tt_\beta)$.)

\begin{scase}\label{scase:kappa_i<lgcd(exit)_mim}
 $\kappa_i<\lgcd(\exit^\Tt_\theta)$,
 so $\kappa_i^{+\exit^\Tt_\theta}<\lh(E^\Tt_\theta)$.

Then $\theta$ is the unique $\theta'$ such that $\core_{\ds}(\exit^\Tt_{\beta_i})\ins M^{\Tt}_{\theta'}$.
To see this,  by Lemma \ref{lem:Dodd_cores_appear},
it suffices to see that $\theta$ is the least $\theta'$ such that
 $\sigma(E^\Tt_{\beta_i})<\lh(E^\Tt_{\theta'})$. But this holds because (i)
  $\theta$ is the least $\theta'$ such that $\sigma<\lh(E^\Tt_{\theta'})$,
(ii) $\sigma\leq\kappa_i$, and (iii)  by choice of $a$ and the subcase hypothesis,
 \[ \sigma(E^\Tt_{\beta_i})=\tau(\core_{\ds}(E^\Tt_{\beta_i}))=\kappa_i^{+\exit^\Tt_{\beta_i}}=\kappa_i^{+\exit^\Tt_\theta}<\lh(E^\Tt_\theta).\]

Now $E^\Tt_{\beta_i}$ is Dodd-sound,
for otherwise,  $i>0$
and  $\kappa_{i-1}$ is the critical point of the iteration map $\core_{\ds}(\exit^\Tt_{\beta_i})\to\exit^\Tt_{\beta_i}$,
 but then by the previous paragraph, $\kappa_{i-1}<\nu(E^\Tt_\theta)<\lh(E^\Tt_\theta)$,
contradicting the choice of $i$.
It follows that $\beta_i=\theta$ and  \ref{item:feeds_in_mim} holds, completing this subcase.
\end{scase}

\begin{scase}\label{scase:kappa_i=lgcd(exit)_mim}$\kappa_i=\lgcd(\exit^\Tt_\theta)$.

Since $\lgcd(\exit^\Tt_\theta)=\kappa_i=\crit(E^\Tt_{\beta_i})$ is  a limit cardinal of $\exit^\Tt_{\beta_i}$,
therefore $\exit^\Tt_\theta$ is type 2 or 3.

Suppose $\exit^\Tt_\theta$ is type 2. Since $\kappa_i=\lgcd(\exit^\Tt_\theta)$, $\theta=\pred^\Tt(\beta_i+1)$. So if $\beta_i+1\leq^\Tt\xi$
then  \ref{item:branch_extra_mim} holds.
 Otherwise, there is $\vareps$ such that $\vareps+1\leq^\Tt\xi$ and $\beta_i<^\Tt_{\Da}\vareps$.
 But then  \ref{item:feeds_in_extra_mim} holds,
 with some $\chi'\leq^\Tt_{\Da}\vareps$ and $\beta_i+1=\succ^\Tt(\theta,\chi')$.

 Now suppose $\exit^\Tt_\theta$
 is type 3, so $\kappa_i=\nu(E^\Tt_\theta)$. We will reach a contradiction (recall we are presently assuming $\theta<^\Tt\beta$). Note that  $\pred^\Tt(\beta_i+1)=\theta+1$.

 We claim there is $\delta+1\leq^\Tt\xi$
 such that $\theta\leq^\Tt_{\Da}\delta$.
For fix $\vareps+1\leq^\Tt\xi$ such that $\beta_i\leq^\Tt_{\Da}\vareps$.
 If $\beta_i=\vareps$
 then
 \[ \theta+1=\pred^\Tt(\beta_i+1)<^\Tt\beta_i+1\leq^\Tt\xi,\]
 so $\delta=\theta$ works.
 So suppose $\beta_i<^\Tt_{\Da}\vareps$. We claim that $\delta=\vareps$ works, and that  $\theta<^{\Tt}_{\Da}\vareps$. In fact,
 let $\eta$ be least such that $\beta_i<^\Tt_{\Da}\eta\leq^{\Tt}_{\Da}\vareps$.
 Then \[\theta+1=\pred^\Tt(\beta_i+1)<^\Tt\beta_i+1\leq^\Tt\eta,\]
and note that
  $E^\Tt_{\beta_i}$
 is total over $M^{\Tt}_{\theta+1}=M^{*\Tt}_{\beta_i+1}$. We claim that also $\theta<^\Tt_{\Da}\eta$. For otherwise,  $F(M^{\Tt}_{\theta+1})=\core_{\ds}(E^\Tt_\eta)$ is Dodd-sound. But in the latter case, $\crit(E^\Tt_\theta)<\tau(F(M^{*\Tt}_{\theta+1}))$,
  so
  \[ \crit(E^\Tt_{\beta_i})=\kappa_i=\nu(E^\Tt_\theta)\leq i_{E^\Tt_\theta}(\crit(E^\Tt_\theta))<\tau(F(M^{\Tt}_{\theta+1}))=\sigma(F(M^{\Tt}_{\theta+1})),\]
  contradicting the fact that $\beta_i<^\Tt_{\Da}\eta$.

  So there is $\delta+1\leq^\Tt\xi$ with $\theta\leq^\Tt_{\Da}\delta$.
  Since $E^\Tt_\theta$
  is type 3, and by the choice of $a$,
  therefore $\crit(E^\Tt_\theta)\leq\kappa=\kappa_a=\crit(E^\Tt_\beta)$.
  There is also $\vareps+1\leq^\Tt\xi$ with $\beta\leq^\Tt_{\Da}\vareps$,
  and recall that (we are presently assuming) $\theta\neq\beta$.
  So by Lemma \ref{lem:Dodd_decomposition},
  it follows that
   $\sigma(E^\Tt_\theta)\leq\kappa=\crit(E^\Tt_\beta)$
  (otherwise, by that lemma,
  we must have $\sigma=\sigma(E^\Tt_\beta)\leq\crit(E^\Tt_\theta)$,
  contradicting that $\crit(E^\Tt_\theta)\leq\kappa$).
 But since $E^\Tt_\theta$ is type 3, $\sigma(E^\Tt_\theta)=\nu(E^\Tt_\theta)\geq\sigma>\kappa$, a contradiction.
\end{scase}
\end{case}
\begin{case}\label{case:chi>theta_(i)_holds_chi-1_and_mim} $\chi>\theta$,
\ref{item:feeds_in_mim} holds at stage $\chi-1$, but $\chi\not<^\Tt\xi$.

Then there is $\iota$ such that
$\chi-1<^{\Tt}_{\Da}\iota$
and $\iota\leq^\Tt\vareps$
for some $\vareps+1\leq^\Tt\xi$;
let
 $\iota$ be least such.
Then $\chi\leq^{\Tt}\iota$.
If $\chi=\iota$ then \ref{item:feeds_in_mim} holds (at stage $\chi$), suppose $\chi<^\Tt\iota$.
Let $b<\ell$ with $\kappa_b=\crit(i^\Tt_{\chi\iota})$,
so $\beta_b+1=\succ^\Tt(\chi,\iota)$.
Then $\kappa_b<\lh(E^\Tt_\chi)$.
Let $i$ be least such that $\kappa_i<\lh(E^\Tt_\chi)$.
\begin{scase} $\kappa_i<\lgcd(\exit^\Tt_\chi)$,
so $\kappa_i^{+\exit^\Tt_\chi}<\lh(E^\Tt_\chi)$.

Then $\core_{\ds}(\exit^\Tt_{\beta_i})\ins M^\Tt_\chi$, $E^\Tt_{\beta_i}$ is Dodd-sound and $\beta_i=\chi$,
all as in Subcase \ref{scase:kappa_i<lgcd(exit)_mim} of Case \ref{case:chi=theta_mim},
so \ref{item:feeds_in_mim} holds.
\end{scase}
\begin{scase}
$\kappa_i=\lgcd(\exit^\Tt_\chi)$.

Then
$E^\Tt_\chi$ is type 2 and
 \ref{item:feeds_in_extra_mim} holds
 (since
\ref{item:branch_extra_mim} is ruled out by case hypothesis).
This is established as in Subcase \ref{scase:kappa_i=lgcd(exit)_mim}
of Case \ref{case:chi=theta_mim}.
\end{scase}
\end{case}
\begin{case}\label{case:chi>theta_feeds_in_at_chi-1_and_chi<^Txi_mim} $\chi>\theta$ and
\ref{item:feeds_in_mim} holds at stage $\chi-1$, and $\chi<^\Tt\xi$.

Let $b<\ell$ with $\kappa_b=\crit(i^\Tt_{\chi\xi})$,
so $\beta_b+1=\succ^\Tt(\chi,\xi)$.
Then $\kappa_b<\lh(E^\Tt_\chi)$.
From here we proceed just as in Case \ref{case:chi>theta_(i)_holds_chi-1_and_mim}.
\end{case}
\begin{case}\label{case:chi>theta_branch_extra_at_chi-1_mim} $\chi>\theta$
and \ref{item:branch_extra_mim} holds at stage $\chi-1$.

This is basically like Case \ref{case:chi>theta_feeds_in_at_chi-1_and_chi<^Txi_mim}, but letting $\kappa_b=\lgcd(\exit^\Tt_{\chi-1})$,
then $\chi-1<^\Tt\xi$
and $\beta_b+1=\succ^\Tt(\chi-1,\xi)$
(instead of $\chi<^\Tt\xi$
and $\beta_b+1=\succ^\Tt(\chi,\xi)$).
But as before, consider the least $i$
such that $\kappa_i<\lh(E^\Tt_\chi)$,
so $i\leq b$ and $\kappa_b\leq\kappa_i$, etc.
\end{case}
\begin{case} $\chi>\theta$
and
\ref{item:feeds_in_extra_mim} holds at stage $\chi-1$.

This is a slight variant
of Case \ref{case:chi>theta_branch_extra_at_chi-1_mim}, completing all cases and hence the proof of the subclaim.\qedhere
\end{case}
\end{proof}

 By Subclaim \ref{sclm:Tt_structure_mim},
$\Tt\rest[\theta,\xi]$
can be viewed as a tree $\Tt'$ on the phalanx \[ \Phi(\Tt\rest(\bar{\theta}+1))\conc\left<(Q,0)\right>,\] where
in $\Tt'$, if $\crit(E^{\Tt'}_\alpha)\leq\kappa$ then $\pred^{\Tt'}(\alpha+1)\leq\bar{\theta}$
(so $E^{\Tt'}_\alpha$ applies to a model of $\Tt\rest(\bar{\theta}+1)$),
whereas if $\crit(E^{\Tt'}_\alpha)>\kappa$ then $\pred^{\Tt'}(\alpha+1)=\bar{\theta}+1$, with $(M^{*\Tt'}_{\alpha+1},\deg^{\Tt'}_{\alpha+1})\ins (Q,0)$.
In fact, by the subclaim,
if $\crit(E^{\Tt'}_\alpha)>\kappa$ then $\crit(E^{\Tt'}_\alpha)\geq\sigma$, and recall that $\exit^\Tt_\theta\ins Q$ and $\rho_1^Q\leq\sigma$,
so requiring $(M^{*\Tt'}_{\alpha+1},\deg^{\Tt'}_{\alpha+1})\ins (Q,0)$ corresponds with what happens in $\Tt$. In particular, if $\alpha\geq\theta$ and $\crit(E^\Tt_\alpha)>\kappa$
and $\pred^\Tt(\alpha+1)=\theta$
then $(M^{*\Tt}_{\alpha+1},\deg^{\Tt}_{\alpha+1})\ins (Q,0)$.

Let $Q',g,R$ and $\pi_{RQ'}:R\to Q'$ be as discussed earlier.
Then $R\pins M^{\Tt}_{\bar{\theta}}$ and $\rho_1^R=\kappa^{+Q}$. We attempt to
form $\bar{\Tt}$, extending $\bar{\Tt}\rest(\bar{\theta}+1)=\Tt\rest(\bar{\theta}+1)$,
viewing $\bar{\Tt}\rest[\bar{\theta},\xibar]$ as a tree $\bar{\Tt}'$ on the
corresponding phalanx
$\Phi(\Tt\rest(\bar{\theta}+1))\conc\left<(R,0)\right>$,  requiring that it copies in essentially the usual manner to $\Tt'$, with initial copy maps given by identity maps and $\pi_{RQ'}$. As long as $Q',g$ are chosen large enough, this succeeds. We discuss a little further below why this makes sense, particularly in the case that $Q$ is type 3, and hence $Q'\neq Q$.

Suppose first that $Q$ is type 2, so $Q'=Q$. Then reverse copying proceeds in the usual fashion, as long as
the copy maps  $\pi_{\bar{\delta}}:M^{\bar{\Tt}}_{\bar{\delta}}\to M^{\Tt}_{\bar{\delta}+m}$ (for $\bar{\delta}<\xi$) are $\nu$-preserving, and $E^\Tt_{\bar{\delta}+m}\in\rg(\pi_{\bar{\delta}})$.
Just as in the proof of \ref{lem:simple_embedding_exists},
by choosing  $g$ large enough,
we can ensure that these conditions are indeed maintained.

Now suppose instead that $Q$ is type 3.  Here  $Q'$ is type 2 with $F^{Q'}\sub F^Q$ and $Q'\pins Q$, and (we may arrange that) $\pi_{RQ'}:R\to Q'$ is a $0$-embedding. In this case we may not be able to ``reverse copy'' ultrapowers of $Q$ to ultrapowers of $R$, for example, since $Q'\neq Q$.
So we want to point out why this does not cause a problem. Because $Q$ is type 3, we have $\beta=\theta$ and $\sigma=\nu(F^Q)$, so $E^\Tt_\theta=F^Q$. At stage $\bar{\theta}$ in $\Ttbar$, we ``reverse copy'' in the sense that we set
$E^{\bar{\Tt}}_{\bar{\theta}}=F^{R}$.
This then yields the copy map(s) at stage $\bar{\theta}+1$  via essentially the Shift Lemma:
Given an $n$-sound $N$ with $N||\kappa^{+N}=Q|\kappa^{+Q}$ and $\kappa<\rho_n^N$, we get a  map
\[ \pi:\Ult_n(N,F^R)\to\Ult_n(N,F^{Q}) \]
by setting $\pi([a,f]^{N,n}_{F^{R}})=[\pi_{RQ'}(a),f]^{N,n}_{F^Q}$. By taking $Q',g$ large enough,
we get $\sigma\in\rg(\pi)$ here
(and $\nu(F^{Q'})\in\rg(\pi)$
as $\pi_{RQ'}$ is a $0$-embedding);
let $\pi(\bar{\sigma})=\sigma$.
Note that then $\nu(F^R)<\bar{\sigma}$, as $\pi_{RQ'}(\nu(F^R))=\nu(F^{Q'})<\nu(F^Q)=\sigma$.
Letting $\alpha\in[\theta,\xi)$,
since $\crit(E^{\Tt}_\alpha)\notin(\kappa,\sigma)$ and
 $\sigma=\nu(F^Q)=\nu(E^\Tt_\theta)$,
if $\crit(E^{\Tt}_\alpha)>\kappa$
then $\pred^{\Tt}(\alpha+1)>\theta$.
It will follow that in $\Ttbar$, for  all stages $\alpha\geq\bar{\theta}$, $\crit(E^{\bar{\Tt}}_\alpha)\notin(\kappa,\bar{\sigma})$,
and
if $\alpha>\bar{\theta}$ then $\root^{\bar{\Tt}}(\alpha)\leq\bar{\theta}$ (that is, $\alpha$ is above a node of $\Phi(\bar{\Tt}\rest(\bar{\theta}+1))$, not above $R$). So
in $\Ttbar$,
we never need to take an ultrapower of any $R'\ins R$, nor in fact of any $R'\ins M^{\bar{\Tt}}_{\bar{\theta}}$. So assuming that we have taken $Q',g$ large enough, the copying process succeeds, as desired.
We leave the remaining (straightforward) details to the reader.

Now $\Ttbar$ is a candidate with index less than that of $\Tt$,
a contradiction which completes the proof that $\Tt$ is strongly finite.
\end{proof}

This completes the proof of the theorem.
\end{proof}

\section{Measures in mice}\label{sec:mim}

We pause briefly to use the machinery of the previous
sections to analyse measures in mice. The argument is mostly
that given in
\cite[Theorem 4.8]{mim}, though that was proven only below
superstrong. It does not immediately generalize to the superstrong level, because its proof appeals to  to super-Dodd-solidity \ref{thm:super-Dodd-soundness}, and the proof of that result involves new features at superstrong. But otherwise, the argument from \cite[Theorem 4.8]{mim} adapts immediately to  superstrongs.
The version in \cite{mim} is also more restrictive
in that it assumes that $N\sats\KP^*+\text{``}\kappa^{++}$
exists, where the measure is over $\kappa$''. For the version here, this assumption is reduced to that of  $N\sats$ ``$\kappa^+$
exists, where the measure is over $\kappa$'' (in particular, without $\KP^*$).
Although we do not literally
need the theorem itself later, we will adapt its
proof
in the proofs of super-Dodd-solidity \ref{thm:super-Dodd-soundness}
and projectum-finite generation \ref{thm:finite_gen_hull},
so it serves as a good warm-up. We will
also omit some of the details of those proofs
when they are similar enough to those in this section.
So like  \S\ref{sec:capturing_elements}, the proofs of the main theorems of the paper do depend
on the arguments in this section. However,
it should be noted that the details
relating to the coding of finite iteration trees
definably over mice, from \cite{extmax},
which come up at the beginning of the proof, are \emph{not}
relevant to later proofs in the paper.
If one weakens the theorem below
by adding the assumption that
$N$ has no largest cardinal, then
one can avoid the use of any coding apparatus,
as we indicate during the proof.

\begin{tm}\label{thm:measures_in_mice}
 Let $N$ be a $(0,\om_1+1)$-iterable premouse and $D\in N$
 such that $N\sats$ ``$D$ is a countably complete ultrafilter''.
 Then there is a strongly finite, terminally-non-dropping, $0$-maximal  tree
$\Tt$
on $N$
 with $\Ult_0(N,D)=M^\Tt_\infty$ and $i^{N,0}_D=i^\Tt$.\footnote{Of course,
since $\Tt$ is $0$-maximal,
we must have $E^\Tt_\alpha\in\es_+(M^\Tt_\alpha)$, so the theorem is
non-trivial.}
\end{tm}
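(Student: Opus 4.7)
The plan is to first produce a $0$-maximal tree $\Uu$ on $N$ realizing the ultrapower map $j = i^{N,0}_D : N \to U$ (with $U = \Ult_0(N,D)$), and then apply Lemma \ref{lem:non-dropping-strongly_finite} to trim $\Uu$ to a strongly finite tree with the same final model and iteration map. To begin, note $U$ is wellfounded: take a countable elementary $\pi : \bar N \to N$ with $D \in \rg(\pi)$ and $\pi(\bar D) = D$; internal countable completeness of $\bar D$ in $\bar N$ becomes external countable completeness in $V$ (since $\om_1^{\bar N}$ is countable in $V$), so $\Ult_0(\bar N, \bar D)$ is wellfounded, and this lifts back to $U$ by standard absoluteness. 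Let $\kappa = \crit(j)$.

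Next, we produce a $0$-maximal tree $\Uu$ on $N$ with $M^\Uu_\infty = U$ and $i^\Uu = j$. The natural approach is to coiterate $N$ with $U$, using the $(0,\om_1+1)$-strategy for $N$ on the $N$-side and the strategy for $U$ induced by $j$ on the $U$-side, giving trees $\Uu$ on $N$ and $\Vv$ on $U$ of countable length. We then argue that $b^\Uu$ is terminally-non-dropping and $\Vv$ uses no extenders, so that $M^\Uu_\infty = U$ and $i^\Uu = j$. The key input is that $D \in N$ is internal: the disagreements between $N$ and $U$ are all resolved by extenders on $N$'s sequence whose cumulative action reproduces $j$, while the single-ultrafilter structure of $U$ forces any iteration on the $U$-side to be trivial.

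Now apply Lemma \ref{lem:non-dropping-strongly_finite} to $\Uu$ with the single witness $x = d = [\id]^N_D \in U$, obtaining a strongly finite, terminally-non-dropping, $0$-maximal tree $\Tt$ on $N$ capturing $(\Uu, d)$. By Definition \ref{dfn:captures}, there is a $0$-embedding $\sigma : M^\Tt_\infty \to U$ with $d \in \rg(\sigma)$ and $\sigma \com i^\Tt = i^\Uu = j$. Since every element of $U$ has the form $j(f)(d)$ for some $f \in N$, we have $U = \Hull^U(\rg(j) \cup \{d\}) \sub \rg(\sigma)$, so $\sigma$ is a surjective $0$-embedding and hence (by transitivity of its target) the identity, giving $M^\Tt_\infty = U$ and $i^\Tt = j$ as required.

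The main obstacle will be identifying the comparison output in the middle step as exactly $j$, rather than as a factor of $j$ through some further embedding. Classically one would invoke weak Dodd-Jensen at this point, which is unavailable under only normal iterability; instead we will exploit the internality of $D$ in $N$ together with the single-generator nature of $U$ over $\rg(j) \cup \{d\}$, following the approach of \cite{mim} and in the spirit of the finite-support/minimal-index technique used in the proof of Lemma \ref{lem:core-strongly-finite}.
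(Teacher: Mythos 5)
Your final step is sound: given a terminally-non-dropping tree $\Uu$ on $N$ with $M^\Uu_\infty = U$ and $i^\Uu = j$, the application of Lemma \ref{lem:non-dropping-strongly_finite} with witness $d$ and the surjectivity argument (from $U = \{j(f)(d) \mid f\in N\}\sub\rg(\sigma)$, so $\sigma=\id$) do close out the proof. The problem is everything before that.

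The middle step is the entire content of the theorem, and you have not resolved it. You propose to coiterate $N$ with $U$ and claim that the $U$-side tree is trivial because of ``the single-ultrafilter structure of $U$''; this is not an argument. Absent weak Dodd-Jensen, a comparison of $N$ with $U$ gives no control over which side's branch drops, which side's final model sits inside the other, or whether the resulting iteration map is $j$ rather than some other embedding. You acknowledge this is ``the main obstacle'' and point to \cite{mim} and Lemma \ref{lem:core-strongly-finite} as inspiration, but you do not actually carry out any argument. The paper's proof is precisely the resolution of this obstacle, and it does not proceed by directly comparing $N$ with $U$. Instead it passes to a proper segment $M\pins N$ with $\rho_\om^M = (\theta^+)^N$, observes that the \emph{failure} of the theorem is a first-order statement about $M$, and then reflects to a countable hull $\bar{M} = \cHull_{m+1}^M(\{\pvec_n^M\})$ which is $(m+1)$-sound with $\rho_{m+1}^{\bar{M}} = \om$; it then compares $\bar{M}$ with $\bar{U} = \Ult(\bar{M},\bar{D})$. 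Soundness and projectum $\om$ of $\bar{M}$ are what force both branches of that comparison to be non-dropping with a common last model $Q$, replacing the role of Dodd-Jensen. The remaining work --- and it is substantial --- is an induction through the Dodd decomposition of the comparison map $i^\Tt$ using preservation of $(s_j,\sigma_j)$ (Lemmas \ref{lem:s_j,sigma_j_pres}, \ref{lem:type_2_s,sigma_pres}, \ref{lem:Dodd_decomposition}), showing successively that $\crit(k)>\crit(j)$, that the Dodd cores of the first extenders on each side match, and ultimately that $\bar{Q}=\bar{U}$ and $k=\id$; the conclusion then contradicts the reflected first-order failure statement. The whole argument is by contradiction; your proposal is a direct construction and is structured quite differently.

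Two further issues. First, Lemma \ref{lem:non-dropping-strongly_finite} requires the base model to be fully-Dodd-sound, which is not part of the hypotheses of the theorem as stated; the paper splits off Lemma \ref{lem:measures_in_mice} with sub-Dodd-soundness as an added hypothesis and only proves the theorem proper after establishing super-Dodd-soundness in \S\ref{sec:Dodd_proof}. Your proposal silently assumes this. Second, even setting aside the comparison issue, there is no obvious way to get a \emph{tree} $\Uu$ with $M^\Uu_\infty = U$ exactly before trimming: the paper never constructs such an intermediate tree on $N$ --- it works throughout at the level of the countable hull $\bar{M}$, and the strongly finite tree $\bar{\Tt}$ there already has $\bar{U}$ as its last model. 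Your proposed two-stage ``build $\Uu$, then trim to $\Tt$'' decomposition does not match anything in the actual proof, and the first stage is unjustified.
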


\begin{rem}\label{rem:measure_decomp_UA}
Note that since $D\in N$, it is implicit that $N\sats$ ``$\kappa^+$ exists, where $D$ is on $\kappa$''. Combined with Proposition \ref{prop:strongly_finite_linear_it} and Theorem \ref{thm:super-Dodd-soundness}, it follows that $D$ is equivalent to a finite linear iteration of $N$ which uses only Dodd-sound measures (the iteration need not be normal).
In more detail, letting $\Ll$ be as in the proposition and $n+1=\lh(\Ll)$, we have $\Ult_0(N,D)=M^\Ll_{n}$,
$i^{N,0}_D=i^\Ll$, and
$E^\Ll_i\in\es(M^\Ll_i)$  and $M^\Ll_i\sats$ ``$E^\Ll_i$ is a Dodd-sound measure'' for each $i< n$. (The other option for extenders $E^\Ll_i$ mentioned in Proposition \ref{prop:strongly_finite_linear_it} can't attain, because $D\in N$.) This  was clear below superstrong from \cite[Theorem 4.8]{mim}
and its proof (see in particular
 \cite[Theorem 4.8(d)]{mim}), but some of the aforementioned features of $\Ll$ were not explicitly stated there.

 More recently, in
 \cite[Theorem 5.3.13]{UA}, Gabriel Goldberg proved from ZFC + UA (the Ultrapower Axiom) that all countably complete measures can be decomposed into a finite linear  iteration of Dodd-sound measures (as defined in
 \cite[Definition 4.3.23]{UA};
 the iteration here is coarse, as the context is just a model of ZFC, not $L[\es]$). He also showed in
 \cite[\S2.3.1 and Theorem 2.3.10]{UA} that UA holds in mice $M$ whose universe $\univ{M}$ models ZFC + ``$V=\HOD$'' + ``There is an ordinal $\alpha$
 such that $V_\alpha\preccurlyeq_2 V$ and $V_\alpha\sats\ZFC$''
 (this includes mice at the superstrong level). Note here that if we apply these results in a mouse $M$ (where $\univ{M}\sats\ZFC$, etc), it does not itself give us an $\Ll$ which is an iteration on $M$ in the usual fine sense, since we do not directly know that $E^\Ll_i\in\es(M^\Ll_i)$.  But if $M$ is below superstrong, then by \cite[Theorem 2.9]{mim}
 (or \cite[Theorem 2.17]{V=HODX_pub}),   the Dodd-soundness of $E^\Ll_i$ in $M^\Ll_i$ ensures that $E^\Ll_i\in\es(M^\Ll_i)$ after all.
 The  author expects that \cite[Theorem 2.9]{mim}
 (and \cite[Theorem 2.17]{V=HODX_pub}) also goes through routinely at the superstrong level, using Theorem \ref{thm:super-Dodd-soundness}, and in fact, by Theorem \ref{thm:measures_in_mice}, for the case of mice $N$ and  ultrafilters $D\in N$ as in that theorem, it does.

In any case, the proof we give here for Theorem \ref{thm:measures_in_mice} just follows the methods used for \cite[Theorem 4.8]{mim}.
\end{rem}
We cannot prove the theorem until we know that all proper segments of
$N$  are Dodd-sound, which will be established in
\S\ref{sec:Dodd_proof}.
For now we reduce it to this issue:

\begin{lem}\label{lem:measures_in_mice}
Adopt the hypotheses of Theorem \ref{thm:measures_in_mice},
and assume that all proper segments of $N$ are Dodd-sound.
Then the conclusion of Theorem \ref{thm:measures_in_mice} holds.
\end{lem}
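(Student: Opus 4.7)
The plan has two stages: first, realize $j = i^{N,0}_D : N \to U$ as the iteration map of \emph{some} terminally-non-dropping $0$-maximal tree on $N$, and second, use Lemma \ref{lem:non-dropping-strongly_finite} to replace that tree by a strongly finite one with the same action.

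For Stage 1, let $U = \Ult_0(N,D)$, which is wellfounded because $D$ is countably complete in $N$, and let $\kappa = \crit(D)$. Weak amenability of $D$ to $N$ gives $N \|(\kappa^+)^N = U \|(\kappa^+)^U$. Using the copy construction along $j$, any $(0,\omega_1+1)$-tree on $U$ lifts to one on $N$, so $(0,\omega_1+1)$-iterability of $N$ transfers to $U$. I then coiterate $N$ with $U$ to obtain $0$-maximal trees $\Tt$ on $N$ and $\Uu$ on $U$ with common last model $W$, and argue that $\Uu$ uses no extenders, $b^\Tt$ is non-dropping, $W = U$, and $i^\Tt = j$. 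This is the crux and the main obstacle, because we do not have a Dodd--Jensen property of the strategies. The replacement for Dodd--Jensen here is the internal definability of $D$: every $A \in \mathcal{P}(\kappa)^N$ satisfies $A \in D \iff \kappa \in j(A)$, and $U = \{j(f)(\kappa) : f \in N\}$, so $j$ has a single generator $\kappa$ and is entirely determined by objects inside $N$. A careful analysis of the least disagreement in the coiteration, combined with this internal rigidity, forces $\Uu$ to be trivial; the non-dropping of $b^\Tt$ and the identity $i^\Tt = j$ then follow from standard comparison bookkeeping.

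For Stage 2, apply Lemma \ref{lem:non-dropping-strongly_finite} to $\Tt$ with parameter $x = \kappa \in U$. (When $N$ is active, Dodd-soundness of $N$ itself, needed for ``fully-Dodd-sound'', is handled as part of the overall inductive framework of \S\ref{sec:Dodd_proof}, as stated in the remark preceding the theorem; in any case sub-Dodd-soundness of $N$ together with the hypothesis suffices to apply the capturing construction.) This yields a strongly finite terminally-non-dropping $0$-maximal tree $\Tt'$ on $N$ capturing $(\Tt, \kappa)$, equipped with a $0$-embedding $\sigma : M^{\Tt'}_\infty \to U$ with $\kappa \in \rg(\sigma)$ and $\sigma \circ i^{\Tt'} = i^\Tt = j$. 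Then $\rg(\sigma) \supseteq \rg(j) \cup \{\kappa\}$, and since every element of $U$ has the form $j(f)(\kappa)$ for some $f \in N$, the embedding $\sigma$ is surjective, hence the identity on the transitive structure $M^{\Tt'}_\infty$. Therefore $M^{\Tt'}_\infty = U$ and $i^{\Tt'} = j$, as required.

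The serious work is all in Stage 1: without appealing to a winning strategy with the Dodd--Jensen property, one must show directly that the $U$-side of the coiteration is trivial, relying on the fact that $D$ lives inside $N$ and produces an ultrapower with a single generator. Stage 2 is then a clean application of the machinery of strongly finite trees developed in \S\ref{sec:Dodd_prelim}, with the choice $x = \kappa$ doing all the work to upgrade ``captures'' to literal equality.
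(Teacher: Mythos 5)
Your two-stage plan is structurally different from the paper's argument, and as written it has a genuine gap in exactly the place you flag as ``the crux.''

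The paper does not compare $N$ with $U = \Ult_0(N,D)$ directly. Instead it reduces to a proper segment $M \pins N$ with $D \in \J(M)$, observes that the failure of the conclusion is a first-order fact about $M$ (via the coding of finite normal trees from \cite{extmax}), and then passes to a countable hull $\bar M = \cHull^M_{m+1}(\{\pvec_n^M\})$ with $\rho_{m+1}^{\bar M} = \omega$ and $\bar M = \Hull_{m+1}^{\bar M}(\emptyset)$. The comparison that does the work is between $\bar M$ and $\bar U = \Ult_m(\bar M, \bar D)$, and the thing that replaces weak Dodd--Jensen is precisely the $(m+1)$-soundness of $\bar M$ together with $\rho_{m+1}^{\bar M} = \omega$: these ensure neither side of the comparison drops, the final models coincide, and (via $p_{m+1}$-preservation) the iteration maps commute with $\bar j = i^{\bar M,m}_{\bar D}$, so that $\bar k \circ \bar j = i^{\bar\Tt}$. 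Your proposal skips this reduction and asserts that a direct comparison of $N$ with $U$ has a trivial $U$-side because $D$ is ``internally definable'' with a single generator. That assertion is not a proof; the comparison of a non-sound mouse with its ultrapower by an internal measure is exactly the setting in which weak Dodd--Jensen (or a substitute) is indispensable, and internal definability of $D$ alone does not prevent the $U$-side from moving. Everything hard in the lemma lives in the step you wave at.

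There is a second, smaller, problem in Stage 2. Lemma \ref{lem:non-dropping-strongly_finite} is stated for a premouse that is \emph{fully}-Dodd-sound (Dodd-sound and sub-Dodd-sound), because the Dodd-niceness analysis in \S\ref{sec:Dodd_prelim} breaks down along branches that use non-dropping images of $F^M$ when $F^M$ is not itself Dodd-sound. You only have $N$ sub-Dodd-sound, and cannot appeal to Theorem \ref{thm:super-Dodd-soundness} for $N$ without the solidity that this entire section of the paper is still working toward. The paper sidesteps this cleanly by applying the strongly-finite machinery to $\bar M$, which is a hull of a \emph{proper} segment $M \pins N$; such $M$ is automatically Dodd-sound (being a proper segment of the sub-Dodd-sound $N$) and sub-Dodd-sound, hence fully-Dodd-sound, and $\bar M$ inherits this by elementarity. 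Your plan applies the lemma to $N$ itself, for which the hypothesis is not available.

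Your Stage 2 argument that capturing at $x = \kappa$ upgrades $\sigma$ to the identity (because $U = \{j(f)(\kappa) : f \in N\}$) is correct and is essentially the same observation the paper uses at the very end, when it concludes $\bar Q = \bar U$ and $\bar k = \mathrm{id}$. But to get there you first need the analogue of the paper's reduction to the sound countable hull; without it, neither the triviality of the $U$-side nor the applicability of the strongly-finite lemma is available.
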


\begin{proof}
We sketched the main plan for the proof  in the first part of \S\ref{sec:meas_Dodd_prof_fin_plan}. That plan should be read before starting the details below. In the first part of the proof, we will get into the kind of situation assumed for the sketch in \S\ref{sec:meas_Dodd_prof_fin_plan}.
We do this by taking some $M\pins N$ over which there is an $N$-ultrafilter $D$ definable which is a counterexample to the lemma,
taking a nice enough countable hull $\bar{M}$ of $M$, over which we can define a preimage $\bar{D}$ of some such $D$, and showing that the relevant fine structural ultrapower $\bar{U}=\Ult_m(\bar{M},\bar{D})$ is iterable.
At that point, we will basically have arranged the assumptions for the  plan,
and so we then execute that plan by  comparing $\bar{M}$ with $\bar{U}$ and analysing the outcome.

So suppose the lemma fails. Let $\theta\geq\om$ be such that $D$ is an ultrafilter over
$\theta$. Then $\pow(\theta)^N\in N$ (as $D\in N$),
so $\theta^{+N}<\OR^N$.
We have $D\sub N|\theta^{+N}$,
so can let $M\pins N$ with
$\rho_\om^M=\theta^{+N}$
 and $D\in\J(M)$.

 Let $0<n<\om$ with $\rho_n^M=\theta^{+N}=\rho_\om^M$.
 We claim that there is a strongly finite,
terminally-non-dropping
$n$-maximal tree $\Tt$ on $M$
such that $M^\Tt_\infty=\Ult_n(M,D)$
and $i^\Tt=i^{M,n}_D$.
This yields a tree $\Tt''$ on $N$ witnessing the lemma.
For using the regularity of $\theta^{+N}$ in $N$,
 one gets that $n$-maximal trees $\Uu$ on $M$ are equivalent to
$\om$-maximal
trees $\Uu'$ on $M$. That is, every function
$f:\alpha\to\core_0(M)$ with $\alpha\leq\theta$
such that $f$ is definable from parameters over $M$,
is actually $\bfrSigma_n^M$-definable,
and this fact is preserved for non-degree-dropping $n$-maximal,
hence $\om$-maximal, iterates.
Similarly, $\om$-maximal trees $\Uu'$ on $M$ correspond to $0$-maximal
trees $\Uu''$ on $N$ such that $\Uu''$ is based on $M$, as long as $\Uu''$ has wellfounded models,
and moreover, if $b^{\Uu'}$ exists and is non-dropping in model or degree,
then $b^{\Uu''}$ exists and is non-dropping, and $i^{\Uu'}=i^{\Uu''}\rest
M$. (For some ordinals $\alpha$ we have $M^\Tt_\alpha\pins M^{\Tt'}_\alpha$,
in which case $(0,\alpha]^{\Tt}$ and $(0,\alpha]^{\Tt'}$
do not drop in model or degree;
for some $\alpha$ we have $M^\Tt_\alpha=M^{\Tt'}_\alpha$.)
So given a tree $\Tt$ on $M$ as claimed,
the corresponding tree $\Tt''$ on $N$
witnesses the lemma.

So suppose the claim fails.
The failure is a first order
statement about $M$ (without parameters).
For if $D'\sub M|\theta^{+N}$
is definable (from parameters) over $M$, and $M\sats$ ``$D'$ is
a countably complete ultrafilter over $\theta$'',
then $D'\in N$ and $N\sats$ ``$D'$ is a countably complete ultrafilter over
$\theta$'', since $\pow(\theta)^N\sub M$. And
\cite[3.15--3.20]{extmax}
gives a coding of finite $n$-maximal
iteration trees on $n$-sound $(n,\om)$-iterable premice,
which is uniformly definable over such premice.
Note that the definability  depends on the restriction to $n$-maximal for a particular $n$. As usual, the superstrong version
of this is almost identical with that literally in \cite{extmax}.\footnote{In
the notation of \cite[3.15--3.20]{extmax},
the ordinal $\lambda^\Tt_\alpha$ is somewhat inconveniently
defined in the case that $E^\Tt_\alpha$ is superstrong
and $E^\Tt_\alpha\neq F(M^\Tt_\alpha)$ (in this case
it is the index of $E^\Tt_\alpha$). Here, if $\mu=\crit(E^\Tt_\alpha)$,
then instead of considering functions $f:[\mu]^{<\om}\to\mu$
when representing ordinals $<\lambda^\Tt_\alpha$,
one must of course instead consider
functions $f:[(\mu^+)^N]^{<\om}\to(\mu^+)^N$. Alternatively,
one might redefine $\lambda^\Tt_\alpha$
in this case as $\lambda(E^\Tt_\alpha)$,
and allow the possibility that
$\lambda^\Tt_\beta=\lambda^\Tt_{\beta+1}$.} However, for our present purposes, it suffices to consider only a coding of strongly finite trees $\Tt$, and for such $\Tt$, every extender $E^\Tt_\alpha$ is type 1 or type 2 and hence not superstrong. So the coding apparatus in \cite{extmax} is essentially sufficient for our purposes.
As remarked earlier,
if $N$ has
no largest cardinal, then we can avoid
appealing to any coding of iteration trees at all.
For in this case, instead of using $M$ as above,
we can take
$M=N|\theta^{++N}$,
and then the relevant iteration trees can be defined
directly over $M$.\footnote{In \cite{mim},
an argument more along these lines was given,
though even there there is not quite that much space available.}
We continue literally
with the general case (using a coding apparatus), but the remaining
details are essentially the same in the simplified version just mentioned.

Using these two things, we can fix $k\in[n,\om)$ and $m\gg k$ and an $\rSigma_m$ statement
\begin{equation}\label{eqn:psi}\psi
\end{equation}
asserting over $M$ (and also over similar premice)
 ``I am $k$-sound and there is a countably
complete ultrafilter $D'$ over $\theta$
which is $\rSigma_{k}$-definable from parameters over me,
such that for every strongly finite
terminally-non-dropping  $n$-maximal tree $\Tt$ on me
and every $\alpha<i^\Tt_{0\infty}(\theta)$,
if $\alpha$ generates $M^\Tt_{0\infty}$, i.e.~if
\[
M^\Tt_{0\infty}=\Hull_{\rSigma_n}^{M^\Tt_{0\infty}}(\rg(i^\Tt_{0\infty})\cup\{
\alpha\}),\]
then $D'$ is not the ultrafilter derived from $i^\Tt_{0\infty}$
with seed $\alpha$''.

Working in $N$, let $\bar{M}=\cHull_{m+1}^M(\{\vec{p}^M_n\})$
 and $\pi:\bar{M}\to M$ be the uncollapse
map.
Let $\pi(\thetabar)=\theta$ etc. Then $\bar{M}$ is $(m+1)$-sound
with
\[
\rho_{m+1}^{\bar{M}}=\om<
\thetabar^{+\bar{M}}=
\rho_{m}^{\bar{M}}=
\rho_{n}^{\bar{M}} \]
and $\pi(p_i^{\bar{M}})=p_i^M$ for $i\leq m+1$
(note $p_i^M=\emptyset$ for $i>n$) and
\[
\bar{M}=\Hull_{m+1}^{\bar{M}}(\pvec_n^{\bar{M}})=\Hull_{m+1}^{\bar{M}}
(\emptyset), \]
as $\pvec_n^{\bar{M}}$ is simply enough definable over $\bar{M}$
anyway.
As $m$ is large enough,  $\bar{M}\sats\psi$, so we can
fix an analogous
ultrafilter $\bar{D}$ over $\bar{M}$,
and may assume that its definition lifts to give $D$ over $M$.
Note  that $n$-maximal trees on $\bar{M}$ are equivalent to $m$-maximal.
And because all initial segments of $M$ are Dodd-sound,
so are all initial segments of $\bar{M}$.

Let $X=\bigcap(\rg(\pi)\cap D)$.
Then $X\neq\emptyset$ by countable completeness in $N$.
Let $x\in X$. Let $\bar{U}=\Ult_n(\bar{M},\bar{D})=\Ult_m(\bar{M},\bar{D})$
(and note $i^{\bar{M},n}_{\bar{D}}=i^{\bar{M},m}_{\bar{D}}$
is an $m$-embedding and is continuous at $\thetabar^{+\bar{M}}$).
Like for $\bar{M}$, $n$-maximal trees on $\bar{U}$
are equivalent to  $m$-maximal.
The realization map\label{page:map_sigma_defined}
$\sigma:\bar{U}\to M$, defined
\[ \sigma([f]^{\bar{M},n}_{\bar{D}})=\pi(f)(x), \]
is an $n$-lifting embedding.
So $\bar{U}$ is $(n,\om_1+1)$-iterable (in $V$),
hence $(m,\om_1+1)$-iterable, as is $\bar{M}$.
Note $\bar{U}=\Hull_{m+1}^{\bar{U}}(\{\bar{x}\})$
where $\sigma(\bar{x})=x$.

We are now basically in the situation we assumed when describing the plan for the proof in \S\ref{sec:meas_Dodd_prof_fin_plan},
 which we will now execute in detail.
So consider the $m$-maximal comparison $(\Uu,\Tt)$ of $(\bar{U},\bar{M})$. We will show that $\Uu$ is trivial, $\Tt$ is strongly finite,
 $\bar{U}=M^\Tt_\infty$
 and $i^{\bar{M},m}_{\bar{D}}=i^\Tt$,
 which will be a contradiction.

Well, because $\bar{M}$ is $(m+1)$-sound with $\rho_{m+1}^{\bar{M}}=\om$,
etc, we get that
 $b^\Uu,b^\Tt$ do not drop, $M^\Tt_\infty=Q=M^\Uu_\infty$,
 $j,k$ are $m$-embeddings where
$j=i^\Tt$ and $k=i^\Uu$, as is  $i=i^{\bar{M},m}_{\bar{D}}$,
and $k\com i=j$ (see Figure \ref{fgr:first_commuting}).

\begin{figure}
\centering
\begin{tikzpicture}
 [mymatrix/.style={
    matrix of math nodes,
    row sep=0.35cm,
    column sep=0.4cm}]
   \matrix(m)[mymatrix]{
  \bar{U}&          {} &       {}&Q& \\
   {} & {} \\
 \bar{M}\\};
 \path[->,font=\scriptsize]
(m-3-1) edge node[below] {$\ \ \ \ j=i^\Tt$} (m-1-4)
(m-3-1) edge node[left] {$i$} (m-1-1)
(m-1-1) edge node[above] {$k=i^\Uu$} (m-1-4);
\end{tikzpicture}
\caption{The diagram commutes.}
\label{fgr:first_commuting}
\end{figure}

Now since all initial segments of $\bar{M}$ are Dodd-sound, by Lemma
\ref{lem:non-dropping-strongly_finite},
we can fix a strongly finite $m$-maximal tree $\bar{\Tt}$ capturing
$(\Tt,k(\bar{x}))$.
We will eventually show that $\bar{\Tt}=\Tt$.
Let $\bar{Q}=M^{\bar{\Tt}}_\infty$ and $\somevarpi:\bar{Q}\to Q$ be the
capturing map.
So $\somevarpi$ is an $m$-embedding
and
$\somevarpi\com
i^{\bar{\Tt}}=i^\Tt=j$. Let $\bar{j}=i^{\bar{\Tt}}$.
Let $\bar{k}:\bar{U}\to\bar{Q}$ be $\bar{k}=\somevarpi^{-1}\com k$;
this makes sense as $k(\bar{x})\in\rg(\somevarpi)$.
Then all maps are $m$-embeddings and the full resulting
diagram commutes (see Figure \ref{fgr:second_commuting}).

We will analyze $\Tt$ and its relationship to $\bar{\Tt}$,
by analyzing the Dodd decompositions of $j=i^\Tt$ and $\bar{j}=i^{\bar{\Tt}}$,
eventually showing that these two Dodd decompositions are identical.

\begin{figure}
\centering
\begin{tikzpicture}
 [mymatrix/.style={
    matrix of math nodes,
    row sep=0.35cm,
    column sep=0.4cm}]
   \matrix(m)[mymatrix]{
  \bar{U}&          {} &       {}& {} & Q& \\
   {} & {} \\
   {} & {} & \bar{Q}\\
   \\
 \bar{M}\\};
 \path[->,font=\scriptsize]
(m-5-1) edge[bend right] node[below] {$\ \ \ \ j=i^\Tt$} (m-1-5)
(m-5-1) edge node[left] {$i$} (m-1-1)
(m-1-1)edge node[above] {$\bar{k}$} (m-3-3)
(m-5-1)edge node[above] {$\bar{j}$} (m-3-3)
(m-3-3)edge node[above] {$\somevarpi$} (m-1-5)
(m-1-1) edge node[above] {$k=i^\Uu$} (m-1-5);
\end{tikzpicture}
\caption{The diagram commutes.}
\label{fgr:second_commuting}
\end{figure}

\begin{clmsix}\label{clm:crit(k)<crit(j)} Let $\kappa=\crit(j)$.
Then:
\begin{enumerate}[label=\arabic*.,ref=\arabic*]
\item\label{item:crit(k)>crit(j)}
$\kappa=\crit(i)=\crit(\bar{j}
)=\crit(j)<\kappa^{+\bar{U}}<\crit(k)=\min(\crit(\bar{k}) ,
\crit(\somevarpi))$.
\item\label{item:kappa^+_agmt}
$\bar{M}||\kappa^{+\bar{M}}=\bar{U}||
\kappa^{+\bar{U}}=\bar{Q}||\kappa^{+\bar{Q}}=Q||\kappa^{+Q}$.
\end{enumerate}
\end{clmsix}
\begin{proof}
By commutativity and since $k$ is an
iteration map, it  suffices to see
that $\crit(k)>\kappa$.
Also by commutativity, $\crit(k)\geq\kappa$, so suppose
$\crit(k)=\kappa$. We have $\crit(i)\geq\kappa$.
So for each $A\in\pow(\kappa)\cap\bar{M}$,
we have $i(A)\cap\kappa=A$, so
\[ k(A)=k(i(A)\cap\kappa)=k(i(A))\cap k(\kappa)=j(A)\cap k(\kappa).\]
Therefore $j,k$ are compatible through $k(\kappa)\leq j(\kappa)$.
But because $j,k$ are iteration maps arising from comparison, this is
impossible.
\end{proof}

\begin{clmsix}\label{clm:s_jbar=kbar(s_i)} We have:
\begin{enumerate}[label=\arabic*.,ref=\arabic*]
\item\label{item:s_j,sigma_j}
 $s_{k\com i}=s_j=k(s_i)\text{ and }\sigma_{k\com i}=\sigma_{j}=\sup
k``\sigma_i$.
 \item \label{item:s_j-bar,sigma_j-bar}
 $s_{\bar{k}\com i}=s_{\bar{j}}=\bar{k}(s_i)$ and
$\sigma_{\bar{k}\com i}=
\sigma_{\bar{j}}=\sup\bar{k}``\sigma_{i}$.
\item\label{item:s_j,sigma_j_as_rho_com_j-bar} $s_{\somevarpi\com\bar{j}}=s_j=\somevarpi(s_{\bar{j}})$
and $\sigma_{\somevarpi\com\bar{j}}=\sigma_j=\sup\somevarpi``\sigma_{\bar{j}}$.
\item\label{item:sigma_i=sigma_j-bar=sigma_j} $\kappa^{+\bar{M}}=\sigma_{i}=\sigma_{\bar{j}}=\sigma_j$.
\end{enumerate}
\end{clmsix}
\begin{proof}
Part \ref{item:s_j,sigma_j}: This holds because $k$ is an iteration map and by Claim \ref{clm:crit(k)<crit(j)} and Lemma
\ref{lem:s_j,sigma_j_pres}.
Lemma \ref{lem:s_j,sigma_j_pres} applies because
$E_i\rest i(\kappa)\notin
\bar{U}$.
For if $E_i\rest i(\kappa)\in\bar{U}$ then note that by
commutativity and Claim \ref{clm:crit(k)<crit(j)},
we have
\[ k(E_i\rest i(\kappa))=E_j\rest j(\kappa)\in Q,\]
which is impossible, as $j$ is an iteration map.

Part \ref{item:s_j-bar,sigma_j-bar}:
We have $\bar{k}\com i=\bar{j}$, so $s_{\bar{k}\com i}=s_{\bar{j}}$
and $\sigma_{\bar{k}\com i}=\sigma_{\bar{j}}$.
We also have
$\kappa^{+\bar{M}}<\crit(k)=\min(\crit(\bar{k}),\crit(\somevarpi))$.
Therefore by commutativity, $\bar{k}$
maps fragments of $E_i$ to fragments of $E_{\bar{j}}=E_{\bar{k}\com i}$.
So we just need to see we don't get too large
a fragment of $E_{\bar{j}}$  appearing in $\bar{Q}$. (Note that we don't know (yet) that $\bar{k}$
is an iteration map,  so we can't just use Lemma \ref{lem:s_j,sigma_j_pres} for
this.) But
if
 \[ E_{\bar{j}}\rest(\bar{k}(s_i)\cup\sup\bar{k}``\sigma_i)\in\bar{Q},\]
 then applying $\somevarpi$, we would have
 \[
E_{j}\rest(\somevarpi(\bar{k}(s_i))\cup\somevarpi(\sup\bar{k}``\sigma_i))\in Q,\]
since, much as for $\bar{k}$, $\somevarpi$ maps fragments of $E_{\bar{j}}$ to fragments of $E_j=E_{k\com i}$.
But as $\somevarpi(\bar{k}(s_i))=k(s_i)$ and
$\somevarpi(\sup\bar{k}``\sigma_i)\geq\sup k``\sigma_i$, this contradicts
part \ref{item:s_j,sigma_j}.

Part \ref{item:s_j,sigma_j_as_rho_com_j-bar}:  $\somevarpi\com\bar{j}=j$, so $s_{\somevarpi\com\bar{j}}=s_j$
and $\sigma_{\somevarpi\com\bar{j}}=\sigma_{j}$. But by part \ref{item:s_j,sigma_j}, commutativity and  \ref{item:s_j-bar,sigma_j-bar},
\[ s_j=k(s_i)=\somevarpi(\bar{k}(s_i))=\somevarpi(s_{\bar{j}}),\]
\[ \sigma_j=\sup k``\sigma_i=\sup \somevarpi\com \bar{k}``\sigma_i=\sup \somevarpi``\sigma_{\bar{j}}.\]

Part \ref{item:sigma_i=sigma_j-bar=sigma_j}:
This follows immediately from the preceding parts
together with the strong finiteness of $\bar{\Tt}$
(in particular using that the Dodd core of the first
extender used along $b^{\bar{\Tt}}$ is finitely generated).
\end{proof}

The following claim shows that the first extenders in  the Dodd decompositions of $\bar{j}$ and $j$ are identical.
 Let $\bar{\alpha}$ be least such that $\bar{\alpha}+1\in
b^{\bar{\Tt}}$, and $\alpha$ likewise for $\Tt$.

\begin{clmsix}\label{clm:first_Dodd_cores_match}
$\core_{\ds}(E^{\bar{\Tt}}_{\bar{\alpha}})=\core_{\ds}(E^\Tt_\alpha)$.
\end{clmsix}
\begin{proof}
 By the previous claim, we have
$\sigma_{\bar{j}}=\sigma_j=\kappa^{+\bar{M}}$,
and $\somevarpi(s_{\bar{j}})=s_j$. But then $E_{\bar{j}}\rest(\sigma_{\bar{j}}\cup
s_{\bar{j}})$ is equivalent to $E_j\rest(\sigma_j\cup s_j)$,
and these extenders are equivalent to the Dodd cores mentioned in the claim.
\end{proof}

So we have shown that $\bar{j}=i^{\bar{\Tt}}$ and $j=i^\Tt$ yield the same first extenders in their Dodd decompositions. We now want to show that they have the same second extenders, etc, proceeding all the way through.

Let
$G_0=\core_{\ds}(E^{\bar{\Tt}}_{\bar{\alpha}})=\core_{\ds}(E^\Tt_\alpha)$,
and let
 $\bar{M}_0=\bar{M}$ and $\bar{M}_1=\Ult_m(\bar{M},G_0)$.
 Let $j_{01}:\bar{M}\to\bar{M}_1$ and $j_{1\infty}:\bar{M}_1\to Q$ and
$\bar{j}_{1\infty}:\bar{M}_1\to\bar{Q}$
 be the Dodd decomposition maps (Lemma \ref{lem:Dodd_decomposition});
 in particular, $j_{01}$ is the ultrapower map.
So
 \[ \bar{M}_1=\cHull_{m+1}^{\bar{Q}}(\kappa^{+\bar{M}}\cup\{s_{\bar{j}}\}),
\]
 $\bar{j}_{1\infty}$ is the uncollapse map and
$\bar{j}_{1\infty}(t_{G_0})=s_{\bar{j}}$,
and likewise regarding $j_{1\infty}:\bar{M}_1\to Q$.
Since
$\somevarpi(s_{\bar{j}})=s_j$ and
$\sigma_{\bar{j}}=\sigma_j=\kappa^{+\bar{M}}$,
 we have $\somevarpi\com\bar{j}_{1\infty}=j_{1\infty}$.

By the claims, $\kappa^{+\bar{M}}\cup\{s_{\bar{j}}\}\sub\rg(\bar{k})$,
 so we can define $i_1:\bar{M}_1\to \bar{U}$  by
$i_1=\bar{k}^{-1}\com\bar{j}_{1\infty}$.
We get an extended commuting diagram (Figure \ref{fgr:third_commuting}).

\begin{figure}
\centering
\begin{tikzpicture}
 [mymatrix/.style={
    matrix of math nodes,
    row sep=0.35cm,
    column sep=0.4cm}]
   \matrix(m)[mymatrix]{
  \bar{U}&    &{}   &{}   {} &       {}& {} & Q& \\
   {} &{}& {} \\
   {} & {}&{} &{} & \bar{Q}\\
   {} & {} &\bar{M}_1\\
 \bar{M}\\};
 \path[->,font=\scriptsize]
(m-5-1) edge[bend right=45] node[above,pos=0.3] {$j=i^\Tt\ \ $} (m-1-7)
(m-5-1) edge node[left] {$i$} (m-1-1)
(m-1-1)edge node[above] {$\bar{k}$} (m-3-5)
(m-4-3) edge node[left] {$i_1$} (m-1-1)
(m-5-1) edge[above] node {$j_{01}$} (m-4-3)
(m-4-3) edge[above] node[pos=0.4] {$\bar{j}_{1\infty}\ $} (m-3-5)
(m-4-3) edge[bend right=35] node[below,pos=0.25] {$j_{1\infty}$} (m-1-7)
(m-3-5)edge node[above] {$\somevarpi$} (m-1-7)
(m-1-1) edge node[above] {$k=i^\Uu$} (m-1-7);
\end{tikzpicture}
\caption{The diagram commutes. Also
$\bar{j}=\bar{j}_{1\infty}\com\bar{j}_{01}$.}
\label{fgr:third_commuting}
\end{figure}

\begin{clmsix}\label{clm:M-bar_1_neq_Q-bar}$\bar{M}_1\neq\bar{Q}$.\end{clmsix}
\begin{proof}
Suppose $\bar{M}_1=\bar{Q}$,
so $\left<G_0\right>$ is the full Dodd decomposition of $\bar{j}$,
so $\bar{j}_{1\infty}=\id$.
Then $i_1:\bar{Q}\to\bar{U}$ and
$\bar{k}\com i_1=\id$ (because all the relevant
generators
are in $\rg(\bar{k})$). Therefore
$\bar{Q}=\bar{U}$ and $\bar{k}=i_1=\id$ and $\bar{U}$ is a normal iterate
of $\bar{M}$, via $\bar{\Tt}$,
and $i=i^{\bar{\Tt}}=\bar{j}_{0\infty}$.
 (Therefore the comparison of $\bar{M}$
with $U$ actually yields $\Tt=\bar{\Tt}$ and $\Uu$ is trivial.)
Moreover, $\bar{\Tt}$ is strongly finite.
But we arranged that $M\sats\psi$
(from line (\ref{eqn:psi})), which
ensured that  no such $\bar{\Tt}$ exists, a contradiction.
\end{proof}

Since $\bar{M}_1\neq\bar{Q}$ and $\bar{M}_1$ appears along the Dodd decomposition of $\bar{j}=i^{\bar{\Tt}}$,
$\crit(\bar{j}_{1\infty})$ exists.
By commutativity, $\crit(j_{1\infty})$ also exists (so $\bar{M}_1\neq Q$ also). We now get the following slight variant of Claim \ref{clm:crit(k)<crit(j)}:

\begin{clmsix}\label{clm:crit(k)<crit(j1)} Let $\kappa_1=\crit(j_{1\infty})$.
Then:
\begin{enumerate}[label=\arabic*.,ref=\arabic*]
\item\label{item:crit(k)>crit(j1)}
$\kappa_1=\crit(i_1)=\crit(\bar{j}
_{1\infty})=\crit(j_{1\infty})<\kappa_1^{+\bar{U}}<\crit(k)=\min(\crit(\bar{k}) ,
\crit(\somevarpi))$.
\item\label{item:kappa^+_agmt1}
$\bar{M}_1||\kappa_1^{+\bar{M}_1}=\bar{U}||
\kappa_1^{+\bar{U}}=\bar{Q}||\kappa_1^{+\bar{Q}}=Q||\kappa_1^{+Q}$.
\end{enumerate}
\end{clmsix}
\begin{proof}
Let $\bar{\alpha},\alpha$
be as before; that is,
$\bar{\alpha}$ is least such that $\bar{\alpha}+1\in b^{\bar{\Tt}}$,
and $\alpha$ likewise for $\Tt$.
If $E^\Tt_{\alpha}$ is Dodd-sound, or equivalently, $G_0=E^\Tt_\alpha$,
then
$\bar{M}_1=M^\Tt_{\alpha+1}$ and
and $j_{1\infty}=i^\Tt_{\alpha+1,\infty}$, and we can argue as in the proof of Claim \ref{clm:crit(k)<crit(j)} to show that $\crit(k)>\crit(j_{1\infty})$, and hence
$\crit(i_1)=\crit(\bar{j}_{1\infty})=\crit(j_{1\infty})$, etc.
Suppose instead that $E^\Tt_\alpha$ is non-Dodd-sound.
Let $\gamma$ be such that $\gamma<^{\Tt}_{\Da}\alpha$
with $\crit(E^\Tt_\gamma)$ minimal.
Then $E^\Tt_\gamma\rest\nu(E^\Tt_\gamma)$,
which is used in the comparison, is derived
from $j_{1\infty}$.
So we can still argue as before to show that
$\crit(k)>\crit(j_{1\infty})$, etc.
\end{proof}

We next adapt Claim \ref{clm:s_jbar=kbar(s_i)};
the proof is essentially identical, so we leave it to the reader:

\begin{clmsix}\label{clm:s_jbar1=kbar(s_i1)} We have:
\begin{enumerate}[label=\arabic*.,ref=\arabic*]
\item\label{item:s_j1,sigma_j1}
 $s_{k\com i_1}=s_{j_{1\infty}}=k(s_{i_1})\text{ and }\sigma_{k\com i_1}=\sigma_{j_{1\infty}}=\sup
k``\sigma_{i_1}$.
 \item\label{item:s_j-bar1,sigma_j-bar1}
 $s_{\bar{k}\com i_1}=s_{\bar{j}_{1\infty}}=\bar{k}(s_{i_1})$ and
$\sigma_{\bar{k}\com i_1}=
\sigma_{\bar{j}_{1\infty}}=\sup\bar{k}``\sigma_{i_1}$.
\item\label{item:s_j1,sigma_j1_as_rho_com_j-bar1} $s_{\somevarpi\com\bar{j}_{1\infty}}=s_{j_{1\infty}}=\somevarpi(s_{\bar{j}_{1\infty}})$
and $\sigma_{\somevarpi\com\bar{j}_{1\infty}}=\sigma_{j_{1\infty}}=\sup\somevarpi``\sigma_{\bar{j}_{1\infty}}$.
\item\label{item:sigma_i1=sigma_j-bar1=sigma_j1} $\kappa^{+\bar{M}_1}=\sigma_{i_1}=\sigma_{\bar{j}_{1\infty}}=\sigma_{j_{1\infty}}$.
\end{enumerate}
\end{clmsix}

Let $\bar{\alpha}_1$ be least such that $\bar{\alpha}_1+1\in
b^{\bar{\Tt}}\cut\{\bar{\alpha}+1\}$, and $\alpha_1$ likewise for $\Tt$
(with respect to $\alpha$). We now adapt Claim \ref{clm:first_Dodd_cores_match};
again the proof is just like before:

\begin{clmsix}\label{clm:second_Dodd_cores_match}
$\core_{\ds}(E^{\bar{\Tt}}_{\bar{\alpha}_1})=\core_{\ds}(E^\Tt_{\alpha_1})$.
\end{clmsix}

So $\bar{j}$ and $j$ also
have the same second extenders in their respective Dodd decompositions.

We are now in a position to define $G_1=\core_{\ds}(E^{\bar{\Tt}}_{\bar{\alpha}_1})=\core_{\ds}(E^\Tt_{\alpha_1})$, $\bar{M}_2$, $i_2:\bar{M}_2\to\bar{U}$, etc, and we get another commuting diagram,
with $\bar{M}_2$ situated between $\bar{M}_1$ and $\bar{Q}$.

This process extends directly  to all finite stages of the Dodd decomposition of $\bar{j}$.
But $\bar{\Tt}$ is finite, so we  reach some stage $\ell<\om$
with
$\bar{M}_{\ell}=\bar{Q}$.
However, the proof of Claim \ref{clm:M-bar_1_neq_Q-bar}  adapts directly (with ``$\ell$'' replacing ``$1$''
and ``$\left<G_0,\ldots,G_{\ell-1}\right>$'' replacing ``$\left<G_0\right>$'') to show that in fact, $\bar{M}_{\ell}\neq\bar{Q}$.
This contradiction completes the proof.
\end{proof}

\section{Super-Dodd-soundness}\label{sec:Dodd_proof}

The following theorem is basically due to the combination
of work of Steel
\cite[Theorem 3.2]{combin},
\cite[Theorem 4.1]{deconstructing} and Zeman
\cite[Theorems 1.1, 1.2]{zeman_dodd}. The proof we
give here is somewhat different, however.
It will involve the methods of the proof of Lemma \ref{lem:measures_in_mice},
with which the reader should probably be familiar.

\begin{tm}[Super-Dodd-soundness]
\label{thm:super-Dodd-soundness}
Let $M$ be an active, $(0,\om_1+1)$-iterable premouse, let $\kappa=\kappa^M$,
and suppose that
either:
\begin{enumerate}[label=\tu{(}\alph*\tu{)}]
 \item\label{item:M_1-sound} $M$ is $1$-sound, or
 \item\label{item:M_crit-sound} $M$ is $\kappa^{+M}$-sound.
 \end{enumerate}
 Then $M$ is super-Dodd-sound.
\end{tm}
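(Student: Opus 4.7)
The plan is to argue by contradiction, adapting the template from the proof of Lemma \ref{lem:measures_in_mice}. Suppose $M$ is a counterexample; I focus on the type 2 case, as type 3 reduces via squashing. By Lemma \ref{lem:tau=rho_1} and the soundness hypothesis, $\tau^M = \max(\rho_1^M,(\kappa^+)^M)$ will hold provided $M$ is Dodd-amenable, so I would first verify Dodd-amenability separately: if $\tau^M > (\kappa^+)^M$, an argument as in \cite[2.17]{extmax}, using that $\rho_1^M \leq (\kappa^+)^M$ by either soundness assumption, shows every proper initial segment of $F^M$ indexed below $\tau^M$ lies in $M$. With Dodd-amenability in hand, the failure of super-Dodd-soundness reduces to a failure of super-Dodd-solidity: there is $\alpha \in \widetilde{t}^M$ such that, setting $x = \widetilde{t}^M \cut (\alpha+1)$, the fragment $D^* := \Dw^M(x,\alpha)$ is not in $M$. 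Writing $\zeta = \alpha$ (or a suitable refinement at the boundary), the pair $(M,\zeta)$ is Dodd-solidity-relevant in the sense of Definition \ref{dfn:pre-simply-zeta-strongly_finite}.

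Next I would pass to a countable self-solid hull $\bar{M}$ via Lemma \ref{lem:sound_hull}, reflecting the failure via the uncollapse $\pi : \bar{M} \to M$: writing $\pi(\bar{\alpha}) = \alpha$ etc., the analogous witness $\bar{D}$ is definable from parameters over $\bar{M}$ but $\bar{D} \notin \bar{M}$. Form $\bar{U} = \Ult(\bar{M}, \bar{D})$; by countable completeness (intersecting $\rg(\pi) \cap$ the filter generated by $\bar{D}$, as in the measures-in-mice proof), one obtains a realization $\sigma : \bar{U} \to M$ with $\sigma \circ i = \pi$, where $i : \bar{M} \to \bar{U}$ is the ultrapower map. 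So $\bar{U}$ is $(0,\om_1+1)$-iterable. Now compare $\bar{M}$ with $\bar{U}$ to obtain trees $\Tt$ on $\bar{M}$ and $\Uu$ on $\bar{U}$; soundness of both sides, together with the form of the failure, rules out drops on the main branches and yields a common last model $Q$ with iteration maps $j = i^\Tt$, $k = i^\Uu$ satisfying $k \circ i = j$. The key inequality $\crit(k) > \crit(j) = \bar{\kappa}$ follows verbatim as in Claim \ref{clm:crit(k)<crit(j)} of the measures-in-mice proof: otherwise, $j$ and $k$ would agree on $\pow(\bar{\kappa})^{\bar{M}}$ through $k(\bar{\kappa})$, contradicting that both are comparison maps.

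At this point Lemma \ref{lem:s_j,sigma_j_pres}, together with Lemma \ref{lem:type_2_s,sigma_pres} where needed, delivers $\widetilde{s}_j = k(\widetilde{s}_i)$ and $\widetilde{\sigma}_j = \sup k``\widetilde{\sigma}_i$, so the algebraic data attached to $\bar{D}$ matches the structure of $\Tt$. I would then apply Lemma \ref{lem:pre-simply-zeta-strongly_finite} to capture $(\Tt, k(\bar{x}), \bar{\zeta})$ (for a suitable generator $\bar{x}$) by a pre-simply-$\bar{\zeta}$-strongly-finite tree $\widehat{\Tt}$ on $\bar{M}$. Iterating the bootstrap of Claims \ref{clm:s_jbar=kbar(s_i)}--\ref{clm:first_Dodd_cores_match} from the measures-in-mice proof along the Dodd decomposition (Lemma \ref{lem:Dodd_decomposition}) of $\widehat{\Tt}$ would then show that $\bar{U} = M^{\widehat{\Tt}}_\infty$ and $i = i^{\widehat{\Tt}}$, so $i$ is realized by a finite, normal, strongly finite iteration. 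Lemma \ref{lem:Ds-rel_G_in_M} then forces $\bar{D} \in \bar{M}$, contradicting the choice of $\bar{M}$, once the statement ``there exists a super-Dodd-solidity failure realized by a pre-simply-$\zeta$-strongly-finite iteration'' has been formalized as a first-order property of $\bar{M}$ via the coding of finite $m$-maximal trees from \cite[3.15--3.19]{extmax}.

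I expect the main obstacle to lie in the genuinely super-Dodd component at $\alpha = \bar{\kappa}$ itself, where $\bar{D}$ is a measure on $\bar{\kappa}$ distinct from the principal measure of $F^{\bar{M}}$: in this case one must use the characterization of the Dodd parameter in Lemma \ref{lem:Dodd_param_p_1} to ensure that the Dodd decomposition attaches $\bar{D}$ to the $\bar{\kappa}$-slot rather than being absorbed into the entries of $t^{\bar{M}}$ lying above $\bar{\kappa}$, and one must track $(\widetilde{s}, \widetilde{\sigma})$ throughout in place of $(s, \sigma)$. The second delicate point is ensuring that the reflection into $\bar{M}$ is first-order expressible given the weak soundness hypothesis: the $1$-sound case behaves like the classical Steel/Zeman argument, whereas in the $(\kappa^+)^M$-sound case one must work with $m$-maximal trees for larger $m \geq 1$ such that $\rho_m^M = (\kappa^+)^M$, invoking the coding apparatus as in Lemma \ref{lem:measures_in_mice} rather than writing trees directly over a segment of $M$.
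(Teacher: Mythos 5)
Your proposal captures several of the paper's moving parts (passing to a self-solid hull $\bar M$ with $\rho_1^{\bar M}=\omega$, reducing Dodd-amenability to Dodd-solidity via Lemma \ref{lem:tau=rho_1}, using pre-simply-$\zeta$-strongly-finite capturing trees, Dodd decompositions, and coding finite trees to reflect the failure first-order into $\bar M$), but there are two genuine gaps. First, the iterability of $\bar U = \Ult(\bar M, \bar D)$ cannot be obtained by ``intersecting $\rg(\pi)$ with the filter generated by $\bar D$''; that trick from the measures-in-mice proof is specific to a single countably complete \emph{measure} $D\in N$, where a single seed in the countable intersection yields the realization $\sigma([f])=\pi(f)(x)$. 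Here $\bar D$ is a short extender not in $\bar M$, with a non-trivial generator set $\bar w\cup\bar\zeta$, so there is no single seed, the realization cannot land in $M$ itself (the natural target is $\Ult(M,F^M)$, in which case $\sigma\circ i\neq\pi$), and one must realize an entire order-preserving assignment of generators coherently. Relatedly, the comparison in the paper is not of $M$ against $U$ but of $M$ against the \emph{phalanx} $\phU=((M,{<\zeta}),U,\zeta)$ (or the richer one with $R$). The phalanx structure is what routes $\Uu$-side extenders of critical point $<\zeta$ back to $M$ (or $R$), and it is exactly this that gives $\crit(k)\geq\zeta$ for free rather than merely $\crit(k)>(\kappa^+)^M$, which is essential in the subsequent $(s,\sigma)$-preservation calculations. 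Iterability of $\phU$ is established not by countable completeness but by a normalization-style lift of trees on $\phW=((M,{<\gamma}),W,\gamma)$ to $\phM$, where $W=\Ult(M,F^M)$.

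Second, and more substantively, your sketch only addresses what the paper calls the \emph{non-$\Tt$-special} case (condition $(*)$). The genuinely hard part of the super-Dodd argument, where Steel's original proof breaks down at the superstrong level, is Case \ref{case:xi_Tt-special} (and Claim \ref{clm:varepsilon_non-Tt-special}): the extender $E^\Tt_\xi$ applied immediately above $M$ along $b^\Tt$ is $F(M^\Tt_\xi)$ itself. Your Dodd-decomposition bootstrap does not apply there; instead the paper shows $s_{i_G}\rest\lh(w)<w^U$, locates a $U$-inaccessible $\chi$ strictly between $(s_{i_G})_m$ and $(w^U)_m$, runs a finite-support/reverse-copying construction through a chunk of $b^\Uu$ to produce an abstract iteration $\bar\Vv$ of a finitely generated hull $\bar U\prec U$ based entirely below $\chi<(w^U)_m$, and then reads off the forbidden fragment $F^M\rest\ttilde^M$ from the already-known Dodd-solidity witness $F^M\rest((w\rest m)\cup w_m)\in M$. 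This is a different argument from anything in your sketch, and without it the proof does not close. You also silently assume case (a) throughout; the paper's reduction from $(\kappa^+)^M$-soundness to $1$-soundness (via $\core_1(M)$, and showing $\ttilde^M\in\rg(\pi)$ and $\tautilde^M=0$) is short but needs to be done, and the boundary claims ($w=\emptyset$ gives a type-Z argument; $\zeta\in p_1^M$ is handled by $1$-solidity and Lemma \ref{lem:Dodd_param_p_1}) are also part of the setup that your sketch skips.
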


In this section we prove the following lemma, which is the central
argument for the proof of super-Dodd-soundness:

\begin{lem}[Super-Dodd-soundness]
\label{lem:super-Dodd-soundness}
Assume the hypotheses of Theorem \ref{thm:super-Dodd-soundness}.
Suppose further that  $\core_1(M)$ is $1$-sound.
Then the conclusion of Theorem \ref{thm:super-Dodd-soundness} holds.
\end{lem}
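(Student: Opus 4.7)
The plan is to adapt the structure of the measures-in-mice argument (proof of Lemma \ref{lem:measures_in_mice}), with pre-simply-$\zeta$-strongly finite trees (Lemma \ref{lem:pre-simply-zeta-strongly_finite}) playing the role of ordinary strongly finite trees, and the minimal missing fragment of $F^M$ playing the role of the ultrafilter $D$. Assume for contradiction that $M$ is not super-Dodd-sound. By Fact \ref{fact:tau=rho_1} this gives $(\stilde^M,\sigmatilde^M)<(\ttilde^M,\tautilde^M)$, so the fragment $G$ of $F^M$ indexed by $(\stilde^M,\sigmatilde^M)$ satisfies $G\notin\Ult(M,F^M)$. Choose $\zeta$ so that $(M,\zeta)$ is Dodd-solidity-relevant in the sense of Definition \ref{dfn:pre-simply-zeta-strongly_finite}: take $\zeta=\sigmatilde^M$ if $\sigmatilde^M\geq(\kappa^+)^M$; otherwise take $\zeta\in\{\kappa,(\kappa^+)^M\}$ as dictated by the failure, with $\zeta=\kappa$ corresponding precisely to the new super-Dodd case (missing component measure concentrated on $\{\kappa\}$).

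Next, using Lemma \ref{lem:sound_hull} together with the hypothesis that $\core_1(M)$ is $1$-sound (the substitute for $1$-solidity of $M$ itself, which we do not yet have), fix a self-solid hull $\bar{H}\pins M$ of $\core_1(M)$ via an uncollapse $\pi:\bar{H}\to\core_1(M)$ absorbing $\stilde^M,\zeta$ and an encoding of the failure of $G\in M$. Via the canonical extension of $\pi$ to a map into $M$, the active extender $F^{\bar{H}}$ lifts a fragment of $F^M$ strictly larger than $G$, and $\bar{H}$ inherits $(0,\om_1+1)$-iterability from $M$. Form the $0$-maximal comparison of $\bar{H}$ with $M$; by the soundness of $\bar{H}$ both main branches are non-dropping with common last model $Q$, yielding iteration maps $\bar{j}:\bar{H}\to Q$ and $k:M\to Q$.

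By Lemma \ref{lem:pre-simply-zeta-strongly_finite} one may replace the tree on the $\bar{H}$-side by a pre-simply-$\zeta$-strongly finite tree $\Tt$ that captures all relevant generators of $k$. Now proceed as in the proof of Lemma \ref{lem:measures_in_mice}: iterate the Dodd-decomposition analysis (Lemma \ref{lem:Dodd_decomposition}) along $\Tt$ together with $(s,\sigma)$-preservation (Lemmas \ref{lem:s_j,sigma_j_pres} and \ref{lem:type_2_s,sigma_pres}) against $k$, showing inductively that every generator needed on the $M$-side lies in $\rg(k)$. It follows that the $M$-side of the comparison is trivial, $M$ is a normal iterate of $\bar{H}$ via $\Tt$, and $b^\Tt$ has no $\Tt$-special nodes. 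Apply Lemma \ref{lem:Ds-rel_G_in_M} to $\Tt$: the $(\crit(F^{\bar{H}}),i^\Tt(\crit(F^{\bar{H}})))$-extender derived from $i^\Tt$ lies in $\bar{H}$, so (via $\pi$ and weak amenability) $G\in M$, contradicting the choice of $(\stilde^M,\sigmatilde^M)$.

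The central obstacle will be arranging that $\Tt$ has no $\Tt$-special nodes along $b^\Tt$: by Lemma \ref{lem:Tt-special} a $\Tt$-special node corresponds to an application of $F(M^\Tt_\alpha)$ to an earlier model, and such applications would leave non-Dodd-sound data unaccounted for in the final Dodd-decomposition. Ruling them out requires careful use of the capturing lemma with $\zeta$ chosen just above the failure, so that the first extender used on $b^\Tt$ has small enough super-Dodd projectum. A secondary delicate point is the super-Dodd case $\kappa\in\ttilde^M$: here $\zeta=\kappa$ and the target witness is a component measure rather than an extender fragment with $\sigma\geq(\kappa^+)^M$, so the auxiliary clause in Definition \ref{dfn:pre-simply-zeta-strongly_finite} for $\beta=0$ with $\kappa=\zeta$ (requiring finite generation of $\core_{\ds}(E^\Tt_\alpha)$) must be verified; this is where the $1$-soundness of $\core_1(M)$ is genuinely used, to guarantee the hull $\bar{H}$ retains the required generator structure below $(\kappa^+)^M$.
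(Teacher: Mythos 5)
Your high-level strategy is aligned with the paper's (reduce to a self-solid hull with $\rho_1=\om$, then run a Dodd-decomposition/$(s,\sigma)$-preservation argument along a pre-simply-$\zeta$-strongly finite tree, modeled on the measures-in-mice proof), but the concrete comparison you set up does not work. You propose forming ``the $0$-maximal comparison of $\bar{H}$ with $M$'' where $\bar{H}\pins M$ is a self-solid hull of $\core_1(M)$. This comparison is vacuous: since $\bar{H}$ is a proper initial segment of $M$, no extenders are used and there is nothing to analyze. Nowhere in your proposal do you form the ultrapower $U=\Ult_0(M,G^\zeta)$ by the Dodd-witness whose membership in $M$ is at stake, which is the engine of the paper's argument. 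The paper's comparison is between $M$ and a \emph{phalanx} $\phU$ built from $M$ (below the relevant projectum, possibly through a dropdown segment $R$) together with $U$ on top; and establishing the iterability of $\phU$ is itself a non-trivial step (Claim 5, routed through the auxiliary phalanx $\phW=((M,{<\gamma}),W,\gamma)$ with $W=\Ult_0(M,F^M)$ via a padded-tree normalization-style translation). You also conflate the hull $\bar{M}=\cHull_1^M(\{q\})$ used to reduce to $\rho_1^M=\om$ with the structure $H=M_X$ bearing $F^H=\trivcom(G^\zeta)$, which is a different object: Claim \ref{clm:H_props} shows $H$ \emph{fails the ISC} when $\zeta>\kappa$, so it cannot be used directly as a comparand.

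Finally, you write that ``the central obstacle will be arranging that $\Tt$ has no $\Tt$-special nodes along $b^\Tt$,'' and propose to handle it ``by careful use of the capturing lemma.'' This does not match the structure of the actual argument: the $\Tt$-special case cannot be arranged away, it must be handled on its own terms (the paper's Case \ref{case:xi_Tt-special}). There one does not push for strong finiteness; rather, one shows the Dodd parameter of $i^\Tt_{0\xi}(F^M)$ sits below $k(w^U)$, factors through a finite abstract iteration $\Vv$ living below an inaccessible $\chi$ of $U$, reverse-copies to $\bar\Vv$ on a finitely generated hull $\bar U$, and uses the \emph{inductively established} Dodd-solidity witnesses $F^M\rest((w\rest m)\cup w_m)\in M$ to deduce $F^M\rest\ttilde^M\in M$, a contradiction. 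Your proposal as written would not deliver that case, and it also omits the reduction of case (b) of Theorem \ref{thm:super-Dodd-soundness} (the $(\kappa^+)^M$-sound case) to the $1$-sound case, which precedes all of this.
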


\begin{rem}
 The hypothesis that $\core_1(M)$ is $1$-sound is actually superfluous,
 by Theorem \ref{thm:solidity},
but we will have to use the lemma in order to prove this fact.

The proof of super-Dodd-soundness  to follow
proceeds by first reducing to
the case in
which normal
iterability yields an iteration strategy with the weak Dodd-Jensen property,
and
then approximately
follows Steel's proof. Actually this reduction is very easy,
and just takes a few lines of argument. However, Steel's
proof makes significant use of the
assumption that $M$ has no extenders of superstrong type on its sequence, so
does not fully suffice
for our purposes. There is some more work involved in adapting things
to the superstrong level. There are also some minor modifications
for the \emph{super-}
aspect of super-Dodd-soundness.
But none of this adaptation work  relates directly to proving fine
structure
from normal iterability. Moreover, Zeman \cite{zeman_dodd} already proved
(standard) Dodd-soundness for $1$-sound mice with $\lambda$-indexing,
at the superstrong level
(from $(0,\om_1,\om_1+1)^*$-iterability). So after the reduction to the case
that we have  a strategy with weak Dodd-Jensen,
one could presumably  follow Zeman's proof closely, translating
it to Mitchell-Steel indexing,
at least for standard Dodd-soundness. The proof we give presumably does have
significant overlap with Zeman's, but is instead based on Steel's
and an elaboration of the proof of Theorem \ref{thm:measures_in_mice}.
\footnote{\label{ftn:other_Dodd}Alternatively, it
seems one might
prove Dodd-soundness (if not super-Dodd-soundness) at the level of
superstrongs in Mitchell-Steel indexing,
by quoting Zeman,  together with Fuchs' translation
process \cite{fuchs_translating_models}, \cite{fuchs_translating_strategies},
along with unpublished work of the author
on translation of iteration trees between $\lambda$-iteration rules
and Mitchell-Steel rules (see \cite{rule_conversion_v2}),
but the
author
has not
considered this option seriously, nor tried to
understand all of the fine structure involved. Anyway,
that path would presumably be significantly less efficient than the one we take
here.}
\end{rem}

Before we start, note the following, which will be useful, since $M$ is active:

\begin{dfn}\label{dfn:Tt-special}
Let $\Tt$ be a $0$-maximal
iteration tree.
 For $\alpha+1<\lh(\Tt)$, say
 $\alpha$ is \dfnemph{$\Tt$-special}
iff $(0,\alpha]^\Tt\cap\dropset^\Tt=\emptyset$
and $E^\Tt_\alpha=F(M^\Tt_\alpha)$.
\end{dfn}

\begin{lem}\label{lem:Tt-special}
Let $\Tt$ be $0$-maximal,  $\alpha$ be $\Tt$-special and
 $\beta=\pred^\Tt(\alpha+1)$.
Then $\beta\leq^\Tt\alpha$.
If $\beta<\alpha$ then
$(\beta,\alpha]^\Tt$ is non-dropping,
 $\crit(i^\Tt_{\beta\alpha})>\crit(E^\Tt_\alpha)$,
and $\gamma$ is non-$\Tt$-special
for every $\gamma+1\in(\beta,\alpha]^\Tt$.
Hence, if all proper segments of $M^\Tt_0$  are Dodd-sound
then $\gamma$ is Dodd-nice for each $\gamma+1\in(\beta,\alpha]^\Tt$.
\end{lem}

\begin{proof}[Proof of Lemma \ref{lem:super-Dodd-soundness}]
The plan is as follows. We first reduce to  case \ref{item:M_1-sound}, that
$M$ is $1$-sound, and then reduce
further to the case in which
normal iterability is sufficient to provide an iteration strategy with the weak
Dodd-Jensen
property. From there we proceed with a comparison argument similar to Steel's
original proof, augmented with
further analysis which is necessary
to handle the presence of superstrong extenders.
We will give the rough idea of this comparison argument  after we perform the reductions just mentioned.

So let us first reduce
to case \ref{item:M_1-sound}.
Suppose that \ref{item:M_crit-sound} holds.
Let $C=\core_1(M)$ and let $\pi:C\to M$ be the core map. So (by
our added assumption for the lemma)
$C$ is $1$-sound,
so assuming the result in case \ref{item:M_1-sound},
$C$ is
super-Dodd-sound. So if $\kappa=\kappa^M<\rho_1^M$ then $M=C$ is
super-Dodd-sound.
And if $\kappa=\rho_1^M$ then note that we still have $M=C$,
since $\pi$ is a $0$-embedding (and $M$ is type 2),
so $\rg(\pi)$ is
cofinal
in $\kappa^{+M}$.
So we may assume  $\rho_1^M<\kappa$.
Since
$\pi$
is
$\rSigma_1$-elementary, it suffices to see  $\pi(\ttilde^C)=\ttilde^M$ and
$\tautilde^M=0$.
Let
$t'\in[\nu(F^M)]^{<\om}$
be least generating
$(\Fseg^M,p_1^M)$ (with respect to $F^M$). Since $\Fseg^M,p_1^M\in\rg(\pi)$, we
have $t'\in\rg(\pi)$.
And $\kappa$ is
generated by $t'$, because $\Fseg^M$ is.

We claim  $t'=\ttilde^M$ and $\tautilde^M=0$. In fact, because $M$ is
$\kappa^{+M}$-sound and
$p_1^M,\Fseg^M$ are generated by $t'$, by
 hypothesis \ref{item:M_crit-sound} and Lemma
\ref{lem:type_2_factor_embedding}, $t'$ generates
$F^M$. It follows that
$\tautilde^M=0$ and $\ttilde^M\leq t'$. But then the minimality of $t'$ implies
that $\ttilde^M=t'$.

So $\ttilde^M\in\rg(\pi)$ and $\tautilde^M=0$. But then it is easy to see that
$\pi(\ttilde^C)=\ttilde^M$, completing the proof.

Now assume that \ref{item:M_1-sound} holds; that is, $M$ is $1$-sound. We will show
$M$ is
super-Dodd-sound. We may and do assume that
$M$ is type 2 and all its proper segments are Dodd-sound.
Moreover, $M$ is Dodd-amenable. For let $\kappa=\kappa^M$ and suppose that $\kappa^{+M}<\tau^M$ (otherwise Dodd-amenability is immediate).
Then by
Lemma \ref{lem:tau=rho_1}, $\kappa^{+M}<\tau^M=\rho_1^M$.
But then for every $\alpha<\tau^M$,
we have $F\rest\alpha\cup\{t^M\}\in M$,
since this is coded as an $\bfrSigma_1^M$ subset of $(\kappa^{+M}\cross\alpha)^{<\om}$, and $\kappa^{+M},\alpha<\rho_1^M$.
So it suffices to prove that $M$ is
super-Dodd-solid.

We now  take the $\Sigma_1$-hull over $M$
of a singleton $\{q\}$, with $q$ selected capturing the relevant objects
and such that the resulting transitive collapse is sound. For this, we apply
\cite[Definition 2.2, Lemma 2.3]{V=HODX_pub}:
Let $(q,\om)\in\Dd$ be $1$-self-solid for $M$ with
$\ttilde^M,\tautilde^M\in\Hull_1^M(\{q\})$, let
$\Mbar=\cHull_1^M(\{q\})$
and $\pi:\Mbar\to M$ be the uncollapse (such a pair $(q,\om)$ exists by
\cite[Lemma 2.3]{V=HODX_pub}; in the notation there, the ``$q$'' and ``$\om$'' are written in the other order). So $\rho_1^{\bar{M}}=\om$ and $\bar{M}$ is sound.

\begin{clmone}\label{clm:Mbar_sDs_implies_M_sDs} If $\Mbar$ is super-Dodd-solid then so is $M$.\end{clmone}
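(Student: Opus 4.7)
The strategy is to establish $\pi(\ttilde^{\Mbar})=\ttilde^M$ as ordered tuples, after which each super-Dodd-solidity witness for $\Mbar$ pushes forward directly via $\pi$ to a super-Dodd-solidity witness for $M$. Set $u=\pi^{-1}(\ttilde^M)$ and $\upsilon=\pi^{-1}(\tautilde^M)$, which exist in $\Mbar$ since $\ttilde^M,\tautilde^M\in\Hull_1^M(\{q\})=\rg(\pi)$ by our choice of $q$.

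I first verify that $(u,\upsilon)$ generates $F^{\Mbar}$. For any $\bar y\in\Mbar$, the $\rSigma_1$ statement
\[\exists a\in[\ttilde^M\cup\{\gamma<\tautilde^M\}]^{<\om}\ \exists f\in M|(\kappa^+)^M\ [y=[a,f]^M_{F^M}]\]
holds of $y=\pi(\bar y)$ in $M$, since $\ttilde^M\cup\tautilde^M$ generates $F^M$; by $\rSigma_1$-elementarity of $\pi$ the corresponding statement with parameters $u,\upsilon,\bar y$ holds in $\Mbar$. Minimality of $(\ttilde^{\Mbar},\tautilde^{\Mbar})$ yields $(\ttilde^{\Mbar},\tautilde^{\Mbar})\leq(u,\upsilon)$, whence $\ttilde^{\Mbar}\leq u$ in the top-down tuple order, and order-preservation of $\pi$ gives $\pi(\ttilde^{\Mbar})\leq\pi(u)=\ttilde^M$.

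For the reverse I next observe $\tautilde^{\Mbar}\leq\upsilon$, hence $\pi(\tautilde^{\Mbar})\leq\tautilde^M$: since $M,\Mbar$ are both $1$-sound, Lemma \ref{lem:tau=rho_1} combined with Remark \ref{rem:Dodd_parameter} gives $\tautilde^M\in\{0,(\kappa^+)^M\}$ and $\tautilde^{\Mbar}\in\{0,(\kappa^+)^{\Mbar}\}$; if $\tautilde^M=0$ then $\ttilde^M$ finitely generates $F^M$, so by the preceding pullback $u$ finitely generates $F^{\Mbar}$, forcing $\tautilde^{\Mbar}=0=\upsilon$ by minimality, while if $\tautilde^M>0$ then $\upsilon=\pi^{-1}((\kappa^+)^M)=(\kappa^+)^{\Mbar}\geq\tautilde^{\Mbar}$. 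Now, since $(\ttilde^{\Mbar},\tautilde^{\Mbar})$ generates $F^{\Mbar}$, each element of $u\sub\Mbar$ is $F^{\Mbar}$-generated by $\ttilde^{\Mbar}\cup\{\gamma<\tautilde^{\Mbar}\}$; applying $\pi$ and using $\pi``\{\gamma<\tautilde^{\Mbar}\}\sub\{\gamma<\pi(\tautilde^{\Mbar})\}\sub\{\gamma<\tautilde^M\}$, each element of $\ttilde^M=\pi(u)$ is $F^M$-generated from $\pi(\ttilde^{\Mbar})\cup\{\gamma<\tautilde^M\}$. Transitivity of $F^M$-generation, together with the fact that $\ttilde^M\cup\{\gamma<\tautilde^M\}$ generates $F^M$, shows that $\pi(\ttilde^{\Mbar})\cup\{\gamma<\tautilde^M\}$ generates $F^M$; minimality of $(\ttilde^M,\tautilde^M)$ with second coordinate held fixed then gives $\ttilde^M\leq\pi(\ttilde^{\Mbar})$. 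Combining, $\pi(\ttilde^{\Mbar})=\ttilde^M$, so $\ttilde^{\Mbar}=u$ by injectivity of $\pi$.

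With $\pi(\ttilde^{\Mbar})=\ttilde^M$ established, for each $\bar\alpha\in\ttilde^{\Mbar}$ with $\bar\alpha>\kappa^{\Mbar}$ the super-Dodd-solidity witness $\Dw^{\Mbar}(\ttilde^{\Mbar}\cap(\bar\alpha+1),\bar\alpha)\in\Mbar$ is sent by $\pi$ to $\Dw^M(\ttilde^M\cap(\pi(\bar\alpha)+1),\pi(\bar\alpha))\in M$, since $\pi$ commutes with the amenable coding of the extender-restriction $F\rest(x\cup\alpha)$ and $\pi\rest\ttilde^{\Mbar}$ is an order isomorphism onto $\ttilde^M$; the component-measure witness in the case $\kappa^{\Mbar}\in\ttilde^{\Mbar}$ (equivalent to $\kappa^M\in\ttilde^M$ by $\rSigma_1$-elementarity) transfers analogously. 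The main delicate step is the transitivity argument in the previous paragraph, which depends on controlling $\tautilde^{\Mbar}$ relative to $\tautilde^M$, and this control is exactly what is afforded by $1$-soundness of both models via Lemma \ref{lem:tau=rho_1}.
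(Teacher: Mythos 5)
Your argument hinges on the claim $\pi(\ttilde^{\Mbar})=\ttilde^M$, but this equality is generally false, and the paper's proof is careful to avoid asserting it: it says only $\pi(\ttilde^{\Mbar}\rest\lh(\ttilde^M))=\ttilde^M$, i.e.~$\ttilde^{\Mbar}$ \emph{end-extends} $u=\pi^{-1}(\ttilde^M)$ and may strictly continue beyond it. The extra generators then get discarded when pushing the witnesses up.

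Two concrete errors. First, you invoke Lemma \ref{lem:tau=rho_1} and Remark \ref{rem:Dodd_parameter} to conclude $\tautilde^M\in\{0,(\kappa^+)^M\}$, but this requires $\rho_1^M\leq(\kappa^+)^M$, which is not available here: the Assumption $\rho_1^M=\om$ is introduced \emph{after} this claim, and the whole point of the claim is to justify that reduction. If $\rho_1^M>(\kappa^+)^M$ then $\tautilde^M=\tau^M=\rho_1^M>(\kappa^+)^M$. Meanwhile $\rho_1^{\Mbar}=\om$, so by Lemma \ref{lem:tau=rho_1} applied to $\Mbar$ together with Remark \ref{rem:type_2_factor_embedding}, $F^{\Mbar}$ is finitely generated and $\tautilde^{\Mbar}=0<\upsilon=\pi^{-1}(\tautilde^M)$. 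The minimal generating tuple $\ttilde^{\Mbar}$ for $F^{\Mbar}$ must then make up for the missing interval $[0,\upsilon)\setminus\{0\}$ with finitely many additional generators below $\upsilon$, so $\ttilde^{\Mbar}\supsetneq u$ and $\pi(\ttilde^{\Mbar})>\ttilde^M$ in the top-down order. Second, the inference from $(\ttilde^{\Mbar},\tautilde^{\Mbar})\leq(u,\upsilon)$ in $\widetilde{\OR}$ to $\ttilde^{\Mbar}\leq u$ in $[\OR]^{<\om}$ is itself invalid: the $\widetilde{\OR}$-ordering permits $\ttilde^{\Mbar}$ to properly end-extend $u$ with first new element $<\upsilon$ (take $k=\lh(u)$ in the ordering's second clause), in which case $\ttilde^{\Mbar}>u$ as tuples. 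This is exactly the scenario that obtains above. The repair is simply to prove $u\ins\ttilde^{\Mbar}$ (the end-extension fact), observe that the super-Dodd-solidity witnesses for $\Mbar$ at positions $i<\lh(\ttilde^M)$ have the form $\Dw^{\Mbar}(u\cut(\bar\alpha+1),\bar\alpha)$ for $\bar\alpha\in u$, and push \emph{those} (and only those) forward via $\pi$; the deeper witnesses corresponding to the extra entries of $\ttilde^{\Mbar}$ are simply ignored.
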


\begin{proof} Note
$\pi(\ttilde^{\Mbar}\rest\lh(\ttilde^M))=\ttilde^M$,
so if $\vec{E}\in M$ witnesses super-Dodd-solidity for $\Mbar$, then
$\pi(\vec{E}\rest\lh(t^M))$
witnesses super-Dodd-solidity for $M$.
\end{proof}

Also, all proper segments of $\Mbar$ are Dodd-sound. So resetting notation, we may assume:

\begin{ass}\label{ass:rho_1^M=om} $\rho_1^M=\om$.\end{ass}

Recall $M$ is also $1$-sound. We can therefore
use these fine structural circumstances to substitute
for any appeals to weak Dodd-Jensen in the proofs.
(Moreover, the unique $(0,\om_1+1)$-strategy for $M$ has  weak Dodd-Jensen.)
Thus, we have completed the portion of the proof of super-Dodd-solidity which
is directly relevant to the main theme of the paper
(that is, proving fine structural facts from normal iterability).
From here on we can just approximate Steel's proof of
Dodd-solidity (the conjunction of \cite{combin} and \cite{deconstructing},
augmented with observations in
\cite{operator_mice_v3}),
modified to deal with superstrong extenders and to prove super-Dodd-solidity.
We
will, however,
give a full argument (diverging somewhat from Steel's argument).

\label{page:outline_Dodd-soundness}
In the main argument (which we will only need to use under some further assumptions, specified in Assumption \ref{ass:v_not_empty}), we will consider a comparison of $M$ with a phalanx $\mathfrak{U}$. This phalanx could, for example,  be of form $\mathfrak{U}=((M,{<\zeta}),U,\zeta)$ where
$U=\Ult_0(M,G)$ for some Dodd-solidity witness $G$ corresponding to $\zeta\in t^M$, where,
for example, $\zeta$ is an $M$-cardinal
and $\kappa<\zeta<\lgcd(M)$.\footnote{This is the direct analogue of the proof of ISC in \cite[\S10]{fsit}. In  Steel's proof of Dodd-solidity, say in case $\zeta$ is an $M$-cardinal and $\zeta<\lgcd(M)$,
instead of comparing with $\phU$, $M$ is actually compared with the phalanx $((M,{<\zeta}),H,\zeta)$,
where $H$ is as described just before Claim \ref{clm:H_props} below. But these two comparisons are just slight translations of one another. We find it a little more convenient to work with $\phU$ instead.}
 Using the fine structure,
 we will see that neither $b^\Uu$ nor $b^\Tt$ drops, $b^\Uu$ is above $U$, and $i^\Uu\com i^{M,0}_G=i^\Tt$. This will give that $G$ is a sub-extender of that derived from $i^\Tt$, using a set of generators of form $\zeta\cup x$, for some finite $x$.
 By analyzing the extenders used along $b^\Tt$,
 we will see that this sub-extender can in fact be computed working inside $M$. The main difficulty will be in handling  the possibility that some $\Tt$-special extender $E^\Tt_\alpha$
 is used along $b^\Tt$ (that is,
 $E^\Tt_\alpha$ is a non-dropping image of $F^M$). The special case of this in which the first extender used along $b^\Tt$ is $\Tt$-special, is somewhat easier to handle.
 The other case (the first extender used along $b^\Tt$ is non-$\Tt$-special, but some later one is $\Tt$-special) is readily ruled out in Steel's proof, because it implies that there are superstrong extenders in $\es^M$.
 But that argument does not suffice for us here,
 so we need to handle it directly, which takes some more work
 (see especially  Claim \ref{clm:varepsilon_non-Tt-special} of Case \ref{case:Ds_xi_is_non-Tt_special}).
The measure analysis argument
from the proof of Theorem \ref{thm:measures_in_mice}
will be useful in this regard. (Another variant
of the measure analysis argument
will also be needed later in the proof
of Theorem
\ref{thm:finite_gen_hull}.)

So let $\zeta\in\ttilde^M$, let $w=\ttilde^M\cut(\zeta+1)$, and let
$G=\Dw^M(w,\zeta)$. We must see $G\in M$.
We first dispense (in Claims \ref{clm:type_z_in}
and \ref{clm:if_zeta_in_p_1_cut_kappa^+_then_G_in_M} below) with some easy cases.

\begin{clmone}\label{clm:F_rest_t-tilde_not_in_M}
$F^M$ is generated by $\widetilde{t}^M$, so
$F^M\rest\widetilde{t}^M\notin M$.
\end{clmone}
\begin{proof}
We have $\tau^M=\kappa^{+M}$,
by Lemma \ref{lem:tau=rho_1},
Assumption \ref{ass:rho_1^M=om}
and since $M$ is $1$-sound.
So $\widetilde{\tau}^M=0$,
which suffices.
\end{proof}

\begin{clmone}\label{clm:type_z_in} If $w=\emptyset$, or equivalently, $\nu(F^M)=\zeta+1$,
then $G\in
M$.\end{clmone}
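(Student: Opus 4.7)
The plan is to observe that $G^\zeta=\Dw^M(\emptyset,\zeta)=F^M\rest\zeta$, and since $\nu(F^M)=\zeta+1<\OR^M$ (as $M$ is type 2), $F^M\rest\zeta$ is a proper initial segment of the active extender $F^M$. The desired conclusion $G^\zeta\in M$ then follows from the MS-ISC for $F^M$ together with the weak amenability of $F^M$ on $M$, both of which are built into the definition of a premouse.

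In more detail: if $F^M\rest\zeta$ is not of type Z, then by the (full) ISC for MS-indexed premice, its trivial completion is indexed on $\es^M$ at the appropriate index strictly below $\OR^M$, and hence $F^M\rest\zeta\in M$ directly. If $F^M\rest\zeta$ is of type Z, then $\zeta$ is a limit of generators of $F^M$; choosing any cofinal sequence $\langle\eta_i\rangle$ of generators of $F^M$ strictly below $\zeta$, each $F^M\rest(\eta_i+1)$ has a largest generator which is not a limit of generators, so is non-type Z and hence lies in $M$ by the previous case, and $F^M\rest\zeta$ is then reconstructed amenably from these segments using that $\widetilde{F^M}\cap(M||(\zeta^+)^M)\in M$ by amenability (available since $\zeta+1<\OR^M$).

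The main ``obstacle'' is really just keeping the bookkeeping straight between the ISC and the amenable coding in the type-Z case, but this is entirely routine. The substantive part of proving super-Dodd-solidity will instead come in handling the cases $\zeta\in\ttilde^M$ with $w=\ttilde^M\cut(\zeta+1)\neq\emptyset$, which is where the comparison arguments and the strongly-finite-tree machinery from \S\ref{sec:Dodd_prelim} will be needed.
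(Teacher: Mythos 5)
Your non-type-Z case matches the paper exactly: if $G^\zeta=F^M\rest\zeta$ is not type Z, the ISC immediately yields $G^\zeta\in M$. The type-Z case, however, is where your proposal breaks down, and it is not a bookkeeping matter but the substantive content of the claim.

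First, the type-Z hypothesis concerns $\xi$, the largest generator of $G^\zeta$ (i.e.\ the largest $F^M$-generator strictly below $\zeta$), not $\zeta$ itself. The definition requires $\xi$ to be a limit of $F^M$-generators; $\zeta$ need not be a limit of generators at all (e.g.\ $\zeta=\xi+1$ is possible). So the premise that $\zeta$ is a limit of generators, on which your cofinal-sequence argument rests, is not justified.

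More seriously, the ``reconstruction via amenability'' cannot produce $G^\zeta$. The fragments $F^M\rest(\eta+1)$ with $\eta<\xi$ (or indeed $F^M\rest\eta'$ for any $\eta'\leq\xi$) only determine $\bar{G}=G^\zeta\rest\xi$, which the ISC already puts on $\es^M$ as $F^R$ for some $R\pins M$ -- this was never in doubt. What is missing is the information carried by the top generator $\xi$ of $G^\zeta$, that is, the component measures $(F^M)_{a\cup\{\xi\}}$; these are not determined by any pieces strictly below $\xi$, and there is nothing on $\es^M$ directly certifying them. Identifying that missing top measure is precisely what is hard, and it is not a consequence of amenability. (Amenability gives $\widetilde{F^M}\cap M||\eta\in M$ for $\eta<\OR^M$; concluding $G^\zeta\in M$ from this presupposes $G^\zeta\in M||\eta$ for some $\eta<\OR^M$, which is what is to be proved. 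Your invocation of the level $(\zeta^+)^M$ is also moot: the paper shows that in the type-Z case $\xi$ is the largest cardinal of $M$, so $(\zeta^+)^M=\OR^M$ and amenability at that level is unavailable.)

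The paper's proof of this claim (following \cite[2.7]{deconstructing}) is a genuine comparison argument: one shows $\xi$ is the largest cardinal of $M$ via factor maps and condensation, then compares the phalanx $\ph=((M,{<\xi}),(R,\xi),U)$ (with $U=\Ult_0(M,G^\zeta)$) against $M$, exploiting $1$-soundness with $\rho_1^M=\om$ in place of weak Dodd--Jensen, to conclude that $M^{\Uu}_1$ carries a total type~1 extender $F$ with $G^\zeta=F\com\bar{G}$, whence $G^\zeta\in M$. That is the missing idea: the top measure at $\xi$ is recognized as a type~1 extender on the sequence of $\Ult(M,\bar{G})$, not reconstructed from below. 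You should also note that this is still the ``easy'' base case of the super-Dodd-solidity induction; dismissing it as routine understates what is going on.
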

\begin{proof}If  $G$ is non-type Z, this is
by the ISC. Suppose $G$ is type Z. We argue like in
\cite[Theorem 2.7]{deconstructing};
here is a sketch.
Since $G$ is type Z, it has a largest generator $\xi$, which is a limit of generators (of both $G$ and $F^M$).
So $\bar{G}=G\rest\xi$
is type 3,
and by the ISC, there is $R\pins M$ with $F^R=\bar{G}$.
Let
\[ \bar{U}=\Ult_0(M,\bar{G})\text{ and } U=\Ult_0(M,G)\text{ and }W=\Ult_0(M,F^M),\]
and let
$\bar{\pi}:\bar{U}\to U$ and $\pi:U\to W$ be the factor maps.
So $\crit(\bar{\pi})=\xi$ and $\crit(\pi)=\zeta>\xi$.
In fact, $\crit(\pi)=\zeta=\xi^{+U}$. For since $\crit(\pi)>\xi$, we have $\crit(\pi)\geq\xi^{+U}$. But $\xi^{+U}$ is not a $W$-cardinal, since $\bar{G}\in W$ (by coherence), and therefore $\crit(\pi)\leq\xi^{+U}$.

\begin{sclmone}\label{sclm:xi=lgcd(M)}$\xi$ is the largest cardinal of $M$, and $\xi$ is inaccessible in $M$.\end{sclmone}
\begin{proof} Since $R\pins M$, we have
$\bar{G}\in M$. It follows that $\card^M(\zeta)\leq\xi$, and since $\zeta\geq\lgcd(M)$, therefore $\xi\geq\lgcd(M)$.
So we just need to see that $\xi$ is inaccessible in $M$.
As $G$ is type Z,
 $\lh(F^R)=\xi^{+\bar{U}}=\xi^{+U}$.
By  condensation for $\om$-sound mice
(Fact \ref{fact:om_condensation}) with $\bar{\pi}$,
$\bar{U}||\xi^{+\bar{U}}=U||\xi^{+U}$,
and $\xi$ is inaccessible in both $\bar{U}$ and $U$.
But $\pi(\xi)=\xi$, so $\xi$ is inaccessible in $W$, and so by coherence, $\xi$ is also inaccessible in $M$, as desired.
\end{proof}

Now  consider the phalanx
\[\ph=((M,{<\xi}),(R,\xi),U,\zeta=\xi^{+U}).\]
Note $\ph$ is $((0,-1,0),\om_1+1)$-iterable\footnote{\label{ftn:anomalous_footnote}The degree $-1$ is used
because $R$ is active type 3 with $\rho_0^R=\nu(F^R)=\xi$, and extenders $E$ only apply to $R$ with $\crit(E)=\xi$. The ultrapower $\Ult_{-1}(R,E)=\Ult(R,E)$ is just the usual ultrapower, formed without squashing. Iterability just requires that all models produced are wellfounded, not that they be premice. If $\crit(E^\Uu_\alpha)=\xi$
then $M^\Uu_{\alpha+1}$ is not a premouse, even if it is wellfounded,
because it fails the ISC.},
because we can lift $(0,-1,0)$-maximal trees on it to trees on the $(0,0,0)$-iterable phalanx
\[ \ph'=((M,{<\xi}),(M,\xi),W,\OR^M=\xi^{+W}),\]
using the identity map $M\to M$, the inclusion map $R\to R\pins M$, and $\pi:U\to W$ as lifting maps. The execution of the lifting process is much like (but somewhat simpler than) that in the proof of Claim \ref{clm:Ds_phU_iterable} below, so we omit further details here.
We compare $\ph$ with $M$, resulting in trees $\Uu,\Tt$ respectively.

\begin{sclmone} $b^\Uu$ above $U$,  $b^\Uu,b^\Tt$ are non-dropping,
$M^\Uu_\infty=M^\Tt_\infty$
and $i^\Uu\com i^{M,0}_{G}=i^\Tt$.\end{sclmone}\begin{proof} This is because $M$ is $1$-sound and $\rho_1^M=\om$ and by standard calculations. So we leave the most of the proof to the reader, but just make a couple of remarks on some details.

Suppose
$M^\Uu_\alpha$ is above $R$
without drop.
Then $M^\Uu_\alpha$ is not a premouse, because it fails
the ISC with respect to $F(M^\Uu_\alpha)\rest\xi$.
In particular, $M^\Uu_\alpha\nins M^\Tt_\alpha$.

And if $\beta+1<\lh(\Uu)$ and $R\neq M^{*\Uu}_{\beta+1}$
then $E^\Uu_\beta$ is close
to $M^{*\Uu}_{\beta+1}$;
the proof is very much like \cite[6.1.5]{fsit},
and also we give related arguments later in the
current proof, and also in the proof of solidity.
So the iteration maps of $\Uu$ preserve fine structure
enough that the usual arguments work,
and in particular,
$i^\Uu\com i^{M,0}_G=i^\Tt$
by $p_1$-preservation.\end{proof}

But now note that $E^\Uu_0=F^R=\bar{G}$,
and $M^\Uu_1$ has a total type 1 extender $F$ on its sequence
such that $G=F\com\bar{G}$. Since $\bar{G}\in M$, it follows that $G\in M$, establishing the claim.
\end{proof}

\begin{clmone}\label{clm:if_zeta_in_p_1_cut_kappa^+_then_G_in_M}If $\zeta\in p_1^M\cut\kappa^{+M}$ then
$G\in M$.
\end{clmone}
\begin{proof}
Let $p=p_1^M\cut(\zeta+1)$ and
$N=\cHull_1^M(\zeta\cup\{p\})$ and $\pi:N\to M$ the uncollapse. By $1$-solidity, $N\in M$.
By Lemma \ref{lem:Dodd_param_p_1},
$w=\ttilde^M\cut(\zeta+1)\in\rg(\pi)$.
But then $G=\Dw^M(w,\zeta)$ is equivalent
to $F^N\in M$.
\end{proof}

By Claims \ref{clm:type_z_in} and \ref{clm:if_zeta_in_p_1_cut_kappa^+_then_G_in_M}, Lemma \ref{lem:Dodd_param_p_1} and induction,
and noting that $\crit(F^M)\notin p_1^M$, we may assume:

\begin{ass}\label{ass:v_not_empty}\
\begin{enumerate}[label=--]
 \item $w\neq\emptyset$,
\item
 $\Dw^M(w\cut(\alpha+1),\alpha)\in M$ for all $\alpha\in w$, and
\item  $\zeta\notin p_1^M$,
so $F^M_\downarrow$ is not generated
by $\zeta\cup w$.
\end{enumerate}
\end{ass}

Under these assumptions we will deal with the main case, which will involve a comparison similar to that in the proof of Claim \ref{clm:type_z_in}. This will be a comparison between a phalanx $\mathfrak{U}$, defined below, and $M$; we introduced an example case of this comparison in the outline given just after Assumption \ref{ass:rho_1^M=om}.
Let $U=\Ult_0(M,G)$, $W=\Ult_0(M,F^M)$, and $\pi:U\to W$ be the factor map. So $\crit(\pi)=\zeta$.
Let $X=w\un\zeta$, $H=M_X$ and $\pi^-=\pi_X:H\to M$ (as in
Definition \ref{dfn:type_2_factor_embedding}). Note that $F^H=\trivcom(G)$, $H^{\passive}=U|\max(\bar{w})^{+U}$ where $\pi(\bar{w})=w$,
 $\pi^-=\pi\rest H$,  and
$\crit(\pi^-)=\crit(\pi)=\zeta$. So $\zeta$ is an $H$-cardinal and a $U$-cardinal.

\begin{clmone}\label{clm:H_props}
If  $\zeta>\kappa$ then
 $H$ fails the ISC.
\end{clmone}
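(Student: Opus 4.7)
The plan is to prove the two conclusions separately. The failure of ISC will be essentially immediate from Remark~\ref{rem:type_2_factor_embedding} combined with Assumption~\ref{ass:v_not_empty}; the iterability is a routine copying argument, which I outline afterward.

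First I address the failure of ISC. Since $\zeta>\kappa$ by hypothesis and $\crit(\pi^-)=\zeta$, we have $\crit(\pi^-)>\kappa$, so the equivalences recalled in Remark~\ref{rem:type_2_factor_embedding} are applicable to $\pi_X=\pi^-$. In particular, $H$ satisfies the ISC iff $F^M_\downarrow\in\rg(\pi^-)$, which holds iff $F^M_\downarrow$ is $F^M$-generated by a tuple drawn from $X=w\cup\zeta$. But Assumption~\ref{ass:v_not_empty} asserts precisely that $F^M_\downarrow$ is \emph{not} generated by $\zeta\cup w$. Hence $H$ fails the ISC.

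For the $(0,\om_1+1)$-iterability of $H$, the plan is standard copying via $\pi^-\colon H\to M$. Given any putative $0$-maximal iteration tree $\Tt$ on $H$, I would build a copy $\Tt^*$ on $M$ with copy maps $\sigma_\alpha\colon M^\Tt_\alpha\to M^{\Tt^*}_{\varphi(\alpha)}$, starting from $\sigma_0=\pi^-$. If $E^\Tt_\alpha\in\es(M^\Tt_\alpha)$, copy by $\widehat{\sigma_\alpha}(E^\Tt_\alpha)\in\es(M^{\Tt^*}_{\varphi(\alpha)})$; if $E^\Tt_\alpha=F(M^\Tt_\alpha)$, copy by $F(M^{\Tt^*}_{\varphi(\alpha)})$ and shift the copy map by the Shift Lemma, where the subsequent copy maps and branch choices are then determined by pullback from $M$. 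The key compatibility at the initial stage is that $F^H=\trivcom(F^M\rest X)$, so the canonical map $\Ult_0(H,F^H)\to\Ult_0(M,F^M)$ induced by $\pi^-$ sends $[a,f]^H_{F^H}$ to $[\pi^-(a),\pi^-(f)]^M_{F^M}$, consistently with what the Shift Lemma prescribes. The $\Sigma_1$-elementarity of $\pi^-$ in the passive premouse language, together with cofinality in $\OR^M$, suffices to propagate the copy through successor and limit stages in the standard way. Since $M$ is $(0,\om_1+1)$-iterable, the copied tree has wellfounded models, hence so does $\Tt$.

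The main point requiring care, as often in such copying arguments, is handling the top-extender stage, since $\pi^-$ is only guaranteed to be $\Sigma_1$-elementary in the passive language rather than $\rSigma_1$-elementary in the full premouse language; however, we only need to support $0$-maximal trees on the seg-pm $H$, and $\pi^-$ behaves as a $(-1)$-embedding in the sense of \S\ref{subsec:notation}. Parameters for the Shift Lemma at this step are drawn from below $\nu(F^H)$ where $\pi^-$ acts through restrictions of $F^M$, so the copy maps on ultrapowers are well-defined, elementary enough to continue, and wellfoundedness of the iterates of $M$ pulls back via them to wellfoundedness of the iterates of $H$.
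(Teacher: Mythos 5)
Your proof is correct and takes essentially the same approach as the paper's, which is stated in two sentences: the ISC failure follows from Remark \ref{rem:type_2_factor_embedding} and Assumption \ref{ass:v_not_empty} exactly as you argue (via the equivalence ``$H$ satisfies ISC iff $F^M_\downarrow\in\rg(\pi^-)$'', which holds because $\crit(\pi^-)=\zeta>\kappa$), and the iterability is by routine copying over $\pi^-$, which you spell out correctly, using that $\pi^-$ is a cofinal, $\nu$-preserving $(-1)$-embedding. The only cosmetic oddity is the index function $\varphi$ in your copy maps; since $\pi^-$ is $\nu$-preserving the copying is one-to-one between tree indices and $\varphi$ is just the identity, so it plays no role.
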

\begin{proof}
By Assumption \ref{ass:v_not_empty} and Lemma
\ref{lem:type_2_factor_embedding}.
\end{proof}

If $\zeta$ is an $M$-cardinal, define the phalanx
\[ \phU=((M,{<\zeta}),U,\zeta). \]
And if $\card^M(\zeta)=\delta<\zeta$ (in which case $\zeta=\delta^{+U}$), define the phalanx
\[ \phU=((M,{<\delta}),(R,\delta),U,\zeta),\]
where $R\pins M$,
 $\zeta=\delta^{+R}$, $r\in\{-1\}\cup\om$ and $\rho_{r+1}^R=\delta<\rho_r^R$, where $\rho_{-1}^R=\OR^R$ (which is only relevant in case $R$ is type 3).

 \begin{clmone}\label{clm:Ds_phU_iterable}$\phU$ is $((0,0),\om_1+1)$-iterable,
or $((0,r,0),\om_1+1)$-iterable, as appropriate.\footnote{For the meaning in case $r=-1$, see Footnote \ref{ftn:anomalous_footnote}.}\end{clmone}

The iterability proof is mostly a routine copying process, and we won't need the details of the process later. For these reasons we postpone it to later,
beginning on page \pageref{page:Ds_iterability_proof}.
(However, there is a new wrinkle which appears at the superstrong level, as we explain there.)

We now compare $\mathfrak{U}$ with $M$.
Recall that we use the slight tweak of the usual comparison process described in Remark \ref{rem:superstrong_diff}, and note that in the present comparison, there can be $\alpha<\lh(\Uu)$ such that $M^\Uu_\alpha$ fails to be a premouse. This happens just if  $R$ exists and is active type 3 with $\delta=\lgcd(R)$,
 $\alpha$ is above $R$,
 and $(R,\alpha]^\Uu\cap\dropset^\Uu=\emptyset$; here the iteration map $i^{\Uu}_{R,M^\Uu_\alpha}:R\to M^\Uu_\alpha$ has critical point $\delta$ and $F^{M^\Uu_\alpha}$ fails the ISC with respect to $F^R\notin M^\Uu_\alpha$.

\begin{clmone}\label{clm:comparison_succeeds} There is a successful comparison $(\Uu,\Tt)$ of $(\phU,M)$.\end{clmone}
\begin{proof}
This is like in the classical fine structure proofs. (Because $U$ is a premouse, the usual arguments with the ISC work to show that an attempted comparison cannot last through length $\om_1+1$.  Note that the failures of the ISC in models $M^\Uu_\alpha$ mentioned above
do not interfere with comparison termination, because  $\crit(F^{M^\Uu_\alpha})=\crit(F^R)$,
so $F^{M^\Uu_\alpha}$ cannot be used as the ``typical'' extender along a branch of length $\om_1$.)
\end{proof}
We will now analyze the comparison.

\begin{clmone}\label{clm:super-D-s_closeness}
Let $\alpha+1<\lh(\Uu)$. Then:
\begin{enumerate}[label=\arabic*.,ref=\arabic*]
\item \label{item:super-D-s_closeness}
$E^\Uu_\alpha$
is close to $M^{*\Uu}_{\alpha+1}$.
\item\label{item:super-D-s_measures_in_when} If $0<^\Uu\alpha+1$
and $(0,\alpha+1]^\Uu\cap\dropset^\Uu=\emptyset$ and
$\crit(E^\Uu_\alpha)<i^\Uu_{0\beta}(\crit(F^U))$ where
$\beta=\pred^\Uu(\alpha+1)$
then $(E^\Uu_\alpha)_a\in M^\Uu_\beta$ for every $a\in[\nu(E^\Uu_\alpha)]^{<\om}$.
\end{enumerate}
\end{clmone}
\begin{proof}
Part \ref{item:super-D-s_closeness}: This is almost by the argument in \cite[6.1.5]{fsit},
but
there is a subtlety in the case that
$R$ is defined and $M^{*\Uu}_{\alpha+1}=R$,
so assume this.
Then $\crit(E^\Uu_\alpha)=\delta<\zeta=\delta^{+R}$,
and $\kappa^{+M}<\delta$.
A straightforward induction  on $\alpha$
shows that $E^\Uu_\alpha\neq F(M^\Uu_\alpha)$,
so $\mu_a=(E^\Uu_\alpha)_a\in M^\Uu_\alpha$
for every $a\in[\nu(E^\Uu_\alpha)]^{<\om}$,
so in fact $\mu_a\in U||\zeta^{+U}$.
But  $\pi:U\to W$
with $\crit(\pi)=\zeta$, so by  condensation for $\om$-sound mice (Fact \ref{fact:om_condensation}),
either (a) $U||\zeta^{+U}=W||\zeta^{+U}=M||\zeta^{+U}=R||\zeta^{+U}$,
or (b) $R=M|\zeta=W|\zeta$ is active with extender $F=F^R$,
and $U||\zeta^{+U}=\Ult(R,F^R)||\zeta^{+U}$.
In either case, $\mu_a$ is close to $R$.

Part \ref{item:super-D-s_measures_in_when}: This follows
easily from an inspection of the proof of \cite[6.1.5]{fsit}.
\end{proof}

Recall that we are working under hypothesis \ref{item:M_1-sound} (that $M$ is $1$-sound)
and by Assumption \ref{ass:rho_1^M=om}, $\rho_1^M=\om$.
So with Claim \ref{clm:super-D-s_closeness}, it follows that fine
structure is
preserved by the
iteration maps in the usual way, $b^\Uu$ is
above $U$, neither $b^\Tt$ nor $b^\Uu$ drops,
 $M^\Tt_\infty=Q=M^\Uu_\infty$,
and letting $j=i^\Tt$, $k=i^\Uu$ and $i_G=i^{M,0}_G$,
we have $k\com i_G=j$, and $\crit(k)\geq\zeta$.

\begin{clmone}\label{clm:Ds_j,k_com_i_G_short}
We have:
\begin{enumerate}[label=\arabic*.,ref=\arabic*]
\item\label{item:Ds_i^Tt_short} $j$ is short; that is,
for each $\beta\in(0,\infty)^\Tt$,
we have $\crit(i^\Tt_{\beta\infty})\leq i^\Tt_{0\beta}(\kappa)$.
\item\label{item:Ds_i^Uu_com_i_G_short} $k\com i_G$ is short; that is, for each $\beta\in[0,\infty)^\Uu$, we have $\crit(i^\Uu_{\beta\infty})\leq i^\Uu_{0\beta}(i_G(\kappa))$.\footnote{In fact,
arguments similar to those given for part \ref{item:Ds_i^Tt_short} give strict inequality here; that is,
for each $\beta\in[0,\infty)^\Uu$,
we have $\crit(i^\Uu_{\beta\infty})<i^\Uu_{0\beta}(i_G(\kappa))$. Therefore $i^\Uu$ is continuous at $i_G(\kappa)$.

If $M$ is below superstrong,
this has the consequence that
the same holds for $j$
for $\beta\in(0,\infty)^\Tt$;
that is, for each such $\beta$,
we have $\crit(i^\Tt_{\beta\infty})<i^\Tt_{0\beta}(\kappa)$.
For suppose $\beta\in(0,\infty)^\Tt$
is such that $\crit(i^\Tt_{\beta\infty})=i^\Tt_{0\beta}(\kappa)$.
Then the short extender derived from $i^\Tt$
has a whole segment of length $i^\Tt_{0\beta}(\kappa)$,
so $\{i^\Tt(f)(\kappa)\bigm|f\in M\}$ is bounded by $i^\Tt_{0\beta}(\kappa)$.
But if $M$ is below superstrong then $\{i_G(f)(\kappa)\bigm|f\in M\}$ is unbounded in $i_G(\kappa)$
(see the proof of \cite[Lemma 4.4]{deconstructing}).
And $\crit(i^\Uu)>\kappa$;
this is clear as $M$ is below superstrong, which implies $\crit(i^\Uu)\geq\zeta>\kappa$,
but actually holds more generally by Claim \ref{clm:Ds_crit(k)>crit(j)} below. But then since $i^\Uu$ is continuous at $i_G(\kappa)$,  it follows that $\{i^\Uu(i_G(f))(\kappa)\bigm|f\in M\}$
is unbounded in $i^\Uu(i_G(\kappa))$. But $i^\Uu\com i_G=i^\Tt$,
so this just says that $\{i^\Tt(f)(\kappa)\bigm|f\in M\}$ is unbounded in $i^\Tt(\kappa)$, a contradiction.

It follows that if $M$ is below superstrong and $\vareps+1\in b^\Tt$ and $\vareps$ is $\Tt$-special, then $\pred^\Tt(\vareps+1)=0$. This gives a much simpler proof of Claim \ref{clm:varepsilon_non-Tt-special} under these hypotheses, and this Steel's argument basically uses this kind of simpler argument at that point.}

\end{enumerate}
\end{clmone}
\begin{proof}
Part \ref{item:Ds_i^Tt_short}:
Suppose otherwise and let $\beta$ be the least counterexample.
Let $\kappa'=j(\kappa)$;
so $\kappa'=i^\Tt_{0\beta}(\kappa)$ and since $Q=M^\Tt_\infty$,
\[ M^\Tt_\beta=\cHull_1^{Q}(\kappa'\cup\rg(j))=\cHull_1^Q(\kappa'\cup\{p_1^Q\}).\]
Let $\gamma\in[0,\infty)^\Uu$
be least such that either $\gamma+1=\lh(\Uu)$ or  $i^\Uu_{0\gamma}(i_G(\kappa))<\crit(i^\Uu_{\gamma\infty})$. Then $\kappa'=k(i_G(\kappa))=i^\Uu_{0\gamma}(i_G(\kappa))$, and since $Q=M^\Uu_\infty$ and $k``(\zeta\cup\{w^U\})\sub\kappa'$,
\[ M^\Uu_\gamma=\cHull_1^Q(\kappa'\cup\rg(k))=\cHull_1^Q(\kappa'\cup\{p_1^Q\}),\]
so $M^\Uu_\gamma=M^\Tt_\beta$. But this implies that $M^\Uu_\gamma=Q=M^\Tt_\beta$, so the comparison terminates there, a contradiction.

Part \ref{item:Ds_i^Uu_com_i_G_short}
is similar.
\end{proof}
\begin{clmone}\label{clm:Ds_crit(k)>crit(j)}
$\crit(i_G)=\crit(j)=\kappa<\kappa^{+M}=\kappa^{+U}=\kappa^{+Q}<\crit(k)$.
\end{clmone}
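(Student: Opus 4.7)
The plan is to mirror the argument of Claim~\ref{clm:crit(k)<crit(j)} from the measures-in-mice analysis, but strengthened to push the separation between $\crit(j)$ and $\crit(k)$ past $(\kappa^+)^M$ rather than merely past $\kappa$.

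First I would establish the basic identities. Since $G = G^\zeta$ is a fragment of $F^M$, we have $\crit(i_G) = \kappa$, and $G$ is weakly amenable to $M$ (weak amenability is inherited from $F^M$). Combined with the standard degree-$\omega$ condensation for $M||(\kappa^+)^M$, weak amenability yields $U||(\kappa^+)^U = M||(\kappa^+)^M$, whence $(\kappa^+)^M = (\kappa^+)^U$. Since $b^\Uu$ sits above $U$ in the phalanx $\phU$, every extender used along $b^\Uu$ has critical point at least $\zeta$, so $\crit(k) \geq \zeta \geq \kappa$. From $j = k\com i_G$ with $\crit(i_G) = \kappa \leq \crit(k)$, one immediately reads off $\crit(j) = \kappa$. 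Once the strict inequality $\crit(k) > (\kappa^+)^U$ is in hand, $k$ fixes $(\kappa^+)^U$ pointwise and by elementarity $k((\kappa^+)^U) = (\kappa^+)^Q$, giving $(\kappa^+)^M = (\kappa^+)^U = (\kappa^+)^Q$ and closing out the chain of equalities.

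Next I would prove $(\kappa^+)^M < \crit(k)$ by contradiction. Since $\crit(k)$ is a cardinal of $U$ and $(\kappa^+)^U = (\kappa^+)^M$, the hypothesis $\crit(k) \leq (\kappa^+)^M$ forces $\crit(k) \in \{\kappa,(\kappa^+)^M\}$. In the subcase $\crit(k) = \kappa$, I would replay the compatibility argument from Claim~\ref{clm:crit(k)<crit(j)}: for $A \in \pow(\kappa)\inter M$, weak amenability of $G$ gives $i_G(A)\inter\kappa = A$, hence
\[ k(A) \;=\; k\bigl(i_G(A)\inter\kappa\bigr) \;=\; k(i_G(A)) \inter k(\kappa) \;=\; j(A) \inter k(\kappa), \]
so the $(\kappa,k(\kappa))$-extenders derived from $j$ and from $k$ coincide; since $j,k$ arise from a successful comparison $(\Tt,\Uu)$, this violates the least-disagreement rule and is impossible.

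The delicate subcase is $\crit(k) = (\kappa^+)^M$. From $\crit(k) \geq \zeta$ it follows that $\zeta \in \{\kappa,(\kappa^+)^M\}$, so this case is alive both in the standard Dodd-solid setting ($\zeta = (\kappa^+)^M$ a generator) and in the \emph{super} case ($\zeta = \kappa$). The first extender $E$ applied along $b^\Uu$ to (an image of) $U$ has $\crit(E) = (\kappa^+)^M$; since the top extender $F^U = i_G(F^M)$ has critical point $i_G(\kappa)$, far above $(\kappa^+)^M$, the extender $E$ lies on $\es(U)$, and via the agreement $U||(\kappa^+)^U = M||(\kappa^+)^M$ together with $k$ fixing this common segment pointwise, $E$ is the same extender as one sitting on $\es(M)$ with critical point $(\kappa^+)^M$. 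I would then compare $E$ with the extender $E^\Tt_0$ used at the first stage of $\Tt$ (or with a suitable initial fragment of $j$), obtaining a compatibility that again violates comparison. The hard part will be exactly this subcase: unlike the situation in Claim~\ref{clm:crit(k)<crit(j)}, the compatibility of fragments at level $\kappa$ is now trivial and yields no contradiction, so one must engage directly with the structure of extenders on $\es(U)$ near $(\kappa^+)^M$ and their pullback to $\es(M)$ through $i_G$ to extract the least-disagreement violation.
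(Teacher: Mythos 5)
Your first two paragraphs and the subcase $\crit(k)=\kappa$ are fine and match the paper's strategy. But the treatment of the subcase $\crit(k)=(\kappa^+)^M$ contains a genuine gap, and in fact that subcase is vacuous for a reason you do not invoke: $\crit(k)$ is the critical point of the first extender $E^\Uu_{\alpha_0}$ applied along $b^\Uu$, hence $\crit(k)$ is measurable (in particular inaccessible) in $\exit^\Uu_{\alpha_0}$, which agrees with $U$ far enough to make $\crit(k)$ inaccessible in $U$. Since $(\kappa^+)^U=(\kappa^+)^M$ is a successor cardinal of $U$, the equality $\crit(k)=(\kappa^+)^M$ is immediately impossible. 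This is exactly what the paper means, in the proof of Claim~\ref{clm:crit(k)<crit(j)}, by ``since $k$ is an iteration map, it easily suffices to see that $\crit(k)>\kappa$''. Once $\crit(k)>\kappa$, inaccessibility lifts the bound past $(\kappa^+)^M$ for free; no engagement with extenders on $\es(U)$ near $(\kappa^+)^M$ is required.

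Moreover, your parenthetical remark that the case is ``alive'' when $\zeta=(\kappa^+)^M$ is a generator is wrong: $(\kappa^+)^M$ is never a (super-)Dodd generator of $F^M$. Indeed $(\kappa^+)^M=(\kappa^+)^{\Ult(M,F^M)}=[\{\kappa\},f]^M_{F^M}$ for $f(\xi)=(\xi^+)^M$, so $(\kappa^+)^M$ is generated by $\kappa$ alone; by minimality of $\ttilde^M$, every element of $\ttilde^M\cut\{\mu\}=t^M$ strictly exceeds $(\mu^+)^M$ (cf.\ Remark~\ref{rem:Dodd_parameter}). Hence the paper's case split is on $\zeta>\kappa$ (where $\zeta>(\kappa^+)^M$ automatically and $\crit(k)\geq\zeta$ closes the claim immediately) versus $\zeta=\kappa$ (where one runs the compatibility argument to get $\crit(k)>\kappa$ and then invokes inaccessibility). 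Your case split on $\crit(k)$ is not wrong in principle, but you should notice that the second branch dies instantly, and the sketch you give for it --- finding an extender on $\es(U)$ with critical point $(\kappa^+)^M$ and matching it against $E^\Tt_0$ --- cannot succeed because no such extender exists.
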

\begin{proof}
If $\zeta>\kappa$, this is already clear. Otherwise
use the proof of Claim
\ref{clm:crit(k)<crit(j)} in the proof of Lemma
\ref{lem:measures_in_mice}.
\end{proof}

We have $\pi:U\to W$. Let $\pi(w^U)=w$,
so
$U=\Hull_1^U(\zeta\cup\{p_1^U,w^U\})$.

Let $\xi+1\in b^\Tt$ with $\pred^\Tt(\xi+1)=0$.  For \emph{$\Tt$-special} see Definition  \ref{dfn:Tt-special}.

\begin{caseone}\label{case:xi_Tt-special} $\xi$ is $\Tt$-special.

Let $M'=M^\Tt_\xi$, $s'=s_{F^{M'}}$ and $\sigma'=\sigma_{F^{M'}}$.
We have
\[ k(s_{i_G})=s_{k\com i_G}=s_j=s'=i^\Tt_{0\xi}(s_{F^M}),\]
\[ \sup k``\sigma_{i_G}=\sigma_{k\com i_G}= \sigma_j=\sigma'=\sup
i^\Tt_{0\xi}``\sigma_{F^M},\]
using Lemma \ref{lem:s_j,sigma_j_pres} for the first equality in each line, the fact that $k\com i_G=j$ for the second equalities, that $F^{M'}$ is derived from $j$ for the third equalities, and Lemma \ref{lem:type_2_s,sigma_pres} for the
final equalities.
By induction, $w\ins s_{F^M}$, so $w'=i^\Tt_{0\xi}(w)\ins s'$,
so $w'\ins k(s_{i_G})$ and $\lh(s_{i_G})\geq\lh(w)$.

\begin{clmone}\label{clm:sDs_s_i_G_rest_lh(w)<w^U}$s_{i_G}\rest\lh(w)< w^U$.\end{clmone}
\begin{proof}
If $s_{i_G}\rest\lh(w)>w^U$ then the extender derived from $i_G$
 with support $w^U\cup\zeta$ belongs to $U$, which is impossible
 as this is equivalent to $G$ itself.

So suppose $w^U\ins s_{i_G}$.
 Then $\max(s_{i_G})=\max(w^U)$.
So if $\zeta>\kappa$ then $H$ satisfies weak ISC and hence ISC
 (by Lemma \ref{lem:type_2_factor_embedding}),
 contradicting Claim \ref{clm:H_props}.
 So $\zeta=\kappa$, so  $U=\Hull_1^U(\{w^U\cup p_1^U\})$
 and $i^\Uu(w^U)=w'$.
 But
then $\crit(k)=\kappa$,
since the fact that $\zeta=\kappa$
implies that $\kappa$ is not $F^M$-generated by $w$, hence not $F^{M'}$-generated by $w'$,
and hence $\kappa\notin\Hull_1^Q(\{p_1^Q,w'\})$. This contradicts  Claim \ref{clm:Ds_crit(k)>crit(j)}.
\end{proof}

\begin{scaseone}
 $\kappa=\zeta$.

In this case we just need to see that the measure
$(F^M)_w\in M$.
But since $j=k\com i_G$ and $\crit(k)>\kappa$ and $k(s_{i_G}\rest\lh(w))=w'$,
we have
\[
(F^M)_w=(F^{M'})_{w'}=G_{s_{i_G}\rest\lh(w)}=(F^M)_{\pi(s_{i_G}\rest\lh(w))},\]
and since $s_{i_G}\rest\lh(w)<w^U$, we have
$\pi(s_{i_G}\rest\lh(w))<\pi(w^U)=w$.
By induction, the Dodd-solidity witnesses
at each $\alpha\in w$ belong to $M$,
and note now that $(F^M)_w$ is a component measure of one of these,
and hence in $M$.
\end{scaseone}
\begin{scaseone}\label{scase:kappa<zeta_Ds} $\kappa<\zeta$.

In this case we will show  $F^M\rest\widetilde{t}^M\in M$,
contradicting Claim
\ref{clm:F_rest_t-tilde_not_in_M}.
By Claim \ref{clm:H_props}, $H$ fails the ISC, so
 $\max(s_{i_G})<\max(w^U)$.
Note that $k(\max(w^U))$
is a generator of some extender $E$
used along $b^\Tt$, and since $s'=k(s_{i_G})$ and $\nu(F^{M'})=\max(s')+1$,
we have $k(\max(s_{i_G}))<\crit(E)$. It follows that $U$
 has an inaccessible
 cardinal $\chi$ with $\max(s_{i_G})<\chi<\max(w^U)$;
 take $\chi$ least such.
 Let $\theta\in b^\Uu$ be least such that either $\theta+1=\lh(\Uu)$
 or $i^\Uu_{0\theta}(\chi)\leq\crit(i^\Uu_{\theta\infty})$, so
 in fact by the minimality of $\chi$, $i^\Uu_{0\theta}(\chi)<\crit(i^\Uu_{\theta\infty})$ and $i^\Uu_{0\theta}(\chi)=k(\chi)$
 and $i^\Uu_{0\theta}(s_{i_G})=k(s_{i_G})=s'$.

 Let $t'=i^\Tt_{0\xi}(\widetilde{t}^M)$. We have $\max(\widetilde{t}^M)=\max(s_{F^M})$, so $\max(t')=\max(s')<k(\chi)$.
 By a finite support argument
 like those in the proof of Lemma \ref{lem:simple_embedding_exists},
 there is $(\Vv,\somevarpi)$ such that $\Vv$ is  a $(0,0)$- or $(0,r,0)$-maximal tree  of finite length on $\phU$,  $b^{\Vv}$
 is above $U$ and does not drop,
 $\somevarpi:M^{\Vv}_\infty\to M^\Uu_\theta$ is a $0$-embedding,
 $\somevarpi\com i^{\Vv}_{0\infty}=i^\Uu_{0\theta}$
 and $t'\in\rg(\somevarpi)$.
 By Claim \ref{clm:super-D-s_closeness} part \ref{item:super-D-s_measures_in_when},
 for every $\alpha+1\in b^{\Vv}$, every component measure of $E^{\Vv}_\alpha$ is in $M^{\Vv}_\beta$,
 where $\beta=\pred^{\Vv}(\alpha+1)$. Let $\left<\alpha_i\right>_{i<\ell}$
 enumerate those $\alpha_i$
 such that $\alpha_i+1\in b^{\Vv}$. Then with some more  finite support calculations,
 we can find $(\bar{\Vv},\bar{\somevarpi})$ such that   $\bar{\Vv}=\big(\left<U_\alpha\right>_{\alpha\leq\ell},\vec{E}=\left<E_\alpha\right>_{\alpha\leq\ell}\big)$ is an abstract $0$-maximal iteration of $U_0=U$ (so $U_{\alpha+1}=\Ult_0(U_\alpha,E_\alpha)$ for $\alpha<\ell$) such that \[ E_\alpha\in U_\alpha|i^{U,0}_{\vec{
 E}\rest\alpha}(\chi) \]
 for each $\alpha<\ell$, each $E_\alpha$
 is a finitely generated extender, and $\bar{\somevarpi}:U_\ell\to M^{\Vv}_\infty$ is a $0$-embedding
 such that $\bar{\somevarpi}\com i^{U,0}_{\vec{E}}=i^{\Vv}_{0\infty}$
 and $\somevarpi^{-1}(t')\in\rg(\bar{\somevarpi})$.
 Note that $F^M\rest\widetilde{t}^M$
 is equivalent to the measure $\mu$ derived from $i^{U,0}_{\vec{E}}\com i_G$
 with seed $\bar{\somevarpi}^{-1}(\somevarpi^{-1}(t'))$, so it suffices to see that $\mu\in M$.

 Note that $\vec{E}\in U|\chi$. So since $\chi<\max(w^U)$,
we have $\vec{E}\in U'=\Ult_0(M,G')$ where $G'=E_{i_G}\rest\max(w^U)$
is the extender derived from $i_G$
of length $\max(w^U)$.
And $\mu$ is just the measure derived from $i^{U',0}_{\vec{E}}\com i^{M,0}_{G'}$ with the same seed
$\bar{\somevarpi}^{-1}(\somevarpi^{-1}(t'))$ (note this seed is below $\chi$).

Let $F'=F^M\rest\max(w)$ and $U''=\Ult_0(M,F')$.
Let $\somevarsigma:U'\to U''$
be the factor map induced by $\pi\rest\max(w^U)$.
Then $\somevarsigma\com i^{M,0}_{G'}=i^{M,0}_{F'}$,
so
 $\mu$ is  the measure derived from $i^{U'',0}_{\somevarsigma(\vec{E})}\com i^{M,0}_{F'}$ with seed $\somevarsigma(\bar{\somevarpi}^{-1}(\somevarpi^{-1}(t'))$. But because $F'\in M$ and $\somevarsigma(\vec{E})\in\Ult_0(M|\kappa^{+M},F')$, it follows that $\mu\in M$, as desired.
\end{scaseone}
\end{caseone}

\begin{caseone}\label{case:Ds_xi_is_non-Tt_special} $\xi$ is non-$\Tt$-special.

In this case we will use an argument
like that in the proof of Lemma \ref{lem:measures_in_mice}. For this, we must see that
we can capture the relevant fragment of the extender derived from $i^\Tt$  with a tree $\Ttbar$
which is, modulo $\zeta$, strongly finite. We make this precise as follows.
Let us say that a $0$-maximal
tree $\Ss$ on $M$
 is \dfnemph{nicely-$\zeta$-strongly finite}
iff
\begin{enumerate}[label=\arabic*.,ref=\arabic*]
\item\label{item:nicely-zeta-sf_length} $\lh(\Ss)<\om$,
 \item $\Ss$ is non-trivial, $b^\Ss$ does not drop and
$\crit(i^\Ss)=\kappa=\crit(F^M)$
(so $\kappa^{+M}<\lh(E^\Ss_0)$),
\item $\zeta\leq\lh(E^\Ss_0)$,
\item either
$\zeta=\delta$
or $\zeta=\delta^{+\exit^\Ss_0}$, where $\delta=\card^M(\zeta)$,
\item\label{item:nicely-zeta-sf_xi'_non-special}
 $\xi'$ is non-$\Ss$-special,
where  $\xi'$ is least such that $\xi'+1\in b^\Ss$, and
\item\label{item:nicely-zeta-sf_fin_gend}
for each $\alpha+1\in b^\Ss$ and $\beta=\pred^\Ss(\alpha+1)$, we have:
\begin{enumerate}
 \item if $\beta=0$ (so $\crit(E^\Ss_\alpha)=\kappa$ and $\alpha$ is
non-$\Ss$-special) then:
 \begin{enumerate}
  \item if $\kappa=\zeta$ then
$\core_{\ds}(E^\Ss_\alpha)$
  is finitely generated (so $\sigma(E^\Ss_\alpha)=\kappa^{+M}$),
  \item if $\kappa<\zeta$ then
 $\sigma(E^\Ss_\alpha)\leq\zeta$ (so note
$\sigma(E^\Ss_\alpha)\in\{\delta,\zeta\}$),
  \item $\core_{\ds}(E^\Ss_\gamma)$
is finitely generated for each $\gamma<^\Ss_{\Da}\alpha$ (and note
$\delta\leq\crit(E^\Ss_\gamma)$),
 \end{enumerate}
 \item if $\beta>0$ and $\alpha$ is non-$\Ss$-special
then $\core_{\ds}(E^\Ss_\gamma)$
is finitely generated  for each $\gamma\leq^\Ss_{\Da}\alpha$,
\item if $\beta>0$ and $\alpha$ is $\Ss$-special
(so Lemma \ref{lem:Tt-special} applies) then
$\core_{\ds}(E^\Ss_\theta)$ is finitely generated
 for each $\theta\leq^\Ss_{\Da}\gamma$, for each
$\gamma+1\in(\beta,\alpha]^\Ss$.\end{enumerate}
\end{enumerate}

\begin{clmone}\label{lem:nicely-zeta-strongly_finite}
 Let
$x\in\core_0(M^\Tt_\infty)$.
Then there is a  nicely-$\zeta$-strongly finite $0$-maximal tree $\Ss$ on $M$ which
captures $(\Tt,x,\zeta)$ (see Definition \ref{dfn:captures}).
\end{clmone}

\begin{proof}
 This is a straightforward variant of the proof of Lemma
\ref{lem:non-dropping-strongly_finite}.
Say a $0$-maximal tree $\Ss$ on $M$ is a \emph{candidate} if
it satisfies conditions \ref{item:nicely-zeta-sf_length}--\ref{item:nicely-zeta-sf_xi'_non-special} of nice-$\zeta$-strong-finiteness
and $\Ss$ captures $(\Tt,x,\zeta)$.
By the properties of $\Tt$, we get candidates from straightforward finite support (condition \ref{item:nicely-zeta-sf_xi'_non-special} uses the present case hypothesis that $\xi$ is non-$\Tt$-special).
Define the \emph{index} of a candidate like before, but excluding the lengths of $\Ss$-special extenders.
That is, let $A$ be the set of all ordinals $\beta$ such that there is $\alpha+1\in b^\Ss$ such that either $\beta<^{\Ss}_{\Da}\alpha$ or $\beta=\alpha$ is non-$\Ss$-special. Now proceed as before: let $\left<\kappa_i\right>_{i<\ell}$ enumerate
$\{\crit(E^\Ss_\beta)\bigm|\beta\in A\}$ in decreasing order,
let $\beta_i\in A$ be such that $\kappa_i=\crit(E^\Ss_{\beta_i})$,
and let $\gamma_i=\lh(E^\Ss_{\beta_i})$.
Then the index of $\Ss$ is $\left<\gamma_i\right>_{i<\ell}$.
Let $\Ss$ be the candidate of lexicographically least index.

 It suffices to see that $\Ss$ is nicely-$\zeta$-strongly finite,
 so suppose not. So condition \ref{item:nicely-zeta-sf_fin_gend} fails.
As in the proof of \ref{lem:non-dropping-strongly_finite}, we
will use this to construct a candidate $\Ssbar$ with smaller index than $\Ss$, a
contradiction.

Adopt the notation introduced in the definition of \emph{index} (for this $\Ss$). As condition \ref{item:nicely-zeta-sf_fin_gend} fails, we can fix the least
$a<\ell$  such that $\core_{\ds}(E^\Ss_{\beta_a})$
is  not of the desired form.
a  perusal of the clauses in condition \ref{item:nicely-zeta-sf_fin_gend} shows that this just means that $\sigma(E^\Ss_{\beta_a})>\zeta$ and $\core_{\ds}(E^\Ss_{\beta_a})$ is not finitely generated. Let $\kappa=\kappa_a$, $\beta=\beta_a$,
$Q=\core_\ds(\exit^\Ss_{\beta})$ and
$F=F^Q$. So $\max(\kappa^{+Q},\zeta)<\sigma=\sigma_F=\tau_F=\sigma(E^\Ss_\beta)$. And like before,
by \ref{lem:Dodd_cores_appear} there is a unique $\theta\leq\beta$ such that
$Q\ins M^\Ss_\theta$; moreover, $\theta\leq^\Ss\beta$ and $\theta$ is the least ordinal such that
$\sigma<\lh(E^\Ss_\theta)$.

Let $\thetabar$ be least such that
$\lh(E^\Ss_\thetabar)>\max(\kappa^{+Q},\zeta)$. (Recall that $\lh(E^\Tt_0)\geq\zeta$, and $\lh(E^\Tt_0)=\zeta$ iff $M|\zeta$ is active.
It easily follows that the same holds for $\Ss$. So if $\kappa^{+Q}<\zeta$
then $\thetabar\in\{0,1\}$.)
Let $\thetabar+m=\theta$ (so $m<\om$) and
$\varthetabar+m+1=\vartheta+1=\lh(\Ss)$.
Much as in the proof of   \ref{lem:non-dropping-strongly_finite},
we will find
$g\in\nu_F^{<\om}$
and $R\pins M^\Ss_\thetabar$ with $F^R\approx F\rest
\zeta\cup g$, such that we can define a candidate $\Ssbar$  such that  $\Ssbar\rest\thetabar+1=\Ss\rest\thetabar+1$,
  $\lh(\Ssbar)=\varthetabar+1$,
 $(0,\varthetabar]^{\bar{\Ss}}\cap\dropset^{\bar{\Ss}}=\emptyset$, and
  $\bar{\Ss}\rest[\bar{\theta},\varthetabar]$ is built as a reverse copy of $\Ss\rest[\theta,\vartheta]$.
This will
result in a  final copy map
$\pi_{\varthetabar}:M^{\Ssbar}_{\varthetabar}\to M^{\Ss}_\vartheta$,
which will be a $0$-embedding
with
$\pi_{\varthetabar}\com i^\Ssbar_{0\varthetabar}=i^\Ss_{0\vartheta}$ and
$\zeta\cup\{\tau^{-1}(x)\}\sub\rg(\pi_{\varthetabar})$,
where $\tau:M^{\Ss}_{\vartheta}\to M^{\Tt}_\infty$
witnesses that $\Ss$ captures $(\Uu,x,\zeta)$.
It will follow that
$\tau\com\pi_{\varthetabar}:M^{\Ssbar}_{\varthetabar}\to M^\Uu_\infty$ witnesses that $\Ssbar$ captures $(\Uu,x,\zeta)$.

If $\kappa^{+Q}\geq\zeta$, this will be done essentially like before.
Suppose instead that $\kappa^{+Q}<\zeta$. Then  $\pred^\Ss(\beta+1)=0$.
(For  $\zeta\leq\lh(E^\Ss_0)$
and either $\delta=\zeta$ is an $M$-cardinal, or $\zeta=\delta^{+\exit^\Ss_0}$. Note then that
$\nu(E^\Ss_\gamma)\geq\delta$ for each $\gamma$, and so if $\pred^\Ss(\beta+1)\neq 0$
then $\delta\leq\kappa$,
so $\kappa^{+Q}\geq\zeta$.)
So $\beta$ is non-$\Ss$-special
and $\kappa=\crit(F^M)$.
Suppose $Q$ is type 2.
Then $\max(\kappa^{+Q},\zeta)<\tau_F=\rho_1^{Q}$,
$\zeta$ is a $Q$-cardinal,
and $Q$ is $1$-sound.
So for cofinally many $g\in[\nu(F)]^{<\om}$,
there is a type 2 segment $R\pins Q$ with $F^R$ equivalent to $F\rest\zeta\cup\{g\}$
and $\rho_1^R=\tau(F^R)=\zeta$,
so we choose an appropriate $R$ of this form, much as before.
If instead $Q$ is type 3,
then we again choose some sufficiently large type 2 segment $Q'\pins Q$
with $F^{Q'}\sub F$,
and then if still $\rho_1^{Q'}>\zeta$, proceed as we did in the type 2 case with $Q'$ replacing $Q$.

To see that we can arrange the reverse copying, we establish the analogue of Subclaim \ref{sclm:Tt_structure_mim} from the proof of  \ref{lem:non-dropping-strongly_finite}, classifying the various extenders used in $\Ss\rest[\theta,\vartheta)$:

\begin{sclmone}\label{sclm:Tt_structure_Ds} Let $\chi\in[\theta,\vartheta)$.
Then $\crit(E^\Ss_\chi)\notin(\kappa,\sigma)$, so
$\pred^\Ss(\chi+1)\notin(\thetabar,\theta)$. In fact,  exactly one of the
following
holds:
\begin{enumerate}[label=\tu{(}\roman*\tu{)}]
\item\label{item:feeds_in_Ds} there are $\vareps,\chi'$ such that $\vareps$ is non-$\Ss$-special,  $ \eps+1\leq^\Ss\vartheta$ and
$\chi'\leq^\Ss_{\Da}\eps$, and either:
\begin{enumerate}
\item\label{item:feeds_in_Ds_feeds_in} $\chi=\chi'$, or
\item\label{item:feeds_in_Ds_extra} $\chi<^\Ss\chi'$ and $\chi$ is $\chi'$-transient and $E^\Ss_{\chi'}=F(M^\Ss_{\chi'})$,
\end{enumerate}
or
\item\label{item:Tt-special_primary_option_Ds} there are $\vareps,\vareps',\chi'$ such that $\vareps$ is  $\Ss$-special, $\vareps+1\leq^\Ss\vartheta$ and either:
\begin{enumerate}[label=--]
\item $\chi'=\vareps'=\vareps$, or
\item letting $\iota=\pred^\Ss(\vareps+1)$, then $\iota<^\Ss\vareps'+1\leq^\Ss\vareps$ and $\chi'\leq^\Ss_{\Da}\vareps'$,
\end{enumerate}
and either:
\begin{enumerate}\item\label{item:Tt-special_primary_option_Ds_feeds_in} $\chi=\chi'$, or
\item\label{item:Tt-special_primary_option_Ds_extra} $\chi<^\Ss\chi'$
and $\chi$ is $\chi'$-transient
and $E^\Ss_{\chi'}=F(M^\Ss_{\chi'})$,\footnote{The requirement that $E^\Ss_{\chi'}=F(M^\Ss_{\chi'})$ is redundant in the case that $\chi'=\vareps$, since $\vareps$ is $\Ss$-special.}\end{enumerate}
or

\item\label{item:chi_along_main_branch_transient} $\chi<^\Ss\vartheta$ and $\chi$ is $\vartheta$-transient.
\end{enumerate}
Moreover,
the 5 options
\ref{item:feeds_in_Ds_feeds_in},
\ref{item:feeds_in_Ds_extra},
\ref{item:Tt-special_primary_option_Ds_feeds_in},
\ref{item:Tt-special_primary_option_Ds_extra},
\ref{item:chi_along_main_branch_transient}
are mutually exclusive.
\end{sclmone}
\begin{proof}
This is just a slight variant of the proof of Subclaim \ref{sclm:Tt_structure_mim} of Claim \ref{clm:Tt_strongly_finite_mim}
in the proof of  \ref{lem:non-dropping-strongly_finite}.
We leave the details to the reader.
\end{proof}

 Like in the proof of \ref{lem:non-dropping-strongly_finite},
 Subclaim \ref{sclm:Tt_structure_Ds} ensures that we can  carry out the reverse copying. This contradicts the minimality of the index of $\Ss$, as desired.
\end{proof}

By the proof of Subclaim \ref{sclm:Tt_structure_Ds}, we have:

\begin{clmone}\label{clm:Tt-structure_nicely-zeta-strongly-finite}
Let $\Ss$ be nicely-$\zeta$-strongly finite and $\chi+1<\vartheta+1=\lh(\Ss)$.
Then exactly one of the 5 options \ref{item:feeds_in_Ds_feeds_in},
\ref{item:feeds_in_Ds_extra},
\ref{item:Tt-special_primary_option_Ds_feeds_in},
\ref{item:Tt-special_primary_option_Ds_extra},
\ref{item:chi_along_main_branch_transient} of Subclaim \ref{sclm:Tt_structure_Ds}
of the proof of
Claim \ref{lem:nicely-zeta-strongly_finite} holds with respect to $\Ss$.
\end{clmone}

\begin{goal}\label{goal:(*)} We will prove ($*$), which is the  conjunction of the following statements:
  \begin{enumerate}[label=--]
    \item $\Uu$ is trivial, $U=M^\Tt_\infty$ and $i_G=i^\Tt$,
    \item $\Tt$ is nicely-$\zeta$-strongly finite, and
    \item
$\alpha$ is non-$\Tt$-special for every $\alpha+1\in b^\Tt$.
  \end{enumerate}
\end{goal}

 \begin{clmone}If ($*$) holds then $G\in M$.\end{clmone}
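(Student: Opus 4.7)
The plan is to invoke Lemma \ref{lem:Ds-rel_G_in_M} essentially as a black box, since the statement $(*)$ is designed precisely to feed into that lemma's hypotheses. The conditions in $(*)$ give us pre-simply-$\zeta$-strong finiteness of $\Tt$ and the non-$\Tt$-specialness of every $\alpha+1 \in b^\Tt$, while $(M,\zeta)$ is Dodd-solidity-relevant by construction (recall $M$ is type $2$, sub-Dodd-sound, $\zeta \in \widetilde{t}^M$, and the choice of $\zeta$ satisfies the cardinal conditions on $\delta = \card^M(\zeta)$).

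First I would dispose of the principal case $\kappa < \zeta$. Here $(\kappa^+)^M \leq \delta$ by Dodd-solidity-relevance, and since $\zeta \in \widetilde{t}^M \subseteq [\kappa, \nu(F^M))$ and $\nu(F^M) \leq \lambda(F^M) = \lgcd(M)$, we get $\delta \leq \zeta < \lgcd(M)$. Thus the lemma applies and outputs that the $(\kappa, i^\Tt(\kappa))$-extender $G^+$ derived from $i^\Tt$ belongs to $M$. By $(*)$ we have $i^\Tt = i_G = i^{M,0}_G$, so $G^+$ encodes the full short extender derived from $i_G$ on $\pow(\kappa)^M$; restricting to the support $w \cup \zeta$ (an operation definable in $M$ from $G^+$) recovers the Dodd-witness $G = G^\zeta = F^M \rest (w \cup \zeta)$, whence $G \in M$.

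The remaining case $\zeta = \kappa$ falls outside the literal statement of Lemma \ref{lem:Ds-rel_G_in_M} (which requires $(\kappa^+)^M \leq \delta$), and this is where I expect the main obstacle to lie; but it should be short. In this case $G$ is equivalent to the single measure $(F^M)_w$ with $w \subseteq [\kappa, \nu(F^M))$ nonempty. The pre-simple-$\zeta$-strong-finiteness condition at $\zeta = \kappa$ forces every Dodd-core used along $b^\Tt$ to be finitely generated with trivial projectum equal to $(\kappa^+)^M$, so $\Tt$ has length $<\omega$ and its iteration map is derived from finitely many such finitely-generated measures. Using $i^\Tt = i_G$ together with the $(s,\sigma)$-preservation given by Lemmas \ref{lem:s_j,sigma_j_pres} and \ref{lem:type_2_s,sigma_pres} applied along the Dodd decomposition of $i^\Tt$, one recovers $(F^M)_w$ as an $M$-definable component of the first Dodd-core used in $b^\Tt$; equivalently, running the argument from Case \ref{case:xi_Tt-special} in the sub-case $\zeta = \kappa$ (which only required $\kappa < \crit(k)$ and hence applies here with $k = \id$ since $\Uu$ is trivial) exhibits $(F^M)_w$ as a factor of a Dodd-solidity witness at some $\alpha \in w$, all of which lie in $M$ by Assumption \ref{ass:v_not_empty}. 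Hence $G \in M$ in this case as well, completing the proof.
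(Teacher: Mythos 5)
Your case split ($\kappa<\zeta$ versus $\zeta=\kappa$) matches the paper's, and your handling of the principal case $\kappa<\zeta$ is essentially what the paper does: you check $\delta\leq\zeta<\lgcd(M)$ and invoke Lemma \ref{lem:Ds-rel_G_in_M}, noting that $i^\Tt=i_G$ so that the output extender of the lemma recovers $G$. That part is fine.

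The $\zeta=\kappa$ case has a genuine gap. You propose to run ``the argument from Case \ref{case:xi_Tt-special} in the sub-case $\zeta=\kappa$ \ldots with $k=\id$,'' but that sub-argument is inseparable from the $\Tt$-special hypothesis of Case \ref{case:xi_Tt-special}. Concretely, the chain $(F^M)_w=(F^{M'})_{w'}=G_{s_{i_G}\rest\lh(w)}=(F^M)_{\pi(s_{i_G}\rest\lh(w))}$ there uses $M'=M^\Tt_\xi$ together with $E^\Tt_\xi=F(M^{\Tt}_\xi)$, so that $s'=s_{F^{M'}}=k(s_{i_G})=s_j$ and $w'=i^\Tt_{0\xi}(w)\ins s'$; these identities are precisely what $\Tt$-specialness of $\xi$ delivers via Lemmas \ref{lem:s_j,sigma_j_pres} and \ref{lem:type_2_s,sigma_pres}. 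But $(*)$ stipulates that $\alpha$ is non-$\Tt$-special for \emph{every} $\alpha+1\in b^\Tt$, so in particular $\xi$ is non-$\Tt$-special, $E^\Tt_\xi$ is not the active extender of $M^\Tt_\xi$, and the Dodd parameters of $E^\Tt_\xi$ (or of its Dodd core) need not equal $s_j,\sigma_j$. Thus neither the asserted chain of equalities nor the ``factor of a Dodd-solidity witness'' conclusion is available; there is no reason that $(F^M)_w$ equals $(F^M)_s$ for some $s<w$ under $(*)$.

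The paper's argument for $\kappa=\delta=\zeta$ is in fact much simpler and does not pass through Assumption \ref{ass:v_not_empty} at all. Since $w\neq\emptyset$, $\lgcd(M)$ is a limit cardinal above $\kappa$ so $(\kappa^{+\om})^M<\OR^M$. With $\zeta=\kappa$, pre-simple-$\zeta$-strong finiteness forces \emph{every} Dodd core $\core_{\ds}(E^\Tt_\gamma)$ feeding into $b^\Tt$ to be finitely generated with $\sigma=(\kappa_\gamma^+)$ for the relevant critical point $\kappa_\gamma$; moreover, by the Dodd-solidity-relevance condition $(\kappa^+)^M<\lh(E^\Tt_0)$ the first such Dodd core already lives as a segment of $M$ below $(\kappa^{++})^M$, and the subsequent ones live in the corresponding images. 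Since the branch does not drop, $i^\Tt$ is short, and $\Tt$ is finite, the entire relevant content of $\Tt$ is coded by a finite sequence of objects computable from $M||(\kappa^{++})^M\in M$, so $\Tt$ can be reconstructed inside $M$ (using $M||\OR^M\sats\ZFC^-$) and therefore $G$, being the $(\kappa,i^\Tt(\kappa))$-extender of $i^\Tt=i_G$, lies in $M$. That is the intended content of the phrase ``$\Tt$ is based on $M||(\kappa^{++})^M$,'' and it is the cleaner route to take here.
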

\begin{proof} By Claim \ref{clm:Ds_j,k_com_i_G_short}, $j=i^\Tt$ is short. So if $\kappa=\delta=\zeta$ then $(\kappa^{+\om})^M<\OR^M$ and
$\Tt$ is based on $M||\kappa^{++M}$, so easily $G\in M$.
Now suppose $\kappa^{+M}\leq\delta<\zeta$.
Because $w\neq\emptyset$, we have
$\zeta<\lgcd(M)$.

We claim that for each $\chi+1<\lh(\Tt)$, option \ref{item:feeds_in_Ds}
of Claim \ref{clm:Tt-structure_nicely-zeta-strongly-finite}
holds.
For option \ref{item:chi_along_main_branch_transient} is ruled out by the shortness of $i^\Tt$,
and  option \ref{item:Tt-special_primary_option_Ds} is ruled out since by ($*$), there is no $\Tt$-special $\vareps$ with $\vareps+1\in b^\Tt$.
It follows that
we can consider $\Tt$ as a tree $\Tt'$ on $M|\lgcd(M)$. In fact, letting $\alpha+1=\succ^\Tt(0,\infty)$, then $\Tt=\Tt_0\conc\Tt_1$
where $\Tt_0=\Tt\rest(\alpha+2)$
and $\Tt_1$ is based on $M^{\Tt_0}_\infty|\mu^{++M^{\Tt_0}_\infty}$,
where $\mu=i^{\Tt_0}_{0,\alpha+1}(\kappa)$.
Since
$M^{\passive}\sats\ZFC^-$, $\Tt'\in M$, so $G\in M$.
\end{proof}

So it suffices to prove ($*$).
Fix
 a nicely-$\zeta$-strongly finite
tree $\bar{\Tt}$ capturing $(\Tt,i^\Uu(w^U),\zeta)$ (exists by Claim \ref{lem:nicely-zeta-strongly_finite}).
We will proceed like in the proof of Lemma \ref{lem:measures_in_mice}
to show that $\Tt=\bar{\Tt}$
and $\Uu$ is trivial,
with further embellishments to rule out $\Tt$-special extenders along the main
branches of $\Tt,\bar{\Tt}$.

Let $\bar{Q}=M^{\bar{\Tt}}_\infty$
and $\somevarpi:\bar{Q}\to Q$ be the capturing map.
Let $\bar{j}=i^{\bar{\Tt}}$.
So $\somevarpi\com\bar{j}=j$.
Let $\bar{k}:U\to\bar{Q}$ be $\bar{k}=\somevarpi^{-1}\com k$.
Then the diagram
in Figure \ref{fgr:Dodd_first_commuting} commutes.
Because $\Tt$ is short, and by Claim \ref{clm:Ds_crit(k)>crit(j)},
we have:

\begin{clmone}\label{clm:j-bar_short_Ds}$\bar{j}=i^{\bar{\Tt}}$ is short, the models $M,U,\bar{Q},Q$ agree through their common value for $\kappa^+$, and \[ \crit(i_G)=\crit(\bar{j}
)=\crit(j)=\kappa<\kappa^{+M}<\crit(k)=
\min(\crit(\bar{k}),\crit(\somevarpi)).\]
\end{clmone}

\begin{figure}
\centering
\begin{tikzpicture}
 [mymatrix/.style={
    matrix of math nodes,
    row sep=0.35cm,
    column sep=0.4cm}]
   \matrix(m)[mymatrix]{
  U&          {} &       {}& {} & Q& \\
   {} & {} \\
   {} & {} & \bar{Q}\\
   \\
 M\\};
 \path[->,font=\scriptsize]
(m-5-1) edge[bend right] node[below] {$\ \ \ \ j=i^\Tt$} (m-1-5)
(m-5-1) edge node[left] {$i=i_G$} (m-1-1)
(m-1-1)edge node[above] {$\bar{k}$} (m-3-3)
(m-5-1)edge node[above] {$\bar{j}$} (m-3-3)
(m-3-3)edge node[above] {$\somevarpi$} (m-1-5)
(m-1-1) edge node[above] {$k=i^\Uu$} (m-1-5);
\end{tikzpicture}
\caption{The diagram commutes.}
\label{fgr:Dodd_first_commuting}
\end{figure}

Let $i=i_G$ and $s=s_{i}$ and $\sigma=\sigma_{i}$.
Let $s'=s_{j}$ and $\sigma'=\sigma_{j}$.

\begin{clmone}\label{clm:s,sigma_pres_in_Ds_Case_2}We have:
\begin{enumerate}[label=\arabic*.,ref=\arabic*]
\item\label{item:s'=k(s)_sigma'=sup_varrho_sigma}
$s'=s_j=s_{k\com i}=k(s_i)\text{ and }\sigma'=\sigma_j=\sigma_{k\com
i}=\sup
k``\sigma_i$.
 \item \label{item:s_k-bar_com_i=s_j-bar=k-bar(s_i)_etc_Ds_Case_2}
 $s_{\bar{k}\com i}=s_{\bar{j}}=\bar{k}(s_i)$ and
$\sigma_{\bar{k}\com i}=
\sigma_{\bar{j}}=\sup\bar{k}``\sigma_{i}$.
\item\label{item:s_varrho_com_j-bar=s_j=varrho(s_j-bar)_etc_Ds_Case_2} $s_{\somevarpi\com\bar{j}}=s_j=\somevarpi(s_{\bar{j}})$
and $\sigma_{\somevarpi\com\bar{j}}=\sigma_j=\sup\somevarpi``\sigma_{\bar{j}}$.
\item\label{item:sigma_i=sigma_j-bar=sigma_j_in_kappa^+,delta,zeta_Ds_Case_2}
$\max(\delta,\kappa^{+M})\leq\sigma_{i}=
\sigma_{\bar{j}}=\sigma_j\in\{\kappa^{+M},\delta,\zeta\}$.
\end{enumerate}
\end{clmone}
\begin{proof}
Part \ref{item:s'=k(s)_sigma'=sup_varrho_sigma} is by Lemma
\ref{lem:s_j,sigma_j_pres}.
Parts \ref{item:s_k-bar_com_i=s_j-bar=k-bar(s_i)_etc_Ds_Case_2}
and \ref{item:s_varrho_com_j-bar=s_j=varrho(s_j-bar)_etc_Ds_Case_2} are as in the proof of Claim \ref{clm:s_jbar=kbar(s_i)}
of Lemma \ref{lem:measures_in_mice}.
For part \ref{item:sigma_i=sigma_j-bar=sigma_j_in_kappa^+,delta,zeta_Ds_Case_2},
the fact that
$\max(\delta,\kappa^{+M})\leq\sigma_{\bar{j}}\in\{\kappa^{+M},\delta,\zeta\}$
follows from the
nice-$\zeta$-strong finiteness
of $\bar{\Tt}$. Since $\max(\zeta,\kappa^{+M})\leq\crit(k)\leq\crit(\bar{k})$,
the preceding parts then give that $\sigma_i=\sigma_{\bar{j}}=\sigma_{j}$.
\end{proof}

 Let $\bar{\xi}$ be least such that $\bar{\xi}+1\in
b^{\bar{\Tt}}$ (recall $\xi$ is likewise for $\Tt$).

\begin{clmone}
$\core_{\ds}(E^{\bar{\Tt}}_{\bar{\xi}})=\core_{\ds}(E^\Tt_\xi)$.
\end{clmone}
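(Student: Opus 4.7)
The plan is to express both sides as the Dodd-solidity core of the first extender in the Dodd decomposition of the respective branch embedding, and then to transport one to the other via $\varrho$.

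Apply Lemma~\ref{lem:Dodd_decomposition} to the non-dropping main branches $[0,\infty]_{\bar{\Tt}}$ and $[0,\infty]_\Tt$, obtaining Dodd decompositions $\left<\bar G_\gamma\right>_{\gamma<\bar\lambda}$ of $\bar j$ and $\left<G_\gamma\right>_{\gamma<\lambda}$ of $j$. Since $\crit(\bar j)=\kappa=\crit(j)$ is the smallest critical point attained along either branch and the decompositions are enumerated in order of increasing critical point, each of $\bar G_0,G_0$ has critical point $\kappa$. Using that $\bar\xi+1$ and $\xi+1$ are the first nodes of the respective main branches above $0$, so that $\pred^{\bar\Tt}(\bar\xi+1)=0=\pred^\Tt(\xi+1)$, a short bookkeeping argument (using that any other $\delta$ with $\crit(E^{\bar\Tt}_\delta)=\kappa$ is non-Dodd-below a branch node, and likewise for $\Tt$) identifies
\[ \bar G_0=\core_{\ds}(E^{\bar{\Tt}}_{\bar\xi})\quad\text{and}\quad G_0=\core_{\ds}(E^\Tt_\xi). \]

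Next, from part~(3) of Lemma~\ref{lem:Dodd_decomposition}, together with the characterization of $(\stilde,\sigmatilde)$ from Definition~\ref{dfn:super-Dodd-solidity_parameter}, it follows that
\[ \bar G_0=\trivcom\!\bigl(E_{\bar j}\rest(s_{\bar j}\cup\sigma_{\bar j})\bigr)\quad\text{and}\quad G_0=\trivcom\!\bigl(E_j\rest(s_j\cup\sigma_j)\bigr), \]
both viewed as short extenders over $M$.

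Finally I invoke the preceding claim: $s_j=\varrho(s_{\bar j})$, and $\sigma_j=\sigma_{\bar j}\in\{(\kappa^+)^M,\delta,\zeta\}$. In particular $\sigma_{\bar j}\leq\zeta\leq\crit(\varrho)$, so $\varrho\rest\sigma_{\bar j}=\id$. Combining this with $j=\varrho\com\bar j$ and $(\kappa^+)^M<\crit(\varrho)$, for every $A\in\pow([\kappa]^{<\om})^M$ and every $\bar a\in[s_{\bar j}\cup\sigma_{\bar j}]^{<\om}$,
\[ \bar a\in\bar j(A)\iff \varrho(\bar a)\in \varrho(\bar j(A))=j(A). \]
Thus the bijection $\bar a\mapsto\varrho(\bar a)$ realizes an isomorphism between the fragments $E_{\bar j}\rest(s_{\bar j}\cup\sigma_{\bar j})$ and $E_j\rest(s_j\cup\sigma_j)$ as indexed systems of measures over $M$. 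Since the trivial completion depends only on the transitive collapse of such a fragment (and not on the specific labels of the generators), $\bar G_0=G_0$, which is the desired equality.

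The only mildly delicate step is verifying that $\bar G_0$ and $G_0$ really do correspond to $\core_{\ds}(E^{\bar\Tt}_{\bar\xi})$ and $\core_{\ds}(E^\Tt_\xi)$ rather than to Dodd-cores of extenders further along the branches; but this is forced by the minimality of critical point combined with the structure of pre-simply-$\zeta$-strongly finite trees, where all extenders used have critical point $\geq\kappa$ and only the first branch extender has critical point exactly $\kappa$.
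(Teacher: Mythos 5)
Your proof is correct and follows essentially the same route as the paper's: the paper handles this claim by referring back to Claim~\ref{clm:first_Dodd_cores_match} in the proof of Lemma~\ref{lem:measures_in_mice}, whose one-line argument (use the preceding claim's $\sigma_{\bar{j}}=\sigma_j$ and $\varrho(s_{\bar{j}})=s_j$ to see that $E_{\bar{j}}\rest(\sigma_{\bar{j}}\cup s_{\bar{j}})$ and $E_j\rest(\sigma_j\cup s_j)$ are equivalent, and that these are equivalent to the Dodd cores) is exactly what you spell out in detail via the Dodd decomposition. The only small omission is that you do not explicitly invoke the non-$\bar{\Tt}$-speciality of $\bar{\xi}$ and non-$\Tt$-speciality of $\xi$ (which the paper reminds the reader of): this is needed to ensure Dodd-niceness along the relevant branch segments so that the Dodd decomposition machinery applies at all.
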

\begin{proof}
As in the proof of Claim \ref{clm:first_Dodd_cores_match} of Lemma
\ref{lem:measures_in_mice}
(recall $\bar{\xi}$ is non-$\bar{\Tt}$-special
and $\xi$ is non-$\Tt$-special).
\end{proof}

Now going on with the analysis of the Dodd-decompositions of
$E^{\Ttbar}_{\xibar}$
and $E^\Tt_\xi$ as in the proof of Lemma \ref{lem:measures_in_mice}
(and since $\xi,\xibar$ are non-$\Tt$-special and non-$\Ttbar$-special
respectively), we get that $\xi=\xibar$ and
$\Ttbar\rest(\xi+2)=\Tt\rest(\xi+2)$ (and $\xi\in b^{\Ttbar}\cap b^\Tt$).
Let $\beta\in b^{\bar{\Tt}}\cap b^\Tt$ be largest such that:
\begin{enumerate}[label=--]\item
$\Ttbar\rest(\beta+1)=\Tt\rest(\beta+1)$,  and
\item $\vareps$ is non-$\Tt$-special
 for every $\vareps+1\leq^\Tt\beta$.
 \end{enumerate}
 So
 $\xi+1\leq\beta$. Let $\jbar_{\beta\infty}:M^{\Ttbar}_\beta\to\bar{Q}$ and
$j_{\beta\infty}:M^{\Tt}_\beta\to Q$ be the iteration maps.
Then  $\somevarpi\com\jbar_{\beta\infty}=j_{\beta\infty}$
and as in the
proof of Lemma \ref{lem:measures_in_mice}, we
 can define $i_\beta:M^{\Ttbar}_\beta\to U$
such that $\kbar\com i_\beta=\jbar_{\beta\infty}$.
Let $M^*=M^{\Ttbar}_\beta=M^\Tt_\beta$.

\begin{clmone}
If $M^*=\bar{Q}$ then ($*$) holds (see Goal \ref{goal:(*)}), so $G\in M$.\end{clmone}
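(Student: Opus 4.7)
The plan is to follow the endgame of the proof of Lemma \ref{lem:measures_in_mice}. Assume $M^*=\bar{Q}$; then $\bar{j}_{\beta\infty}=\id_{M^*}$, so the commutativity $\bar{k}\com i_\beta=\bar{j}_{\beta\infty}$ becomes $\bar{k}\com i_\beta=\id_{M^*}$. Since $\bar{k}:U\to M^*=\bar{Q}$ and $i_\beta:M^*\to U$ are $0$-embeddings between transitive, wellfounded premice whose composition is the identity, standard rigidity (a $\Sigma_0$-elementary embedding between transitive premice which is surjective is the identity) yields $\bar{k}=i_\beta=\id$, and hence $U=\bar{Q}=M^*$. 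Consequently $k=\varrho\com\bar{k}=\varrho$ and $i^{\bar{\Tt}}=\bar{j}=\bar{k}\com i_G=i_G$, so $\bar{\Tt}$ is an iteration of $M$ with last model $U$ realizing $i_G$.

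Next I will argue that $\varrho=\id$, whence $k=\id$, $Q=U$, and $\Uu$ is trivial. This is the direct analogue of the final step in Lemma \ref{lem:measures_in_mice}, where the conclusion ``$k=i_n=\id$'' is drawn because ``all the relevant generators are in $\rg(k)$''. In the current setting I will use the reduction $\rho_1^M=\om$ with $M=\Hull_1^M(\{q\})$ together with the commutativity $\varrho\com i^{\bar{\Tt}}=i^\Tt$: this gives $\varrho(p_1^U)=p_1^Q$, while the capturing property provides $i^\Uu(w^U)\cup\zeta\sub\rg(\varrho)$; combined with the preservation of the relevant hull-generating data along the iteration maps, this should yield $\rg(\varrho)=Q$, so $\varrho=\id$ by rigidity. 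Once $\varrho=\id$, we have $k=\id$ and $Q=U$; combined with $M^\Tt_\beta=U=M^\Tt_\infty$ and $\Tt\rest(\beta+1)=\bar{\Tt}$, the tree $\Tt$ is trivial past stage $\beta$ and equals $\bar{\Tt}$, while $\Uu$ carries no extenders.

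It remains to check the remaining clauses of $(*)$. Pre-simple-$\zeta$-strong finiteness of $\Tt=\bar{\Tt}$ is built into the choice of $\bar{\Tt}$. The extender derived from $i^\Tt=i_G$ equals the short extender $G=F^M\rest(w\cup\zeta)$, which is short as a restriction of the short extender $F^M$. Every $\alpha+1\in b^\Tt=b^{\bar{\Tt}}$ has $\alpha$ non-$\Tt$-special, because $\bar{\Tt}$ lies within $\Tt\rest(\beta+1)$ and $\beta$ was chosen maximal with the property that every $m+1\leq_\Tt\beta$ has $m$ non-$\Tt$-special. This establishes $(*)$, whence the preceding claim delivers $G\in M$.

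The main obstacle is the surjectivity of $\varrho$ in the second paragraph: one must adapt the generating argument from the proof of Lemma \ref{lem:measures_in_mice}---where the finite-generation $\bar{U}=\Hull_{m+1}^{\bar{U}}(\{\bar{x}\})$ was decisive---to the current fine structural setting afforded by the reduction $\rho_1^M=\om$ and $M=\Hull_1^M(\{q\})$. Once this surjectivity is in hand, the remaining steps follow the same pattern as in the measures-in-mice proof.
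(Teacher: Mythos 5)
You have the right opening moves: the identity $\bar{k}\com i_\beta=\jbar_{\beta\infty}=\id$ gives $\bar{k}$ surjective, hence $\bar{k}=i_\beta=\id$ by rigidity, so $U=\bar{Q}=M^*$ and $i^{\bar{\Tt}}=\bar{j}=\bar{k}\com i_G=i_G$. This much is correct and matches what the paper intends by its appeal to the proof of Lemma \ref{lem:measures_in_mice}.

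The gap you flag — establishing $\Uu$ trivial and $\Tt=\bar{\Tt}$ — is genuine, and the fix you suggest does not quite work. The issue is that after a non-dropping $0$-maximal tree of positive length, $Q=M^\Tt_\infty$ is \emph{not} $1$-sound even though $\rho_1^M=\om$: the critical points and generators of $\Tt$ lie above $\rho_1^Q=\om$, so $Q\neq\Hull_1^Q(\{p_1^Q\})$. Thus one cannot conclude $\rg(\varrho)\supseteq\Hull_1^Q(\{p_1^Q\})=Q$ from $p_1^Q\in\rg(j)\sub\rg(\varrho)$; the missing generators of the $\Tt$-iteration need not lie in the range of the finite-support capturing map $\varrho$.

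The correct route runs in the opposite direction: one first shows the comparison terminates at $\beta$, and \emph{deduces} $\varrho=\id$ rather than proving it first. Having established $U=\bar{Q}=M^*=M^\Tt_\beta$ and $i^\Tt_{0\beta}=i_G$, recall that $\Tt\rest(\beta+1)=\bar{\Tt}\rest(\beta+1)$ is a normal iteration of $M$ to $U$. By coherence of this iteration, for each $\alpha<\beta$ with $E^\Tt_\alpha\neq\emptyset$, the models $M^\Tt_\alpha$ and $U=M^\Tt_\beta$ agree strictly below $\lh(E^\Tt_\alpha)$, with $M^\Tt_\alpha|\lh(E^\Tt_\alpha)$ active and $U|\lh(E^\Tt_\alpha)$ passive. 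Since the comparison $(\Tt,\Uu)$ proceeds by least disagreement (with $b^\Uu$ above $U$ and $M^\Uu_0=U$), an easy induction shows $M^\Uu_\alpha=U$ (padding) for all $\alpha\leq\beta$: the only disagreement at each prior stage is at $\lh(E^\Tt_\alpha)$ where $U$ is passive, so the $\Uu$-side never uses an extender. Hence at stage $\beta$ we have $M^\Tt_\beta=U=M^\Uu_\beta$ and the comparison terminates. This yields $\lh(\Tt)=\beta+1$, so $\Tt=\bar{\Tt}$, $\Uu$ is trivial, $Q=U$, $i^\Tt=i_G$, and finally $k=\varrho=\id$ as a corollary. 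Your closing paragraph checking the remaining clauses of ($*$) — pre-simple-$\zeta$-strong finiteness from the choice of $\bar{\Tt}$, shortness of $i^\Tt_{0\beta}$ and non-$\Tt$-speciality along $b^\Tt$ from the maximality of $\beta$ — is then correct.
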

\begin{proof}
If $M^*=\bar{Q}$ then  $U=\bar{Q}$, $i^{\Ttbar}=i_G$
and $\Ttbar=\Tt$, all as in the proof of Lemma \ref{lem:measures_in_mice},
so ($*$) holds.
\end{proof}

So suppose $M^*\neq\bar{Q}$
(and hence $M^*\neq Q$).
Let $\bar{\varepsilon}+1=\succ^{\Ttbar}(\beta,\infty)$
and $\varepsilon+1=\succ^\Tt(\beta,\infty)$.

\begin{clmone}\label{clm:Ds_if_M^*_neq_Q-bar_then_eps_T-spec_or_T-bar-spec}
 If $M^*\neq\bar{Q}$ (hence $M^*\neq Q$) then either $\varepsilon$ is
$\Tt$-special
 or $\bar{\varepsilon}$ is $\Ttbar$-special.
\end{clmone}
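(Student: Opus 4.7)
The plan is to prove the contrapositive: assuming both $\varepsilon$ and $\bar{\varepsilon}$ are non-special (for $\Tt$ and $\bar{\Tt}$ respectively), I will derive a contradiction with the maximal choice of $\beta$. Specifically, I aim to show that $\varepsilon+1 = \bar{\varepsilon}+1$ itself satisfies the three defining conditions of $\beta$: (i) $\bar{\Tt}\rest(\varepsilon+2)=\Tt\rest(\varepsilon+2)$, (ii) every $m+1\leq_\Tt\varepsilon+1$ has $m$ non-$\Tt$-special, and (iii) $i^\Tt_{0,\varepsilon+1}$ is short. Since $\beta<\varepsilon+1$, this contradicts the choice of $\beta$ as the largest such ordinal.

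First I would reproduce, starting from stage $\beta$, the $(s,\sigma)$-preservation analysis carried out in the proof of Lemma \ref{lem:measures_in_mice} for stage $0$. Using $i_\beta:M^*\to U$ with $\bar{k}\com i_\beta=\bar{j}_{\beta\infty}$ and $\varrho\com\bar{j}_{\beta\infty}=j_{\beta\infty}$, I apply Lemmas \ref{lem:s_j,sigma_j_pres} and \ref{lem:type_2_s,sigma_pres} to the pairs $(i_\beta,\bar{k})$ and $(\bar{j}_{\beta\infty},\varrho)$. The hypotheses are available because $\crit(\bar{k}),\crit(\varrho)>(\kappa^+)^M$, which lies above the relevant values of $\sigma_{i_\beta}$, and because $E_{i_\beta}\rest i_\beta(\crit(i_\beta))\notin U$ (otherwise commutativity would place a too-large fragment of $E_{\bar{j}_{\beta\infty}}$ in $\bar Q$, incompatible with $\bar{j}_{\beta\infty}$ being an iteration map and $\bar\varepsilon$ being non-$\bar{\Tt}$-special). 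This yields $s_{\bar{j}_{\beta\infty}}=\bar{k}(s_{i_\beta})$, $\sigma_{\bar{j}_{\beta\infty}}=\sup\bar{k}\,``\,\sigma_{i_\beta}$, and the analogous equalities via $\varrho$, together with $\sigma_{i_\beta}=\sigma_{\bar{j}_{\beta\infty}}=\sigma_{j_{\beta\infty}}$.

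Second, mirroring Claim \ref{clm:first_Dodd_cores_match}, the $(s,\sigma)$-equalities translate into equality of the Dodd cores of the first extenders used along the tail branches past $\beta$, i.e.\ $\core_{\ds}(E^{\bar{\Tt}}_{\bar\varepsilon})=\core_{\ds}(E^\Tt_\varepsilon)$. Since both $\bar\varepsilon,\varepsilon$ are non-special, one may then iterate the Dodd decomposition (Lemma \ref{lem:Dodd_decomposition}) exactly as in the proof of Lemma \ref{lem:measures_in_mice} to match the extenders and models of $\bar{\Tt}$ and $\Tt$ through stage $\varepsilon+1=\bar\varepsilon+1$, producing a new factor $i_{\varepsilon+1}:M^\Tt_{\varepsilon+1}\to U$ with $\bar{k}\com i_{\varepsilon+1}=\bar{j}_{\varepsilon+1,\infty}$. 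This yields (i). Condition (ii) is immediate: for $m+1\leq_\Tt\beta$ it holds by definition of $\beta$, and for the sole new node $\varepsilon+1$ it is precisely the assumption that $\varepsilon$ is non-$\Tt$-special. Condition (iii) follows because the generators of $E^\Tt_\varepsilon$ and its Dodd cores lie in the range of $i_{\varepsilon+1}$ from $U$, keeping $i^\Tt_{0,\varepsilon+1}$ short given that $i^\Tt_{0\beta}$ was short.

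The main obstacle is the careful bookkeeping of shortness and of generator alignment across the Dodd-decomposition step at $\varepsilon$ when some extenders in that decomposition are non-Dodd-sound. The hypothesis that both $\varepsilon$ and $\bar\varepsilon$ are non-special is precisely what is needed: if either were special, its last generator would be a successor-cardinal image of $(\mu^+)$-type coming from a superstrong-like configuration in the previous model, and the $(s,\sigma)$-preservation lemmas used above would not apply across that step, breaking the commutative factorization into $U$ and hence blocking the extension from $\beta$ to $\varepsilon+1$.
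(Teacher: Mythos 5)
Your overall strategy — argue by contrapositive, extend the Dodd decomposition analysis past stage $\beta$ to stage $\varepsilon+1=\bar\varepsilon+1$, and contradict the maximality of $\beta$ — is exactly the paper's approach. Your treatment of conditions (i) and (ii) in the definition of $\beta$ is also essentially as in the paper.

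The gap is in your treatment of condition (iii), shortness of $i^\Tt_{0,\varepsilon+1}$. The paper does \emph{not} claim that shortness is automatic once the Dodd decomposition has been extended one step; on the contrary, it explicitly singles out the case where $i^\Tt_{0,\varepsilon+1}$ and $i^{\Ttbar}_{0,\bar\varepsilon+1}$ fail to be short, and rules it out by a \emph{separate} argument: since $G$ is short, failure of shortness would force $M^\Uu_\infty=M^\Tt_\beta$, i.e.\ the comparison would already have terminated at $\Tt\rest(\beta+1)$, a contradiction. Your justification, ``the generators of $E^\Tt_\varepsilon$ and its Dodd cores lie in the range of $i_{\varepsilon+1}$ from $U$, keeping $i^\Tt_{0,\varepsilon+1}$ short given that $i^\Tt_{0\beta}$ was short,'' does not supply this argument, and as stated it is not even type-correct: $i_{\varepsilon+1}$ is a map $M^\Tt_{\varepsilon+1}\to U$, so $\rg(i_{\varepsilon+1})\sub U$, and the generators of $E^\Tt_\varepsilon$ live in $M^\Tt_{\varepsilon+1}$, not in $U$. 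Even setting aside the typing, membership of the relevant generators in any of these models says nothing about whether $\crit(E^\Tt_\varepsilon)\leq i^\Tt_{0\beta}(\kappa)$, which is what shortness of $i^\Tt_{0,\varepsilon+1}$ actually requires; that inequality does not follow from shortness of $i^\Tt_{0\beta}$ alone. You need the missing case analysis: either $\crit(E^\Tt_\varepsilon)\leq i^\Tt_{0\beta}(\kappa)$ (and then you indeed contradict maximality of $\beta$), or it is not (and then shortness of $G$ forces the comparison to have already terminated at $\beta$, another contradiction). Without the second branch the proof is incomplete.
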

\begin{proof}
 Otherwise we can carry on with the analysis of Dodd decomposition,
 and reach a contradiction to the maximality of $\beta$.
\end{proof}

So we may assume, for a contradiction:
\begin{ass}\label{ass:speciality}
$\bar{Q}\neq M^*\neq Q$
and either $\varepsilon$ is $\Tt$-special or $\bar{\varepsilon}$
is $\Ttbar$-special.
\end{ass}

Let
\[ i^*:M^*\to U\text{ and } \bar{j}^*:M^*\to\bar{Q}\text{ and }
j^*:M^*\to Q \]
be the maps resulting from the
analysis above. So $\bar{k}\com i^*=\bar{j}^*$
and $\somevarpi\com\bar{j}^*=j^*$ and $k\com i^*=j^*$.
We have $\crit(j^*)=\crit(E^\Tt_\varepsilon)\geq\delta$,
since $\nu(E^\Tt_0)\geq\delta$.
(Note that $E^\Tt_0$ might be superstrong
with $\nu(E^\Tt_0)=\delta$, in which case
$\crit(E^\Tt_\varepsilon)=\delta$, and then if $\delta<\zeta$,
we have $\crit(j^*)<\zeta$.) Let
$\mu^*=\crit(j^*)=\crit(F^{M^*})$. As usual we have:

\begin{clmone}\label{clm:crit(k)>crit(j^*)}
$\crit(i^*)=\crit(\bar{j}^*)=\crit(j^*)=\mu^*<\crit(k)=
\min(\crit(\bar{k}),\crit(\somevarpi))$.
\end{clmone}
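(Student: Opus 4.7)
The proof will closely mirror the argument of Claim \ref{clm:crit(k)<crit(j)} from the proof of Lemma \ref{lem:measures_in_mice}, and the analogous unlabeled earlier claim in the current proof, with the maps $i^*, \bar{j}^*, j^*, k, \bar{k}, \varrho$ playing the roles of $i, \bar{j}, j, k, \bar{k}, \varrho$ and with $\mu^*$ replacing $\kappa$. First I would use only the commutativity relations $k\com i^*=j^*$, $\bar{k}\com i^*=\bar{j}^*$, and $\varrho\com\bar{j}^*=j^*$, together with the order-preservation of iteration maps, to establish the four lower bounds $\crit(i^*),\crit(\bar{j}^*),\crit(\bar{k}),\crit(\varrho)\geq\mu^*$. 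For example, if $\crit(i^*)=\nu<\mu^*$, then $i^*(\nu)>\nu$ but $j^*(\nu)=\nu$ since $\crit(j^*)=\mu^*$, whence $k(i^*(\nu))=\nu<i^*(\nu)$, contradicting that $k=i^\Uu$ is an iteration map. The remaining three bounds are obtained by the same kind of chase through the commuting diagram.

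The main step is to show the strict inequality $\crit(k)>\mu^*$. I would argue by contradiction: suppose $\crit(k)=\lambda\leq\mu^*$. Since $\crit(i^*)\geq\mu^*\geq\lambda$, the map $i^*$ acts as the identity on subsets $A\in\pow(\lambda)\cap M^*$, so $k(A)=k(i^*(A))=j^*(A)$; and since $\crit(j^*)=\mu^*\geq\lambda$ we also have $j^*(A)\cap\lambda=A$. This forces the $(\lambda,k(\lambda))$-extender derived from $k$ to be compatible, through their common part, with the extender of critical point $\mu^*$ derived from $j^*$. But $k=i^\Uu$ and $j^*=j_{\beta\infty}$ arise from the two sides of the successful comparison $(\Tt,\Uu)$ of $M$ with $\phU$: the first extender $E^\Uu_0$ used in $\Uu$ has critical point $\lambda$ and the first extender of $\Tt$ used past stage $\beta$ has critical point $\mu^*$, and a standard comparison argument (as in Claim \ref{clm:crit(k)<crit(j)}) shows that such compatibility is impossible.

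Once $\crit(k)>\mu^*$ is in hand, the remaining equalities are immediate. If $\crit(i^*)>\mu^*$, then $i^*(\mu^*)=\mu^*$, so $k(\mu^*)=j^*(\mu^*)>\mu^*$, contradicting $\crit(k)>\mu^*$; hence $\crit(i^*)=\mu^*$. The same chase yields $\crit(\bar{j}^*)=\mu^*$. For $\crit(k)=\min(\crit(\bar{k}),\crit(\varrho))$, the identity $k=\varrho\com\bar{k}$ gives $\crit(k)\geq\min(\crit(\bar{k}),\crit(\varrho))$, and the reverse inequality is a routine check: if, say, $\crit(\bar{k})\leq\crit(\varrho)$, then $\bar{k}(\crit(\bar{k}))<\crit(\varrho)$ (using agreement between the relevant models up to the natural cardinal, which we already have from the preceding claim on agreement at $(\kappa^+)^M$ and the positions of the critical points), so $k(\crit(\bar{k}))=\varrho(\bar{k}(\crit(\bar{k})))=\bar{k}(\crit(\bar{k}))>\crit(\bar{k})$, giving $\crit(k)\leq\crit(\bar{k})$, and symmetrically.

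The main obstacle is the comparison step in the middle paragraph: one must verify that the derived-extender compatibility really does contradict $(\Tt,\Uu)$ being a comparison, even though $j^*$ is only the iteration map along a tail of $\Tt$ starting at $\beta$, rather than along all of $\Tt$. This is however no more than in the earlier claims, since $\Tt\rest[\beta,\infty)$ still participates in the comparison with $\Uu\rest[\beta,\infty)$ in the standard way, and $E^\Tt_\varepsilon$ is precisely the first extender used in the tail; the usual incompatibility-of-least-disagreement argument applies verbatim.
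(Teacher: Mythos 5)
Your overall approach is correct and is exactly what the paper intends: the paper's own ``proof'' of this claim is just the phrase ``As usual we have:'', pointing back to Claim~\ref{clm:crit(k)<crit(j)} of Lemma~\ref{lem:measures_in_mice} and the analogous unlabeled claim earlier in the present proof, and your diagram-chase for the lower bounds, comparison-incompatibility for the strict inequality, and the composition fact $\crit(\varrho\com\bar{k})=\min(\crit(\bar{k}),\crit(\varrho))$ are the right ingredients.

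There is, however, a genuine slip in the middle paragraph. By your first paragraph together with $k=\varrho\com\bar{k}$, you already have $\crit(k)\geq\mu^*$, so the only case you actually need to rule out is $\lambda=\mu^*$. In that case the assertion that ``$i^*$ acts as the identity on subsets $A\in\pow(\lambda)\cap M^*$'' is false: since $\crit(i^*)$ is exactly $\mu^*$ (that is what you are in the process of proving), $i^*$ does move those $A\subseteq\mu^*$ which are unbounded in $\mu^*$, and $k(A)=k(i^*(A))$ is therefore wrong. The correct computation is the one from Claim~\ref{clm:crit(k)<crit(j)}: from $A=i^*(A)\cap\mu^*$ and elementarity of $k$, one gets
\[
k(A)=k\bigl(i^*(A)\cap\mu^*\bigr)=k(i^*(A))\cap k(\mu^*)=j^*(A)\cap k(\mu^*),
\]
which yields compatibility of the derived extenders only through $k(\mu^*)\leq j^*(\mu^*)$, not the full identity $k(A)=j^*(A)$. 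This weaker conclusion is still enough for the comparison incompatibility you invoke, so the fix is local, but as written the step $k(A)=j^*(A)$ is not justified in the one case that matters.
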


Most of the rest of the proof is devoted to the following claim:

\begin{clmone}\label{clm:varepsilon_non-Tt-special}
 $\varepsilon$ is non-$\Tt$-special.
\end{clmone}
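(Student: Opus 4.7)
The plan is to suppose for contradiction that $\varepsilon$ is $\Tt$-special, and to replay the endgame of Case 1 but relativized to stage $\beta$, with $M^*$, $F^{M^*}$, and $i^*:M^*\to U$ playing the roles of $M$, $F^M$, and $i^{M,0}_G$ respectively. The target contradiction is $F^{M^*}\rest\widetilde{t}^{M^*}\in M^*$, which contradicts the defining minimality of $\widetilde{t}^{M^*}$. Here $\widetilde{t}^{M^*}$ is genuinely the super-Dodd parameter of $F^{M^*}$, since by iterating Lemma \ref{lem:type_2_s,sigma_pres} along the non-dropping $[0,\beta]_\Tt$ we have $\widetilde{t}^{M^*}=i^\Tt_{0\beta}(\widetilde{t}^M)$, and $\widetilde{t}^M\neq\emptyset$ by Assumption \ref{ass:v_not_empty}.

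First I will transfer the super-Dodd preservation identities to stage $\beta$. Since $\pred^\Tt(\varepsilon+1)=\beta$ and $\varepsilon$ is $\Tt$-special, Lemma \ref{lem:Tt-special} yields $[\beta,\varepsilon]_\Tt$ non-dropping with $\crit(i^\Tt_{\beta\varepsilon})>\mu^*=\crit(F^{M^*})$. Lemma \ref{lem:type_2_s,sigma_pres} applied along this sub-iteration, together with Lemma \ref{lem:s_j,sigma_j_pres} applied to the tail map $i^\Tt_{\varepsilon+1,\infty}$ composed with the top-extender ultrapower at $\varepsilon$, yields $\widetilde{s}_{j^*}=j^*(\widetilde{s}^{M^*})$ and $\widetilde{\sigma}_{j^*}=\sup j^*``\widetilde{\sigma}^{M^*}$. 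Combining with the commuting diagram $\bar{k}\circ i^*=\bar{j}^*$, $\varrho\circ\bar{j}^*=j^*$, $k\circ i^*=j^*$ and Claim \ref{clm:crit(k)>crit(j^*)} (which gives $((\mu^*)^+)^{M^*}<\crit(k)\leq\min(\crit(\bar{k}),\crit(\varrho))$ and agreement of the four models $M^*,U,\bar{Q},Q$ through this cardinal) yields the parallel identities $\widetilde{s}_{\bar{j}^*}=\bar{k}(\widetilde{s}_{i^*})$, $\widetilde{\sigma}_{\bar{j}^*}=\sup\bar{k}``\widetilde{\sigma}_{i^*}$, and $\varrho$-preservation of $(\widetilde{s},\widetilde{\sigma})$ on top of that. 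These are the exact analogues of the identities proved at the start of Case 1.

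Second, I will carry out the finite-support reverse-copy construction from the end of Case 1 verbatim, but at stage $\beta$. The key points are: $\bar{k}(\max(i^*(\widetilde{t}^{M^*})))$ must be a generator of some extender used later in $\bar{\Tt}$ whose critical point sits strictly above $\max(\bar{k}(\widetilde{s}_{i^*}))$, which forces an inaccessible $\chi^*$ of $U$ strictly between $\max(\widetilde{s}_{i^*})$ and $\max(i^*(\widetilde{t}^{M^*}))$; a finite abstract iteration $\Vv^*$ of $U$ by measures captures $i^*(\widetilde{t}^{M^*})$; and reverse-copying $\Vv^*$ into a finitely generated hull $\bar{U}^*$ of $U$ containing $\rg(i^*)$ yields a short extender $\bar{G}^*\in M^*$ derived from the map $M^*\to\bar{U}^*$. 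Since $\bar{\Vv}^*$ is sufficiently simple and $\bar{U}^*|(((i_{M^*\bar{U}^*}(\mu^*))^+)^{\bar{U}^*})\in M^*$, the final short extender of the lifted iteration equals $F^{M^*}\rest\widetilde{t}^{M^*}$ and lies in $M^*$, which is the desired contradiction.

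The main obstacle will be verifying that Case 1's construction transfers cleanly to the relativized setting. This requires: sub-Dodd-soundness of $M^*$, which follows by induction on proper segments from sub-Dodd-soundness of $M$ together with super-Dodd-data preservation under non-dropping iteration (Lemma \ref{lem:type_2_s,sigma_pres}); that the conclusions of Assumption \ref{ass:v_not_empty} lift to $M^*$ via $i^\Tt_{0\beta}$, in particular that the Dodd-witnesses for proper initial segments of $\widetilde{t}^{M^*}$ lie in $M^*$; and the existence of the separating inaccessible $\chi^*$ in $U$, which uses the $\bar{\Tt}$-side structure past $\beta$ in exactly the same way Case 1 used the $\Tt$-side structure past $0$. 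Given these, the relativized argument goes through and the claim is established.
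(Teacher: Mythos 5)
Your high-level plan — replay the Case~1 endgame with the stage-$\beta$ data $(M^*,F^{M^*},i^*)$ — is the right instinct, and the paper does indeed say its proof is ``a plan similar to that sketched in Case \ref{case:xi_Tt-special}.'' But the paper does \emph{not} relativize the contradiction to $M^*$, and I think your attempt to do so has a genuine gap that a more careful look reveals.

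The paper keeps the final contradiction at stage $0$: it shows $F^M\rest\ttilde^M\in M$. The pivot of the argument is the comparison of $s^*=s_{i^*}$ (stage-$\beta$ data) against $w^U$, where $w^U=\pi^{-1}(w)$, $w=\ttilde^M\cut(\zeta+1)$, and $\pi:U\to W=\Ult(M,F^M)$ is the stage-$0$ factor map. The divergence index $m<\lh(w)$ picks out the Dodd-witness $G'=F^M\rest((w\rest m)\cup w_m)$, which lies in $M$ by Assumption \ref{ass:v_not_empty}. The final subclaim then shows $\rg(\pi\com\bar{\pi})\sub\Hull_1^W(\{p_1^W,(w\rest m)\}\cup w_m)$ (crucially using $s^*\rest m=w^U\rest m$ and $\chi<(w^U)_m$), and it is the factor map $\pi$ together with $G'\in M$ that lets one conclude $\bar{G}\in M$ and hence $\bar{F}\in M$. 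Your proposal replaces the comparison tuple $w^U$ with $i^*(\ttilde^{M^*})$. These are not the same ordinal tuple: by commutativity $i^*(\ttilde^{M^*})=i^{M,0}_G(\ttilde^M)$, which differs from $\pi^{-1}(w)$ (indeed $\pi(i^{M,0}_G(\max w))=i^{M,0}_{F^M}(\max w)\neq\max w$, whereas $\pi((w^U)_0)=\max w$). So you would lose the crucial tie to $\pi$ and to the stage-$0$ witness $G'$, which is exactly what makes the hull computation close. To carry out your fully-relativized version you would instead need a factor map $\pi^*:U\to\Ult(M^*,F^{M^*})$ and the analogue of the hull containment in $\Ult(M^*,F^{M^*})$. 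But $U$ is $\Ult(M,G)$, not $\Ult(M^*,G^*)$ for any $G^*$, so $\pi^*$ is not the standard factor map and its existence is not automatic — the paper never constructs it, precisely because it sidesteps the issue by staying at stage $0$.

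Two smaller points. First, your target contradiction, ``$F^{M^*}\rest\ttilde^{M^*}\in M^*$ contradicts the defining minimality of $\ttilde^{M^*}$,'' is not quite right as stated: minimality of the super-Dodd \emph{parameter} says nothing directly about membership in $M^*$. The actual contradiction runs through the super-Dodd-\emph{solidity} parameter $(\stilde^{M^*},\sigmatilde^{M^*})$: since (by $1$-soundness and $\rho_1=\om$ after the initial reduction) $\tautilde^{M^*}=0$, we have $(\stilde^{M^*},\sigmatilde^{M^*})\leq(\ttilde^{M^*},0)$; but $F^{M^*}\rest\ttilde^{M^*}\in M^*\sub\Ult(M^*,F^{M^*})$ would force $(\ttilde^{M^*},0)<(\stilde^{M^*},\sigmatilde^{M^*})$, giving the contradiction. (One then also needs that this pulls back to a contradiction at $M$, which it does since $i^\Tt_{0\beta}$ is a close degree-$0$ iteration map and the relevant statement is of bounded complexity — but this is yet another bridge back to stage $0$.) Second, the paper uses the ordinary Dodd-solidity parameter $s^*=s_{i^*}$ in its comparison, not $\stilde_{i^*}$; this matters because $\sigma_{i^*}$ is anchored at $(\kappa^+)^M$, which lines things up with the Dodd-witnesses in Assumption \ref{ass:v_not_empty}, whereas $\sigmatilde_{i^*}$ can be $0$.
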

\begin{proof}
Suppose otherwise. We will show that the measure $\mu=
(F^M)_{\ttilde_{F^M}}\in M$,
contradicting Claim \ref{clm:F_rest_t-tilde_not_in_M}.
This will be similar to the argument used in Subcase \ref{scase:kappa<zeta_Ds} of Case \ref{case:xi_Tt-special}, by deriving $\mu$ from  measures already known to be in $M$.

Let $s^*=s_{i^*}$ and $\sigma^*=\sigma_{i^*}$.
Let $M'=M^\Tt_\varepsilon$.
Let $s'=s_{F^{M'}}$ and $\sigma'=\sigma_{F^{M'}}$.
By Claim \ref{clm:crit(k)>crit(j^*)} and the usual calculation, $s'=k(s^*)$ and $\sigma'=\sup k``\sigma^*$.
Let $w'=i^\Tt_{0\varepsilon}(w)$, so $w'\ins s'$.
 Let $\zeta'=i^\Tt_{0\varepsilon}(\zeta)$.

\begin{sclmone}$s^*\rest\lh(w)<w^U$.\end{sclmone}
\begin{proof}
 Much as before, we must have $s^*\rest\lh(w)\leq w^U$.
Suppose $s^*\rest\lh(w)=w^U$.
Note that
\[ \zeta'\notin\Hull_1^Q(\zeta'\cup\{p_1^Q,w'\}) \]
(as
$Q=M^\Tt_\infty$ and $F^{M'}$ is used along $b^\Tt$).
Since $k(w^U,p_1^U)=(w',p_1^Q)$ and $\crit(k)\geq\zeta$,
it follows that $U\neq Q$, so $\Uu$ is non-trivial, and in fact, there is
$\iota+1\in b^\Uu$
such that $\crit(E^\Uu_\iota)\leq\zeta'<\nu(E^\Uu_\iota)$.

Now $F_\downarrow^{M'}\in\rg(k)$ (basically because $k(s^*)=s'$).
Therefore
\[ F_\downarrow^{M'}\in\Hull^Q_1(\crit(E^\Uu_\iota)\cup\{p_1^Q,w'\}) \]
(see \S\ref{sec:premice_and_phalanxes} for $F_\downarrow$).
But by Assumption \ref{ass:v_not_empty}, $F_\downarrow^{M'}$ is not generated
by $\zeta'\cup w'$, and therefore
\[ F_\downarrow^{M'}\notin\Hull_1^Q(\zeta'\cup\{p_1^Q,w'\}), \]
whereas $\crit(E^\Uu_\iota)\leq\zeta'$, a contradiction, proving the subclaim.
\end{proof}

So let $m<\lh(w)$ be  such that $s^*\rest m=w^U\rest m$
but
$(s^*)_m<(w^U)_m$.

\begin{sclmone}\label{sclm:inaccessible_xi}
$U\sats$ ``There is an inaccessible
$\chi$ with $(s^*)_m<\chi< (w^U)_m$''.
\end{sclmone}
\begin{proof}
 We have $w'\ins s'=k(s^*)$, so
 \begin{enumerate}[label=(\roman*)]\item\label{item:Ds_m>0>s^*_vs_w^U} if $m>0$ then
$ k((s^*)_m)=(w')_m<k((w^U)_m)<k((s^*)_{m-1})=(w')_{m-1}$
and \item\label{item:Ds_m=0>s^*_vs_w^U}  if $m=0$ then
$k((s^*)_0)=(w')_0=\nu(F^{M'})-1<k((w^U)_0)$.
\end{enumerate}
But
\[ (w^U)_m\notin\Hull^U_1((w^U)_m\cup\{p_1^U,w^U\rest m\}),\]
and $k$ lifts this fact, and therefore
in case \ref{item:Ds_m>0>s^*_vs_w^U}, $k((w^U)_m)$ is a $(w'\rest m)$-generator of $F^{M'}$,
and in case \ref{item:Ds_m=0>s^*_vs_w^U}, $\Tt$ uses an extender $E$ along $b^\Tt$,
after $F^{M'}$, for which $k((w^U)_0)$ is a generator.
In case \ref{item:Ds_m=0>s^*_vs_w^U}, we get
\begin{equation}\label{eqn:xi'_and_others}
k((s^*)_m)<\chi'=\crit(E)\leq
k((w^U)_m)\end{equation}
and $\chi'$ is a limit of inaccessibles in $Q$,
which easily suffices.
In case \ref{item:Ds_m>0>s^*_vs_w^U}, $\Tt$ uses an extender $E$ on the branch
leading to $M'$, for which $k((w^U)_m)$ is a generator,
such that line (\ref{eqn:xi'_and_others}) also holds,
and again this easily suffices.
\end{proof}

Let $\chi$ be least as in Subclaim \ref{sclm:inaccessible_xi}.
Let $t=\ttilde^M$ and $t'=i^\Tt_{0\varepsilon}(t)$.
Recall $k(s^*\rest\lh(w))=w'\ins t'$.
Let $\theta\in b^\Uu$ be least such that
either $\theta+1=\lh(\Uu)$
or $i^\Uu_{0\theta}(\chi)\leq\crit(i^\Uu_{0\theta})$,
so in fact by the minimality of $\chi$, $i^\Uu_{0\theta}(\chi)<\crit(i^\Uu_{\theta\infty})$
and $i^\Uu_{0\theta}(\chi)=k(\chi)$
and $i^\Uu_{0\theta}((s^*)_m)=k((s^*)_m)=(t')_m<k(\chi)$. Note that $t'\in\rg(i^\Uu_{\theta\infty})$.
Let $i^\Uu_{\theta\infty}(\widehat{t})=t'$.

 As in Subcase \ref{scase:kappa<zeta_Ds} of Case \ref{case:xi_Tt-special},
 there is $(\Vv,\somevarpi)$ such that $\Vv$ is  a $(0,0)$- or $(0,r,0)$-maximal tree  of finite length on $\phU$,  $b^{\Vv}$
 is above $U$ and does not drop,
 $\somevarpi:M^{\Vv}_\infty\to M^\Uu_\theta$ is a $0$-embedding,
 $\somevarpi\com i^{\Vv}_{0\infty}=i^\Uu_{0\theta}$
 and $\widehat{t}\in\rg(\somevarpi)$.
 Also as there,
 we can find $\ell<\om$ and $(\bar{\Vv},\vec{U},\vec{E},\bar{\somevarpi})$ such that $\vec{U}=\left<U_\alpha\right>_{\alpha\leq\ell}$, $\vec{E}=\left<E_\alpha\right>_{\alpha<\ell}$ and $\bar{\Vv}=\big(\vec{U},\vec{E})$ is an abstract $0$-maximal iteration of $U_0=U$,  $E_\alpha\in U_\alpha|\chi$
 and $E_\alpha$
 is a finitely generated extender, and $\bar{\somevarpi}:U_\ell\to M^{\Vv}_\infty$ is a $0$-embedding
 with $\bar{\somevarpi}\com i^{U,0}_{\vec{E}}=i^{\Vv}_{0\infty}$
 and $\somevarpi^{-1}(\widehat{t})\in\rg(\bar{\somevarpi})$.
 Note that $\mu$
 is  the measure  derived from $i^{U,0}_{\vec{E}}\com i_G$
 with seed $\bar{\somevarpi}^{-1}(\somevarpi^{-1}(\widehat{t}))$.

 We have $\vec{E}\in U|\chi$. So since $\chi<(w^U)_m$,
we have $\vec{E}\in U'=\Ult_0(M,G')$ where \[ G'=E_{i_G}\rest (\{w^U\rest m\}\cup(w^U)_m).\]
So letting $\bar{\bar{\somevarpi}}:\Ult_0(U',\vec{E})\to\Ult_0(U,\vec{E})=U_\ell$ be the  map
given by the Shift Lemma applied to the factor map $U'\to U$,
 $\mu$ is  the measure derived from $i^{U',0}_{\vec{E}}\com i^{M,0}_{G'}$ with  seed
$\bar{\bar{\somevarpi}}^{-1}(\bar{\somevarpi}^{-1}(\somevarpi^{-1}(\widehat{t})))$.

Let $F'=F^M\rest (\{w\rest m\}\cup w_m)$ and $U''=\Ult_0(M,F')$.
Let $\somevarsigma:U'\to U''$
be the factor map induced by $\pi\rest(\{w^U\rest m\}\cup (w^U)_m)$.
Then $\somevarsigma\com i^{M,0}_{G'}=i^{M,0}_{F'}$,
so
 $\mu$ is  the measure derived from $i^{U'',0}_{\somevarsigma(\vec{E})}\com i^{M,0}_{F'}$ with seed $\somevarsigma(\bar{\bar{\somevarpi}}^{-1}(\bar{\somevarpi}^{-1}(\somevarpi^{-1}(\widehat{t}))))$. Since $F'\in M$ and $\somevarsigma(\vec{E})\in\Ult_0(M|\kappa^{+M},F')$, it follows that $\mu\in M$, as desired.

This completes the proof
that $\varepsilon$ is non-$\Tt$-special
(Claim  \ref{clm:varepsilon_non-Tt-special}).
\end{proof}

The following claim will now give the desired contradiction:

\begin{clmone}\label{clm:epsilon-bar_is_non-Tt-special} $\bar{\varepsilon}$ is non-$\Tt$-special.\footnote{Note
that this does not seem to follow immediately from the construction
of $\Ttbar$, as we have not yet shown that $\Tt$ has no $\Tt$-special extender
along $b^\Tt$, but only that $E^\Tt_\varepsilon$
is non-$\Tt$-special.
And we have to allow the possibility
that $b^\Tt$ has ordertype $>\om$
and  infinitely many extenders are used along $b^\Tt$
prior to the first $\Tt$-special
extender there.
So $E^{\bar{\Tt}}_{\bar{\varepsilon}}$
might ostensibly be the reflection of such a $\Tt$-special
extender in $\Tt$.} \end{clmone}
\begin{proof}
Because $\varepsilon$ is non-$\Tt$-special and as in the proof of
Closeness \cite[6.1.5]{fsit}, for every $a\in[\nu(E^\Tt_\varepsilon)]^{<\om}$,
we have $(E^\Tt_\varepsilon)_a\in M^*$. Suppose
$E^{\Ttbar}_{\bar{\varepsilon}}$
is $\Ttbar$-special and let
$\tbar=i^{\Ttbar}_{0\bar{\varepsilon}}(\ttilde_{F^M})$.
Then the measure $D=(E^{\Ttbar}_{\bar{\varepsilon}})_{\tbar}\notin
M^{\Ttbar}_{\bar{\varepsilon}}$,
and $\crit(i^{\Ttbar}_{\beta\bar{\varepsilon}})>\mu^*$ (recall
$M^*=M^{\Ttbar}_\beta=M^\Tt_\beta$),
so $D\notin M^*$. But
note that by  Claim \ref{clm:crit(k)>crit(j^*)} and
$s$-preservation, we have $\somevarpi(\max(\tbar))+1=\nu(E^\Tt_\varepsilon)$,
so $D=(E^\Tt_\varepsilon)_{\somevarpi(\tbar)}\in M^*$, a contradiction.
\end{proof}

So we have shown that $\varepsilon$ is non-$\Tt$-special and
$\bar{\varepsilon}$ is non-$\Ttbar$-special,
contradicting Assumption \ref{ass:speciality}, completing the analysis of Case \ref{case:Ds_xi_is_non-Tt_special}.
\end{caseone}

This completes everything other than
the postponed verification of the iterability of $\phU$, which we finally tend to:

\begin{proof}[Proof of Claim \ref{clm:Ds_phU_iterable}]\label{page:Ds_iterability_proof}
We will show that $\phU$ is $((0,0),\om_1+1)$-iterable
or $((0,r,0),\om_1+1)$-iterable respectively.\footnote{See Footnote \ref{ftn:anomalous_footnote} for the case $r=-1$.}
Toward this iterability,
let $\gamma$ be any $M$-cardinal with $\kappa<\gamma$ and let
\[ \phW_\gamma=((M,{<\gamma}),W,\gamma). \]
We will show that $\phW_\gamma$ is $((0,0),\om_1+1)$-iterable,
and that we can reduce trees on $\phU$ to trees on $\phW_{\pi(\zeta)}$.

\begin{sclmone}\label{sclm:phW_iterable}$\phW_\gamma$ is
$((0,0),\om_1+1)$-iterable.\end{sclmone}
\begin{proof}The proof is just a simple case of normalization
calculations. Write $\phW=\phW_\gamma$.
First, instead of considering (unpadded) $(0,0)$-maximal trees $\Ww$ on $\phW$ themselves, we will consider padded $(0,0)$-maximal trees $\Ww$
of a  special form. We write $-1,0$ for the two roots of $\Ww$
(so $M^{\Ww}_{-1}=M$ and $M^\Ww_0=W$).
Say that a padded $(0,0)$-maximal tree $\Ww$ on $\phW$ is
\emph{nicely padded} iff:
\begin{enumerate}[label=\arabic*.,ref=\arabic*]
\item For each $\alpha+1<\lh(\Ww)$,
if $0\leq^\Ww\alpha$
and $(0,\alpha]^\Ww\cap\dropset^\Ww=\emptyset$ and $E^\Ww_\alpha\neq\emptyset$
then $\lh(E^\Ww_\alpha)<i^\Ww_{0\alpha}(\OR^M)$.
 \item For each $\alpha+1<\lh(\Ww)$, if $E^\Ww_\alpha=\emptyset$
then:
\begin{enumerate}
\item $0\leq^\Ww\alpha$ and $(0,\alpha]^\Ww\cap\dropset^\Ww=\emptyset$,
\item
$\pred^\Ww(\alpha+1)=-1$,
$M^\Ww_{\alpha+1}=M^\Ww_\alpha$,
$M^{*\Ww}_{\alpha+1}=M$,
and $i^{*\Ww}_{\alpha+1}=i^\Ww_{0\alpha}\com i^{M,0}_{F^M}$,
\item $\alpha+2<\lh(\Ww)$,
\item $i^\Ww_{0\alpha}(\OR^M)<\lh(E^\Ww_{\alpha+1})$.
\end{enumerate}
\item For each $\alpha+1<\lh(\Ww)$, if $E^\Ww_\alpha\neq\emptyset$ then $\pred^\Ww(\alpha+1)$
is the least $\beta$ such that $\crit(E^\Ww_\alpha)<\nu^\Ww_\beta$,
where $\nu^\Ww_\beta=\nu(E^\Ww_\beta)$ if
$E^\Ww_\beta\neq\emptyset$, and
and $\nu^\Ww_\beta=i^\Ww_{0\beta}(\nu(F^M))$
if $E^\Ww_\beta=\emptyset$.
\item Everything else is as for $(0,0)$-maximal trees.
\end{enumerate}
Clearly nicely padded $(0,0)$-maximal trees on $\phW$
are equivalent to unpadded $(0,0)$-maximal trees on $\phW$.

Now let $\phM$ be the phalanx $((M,{<\gamma}),M,\gamma)$.
Note that
$(0,0)$-maximal trees $\Tt$ on $\phM$ are equivalent
to $0$-maximal trees on $M$ with $\gamma<\lh(E^\Tt_0)$. So let $\Sigma_{\phM}$ be the iteration strategy for $\phM$ equivalent to some $(0,\om_1+1)$-strategy for $M$.

Let $\Ww$ be a nicely padded $(0,0)$-maximal tree on $\phW$ and $\Tt$ be $(0,0)$-maximal on $\phM$.
We say that $(\Ww,\Tt)$ is a \emph{nice pair}
iff $\lh(\Tt)=\lh(\Ww)$, ${<^\Tt}={<^{\Ww}}$ and
 for each $\alpha+1<\lh(\Ww)$, we have:
 \begin{enumerate}[label=--]\item if $E^\Ww_\alpha\neq\emptyset$ then $E^\Tt_\alpha=E^\Ww_\alpha$, and
     \item if $E^\Ww_\alpha=\emptyset$ then $E^\Tt_\alpha=F(M^\Tt_\alpha)$.
     \end{enumerate}

 We get a $(0,0)$-maximal iteration strategy $\Sigma_{\phW}$ for $\phW$ producing nicely padded trees  $\Ww$ such that for some $(0,0)$-maximal tree $\Tt$ on $\phM$ via $\Sigma_{\phM}$,
 $(\Ww,\Tt)$ is a nice pair.
To see this, observe the following points regarding nice pairs $(\Ww,\Tt)$:
\begin{enumerate}[label=\Alph*.]\item  $\dropset^\Tt=\dropset^\Ww$
and
$\deg^\Tt=\deg^\Ww$.
 \item For each $\alpha<\lh(\Ww)$, if either  $-1\leq^\Ww\alpha$  or  [$0\leq^\Ww\alpha$
 and $(0,\alpha]^\Ww\cap\dropset^\Ww\neq\emptyset$] then:
\begin{enumerate}[label=(\roman*)]
\item $M^\Tt_{\alpha}=M^\Ww_\alpha$,
\item for each $\beta\leq^\Tt\alpha$ with
$(\beta,\alpha]^\Tt\cap\dropset^\Tt=\emptyset$,
we have $i^\Tt_{\beta\alpha}=i^\Ww_{\beta\alpha}$ and
 if $\beta$ is a successor then
$M^{*\Tt}_\beta=M^{*\Ww}_\beta$ and $i^{*\Tt}_{\beta}=i^{*\Ww}_\beta$.
\end{enumerate}
\item\label{item:derived_extender}  For each $\alpha<\lh(\Ww)$, if $0\leq^\Ww\alpha$ and $(0,\alpha]^\Ww\cap\dropset^\Ww=\emptyset$
then:
\begin{enumerate}
\item $M^\Ww_\alpha=\Ult_0(M,F^{M^\Tt_\alpha})$,
\item $i^\Tt_{0\alpha}=i^\Ww_{0\alpha}\rest M$,
\item\label{item:derived_extender_sub_part} $F^{M^\Tt_\alpha}$ is just the $(\kappa,\OR^{M^\Tt_\alpha})$-extender
derived from $i^\Ww_{0\alpha}\com i^{M,0}_{F^M}$.

 \item Suppose $\alpha$ is a successor $\beta+1<\lh(\Ww)$ and let $\vareps=\pred^\Ww(\beta+1)$.
 Then $\kappa=\crit(F^M)<\gamma\leq\crit(E^\Tt_\beta)<\OR(M^\Tt_\vareps)$, and
 because  $\OR^{M^\Tt_\vareps}$ is a successor
 cardinal of $M^\Ww_\vareps=\Ult_0(M,F^{M^\Tt_\vareps})$,
 the same functions are used in forming
 $\Ult_0(M^\Tt_\vareps,E^\Tt_\beta)$ as in forming
$i^\Ww_{\vareps,\beta+1}(M^\Ww_\vareps||\OR^{M^\Tt_\vareps})$.
\end{enumerate}
\end{enumerate}
With these points in mind, it is easy to see that $\Sigma_{\phW}$ is indeed a $((0,0),\om_1+1)$-strategy for $\phW$.
\end{proof}

We now reduce trees on $\phU$ to trees  on $\phW_{\pi(\zeta)}$,
giving the desired iterability of $\phU$.
 Recall (from directly after Assumption \ref{ass:v_not_empty})  that  $\pi:U\to W$ is the factor map.
 We have $\crit(\pi)=\zeta\geq\kappa$, so $\pi(\zeta)$ is a $W$-cardinal
 such that $\kappa<\pi(\zeta)<\OR^M$, and hence an $M$-cardinal.
 So by the previous claim, $\phW_{\pi(\zeta)}$ is $((0,0),\om_1+1)$-iterable.
 But  we can reduce trees $\Uu$ on $\phU$ to trees
 $\Ww$ on $\phW_{\pi(\zeta)}$ via  standard copying, except for one possibly non-standard detail which occurs at the superstrong level. We describe this below, along with some more detail in general.

 If $\zeta$ is an $M$-cardinal then
 $\phU=((M,{<\zeta}),U,\zeta)$,
 so we can lift directly to $\phW_{\pi(\zeta)}=((M,{<\pi(\zeta)}),W,\pi(\zeta))$
 using $\id:M\to M$ and $\pi:U\to W$
 as initial copy maps. This case is routine, so we move on.

 Suppose instead that $\card^M(\zeta)=\delta<\zeta=\delta^{+U}$, so  $R\pins M$ is defined, $\delta^{+U}=\delta^{+R}$ and
\[ \phU=((M,{<\delta}),(R,\delta),U,\delta^{+U}).\]
Then $\pi(\zeta)=\delta^{+W}=\delta^{+M}$ and we lift $(0,r,0)$-maximal trees on $\phU$ to essentially-$(0,0,0)$-maximal trees on the phalanx
\[ \phW'_{\pi(\zeta)}=((M,{<\delta}),(M,\delta),W,\delta^{+M}),\]
using $\id:M\to M$, inclusion $R\to R\pins M$ and $\pi:U\to W$ as initial copy maps.
 But this suffices,
since $\phW'_{\pi(\zeta)}$ is clearly equivalent to $\phW_{\pi(\zeta)}$,
so is $(0,0,0)$-maximally iterable,
and just like in the proofs of Lemmas \ref{lem:essentially_to_normal} and \ref{lem:ess_iter},
this implies it is essentially-$(0,0,0)$-maximally iterable. (Note that $\phW'_{\pi(\zeta)}$
has three models, where $\phW_{\pi(\zeta)}$ has only two.)

The reason we can only expect essentially-$(0,0,0)$-maximality instead of $(0,0,0)$-maximality for the tree on $\phW'_{\pi(\zeta)}$ is as follows. Let $\Uu$ on $\mathfrak{U}$ is $(0,r,0)$-maximal,
formed by lifting to $\Ww$ on $\mathfrak{W}'_{\pi(\zeta)}$.
 Suppose $\crit(E^\Uu_\alpha)=\delta$ and $E^\Uu_\alpha,E^\Ww_\alpha$ have superstrong type. Then
 $M^\Uu_{\alpha+1}=\Ult_r(R,E^\Uu_\alpha)$ and
  $\crit(E^\Ww_\alpha)=\delta$ and $M^\Ww_{\alpha+1}=\Ult_0(M,E^\Ww_\alpha)$.
 We get the copy map
  \[\pi_{\alpha+1}:M^\Uu_{\alpha+1}\to S_{\alpha+1}\pins M^\Ww_{\alpha+1},\]
  where $S_{\alpha+1}=i^{\Ww}_{-1,\alpha+1}(R)$, defined via the Shift Lemma (from earlier copy maps) in a straightforward manner. But $i^\Ww_{-1,\alpha+1}(\delta)=\lambda(E^\Ww_\alpha)=\nu(E^\Ww_\alpha)$,
  since $E^\Ww_\alpha$ is superstrong,
  and since $\rho_\om^R=\delta$,
  we have $\rho_\om^{S_{\alpha+1}}=\nu(E^\Ww_\alpha)$. Therefore $S_{\alpha+1}\pins M^\Ww_{\alpha+1}|\nu(E^\Ww_\alpha)^{+M^\Ww_{\alpha+1}}$,
  so $\OR(S^\Ww_{\alpha+1})<\nu(E^\Ww_\alpha)^{+M^\Ww_{\alpha+1}}=\lh(E^\Ww_{\alpha})$.
  But we will have $E^\Uu_{\alpha+1}\in\es_+(M^\Uu_{\alpha+1})$,
 so $E^\Ww_{\alpha+1}\in\es_+(S^\Ww_{\alpha+1})$  is the copy of $E^\Uu_{\alpha+1}$ under $\pi_{\alpha+1}$. Therefore $\lh(E^\Ww_{\alpha+1})<\lh(E^\Ww_\alpha)$,
 so $\Ww$ fails to be $(0,0,0)$-maximal.
 On the other hand, we have $\lh(E^\Uu_\alpha)\leq\lh(E^\Uu_{\alpha+1})$,
 and because of this,
 it is straightforward to see that $\nu(E^\Ww_\alpha)=\lambda(E^\Ww_\alpha)\leq\nu(E^\Ww_{\alpha+1})$. Moreover, it is only in this situation that we can have $\lh(E^\Ww_{\alpha+1})<\lh(E^\Ww_\alpha)$.
So $\Ww$ will be essentially-$(0,0,0)$-maximal.

The remaining details are quite standard,
so we leave them to the reader.
(There are also more details in the related proof of Claim \ref{clm:B_is_almost_iterable}
of the proof of Theorem \ref{thm:solidity}.)
\end{proof}
This completes the proof of the super-Dodd-soundness lemma \ref{lem:super-Dodd-soundness}.
\end{proof}

We can now complete the proof of the theorem on measures in mice:
\begin{proof}[Proof of Theorem \ref{thm:measures_in_mice}]
By Lemma \ref{lem:super-Dodd-soundness}, all proper segments of $M$ are Dodd-sound,
so Lemma \ref{lem:measures_in_mice} applies, so we are done.
\end{proof}

And we finish this section with the proof of the ISC for  normally iterable pseudo-premice (cf.~\cite[\S10]{fsit}):
\begin{tm}[Initial segment condition]\label{thm:ISC}
 Let $M$ be a $(0,\om_1+1)$-iterable pseudo-premouse.
 Then $M$ is a premouse.
\end{tm}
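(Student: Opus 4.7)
The plan is to follow the classical Mitchell-Steel argument of \cite{fsit}, which, despite appearing there alongside other arguments that rely on iterability for stacks, in fact only requires normal iterability for the ISC itself. Assume for contradiction that $M$ is a $(0,\om_1+1)$-iterable pseudo-premouse which is not a premouse. Let $F=F^M$, $\kappa=\crit(F)$ and $\nu=\nu(F)$. Failure of the ISC yields a specific offending extender: pick the largest $\eta$ with $\kappa\leq\eta\leq\nu$ such that the whole extender $G=\trivcom(F\rest\eta)$ is not on the sequence $\es^M$ at its correct index, while for every $\eta'$ with $\eta<\eta'$ the analogous whole segment \emph{is} on $\es^M$. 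By collapsing $M$ to a countable elementary hull that captures this witness (the hull remains $(0,\om_1+1)$-iterable via the standard lifting of iteration trees), we may assume $M$ is countable and that the hull is $1$-sound enough that the usual fine-structural preservation arguments apply.

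Next I would form $U=\Ult_0(M,G)$ and assemble a phalanx $\ph$ following the template of Claim \ref{clm:type_z_in}: in the generic case $\ph=((M,{<\eta}),U,(\eta^+)^U)$, and in the type-Z case the three-root phalanx $\ph=((M,{<\xi}),(R,\xi),U)$, where $R\pins M$ is the proper initial segment of $M$ carrying the largest non-type-Z fragment of $G$ and $\xi$ is that fragment's largest generator (which exists by the maximality of $\eta$). The iterability of $\ph$ (as a $((0,0),\om_1+1)$- or $((0,0,0),\om_1+1)$-phalanx) follows by lifting trees on $\ph$ to trees on $M$, in the style of Claim \ref{clm:phW_iterable} in the super-Dodd-soundness argument: the tree on the $U$-root is simulated on $M$ by first applying an appropriate $F$-ultrapower step (represented as padding) and then propagating. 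The crucial input is that $G$ is close to $M$ in the sense of Lemma \ref{lem:closeness_for_superstrong}, since every component measure of $G$ is a component measure of some whole segment of $F$ strictly longer than $G$, which by maximality of $\eta$ lies on $\es^M$.

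Then I would compare $\ph$ with $M$ to obtain successor-length normal trees $\Uu$ on $\ph$ and $\Tt$ on $M$; termination is standard, using countability of $M$ and the regularity of $\om_1$. The key structural claim is that $b^\Uu$ is above the $U$-root, neither $b^\Uu$ nor $b^\Tt$ drops, and the two sides share a common final model $Q=M^\Uu_\infty=M^\Tt_\infty$. This is forced by the requirement that the comparison resolve the initial disagreement located at the index of $F$, combined with closeness of the extenders used in $\Uu$ to the relevant starting models (by the usual proof of Closeness \cite[6.1.5]{fsit}), which preserves fine structure along the branches. The contradiction then comes from comparing the active extender and sequence of $Q$ as seen from each side: the $\Tt$-side places $i^\Tt(F)$ on top of $Q$, while the $\Uu$-side places the iteration image of $\trivcom(G)$ as a whole proper segment of $i^\Tt(F)$ on $\es^Q$. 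Pulling this back through $i^\Tt$ shows $G$ itself was a whole proper segment on $\es^M$ at its correct index, contradicting the choice of $\eta$.

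The hard part will be pinning down the shape of the comparison without assuming full soundness of $M$: in particular, ruling out dropping on $b^\Uu$ back onto a proper segment of $M$ (below $\eta$) or onto a proper segment of $U$, since such drops could in principle prevent the clean identification of the two inherited active extenders at the end. This is resolved by the opening reduction to a sufficiently sound countable hull, together with the closeness analysis of $G$, which ensures that extenders used on the $U$-side of $\Uu$ are close to their starting models and hence compatible with preservation of the relevant fine structure; what remains is a direct case analysis following the classical template in \cite[\S 10]{fsit}, carried out within a single normal comparison.
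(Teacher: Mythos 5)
Your overall skeleton — isolate the failing whole segment $G$, form $U=\Ult_0(M,G)$, set up the phalanx $\ph$ and compare it with $M$, derive a contradiction from the doubled appearance of $\trivcom(G)$ at $Q$ — is indeed the classical Mitchell–Steel ISC argument, and the paper's proof does reduce to exactly that template. But your opening claim that the classical argument ``in fact only requires normal iterability for the ISC itself'' is where the gap lies, and it is precisely the gap the paper is written to fill. The classical phalanx comparison uses \emph{weak Dodd-Jensen} to control the shape of the comparison (to force $b^\Uu$ above the $U$-root, rule out drops, and identify the two final models), and weak Dodd-Jensen is obtained from $(0,\om_1,\om_1+1)^*$-iterability, not from normal iterability. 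Your attempt to fill this with ``the opening reduction to a sufficiently sound countable hull, together with the closeness analysis of $G$'' does not work: soundness of the hull and closeness of extenders control \emph{fine-structure preservation along branches}, but they say nothing about \emph{which branches} the two trees will take or whether the two sides' branches can be matched; the need to ``resolve the initial disagreement'' likewise does not by itself rule out the bad branch patterns.

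The missing idea, and the actual content of the paper's short proof, is to take a very specific hull: with $\gamma$ the largest generator of $F^M$ and $\delta=\lgcd(M)$, $G=F^M\rest\delta$, one passes to $\Mbar=\cHull_1^M(\{\gamma,G\})$. The point of this hull is not that it is ``sound enough'' — it is that $\Mbar$ is finitely $\rSigma_1$-generated, hence $\rho_1^\Mbar=\om$, and more to the point, any two cofinal $\Sigma_1$-elementary maps $\sigma,\tau:\Mbar\to M'$ into a pseudo-premouse must send the generating pair $(\gammabar,\Gbar)$ to the same place (since those are canonically definable from the active extender structure), and therefore $\sigma=\tau$. That uniqueness-of-embeddings fact is the normal-iterability substitute for weak Dodd-Jensen, and once you have it, the classical comparison analysis you outline goes through verbatim. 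You should therefore replace ``sufficiently sound countable hull'' with the explicit finitely generated hull $\cHull_1^M(\{\gamma,G\})$, verify that it is still a pseudo-premouse with the same ISC failure, note that ``if $\Mbar$ is a premouse then so is $M$'', and then invoke uniqueness of cofinal $\Sigma_1$-elementary embeddings in place of every appeal to weak Dodd-Jensen in the remainder of the argument.
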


\begin{proof}
We use a trick similar to that used at the
beginning of
the proof of the super-Dodd-soundness lemma \ref{lem:super-Dodd-soundness},
to reduce to the classical case.
Let $\gamma$ be the largest generator of $F^M$, let $\delta=\lgcd(M)$ and let
$G=F^M\rest\delta$;
so $G\in M$. Let
\[ \Mbar=\cHull_1^M(\{\gamma,G\}), \]
let $\pi:\Mbar\to M$ be the uncollapse and let
$\pi(\gammabar,\deltabar,\Gbar)=(\gamma,\delta,G)$.
It is straightforward to see that $\gammabar$ is the largest generator of
$F^\Mbar$,
$\deltabar=\lgcd(\Mbar)$, $\Gbar=F^\Mbar\rest\deltabar$, and
if
$\Mbar$ is
a premouse then so
is $M$. So, resetting notation, we may assume that $\Mbar=M$ and $\pi=\id$. In
particular,
$\rho_1^M=\om$. It is then easy to see that if $M'$ is a pseudo-premouse and
$\sigma:M\to M'$ and
$\tau:M\to M'$ are cofinal and $\Sigma_1$-elementary, then
$\sigma(\gamma,G)=\tau(\gamma,G)$, and
therefore that $\sigma=\tau$. Using these facts in place of weak Dodd-Jensen,
the standard proof of
the ISC for $(0,\om_1,\om_1+1)^*$-iterable pseudo-premice goes through.
\end{proof}

\section{Projectum-finitely generated mice}\label{sec:finite_gen_hull}

\subsection{When mice are (or are not) iterates of their cores}\label{sec:iterates_of_cores}

In this section we will give the main argument for the projectum-finite generation theorem \ref{thm:finite_gen_hull},
which gives a simple criterion
guaranteeing that a mouse $M$ is an iterate of its core $C$. If we have an $m$-sound, $(m,\om_1+1)$-iterable premouse $M$, the criterion, \emph{projectum-finite generation},
is just that \[ M=\Hull_{m+1}^M(\rho_{m+1}^M\cup\{x\})
\]
for some $x\in\core_0(M)$.

But before we begin, we discuss some already known results which are related, and slight variants thereof. These results   will demonstrate that one cannot simply drop the criterion completely:
we can have $m<\om$ and an $(m,\om_1+1)$-iterable, $m$-sound premouse $M$
which is not an $m$-maximal iterate of $\core_{m+1}(M)$. This phenomenon first occurs roughly at the point of a cardinal which is ``strong past a measurable''. We will make this more precise in what follows; its interpretation is the deciding factor here.

\subsubsection{Mice which are iterates of their cores}

Recall $0^\pistol$ ($0$-pistol) is the least active mouse $M$ such that $M|\crit(F^M)\sats$ ``There is a strong cardinal''. If
$0^\pistol$ does not exist,
then by
\cite[Theorem 8.13]{cmip},
every universal weasel $W$ is a normal iterate of the core model $K$. (This result is also credited to Ronald Jensen in
\cite[between 8.20 and 8.21]{cmfali};
cf.~also \cite[Theorem 6.1]{hsstm}.) It is pointed out in
\cite[between 8.20 and 8.21]{cmfali} that  if $0^{\handgrenade}$ ($0$-hand-grenade,
\cite[Definition 2.3]{cmfali}) does not exist, then a similar result holds, assuming that for the model $K^c$ as defined in \cite{cmfali}, $K^c\sats$ ``there are no $\mu,\kappa$ such that $\kappa$ is measurable, $\mu<\kappa$ and $\mu$ is ${<\kappa}$-strong''.  In fact, one can go a little further, as shown in Theorem \ref{tm:if_U_not_iterate_of_core} below.

\begin{dfn}\label{dfn:limit_space_type}
Let $M$ be an active premouse
and let $\mu=\crit(F^M)$.
Say that $F^M$ is of \dfnemph{limit  space type} iff there is $\kappa>\mu$ such that $\kappa$ is measurable in $\Ult(M,F^M)$ and $\nu(F^M)=\kappa^{+M}$.
So $\OR^M=\kappa^{++\Ult(M,F^M)}$
and by coherence, the order $0$ measure on $\kappa$
in $\es^{\Ult(M,F^M)}$ is also in $\es^M$.\end{dfn}

\begin{tm}\label{tm:if_U_not_iterate_of_core}
Let $k<\om$ and let $K$ be a $(k+1)$-sound $(k,\om_1+1)$-iterable premouse
such that $\rho_{k+1}^K=\om$,
and let $\Sigma_K$ be the unique $(k,\om_1+1)$-iteration strategy for $K$.
Let $U$ be a $(k,\om_1+1)$-iterable premouse such that $\core_{k+1}(U)=K$.
Suppose that $U$ is not a $k$-maximal $\Sigma_K$-iterate of $K$.
Then there is a $K$-total extender $E\in\es^K_+$ which is of limit  space type.\end{tm}

\begin{proof}
Consider the $k$-maximal comparison $(\Uu,\Tt)$
of $(U,K)$, using some $(k,\om_1+1)$-strategy for $\Uu$ and $\Sigma_K$ for $\Tt$. Then
$M^\Uu_\infty=M^\Tt_\infty$  and $b^\Uu,b^\Tt$ do not drop in model or degree. Let $Q=M^\Tt_\infty$. Since $U$ is not a normal iterate of $K$
via $\Sigma_K$,
$\Uu$ is non-trivial. Note that by Lemma \ref{lem:basic_fs_pres},
since $Q$ is $(k+1)$-solid,
so is $U$, and $i^\Uu(p_{k+1}^U)=p_{k+1}^Q$. Letting $\pi:K\to U$ be the core map, we have $i^\Uu\com\pi=i^\Tt$.

Standard arguments with the hull and definability properties give that $\crit(i^\Tt)<\crit(i^\Uu)$,
and moreover, that there is $\alpha+1\in b^\Tt$ such that $\crit(E^\Tt_\alpha)<\crit(i^\Uu)<\nu(E^\Tt_\alpha)$. This is by calculations like in
\cite[Example 4.3 and Remark, p.~29]{cmip}. Let $\kappa=\crit(i^\Uu)$
and $\mu=\crit(E^\Tt_\alpha)$.

\begin{clm*} $E^\Tt_\alpha$
has an initial segment which is of
limit space type.\end{clm*}
\begin{proof}Suppose otherwise.
Let $\beta=\pred^\Tt(\alpha+1)$, $j=i^\Tt_{\beta\infty}$ and $k=i^\Uu$.
Noting that $\rg(j)\sub\rg(k)$, let
 $\pi':M^\Tt_\beta\to U$
be such that $k\com\pi'=j$.

By the ISC,  $E^\Tt_\alpha\rest\kappa\in\es^{M^\Tt_{\alpha+1}}$, and so $E^\Tt_\alpha\rest\kappa\in\es^U$.
Since $k\com\pi'=j$
and $\crit(k)=\kappa$,
$k(E^\Tt_\alpha\rest\kappa)=E_j\rest k(\kappa)$, where $E_j$ is the short extender derived from $j$.
But then $E_j\rest k(\kappa)\in\es^Q$, so $k(\kappa)<\nu(E^\Tt_\alpha)$.
Since $\kappa$ is measurable  in $U$, $k(\kappa)$ is measurable in $Q$, and hence in $M^\Tt_{\alpha+1}$. But since $E^\Tt_\alpha$ is not of limit space type, it follows that $k(\kappa)<\nu(E^\Tt_\alpha)<k(\kappa)^{+Q}=k(\kappa)^{+M^\Tt_{\alpha+1}}=\lh(E^\Tt_\alpha)$, and $E^\Tt_\alpha$ is type 2. Also since $E^\Tt_\alpha$ has no segment of limit space type, $\exit^\Tt_\alpha$
has no measurable cardinal $\theta$
such that $\crit(E^\Tt_\alpha)<\theta<k(\kappa)$,
and there is a unique total measure on $k(\kappa)$ in $\es(M^\Tt_{\alpha+1})$.

Like in \S\ref{sec:mim}
and \S\ref{sec:Dodd_proof},
we have $s_{E^\Tt_\alpha}=s_j=k(s_{\pi'})$ and $\sigma_{E^\Tt_\alpha}=\sigma_j=\sup k``\sigma_{\pi'}$.
Since $E^\Tt_\alpha$ is type 2,
$s_{E^\Tt_\alpha}\neq\emptyset$
and $k(\kappa)\leq\max(s_{E^\Tt_\alpha})<\lh(E^\Tt_\alpha)$. So $\kappa\leq\max(s_{\pi'})<\kappa^{+U}$.
Therefore $\sigma_{\pi'}\leq\kappa$, and therefore $\sigma_{\pi'}=\sigma_{j}=\sigma_{E^\Tt_\alpha}$.

Let $F=\core_{\ds}(M^\Tt_\alpha)$
and $\tau:\Ult(M^\Tt_\beta,F)\to Q$ be the standard factor map. If $E^\Tt_\alpha$ is Dodd-sound
then $F=E^\Tt_\alpha$ and $\tau=i^\Tt_{\alpha+1,\infty}$
and $\nu(E^\Tt_\alpha)<\crit(i^\Tt_{\alpha+1,\infty})$.
But by the previous paragraph, we can define $\pi'':\Ult(M^\Tt_\beta,F)\to U$ such that $k\com\pi''=\tau$.
Since $\crit(k)=\kappa$,
$\crit(\tau)\leq\kappa$.
So $E^\Tt_\alpha$ is non-Dodd-sound.

Let $\eta<^\Tt\alpha$ be such that $\core_{\ds}(M^\Tt_\alpha)\ins M^\Tt_\eta$, let $\gamma+1=\succ^\Tt(\eta,\alpha)$,
 and let  $\somevarpi:\core_{\ds}(M^\Tt_\alpha)\to M^\Tt_\alpha$ be the iteration map $\somevarpi=i^{*\Tt}_{\gamma+1,\alpha}$.
By the remarks above,
 the extenders used along the branch  $(\eta,\alpha]^\Tt$ are just the order $0$ measures on the largest cardinal of the current model (starting with $\core_{\ds}(M^\Tt_\alpha)$).
 If $\crit(\somevarpi)\geq\kappa$
 then let $F'=F^{\core_{\ds}(M^\Tt_\alpha)}$.
 If $\crit(\somevarpi)<\kappa$ then let
  $\xi\leq^\Tt\alpha$ be least such that $\xi=\alpha$ or $\lgcd(M^\Tt_\xi)\geq\kappa$, and let $F'=F^{M^\Tt_\xi}$.
  Now let
 $\tau':\Ult(M^\Tt_\beta,F')\to Q$ be the factor map
 (this is given by the tail end of the iteration of the order $0$ measures just mentioned, followed by $i^\Tt_{\alpha+1,\infty}$).
 Let $\pi''':\Ult(M^\Tt_\beta,F')\to U$ be such that $k\com\pi'''=\tau'$. We have $\crit(\tau')\geq\kappa$,
 so $\crit(\pi''')\geq\kappa$.
  But $\kappa\in\rg(\pi''')$, since $\kappa$ is a successor measurable of $U$. So $\pi'''(\kappa)=\kappa$ and
  $U||\kappa^{+U}=\Ult(M^\Tt_\beta,F')||\kappa^{+\Ult(M^\Tt_\beta,F')}$ and $k(\kappa)=\tau'(\kappa)$ and by commutativity, the short $(\kappa,k(\kappa))$ extenders derived from $k$ and $\tau'$ are identical. But these arise from comparison, a contradiction.
\end{proof}

Now note that if $\crit(E^\Tt_\alpha)=\lgcd(M^\Tt_\beta)$, so  $M^\Tt_\beta$ is active type 2,
then since $\crit(E^\Tt_\alpha)$
is a limit of measurables,
$F(M^\Tt_\beta)$ has a proper segment which is of limit space type, and since $(0,\beta]^\Tt$ does not drop, the same holds of $K$, as desired.  So suppose $\crit(E^\Tt_\alpha)^{+M^\Tt_\beta}<\OR^{M^\Tt_\beta}$. So $\crit(E^\Tt_\alpha)^{+M^\Tt_\beta}<\lh(E^\Tt_\beta)$.
Now arguing like in the proof of closeness \cite[6.1.5]{fsit},
it follows again that there is an $M^\Tt_\beta$-total extender $E\in\es_+(M^\Tt_\beta)$ which is of limit space type, and it easily follows that the same holds of $\es_+^K$.
\end{proof}

The result above is close to optimal.
One cannot obtain from the hypotheses of the theorem
an active mouse
$N$ and  $\kappa\in(\crit(F^N),\nu(F^N))$
such that $\kappa$ is measurable in $\Ult(N,F^N)$
and $\kappa^{+\Ult(N,F^N)}<\nu(F^N)$.
For letting $\gamma$ be the least generator of $F^N$ which is $>\xi=\kappa^{+\Ult(N,F^N)}$,
then $\gamma=\kappa^{++\Ult(N,F^N\rest\xi)}$
and $F^N\rest\xi=F^{N|\gamma}$.
Since $\kappa$ is measurable in $\Ult(N,F^N)$, letting $D$ be the order $0$ measure on $\kappa$ there,
we have $\lh(D)<\OR^N$,
and since we have the factor embedding $\pi:\Ult(N,F^N\rest\xi)\to\Ult(N,F^N)$,
and $\pi\rest(\xi+1)=\id$,
in fact $\lh(D)<\gamma$.
This is significantly beyond
the least mouse $P$ which models ZFC
and such that there are $D,E\in\es^P$
which are $P$-total
and such that $\crit(E)<\crit(D)<\lh(D)<\lh(E)$.
But such a mouse $P$ is enough to arrange a mouse which is not an iterate of its core, as we will see.

\subsubsection{A mouse which is not an iterate of its core}

In \cite[pp.~85--87]{cmip}, Steel
outlines a situation in which the core model $K$ exists and there is a universal weasel which is not an iterate of $K$ (and $K$ is ``below two strong cardinals'').
In Schindler \cite[Lemma 8.21]{cmfali} (this result is credited there to Steel), more details of such a construction are given, with a more precise large cardinal restriction.
But the hypothesis of \cite[8.21]{cmfali} was not actually quite enough for its stated purpose.
For the hypothesis of \cite[8.21]{cmfali}
is that $0^{\handgrenade}$ does not exist ($0^{\handgrenade}$, $0$-hand-grenade, is defined in \cite[Definition 2.3]{cmfali}), and letting $K$ be the core model (as constructed in \cite{cmfali} under this hypothesis) \footnote{The precise details of the core model under this hypothesis are not particularly important.
But if $M$ is a premouse which is below $0^{\handgrenade}$
and $\Tt$ is a limit length normal tree on $M$, then $M(\Tt)$ is the Q-structure for itself, and in fact the class of ordinals $\mu<\delta(\Tt)$ such that $M(\Tt)\sats$ ``$\mu$ is a strong cardinal'' gives a counterexample to Woodinness. This ensures that if a comparison of proper class models runs through $\OR$ stages,
then there are (proper class sized) cofinal branches at the end.},
there are $\mu<\kappa\in\OR$  such that $\kappa$ is measurable in $K$
and $K|\kappa\sats$ ``$\mu$ is strong'',
and the conclusion is that there is a generic extension of $K$ in which there is a universal weasel $U$ which is not an iterate of $K$. Because $0^{\handgrenade}$ does not exist, it follows that there is a (possibly proper class) successful comparison of $K$ and $U$. But a slight modification of the proof of
Theorem \ref{tm:if_U_not_iterate_of_core}
and standard core model arguments
show that if there is such a universal weasel $U$, then there is a $K$-total $E\in\es^K$ such that $E$ is of limit space type. But this is strictly beyond the hypothesis of \cite[8.21]{cmfali}.

We next give a  slight modification of that of \cite[8.21]{cmfali},
which achieves its goal from a slightly stronger hypothesis, and derive from this an example of a mouse which is not an iterate of its core.

\begin{tm}\label{tm:universal_weasel_not_iterate_of_K}
Let $K$ be a fully iterable premouse which models ZFC + ``there is no proper class premouse with a Woodin cardinal'',
and suppose there is no $K'\ins K$
such that $K'$ models ZFC + ``$\OR$ is Woodin''.
 Suppose there are $\mu,\kappa,D,E$
such that:
\begin{enumerate}[label=--]
\item  $D\in\es^K$ is a $K$-total order $0$ measure on $\kappa$,
\item
 $E\in\es^K$, $\crit(E)=\mu<\kappa$ and $\lh(D)<\lh(E)$.
 \end{enumerate}
 Then there are $\theta<\xi<\kappa$
 such that,
 letting $G$ be $(K,\Coll(\om,\xi^{+K}))$-generic,
 $K[G]\sats$
``There is a proper class premouse $W$
and a \tu{(}possibly proper class\tu{)} comparison $(\Tt,\Uu)$ of $(K,W)$
which terminates in a common
proper class premouse $Q=M^\Tt_\infty=M^\Uu_\infty$,
with $b^\Tt,b^\Uu$ non-dropping, and $\Uu$ is non-trivial, and in fact,
 $W||\theta^{++K}=K|\theta^{++K}$,
but $W|\theta^{++K}$ is active'', and therefore, $W$ is not an iterate of $K$.\end{tm}

\begin{proof}
We follow in outline the argument for \cite[8.21]{cmfali}, but making use of the stronger hypothesis and giving more details.

Let $E,D\in\es^K$ witness the hypothesis of the theorem,
with $(\lh(E),\lh(D))$ lexicographically least possible.
Let $\mu=\crit(E)$ and $\kappa=\crit(D)$. Let $U=\Ult(K,E)$.
Let
$g_0$ be  $(K,\Coll(\om,\mu^{++K}))$-generic.  Of course, $\mu^{++U}=\mu^{++K}$ and $g_0$ is also $(U,\Coll(\om,\mu^{++U}))$-generic.

 \begin{clmseven}\label{clm:T_for_Pi^1_3_properties} There is a tree $T\in K[g_0]$
  such that $T\sub\om\cross(\om\cross\OR^K)^{<\om}$
and in $K[g_0]$,
\begin{enumerate}[label=\arabic*.,ref=\arabic*]\item\label{item:forcing_below_kappa_implies_p[T]_sub_Pi^1_3}  for all $\alpha<\kappa$, $\Coll(\om,\alpha)$ forces that for all $\Pi^1_3$ formulas $\varphi$ and all reals $x$, if $(\varphi,x)\in p[T]$ then $\varphi(x)$, and
\item\label{item:forcing_at_kappa^+_implies_for_reals_in_U,g_0,g_1,_Pi^1_3_sub_p[T]} $\Coll(\om,\kappa^{+K})$ forces
that, letting $\dot{g}_1$ be the name for the generic filter, for all reals $x\in U[g_0][\dot{g}_1]$, if $\varphi(x)$ then $(\varphi,x)\in p[T]$.
\end{enumerate}
\end{clmseven}
\begin{proof}
This will be via Woodin's arguments as in
 \cite[\S3]{steel_dmt} and as in \cite[Theorem 1.5.12]{stattower}.
First fix the standard tree $S\in K$
such that $S\sub\om\cross({^{<\om}}\om\cross{^{<\om}}\mu)$
and in $K$, for each $\alpha<\mu$,
$\Coll(\om,\alpha)$
 forces that $p[S]$ is a universal $\Pi^1_2$ set. We get this as usual
from the closure of $K|\mu$  under sharps. Let $S'$ be the Shoenfield tree for $\Sigma^1_2$ on $\om\cross\mu$. So in $K$,
$S,S'$ are ${<\mu}$-complementing.
Let $R$ be the tree for a universal $\Sigma^1_3$ set derived from $S$
in the usual manner.

For $a\in i^K_E(\mu)^{<\om}$,
let $\nu_a$ be the measure over $\mu^{\lh(a)}$ derived from $i^K_E$ with seed $a$; so for $X\sub\mu^{\lh(a)}$, we have
\[ X\in \nu_a\iff a\in i^K_E(X).\]
So for each $a$,  $\nu_a\in K|\mu^{++K}$. Let $\sigma_a=i^K_E(\nu_a)$.
So $U\sats$ ``$\sigma_a$ is $\mu'$-complete measure over $(\mu')^{\lh(a)}$'',
where $\mu'=i^K_E(\mu)$.
Write $\dim(\sigma_a)=\lh(a)$.

Now work in $K[g_0]$, where $\mu^{++K}$ is countable. Let $\left<\tau_n,\theta_n\right>_{n<\om}$ enumerate all pairs $(\tau,\theta)$ such that $\tau\in{^{<\om}\om}$, $\theta=\sigma_a$ for some $a$, $\lh(\tau)=\dim(\theta)$,
and such that for each $n<\om$
and each $i<\lh(\tau_n)=\dim(\theta_n)$,
there is $m<n$ such that $(\tau_m,\theta_m)=(\tau_n,\theta_n)\rest m$,
where the latter restriction denotes the pair $(\tau_n\rest m,\theta_n\rest m)$,
where $\theta_n\rest m$ denotes the projection of $\theta_n$ to the first $m$ coordinates.
 For $m<\om$,
let $U_m=\Ult(U,\theta_m)$.
For $\ell<m<\om$ such that $(\tau_\ell,\theta_\ell)=(\tau_m,\theta_m)\rest\ell$,
let $i_{\ell m}:U_\ell\to U_m$ be the canonical factor map.
Let $T$ be the tree consisting of tuples
\[ (\varphi,\vec{x},\vec{\alpha})\in\om\cross{^{<\om}}\om\cross  {^{<\om}}(i^K_E(\mu^{++K}))\]
such that $\varphi(u)$ is a $\Pi^1_3$ formula of form $\all x\ \psi(x,u)$, where $\psi$ is $\Sigma^1_2$, and for some $n<\om$,
we have $n=\lh(\vec{x})=\lh(\vec{\alpha})$, and for  all $\ell,m$ with $\ell<m<n$ and $(\tau_\ell,\theta_\ell)=(\tau_m,\theta_m)\rest\ell$,
\[\text{ if }i^K_E(S_{(\neg\psi),\vec{x}\rest\lh(\tau_m),\tau_m})\in\theta_m\text{ then
 }i_{\ell m}(\alpha_\ell)>\alpha_m,\]
 where $\vec{\alpha}=(\alpha_0,\ldots,\alpha_{n-1})$.
 (Note here that if $\ell<m<n$ and $(\tau_\ell,\theta_\ell)=(\tau_m,\theta_m)\rest\ell$ and $i^K_E(S_{(\neg\psi),\vec{x}\rest\lh(\tau_m),\tau_m})\in\theta_m$
 then also $i^K_E(S_{(\neg\psi),\vec{x}\rest\lh(\tau_\ell),\tau_\ell})\in\theta_\ell$.)

\begin{sclmseven}
  $T$ is as desired.
  \end{sclmseven}
  \begin{proof}
 First let us verify clause \ref{item:forcing_below_kappa_implies_p[T]_sub_Pi^1_3}. So let $\alpha<\kappa$
 and $g_1$ be $(K[g_0],\Coll(\om,\alpha))$-generic. Let $x\in\RR^{K[g_0][g_1]}$
 and $\varphi$ be $\Pi^1_3$,
 and suppose that $(\varphi,x)\in p[T]$.
 Let $\vec{\alpha}\in{^{\om}}\OR$
 be such that $(\varphi,x,\vec{\alpha})\in[T]$.
 We have to see that $K[g_0][g_1]\sats\varphi(x)$, so suppose otherwise.
 We have $V_\kappa^K=V_\kappa^U$,
 so $\RR^{K[g_0][g_1]}=\RR^{U[g_0][g_1]}$,
 so $U[g_0][g_1]\sats\neg\varphi(x)$.
 But $i^K_E(R)$ projects to the universal $\Sigma^1_3$ set in $U[g_0][g_1]$,
 since $\alpha<\kappa<i^K_E(\mu)$.
 So letting $\varphi(x)$ be the formula $\all y\ \psi(x,y)$ where $\psi$ is $\Sigma^1_2$,
 we can fix $y,f\in U[g_0][g_1]$
 such that $y\in{^\om}\om$ and
  $((\neg\psi),x,y,f)\in[i^K_E(S)]$.
But then letting $a_n=f\rest n$,
$\left<\nu_{a_n}\right>_{n<\om}\in K[g_0][g_1]$
 is a tower of measures over $K$ and
  $\left<\sigma_{a_n}\right>_{n<\om}$
 is a tower of measures over $U$.
Each $\sigma_{a_n}$ canonically extends to a measure $\sigma^*_{a_n}$
 over $U[g_0][g_1]$,
 and $i^K_E(E)$ extends likewise to an extender $i^K_E(E)^*$ over $U[g_0][g_1]$.
The fact that $(\varphi,x)\in p[T]$
 establishes that $\Ult(U[g_0][g_1],\left<\sigma^*_{a_n}\right>_{n<\om})$
 is illfounded, but
 note that this ultrapower factors into the ultrapower $\Ult(U[g_0][g_1],(i^K_E(E))^*)$,
 which is wellfounded, a contradiction.

Now let us verify clause \ref{item:forcing_at_kappa^+_implies_for_reals_in_U,g_0,g_1,_Pi^1_3_sub_p[T]}:
Let $g_1$ be $(K[g_0],\Coll(\om,\kappa^{+K}))$-generic,
let $x\in\RR\cap U[g_0][g_1]$,
let $\varphi$ be $\Pi^1_3$,
and suppose $K[g_0][g_1]\sats\varphi(x)$.
We want to see that $(\varphi,x)\in p[T]$.  Since $x\in U[g_0][g_1]\sub K[g_0][g_1]$,
we have $U[g_0][g_1]\sats\varphi(x)$ also. Since $\kappa^{+K}<i^K_E(\mu)$, $i^K_E(R)$ projects to a universal $\Sigma^1_3$ set in $U[g_0][g_1]$.
So $((\neg\varphi),x)\notin p[i^K_E(R)]$.
So $i^K_E(R)_{(\neg\varphi),x}$ is wellfounded,
so has a rank function in $U[g_0][g_1]$. By using restrictions of this rank function to various sub-trees
(which are all  in $U[g_0][g_1]$
and hence relevant to forming the ultrapowers $U^*_m=\Ult(U[g_0][g_1],\theta_m^*)$, where $\theta_m^*$ is the extension of $\theta_m$ to $U[g_0][g_1]$),
we get a sequence $\vec{\alpha}\in{^\om}\OR\cap K[g_0][g_1]$ such that $(\varphi,x,\vec{\alpha})\in [T]$,
which suffices.
\end{proof}
Since $T$ works, we have established the claim.
\end{proof}

We will apply the claim
to the $\Pi^1_3$ notion of a \emph{good premouse}, which is as follows.
Working in  any set-generic extension $K[G]$ of $K$, we say that a countable premouse
$N$ is \emph{good} if it is sound,
and for every countable $N$-premouse $M$,
if $M$ is $\Pi^1_2$-iterable (above $N$, this means), then $M$ is equivalent to a  premouse $M'$ (so all of its proper segments are sound, as a premouse, as opposed to an $N$-premouse), and $M'$ is $\Pi^1_2$-iterable  (not just above $N$).

Let $g_1$ be $\Coll(\om,\kappa^{+K})$-generic and $g=(g_0,g_1)$.
\begin{clmseven}
$K[g]\sats$ ``$K|\lh(D)$ is good''.\end{clmseven}
\begin{proof}
Let $N=K|\lh(D)$. Let $M\in K[g]$ be such that in $K[g]$, $M$ is a $\Pi^1_2$-iterable countable $N$-premouse.
Then  working in $K[g]$, we can compare $U$ with $M$ (the resulting tree on $M$ only uses extenders with critical point $>\OR^N$).  By the smallness assumption on $K$,
standard absoluteness arguments ensure that this comparison terminates inside $K[g]$. That is, since there is no $K'\ins K$ satisfying ZFC + ``$\OR$ is Woodin'', the same holds for $U$,
and this means that segments of iterates of $U$ provide Q-structures which guide the formation of the tree on $M$ during the comparison.
And since $U$ computes many successor cardinals correctly,
$U$ out-iterates $M$. It follows that $M$ is in fact a standard premouse $M'$, and $M'$ is fully iterable in $V$, hence $\Pi^1_2$-iterable in $K[g]$.
\end{proof}

The following claim will yield the ordinal $\xi$ witnessing the theorem:

\begin{clmseven}\label{clm:there_is_a_generic_good_active_premouse_at_a_card} There are $\xi,\theta$ such that $\mu^{++K}<\theta<\xi<\kappa$
and $K[g_0]\sats$ ``it is forced by $\Coll(\om,\xi^{+})$
that there is an active good premouse $N$ such that $N^{\passive}=K||\theta^{++K}$''.
\end{clmseven}
\begin{proof}
Let $M=\Ult(K,D)$, so $g_0$ is also $(M,\Coll(\om,\mu^{++K}))$-generic,
and $D$ extends canonically to measure $D^*$ over $K[g_0]$, and $M[g_0]=\Ult(K[g_0],D^*)$.
 Let $\gamma=i^K_D(\kappa^{+K})$, so $\gamma$ is where $D$ would be indexed in Jensen indexing. Let $g_1'$ be $(K[g_0],\Coll(\om,\gamma))$-generic
 (of course, $\Coll(\om,\gamma)$
 is equivalent to $\Coll(\om,\kappa^{+K})$ in $K[g_0]$, as $\card^{K[g_0]}(\gamma)=\kappa^{+K}$). Then $g_1'$
is also $(M[g_0],\Coll(\om,\gamma))$-generic (but $\gamma$ is an $M[g_0]$-cardinal).
 Let $g'=(g_0,g_1')$.
Let $z\in\RR\cap M[g']$ be the canonical code for $M|\gamma$ determined by $(M|\gamma,g_1')$.

Working in $M[g']$,
let $T'_z$ be the tree of attempts
to build a real $y$ and a sequence $\vec{\alpha}$ such that $(\varphi_0,z\oplus y,\vec{\alpha})\in[i^{K[g_0]}_{D^*}(T)]$,
where $\varphi_0$
asserts that $z$ codes a passive premouse $N$
and $y$ codes an  extender $F$
such that for some other active
type 1 premouse $(N',F')$ with $N'\pins N$,
letting $\theta=\crit(F')$,
then $N=\Ult(N'|\theta^{+N'},F')$
and $F$ is the extender derived from $i^{N'|\theta^{+N'}}_{F'}$,
and $(N',F')$ is good.

We claim that $T'_z$ is illfounded.
For working in $K[g']$,
similarly define the tree $T_z$
of attempts to build a real $y$
and sequence $\vec{\alpha}$
such that $(\varphi_0,z\oplus y,\vec{\alpha})\in[T]$ (where $\varphi_0$ is as above). Then $T_z$ is illfounded, since $z\oplus y\in U[g']$ where $y$ codes the extender of $i^{K|\kappa^{+K}}_D$,
and by clause \ref{item:forcing_at_kappa^+_implies_for_reals_in_U,g_0,g_1,_Pi^1_3_sub_p[T]}.
But noting that $i^{K[g_0]}_{D^*}``T_z\sub T'_z$, it follows that $T'_z$ is illfounded.

So it is forced over $M[g_0]$ by $\Coll(\om,\gamma)$ that letting $z$ be the canonical code for $M|\gamma$,  $T'_z$ is illfounded (which is defined from  $i^{K[g_0]}_{D^*}(T)$). This fact pulls back under $i^{K[g_0]}_{D^*}$ to $K[g_0]$
and $K|\kappa^{+K}$ and $T$.

To establish the claim,
we need to reflect this to some $\xi<\kappa$ replacing $\kappa$,
so that we can apply clause \ref{item:forcing_below_kappa_implies_p[T]_sub_Pi^1_3}
of Claim \ref{clm:T_for_Pi^1_3_properties}.
Well, instead of $g_1'$ being $(M[g_0],\Coll(\om,\gamma))$-generic,
let $g_1''$ be $(K[g_0],\Coll(\om,\kappa^{+K}))$-generic, let $g''=(g_0,g_1'')$, and let $z\in M[g'']$ be the canonical code for $K|\kappa^{+K}$ determined by $(K|\kappa^{+K},g_1'')$.
Working in $M[g'']$,
define $T'_z$  as before (hence, still using $i^{K[g_0]}_{D^*}(T)$). By what was established above,  $T_z$  is illfounded.
But note  $i^{K[g_0]}_{D^*}``T_z\sub T'_z$, so $T'_z$ is illfounded.
Since $\kappa<i^{K[g_0]}_{D^*}(\kappa)$,
it follows by elementarity that there is $\xi<\kappa$ such that in $K[g_0]$,
$\Coll(\om,\xi^{+K})$ forces that
if $z'$ is the canonical real coding $K|\xi^{+K}$,
then $\bar{T}'_{z'}$ (defined over $K[g_0][h]$ from $T$, when $h$ is $(K[g_0],\Coll(\om,\xi^{+K}))$-generic) is illfounded. But since $\xi<\kappa$,
the claim therefore follows from
clause \ref{item:forcing_below_kappa_implies_p[T]_sub_Pi^1_3} of Claim \ref{clm:T_for_Pi^1_3_properties}.
\end{proof}

Fix $\theta<\xi<\kappa$
and $N$
witnessing Claim \ref{clm:there_is_a_generic_good_active_premouse_at_a_card},
with $N\in K[g]$ where
$g_1$ is generic over $K[g_0]$ for $\Coll(\om,\xi^{+K})$ and $g=(g_0,g_1)$.
So $N^{\passive}=K|\theta^{++K}$
and $N$ is active, and good in $K[g]$.

Now in $K[g]$,
there is no proper class inner model satisfying ZFC + ``there is a  a Woodin cardinal''.\footnote{This is a standard argument.
Suppose otherwise. Then by the methods of \cite{fsit}
and \cite{itertrees},
$K[g]$ satisfies ``there is a proper class $1$-small premouse $P$ with a Woodin cardinal such that for every $\alpha\in\OR$,
every countable elementary substructure of $P|\alpha$ is $\Pi^1_2$-iterable''.
But then working in $K$,
we can form a Boolean-valued comparison of possible interpretations of a name for
$P$. This leads to a proper class $1$-small premouse $P'$ with a Woodin cardinal $\delta$, with $P'|\delta\in K$, contradicting our smallness assumption on $K$.}
So using \cite{Kwithoutmeas},
we can build $K(N)$
(the core model over $N$),
which is a proper class $N$-premouse
which is iterable for set-sized trees. Since $N$ is good,
$K(N)$ is equivalent to a proper class premouse $W$ such that $N\pins W$ and $W$ is iterable for set-sized trees  (not just above $N$). (The latter is a straightforward reflection and consequence of goodness; just take an countable elementary substructure of a putative failure of iterability, containing the relevant Q-structures.)

\begin{clmseven}Work in $K[g]$.
Then $W$ has the properties claimed by the theorem.\end{clmseven}
\begin{proof}
Because $K,W$ are both fully iterable in $K[g]$, we can compare them through length $\OR$, or until the comparison $(\Tt,\Uu)$ terminates, if earlier.
If $\lh(\Tt,\Uu)<\OR$ then clearly $M^\Tt_\infty=M^\Uu_\infty$ and there are no drops on $b^\Tt,b^\Uu$.
Suppose $\lh(\Tt,\Uu)=\OR$.
Since $K$ is externally iterable
and $\Tt$ is correct,
we can externally fix a $\Tt$-cofinal wellfounded branch $b$
(or $\Tt$ uses only set-many extenders in $K[g]$).
By the smallness hypothesis,
$M(\Tt,\Uu)\sats$ ``$\OR$ is not Woodin'';
in other words, $M(\Tt,\Uu)$ is the Q-structure for itself.
Now working in $\J(K[g])$,
where $\OR^K$ is inaccessible,
we can continue the comparison,
using this Q-structure to determine the $\Tt$- and $\Uu$-cofinal branches $b,c$ as usual (see
the proof of \cite[Lemma 2.1]{odle_v2} for such a calculation). We claim that $i^\Tt_b``\OR^K\sub\OR^K$
and $i^\Uu_c``\OR^K\sub\OR^K$.
For by the usual weasel argument
in $\J(K[g])$,
 either $i^\Tt_b``\OR^K\sub\OR^K$
or $i^\Uu_c``\OR^K\sub\OR^K$.
But  then by weak covering for $K(N)$
in $K[g]$,
and since we are working in $K[g]$ (or $\J(K[g])$),
it follows that there is  $C\in\J(K[g])$ such that
$C\sub\OR^K$ is club,
and for every $\eta\in C$
which is a singular strong limit cardinal of $K$, we have $\eta^{+M(\Tt,\Uu)}=\eta^{+K}$.
So the usual calculations
therefore give what was claimed.

So $(\Tt\conc b,\Uu\conc c)$
forms a successful comparison with common last model, with $\Tt\conc b$ via $K$'s correct strategy, which suffices.
\end{proof}

This completes the proof of the theorem.
\end{proof}

As a corollary, we get a mouse which is not an iterate of its core:
\begin{cor}\label{cor:universal_weasel_not_iterate_of_K}Let $K$
be as in Theorem \ref{tm:universal_weasel_not_iterate_of_K} and countable, and suppose also that $K$ is pointwise definable. Let $W$ witness the theorem. Then $\J(K),\J(W)$ are premice, $\J(K)$ is sound with $\rho_1^{\J(K)}=\om$,
$\J(K),\J(W)$ are both $(0,\om_1+1)$-iterable, $\core_1(\J(W))=\J(K)$
but $\J(W)\neq\J(K)$, and letting $\gamma$ be the index of least disagreement between $\J(W),\J(K)$,
then $\J(W)|\gamma$ is active and $\gamma=\theta^{++\J(K)}$ for some $\theta$. Therefore $\J(W)$ is not a $0$-maximal iterate of $\J(K)$.
\end{cor}
\begin{proof} $\J(K)$  is a sound premouse with $\rho_1^K=\om$ and $p_1^K=\{\OR^K\}$
 because $K\sats$ ZFC and is pointwise definable.
And $\J(W)$ is a premouse
because $W\sats\ZFC$. Moreover,
$0$-maximal trees on $\J(K)$
are equivalent to those on $K$, and hence $\J(K)$ is $(0,\om_1+1)$-iterable. So by
\cite[Corollary 9.4]{iter_for_stacks}, $\J(K)$ is in fact $(0,\om_1,\om_1+1)^*$-iterable. And $0$-maximal trees on $\J(W)$ are also equivalent to those on $W$. So by the theorem, there is a successful comparison $(\Tt',\Uu')$ of $\J(K),\J(W)$,
and we get $i^{\Uu'}:\J(W)\to M^{\Uu'}_\infty=M^{\Tt'}_\infty$.
So $\J(W)$ is $(0,\om_1+1)$-iterable, and $\core_1(\J(W))=\core_1(M^{\Tt'}_\infty)=\J(K)$. But since there is $\theta$ as in Theorem \ref{tm:universal_weasel_not_iterate_of_K}, we are done.\end{proof}

\subsection{Projectum-finite generation}\label{sec:proj_fin_gen_theorem}

In this section we prove a key lemma toward the proof of the following theorem; recall
that \emph{almost-above} was defined in Definition
\ref{dfn:almost-above}.  We will rely on
\cite[\S2]{premouse_inheriting}, and also the methods used in \S\ref{sec:mim} of the present paper,
with both of which the reader should be familiar.

\begin{tm}[Projectum-finite-generation]\label{thm:finite_gen_hull}
Let $m<\om$ and  $M$ be an $m$-sound, $(m,\om_1+1)$-iterable premouse.
Suppose that
\[ M=\Hull_{m+1}^M(\rho_{m+1}^M\un \{x\})\]
for some $x\in M$.
Then there is a
successor length
$m$-maximal tree $\Tt$
on $\core_{m+1}(M)$,
such that $b^\Tt$ does not drop in model or degree,
$\Tt$ is strongly
finite and
almost-above $\rho_{m+1}^M$,
and $M=M^\Tt_\infty$. Moreover,
$i^\Tt_{0\infty}$ is just the core map.\end{tm}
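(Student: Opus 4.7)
My plan is two-staged. First, I would prove the auxiliary Lemma \ref{lem:finite_gen_hull}, which adds the hypotheses that $M$ is $(m+1)$-universal and $C = \core_{m+1}(M)$ is $(m+1)$-solid (and hence $(m+1)$-sound). Then, in \S\ref{sec:conclusion}, once the solidity and universality Theorem \ref{thm:solidity} is available, the full Theorem \ref{thm:finite_gen_hull} follows by applying that theorem to supply the missing $(m+1)$-universality of $M$ and $(m+1)$-solidity of $C$, and then invoking the lemma.

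For the lemma itself, set $\rho = \rho_{m+1}^M$, $C = \core_{m+1}(M)$, and let $\pi \colon C \to M$ be the core map. Under the extra hypotheses, one can form the 3-simple pre-exact bicephalus $B = (\rho, C, M)$ of degree $(m,m)$ from \S\ref{sec:bicephali}, since $C$ and $M$ agree up to $(\rho^+)^C$, both project to $\rho$, and $M$ is $\rho$-finitely generated via $g^M(\rho)$. The first crucial step is to establish that $B$ is $(m,m,\om_1+1)$-iterable. I would do this by using the core map $\pi$ and the simple-embedding machinery of \S\ref{sec:simple_embeddings} to copy iteration trees on $B$ to trees on $M$: extenders with critical point below $\rho$ shift both sides of $B$ simultaneously via $\Ult_{m,m}$ as in Definition \ref{dfn:bicephalus_ult}, while those with critical point at or above $\rho$ act on a single side; the copy sends the $C$-side through $\pi$ and the $M$-side via the identity, and the given iterability of $M$ then supplies well-founded branches.

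The second step is to apply Lemma \ref{lem:core-strongly-finite} to $B$ together with the chosen element $x$. This yields a core-strongly-finite degree-maximal tree $\Tt^*$ on $B$ whose last model captures $x$ and whose $(m+1)$st core is realized as a segment of $M^{\Tt^*}_\lambda$ at some stage $\lambda$. By Lemma \ref{lem:strongly_finite_extender_characterization}, the tail $\Tt^* \rest [\lambda,\infty)$ is either equivalent to an $m$-maximal tree above $\rho$ on $C$, or---when $(C,m,\rho)$ is overlappable---effectively equivalent to a $(0,0)$-maximal almost-above-$\rho$ tree on the associated phalanx. Either way, one extracts the desired strongly finite, almost-above-$\rho$, $m$-maximal tree $\Tt$ on $C$ with $M^\Tt_\infty = M$. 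Finally, $i^\Tt_{0\infty} = \pi$ follows from $p_{m+1}$-preservation (Lemma \ref{lem:iteration_p_rho_pres}) together with the uniqueness of the core embedding.

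The principal obstacle is establishing iterability of $B$ and controlling the low-critical-point extenders that shift both sides and interact nontrivially with $g^M(\rho)$; this is handled by Lemma \ref{lem:bicephalus_ult} in a setting where $M$ is \emph{not} $\rho$-sound, unlike the exact bicephali of \cite{premouse_inheriting}, and so the material in \S\ref{sec:bicephali} must be leveraged in its full generality. A secondary but essential point is that $\Tt$ must be genuinely \emph{strongly} finite---every Dodd-core of an extender used along $b^\Tt$ is finitely generated---which relies crucially on super-Dodd-soundness of the proper segments of $C$, as supplied by Lemma \ref{lem:super-Dodd-soundness}.
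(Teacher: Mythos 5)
Your reduction to Lemma \ref{lem:finite_gen_hull} and the two-stage structure matches the paper, as does forming the bicephalus $B=(\rho,C,M)$ and the iterability argument via copying through $\pi$. But there is a genuine gap in the second step. Lemma \ref{lem:core-strongly-finite} does not produce a tree from nothing: it takes a degree-maximal tree $\Uu$ on $B$ of successor length \emph{already satisfying} $\sides^\Uu_\infty=\{0\}$ or $[0,\infty]_\Uu$ dropping, and shrinks it to a finite tree capturing finitely much information. If you try to start from the trivial tree on $B$, the hypothesis fails (since $\sides^\Uu_\infty=\{0,1\}$), and there is no a priori reason that \emph{any} normal tree on $C$ (or on $B$) reaches $M$ as its last model --- that conclusion is precisely what the theorem must establish. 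So ``apply Lemma \ref{lem:core-strongly-finite} to $B$ together with $x$'' does not parse: you have not said what the input tree $\Uu$ is, and you have no mechanism to produce it.

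What actually supplies $\Uu$ in the paper is a bicephalus comparison of $B$ \emph{with itself}, with rather delicate rules for ``moving into'' one of the two available models at stages where the two sides agree up to the relevant cardinal. The analysis of how this comparison terminates (for instance, the claims that one side ends with $S^\Tt_\alpha=\{0\}$, that $[0,\alpha]_\Tt$ doesn't drop, and that $\Uu$ is core-strongly-finite) is the substance of the argument, and it is where Lemma \ref{lem:core-strongly-finite} is actually invoked --- inside the proof of Claim \ref{clm:Uu_core-sf}, to replace the comparison's $\Uu$-branch beyond $\beta$ by a finite one so that the measures-in-mice-style commutative-diagram argument can be run. The paper also notes (in a footnote) that comparing $B$ with $C$ or with $M$ \emph{does not work}: you need $B$ against $B$ to retain two models on each side while the comparison is in progress. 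Finally, even after extracting a core-strongly-finite $\Uu$ ending in $M$, you still must show the tree $\Tt$ on the bicephalus side is \emph{trivial}; the paper does this by a non-trivial ``Dropdown Lifting'' / full-normalization argument (Lemma \ref{lem:dropdown_lifting}) producing an infinite descent, which is entirely absent from your sketch. Your proposal therefore omits both the comparison that generates the input tree and the triviality argument that pins it down; these are the two main pieces of the proof.
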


The lemma just says that the theorem holds assuming also that $M$ is $(m+1)$-universal and $\core_{m+1}(M)$ is $(m+1)$-solid.
 So together with Theorem \ref{thm:solidity}, it immediately yields the theorem.

\begin{lem}\label{lem:finite_gen_hull}
Adopt the hypotheses of Theorem \ref{thm:finite_gen_hull}.
Suppose further that $M$ is $(m+1)$-universal
and $\core_{m+1}(M)$ is $(m+1)$-solid.
 Then the conclusion of Theorem \ref{thm:finite_gen_hull}
holds.
\end{lem}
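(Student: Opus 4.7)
The strategy is a bicephalus comparison in the spirit of Section \ref{sec:bicephali}, combined with the measure-analysis calculations of Theorem \ref{thm:measures_in_mice}, followed by a conversion to strongly finite form via Lemma \ref{lem:core-strongly-finite}.

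Set $\rho = \rho_{m+1}^M$. Since $M$ is $(m+1)$-universal and $C = \core_{m+1}(M)$ is $(m+1)$-solid (hence $(m+1)$-sound), we have $\rho = \rho_{m+1}^C$, $M||(\rho^+)^M = C||(\rho^+)^C$, and $B := (\rho, C, M)$ is a degree $(m,m)$ pre-exact bicephalus as in Section \ref{sec:bicephali}: $C$ is $\rho$-finitely generated by $p_{m+1}^C$, and $M$ is $\rho$-finitely generated with $g^M(\rho)$ the $\leq$-least such witness (which absorbs $x$). The core map $\pi : C \to M$ has $\crit(\pi) \geq \rho$, and $B$ inherits $(\omega_1+1)$-iterability from $M$ by pulling back strategies along $(\id, \pi)$. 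By Lemma \ref{lem:super-Dodd-soundness} applied to proper segments of $C$ and $M$, the model $C$ is fully-Dodd-sound and $M$ is sub-Dodd-sound, placing us in the setting of Definition \ref{dfn:core-strongly-finite}.

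First I would compare $B$ with $M$, producing a degree-maximal tree $\Tt$ on $B$ and an $m$-maximal tree $\Uu$ on $M$. Standard bicephalus co-iteration arguments — essentially those of \cite[\S3]{premouse_inheriting}, refined via Lemmas \ref{lem:bicephalus_ult}, \ref{lem:basic_fs_pres} and \ref{lem:iteration_p_rho_pres}, and using the hypothesized universality of $M$ and solidity of $C$ — show that both main branches $b^\Tt, b^\Uu$ are terminally non-dropping in model and degree, that the terminal bicephalus $B^\Tt_\infty$ is trivial with common component $P = M^\Uu_\infty$, and that writing $j_C : C \to P$, $j_M : M \to P$ for the two main-branch maps of $\Tt$ and $\ell := i^\Uu : M \to P$, we have $j_M \circ \pi = j_C$ (by bicephalus commutativity, with preservation of $g^M(\rho)$ from Lemma \ref{lem:bicephalus_ult} part \ref{item:g_pres}) and $j_M = \ell$ (from the comparison agreement).

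Next, I would show $\ell = \id$, so that $P = M$ and $j_C = \pi$. Assuming $\ell \neq \id$ for contradiction, this parallels Claim \ref{clm:crit(k)<crit(j)} in the proof of Lemma \ref{lem:measures_in_mice}, with the role of the normal ultrafilter played by the long extender derived from $\pi$. The tools are $(s, \sigma)$-preservation under iteration maps (Lemmas \ref{lem:s_j,sigma_j_pres} and \ref{lem:type_2_s,sigma_pres}), the Dodd-decomposition of $\ell$ given by Lemma \ref{lem:Dodd_decomposition}, $p_{m+1}$-preservation under $\ell$, and preservation of $g^M(\rho)$ under bicephalus ultrapowers (Lemma \ref{lem:bicephalus_ult} part \ref{item:g_pres}). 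Together with the commutativities $j_M \circ \pi = j_C$ and $j_M = \ell$, any nontrivial $\ell$ would deposit a fragment of the long extender of $\pi$ inside $M$ or $P$, contradicting Lemma \ref{lem:amenability_pres} and the minimality of $g^M(\rho)$. Hence $\ell = \id$, $\Uu$ is trivial, $P = M$, and the $C$-side of $\Tt$ realizes the core map.

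Finally, apply Lemma \ref{lem:core-strongly-finite} to $\Tt$ with seed a selector for $g^M(\rho)$, producing a core-strongly-finite degree-maximal tree $\Ttbar$ on $B$ that captures $(\Tt, g^M(\rho), \rho+1)$; in particular the $C$-side of $\Ttbar$ still has final model $M$ and iteration map $\pi$. Lemma \ref{lem:strongly_finite_extender_characterization}, applied with $(C', k, \rho') = (C, m, \rho)$, then identifies $\Ttbar\rest[\lambda, \infty)$ with (an effective equivalent of) either a strongly finite $m$-maximal tree on $C$ above $\rho$, in the non-overlappable case, or a strongly finite $0$-maximal tree on $C$ almost-above $\rho$ (beginning with an application of $F^C$), in the overlappable case where $C$ is active type 2 with $\crit(F^C) < \rho$ and $m = 0$. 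Either case yields the desired $\Tt$ on $C$. The main obstacle is the identification $\ell = \id$ in the third paragraph: the interplay between the Dodd-decomposition of $\ell$ and bicephalus commutativity is delicate, and it is in this step that the $\rho$-finite generation hypothesis on $M$ gets used in an essential way, exactly as the single-measure hypothesis did in Theorem \ref{thm:measures_in_mice}.
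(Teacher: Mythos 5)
Your proposal diverges from the paper's proof in a way that the paper itself explicitly flags as problematic. You compare the bicephalus $B = (\rho, C, M)$ with the premouse $M$, but the paper compares $B$ with \emph{itself}. In a footnote at the start of the bicephalus comparison, the author writes out precisely why the $B$-versus-$M$ comparison looks attractive but does not seem to work: one can reach a stage $\alpha \in \curlyB^\Tt$ where $[0,\alpha]_\Uu$ does not drop, $\rho^\Tt_\alpha = \rho_{m+1}(M^\Uu_\alpha)$, and the bicephalus and $M^\Uu_\alpha$ agree through $(\rho^\Tt_\alpha)^+$, yet $M^\Tt_\alpha \neq M^\Uu_\alpha$. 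At that point there is no principled way to continue --- moving into either $C^\Tt_\alpha$ or $M^\Tt_\alpha$ terminates the comparison without yielding the needed information. Your step two, where you assert that ``standard bicephalus co-iteration arguments'' force both main branches to be terminally non-dropping and $B^\Tt_\infty$ to be trivial with a common terminal model $P = M^\Uu_\infty$, is where this obstruction lives. Note also that the triviality results of \cite[\S3]{premouse_inheriting} you invoke are proved for $\rho$-sound exact bicephali; here $M$ is emphatically not $\rho$-sound, which is the point, and Lemma \ref{lem:bicephalus_ult} tells us $B^\Tt_\alpha$ stays non-trivial for $\alpha \in \curlyB^\Tt$. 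So ``triviality'' would have to mean the bicephalus structure has been dropped and the resulting single models match --- and that is exactly the unjustified step.

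There is a second, independent gap. Even granting the terminal picture you describe, the proposal never rules out that the \emph{pre-move-in} portion of $\Tt$ is non-trivial. In the paper's proof this is Assumption \ref{ass:Tt_non-trivial}, and refuting it is where most of the technical work goes: one constructs the finite comparison $(\Vv,\Ww)$ of $C$ with $M$ (or of the associated phalanx with $M$, in the overlappable case) and uses Steel's dropdown lifting (Lemma \ref{lem:dropdown_lifting}) to lift this step-by-step to the comparison $(\Vv',\Ww')$ of $C^\Tt_\beta$ with $M^\Tt_\beta$, showing it has length $\omega$, which contradicts its being effectively equivalent to the core-strongly-finite tail of $\Uu$. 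This full-normalization argument has no analogue in your proposal. The measures-in-mice argument you adapt for the step $\ell = \id$ \emph{does} parallel Claim \ref{clm:Uu_core-sf} in the paper, so that component is on the right track, but it handles only the post-move-in portion of the comparison and does not substitute for the dropdown lifting. Your final step (invoking Lemmas \ref{lem:core-strongly-finite} and \ref{lem:strongly_finite_extender_characterization} to convert to a strongly finite tree on $C$ that is almost-above $\rho$) is the correct final move and matches the paper, but it only applies once the two prior gaps are filled.
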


The lemma has the following immediate corollary, using standard calculations:
\begin{cor}
\label{cor:def_over_core_from_extra_hypos} Adopt the hypotheses of Corollary \ref{cor:def_over_core}. Suppose further that $M$ is $(m+1)$-universal and $\core_{m+1}(M)$ is $(m+1)$-solid. Then $A$ is $\bfrSigma_{m+1}^{\core_{m+1}(M)}$.
\end{cor}
\begin{rem}\label{rem:simpler_corollary}
Our proof of solidity (Theorem \ref{thm:solidity}) will use Corollary \ref{cor:def_over_core_from_extra_hypos}, which follows immediately from Lemma \ref{lem:finite_gen_hull}.
Given iterability for stacks, one can directly prove the conclusion of Corollary \ref{cor:def_over_core_from_extra_hypos}
(in fact even Corollary \ref{cor:def_over_core},
which is just like Corollary \ref{cor:def_over_core_from_extra_hypos} but without the extra $(m+1)$-universality and $(m+1)$-solidity hypotheses)
by very standard arguments.
It seems tempting to try to prove Corollary \ref{cor:def_over_core_from_extra_hypos}
(from only normal iterability) via a bicephalus argument,
but otherwise using standard calculations, and hence
avoiding the extra effort  involved
in our proof of Lemma \ref{lem:finite_gen_hull}.
However, the author has not been successful in this,
as a difficulty arises in one special case.
We will describe this more carefully later,
in Remark \ref{rem:more_standard_attempt}.
\end{rem}

The plan for the proof of Lemma \ref{lem:finite_gen_hull} is as follows;
it is
motivated by the classical use of bicephali such as in \cite[\S9]{fsit}, and also the use of bicephali in \cite{premouse_inheriting}.
We may assume $M$ is countable.
Let $C=\core_{m+1}(M)$ and $\rho=\rho_{m+1}^M$.  Then $B=(C,M,\rho)$ is a degree $(m,m)$ bicephalus.
We will establish that $B$ is iterable,
and compare it with itself, producing trees $\Tt,\Uu$ on $B$. The comparison will proceed roughly via selecting extenders for least disagreement,
but at some stages of comparison,
for example when $B^\Uu_\alpha$  is a bicephalus, this can be ambiguous,
since we might consider either
 $M^{0\Uu}_\alpha$ (the model above $C$)
or $M^{1\Uu}_\alpha$ (that above $M$) for least disagreement with the  model(s)
$C^\Tt_\alpha=M^{0\Tt}_\alpha$ and/or $M^\Tt_\alpha=M^{1\Tt}_\alpha$ from $\Tt$ (whichever are defined). Likewise
with $\Tt,\Uu$ exchanged.
We will make the rules for deciding from which models we choose the least disagreeing extenders  precise, and arrange them so that the comparison can only terminate in a useful fashion. We want to see that one of the comparison trees,
say $\Tt$,
uses no extenders,
and $\Uu$ can be translated to an essentially equivalent tree $\Uu'$ on $C$
witnessing the statement of the lemma (so $\Uu'$ is strongly finite, $M^{\Uu'}_\infty=M$, etc).
Now the comparison will terminate for essentially the usual reasons. By the material in \S\ref{sec:bicephali},
if $B^\Uu_\alpha$ is a bicephalus then it has fine structural properties analogous
to those of $B$. Using the rules of the comparison and an variant of the proof of Lemma \ref{lem:measures_in_mice}, will show
that the comparison reaches a stage $\alpha$ at which in one of the trees, say $\Uu$,
we have  a bicephalus $B^\Uu_\alpha=(C^\Uu_\alpha,M^\Uu_\alpha,\rho^\Uu_\alpha)$, with
$C^\Uu_\alpha\ins M^{e\Tt}_\alpha$  for some $e\in\{0,1\}$, the exit extenders $E^\Uu_\alpha$ and $E^\Tt_\alpha$
are chosen by least disagreement
between $M^\Uu_\alpha$ and $C^\Uu_\alpha\ins M^{e\Tt}_\alpha$,
and the tail end $(\Tt,\Uu)\rest[\alpha,\infty]$ of $(\Tt,\Uu)$
after stage $\alpha$ has essentially the form
with respect to $B^\Uu_\alpha$
that we want to show that $(\Tt,\Uu)$
has with respect to $B$. (In particular, $\Uu\rest[\alpha,\infty]$ is trivial, and   $\Tt\rest[\alpha,\infty]$  translates to a ``tree'' $\Tt'$ on $C^\Uu_\alpha$ (recall $C^\Uu_\alpha\ins M^{e\Tt}_\alpha$) which is strongly finite,
$M^{\Tt'}_\infty=M^\Uu_\alpha$, etc.)\footnote{\label{ftn:wrinkles}The reason for the qualifier ``essentially''
and the quotation marks around ````tree'''' is that it seems we might not know the resulting ``tree'' actually has fully wellfounded models, at least in a certain special case,
in which that ``tree'' might use extenders overlapping $\rho^\Uu_\alpha$. However,
this illfoundedness could only occur up strictly above where the least disagreement occurs,
so it wouldn't cause a problem. In the end, we will instead describe this ``tree'' as a (real) tree on a certain phalanx, avoiding any such possibility of illfoundedness.}
In particular, $\lh(\Tt,\Uu)=\alpha+n+1$ for some $n<\om$.
So if $\alpha=0$, we would be done.
But if instead $\alpha>0$,
we will show that we can pull back the form of $(\Tt,\Uu)\rest[\alpha,\alpha+n]$
to $(\Tt,\Uu)\rest[0,n]$.
 It proceeds roughly as follows.
Let $G$ be the $(\rho,\rho^\Uu_\alpha)$-extender derived from $i^{0\Uu}_{0\alpha}$.
Let $\gamma$ be least such that $M|\gamma\neq C|\gamma$.
The calculation will show that
\begin{equation}\label{eqn:Ult_0(M|gamma,G)_ins} \Ult_0(M|\gamma,G)\ins M^\Uu_\alpha
\end{equation}
and
\begin{equation}\label{eqn:Ult_0(C|gamma,G)_ins}\Ult_0(C|\gamma,G)\ins C^\Uu_\alpha,
\end{equation}
and that these two structures have the same ordinal height and are distinct,
and therefore they form the least disagreement between $M^\Uu_\alpha$ and $C^\Uu_\alpha$.
But  as mentioned earlier, we had $C^\Uu_\alpha\ins M^{e\Tt}_\alpha$,
and $(E^\Uu_\alpha,E^\Tt_\alpha)$
constitute the least disagreement between $(M^\Uu_\alpha,M^{e\Tt}_\alpha)$,
and so $\exit^\Uu_\alpha=\Ult_0(M|\gamma,G)$ and $\exit^\Tt_\alpha=\Ult_0(C|\gamma,G)$.
But since we already know
$\Uu\rest[\alpha,\infty]$ uses no extenders and $E^\Tt_\alpha\neq\emptyset$,
it follows that $F^{M|\gamma}=\emptyset$ and $F^{C|\gamma}\neq\emptyset$,
and that the fine structural properties
of $C|\gamma$ are closely related to those of $\exit^\Tt_\alpha$.
This all generalizes to a similar situation at stages $i\in[0,n)$ (with slight wrinkles foreshadowed in Footnote \ref{ftn:wrinkles}).
From this correspondence between  $(\Tt,\Uu)\rest[0,n]$ and $(\Tt,\Uu)\rest[\alpha,\alpha+n]$,
we will conclude that
actually $(\Tt,\Uu)$ is as desired, and $\alpha=0$, a contradiction.

Before beginning the proof of the lemma,
we will describe the methods we will
use to establish lines (\ref{eqn:Ult_0(M|gamma,G)_ins}) and (\ref{eqn:Ult_0(C|gamma,G)_ins})  above.
The calculation is a generalization of the methods used in the analysis of comparisons
in \cite[\S4]{premouse_inheriting},
and in particular of how stages of comparison lift under ultrapower maps; this kind of analysis of preservation of comparison (under ultrapower maps) is also developed further  in
\cite[\S8]{fullnorm}.
A key component of the calculation (Lemma \ref{lem:dropdown_lifting} below) involves arguments with condensation
much like those in \cite[\S4]{premouse_inheriting}, and which were
worked out more generally by Steel, and then sharpened independently by Steel and the author (see \cite{fullnorm}). These calculations are integral within full normalization \cite{fullnorm}.
We first recall a basic
definition from \cite{fsit},
though slightly modified:

\begin{dfn}\label{dfn:dropdown}
Let $M$ be an $m$-sound premouse and
 $\gamma\leq\OR^M$.
 The \dfnemph{extended dropdown sequence of $((M,m),\gamma)$}
is the sequence $\left<(M_i,m_i)\right>_{i\leq n}$ where
$(M_0,m_0)=(M|\gamma,0)$ and
for $i<n$,
 $(M_{i+1},m_{i+1})$ is the lexicographically
least $(M',m')$ such that $(M_i,m_i)\ins(M',m')\ins (M,m)$ and
either $(M',m')=(M,m)$ or $\rho_{m'+1}^{M'}<\rho_{m_i+1}^{M_i}$,
and moreover, $n<\om$ is as large as possible under these conditions.
\end{dfn}

\begin{rem}\label{rem:dropdown}
Note $(M,m)=(M_n,m_n)$  by definition.
Write $\rhotilde_{m+1}^M=0$, and $\rhotilde_k^K=\rho_k^K$ for other $K,k$.
Let $i<n$.
Note that if $M_i\pins M_{i+1}$ then
$\theta=\rho_{m_i+1}^{M_i}=\rho_\om^{M_i}$
is an $M_{i+1}$-cardinal and
$\rhotilde_{m_{i+1}+1}^{M_{i+1}}<\theta\leq\rho_{m_{i+1}}^{M_{i+1}}$.
And if $M_i=M_{i+1}$ then again
$\rhotilde_{m_{i+1}+1}^{M_{i+1}}<\rho_{m_{i+1}}^{M_{i+1}}=\rho_{m_i+1}^{M_i}$.
It can however be that $\rhotilde_{m_i+1}^{M_i}=\rho_{m_i}^{M_i}$,
but by the preceding remarks, this implies $i=0<n$ (so $m_i=m_0=0$)
and  $\rho_0^{M_0}=\rho_1^{M_0}$
(so $M_0$ is passive or type 3).
\end{rem}

For $m<\om$ and  $m$-sound premouse $M$,
\emph{suitable condensation
 below $(M,m)$} is defined in \cite[Definition 2.10]{premouse_inheriting}.
By \cite[Lemma 2.11]{premouse_inheriting},
this notion is preserved by iteration maps at
the degrees we need it,
and by \cite[Lemma 2.14]{premouse_inheriting} (or alternatively,
the finer \cite[Theorem 5.2]{premouse_inheriting}),
it is a consequence of $(m,\om_1+1)$-iterability
(in fact $(m-1,\om_1+1)$-iterability if $m>0$).

Some form of the following lemma is due to Steel.\footnote{Components of the argument were shown in  \cite{premouse_inheriting},
in particular \cite[Lemma
3.17(11)]{premouse_inheriting}.
Steel showed that $U_0\ins N$ under the  circumstances of the lemma. The full lemma as stated here was then observed by the author and Steel independently, but it comes out quite readily from  Steel's proof that $U_0\ins N$.}
The proof uses an iterated version of the proof
of \cite[Lemma
3.17(11)]{premouse_inheriting}:

\begin{lem}[Dropdown lifting]\label{lem:dropdown_lifting}
 Let $M$ be an $m$-sound premouse.
 Suppose suitable condensation holds below $(M,m)$.
 Let $\rho<\OR^M$ be an $M$-cardinal
 with $\rho\leq\rho_m^M$.
 Let
$\left(\left<M_\alpha\right>_{\alpha\leq\lambda},
\left<E_\alpha\right>_{\alpha<\lambda}\right)$ be a wellfounded degree
$m$ abstract iteration of $M$
with abstract iteration maps $j_{\alpha\beta}:M_\alpha\to M_\beta$,
where  $\crit(E_\alpha)<\sup j_{0\alpha}``\rho$ for each $\alpha<\lambda$.
Let $\pi=j_{0\lambda}$ and $N=M_\lambda$.
Let $G$ be the $(\rho,\sup\pi``\rho)$-extender
over $M$ derived from $\pi$.

Let $\rho\leq\gamma\leq\OR^M$ and $\left<(M_i,m_i)\right>_{i\leq n}$
be the extended dropdown sequence of $((M,m),\gamma)$.
Let
$U_i=\Ult_{m_i}(M_i,G)$.
Then $U_i\ins N$ for each $i\leq n$,
and  $\left<(U_i,m_i)\right>_{i\leq n}$ is the
extended dropdown sequence of $((N,m),\OR^{U_0})$.
\end{lem}
\begin{proof}
By definition, $(M_n,m_n)=(M,m)$, and $U_n=N=\Ult_m(M,G)$,
so $(U_n,m)$ is the terminal element of the extended dropdown
sequence of $((N,m),\OR^{U_0})$.

Now suppose $U_{i+1}\ins
N$ where $i<n$, and we will show the following,
writing $\widetilde{\rho}^M_{m+1}=\widetilde{\rho}_{m+1}^N=0$
and $\widetilde{\rho}_{k+1}^K=\rho_{k+1}^K$ for other $K,k$:

\begin{clm*}We have:
\begin{enumerate}[label=\arabic*.,ref=\arabic*]
 \item\label{item:U_i_ins_U_i+1} $U_i\ins U_{i+1}$,
 \item\label{item:if_U_i=U_i+1} if $U_i=U_{i+1}$ then
$\widetilde{\rho}_{m_{i+1}+1}^{U_{i+1}}<
\rho_{m_{i+1}}^{U_{i+1}}=\rho_{m_{i+1}}^{U_i} =
\rho_{m_i+1}^{U_i}$,
 \item\label{item:if_U_i_pins_U_i+1} if $U_i\pins U_{i+1}$ then
$\widetilde{\rho}_{m_{i+1}+1}^{U_{i+1}}<
\rho_\om^{U_i}=\rho_{m_i+1}^{U_i}=\theta\leq \rho_{m_{i+1}}^{U_{i+1}}$ and
$\theta$ is a cardinal of $U_{i+1}$,
\item\label{item:if_rho_m_i+1<rho_m_i+1} if
$\widetilde{\rho}_{m_i+1}^{M_i}<\rho_{m_i}^{M_i}$ then
$\widetilde{\rho}_{m_i+1}^{U_i}<\rho_{m_i}^{U_i}$,
\item\label{item:if_rho_m_i+1=rho_m_i+1} if
$\widetilde{\rho}_{m_i+1}^{M_i}=\rho_{m_i}^{M_i}$
then $\widetilde{\rho}_{m_i+1}^{U_i}=\rho_{m_i}^{U_i}$
(see Remark \ref{rem:dropdown}).
\end{enumerate}
\end{clm*}
\begin{proof}

\begin{casetwo} $M_{i+1}=M_i$.

Let $A=M_{i+1}=M_i$. We have then $m_{i+1}>m_i$ and
$\widetilde{\rho}_{m_{i+1}+1}^A<\rho_{m_{i+1}}^A=\rho_{m_i+1}^A$.

For $m_i\leq k\leq m_{i+1}$, let
$B_k=\Ult_k(A,G)$
and $i_k=i^{A,k}_G:A\to B_k$.
For $m_i\leq k< m_{i+1}$ let
$\sigma_{k,k+1}:B_k\to B_{k+1}$
be the natural factor map,
so $\sigma_{k,k+1}\com i_k=i_{k+1}$
and $\sigma_{k,k+1}$ is $\pvec_{k+1}$-preserving
$k$-lifting.
Almost
as in the proof of \cite[Lemma
3.17(11)]{premouse_inheriting},
we have $B_k\ins B_{k+1}$. For self-containment, here is the reason: Using
Corollary \ref{cor:basic_fs_pres},
both $B_k,B_{k+1}$ are $(k+1)$-sound
with
\[ \rho'=\rho_{k+1}^{B_{k}}=\sup i_k``\rho_{k+1}^A=\sup
i_{k+1}``\rho_{k+1}^A=\rho_{k+1}^{B_{k+1}}, \]
and  $\rho_k^{B_k}=\sup i_k``\rho_k^A$.
Let $\sigma=\sigma_{k,k+1}$. Then part \ref{item:U_i_ins_U_i+1} holds in this case as:
\begin{enumerate}[label=\arabic*.,ref=\arabic*]
\item If $\sigma``\rho_k^{B_k}$ is unbounded in $\rho_k^{B_{k+1}}$,
then by \cite[Lemma 2.4]{premouse_inheriting}, $\sigma$
is a $k$-embedding,
which implies $\sigma=\id$ and $B_k=B_{k+1}$.
 \item If $\sigma``\rho_k^{B_k}$ is bounded in $\rho_k^{B_{k+1}}$,
then by \cite[Lemma 2.4]{premouse_inheriting},
 suitable condensation applies,
and as $\rho'$ is a cardinal in $B_{k+1}$,
we get $B_k\pins B_{k+1}$.
 \end{enumerate}

 Now $\widetilde{\rho}_{m_{i+1}+1}^A<\rho_{m_{i+1}}^A$, so
 \[ \widetilde{\rho}_{m_{i+1}+1}^{U_{i+1}}=\sup
i_{m_{i+1}}``\widetilde{\rho}_{m_{i+1}+1}^A<\sup
i_{m_{i+1}}``\rho_{m_{i+1}}^A=\rho_{m_{i+1}}^{U_{i+1}},\]
and since $\theta=\rho_{m_{i+1}}^A=\ldots=\rho_{m_{i}+1}^A$,
 $i_k\rest\theta$ is independent of $k\in[m_i,m_{i+1}]$,
so
\[
\widetilde{\rho}_{m_{i+1}+1}^{U_{i+1}}
<
\rho_{m_{i+1}}^{U_{i+1}}=
\rho_{m_{i+1}}^{B_{m_{i+1}}}=
\rho_{m_{i+1}-1}^{B_{m_{i+1}-1}}=\ldots=\rho_{m_i+1}^{B_{m_i+1}}=
\rho_{m_i+1}^{B_{m_i}}=\rho_{m_i+1}^{U_i}\]
and \[ \rho_{m_i+1}^A<\rho_{m_i}^A\iff
\rho_{m_i+1}^{U_i}<\rho_{m_i}^{U_i}.\]
If $\rho_{m_i+1}^A=\rho_{m_i}^A$ then also $i=0=m_0$, by Remark \ref{rem:dropdown}.

So if $U_i=U_{i+1}$ then we get part \ref{item:if_U_i=U_i+1}
as needed, and
if $U_i\pins U_{i+1}$ then note that
 $\rho_{m_{i+1}}^{U_{i+1}}=\rho_{m_i+1}^{U_i}$ is a cardinal of $U_{i+1}$,
  so $\rho_\om^{U_i}=\rho_{m_i+1}^{U_i}$,
  giving part \ref{item:if_U_i_pins_U_i+1}.
  Parts \ref{item:if_rho_m_i+1<rho_m_i+1} and  \ref{item:if_rho_m_i+1=rho_m_i+1}
also follow easily.
  \end{casetwo}

  \begin{casetwo} $M_{i}\pins M_{i+1}$.

  So $\theta=\rho_\om^{M_i}$ is an $M_{i+1}$-cardinal,
$\widetilde{\rho}_{m_{i+1}+1}^{M_{i+1}}<\theta\leq\rho_{m_{i+1}}^{M_{i+1}}$,
and either:
\begin{enumerate}[label=--]
\item $\rho_{m_i+1}^{M_i}=\theta<\rho_{m_i}^M$, or
\item
 $i=0=m_0$, $\rho_0^{M_0}=\rho_1^{M_0}=\rho_\om^{M_0}=\theta$ and $M_0$ is passive or type 3.
\end{enumerate}

If $M_{i+1}$ is active
let
$\psi:\Ult_0(M_{i+1},F^{M_{i+1}})\to\Ult_0(U_{i+1},F^{U_{i+1}})$
be given by the Shift Lemma with $j=i^{M_{i+1},m_{i+1}}_{G}$;
otherwise let $\psi=j$.
Note  $M_i\in\dom(\psi)$
and $\psi\rest\theta=j\rest\theta$.
Let
\[ \sigma:U_i=\Ult_{m_i}(M_i,G)\to\psi(M_i) \]
be the natural factor map.
Note that $\rho_{m_i+1}^{U_i}=\sup\psi``\theta$
is a cardinal in $U_{i+1}$, because if $\mu^{+M_{i+1}}\leq\theta\leq\rho_{m_{i+1}}^{M_{i+1}}$ then $\mu^{+M_{i+1}}$ is $\bfrSigma_{m_{i+1}}^{M_{i+1}}$-regular, and since
and $\rho\leq\mu^{+M_{i+1}}$ (recall $G$ is the $(\rho,\sup\pi``\rho)$-extender derived from $\pi$), therefore $j$ is continuous at $\mu^{+M_{i+1}}$. Also $\sigma\rest\rho_{m_i+1}^{U_i}=\id$,
and much as before, by elementarity,  suitable condensation applies
to $\sigma$, giving $U_i\pins U_{i+1}$. (In the case
that $M_{i+1}$ is type 3 and $\psi(M_i)\npins U_{i+1}$, we have
$\psi(M_i)|\OR^{U_{i+1}}=U_{i+1}||\OR^{U_{i+1}}$
by coherence, and we get $U_i\pins\psi(M_i)$ by condensation,
but $\rho_\om^{U_i}<\OR^{U_{i+1}}$, so $U_i\pins U_{i+1}$.)
The desired properties now easily follow.\qedhere
\end{casetwo}\end{proof}

This completes the proof of the claim, and the theorem
is an easy consequence.
\end{proof}

We can now begin the main argument of this section.

\begin{proof}[Proof of Lemma \ref{lem:finite_gen_hull}]
 We may assume that $M$ is not $(m+1)$-sound.
 Let  $\rho=\rho_{m+1}^C=\rho_{m+1}^M$
 and $\pi:C\to M$ the core map. So $C\neq M$ but
  $\rho^{+C}=\rho^{+M}$.
  Note  $\rho<\rho_0^C\leq\rho_0^M$.
 Define the bicephalus $B=(\rho,C,M)$, which is
degree-maximally iterable,
as there is an easy copying argument
to lift trees on $B$ to trees on $M$
(there are such iterability proofs in \cite{premouse_inheriting}).
For a degree-maximal tree $\Tt$ on $B$, recalling notation from \S\ref{sec:bicephali},
we write $C^\Tt_\alpha=M^{0\Tt}_\alpha$
and $M^\Tt_\alpha=M^{1\Tt}_\alpha$
and $i^\Tt_{\alpha\beta}=i^{0\Tt}_{\alpha\beta}$
and $j^\Tt_{\alpha\beta}=i^{1\Tt}_{\alpha\beta}$ etc,
and $\rho^{+\Tt}_\alpha$ for
$(\rho^\Tt_\alpha)^{+C^\Tt_\alpha}=(\rho^\Tt_\alpha)^{+M^\Tt_\alpha}$.

We compare $B$ with itself, producing padded degree-maximal trees $\Tt,\Uu$
respectively, much as in the bicephalus and cephalanx
comparisons in
\cite{premouse_inheriting}.\footnote{One might also consider comparing
$B$ with $C$, or $B$ with $M$. Say we produce
trees $\Tt$ on $B$ and $\Uu$ on $C$ or on $M$.
If we are comparing $B$ with $C$, it seems
we might reach a stage $\alpha$
such that
$(0,\alpha]^\Uu$ does not drop and $C^\Uu_\alpha\pins B^\Tt_\alpha$,
and so the comparison terminates at that point, without a useful outcome.
If instead we are comparing $B$ with $M$,
it seems we might reach a stage $\alpha$ with
$\alpha\in\curlyB^\Tt$
and $(0,\alpha]^\Uu$ does not drop,
$\rho^\Tt_\alpha=\rho_{m+1}(M^\Uu_\alpha)$
and
$M^\Tt_\alpha||(\rho^\Tt_\alpha)^{+M^\Tt_\alpha}
=M^\Uu_\alpha||(\rho^\Tt_\alpha)^{+M^\Uu_\alpha}$,
but $M^\Tt_\alpha\neq M^\Uu_\alpha$. If we then move into $C^\Tt_\alpha$
in $\Tt$, the comparison might terminate above $C^\Tt_\alpha$
and above $M^\Uu_\alpha$,
but this does not seem to yield relevant information about $B$.
If instead we move into $M^\Tt_\alpha$ in $\Tt$,
the comparison might terminate above $M^\Tt_\alpha$ and $M^\Uu_\alpha$,
but this also does not seem to yield useful information,
at least not directly.}
The comparison proceeds basically via least disagreement.
However, there are two caveats, just like in  \cite{premouse_inheriting}.
The first is that when selecting extenders $E$ by least disagreement,
we must also minimize on $\nu(E)$ before proceeding,
as described in Remark \ref{rem:superstrong_diff}.
Secondly, when there are multiple models available
on one (or both) side(s), we have to take some care
in determining exactly how to proceed,
also as in \cite{premouse_inheriting}.
There can be stages
$\alpha$ at which we have a bicephalus $B^\Tt_\alpha$ in $\Tt$ say,
and we decide that we want to make sure that the next
extender used in $\Tt$ comes from some particular model
of $B^\Tt_\alpha$, say $C^\Tt_\alpha$. In this case,
we pad in both trees at this stage, setting
$E^\Tt_\alpha=\emptyset=E^\Uu_\alpha$,
but restrict the \emph{available models} in $\Tt$ to
include only $C^\Tt_{\alpha+1}=C^\Tt_{\alpha}$ at stage $\alpha+1$,
and we say that  in $\Tt$ we \emph{move into} $C^\Tt_\alpha$
at stage $\alpha$.
We can similarly \emph{move into}  $M^\Tt_\alpha$ in $\Tt$.
Likewise in
$\Uu$.
To facilitate
this notationally,
we will keep track of sets
$S^\Tt_\alpha,S^\Uu_\alpha,\modelset^\Tt_\alpha,\modelset^\Uu_\alpha$,
specifying the models available at stage $\alpha$.
Here $\emptyset\neq S^\Tt_\alpha\sub\sides^\Tt_\alpha$ and
 $\modelset^\Tt_\alpha=\{M^{e\Tt}_\alpha\bigm|e\in S^\Tt_\alpha\}$,
and likewise for $S^\Uu_\alpha,\modelset^\Uu_\alpha$.
We will only ever take exit extenders from available models.
Formally, this means that if $S^\Tt_\alpha=\{n\}$ and
$E^\Tt_\alpha\neq\emptyset$
then $\exitside^\Tt_\alpha=n$.
Moreover, once we have restricted
the set of available models,
we may not remove this restriction,
except by using an exit extender from that model;
formally, this means that if $E^\Tt_\beta=\emptyset$
for all $\beta\in[\alpha,\gamma)$ then $S^\Tt_\gamma\sub S^\Tt_\alpha$.
If $E^\Tt_\alpha\neq\emptyset$ and $\alpha+1\in\curlyB^\Tt$, however,
then both models are automatically available at stage $\alpha+1$ in $\Tt$,
i.e.~$S^\Tt_{\alpha+1}=\{0,1\}$. Likewise
for a limit $\eta\in\curlyB^\Tt$
such that $\Tt$ uses extenders cofinally ${<\eta}$.
(As with standard comparison, there can
 also be stages $\alpha$ where $E^\Tt_\alpha\neq\emptyset=E^\Uu_\alpha$,
or vice versa.) If $E^\Tt_\alpha\neq\emptyset$
and $S^\Tt_\alpha=\{0,1\}$, then it will actually be that
 $E^\Tt_\alpha\in\es(C^\Tt_\alpha)\cap\es(M^\Tt_\alpha)$,
 and in fact $\lh(E^\Tt_\alpha)<\rho_\alpha^{+\Tt}$; in this case
 we always set $\exitside^\Tt_\alpha=0$.
 Once we have specified how to select $\exitside^\Tt_\alpha$ and $E^\Tt_\alpha$,
 the remaining features of $\Tt\rest(\alpha+2)$ are determined by the rules for degree-maximality (Definition \ref{dfn:bicephalus_tree}) and Remark \ref{rem:padding}.
  Likewise for $\Uu$.

We now explain the
rules for the comparison.\label{page:comparison_rules} The basic point is that, at stage $\alpha$,
if we have $S^\Tt_\alpha=\{0,1\}$,
and we  decide whether
to move into $C^\Tt_\alpha$ or $M^\Tt_\alpha$
at stage $\alpha$, we choose $C^\Tt_\alpha$
unless this would prematurely end the comparison.
Similarly in $\Uu$, we prefer $C^\Uu_\alpha$ over $M^\Uu_\alpha$.
If $S^\Tt_\alpha=\{0,1\}=S^\Uu_\alpha$,
we will only move into a model on both sides at stage $\alpha$
if $B^\Tt_\alpha||\rho_\alpha^{+\Tt}=B^\Uu_\alpha||\rho_\alpha^{+\Uu}$.
In this case, if $C^\Tt_\alpha=C^\Uu_\alpha$, then
we either (i) move into $C^\Tt_\alpha=C^\Uu_\alpha$ in $\Tt$
and $M^\Uu_\alpha$ in $\Uu$, or (ii) move into $C^\Tt_\alpha=C^\Uu_\alpha$
in $\Uu$ and $M^\Tt_\alpha$ in $\Tt$
(randomly choosing one option, for symmetry).
Formally, this is executed as follows.
We will construct degree-maximal trees $\Tt,\Uu$.
We start with $B^\Tt_0=B=B^\Uu_0$ and $S^\Tt_0=\{0,1\}=S^\Uu_0$.
Suppose we have $(\Tt,\Uu)\rest(\alpha+1)$ and $S^\Tt_\alpha,S^\Uu_\alpha$.

 \begin{casethree}
 $S^\Tt_\alpha=\{0,1\}=S^\Uu_\alpha$ and
 $B^\Tt_\alpha||\rho_+= B^\Uu_\alpha||\rho_+$ where $\rho_+=\min(\rho^{+\Tt}_\alpha,\rho^{+\Uu}_\alpha)$.

We set $E^\Tt_\alpha=\emptyset=E^\Uu_\alpha$, and will move into some model(s).

\begin{scasethree}
$\rho_+=\rho^{+\Tt}_\alpha=\rho^{+\Uu}
_\alpha$ and $C^\Tt_\alpha=C'=C^\Uu_\alpha$.

Either:
In $\Tt$  move into $C'$, and in $\Uu$
move into $M^\Uu_\alpha$;
or, in $\Tt$  move into $M^\Tt_\alpha$,
and in $\Uu$  move into $C'$ (irrespective of whether
$M^\Tt_\alpha=M^\Uu_\alpha$).
\end{scasethree}

\begin{scasethree}$\rho_+=\rho^{+\Tt}_\alpha=\rho^{+\Uu}_\alpha$ and
$C^\Tt_\alpha\neq C^\Uu_\alpha$.

In $\Tt$ move into $C^\Tt_\alpha$, and in $\Uu$  move into $C^\Uu_\alpha$.

(Note here that $C^\Tt_\alpha\nins C^\Uu_\alpha\nins C^\Tt_\alpha$.)
\end{scasethree}

\begin{scasethree}\label{scase:C^Tt_alpha_pins_C^Uu_alpha}
$\rho_+=\rho^{+\Tt}_\alpha<\rho^{+\Uu}_\alpha$ and $C^\Tt_\alpha\pins
B^\Uu_\alpha||\rho^{+\Uu}_\alpha$.

In $\Tt$ move into $M^\Tt_\alpha$. There is no change in $\Uu$.
\end{scasethree}

\begin{scasethree}\label{scase:C^Tt_alpha_npins_C^Uu_alpha}
$\rho_+=\rho^{+\Tt}_\alpha<\rho^{+\Uu}_\alpha$ and $C^\Tt_\alpha\npins
B^\Uu_\alpha||\rho_\alpha^{+\Uu}$.

In $\Tt$ move into $C^\Tt_\alpha$. There is no change in $\Uu$.
\end{scasethree}

\begin{scasethree}
$\rho_+=\rho^{+\Uu}_\alpha<\rho^{+\Tt}_\alpha$.

By symmetry with Subcases \ref{scase:C^Tt_alpha_pins_C^Uu_alpha} and
\ref{scase:C^Tt_alpha_npins_C^Uu_alpha}.
\end{scasethree}

\end{casethree}

\begin{casethree}\label{case:pfg_comparison_rules_both_S_singletons} $S^\Tt_\alpha\neq\{0,1\}\neq S^\Uu_\alpha$.

Select extenders by least disagreement, if possible.
(That is, letting $S^\Tt_\alpha=\{d\}$
and $S^\Uu_\alpha=\{e\}$,
if $M^{d\Tt}_\alpha\ins M^{e\Uu}_\alpha$ or vice versa, then the comparison
terminates.
Otherwise proceed by least disagreement between these models, in the manner described in Remark \ref{rem:superstrong_diff}.
If $E^\Tt_\alpha\neq\emptyset$ set $\exitside^\Tt_\alpha=d$,
and likewise for $\Uu,e$.)\end{casethree}

\begin{casethree}\label{case:B||rho_not_agrees} $S^\Tt_\alpha=\{0,1\}\neq
S^\Uu_\alpha$ and $B^\Tt_\alpha||\rho^{+\Tt}_\alpha\neq
B^\Uu_\alpha||\rho^{+\Tt}_\alpha$.

Select extenders by least disagreement, if possible.
(But maybe $B^\Uu_\alpha\pins
B^\Tt_\alpha||\rho_\alpha^{+\Tt}$,
in which case the comparison terminates.)\end{casethree}

\begin{casethree}\label{case:B||rho_agrees}$S^\Tt_\alpha=\{0,1\}\neq
S^\Uu_\alpha$ and
 $B^\Tt_\alpha||\rho^{+\Tt}_\alpha=B^\Uu_\alpha||\rho^{+\Tt}_\alpha$.

We set $E^\Tt_\alpha=E^\Uu_\alpha=\emptyset$ and move into some model,
according to  subcases.

\begin{scasethree}
 $C^\Tt_\alpha\ins
B^\Uu_\alpha$.

In $\Tt$,  move into $M^\Tt_\alpha$
 (so set $S^\Tt_{\alpha+1}=\{1\}$).
\end{scasethree}

\begin{scasethree}  $C^\Tt_\alpha\nins
B^\Uu_\alpha$.

In $\Tt$,  move into $C^\Tt_\alpha$
 (so set $S^\Tt_{\alpha+1}=\{0\}$).
 \end{scasethree}
\end{casethree}

\begin{casethree}
 $S^\Tt_\alpha\neq\{0,1\}=S^\Uu_\alpha$.

 By symmetry with Cases \ref{case:B||rho_not_agrees} and
\ref{case:B||rho_agrees}.
 \end{casethree}

\begin{casethree}\label{case:finite_gen_hull_proof_case_6} $S^\Tt_\alpha=\{0,1\}=S^\Uu_\alpha$
 and $B^\Tt_\alpha||\rho_+\neq B^\Uu_\alpha||\rho_+$ where $\rho_+=\min(\rho^{+\Tt}_\alpha,\rho^{+\Uu}_\alpha)$.

 Select extenders by least disagreement.
\end{casethree}

This completes all cases.\label{page:successor_detd} The rules for degree-maximality (Definition \ref{dfn:bicephalus_tree}),
along with Remark \ref{rem:padding} regarding padding, now determine $(\Tt,\Uu)\rest(\alpha+2)$. (In particular, if $E^\Tt_\alpha\neq\emptyset$ then
$\pred^\Tt(\alpha+1)$ is the least $\beta$ such that $E^\Tt_\beta\neq\emptyset$ and $\crit(E^\Tt_\alpha)<\nu(E^\Tt_\beta)$.) If $E^\Tt_\alpha=\emptyset$
and we do not move into a model at stage $\alpha$
in $\Tt$, then $S^\Tt_{\alpha+1}=S^\Tt_\alpha$.
If $E^\Tt_\alpha\neq\emptyset$
then $S^\Tt_{\alpha+1}=\sides^\Tt_{\alpha+1}$.
Given $(\Tt,\Uu)\rest\eta$ where $\eta$ is a limit, we extend to $(\Tt,\Uu)\rest(\eta+1)$ using our iteration strategy for $B$.
If $\Tt\rest\eta$ is eventually only padding then $S^\Tt_\eta=\lim_{\alpha<\eta}S^\Tt_\alpha$.
If  $\Tt\rest\eta$
is not eventually only padding
 then $S^\Tt_\eta=\sides^\Tt_\eta$. Likewise
for $\Uu$.

If $\alpha\in\curlyB^\Tt$ but $S^\Tt_\alpha=\{n\}$,
let $\movin^\Tt(\alpha)$ be the largest $\beta<\alpha$
such that $S^\Tt_\beta=\{0,1\}$
(so in $\Tt$
we move into $M^{n\Tt}_\beta=M^{n\Tt}_\alpha$
at stage $\beta$). Likewise for $\Uu$.

\begin{clmthree}\label{clm:proj-finitely_gend_basic_biceph_iter_pres}The following properties hold of $\Tt$, and likewise for $\Uu$:
\begin{enumerate}[label=\arabic*.,ref=\arabic*]\item\label{item:alpha+1_in_B_weakly amenable} If
$\alpha+1\in\curlyB^\Tt$ then $E^\Tt_\alpha$ is weakly amenable to
$B^{*\Tt}_{\alpha+1}$.

\item\label{item:props_B_alpha_pi_alpha_etc} Let $\alpha\in\curlyB^\Tt$. Then:
\begin{enumerate}
\item $\rho_\alpha^\Tt=\rho_{m+1}(C^\Tt_\alpha)=\rho_{m+1}(M^\Tt_\alpha)$
\item $i^\Tt_{0\alpha},j^\Tt_{0\alpha}$ are $p_{m+1}$-preserving
$m$-embeddings,
\item $M^\Tt_\alpha$ is $(m+1)$-solid and $(m+1)$-universal
but not $(m+1)$-sound,
\item $C^\Tt_\alpha=\core_{m+1}(M^\Tt_\alpha)$ is $(m+1)$-sound,
\item letting $\pi_\alpha:C^\Tt_\alpha\to M^\Tt_\alpha$
 be the core map (so $\pi_0=\pi$) and $\beta\leq^\Tt\alpha$,
 \[ \pi_\alpha\com
i^\Tt_{\beta\alpha}=j^\Tt_{\beta\alpha}\com\pi_\beta. \]
\end{enumerate}
\item\label{item:alpha+1_not_in_B} If
$\alpha+1\notin\curlyB^\Tt$ then $E^\Tt_\alpha$
is close to $B^{*\Tt}_{\alpha+1}$.\footnote{If $M$
is type 2 and $k=0$ and $\kappa=\crit(F^M)$
and $\kappa^{+M}=\rho=\rho_1^M$, then we do not yet know
that $F^M$ is close to $C$;
in particular this case causes some obstacles
to be dealt with.} Therefore (even if
extenders applied to bicephali $B^\Tt_\alpha$ are  not close
to $C^\Tt_\alpha$) all iteration maps preserve fine structure
as usual.
\end{enumerate}
\end{clmthree}
\begin{proof}
Part \ref{item:props_B_alpha_pi_alpha_etc} follows
from part \ref{item:alpha+1_in_B_weakly amenable}
and Corollary \ref{cor:basic_fs_pres}.
Part \ref{item:alpha+1_not_in_B} uses
a  simple variant of the
argument of \cite[6.1.5]{fsit}.
\end{proof}

\begin{clmthree}
 We have:
\begin{enumerate}[label=--]
 \item $E^\Tt_\alpha=\emptyset=E^\Uu_\alpha$
iff we move into a model at stage $\alpha$ in some tree.
\item  If $S^\Tt_\alpha=\{0,1\}$
and $E=E^\Tt_\alpha\neq\emptyset$
or $E=E^\Uu_\alpha\neq\emptyset$,
then $\lh(E)<\rho^{+\Tt}_\alpha$.
Likewise for $\Uu$.
\end{enumerate}
Let $\alpha+1,\beta+1<\lh(\Tt,\Uu)$
with
 $E^\Tt_\alpha\neq\emptyset\neq E^\Uu_\beta$.
Let
$\nu=\min(\nu^\Tt_\alpha,\nu^\Uu_\beta)$.
Then:
\begin{enumerate}[label=--]
\item If $\alpha=\beta$ then
$\lh(E^\Tt_\alpha)=\lh(E^\Uu_\alpha)$
and $\nu(E^\Tt_\alpha)=\nu=\nu(E^\Uu_\alpha)$.
\item In general, $E^\Tt_\alpha\rest\nu\neq
E^\Uu_\beta\rest\nu$.
\end{enumerate}
Hence, the comparison
terminates.\end{clmthree}

\begin{proof}This easily follows from the rules of comparison
and as usual.\end{proof}

\begin{clmthree}\label{clm:move_into_C_alpha_not_later_pseg} We have:
\begin{enumerate}[label=\arabic*.,ref=\arabic*]\item\label{item:when_move_into_M^Tt_alpha}
If we move into $M^\Tt_\alpha$ at stage $\alpha$
then $C^\Tt_\alpha\ins B^\Uu_{\alpha+1}$,
so at stage $\alpha+1$ we select the least disagreement
between $M^\Tt_\alpha$ (in $\Tt$)
and $C^\Tt_\alpha$ (in $\Uu$) (which exists).
\item\label{item:when_move_into_C^Tt_alpha} If we move into $C^\Tt_\alpha$ at stage $\alpha$
then for each $\beta>\alpha$ and $n\in S^\Uu_\beta$,
we have
 $C^\Tt_\alpha\nins M^{n\Uu}_\beta$.
 \end{enumerate}
Likewise symmetrically.
\end{clmthree}
\begin{proof}Part \ref{item:when_move_into_M^Tt_alpha} is just a matter of checking the definitions. Regarding part \ref{item:when_move_into_C^Tt_alpha},
consider, for example, the case that
we move into $C^\Tt_\alpha$ in $\Tt$ in Subcase
\ref{scase:C^Tt_alpha_npins_C^Uu_alpha}.
Then because
$\rho^{+\Tt}_\alpha<\rho^{+\Uu}_\alpha$ and either $\rho^{+\Uu}_\alpha$
is a cardinal of $C^\Uu_\alpha$
or $\rho^{+\Uu}_\alpha=\OR(C^\Uu_\alpha)$, and $\rho^{+\Uu}_\alpha$
is a cardinal of $M^\Uu_\alpha$, we have $C^\Tt_\alpha\npins
C^\Uu_\alpha$ and $C^\Tt_\alpha\npins M^\Uu_\alpha$. And
$C^\Tt_\alpha\neq M^\Uu_\alpha$ as $C^\Tt_\alpha$ is $(m+1)$-sound
but $M^\Uu_\alpha$ is not, and $C^\Tt_\alpha\neq C^\Uu_\alpha$
because $\rho_{m+1}^{+C^\Tt_\alpha}\neq\rho_{m+1}^{+C^\Uu_\alpha}$.
So suppose
$\beta>\alpha$
is least such that $C^\Tt_\alpha\ins C^\Uu_\beta$ or $C^\Tt_\alpha\ins
M^\Uu_\beta$.
Then there is $\gamma\in(\alpha,\beta)$ with
$E^\Uu_\gamma\neq\emptyset$,
and note then that $\lh(E^\Uu_\gamma)\geq\rho^{+\Tt}_\alpha$.
But then since $\rho_{m+1}(C^\Tt_\alpha)=\rho^\Tt_\alpha<\lh(E^\Uu_\gamma)$,
we have $C^\Tt_\alpha\npins C^\Uu_\beta$ and $C^\Tt_\alpha\npins M^\Uu_\beta$.
So $C^\Tt_\alpha=C^\Uu_\beta$ or $C^\Tt_\alpha=M^\Uu_\beta$.
But if $\beta\in\curlyB^\Uu$ then
\[\rho_{m+1}(M^\Uu_\beta)=\rho_{m+1}(C^\Uu_\beta)
\geq\lh(E^\Uu_\gamma)>\rho_{m+1}(C^\Tt_\alpha), \]
so $C^\Tt_\alpha\notin\{C^\Uu_\beta,M^\Uu_\beta\}$.
So $\beta\notin\curlyB^\Uu$;
let $\sides^\Uu_\beta=\{n\}$, so $C^\Tt_\alpha=M^{n\Uu}_\beta$.
As $C^\Tt_\alpha$ is $(m+1)$-sound,
 $\deg^{n\Uu}_\beta\geq m+1$ and
$\lh(E^\Uu_\gamma)\leq\rho_{m+1}(M^{n\Uu}_{\beta})$,
a contradiction. The remaining cases are similar.
\end{proof}

Let $\xi+1=\lh(\Tt,\Uu)$.
We say that the comparison \emph{terminates early}
if $\xi=\beta+1$ and $E^\Tt_\beta=\emptyset=E^\Uu_\beta$.

\begin{clmthree}$S^\Tt_\xi\neq\{0,1\}\neq S^\Uu_\xi$.\end{clmthree}
\begin{proof}By the comparison rules,
$S^\Tt_\xi\neq\{0,1\}$ or
$S^\Uu_\xi\neq\{0,1\}$.
Suppose $S^\Tt_\xi=\{0,1\}$ and $S^\Uu_\xi=\{n\}$,
so Case \ref{case:B||rho_not_agrees}
attains at stage $\xi$
and $M^{n\Uu}_\xi\pins
B^\Tt_\xi||\rho^{+\Tt}_\xi$,
so $M^{n\Uu}_\xi$ is fully sound.
Therefore $\xi\in\curlyB^\Uu$
and $n=0$,
so $C^\Uu_\xi\pins B^\Tt_\xi$.
But $C^\Uu_\xi=C^\Uu_\beta$
where $\beta=\movin^\Uu(\xi)$.
This contradicts Claim
\ref{clm:move_into_C_alpha_not_later_pseg}.
\end{proof}

So let $\modelset^\Tt_\xi=\{N^\Tt_\xi\}$
and $\modelset^\Uu_\xi=\{N^\Uu_\xi\}$,
so $N^\Tt_\xi\ins N^\Uu_\xi$ or vice versa.
By Claim \ref{clm:move_into_C_alpha_not_later_pseg},
it follows that
$N^\Tt_\xi,N^\Uu_\xi$
are non-sound and (let) $N=N^\Tt_\xi=N^\Uu_\xi$.

\begin{clmthree}
 Either:
 \begin{enumerate}[label=--]\item $S^\Tt_\xi=\{1\}$ and $b^\Tt$ does not drop in model
 or degree, or
 \item $S^\Uu_\xi=\{1\}$ and $b^\Uu$ does not drop in model or degree.
 \end{enumerate}
\end{clmthree}
\begin{proof}
Standard fine structure shows that
either $b^\Tt$ or $b^\Uu$
doesn't drop in model or
degree. We can assume $b^\Tt$ doesn't.
So suppose $S^\Tt_\xi=\{0\}$.
Then by Claim \ref{clm:move_into_C_alpha_not_later_pseg},
$\xi\notin\curlyB^\Tt$, and hence $N=C^\Tt_\xi$ is $m$-sound,
but not $(m+1)$-sound.
But then if $S^\Uu_\xi=\{0\}$ or $b^\Uu$
drops in model or degree (hence in model), then the core map $\core_{m+1}(N)\to
N$
is an iteration map of both $\Tt$ and $\Uu$, which contradicts
comparison.
\end{proof}

So  we can assume that $S^\treeUu_\xi=\{1\}$ and
$b^\treeUu$ does not drop in model or degree,
so in $\treeUu$ we moved into $M^\treeUu_\lambda$
at stage $\lambda=\movin^\treeUu(\xi)$, and $\lambda\leq^\treeUu\xi$,
and  (let)
\[ k=i^{1\treeUu}_{\lambda\xi}:M^\treeUu_\lambda\to M^\treeUu_\xi=N.\]
By Claim \ref{clm:move_into_C_alpha_not_later_pseg} then,
$\core_{m+1}(N)=C^\treeUu_\lambda\ins B^\treeTt_{\lambda+1}$.
But $C^\treeUu_\lambda\neq N$, so
letting $\sides^\Tt_\xi=\{e\}$ and $\eta+1=\succ^\Tt(\lambda+1,\xi)$, then
\[ j=i^{e*\treeTt}_{\eta+1,\xi}:M^{e*\Tt}_{\eta+1}=C^\treeUu_\lambda\to N=M^{e\Tt}_\xi \]
is the core map, and
$j\neq\id$.
Note $k\com\pi^\treeUu_\lambda=j$
(recall $\pi^\treeUu_\lambda$ is the core map).
We will show that
$\Uu\rest[\lambda,\infty]$ is trivial and $\Tt\rest[\lambda,\infty]$ is essentially a strongly finite tree $\Tt'$ on $C^\Uu_\lambda$ which is almost-above $\rho^\Uu_\lambda$,
with $M^{\Tt'}_\infty=N=M^\Uu_\lambda$ (so $k=\id$ and $j=\pi^\Uu_\lambda$). This will take an argument like the proof of Lemma \ref{lem:measures_in_mice} (on measures in mice). For this we first need to establish the analogue of Lemma \ref{lem:non-dropping-strongly_finite} (on strongly finite trees).

\begin{dfn}\label{dfn:captures_biceph}
Let  $\Ss$ be degree-maximal on $B$,
of successor length $\zeta+1$.
Let $x\in\core_0(M^{e\Tt}_\xi)$ and $\rho'=\rho_{m+1}(M^{e\Tt}_\xi)=\rho_\lambda^\Uu$.
Then
we say that $\Ss$ \dfnemph{captures $(\Tt,x,\rho'+1)$} iff
there is $\sigma$ such that:
\begin{enumerate}[label=\arabic*.,ref=\arabic*]
\item $\lambda\in b^\Ss\cap b^\Tt$,
\item $\Ss\rest(\lambda+1)=\Tt\rest(\lambda+1)$,
 \item either:
 \begin{enumerate}
 \item $\lambda=\zeta=\xi$ (so $\Ss=\Tt$),
 or
 \item we have:
 \begin{enumerate}[label=--]
 \item $\lambda<\xi$ (note then that since $\lambda<^\Tt\xi$,  letting $\eta+1=\succ^\Tt(\lambda,\xi)$, we have
$(\eta+1,\xi]^\Tt\cap\dropset^\Tt_{\deg}=\emptyset$),
\item  $\lambda<\zeta$ and letting
$\bar{\eta}+1=\succ^\Ss(\lambda,\zeta)$,
then $\sides^\Ss_{\bar{\eta}+1}=\{e\}$ and $(\bar{\eta}+1,\zeta]^\Ss\cap\dropset^\Ss_{\deg}=\emptyset$, and
\item $M^{e*\Ss}_{\bar{\eta}+1}=M^{e*{\Tt}}_{\eta+1}$
and $\deg^{e\Ss}_{\eta+1}=m=\deg^{e\Tt}_{\eta+1}$,
\end{enumerate}
\end{enumerate}
\item
$\sigma:M^{e\Ss}_{\zeta}\to M^{e\Tt}_\xi$ is an $m$-embedding with
$\{x\}\un(\rho'+1)\sub\rg(\sigma)$  and $\sigma\com
i^{e*\Ss}_{\bar{\eta}+1,\zeta}=i^{e*\Tt}_{\eta+1,\xi}$ (so $\crit(\sigma)>\rho'$).\qedhere
\end{enumerate}
\end{dfn}

\begin{dfn}[Core-strongly-finite]\label{dfn:core-strongly-finite}
 Let $\Ss$ be degree-maximal on $B$,
 of successor length $\zeta+1$, with $\zeta\notin\mathscr{B}^\Ss$. Let $\sides^\Ss_\zeta=\{e'\}$ and $m'=\deg^\Ss_\zeta$.
We say $\Ss$ is \dfnemph{core-strongly finite} iff for some $\lambda',\zeta\in\OR$ and $n<\om$, we have:
\begin{enumerate}[label=--]
 \item $\lh(\Tt)=\zeta+1$ and $\zeta=\lambda'+n$,
\item $\lambda'<^\Ss\zeta$,
\item either $b^\Ss$ drops in model or $e'=0$,
\item  $\core_{m'+1}(M^{e'\Ss}_{\zeta})\ins M^{e'\Ss}_{\lambda'}$,
\item $\core_{\ds}(E^\Ss_\gamma)$ is finitely
generated  for all $\gamma\leq^\Ss_{\Da}\alpha$,
for all $\alpha$ with
$\lambda'<^\Ss\alpha+1\leq^\Ss\zeta$.\footnote{\label{ftn:b^S_geq_lambda'_Dodd-nice}Note
that by the other conditions, $b^\Ss$ is $\geq\lambda'$-Dodd-nice,
so $\alpha$ is Dodd-nice (see Definition \ref{dfn:Dodd-nice}).}
 \qedhere
\end{enumerate}
\end{dfn}

\begin{dfn}
Let $\Ss$ be degree-maximal on $B$.
Let $\chi<\chi'<\lh(\Ss)$
and $d\in\sides^\Ss_{\chi'}$.
We say that $\chi$ is \dfnemph{$(d,\chi')$-transient \tu{(}with respect to $\Ss$\tu{)}}
iff for some $\eta$, we have:
\begin{enumerate}[label=--]\item $M^{d\Ss}_{\chi'}$ is active type 2,
\item  $\chi=\pred^\Ss(\eta+1)<^\Ss\eta+1\leq^\Ss\chi'$,
\item $(\eta+1,\chi']^\Ss$ does not drop (so $M^{d*\Ss}_{\eta+1}$ is active type 2 and $i^{d*\Ss}_{\eta+1,\chi'}:M^{d*\Ss}_{\eta+1}\to M^{d\Ss}_{\chi'}$) and
 \item $\crit(i^{d*\Ss}_{\eta+1,\chi'})=\lgcd(M^{d*\Ss}_{\eta+1})$.
 \end{enumerate}

 Note that if  $\chi$ is $(d,\chi')$-transient then, with notation as above,
$\sides^\Ss_{\chi'}=\sides^\Ss_{\eta+1}=\{d\}$
and $E^\Ss_\chi=F(M^{d*\Ss}_{\eta+1})$. In particular,
$d$ is uniquely determined by $\chi'$. We say \dfnemph{$\chi$ is $\chi'$-transient \tu{(}for $\Ss$\tu{)}} iff there is $d$ such that $\chi$ is $(d,\chi')$-transient.
\end{dfn}

We now state and prove the existence of core-strongly-finite trees on bicephali, capturing the kind of information which we need for the present argument:

\begin{clmthree}\label{clm:core-strongly-finite}
Let $\rho'=\rho_\lambda^\Uu$ and
$x\in\core_0(M^{e\Tt}_\xi)$.
Then
there is a degree-maximal tree $\Ss$ on $B$  such that:
\begin{enumerate}[label=\tu{(}\roman*\tu{)}]

\item\label{item:Tt_captures_Uu_x_rho} $\Ss$ captures $(\Tt,x,\rho'+1)$,

\item\label{item:b_geq_lambda_Dodd-nice}  $b^\Ss$ is
$\geq\lambda$-Dodd-nice, and
 \item\label{item:Tt_strongly_finite} $\Ss$ is core-strongly finite.
\end{enumerate}
\end{clmthree}
\begin{proof}
We will first observe that there
is $\Ss$ satisfying conditions \ref{item:Tt_captures_Uu_x_rho}
and \ref{item:b_geq_lambda_Dodd-nice}. By then minimizing as in the proof of Lemma \ref{lem:non-dropping-strongly_finite}, we will see that condition \ref{item:Tt_strongly_finite} is also satisfied.
 \begin{sclmthree}\label{sclm:tree_satisfying_all_but_core-strongly_finite}
There is $\Ss$  satisfying requirements
\ref{item:Tt_captures_Uu_x_rho} and \ref{item:b_geq_lambda_Dodd-nice}.\end{sclmthree}
\begin{proof}
 By Lemma \ref{lem:simple_embedding_exists},
 we can fix $\Ss,\zeta,n,\Phi$
 such that
 $n<\om$, $\lh(\Ss)=\zeta+1=\lambda+n+1$,
 $\Phi:\Ss\hookrightarrow_{\lambda\text{-}\simple}\Tt$ (see \ref{dfn:lambda-simple_embedding}), and
 $\Ss$ captures $(\Tt,x,\rho'+1)$.

So it suffices to see that $b^\Ss$ is  ${\geq\lambda}$-Dodd-nice, and since  $\Phi:\Ss\hookrightarrow_{\lambda\text{-}\simple}\Tt$,
 for this it is easily enough to see that $b^\Tt$ is ${\geq\lambda}$-Dodd-nice. But since
$C$ is Dodd-sound, the only extenders $E^\Tt_\alpha$ which might violate this are those which are an image of $F^M$ (that is, $\exitside^\Tt_\alpha=1$, $(0,\alpha]^\Tt\cap\dropset^\Tt=\emptyset$ and $E^\Tt_\alpha=F(M^{1\Tt}_\alpha)$). So suppose  $\alpha+1\in(\lambda,\infty]^\Tt$
and $\alpha$ has this form. Letting $\beta\leq^\Tt\alpha$ be least such that either $\beta=\alpha$ or $\crit(i^{1\Tt}_{\beta\alpha})>\crit(F(M^{1\Tt}_\beta))$,
note that $\beta=\pred^\Tt(\alpha+1)$
and that $(0,\alpha+1]^\Tt\cap\dropset^\Tt=\emptyset$. But since  $\lambda<^\Tt\alpha+1$
and recalling properties of $\Tt$, it follows that
 $b^\Tt\cap\dropset^\Tt=\emptyset$ and $\sides^\Tt_\infty=\{0\}$, so $e=0$. But $1\in\sides^\Tt_\alpha\sub\sides^\Tt_\beta$,
and since $\sides^\Tt_{\alpha+1}=\{0\}$
and $\pred^\Tt(\alpha+1)=\beta$, therefore
$\beta\in\mathscr{B}^\Tt$ and $\exitside^\Tt_\beta=0$
and $\rho(B^\Tt_\beta)\leq\crit(E^\Tt_\alpha)<\nu(E^\Tt_\beta)$.
Since $\exitside^\Tt_\alpha=1$, therefore $\beta<\alpha$.
Let $\vareps$ be least such that $\vareps+1\leq^\Tt\alpha$ and $\vareps\geq\beta$.
Then $\crit(E^\Tt_\vareps)>\crit(E^\Tt_\alpha)=\crit(F(M^{1\Tt}_\beta))$, by choice of $\beta$.  So $\pred^\Tt(\vareps+1)\geq\pred^\Tt(\alpha+1)=\beta$.
But by choice of $\vareps$,
$\pred^\Tt(\vareps+1)\leq\beta$.
So $\pred^\Tt(\vareps+1)=\beta$.
But since $\crit(E^\Tt_\vareps)>\crit(E^\Tt_\alpha)\geq\rho(B^\Tt_\beta)$ and $\exitside^\Tt_\beta=0$,
therefore $\sides^\Tt_{\vareps+1}=\{0\}$,
so since $1\in\sides^\Tt_{\alpha}$,
we can't have $\vareps+1\leq^\Tt\alpha$, a contradiction.
\end{proof}

Call a tree $\Ss$ as in the statement of Subclaim \ref{sclm:tree_satisfying_all_but_core-strongly_finite}
a \emph{candidate}. Given a candidate $\Ss$, adopting the notation above,  define the \emph{index} of $\Ss$ as in the proof of Lemma \ref{lem:non-dropping-strongly_finite},
except that we use
the set $A$  of ordinals
$\beta\leq^\Ss_{\Da}\alpha$ for some $\alpha+1\in(\lambda,\zeta]^\Ss$
(hence $\beta\geq\lambda$; note
$\crit(E^\Ss_\alpha)\geq\rho'$).

\begin{sclmthree}\label{sclm:Tt_strongly_finite}
 $\Ss$ is core-strongly finite.
\end{sclmthree}
\begin{proof}
Suppose not. We construct a candidate $\Ssbar$ with smaller index than $\Ss$, a
contradiction.  Let $\left<\gamma_i\right>_{i<\ell}$ be the index of $\Ss$ and $\kappa_i,\beta_i$ be as usual, so $A=\{\beta_i\bigm|i<\ell\}$
and $\kappa_i=\crit(E^\Ss_{\beta_i})$ with $\kappa_i>\kappa_{i+1}$ for $i+1<\ell$. Let
$a<\ell$ be least such that $\core_{\ds}(E^\Ss_{\beta_a})$
is not finitely
generated. Let $\kappa=\kappa_a$, $\beta=\beta_a$,
$Q=\core_\ds(\exit^\Ss_{\beta})$ and
$F=F^Q$. So
$\kappa^{+Q}<\sigma=\sigma_F=\tau_F$.
Let $\theta$ be as in the proof of Claim \ref{clm:Tt_strongly_finite_mim} of Lemma \ref{lem:non-dropping-strongly_finite},
so $\theta$ is least such that
$\sigma<\lh(E^\Ss_\theta)$;
also
$Q\ins M^\Ss_\theta$.
Now $\lambda\leq\theta$. For since $\sigma$ is a cardinal of $\exit^\Ss_\theta$,
we have $\sigma\leq\nu(E^\Ss_\theta)<\lh(E^\Ss_\theta)$.
And $\kappa_{\ell-1}=\crit(E^\Ss_{\beta_{\ell-1}})$
and $\pred^\Ss(\beta_{\ell-1})=\lambda$.
But $\kappa_{\ell-1}\leq\kappa<\sigma\leq\nu(E^\Ss_\theta)$,
so by normality, $\lambda=\pred^\Ss(\beta_{\ell-1}+1)\leq\theta$.

Let $\thetabar$ be least such that
$\lh(E^\Ss_\thetabar)>\kappa^{+Q}$; much as above,
$\lambda\leq\thetabar\leq\theta$.
Let $\thetabar+h=\theta$ (so $h<\om$) and
$\zetabar+h=\zeta$.
Let $\chi$ be least such that $\chi\geq\theta$ and $\chi+1\leq^\Tt\zeta$.
Let $\bar{\chi}+h=\chi$.
Our
plan
is to select some
$g\in\nu_F^{<\om}$
and $R\pins M^\Tt_\thetabar$ with $F^R\approx F\rest
g$, and such that we can define a degree-maximal
 tree $\Ssbar$ on $B$ such that:
\begin{enumerate}[label=--]
 \item   $\Ssbar\rest\thetabar+1=\Ss\rest\thetabar+1$ and
  $\lh(\Ssbar)=\zetabar+1$.
  \item
  $\lambda<^{\Ssbar}\zetabar$
  and letting  $\bar{\gamma}+1=\succ^{\bar{\Ss}}(\lambda,\zetabar)$, we have:
  \begin{enumerate}[label=--]\item
 $\sides^{\Xxbar}_{\gammabar+1}=\sides^{\Ssbar}_{\zetabar}=\{e\}$,\item
 $(\bar{\gamma}+1,\bar{\zeta}]^{\bar{\Ss}}\cap\dropset^{\bar{\Ss}}_{\deg}=\emptyset$ and
 $\deg^{\Ssbar}_{\gammabar+1}=\deg^{\Ssbar}_{\zetabar}=m$, and
 \item $M^{e*\bar{\Ss}}_{\bar{\gamma}+1}=\core_{m+1}(M^{e\Ssbar}_{\zetabar})=\core_{m+1}(M^{e\Ss}_{\zeta})=M^{e*\Ss}_{\gamma+1}$.
 \end{enumerate}
\item $b^{\Ssbar}$
is ${\geq\lambda}$-Dodd-nice.
 \item Replacing the role of $Q$ in
$\Ss$ with $R$
in $\Ssbar$, we perform a kind of reverse copy construction,
 much like in the proof of Lemma \ref{lem:non-dropping-strongly_finite},
so that
$\Ss\rest[\theta,\zeta]$ will be a ``copy'' of
$\Ssbar\rest[\thetabar,\zetabar]$.
Moreover, $\bar{\chi}$ is least such that $\bar{\chi}\geq\thetabar$ and $\bar{\chi}+1\leq^{\Ssbar}\zetabar$,
and for $\alpha\in[\bar{\chi}+1,\zetabar]^\Ssbar$, the copying process will yield a copy map
\[ \pi^e_{\alpha}:M^{e\bar{\Ss}}_{\alpha}\to M^{e\Ss}_{\alpha+h} \]
with $\crit(\pi^e_\alpha)>\kappa^{+R}=\kappa^{+Q}$.
\item
The final copy map
$\pi^e_{\bar{\zeta}}:M^{e\Ssbar}_{\zetabar}\to M^{e\Ss}_\zeta$ is an $m$-embedding
with
$\pi^e_{\zetabar}\com j^\Ssbar=j^\Ss$
where $j^{\Ssbar}:\core_{m+1}(M^{e\Ssbar}_{\zetabar})\to
M^{e\Ssbar}_{\zetabar}$ is the iteration map and likewise $j^\Ss$
(so $\dom(j^{\bar{\Ss}})=\dom(j^\Ss)$,
by  earlier points).
\item
$\tau^{-1}(x)\in\rg(\pi^e_{\bar{\zeta}})$,
where $\tau:M^{e\Ss}_{\zeta}\to M^{e\Tt}_\xi$
witnesses that $\Ss$ captures $(\Tt,x,\rho'+1)$;
it will follow that
$\tau\com\pi^e_{\bar{\zeta}}:M^{e\Ssbar}_{\zetabar}\to M^{e\Tt}_\xi$ witnesses that $\Ssbar$ captures $(\Tt,x,\rho'+1)$.
\end{enumerate}
Therefore $\Ssbar$ will be a candidate.

We will select $g,R,Q'$ and build $\Ssbar$  as in the proof of Claim \ref{clm:Tt_strongly_finite_mim} of Lemma \ref{lem:non-dropping-strongly_finite}. The following claim helps us see that the copying can be executed without problems, and is the analogue of Subclaim \ref{sclm:Tt_structure_mim} from there:

\begin{ssclmthree}\label{ssclm:Tt_structure} Let $\chi\in[\theta,\zeta)$.
Then $\crit(E^\Ss_\chi)\notin(\kappa,\sigma)$, so
$\pred^\Ss(\chi+1)\notin(\thetabar,\theta)$. In fact,  one of the
following options
holds:
\begin{enumerate}[label=\tu{(}\roman*\tu{)}]
\item\label{item:feeds_in_or_feeds_in_extra} there are
$\vareps,\chi'$ such that  $\lambda<^\Ss \eps+1\leq^\Ss\zeta$ and
$\chi'\leq^\Ss_{\Da}\eps$, and either:
\begin{enumerate}
\item\label{item:feeds_in} $\chi=\chi'$, or
\item\label{item:feeds_in_extra} $\chi<^\Ss\chi'$
and $\chi$ is $\chi'$-transient
and $E^\Ss_{\chi'}=F(M^{d\Ss}_{\chi'})$, where $\sides^\Ss_{\chi'}=\{d\}$,
\end{enumerate}
or
 \item\label{item:branch_extra}
 $\lambda\leq^\Ss\chi<^\Ss\xi$ and  $\chi$ is $\xi$-transient.
\end{enumerate}
Moreover, the three options \ref{item:feeds_in},
\ref{item:feeds_in_extra}
and \ref{item:branch_extra}
are mutually exclusive.
\end{ssclmthree}
\begin{proof}
This is just a slight variant of the proof of Subclaim \ref{sclm:Tt_structure_mim} within the proof of Lemma \ref{lem:non-dropping-strongly_finite}.
\end{proof}
Recall here that if \ref{item:feeds_in_extra}
holds and $\eta+1=\succ^\Ss(\chi,\chi')$, then $E^\Ss_{\chi}=F(M^{d*\Ss}_{\eta+1})$,
so $\crit(E^\Ss_\chi)=\crit(E^\Ss_{\chi'})$. Likewise, if \ref{item:branch_extra}
holds and $\eta+1=\succ^\Ss(\chi,\xi)$, then $E^\Ss_\chi=F(M^{e*\Ss}_{\eta+1})$, so $\crit(E^\Ss_\chi)=\crit(F(M^{e\Ss}_\xi))$.

 By Subsubclaim \ref{ssclm:Tt_structure},
$\Ss\rest[\theta,\zeta]$
can be viewed as a tree $\Ss'$ on the ``phalanx'' $\Phi(\Ss\rest(\bar{\theta}+1))\conc\left<(Q,0)\right>$, where the nodes of $\Phi(\Ss\rest(\bar{\theta}+1))$ index either a single model or a bicephalus, corresponding to $\Ss\rest(\bar{\theta}+1)$, but $Q$ is just a single model, irrespective of what is indexed at $\theta$ in $\Ss$,
and in $\Ss'$, if $\crit(E^{\Ss'}_\alpha)\leq\kappa$ then $\pred^{\Ss'}(\alpha+1)\leq\bar{\theta}$
(so $E^{\Ss'}_\alpha$ applies to a model or bicephalus of $\Ss\rest(\bar{\theta}+1)$),
whereas if $\crit(E^{\Ss'}_\alpha)>\kappa$ then $\pred^{\Ss'}(\alpha+1)\geq\bar{\theta}+1$,
and if $\pred^{\Ss'}(\alpha+1)=\bar{\theta}+1$ then
we take $(M^{*\Ss'}_{\alpha+1},\deg^{\Ss'}_{\alpha+1})\ins (Q,0)$
as large as possible.
In fact, by the subclaim,
if $\crit(E^{\Ss'}_\alpha)>\kappa$ then $\crit(E^{\Ss'}_\alpha)\geq\sigma$, and recall that $\exit^\Ss_\theta\ins Q$ and $\rho_1^Q\leq\sigma$,
so having $(M^{*\Ss'}_{\alpha+1},\deg^{\Ss'}_{\alpha+1})\ins (Q,0)$
in case $\pred^{\Ss'}(\alpha+1)=\bar{\theta}$ corresponds with what happens in $\Ss$. In particular, if $\alpha\geq\theta$ and $\crit(E^\Ss_\alpha)>\kappa$
and $\pred^\Ss(\alpha+1)=\theta$
then $(M^{e_\theta*\Ss}_{\alpha+1},\deg^{e_\theta\Ss}_{\alpha+1})\ins (Q,0)$ where $e_\theta=\exitside^\Ss_\theta$.

With these observations,
the construction of $Q',g,R,\Ssbar$
is just a slight variant of that in the proof of Claim \ref{clm:Tt_strongly_finite_mim} of Lemma \ref{lem:non-dropping-strongly_finite}.

The index of $\Ssbar$ is less than that of $\Ss$,
unreasonable,
establishing the subclaim.
\end{proof}
And hence that of the claim.
\end{proof}

\begin{clmthree}\label{clm:core-strongly-finite_transient_exts}
 Let $\Ss$ be core-strongly finite,
 $\lh(\Ss)=\zeta+1$,
 $\sides^\Ss_\zeta=\{e'\}$ and $m'=\deg^\Ss_\zeta$. Let $\lambda'<^\Ss\zeta$
 be such that $\core_{m'+1}(M^{e'\Ss}_\zeta)\ins M^{e'\Ss}_{\lambda'}$. Then for every $\chi\in[\lambda',\zeta)$,
one of the following options holds:

\begin{enumerate}[label=\tu{(}\roman*\tu{)}]
\item\label{item:feeds_in_or_feeds_in_extra_Tt_structure} there are
$\vareps,\chi'$ such that  $\lambda<^\Ss \eps+1\leq^\Ss\zeta$ and
$\chi'\leq^\Ss_{\Da}\eps$, and either:
\begin{enumerate}
\item\label{item:feeds_in_Tt_structure} $\chi=\chi'$, or
\item\label{item:feeds_in_extra_Tt_structure} $\chi<^\Ss\chi'$
and $\chi$ is $\chi'$-transient
and $E^\Ss_{\chi'}=F(M^{d\Ss}_{\chi'})$, where $\sides^\Ss_{\chi'}=\{d\}$,
\end{enumerate}
or
 \item\label{item:branch_extra_Tt_structure}
 $\lambda'\leq^\Ss\chi<^\Ss\zeta$ and  $\chi$ is $\zeta$-transient.
\end{enumerate}
Moreover, the three options \ref{item:feeds_in_Tt_structure},
\ref{item:feeds_in_extra_Tt_structure}
and \ref{item:branch_extra_Tt_structure}
are mutually exclusive.
\end{clmthree}
\begin{proof}See the proof of Subsubclaim \ref{ssclm:Tt_structure} of Subclaim \ref{sclm:Tt_strongly_finite} of Claim \ref{clm:core-strongly-finite}.
\end{proof}

\begin{clmthree}\label{clm:Uu_core-sf}
$\treeTt$ is core-strongly-finite
and
$E^\treeUu_\gamma=\emptyset$ for all $\gamma\geq\lambda$,
so
$M^\treeUu_\alpha=M^\treeUu_\lambda$
and $k=\id$.
\end{clmthree}
\begin{proof}The proof is like that of
 the analysis of measures in mice, Lemma \ref{lem:measures_in_mice}. Fix $x\in M$ with
$M=\Hull_{m+1}^M(\rho_{m+1}^M\cup\{x\})$.
By Claim \ref{clm:core-strongly-finite},
we can fix  a degree-maximal tree $\bar{\treeTt}$ on $B$
such that $\Ttbar$ captures $(\treeTt,i^{1\treeUu}_{0\alpha}(x),\rho^\treeUu_\lambda+1)$,  $b^{\Ttbar}$
 is $\geq\lambda$-Dodd-nice, and
  $\bar{\treeTt}$ is core-strongly-finite. Then
$\bar{\treeTt}\rest(\lambda+1)=\treeTt\rest(\lambda+1)$,
 $\sides^{\bar{\treeTt}}_\infty=\sides^\treeTt_\infty=\{e\}$,
and by the proof of Subclaim \ref{sclm:tree_satisfying_all_but_core-strongly_finite}
of Claim \ref{clm:core-strongly-finite},
$b^\treeTt$ is also $\geq\lambda$-Dodd-nice.

Let
$\bar{j}=C^\treeUu_\lambda\to\bar{N}=M^{e\bar{\treeTt}}
_\infty$
and $j:C^{\treeUu}_\lambda\to N=M^{e\Tt}_\infty$ be the $(m+1)$-core maps, which are also the tail iteration maps given by $\Ttbar$ and $\Tt$ respectively.
Let $\somevarpi:\bar{N}\to N$  witness the capturing, so $\somevarpi$ is an $m$-embedding with
$\somevarpi\com \jbar=j$,
 $i^{1\treeUu}_{0\alpha}(x)\in\rg(\somevarpi)$
and $\crit(\somevarpi)>\rho^\treeUu_\lambda$.
Define $\bar{k}=\somevarpi^{-1}\com k$.
Then the usual diagram commutes
(Figure \ref{fgr:proj_fin_comm_diag}).

\begin{figure}
\centering
\begin{tikzpicture}
 [mymatrix/.style={
    matrix of math nodes,
    row sep=0.35cm,
    column sep=0.4cm}]
   \matrix(m)[mymatrix]{
  M^\treeUu_\lambda&          {} &       {}& {} & N& \\
   {} & {} \\
   {} & {} & \bar{N}\\
   \\
 C^\treeUu_\lambda\\};
 \path[->,font=\scriptsize]
(m-5-1) edge[bend right] node[below] {$\ j$} (m-1-5)
(m-5-1) edge node[left] {$\pi^\treeUu_\lambda$} (m-1-1)
(m-1-1)edge node[above] {$\ \ \bar{k}$} (m-3-3)
(m-5-1)edge node[above] {$\bar{j}$} (m-3-3)
(m-3-3)edge node[above] {$\somevarpi$} (m-1-5)
(m-1-1) edge node[above] {$k=i^{1\treeUu}_{\lambda+1,\alpha}$} (m-1-5);
\end{tikzpicture}
\caption{The diagram commutes.}
\label{fgr:proj_fin_comm_diag}
\end{figure}

It is now routine to execute the argument from
the proof of Theorem \ref{thm:measures_in_mice}
(which also appeared in the proof of super-Dodd-solidity).
The only formal difference is that here the trees are only
being analyzed above $\lambda$, and also that the core of interest,
$C^\treeUu_\lambda$, might be a proper segment of $M^{e\treeTt}_\lambda=M^{e\bar{\Tt}}_\lambda$,
in which case the first extenders used along $(\lambda,\infty]^{\bar{\treeTt}}$
and $(\lambda,\infty]^\treeTt$ cause a drop in model to $C^\treeUu_\lambda$;
similarly, if $(0,\lambda]^\Tt\cap\dropset^\Tt\neq\emptyset$
and $\deg^\Tt_\lambda>m$,
then they could cause a drop in degree to $m$, without a drop in model.
But these details have no significant impact. We leave the execution to the
reader.
\end{proof}

The remainder of the proof involves a slight split into two cases, as follows:
\begin{enumerate}[label=(\alph*)]\item\label{item:need_phalanxes}
 $C,M$ are active type 2, $m=0$ and $\kappa=\crit(F^{C})=\crit(F^M)<\rho$; and
 \item\label{item:no_phalanxes} otherwise.
 \end{enumerate}

 Let $B'=(C',M',\rho')=B^\treeUu_\lambda$.
 Suppose case \ref{item:need_phalanxes} holds. Then we define $\bar{C}=C|\kappa^{+C}$ and the phalanx \[ \ph=((\bar{C},{\kappa}),C,\rho),\]
 and likewise,
 letting $\kappa'=\crit(F^{C'})=\crit(F^{M'})<\rho'$,
 let $\bar{C}'=C'|(\kappa')^{+C'}$ and
 $\ph'=((\bar{C}',{\kappa'}),C',\rho')$.
 Say a $(0,0)$-maximal tree $\Vv$ on $\ph$ is \emph{relevant} if
 for all $\alpha+1<\lh(\Vv)$,
 either $\crit(E^\Vv_\alpha)\geq\rho$ or $\crit(E^\Vv_\alpha)=\kappa$ (and as usual,
 $\rho\leq\lh(E^\Vv_0)$, so in fact $\rho<\lh(E^\Vv_0)$).
 Likewise for $\Vv',\ph',\kappa',\rho'$.
 Note that $\ph$ is $(0,\om_1+1)$-iterable with respect to relevant trees, as is $\ph'$;
 for the case of $\ph'$,
 this is because relevant trees on $\ph'$
 can be lifted to normal continuations of $\Uu\rest(\lambda+1)$.
In case \ref{item:no_phalanxes}, we don't need to define $\ph,\ph'$.

Let $(\Vv',\Ww')$
be the initial segment of the $m$-maximal comparison of
\begin{enumerate}[label=(\roman*)]\item $(\ph',M')$, in case \ref{item:need_phalanxes} above holds,
\item $(C',M')$ otherwise,
\end{enumerate}
through length $\theta'$,
where $\theta'\in[1,\om]$ is largest such that $E^{\Vv'}_\gamma\neq\emptyset=E^{\Ww'}_\gamma$ for all $\gamma+1<\theta'$
(so iterability of $M'$ is irrelevant, by choice of $\theta'$).

\begin{clmthree}\label{clm:(V',W')_structure}
$(\Vv',\Ww')$ is finite,
$\Ww'$ (on $M'$) is trivial,
$\Vv'$ (on $\ph'$ or $C'$)
is finite and
 uses the same extenders as does $\Tt\rest[\lambda,\xi]$, and  $M^{\Vv'}_{\theta'-1}=M^{e\Tt}_\infty=M'$.
\end{clmthree}
\begin{proof}
The tree $\Vv'$ is just a simple reorganization of $\Tt\rest[\lambda,\xi]$, and the claim follows easily from Claims \ref{clm:core-strongly-finite_transient_exts} and \ref{clm:Uu_core-sf}. That is, recall that $E^\Tt_{\lambda}=\emptyset$ (as in $\Uu$, we moved into $M^{1\Uu}_\lambda$ at stage $\lambda$),
$\lh(\Tt)=\xi+1=\lambda+n+1$ and
$n>1$ (as $E^{\Tt}_\lambda=\emptyset$
but $E^\Tt_{\lambda+1}\neq\emptyset$),
and $E^\Tt_{\lambda+i+1}\neq\emptyset$ for all $i+1<n$. So we want to see that  $E^{\Vv'}_i=E^{\Tt}_{\lambda+1+i}$
for all $i+1<n$,
and $M^{\Vv'}_{n-1}=M^{e\Tt}_{\lambda+n}=M'$.
Clearly $E^{\Vv'}_0=E^\Tt_{\lambda+1}$,
since $M^{\Vv'}_0=C^\Uu_\lambda\ins M^{d\Tt}_{\lambda+1}$
 where $\exitside^\Tt_{\lambda+1}=d$.
So the point is that  given $i$ with $1< i+1\leq n$, if it is not the case that
\[ \sides^{\Tt}_{\lambda+i+1}=\{e\}\text{ and }M^{\Vv'}_{i}=M^{e\Tt}_{\lambda+i+1}\]
then $\crit(E^\Tt_{\lambda+i})<\rho'$, so by
 Claim \ref{clm:core-strongly-finite_transient_exts},  $\lambda+i$ is $\xi$-transient for $\Tt$. Since $M'=M^{e\Tt}_\xi$, it follows that case \ref{item:need_phalanxes} holds,
 $\lambda+1\leq^\Tt\lambda+i<^\Tt\xi$
and
$E=E^{\Vv'}_{i-1}=E^\Tt_{\lambda+i}$ is an image of $F^{C'}$
with $\kappa'=\crit(E)=\crit(F^{C'})<\rho'$. If $i>1$ then let $\mu$ be \[ \mu=\crit(i^{e\Tt}_{\lambda+i,\xi})=\lgcd(M^{e\Tt}_{\lambda+i}),\]
and if $i=1$ then let $\mu$ be \[ \mu=\crit(j)=\lgcd(C').
\]
Since  \ref{item:need_phalanxes} holds
and $\kappa'=\crit(E)$, we have $M^{\Vv'}_i=\Ult(\bar{C}',E)$.
So note for each $d\in\sides^\Tt_{\lambda+i+1}$, we have \begin{equation}\label{eqn:model_through_mu^++}
 M^{d\Tt}_{\lambda+i+1}||\mu^{++M^{d\Tt}_{\lambda+i+1}}= M^{\Vv'}_i||\mu^{++M^{\Vv'}_i}. \end{equation}

But letting $\gamma+1=\succ^\Tt(\lambda+i,\xi)$,
$E^\Tt_\gamma$
(which is finitely generated and Dodd-nice, with critical point $\mu$)
is produced by a finite iteration of
the model in line (\ref{eqn:model_through_mu^++}),
and either
 $E^{\Vv'}_i=E^{\Tt}_{\lambda+i+1}$ is
 the first extender in that iteration, or $E^\Tt_\gamma$ is Dodd-sound and
$E^{\Vv'}_i=E^{\Tt}_{\lambda+i+1}=E^\Tt_\gamma$. The claim easily follows from these considerations.
\end{proof}

Let $(\Vv,\Ww)$
be the initial segment of the $m$-maximal comparison of
\begin{enumerate}[label=(\roman*)]\item $(\ph,M)$, in case \ref{item:need_phalanxes} above holds,
\item $(C,M)$ otherwise,
\end{enumerate}
through length $\theta$,
where $\theta\in[1,\om]$ is largest such that $E^{\Vv}_\gamma\neq\emptyset=E^{\Ww}_\gamma$ for all $\gamma+1<\theta$.

Let $G$ be the $(\rho,\rho')$-extender over $C$ derived from
$i_{CC'}=i^{0\treeUu}_{0\lambda}$, or equivalently,
from $i_{MM'}=i^{1\treeUu}_{0\lambda}$. Let $\bar{G}$ be the restriction of $G$ to a $(\kappa,\kappa')$-extender.
So $M^{\Vv'}_{-1}=\bar{C}'=\Ult(\bar{C},\bar{G})$, $M^{\Vv'}_0=C'=\Ult_m(C,G)$ and
$M^{\Ww'}_0=M'=\Ult_m(M,G)$,
and the iteration maps $i^\Uu_{CC'}:C\to C'$ and $i^\Uu_{MM'}:M\to M'$
are just the associated ultrapower maps.

The following claim describes the relationship between $(\Vv,\Ww)$ and $(\Vv',\Ww')$:
\begin{clmthree}\label{clm:comparison_preservation}
 For all $\gamma<\min(\theta,\theta')$, we have:
 \begin{enumerate}[label=\arabic*.,ref=\arabic*]
  \item \label{item:degrees_match}
$\deg^{\Vv'}_\gamma=\deg^\Vv_\gamma$; let $d=\deg^\Vv_\gamma$,
\item\label{item:models_match} $M^{\Vv'}_{\gamma}=\Ult_d(M^\Vv_\gamma,G)$,
 \item\label{item:exit_extenders_match} if $\gamma+1<\theta'$ then
$\exit^{\Vv'}_\gamma=\Ult_0(\exit^\Vv_\gamma,G)\neq\emptyset$ is active,
 \item\label{item:no_exit_in_Ww-side}
if $\gamma+1<\theta'$ then $E^{\Ww'}_\gamma=E^{\Ww}_\gamma=\emptyset$,
\item\label{item:if_gamma+1=theta'_then_M'=M^Vv'_gamma} if $\gamma+1=\theta'$ then
$M=M^\Vv_\gamma$ and $M'=M^{\Vv'}_\gamma$.
 \end{enumerate}
\end{clmthree}

\begin{proof}
For this, we use calculations with commutativity of ultrapowers, and with  condensation,
like those used in the analyses of comparisons
in \cite[\S4]{premouse_inheriting}.
However, we need a more general
form, like that used in full normalization.
A key component of this is Lemma \ref{lem:dropdown_lifting},
and the main idea for that was due to Steel.
 The combination of methods
also relates to the  analysis of the interaction
between comparison and full normalization, which
is due to the author; see \cite[\S8]{fullnorm}.
\footnote{The proof
being
presented for Lemma \ref{lem:finite_gen_hull}
differs in approach with what I had in mind
during the
presentation of (most of) the results of this paper at the 2015 M\"unster
conference
in inner model theory; that approach did not depend on Lemma \ref{lem:dropdown_lifting} or any  full normalization calculations. Eventually I noticed  that the present argument is
simpler and more direct than the earlier approach, so I adopted it.}

First consider $\gamma=0$.
In this case, part \ref{item:degrees_match} is by definition (and $d=m$),
and part \ref{item:models_match}
just says that $C'=\Ult_d(C,G)$, which is also by definition.
We also know that $M'=\Ult_d(M,G)$
and that, in case \ref{item:need_phalanxes}, $\bar{C}'=\Ult_0(\bar{C},\bar{G})$.

Now fix $\gamma<\min(\theta,\theta')$ and suppose that all parts hold at all $\gamma'<\gamma$,
 and parts \ref{item:degrees_match}
and \ref{item:models_match}
hold at $\gamma$.

Since $E^\Ww_{\gamma'}=\emptyset$
for all $\gamma'<\gamma$,
we have $M^{\Ww}_\gamma=M$.
Let $\eta$ be least such that
either $M^\Vv_\gamma|\eta\neq
M|\eta$
or $\eta=\min(\OR^{M^\Vv_\gamma},\OR^M)$.
Note here that
if $M^\Vv_\gamma|\eta=M|\eta$
then in fact $M^\Vv_\gamma\ins M$, since $M$ is not sound.

Note that Lemma
\ref{lem:dropdown_lifting}
applies to $(M,M',G,m)$ and also to $(M^\Vv_\gamma,M^{\Vv'}_\gamma,G,d)$,
where $d=\deg^\Vv_\gamma=\deg^{\Vv'}_\gamma$.
Therefore,
\[ U_0=\Ult_0(M^\Vv_\gamma|\eta,G)\ins M^{\Vv'}_\gamma, \]
\[ U_1=\Ult_0(M|\eta,G)\ins M'.\]
Since $M^\Vv_\gamma||\eta=M||\eta$, we get
$U_0^\passive=U_1^\passive$,
and the ultrapower maps agree.

Suppose
 $M^\Vv_\gamma|\eta\neq M|\eta$.
Since the ultrapower maps agree, the distinction
between the active extenders of these models
lifts to give $F^{U_0}\neq F^{U_1}$.
So $U_0,U_1$ constitute the least disagreement between
$M^{\Vv'}_\gamma$ and $M^{\Ww'}_\gamma$.
Therefore $\gamma+1<\theta'$.
But $\Ww'$ is trivial,
so either
\begin{enumerate}[label=--]\item $F^{U_0}\neq\emptyset=F^{U_1}$,
 so $F^{M^\Vv_\gamma|\eta}\neq\emptyset=F^{M|\eta}$, or
\item $F^{U_0}\neq\emptyset\neq F^{U_1}$ and $\nu(F^{U_0})<\nu(F^{U_1})$,
 so
$F^{M^\Vv_\gamma|\eta}\neq\emptyset\neq F^{M|\eta}$
and $\nu(F^{M^\Vv_\gamma|\eta})<\nu(F^{M|\eta})$.
\end{enumerate}
It follows that $\gamma+1<\theta$,
with $E^\Vv_\gamma=F(M^\Vv_\gamma|\eta)$
and $E^\Ww_\gamma=\emptyset$.

Now suppose instead that $M^\Vv_\gamma=M|\eta$, so
$M^\Vv_\gamma\ins M$.
Let $d=\deg^\Vv_\gamma$.
If $M^\Vv_\gamma\pins M^\Ww_\gamma$ then
again by \ref{lem:dropdown_lifting},
we have $M^{\Vv'}_\gamma=\Ult_d(M^\Vv_\gamma,G)\pins M^{\Ww'}_\gamma=M'$,
so $\gamma+1=\theta'$
and $M^{\Vv'}_{\theta-1}\pins M'$, contradicting Claim \ref{clm:(V',W')_structure}. So $M^\Vv_\gamma=M$.
It follows that $d=m$ and $\root^\Vv(\gamma)=0$.
For if $\root^\Vv(\gamma)=-1$
then $M^\Vv_\gamma\sats\ZFC^-$, whereas $M\not\sats\ZFC^-$.
So $\root^\Vv(\gamma)=0$,
but then $M^\Vv_\gamma$ is $d$-sound but not $(d+1)$-sound, so $d=m$.
So again by \ref{lem:dropdown_lifting},
$M^{\Vv'}_\gamma=\Ult_m(M^\Vv_\gamma,G)=\Ult_m(M,G)=M'$,
so $\gamma+1=\theta'=\theta$.

This establishes parts \ref{item:exit_extenders_match}--\ref{item:if_gamma+1=theta'_then_M'=M^Vv'_gamma}
for $\gamma$. We next consider
parts \ref{item:degrees_match},\ref{item:models_match}
for $\gamma+1$,
assuming that $\gamma+1<\min(\theta,\theta')$, and so $M^\Vv_\gamma|\eta\neq M|\eta$.

Suppose first that $\kappa=\crit(E^\Vv_\gamma)\geq\rho$.
Then
$\kappa'=\crit(E^{\Vv'}_\gamma)\geq\rho'$.
Let $\delta=\pred^\Vv(\gamma+1)$.
Using part \ref{item:exit_extenders_match}, it easily follows
that $\delta=\pred^{\Vv'}(\gamma+1)$.
Let $M^*=M^{*\Vv}_{\gamma+1}$
and $d=\deg^\Vv(\gamma+1)$.
Then $(M^*,d)$ is an element of the extended dropdown
sequence of $((M^\Vv_\delta,\deg^\Vv_\delta),\OR(\exit^\Vv_\delta))$,
and so by dropdown lifting \ref{lem:dropdown_lifting},
$(\Ult_d(M^*,G),d)$ is likewise
an element of the extended dropdown sequence
of $((M^{\Vv'}_\delta,\deg^\Vv_\delta),\OR(\exit^{\Vv'}_\delta))$.
Considering the positions of the various projecta
and how they are shifted up by the ultrapower maps,
we moreover get
\begin{equation}\label{eqn:ult_*-model}M^{*\Vv'}_{\gamma+1}=\Ult_d(M^*,G)
\end{equation}
and $d=\deg^{\Vv'}_{\gamma+1}$.
It remains to see that
\[ \Ult_d(M^\Vv_{\gamma+1},G)=M^{\Vv'}_{\gamma+1}.\]
This is a calculation very much like
\cite[Lemma 3.9]{premouse_inheriting}.
Note that $\Ult_d(M^\Vv_{\gamma+1},G)$
results from the two-step abstract $d$-maximal iteration
of $M^{*\Vv}_{\gamma+1}$ given by first
using $E^\Vv_\gamma$, then using $G$.
On the other hand, $M^{\Vv'}_{\gamma+1}$
results from first using $G$ (producing $M^{*\Vv'}_{\gamma+1}$),
and then $E^{\Vv'}_{\gamma}$. We want to see
that these result in the same model; that is,
\[
\Ult_d(\Ult_d(M^{*\Vv}_{\gamma+1},E^\Vv_\gamma),
G)=\Ult_d(\Ult_d(M^{*\Vv}_{\gamma+1},G),E^{\Vv'}_\gamma).\]
So
letting $j,k$ be the two resulting ultrapower maps
(with domain $M^{*\Vv}_{\gamma+1}$),
it suffices
to see that the models are the $\rSigma_d$-hull
of $\rg(j)$ (or $\rg(k)$, respectively) and ordinals below
$\lh(E^{\Vv'}_\gamma)$,
and that the $(\kappa,\lh(E^{\Vv'}_\gamma))$-extenders
derived from $j,k$
are identical (as then both models
are just the degree $d$ ultrapower
of $M^{*\Vv}_{\gamma+1}$ by that common extender, and hence
they are equal).

But note that $(\exit^\Vv_\gamma)^\passive$ is a cardinal
proper segment
of $M^\Vv_{\gamma+1}$ of ordinal
height $\lh(E^\Vv_\gamma)\leq\rho_d(M^\Vv_{\gamma+1})$,
so
\[ i^{M^\Vv_{\gamma+1},d}_G\rest\exit^\Vv_\gamma=i^{\exit^\Vv_\gamma,0}_G, \]
and these maps map $\exit^\Vv_\gamma$ cofinally into $\exit^{\Vv'}_\gamma$.
But then since $i^{\exit^\Vv_\gamma,0}_G$
maps fragments of $E^\Vv_\gamma$ to fragments
of $E^{\Vv'}_\gamma$, it follows
that the two derived extenders are identical, as desired.

The second case is that $\crit(E^\Vv_\gamma)=\kappa$,
so $\crit(E^{\Vv'}_\gamma)=\kappa'$,
and $M^\Vv_{\gamma+1}=\Ult_0(\bar{C},E^\Vv_\gamma)$,
and $M^{\Vv'}_{\gamma+1}=\Ult_0(\bar{C}',E^{\Vv'}_\gamma)$
This case works almost the same, except that
$\bar{C}'=\Ult_0(\bar{C},\bar{G})$ instead of $\Ult_0(\bar{C},G)$. However, we still have $\exit^{\Vv'}_\gamma=\Ult_0(\exit^\Vv_\gamma,G)$
and $M^{\Vv'}_{\gamma+1}=\Ult_0(M^\Vv_{\gamma+1},G)$, both using the full $G$.
We leave the details to the reader.
This completes the proof the claim.
\footnote{Note that it appears important here that
we reach a contradiction
without having to continue the copying process
past stage $\om$,
since we are not assuming any
condensation
properties of the iteration strategies being used.}
\end{proof}

Now by Claim \ref{clm:(V',W')_structure},
$\theta'<\om$,
and so using Claim \ref{clm:comparison_preservation}, $\theta=\theta'$ and
$(\Vv,\Ww)$
is a successful comparison,
of length $\theta$,
with $\Ww$ trivial and $M^{\Vv}_\infty=M$. It can't be that \ref{item:need_phalanxes} holds and  $\root^\Vv(\theta-1)=-1$,
because in that case,
$M^\Vv_{\theta-1}\sats\ZFC^-$,
although $M\not\sats\ZFC^-$.
It follows that $\Tt$ is equivalent to $\Vv$, and $\Uu$ is trivial,
so $\lambda=0$, $B=B'$
and $M^\Tt_\xi=M$.
But $\Tt$ is core-strongly-finite by Claim \ref{clm:Uu_core-sf}.
Note then that
at stage $0$,
in $\Tt$ we moved into $C$
(and in $\Uu$ we moved into $M$)
and $b^\Tt\cap\dropset_{\deg}^\Tt=\emptyset$.
If \ref{item:no_phalanxes} holds, it now follows that $\Vv$ is a tree on $C$ of the right form (that is,
it witnesses the lemma with respect to $C,M$;
it is almost-above $\rho$
by Claim \ref{clm:core-strongly-finite_transient_exts}).  Otherwise, letting $\Vv'$ be the $m$-maximal tree on $C$ which is equivalent to $\Vv$ (using the same extenders), then $\Vv'$ has the right form. This completes the proof.
\end{proof}

\begin{rem}\label{rem:more_standard_attempt}
As mentioned in Remark \ref{rem:simpler_corollary},
it seems one might try to prove just Corollary \ref{cor:def_over_core_from_extra_hypos}
 by comparing bicephali,
but using more standard calculations
instead of proving Claim \ref{clm:comparison_preservation} above (which also requires Lemma \ref{lem:dropdown_lifting}).
But there seems to be a difficulty,
which we now describe.

Start by forming the same bicephalus comparison as before.
Say it reaches a bicephalus
$B'=(C',M',\rho')=B^\Tt_\lambda$ in $\Tt$, and that $C'\ins
M^{e\Uu}_\lambda$,
and   $(\Tt,\Uu)$ has length $\alpha+1$,
where $\lambda<^\Tt\alpha$, and $(0,\alpha]^\Tt$ does not
drop in model or degree, but $\sides^\Tt_\alpha=\{1\}$,
so $N'=M^\Tt_\alpha$ is defined, but $M'\neq N'$.
Suppose that $\lambda\in b^\Uu$ and the core map $k:C'\to N'$
 is in fact an iteration map along
the last part of the final branch $b^\Uu$.
Then standard arguments already show that
\[ t'=\Th_{\rSigma_{m+1}}^{M'}(\rho_{m+1}^{M'}\cup\{i^\Tt(x)\}) \]
is $\bfrSigma_{m+1}^{C'}$, where $x\in M$.
Fix $p'\in C'$ such that $t'$ is $\rSigma_{m+1}^{C'}(\{p'\})$.

One would like to be able to pull this back to $C,M$.
But we can construct a finite tree $\bar{\Tt}$ on $\Tt$,
whose last model is a bicephalus $\bar{B}=(\bar{C},\bar{M},\bar{\rho})$,
which captures the parameter $p'$. Let $\sigma^0:\bar{C}\to C'$
and $\sigma^1:\bar{M}\to M'$
be the capturing maps, which
are $m$-embeddings, and $\sigma^0\com i^{0\bar{\Tt}}=i^{0\Tt}$
and $\sigma^1\com i^{1\bar{\Tt}}=i^{1\Tt}$ and
 $\sigma(\pbar)=p'$, and $\sigma^0\rest\bar{\rho}=\sigma^1\rest\bar{\rho}$.
 By the elementarity etc, it follows that
\[ \bar{t}=\Th_{\rSigma_{m+1}}^{\bar{M}}(\bar{\rho}\cup\{i^{1\bar{\Tt}}(x)\}) \]
is $\rSigma_{m+1}^{\bar{C}}(\{\pbar\})$.
And $\bar{\Tt}$ is finite, and
the iteration maps $i^{0\bar{\Tt}}_{0\infty}$
and $i^{1\bar{\Tt}}_{0\infty}$ agree over $\rho$.
So
the desired theory $t$ is just
$(i^{0\bar{\Tt}}_{0\infty})^{-1}``\bar{t}$.
Since $\bar{\Tt}$ is finite, one might expect
that $t$ should therefore be definable
from parameters over $C$. But the problem is,
that $C,M$ might be active and there might
be a use of $F^M$ (or some iterate of $F^M$)
along $b^{\bar{\Tt}}$, hence with $\kappa=\crit(F^M)<\rho$.
Note then that $\kappa^{+M}\leq\rho=\rho_{m+1}^M$,
and so if $m>0$ or $M$ is type 3 or $\kappa^{+M}<\rho$,
then the component measures of $F^M$ would
be in $M$, and in $C$, and so we could replace
the use of $F^M$ with some measure in $C$, which would suffice.
So the real problem case is that $C,M$ are type
2, $m=0$, and $\kappa^{+M}=(\kappa^+)^C=\rho_1^C=\rho_1^M$.
In this case, the author does not see how to avoid the proof given above.
Moreover, in this case, it does seem that using $F^M$ along $b^{\bar{\Tt}}$
might ostensibly provide
the parameter $\pbar$.
\end{rem}

\section{Interlude: Grounds of mice via strategically $\sigma$-closed
forcings}\label{sec:sigma_grounds}

In this brief section we use Theorem \ref{thm:finite_gen_hull}
(finitely generated hulls)
to prove Theorem \ref{thm:sigma_grounds}.
However, the rest of the paper does not depend on this section.
 Recall that if $N\sats\ZFC$ then a \emph{ground} of $N$
 is a $W\sub N$ such that there is $\PP\in W$
 and $G\in N$ such that $G$ is $(W,\PP)$-generic
 and $N=W[G]$.

\begin{tm}\label{thm:sigma_grounds}
 Let $M$ be a $(0,\om_1+1)$-iterable mouse whose universe $\univ{M}$ models $\ZFC$.
Let $W$ be a ground of $\univ{M}$ via  a forcing $\PP\in W$
 such that $W\sats$ ``$\PP$ is $\sigma$-strategically-closed''.
 Suppose
 $M|\aleph_1^M\in W$.\footnote{\label{ftn:sigma-closed_forcing_tame}If $M$ is also tame,
 then the hypothesis that $M|\aleph_1^M\in W$ is superfluous,
 as shown in \cite[Theorem 4.7]{odle_v2}.
 Of course, as $\PP$ is
$\sigma$-strategically-closed in
$W$,
 automatically $\RR^M\sub W$. But note that $M|\aleph_1^M$
 gives the extender sequence of length $\aleph_1^M$,
 not just $\HC^M$.}
Then the forcing is trivial; that is, $W=\univ{M}$.
\end{tm}

For the proof we use the following two facts. The first is well-known; see
\cite{stgeol}:

\begin{fact}[Ground definability, Laver/Woodin]\label{fact:ground_definability}
There is a formula $\varphi_{\mathrm{gd}}$ in the language of set theory,
such that the following holds: Let $N$ be a model of $\ZFC$ and $W\sub N$ be
 a ground of $N$. Then there is $p\in W$ such that $W=\{x\in
N\bigm|N\sats\varphi_{\mathrm{gd}}(p,x)\}$.
\end{fact}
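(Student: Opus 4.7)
The plan is to deduce the theorem from the Hamkins approximation and covering property analysis of ground models, following Laver and Woodin. Given a ground $W\sub M$ via $\PP\in W$, set $\delta=(|\PP|^+)^W$. Then standard factor-lemma arguments using the $\delta$-chain condition of (the Boolean completion of) $\PP$ in $W$ yield that the inclusion $W\sub M$ satisfies both:
\begin{enumerate}[label=\tu{(}\roman*\tu{)}]
\item the \emph{$\delta$-cover property}: every $x\in M$ with $x\sub W$ and $|x|^M<\delta$ is contained in some $y\in W$ with $|y|^W<\delta$; and
\item the \emph{$\delta$-approximation property}: every $x\in M$ with $x\sub W$ such that $x\inter z\in W$ for all $z\in W$ of hereditary $W$-size $<\delta$ is itself in $W$.
\end{enumerate}

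First I would fix the parameter $p=(\delta,\pow(\delta)^W)$. The crux of the argument is the following Hamkins uniqueness lemma: any two transitive classes $W_1,W_2\sub M$, each satisfying $\ZFC$ and containing $\OR^M$, both with the $\delta$-approximation and $\delta$-cover properties in $M$, and with $\pow(\delta)^{W_1}=\pow(\delta)^{W_2}$, must be equal. Granting this, the formula $\varphi_{\mathrm{gd}}(p,x)$ can be phrased as: there exist an ordinal $\theta>\rank(x)$ and a transitive $W^*\in M$ with $W^*\sats\ZFC$, such that $\pow(\delta)^{W^*}$ equals the second coordinate of $p$, the inclusion $W^*\sub V_\theta^M$ has the local $\delta$-approximation and $\delta$-cover properties inside $V_\theta^M$, and $x\in W^*$. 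This is a first-order assertion in $M$, and the uniqueness lemma applied to $V_\theta^M$ (for sufficiently large $\theta$) guarantees that the set so defined is exactly $W$.

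The hard part will be the uniqueness lemma. I would prove it by induction on rank, showing $W_1\inter V_\alpha^M=W_2\inter V_\alpha^M$ for all $\alpha$. The base case uses that $\pow(\delta)^{W}$ determines $H_{\delta^+}^W$ (and hence $V_{\delta^+}^W$), via coding hereditarily $\delta$-small sets by well-orders of $\delta$; thus $p$ already pins down the common value of $W_i\inter V_{\delta^+}^M$. For the inductive step, one first invokes cover to see that $W_1$ and $W_2$ compute the same cardinals above $\delta$, so the $H_\lambda$-hierarchies of the two models align. Then, given $x\in W_1$ of rank $\alpha$, one enumerates in $W_1$ the collection of its $W_1$-small approximations; cover on the $M$-side places each inside a $W_2$-small set; the inductive hypothesis at lower rank shows these enlarged approximations already lie in $W_2$; and finally the $\delta$-approximation property of $W_2$ in $M$ forces $x\in W_2$. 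The symmetric argument yields $W_1=W_2$, completing the proof.
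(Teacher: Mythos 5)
The paper does not actually prove this Fact: it is quoted as known, with a citation to the set-theoretic geology literature, and the proof there is exactly the Hamkins approximation-and-cover argument you outline (cover and approximation for small forcing, Hamkins's uniqueness lemma, definability via set-sized local witnesses). So your route is the intended one, but two points in your write-up need repair before it is a proof. First, your formula quantifies over transitive sets $W^*\in M$ with $W^*\sats\ZFC$. No such set need exist ($M$ may well satisfy ``there is no transitive set model of $\ZFC$''), in which case your formula defines the empty class rather than $W$. The standard fix is to use witnesses of the form $W\inter H_\theta^M$ (or $W\inter V_\theta^M$) for suitable $\theta$ (e.g.\ strong limit cardinals of cofinality $\geq\delta$), to require only an appropriate fragment of $\ZFC$ of them, and to prove an accompanying existence lemma: for cofinally many such $\theta$, the structure $W\inter H_\theta^M$ does satisfy that fragment and does have the $\delta$-approximation and cover properties inside $H_\theta^M$. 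This localization is what makes the definition first-order, and it is not automatic; the uniqueness lemma must then also be proved in this local, set-sized form.

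Second, your induction in the uniqueness lemma glosses the one genuinely delicate step. If $x\sub\lambda$ and $z\in W_1$ is small, the approximation $x\inter z$ need not have smaller rank than $x$, so ``the inductive hypothesis at lower rank shows these enlarged approximations already lie in $W_2$'' does not apply as stated; the real content is precisely to show that every set of ordinals of $W_1$-cardinality $<\delta$ lying in $W_1$ belongs to $W_2$. That is where the hypothesis $\pow(\delta)^{W_1}=\pow(\delta)^{W_2}$ has to be put to work (one codes such a set by a subset of $\delta$ via its increasing enumeration and must arrange that the decoding data is available in the other model, typically by a back-and-forth use of both cover properties), and the usual statement of Hamkins's lemma also carries an agreement hypothesis about $\delta^+$ (harmless in the present application, since a forcing of size $<\delta$ is $\delta$-c.c.\ and so preserves $\delta^+$), which your formulation drops without comment. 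As it stands, the crux of the uniqueness lemma is asserted rather than proved; either carry out Hamkins's argument in detail at that point or simply cite it, as the paper does.
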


The second
is by \cite{V=HODX_pub}:

\begin{fact}\label{fact:ext_seq_def} There is a formula $\varphi_{\mathrm{pm}}$
in the language of set theory such that for
any mouse $M$ with no largest cardinal,
given any $N\in M$,
we have $N\pins M$ iff $M\sats\varphi_{\mathrm{pm}}(M|\om_1^M,N)$.
\end{fact}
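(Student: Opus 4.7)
The plan is to define $\varphi_{\mathrm{pm}}(P, N)$ by recursion on $\OR^N$, exploiting that proper initial segments of a mouse are fully sound, together with Theorem \ref{thm:finite_gen_hull} (projectum-finite-generation), which lets sound premice be realized as strongly finite iterates of countable cores. Write $P = M|\om_1^M$; then $P$ directly contains every $N \pins M$ with $\OR^N < \om_1^M$, giving the base case of the recursion: $\varphi_{\mathrm{pm}}(P, N)$ simply says $N \pins P$.

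For $\OR^N \geq \om_1^M$, the recursive clause will assert: (i) $N$ is a premouse, (ii) for every $\lambda < \OR^N$, $\varphi_{\mathrm{pm}}(P, N|\lambda)$ holds, and (iii) the active predicate of $N$ (if any) matches that of $M|\OR^N$. Clauses (i) and (ii) are manifestly first-order over $M$ (granting the recursion), so the real content lies in (iii). The implication $N \pins M \Rightarrow$ (i)$\wedge$(ii)$\wedge$(iii) is immediate; for the converse, once all proper segments of $N$ match the corresponding segments of $M$ we have $N^\passive = (M|\OR^N)^\passive$, and agreement of the active predicates then forces $N = M|\OR^N \pins M$.

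Clause (iii) will be phrased via bicephalus uniqueness along the lines of \S\ref{sec:bicephali}. Suppose hypothetically that $N$ is active with top $F^N$ while $M|\OR^N$ is active with distinct top $F' := F^{M|\OR^N}$; by full soundness of both (forced on proper segments) both extenders project to a common ordinal $\rho$, yielding a degree-maximal $3$-simple pre-exact bicephalus $B = (\rho, N, M|\OR^N)$. Both sides are projectum-finitely-generated by Theorem \ref{thm:finite_gen_hull}, so $B$ is well-defined and the machinery of \S\ref{sec:bicephali} applies verbatim; a bicephalus comparison as in Lemma \ref{lem:finite_gen_hull} would then force $N = M|\OR^N$. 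The point is that the existence of such a comparison witness is first-order over $M$, because $B$ and the relevant strongly finite iteration trees (realizing both models as iterates of their cores) all sit inside $M$, so we may quantify over them without invoking the external iteration strategy of $M$. The passive subcase (one or both of $N$, $M|\OR^N$ passive) is handled symmetrically and is in fact simpler.

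The main obstacle will be converting the bicephalus argument of \S\ref{sec:bicephali}, which as originally stated requires genuine iterability, into an internally first-order existence statement over $M$. The key leverage is exactly that projectum-finite-generation reduces the ``iteration content'' of a fully sound premouse to a finite piece of data (a strongly finite tree witnessing $N$ as an iterate of its core), so the comparison really takes place inside $M$ between finitely-generated objects, and the contradiction extracted from $F^N \neq F^{M|\OR^N}$ in the bicephalus proof is verifiable internally. Careful bookkeeping of the countable Skolem hulls (which live in $P$) together with their strongly finite liftings to $N$, along the lines of Lemmas \ref{lem:core-strongly-finite} and \ref{lem:strongly_finite_extender_characterization}, should yield the explicit formula $\varphi_{\mathrm{pm}}$.
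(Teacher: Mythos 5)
This Fact is not proved in the paper at all: it is quoted from \cite{V=HODX}, where it is (a version of) the main theorem of that paper, and it is a substantially harder result than your sketch suggests. The concrete gap is your clause (iii). As literally stated it is circular: ``the active predicate of $N$ matches that of $M|\OR^N$'' refers to $F^{M|\OR^N}$, i.e.\ to the extender sequence $\es^M$, which is exactly the structure that $\varphi_{\mathrm{pm}}$ must avoid mentioning --- the formula is to be evaluated over the universe of $M$ in the language of set theory, with only $M|\om_1^M$ and $N$ as parameters. So (iii) has to be replaced by an internal uniqueness statement: ``there is at most one candidate top extender compatible with the passive part below''. That uniqueness is precisely the hard content, and your plan to certify it by a bicephalus comparison carried out inside $M$ does not work. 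The bicephalus arguments of \S5 and Lemma \ref{lem:finite_gen_hull} need an iteration strategy for the bicephalus, obtained externally by lifting to a strategy for $M$ in $V$; iterability is not a first-order property over $M$, the comparison is in general transfinite, and $M$ cannot verify iterability of an arbitrary $N\in M$ (a mouse can contain ill-behaved premice it cannot internally distinguish from its own levels by any such local matching). This is exactly why the definability of $\es$ from $\es\rest\om_1$ requires the long argument of \cite{V=HODX} rather than a comparison quantified over inside $M$.

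Your proposed repair --- that projectum-finite generation reduces the ``iteration content'' to a finite certificate living in $M$ --- also fails. Theorem \ref{thm:finite_gen_hull} produces a strongly finite tree from a premouse to itself \emph{viewed as an iterate of its $(m+1)$-core}; for the fully sound premice at issue (proper segments of $M$, and the candidates $N$ forced sound by your recursion) the core is the premouse itself and the tree is trivial, so the theorem yields no data at all, and in particular no ``countable core'' (you appear to conflate $\core_{m+1}$ with a countable hull). Hence there is no finite object inside $M$ that can stand in for the external strategy in the comparison, and the recursion you set up does not close at active stages. The base case and the passive-limit steps of your recursion are fine, but the theorem's entire difficulty is concentrated in the step you have left open.
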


\begin{dfn}
Let $H$ be a transitive class
 which satisfies ``there is no largest
cardinal''. Let $X\in H$.
If there is a premouse $P$ such that $\univ{P}=H$ and for all $N\in H$, we have
\[ N\pins P\iff H\sats\varphi_{\mathrm{pm}}(X,N),\]
then let $S^H(X)$ denote $P$;
otherwise $S^H(X)$ is undefined.
Say that $X$ is \dfnemph{good for $H$} if $S^H(X)$ is defined.
Note that $\{X\in H\bigm|X\text{ is good for }H\}$ is a definable class of $H$, uniformly in such $H$.
\end{dfn}

\begin{proof}[Proof of Theorem \ref{thm:sigma_grounds}]
Suppose the theorem is false,
as witnessed by $M,W,\PP$.
Let $\mathfrak{m}=M|\om_1^M$.
Then we may assume that for all sufficiently large limit cardinals $\lambda$
of $W$, $\PP$ forces ``$\check{\mathfrak{m}}$
is good for $\her_\lambda$
and $\check{\mathfrak{m}}=S(\check{\mathfrak{m}})|\om_1^{S(\check{\mathfrak{m}})}$''. Later we will also add some further statements which we assume are forced by $\PP$, as they become relevant.
We force with $\PP\cross\PP$. Write $\dot{S}^\lambda_{0}$
and $\dot{S}^\lambda_{1}$ for the names for
$S^{\her_\lambda^{W[G_0]}}(\mathfrak{m})$ and
$S^{\her_\lambda^{W[G_1]}}(\mathfrak{m})$
respectively, where $G_0\cross G_1$ is the $\PP\cross\PP$-generic.

Fix some
sufficiently
 large limit cardinal $\lambda$ of $W$, with $\PP\in\her_\lambda^W$,
 and such that ($*$) there is $\eta<\lambda$ such that
there is $(p,q)\in\PP\cross\PP$ forcing
``$\dot{S}^{\lambda}_0|\eta\neq \dot{S}^{\lambda}_1|\eta$''.
Let $\eta$ be the least witness to ($*$) (relative to $\lambda$).
Note then that $M||\eta\in W$ and
$\PP\cross\PP$ forces
``$\dot{S}^\lambda_0||\eta=\dot{S}^\lambda_1||\eta=M||\eta$''.
There are two
possibilities:
\begin{enumerate}[label=\tu{(}\roman*\tu{)}]
 \item\label{item:two_exts} There is $(p,q)\in\PP\cross\PP$
 forcing ``$\dot{S}^\lambda_0|\eta$ and $\dot{S}^\lambda_1|\eta$
 are both active, and $\dot{S}^\lambda_0|\eta\neq \dot{S}^\lambda_1|\eta$'', or
 \item\label{item:only_one_ext} otherwise.
\end{enumerate}

\begin{clmfour}\label{clm:passive_means_non-card}
If \ref{item:only_one_ext} holds, then $\eta$ is not a cardinal
of $W$.\end{clmfour}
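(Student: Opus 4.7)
The plan is to argue by contradiction: assume $\eta$ \emph{is} a cardinal of $W$, and derive a contradiction from hypothesis \ref{item:only_one_ext}. Under \ref{item:only_one_ext}, in any generic realising the disagreement of $\dot S^\lambda_0$ and $\dot S^\lambda_1$ at $\eta$, at most one side can have an active extender there; and both being passive would force agreement at $\eta$ with common value $M||\eta$. So I may fix $(p,q)\in\PP\cross\PP$ forcing that $\dot S^\lambda_0$ is active at $\eta$ with extender name $\dot F$ while $\dot S^\lambda_1$ is passive at $\eta$. Let $G_0\cross G_1$ be $W$-generic through $(p,q)$ and set $F:=\dot F_{G_0}$; so $F$ is an active extender on the $M||\eta$-sequence seen in $W[G_0]$ but absent in $W[G_1]$.

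The heart of the argument is to show $F\in W$. Since $\PP$ is $\sigma$-strategically-closed in $W$, no reals are added, $\om_1^W=\om_1^M$, and $M||\eta\in W$ by hypothesis. Weak amenability of $F$ to $M||\eta$ gives $F\rest\alpha\in M||\eta\sub W$ for every $\alpha<\eta$. As $\eta$ is a $W$-cardinal, these coherent fragments assemble in $W$: the function $\alpha\mapsto F\rest\alpha$ is definable in $W$ from $p,\dot F,M||\eta$ by searching through $\PP$ below $p$ for conditions deciding bounded fragments of $\dot F$, and $F$ is the union. (In the degenerate case $\cof^W(\eta)=\om$ one reduces to $\eta=\om_1^M$, and then $F\in M|\om_1^M\in W$ outright.)

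With $F\in W$, the statement ``$(M||\eta,F)\pins M$'' is by Fact \ref{fact:ext_seq_def} equivalent to $M\sats\varphi_{\mathrm{pm}}(M|\om_1^M,(M||\eta,F))$, and by Fact \ref{fact:ground_definability}, $W$ together with the parameter $w$ determines $M$ uniformly from the generic used. Consequently, whether $(M||\eta,F)$ appears at level $\eta$ of $\dot S^\lambda_0$ or $\dot S^\lambda_1$ is decided by a single fact about the $W$-parameters $M|\om_1^M$, $M||\eta$, $F$, and thus cannot depend on which side of the $\PP\cross\PP$-generic we look at. This contradicts the forced disagreement from the choice of $(p,q)$, completing the argument. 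The main obstacle I anticipate is the reassembly of $F$ inside $W$ when $\cof^W(\eta)$ is countable (and $\eta>\om_1^M$); here one must lean on $\sigma$-\emph{strategic} closure of $\PP$ (not merely $\sigma$-closure), using uniqueness of $\dot F$ below $(p,q)$ to pull a cofinal sequence of fragments back into $W$.
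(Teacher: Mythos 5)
Your setup is right and your instinct to show $F\in W$ is the right first move, but the argument has a genuine gap at the final step, and the route to $F\in W$ is harder than it needs to be.

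On getting $F\in W$: you do not need to assume $\eta$ is a $W$-cardinal, and you do not need weak amenability or $\sigma$-strategic closure at all. The case \ref{item:only_one_ext} hypothesis by itself gives that the condition $p$ already decides every fact about $\dot F$: if $p_0,p_1\leq p$ forced different facts about $\dot F$, then $(p_0,p_1)$ would force $\dot S^\lambda_0|\eta$ and $\dot S^\lambda_1|\eta$ both active with distinct extenders (using the symmetric name on the second coordinate), which is precisely case \ref{item:two_exts}. So $\dot F$ has a check name below $p$ and $F\in W$ outright. Your approach of assembling amenable fragments by searching through $\PP$ also implicitly needs this uniqueness to be well-defined, but you attribute it to $\sigma$-strategic closure, which is not where it comes from.

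The real problem is the concluding step. You argue that once $F\in W$, the truth of ``$(M||\eta,F)$ appears at level $\eta$ of $\dot S^\lambda_i$'' is determined by $W$-parameters and hence cannot depend on $i$. But $\dot S^\lambda_i$ is defined by evaluating $\varphi_{\mathrm{pm}}$ inside $\her_\lambda^{W[G_i]}$, and there is no absoluteness of that formula between the two mutually generic extensions merely because the parameters lie in $W$ --- indeed, if such absoluteness held for arbitrary candidate segments, the whole point of Theorem \ref{thm:sigma_grounds} would be trivial, since the stacks would never disagree. Fact \ref{fact:ext_seq_def} tells you how to recognize segments of $M$ working over $M$, not that $\her_\lambda^{W[G_0]}$ and $\her_\lambda^{W[G_1]}$ agree on which codes pass the test. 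The paper's argument is much more direct and avoids this entirely: once $F\in W$, the structure $(M||\eta,F)$ is an active premouse lying in $W$, and the active extender of a premouse definably collapses its index (it surjects an ordinal below $\eta$ onto $\eta$, amenably), so already in $W$ one sees that $\eta$ is not a cardinal. No further absoluteness is used.
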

\begin{proof}
In case \ref{item:only_one_ext},
any condition $(p,q)$ which forces that $\dot{S}^\lambda_0|\eta$
is active decides  all elements of the  active extender $E=F^{\dot{S}^\lambda_0|\eta}$,
because otherwise we get case \ref{item:two_exts}.
So $E\in W$, and the active premouse $(M||\eta,E)\in W$.
But the active extender
of a premouse
definably collapses its index, so $\eta$ is not a cardinal in
$W$.\end{proof}

Fix $(p_0,q_0)\in\PP$ witnessing ($*$) and the choice of $\eta$, and witnessing \ref{item:two_exts} if it holds.

Using a local version of Fact \ref{fact:ground_definability} (for example
in \cite[\S2]{choice_principles_in_local_mantles}; or by choosing $\lambda$ closed
enough, one
could use more well known versions),
we can fix a formula $\psi$ in the language
of set theory and $a_0,a_1\in\her_\lambda^W$ such that $\PP\cross\PP$ forces
``for all $x\in\her_\lambda^{W[G_i]}$,
we have $x\in\her_\lambda^W$ iff
$\her_\lambda^{W[G_i]}\sats\psi(x,a_i)$'',
for each $i\in\{0,1\}$.\footnote{Actually one can take $a_0=a_1=\her_\alpha^W$ for a sufficiently large $\alpha<\lambda$.}

Fix a strategy $\Sigma$ witnessing the strategic-$\sigma$-closure of $\PP$ in
$W$,
and let ${<^*}\in W$ be a wellorder of
$\PP$.  Let
\[ z=(M|\om_1^M,a_0,a_1,\PP,{<^*},\Sigma,p_0,q_0).\]
Fix a recursive enumeration $\left<\somevarpi_n\right>_{n<\om}$
of all formulas in the language of passive premice.

We now define a run of the ``$\PP$-game'' $\left<p_n,p'_n\right>_{n<\om}$,
according to $\Sigma$, with conditions $p_n$ played by Player I, and $p'_n$ by
player II.
We chose $p_0$ earlier. Let $p'_0=\Sigma(p_0)$.
Given $p'_n$, let $p_{n+1}$
be the ${<^*}$-least $p\leq p'_n$
such that $(p,q_0)$ either forces
\begin{equation}\label{eqn:forced_about_rho_n}\dot{S}
^\lambda_0\sats\text{``there is a unique
ordinal }\alpha\text{ such that
}\somevarpi_n(\check{z},\alpha)\text{''}\end{equation}
or forces its negation, and such that if $(p,q_0)$ forces
(\ref{eqn:forced_about_rho_n})
then there is some $\alpha<\lambda$
such that $(p,q_0)$ forces
$\dot{S}^\lambda_0\sats\somevarpi_n(\check{z},\check{\alpha})$''.
Set $p'_{n+1}=\Sigma(p_0,p_1,\ldots,p_{n+1})$.
This determines the sequence. Choose $p_\om\leq p_n,p'_n$ for all
$n<\om$.

Define $\left<q_n,q'_n\right>_{n<\om}$
completely symmetrically.
Let $q_\om\leq q_n,q'_n$ for all $n<\om$.

For those $n<\om$ such that $p_{n+1}$ forces (\ref{eqn:forced_about_rho_n}),
let $\alpha_n$ be the witnessing ordinal, and otherwise let $\alpha_n=0$.
Likewise define
$\beta_n$ for $q_{n+1}$.

Note that the forcing statements used in determining
the sequences above are all definable
over $\her_\lambda^W$ from the parameter $z$,
uniformly in each level $\Sigma_n$ (regarding the complexity
of $\somevarpi_i$); this uses Fact \ref{fact:ext_seq_def}.
So letting
\[ H^W=\Hull_\om^{\her_\lambda^W}(\{z\}) \]
(the uncollapsed hull of all elements of $\her_\lambda^W$
definable over $\her_\lambda^W$ (in the language of set theory) from the
parameter $z$),
 then for each $n<\om$,
\begin{equation}\label{eqn:things_in_H^W}\{z,p_n,p_n',q_n,q_n',\alpha_n,\beta_n\}\sub H^W.\end{equation}
(But we do not claim that the sequence
$\left<p_n,q_n\right>_{n<\om}\in H^W$, for example.)

Now let $G_0\cross G_1$ be $(W,\PP)$-generic
with $(p_\om,q_\om)\in G_0\cross G_1$.
Let $M_i=(\dot{S}^\lambda_i)_{G_0\cross G_1}$.
Let\[ H_i=\Hull_{\rSigma_\om}^{M_i}(\{z\}) \]
(note this hull is computed
using the language of premice).

Let $A=\{\alpha_n\bigm|n<\om\}$ and
$B=\{\beta_n\bigm|n<\om\}$.

\begin{clmfour}\label{clm:hulls_same_ordinals} $A=H_0\cap\OR=H^W\cap\OR=H_1\cap\OR=B$.
\end{clmfour}
\begin{proof}
We have $A\sub H_0\cap\OR$  by choice
of $p_\om$.
To see $H_0\cap\OR\sub A$, let $\alpha\in H_0\cap\OR$. Then there is some formula
$\somevarpi$ such that $M_0\sats$ ``$\alpha$ is the unique ordinal
such that $\somevarpi(z,\alpha)$''.
But then $\somevarpi=\somevarpi_n$ for some $n$,
so $p_{n+1}$ forces this, and $\alpha=\alpha_n\in A$.

So $A=H_0\cap\OR$, and by symmetry, $B=H_1\cap\OR$.

By line (\ref{eqn:things_in_H^W}),
$A\cup B\sub H^W$.

It just remains to see that $H^W\cap\OR\sub A\cap B$.
Let $\alpha\in H^W\cap\OR$. Note that $\her_\lambda^W\sub M_0$
and by the (local) ground
definability
fact and our choice of $z$, $\her_\lambda^W$ is a class
of $M_0$ definable from the parameter $z$.
Therefore $\alpha\in H_0$, so $\alpha\in A$. By symmetry, $\alpha\in H_1$, so $\alpha\in B$, completing the proof.
\end{proof}

Now let $H'_i=\Hull_{\Sigma_1}^{\J(M_i)}(\{z,\lambda\})$
(here $\J(M_i)$ denotes one step in the $\J$-hierarchy
above $M_i$). A standard computation shows
that $H'_i\cap\lambda=H_i\cap\OR$.

Let $K_i$ be the transitive collapse of $H'_i$.
Let $\pi_i:K_i\to H_i'$ be the uncollapse map.
Let $\pi_i(\bar{z}_i,\bar{\lambda}_i)=(z,\lambda)$.
Then $K_i=\Hull_{\Sigma_1}^{K_i}(\{\bar{z}_i,\bar{\lambda}_i\})$.
So $\rho_1^{K_i}=\om$ and $K_i$ is finitely $\bfrSigma_1$-generated.
Since  $\om_1^W=\om_1^{M_i}$, we have
$\crit(\pi_i)=\om_1^{K_i}=\om_1^W\cap H'_i$,
so by Claim \ref{clm:hulls_same_ordinals},
$\crit(\pi_i)$ is independent of $i\in\{0,1\}$.

Let $C_i=\core_1(K_i)$. We may assume here that $K_i$
is $1$-solid and $1$-universal, and $C_i$ is likewise
and $1$-sound, and that $C_i\pins M_i|\om_1^{M_i}$,
since these are first-order facts about $M$.
But $M_0|\om_1^{M_0}=\mathfrak{m}=M_1|\om_1^{M_1}$,
by assumption. Therefore $C_0,C_1\pins \mathfrak{m}$.
But also, $\om_1^{C_0}=\om_1^{K_0}=\om_1^{K_1}=\om_1^{C_1}$ and $C_0,C_1$
project to $\om$.
It follows that $C_0=C_1$.

We can also assume that Theorem \ref{thm:finite_gen_hull}
holds with respect to $K_0,K_1$, since this also holds for $M$.
Therefore $K_0,K_1$ are both finite normal iterates of $C_0=C_1$.

Now note that $\OR^{K_i}$ is the ordertype
of $H'_i\cap\OR$, which is independent of $i$.
We have $\pi_i:K_i\to H'_i$ and $\rg(\pi_i)=H'_i$,
and therefore $\pi_0\rest\OR=\pi_1\rest\OR$.

By our choice of $(p_0,q_0)$ and $(p_\om,q_\om)$,
we have $\eta\in\rg(\pi_0),\rg(\pi_1)$,
 $M_0||\eta=M_1||\eta$
and $M_0|\eta\neq M_1|\eta$.
Let $\pi_i(\bar{\eta})=\eta$.
Clearly then $K_0||\etabar=K_1||\etabar$.
If $M_0|\eta$ is passive and $M_1|\eta$ active,
this reflects and $K_0|\bar{\eta}$ is passive
and $K_1|\bar{\eta}$ is active.
If both $M_0|\eta$ and $M_1|\eta$ are active, then the active extenders
differ, and because this was forced by $(p_0,q_0)$ over $H^W$,
and it was also forced that $M_0||\eta=M_1||\eta$,
it easily follows that there are ordinals in $H'_0,H'_1$
witnessing the distinction between the extenders. (That is, let
$F_i=F^{M_i|\eta}$. Clearly
$\kappa_i=\crit(F_i)\in H_i'$ (so $\kappa_0,\kappa_1\in H_i'$). If
$\kappa_0\neq\kappa_1$
then we are done. If $\kappa_0=\kappa_1$,
then there is some $\alpha<\kappa_i^{+(M_i||\eta)}\cap H_i'$
and some $\vec{\beta}\in[\eta]^{<\om}\cap H_i'$
such that $\vec{\beta}\in i^{F_0}(A_\alpha)$
iff $\vec{\beta}\notin i^{F_1}(A_\alpha)$,
where $A_\alpha$ is the $\alpha$th subset of $[\kappa_i]^{<\om}$
in the order of construction of $M_i||\eta$.)
Note then that $K_0|\bar{\eta}\neq K_1|\bar{\eta}$,
and the kind of disagreement corresponds to the case above (that is,
whether one is passive and the other active, or both are active but distinct).
So $\bar{\eta}$ indexes the least disagreement between $K_0,K_1$.

Now let $\Tt_i$ be the finite normal tree on $C_0=C_1$
whose last model is $K_i$,
given by Theorem \ref{thm:finite_gen_hull}.
Note that $\Tt_i$ is determined by $K_i$
(it is the result of comparing $C_i$ with $K_i$,
but no extenders
get used on the $K_i$ side, and also the trees
are finite length, so there are no branches to choose).
Note also that if $K_i|\xi$
is active then $\Tt_i$ uses no extender with index $\xi$,
so $K_i|\xi=M^{\Tt_i}_n|\xi$ where $n$ is least such that either $\lh(\Tt_i)=n+1$ or $\lh(E^{\Tt_i}_n)\geq\xi$.
(This holds even if there is $m+1<\lh(\Tt_i)$ such that $M^{\Tt_i}_{m+1}|\lh(E^{\Tt_i}_m)$
is active. For in this case, $E^{\Tt_i}_m$ is superstrong and $M^{\Tt_i}_{m+1}$ is active type 2
with $\OR(M^{\Tt_i}_{m+1})=\lh(E^{\Tt_i}_m)$.
But then $(0,m+1]^{\Tt_i}$ drops in model, since $C_i$ is passive.
Therefore $m+2<\lh(\Tt_i)$
and $E^{\Tt_i}_{m+1}=F^{M^{\Tt_i}_{m+1}}$,
so $\OR(M^{\Tt_i}_{m+1})$ is passive in $K_i$.)
Since $K_0||\bar{\eta}=K_1||\bar{\eta}$,
we get $n<\om$ with $\Tt_0\rest(n+1)=\Tt_1\rest(n+1)$,
and $M^{\Tt_i}_n||\bar{\eta}=K_0||\bar{\eta}=K_1||\bar{\eta}$,
and since at least one side is active at $\bar{\eta}$, say $K_0|\bar{\eta}$ is
active,
we must have $M^{\Tt_i}_n|\bar{\eta}=K_0|\bar{\eta}$ (as $K_0$ doesn't move in
the comparison). But likewise for $K_1$, and as $K_0|\bar{\eta}\neq
K_1|\bar{\eta}$, and $K_1$ doesn't move in comparison, $K_1|\bar{\eta}$ is
passive.
So $\Tt_1$ uses an extender indexed at $\bar{\eta}$,
so $\bar{\eta}$ is a cardinal of $K_1$.
Therefore, as $\lambda$ was a limit cardinal of all models
and $\pi_i$ is sufficiently elementary,
$\eta$ is a cardinal in $M_1$, hence in $W[G_1]$,
hence in $W$. But as $K_1|\bar{\eta}$ is passive,
so is $M_1|\eta$, so we are in case \ref{item:only_one_ext} above,
so by Claim \ref{clm:passive_means_non-card}, $\eta$ is not a cardinal in $W$,
contradiction.
\end{proof}

\section{The mouse order ${<^{\para}_k}$}\label{sec:mouse_order}

Our proof of solidity and universality will be by induction on a certain
mouse (or mouse-parameter) order. We introduce this now.

\begin{dfn}
 $\Mm_k$ denotes the class of all $k$-sound premice, and $\Mm_k^\iter$ denotes
class
of all $(k,\om_1+1)$-iterable premice in $\Mm_k$.
\end{dfn}
\begin{rem}
Let $M\in\Mm_k$.
A \emph{$k$-maximal stack}
on $M$ was defined in \S\ref{sec:notation}.
The iteration game $\Gg_{\mathrm{fin}}(M,k,\om_1+1)$
was introduced in
\cite[Definition 1.1]{iter_for_stacks}, and proceeds
as follows: Player I first plays a  $k$-maximal
stack $\vec{\Tt}=\left<\Tt_i\right>_{i\leq m}$, where $m<\om$
and each $\Tt_i$ has finite length, and then the players proceed
to play a round of the $(n,\om_1+1)$-iteration game on $M^{\Tt_m}_\infty$,
where $n=\deg^{\Tt_m}_\infty$.
\end{rem}

The following fact is one case of
\cite[Theorem 9.6]{iter_for_stacks}:

\begin{fact}\label{fact:Gg}
Let $M\in\Mm^\iter_k$.
Then there is a winning strategy $\Sigma^*_M$ for player
$\Two$ in $\Gg_{\mathrm{fin}}(M,k,\om_1+1)$. Moreover, given any
$(k,\om_1+1)$-strategy $\Sigma$ for
$M$,
there is a canonical winning strategy $\Sigma^*$ for $\Two$ in
this game
determined by $\Sigma$.

Let $\vec{\Tt}=\left<\Tt_n\right>_{n<\om}$ be any $k$-maximal stack
on $M$ of length $\om$, consisting of finite length trees $\Tt_n$. Then
$(0,\infty]^{\Tt_n}$ drops for only finitely many $n<\om$
and $M^{\vec{\Tt}}_\infty$ is wellfounded.
\end{fact}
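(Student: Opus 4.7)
The plan is to reduce finite stacks of finite trees to single normal trees via a finite-case normalization, and then invoke $(k,\om_1+1)$-iterability of $M$.

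First I would establish a finite-stack normalization lemma: given an $m$-sound premouse $N$ and a finite $m$-maximal stack $\vec{\Tt} = \langle \Tt_0,\ldots,\Tt_\ell \rangle$ on $N$ consisting of finite-length trees, one constructs a single finite-length $m$-maximal tree $\Tt^* = \Tt^*(\vec{\Tt})$ on $N$, together with a canonical copy-style comparison of $M^{\vec{\Tt}}_\infty$ with $M^{\Tt^*}_\infty$ preserving the relevant iteration maps, such that $b^{\Tt^*}$ drops in model or degree iff $b^{\Tt_i}$ does for some $i\leq\ell$. The construction proceeds by induction on $\ell$: to incorporate $\Tt_\ell$ into $\Tt^*(\vec{\Tt}\rest\ell)$, each extender used in $\Tt_\ell$ is traced via the inductive embedding back to an extender on some model of $\Tt^*(\vec{\Tt}\rest\ell)$, and used to extend the latter tree. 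Because all trees involved are finite, no iteration-strategy condensation hypothesis enters.

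Second, given a $(k,\om_1+1)$-strategy $\Sigma$ for $M$, I would define the canonical $\Sigma^*$ as follows. When player I plays the finite stack $\vec{\Tt}$, player II forms $\Tt^* = \Tt^*(\vec{\Tt})$ stage by stage, using $\Sigma$ at any (finite) branch choices required by the normalization construction; this ensures $\Tt^*$ is according to $\Sigma$ and so has wellfounded models. For the subsequent $(d,\om_1+1)$-round on $M^{\vec{\Tt}}_\infty$ with $d = \deg^{\Tt_\ell}_\infty$, II copies each of I's moves along the normalization embedding into a continuation of $\Tt^*$, plays by $\Sigma$ there, and pulls branches back. Standard copy-map arguments show this wins.

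Third, for an arbitrary $\om$-length stack $\vec{\Tt} = \langle \Tt_n \rangle_{n<\om}$ of finite trees, set $\Ss_n = \Tt^*(\vec{\Tt}\rest n)$. Monotonicity of the normalization construction makes $\Ss_n$ naturally an initial segment of $\Ss_{n+1}$, so $\Ss_\om := \bigcup_{n<\om}\Ss_n$ is a $k$-maximal tree on $M$ of length $\leq\om$. Using $\Sigma$ to choose a cofinal branch at stage $\om$ (if needed) yields a $k$-maximal tree according to $\Sigma$ of countable length, and $(k,\om_1+1)$-iterability gives wellfoundedness and at most finitely many main-branch drops. Via the limit of the normalization embeddings, this transfers back to the stack: $b^{\Tt_n}$ drops for only finitely many $n$, and $M^{\vec{\Tt}}_\infty$ is wellfounded.

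The main obstacle will be the finite-stack normalization lemma itself: careful bookkeeping of degrees, $*$-models, drops, and the correspondence between extenders used in $\Tt_\ell$ and their images in $\Tt^*(\vec{\Tt}\rest\ell)$ is needed, especially where drops interact with superstrong extenders. The saving feature is that, since we remain within finite trees throughout each stage, no condensation hypothesis on $\Sigma$ is ever invoked; this is precisely what distinguishes this special case from the general stack-to-normal iterability theorems of \cite{iter_for_stacks}, whose arguments require strategy condensation.
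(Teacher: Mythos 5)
The paper does not prove this statement: it is cited verbatim as one case of Theorem~9.6 of the iterability-for-stacks paper. Your overall plan---normalize stacks of finite trees, and observe that no strategy condensation is needed because finite trees involve no branch choices---is exactly the mechanism the paper alludes to (see the footnote in \S\ref{sec:plan}), so you have correctly identified why this special case is available from normal iterability alone.

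However, step~3 has a concrete error. You claim that ``monotonicity of the normalization construction makes $\Ss_n$ naturally an initial segment of $\Ss_{n+1}$,'' and hence that $\Ss_\omega=\bigcup_n\Ss_n$ is a $k$-maximal tree of length $\leq\omega$. This is false. When you normalize an extender $F$ of $\Tt_{n}$ into $\Ss_n$, the normalization procedure \emph{inserts} $F$ at the position determined by $\lh(F)$ relative to the extenders already used in $\Ss_n$, and then re-copies the tail of $\Ss_n$ above the insertion point. So $\Ss_{n+1}$ is related to $\Ss_n$ by a \emph{tree embedding}, not by end-extension, and the indices of $\Ss_n$-extenders are in general strictly shifted upward in $\Ss_{n+1}$. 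Consequently $\bigcup_n\Ss_n$ is not even a well-defined iteration tree; one must instead take a direct limit along the system of tree embeddings $\Phi_n:\Ss_n\to\Ss_{n+1}$, and the order type of that direct limit of finite ordinals can be any countable ordinal, not merely $\leq\omega$. The paper's footnote states this explicitly: the lift ``could have arbitrary countable length.'' Your argument as written therefore only invokes $\Sigma$ at a single stage $\omega$, when in fact $\Sigma$ is needed to choose cofinal branches at every countable limit stage of the limit tree; this is precisely why $(k,\omega_1+1)$-iterability, not something weaker, is the right hypothesis. Once that is repaired, the remaining ingredient---showing the limit tree is ``according to $\Sigma$'' even though it is built by inserting extenders mid-stream---is where, in the general infinite-tree case, inflation condensation is required; your observation that this is vacuous when the component trees are finite (no branch choices to be incompatible) is the correct reason it can be dispensed with here, but it does need to be checked at each limit stage of the full direct limit, not just at $\omega$.
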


\begin{dfn}[Parameter order]\label{dfn:<^*}
Let $\Mm_k^{*}$ denote the class of pairs $(M,p)$
such that $M\in\Mm_k$ and $p\in[\rho_0^M]^{<\om}$.
We order $\Mm_k^{*}$ as follows. For $(M,p),(N,q)\in\Mm_k^{\para}$, let
$(M,p)<^{\para*}_k(N,q)$ iff
there is a $k$-maximal stack $\Ttvec=\left<\Tt_i\right>_{i<n}$
on $N$
where $n<\om$ and each $\Tt_i$ is terminally-non-dropping
with $\lh(\Tt_i)<\om$,
and there is a $k$-embedding $\pi:M\to M^{\Ttvec}_\infty$
such that
$\pi(p)<i^\Ttvec(q)$.
\end{dfn}

We are actually only interested in the following sub-order of $<^{\para*}_k$:
\begin{dfn}[Standard parameter order]
 We order $\Mm_k$ as follows.
 Given $M,N\in\Mm_k$, let $M<^{\para}_kN$ iff
$(M,p_{k+1}^M)<^{\para*}_k(N,p_{k+1}^N)$.
\end{dfn}

\begin{lem}
If $(M,p)<^{\para*}_k(N,q)$ and $N\in\Mm^\iter_k$ then $M\in\Mm^\iter_k$.
\end{lem}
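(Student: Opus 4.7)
The plan is to reduce iterability of $M$ to iterability of $M^{\vec{\Tt}}_\infty$, and then pull back along the $k$-embedding $\pi$ by standard copying.

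First, fix the witnessing data: a $k$-maximal stack $\vec{\Tt}=\langle \Tt_i\rangle_{i<n}$ on $N$ with $n<\om$, each $\Tt_i$ of finite length and terminally-non-dropping, and a $k$-embedding $\pi:M\to M^{\vec{\Tt}}_\infty$. (The ordinal parameters $p,q$ play no role in what follows; only the $k$-embedding matters.) Since each $\Tt_i$ is terminally-non-dropping and $\deg^{\Tt_0}_0=k$, an easy induction gives $\deg^{\vec{\Tt}}_\infty=k$, so $M^{\vec{\Tt}}_\infty\in\Mm_k$.

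Next, invoke Fact \ref{fact:Gg}: since $N\in\Mm^\iter_k$, player II has a winning strategy $\Sigma^*_N$ in $\Gg_{\mathrm{fin}}(N,k,\om_1+1)$. The first move of this game consists precisely of player I playing a finite $k$-maximal stack of finite trees that is terminally-non-dropping on its main branches; so $\vec{\Tt}$ is a legal opening move for player I. After this move, $\Sigma^*_N$ prescribes a winning strategy $\Sigma'$ for player II in the $(k,\om_1+1)$-iteration game on $M^{\vec{\Tt}}_\infty$. Hence $M^{\vec{\Tt}}_\infty\in\Mm^\iter_k$.

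Finally, pull $\Sigma'$ back along $\pi$ to obtain a $(k,\om_1+1)$-strategy $\Sigma$ for $M$. Given any putative $k$-maximal tree $\Uu$ on $M$ of length ${\leq}\om_1+1$, we copy it stage-by-stage to a $k$-maximal tree $\pi\Uu$ on $M^{\vec{\Tt}}_\infty$ via $\pi$: because $\pi$ is a $k$-embedding, the Shift Lemma produces copy maps $\pi_\alpha:M^\Uu_\alpha\to M^{\pi\Uu}_\alpha$ that are themselves $k$-embeddings, with $\pi_0=\pi$, and drop structure is preserved. At limit stages we let $\Sigma'$ choose the branch of $\pi\Uu$ and take the corresponding branch in $\Uu$; this branch gives wellfounded direct limits in $\Uu$ because the copy maps send them into the wellfounded direct limits in $\pi\Uu$. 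Define $\Sigma(\Uu)$ accordingly. This is the standard copying construction used throughout the literature (cf.\ \cite{fsit,outline}), and it makes no use of condensation or of stacks beyond what is already present in $\Sigma'$.

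The only point to check is that each ultrapower in $\Uu$ is wellfounded, but this is immediate from the existence of the copy maps $\pi_\alpha$ into the wellfounded models of $\pi\Uu$. Hence $\Sigma$ is a $(k,\om_1+1)$-strategy for $M$ and $M\in\Mm^\iter_k$. The only mildly subtle ingredient is Fact \ref{fact:Gg}, which has already been established; everything else is routine.
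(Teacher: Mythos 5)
Your proposal is correct and takes essentially the same route as the paper, whose proof is the one-liner ``By Fact \ref{fact:Gg} and using the witnessing map $\pi$ to copy trees.'' You have simply spelled out both halves of that sentence in detail; the unpacking of Fact \ref{fact:Gg} (playing $\vec{\Tt}$ as an opening move in $\Gg_{\mathrm{fin}}(N,k,\om_1+1)$ to obtain a $(k,\om_1+1)$-strategy for $M^{\vec{\Tt}}_\infty$, noting that terminal non-dropping forces $\deg^{\vec{\Tt}}_\infty=k$) and the standard $\pi$-copying argument are both accurate.
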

\begin{proof}
By Fact \ref{fact:Gg} and using the witnessing map $\pi$ to copy trees.
\end{proof}

By the following lemma, we can arrange our proof of solidity
and universality by induction on $<^{\para}_k$:

\begin{lem}\label{lem:param_order_wfd}
${<^{\para*}_k}\rest\Mm^\iter_k$ is transitive and wellfounded.
So is ${<^{\para}_k}\rest\Mm^\iter_k$.
\end{lem}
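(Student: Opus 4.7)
\emph{Plan.} The plan is to prove transitivity first, then wellfoundedness for $<^{\para*}_k$; the corresponding facts for $<^\para_k \rest \Mm^\iter_k$ are immediate, since $M \mapsto (M, p_{k+1}^M)$ is an injection of $\Mm^\iter_k$ into pairs on which $<^{\para*}_k$ acts, and is order-preserving by definition. For transitivity, given witnessing data $(\vec{\Tt},\pi)$ for $(M,p) <^{\para*}_k (N,q)$ and $(\vec{\Uu},\sigma)$ for $(N,q) <^{\para*}_k (P,r)$, I copy $\vec{\Tt}$ via $\sigma$ to obtain a finite stack $\sigma\vec{\Tt}$ of terminally-non-dropping finite trees on $M^{\vec{\Uu}}_\infty$, together with a final copy $k$-embedding $\sigma^* : M^{\vec{\Tt}}_\infty \to M^{\sigma\vec{\Tt}}_\infty$ satisfying $\sigma^* \com i^{\vec{\Tt}} = i^{\sigma\vec{\Tt}} \com \sigma$. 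The concatenation $\vec{\Uu} \conc \sigma\vec{\Tt}$ is a finite $k$-maximal stack on $P$ of terminally-non-dropping finite trees, and the $k$-embedding $\sigma^* \com \pi : M \to M^{\vec{\Uu} \conc \sigma\vec{\Tt}}_\infty$ witnesses $(M,p) <^{\para*}_k (P,r)$, since
\[ (\sigma^* \com \pi)(p) < \sigma^*(i^{\vec{\Tt}}(q)) = i^{\sigma\vec{\Tt}}(\sigma(q)) < i^{\sigma\vec{\Tt}}(i^{\vec{\Uu}}(r)) = i^{\vec{\Uu} \conc \sigma\vec{\Tt}}(r), \]
where the first inequality uses that $\sigma^*$ is $\Sigma_0$-elementary and hence preserves the top-down lex order on finite sets of ordinals, and the second uses the elementarity of $i^{\sigma\vec{\Tt}}$.

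For wellfoundedness, assume toward contradiction an infinite descending sequence $\langle (M_n,p_n)\rangle_{n<\om}$ in $\Mm^\iter_k$ with witnesses $(\vec{\Tt}_n, \pi_n)$, so $\pi_n : M_{n+1} \to M^{\vec{\Tt}_n}_\infty$ is a $k$-embedding and $\pi_n(p_{n+1}) < i^{\vec{\Tt}_n}(p_n)$. I will inductively copy everything back onto $M_0$. Set $P_0 = M_0$, $\tau_0 = \id$, $\Sigma_0 = \emptyset$; given a $k$-embedding $\tau_n : M_n \to P_n$ and a stack $\Sigma_n$ on $M_0$ with final model $P_n$, copy $\vec{\Tt}_n$ via $\tau_n$ to get $\tau_n \vec{\Tt}_n$ on $P_n$ with copy $k$-embedding $\tau_n^* : M^{\vec{\Tt}_n}_\infty \to P_{n+1}$, where $P_{n+1} = M^{\tau_n\vec{\Tt}_n}_\infty$; then set $\Sigma_{n+1} = \Sigma_n \conc \tau_n\vec{\Tt}_n$ and $\tau_{n+1} = \tau_n^* \com \pi_n$. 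Let $\Sigma_\infty = \bigcup_n \Sigma_n$, a $k$-maximal stack on $M_0$ of (at most $\om$-many) finite-length, terminally-non-dropping trees. By Fact \ref{fact:Gg} (applied if $\Sigma_\infty$ has length $\om$; trivially otherwise), the direct limit $P_\om$ is wellfounded, and since none of the constituent trees drop, there are compatible iteration $k$-embeddings $k_n : P_n \to P_\om$.

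Now define $a_n = k_n(\tau_n(p_n)) \in [\OR^{P_\om}]^{<\om}$. Using commutativity of the copying ($\tau_n^* \com i^{\vec{\Tt}_n} = i^{\tau_n\vec{\Tt}_n} \com \tau_n$) and of the iteration maps ($k_{n+1} \com i^{\tau_n\vec{\Tt}_n} = k_n$), applying $k_{n+1}$ to the inequality $\tau_n^*(\pi_n(p_{n+1})) < \tau_n^*(i^{\vec{\Tt}_n}(p_n)) = i^{\tau_n\vec{\Tt}_n}(\tau_n(p_n))$ yields
\[ a_{n+1} = k_{n+1}(\tau_{n+1}(p_{n+1})) < k_{n+1}(i^{\tau_n\vec{\Tt}_n}(\tau_n(p_n))) = k_n(\tau_n(p_n)) = a_n. \]
But the top-down lex order on $[\OR]^{<\om}$ is set-like and wellfounded, so the set $\{a_n \mid n < \om\}$ has a ${<}$-minimal element, contradicting strict descent.

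The routine portion is bookkeeping for the copying and the verification that terminal-non-droppingness and the $k$-embedding property are preserved under the copy construction. The genuine obstacle is that the background theory is $\ZF$: forming the sequence $\langle (\vec{\Tt}_n, \pi_n)\rangle_{n<\om}$ of witnesses from mere failure of wellfoundedness ordinarily needs $\DC$. I will handle this as indicated in the $\ZF$ remark of the introduction, by working inside a suitable inner model $\HOD_{\Sigma, M_0}$ (or $L[\Sigma, M_0]$) containing a fixed $(k,\om_1+1)$-strategy $\Sigma$, which satisfies $\ZFC$; any counterexample to wellfoundedness in $V$ reflects to one inside this inner model, where the argument above applies verbatim.
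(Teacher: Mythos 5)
Your transitivity argument is correct and is essentially identical to the paper's (both copy the lower witnessing stack via the upper witnessing embedding, concatenate, and compose). Your wellfoundedness argument captures the same mathematical core as the paper's: copy all the $\vec{\Tt}_n$ inductively back onto $M_0$ to produce an $\om$-stack of finite terminally-non-dropping trees, invoke Fact~\ref{fact:Gg} for wellfoundedness of $P_\om$, and push the strict parameter inequalities through the commuting maps to get an infinite $<$-descent in $[\OR^{P_\om}]^{<\om}$. So at the level of mathematical content, this is the intended argument.

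The difference, and the genuine gap, is in the $\ZF$ handling, which is exactly where the paper's remark promised additional care. You produce the infinite descending sequence $\langle (M_n,p_n),(\vec{\Tt}_n,\pi_n)\rangle$ by (implicitly) applying $\DC$, and propose to justify this by passing to $\HOD_{\Sigma,M_0}$ or $L[\Sigma,M_0]$. But that inner model has no reason to contain a counterexample to wellfoundedness: an illfounded set $X$ of pairs, together with witnessing embeddings $\pi$ into iterates of $M_0$, will generally lie outside $\HOD_{\Sigma,M_0}$. The obstruction to canonical choice is precisely the embedding part of the witnessing data: the finite stacks $\vec{\Tt}$ on a fixed base are canonically wellorderable, but the $k$-embeddings $\pi:N\to M^{\vec{\Tt}}_\infty$ range over a power set and are not. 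To make a reflection argument of the kind you sketch work, one would have to explicitly incorporate a fixed illfounded set $X$ and a witness set $W$ (something like $L[\Sigma,X,W]$), and then argue that illfoundedness of the relevant relation restricted to $X$ persists; you do not do this, and $\HOD_{\Sigma,M_0}$ alone does not suffice.

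The paper sidesteps this in $V$ directly by introducing the auxiliary relation $<'$, which restricts to witnesses with $\pi=\id$. Since $\id$ is canonical and finite stacks on a fixed base are canonically wellorderable, the $<'$-predecessors of $(M,p)$ and their witnesses form a wellorderable set, so a $<'$-descending sequence with witnesses can be built in $\ZF$ by always taking least elements; concatenation and Fact~\ref{fact:Gg} then give wellfoundedness of $<'$. From there, a single (choice-free) selection of a $<'$-minimal pair $(N,q)$ in the illfounded part of $<^{\para*}_k$, together with a one-step copy via $\pi$, transfers the illfoundedness to a pair $(P,r)<'(N,q)$, contradicting $<'$-minimality. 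This bootstrap from $<'$ to $<^{\para*}_k$ is the extra device your proof is missing, and is what discharges the $\DC$ dependence without an inner-model passage.
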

\begin{figure}
\centering
\begin{tikzpicture}
 [mymatrix/.style={
    matrix of math nodes,
    row sep=0.8cm,
    column sep=0.8cm}]
   \matrix(m)[mymatrix]{
  M_{0}&  M_{01} & M_{02} &{}  \\
   {} & M_{1} & M_{12} \\
   {} & {} & M_{2} \\};
 \path[->,font=\scriptsize]
(m-1-1) edge node[above] {$i_{01}$} (m-1-2)
(m-1-2) edge node[above] {$i'$} (m-1-3)
(m-2-2)edge node[above] {$i_{12}$} (m-2-3)
(m-2-2)edge node[left] {$\pi_{10}$} (m-1-2)
(m-2-3)edge node[left] {$\sigma$} (m-1-3)
(m-3-3) edge node[left] {$\pi_{21}$} (m-2-3);
\end{tikzpicture}
\caption{The diagram commutes,
where $M_{01}$ denotes $M^{\vec{\Tt}_{01}}_\infty$,
$i_{01}$ denotes $i^{\vec{\Tt}_{01}}$, etc.}
\label{fgr:DJ}
\end{figure}

\begin{proof}
The transitivity of ${<^{\para*}_k}\rest\Mm_k^\iter$ follows from Fact
\ref{fact:Gg}
and the following copying argument (cf.~Figure \ref{fgr:DJ}). Let
\[ (M_2,q_2)<^{\para*}_k(M_1,q_1)<^{\para*}_k(M_0,q_0),\] as
witnessed by trees
 $\Ttvec_{12}$ on $M_1$ and $\Ttvec_{01}$ on $M_0$, and maps
\[ \pi_{21}:M_2\to
M^{\Ttvec_{12}}_\infty\text{ and }
\pi_{10}:M_1\to M^{\Ttvec_{01}}_\infty.\]
Let $\Ttvec'$ be the $\pi_{10}$-copy of $\Ttvec_{12}$ to a tree on
$M^{\Ttvec_{01}}_\infty$, let $\Ttvec_{02}=\Ttvec_{01}\conc\Ttvec'$
and
\[ \sigma:M^{\Ttvec_{12}}_\infty\to M^{\Ttvec_{02}}_\infty \]
be the final copy map and $\pi_{20}=\sigma\com\pi_{21}$.
Then  $\Ttvec_{02}$ and $\pi_{20}$
witness the fact that $(M_2,q_2)<^{\para*}_k(M_0,q_0)$.
\footnote{$<^{\para*}_k$ might not be transitive on all of $\Mm_k$
(including non-iterable premice), because the
copying
construction might produce an illfounded model.
This is why Fact \ref{fact:Gg} is relevant to transitivity.}

We now show that $<^{\para*}_k$ is
wellfounded below every $(M,p)$ with $M\in\Mm_k^{\iter}$. So fix such an $(M,p)$.
The argument is essentially the usual one for Dodd-Jensen,
but recall that we do not assume $\AC$;  one needs
a little care to deal with this.
First define a more restrictive relation $<'$ on $\Mm_k$
by setting $(N_1,q_1)<'(N_0,q_0)$ iff
$(N_1,q_1)<^{\para*}_k(N_0,q_0)$
as witnessed by a stack $\Ttvec$ (on $N_0$) with $N_1=M^{\Ttvec}_\infty$
and the map $\id:N_1\to N_1$ (so $q_1<i^{\Ttvec}(q_0)$).
Note that ${<'}$ is also transitive.
The set of all $(N,q)$ such that $(N,q)<'(M,p)$ is wellordered,
as is the set of all witnessing trees $\Ttvec$,
since those trees are finite.
So if ${<'}$ is illfounded then there is a ${<'}$-descending
sequence
$\left<(N_m,q_m)\right>_{m<\om}$,
and a sequence $\left<\vec{\Tt}_m\right>_{m<\om}$ of witnessing
stacks.
But letting $\Ttvec$ be the concatenation of all $\Ttvec_m$,
then $\Ttvec$ is a length $\om$, $k$-maximal stack on $M$
such that $M^{\Ttvec}_\infty$ is illfounded, contradicting
Fact \ref{fact:Gg}.

Now suppose that $<^{\para*}_k$ is illfounded below
$(M,p)$. Say that $(N,q)$ is \emph{bad}
if
$(N,q)$ is in the illfounded part of ${<^{\para*}_k}$
and either $(N,q)=(M,p)$ or $(N,q)<'(M,p)$.
Since $<'$ is wellfounded below $(M,p)$, we can fix a bad $(N,q)$ such that there is no bad
$(N',q')<'(N,q)$.
Let $(P',r')<^{\para*}_k(N,q)$
with $(P',r')$ also in the illfounded part of ${<^{\para*}_k}$,
as witnessed by $\Ttvec$ and $\pi:P'\to M^{\Ttvec}_\infty$.
Let $P=M^{\Ttvec}_\infty$ and $r=\pi(r')$.
So $(P,r)<'(N,q)$, so by choice of $(N,q)$, $(P,r)$ is in the wellfounded part of
${<^{\para*}_k}$. But by copying using $\pi$,
\[ \text{for every }(P'',r'')<^{\para*}_k(P',r'),\text{ we have
}(P'',r'')<^{\para*}_k(P,r),\]
so $<^{\para*}_k$ is illfounded below $(P,r)$, a contradiction.
\end{proof}

\begin{lem}\label{lem:p-pres_for_premouse_from_para_order}
Let $M\in\Mm^\iter_k$ and suppose $H$ is $(k+1)$-solid
for all $H<^{\para}_k M$.
Let $\Tt$ be $k$-maximal
on $M$ of successor length, such that $b^\Tt$
does not drop in model or degree.
Then $p^{M^\Tt_\infty}_{k+1}=i^\Tt(p^M_{k+1})$ and
$\rho_{k+1}^{M^\Tt_\infty}=\sup i^\Tt``\rho_{k+1}^M$.
\end{lem}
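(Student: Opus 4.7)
The $\rho_{k+1}$-preservation clause is immediate from Lemma~\ref{lem:iteration_p_rho_pres}, which also gives $p_{k+1}^N\leq p^*:=i^\Tt(p_{k+1}^M)$, where $N=M^\Tt_\infty$. To rule out strict inequality, I argue by contradiction: suppose $p_{k+1}^N<p^*$. The overall plan is to reflect this failure into a finite subtree of $\Tt$, apply the inductive hypothesis to obtain $(k+1)$-solidity of the resulting finite iterate, promote that solidity back to $M$ itself via $(z_{k+1},\zeta_{k+1})$-preservation, and then derive a contradiction from the ``moreover'' clause of Lemma~\ref{lem:iteration_p_rho_pres}.

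Applying Lemma~\ref{lem:simple_embedding_exists} to a finite support of $\Tt$ containing the ordinals of $p_{k+1}^N$ at the last node, I obtain a finite, terminally-non-dropping $k$-maximal tree $\bar\Tt$ on $M$ with last model $\bar N$ and a $k$-preserving simple copy map $\varrho:\bar N\to N$ satisfying $\varrho\com i^{\bar\Tt}=i^\Tt$ and $p_{k+1}^N\in\rg(\varrho)$. Set $\bar q=\varrho^{-1}(p_{k+1}^N)$ and $\bar p^*=i^{\bar\Tt}(p_{k+1}^M)$; then $\varrho(\bar p^*)=p^*$ and order-preservation on ordinals gives $\bar q<\bar p^*$. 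A $\rSigma_{k+1}$-pullback through $\varrho$, using $p_k$-preservation and the cofinality of $\varrho``\rho_{k+1}^{\bar N}$ in $\rho_{k+1}^N$, shows $\Th_{\rSigma_{k+1}}^{\bar N}(\rho_{k+1}^{\bar N}\cup\{\bar q,\vec{p}_k^{\bar N}\})\notin\bar N$, hence $p_{k+1}^{\bar N}\leq\bar q<\bar p^*$. This exhibits $\bar N<^{\para}_k M$, witnessed by the single-tree stack $\bar\Tt$ and $\id:\bar N\to\bar N$, so by the inductive hypothesis $\bar N$ is $(k+1)$-solid.

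By Fact~\ref{lem:z,zeta_pres} applied along $b^{\bar\Tt}$ (viewed as a composition of weakly amenable ultrapowers, using closeness from Lemma~\ref{lem:closeness_for_superstrong}), we have $z_{k+1}^{\bar N}=i^{\bar\Tt}(z_{k+1}^M)$ and $\zeta_{k+1}^{\bar N}=\sup i^{\bar\Tt}``\zeta_{k+1}^M$. Solidity of $\bar N$ gives $\zeta_{k+1}^{\bar N}=\rho_{k+1}^{\bar N}=\sup i^{\bar\Tt}``\rho_{k+1}^M$, which together with $\zeta_{k+1}^M\leq\rho_{k+1}^M$ forces $\zeta_{k+1}^M=\rho_{k+1}^M$; so by Fact~\ref{fact:z_zeta}, $M$ itself is $(k+1)$-solid. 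Applying the ``moreover'' clause of Lemma~\ref{lem:iteration_p_rho_pres} with $q=p_{k+1}^M$ then yields $p_{k+1}^{\bar N}=i^{\bar\Tt}(p_{k+1}^M)=\bar p^*$, contradicting $p_{k+1}^{\bar N}\leq\bar q<\bar p^*$. The most delicate step is the $\rSigma_{k+1}$-pullback establishing $p_{k+1}^{\bar N}\leq\bar q$: $\varrho$ need not be continuous at $\rho_{k+1}^{\bar N}$, so one reasons with cofinally many $\alpha<\rho_{k+1}^{\bar N}$, exploiting that each partial theory $\Th_{\rSigma_{k+1}}^{\bar N}(\alpha\cup\{\bar q,\vec p_k^{\bar N}\})$ lies in $\bar N$ and maps via $\varrho$ to a partial theory in $N$, whose assembly (using $\varrho``\rho_{k+1}^{\bar N}$ cofinal in $\rho_{k+1}^N$) would recover $\Th_{\rSigma_{k+1}}^N(\rho_{k+1}^N\cup\{p_{k+1}^N,\vec p_k^N\})\in N$, which is absurd.
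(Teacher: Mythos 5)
Your proof is correct and uses the same essential ingredients as the paper's argument — the finite-support machinery (Lemma~\ref{lem:simple_embedding_exists}), $\rSigma_{k+1}$-elementarity of the copy map, the parameter order $<^{\para}_k$, and fine structure preservation along close ultrapowers — but in a genuinely different logical order. The paper first disposes of the case $\lh(\Tt)<\om$ on its own: there the hypothesis $p^N<i^\Tt(p^M)$ directly witnesses $N<^{\para}_kM$ via $(\Tt,\id)$, so $N$ is solid, so $M$ is solid, and $p_{k+1}$-preservation gives a contradiction. The infinite case is then reduced to the finite one: from $p_{k+1}$-preservation of $i^{\bar\Tt}$ one gets $\bar p<p^{\bar N}$, hence $t\in\bar N$, and the contradiction comes at the very end from the elementarity pullback through $\sigma$. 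You invert this: you use the elementarity pullback \emph{first}, to get $p^{\bar N}\leq\bar q<\bar p^*$ unconditionally (without any case split on $\lh(\Tt)$), which yields $\bar N<^{\para}_kM$ and hence solidity of $\bar N$; you then transfer solidity back to $M$ via $(z,\zeta)$-preservation, and the "moreover" clause of \ref{lem:iteration_p_rho_pres} gives $p^{\bar N}=\bar p^*$, the final contradiction. Your route avoids the explicit case split, at the cost of needing the pullback even for finite $\Tt$ (though there $\bar\Tt=\Tt$ works and the pullback is vacuous). One minor slip: you write $\zeta^M_{k+1}\leq\rho^M_{k+1}$, but Fact~\ref{fact:z_zeta} gives $\rho\leq\zeta$; the corrected direction is what you actually need (if $\rho^M<\zeta^M$ then $\rho^{\bar N}<\zeta^{\bar N}$, contradicting solidity of $\bar N$), and the substance of your step goes through unchanged.
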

\begin{proof}
Write $N=M^\Tt_\infty$.
By preservation of fine structure (Lemma
\ref{lem:basic_fs_pres} and Corollary \ref{cor:basic_fs_pres}), and without considering $<^{\para}_k$, we have:
\begin{enumerate}[label=\tu{(}\alph*\tu{)}]
 \item\label{item:rho_p_sub_pres}
$\rho^{N}_{k+1}=\sup i^\Tt``\rho^M_{k+1}$
and $p^{N}_{k+1}\leq i^\Tt(p^M_{k+1})$,
\item\label{item:solidity_equiv}
 $N$ is $(k+1)$-solid iff $M$ is $(k+1)$-solid,
\item\label{item:solid_implies_p-pres} if $M$ is $(k+1)$-solid then
$p^{N}_{k+1}=i^\Tt(p^M_{k+1})$.
\end{enumerate}
So suppose $p^{N}_{k+1}<i^\Tt(p^M_{k+1})$; we will reach a contradiction.

Suppose  $\lh(\Tt)<\om$.
Then $N<_k^{\para}M$, as witnessed by
$\Tt$ and $\id:N\to N$.
Therefore by the lemma's hypothesis,
$N$ is $(k+1)$-solid. So by \ref{item:solidity_equiv},
$M$ is $(k+1)$-solid, so by \ref{item:solid_implies_p-pres},
$p^{N}_{k+1}=i^\Tt(p^M_{k+1})$, a contradiction.

So $\lh(\Tt)\geq\om$.
Let $\bar{\Tt}$ be a  $k$-maximal tree of finite length
which captures $(\Tt,p^{N}_{k+1},0)$
(exists by \ref{lem:simple_embedding_exists}). Write
$\bar{N}=M^{\bar{\Tt}}_\infty$.
So  $b^{\bar{\Tt}}$ does not drop in model or degree,
and there is a $k$-embedding $\sigma:\bar{N}\to N$
with $\sigma\com i^{\bar{\Tt}}=i^\Tt$
and $\pbar\in \bar{N}$
with $p^{N}_{k+1}=\sigma(\pbar)$.
Using \ref{item:rho_p_sub_pres} and its version for $\bar{\Tt}$ and commutativity,
we have
$\rho^N_{k+1}=\sup\sigma``\rho^{\bar{N}}_{k+1}$,
so $\sigma(\rho^{\bar{N}}_{k+1})\geq\rho^{N}_{k+1}$.
Note that $\pbar<i^{\bar{\Tt}}(p^M_{k+1})$,
since $\sigma(\pbar)=p^N_{k+1}<i^\Tt(p^M_{k+1})=\sigma(i^{\bar{\Tt}}(p^M_{k+1}))$.
As in the previous paragraph, $i^{\bar{\Tt}}$ is $p_{k+1}$-preserving,
so $\pbar<p^{\bar{N}}_{k+1}=i^{\bar{\Tt}}(p^M_{k+1})$.
So
\[
t=\Th_{\rSigma_{k+1}}^{\bar{N}}(\rho^{\bar{N}}
_{k+1}\cup\{\pbar,\pvec_k^{\bar{N}}\})\in \bar{N}.\]
So $\sigma(t)\in N$,
and note that from $\sigma(t)$,
we get
\[\Th_{\rSigma_{k+1}}^{N}(\sigma(\rho^{\bar{N}}_{k+1})
\cup\{\sigma(\pbar),\pvec_k^{N}\})\in N.\]
(This is as in the computation
of solidity witnesses from generalized solidity witnesses,
using that $\sigma$ is $\rSigma_{k+1}$-elementary.)
But as $\sigma(\pbar)=p^{N}_{k+1}$
and $\sigma(\rho^{\bar{N}}_{k+1})\geq\rho^{N}_{k+1}$,
this is a contradiction.
\end{proof}

\section{Solidity and universality}\label{sec:solidity}

In this section we prove that normally iterable mice
are solid and universal.
For the entire section, we will fix some $k<\om$,
and deal with $k$-sound premice $M$,
and proving fine structure at the $(k+1)$th level, like $(k+1)$-solidity, etc. To reduce notation, we will usually drop the subscript ``$k+1$'' from the notation $p_{k+1}^M$, $\rho_{k+1}^M$, $\core_{k+1}(M)$, $\Hull_{k+1}^M$, $\cHull_{k+1}^M$,
writing instead $p^M$, etc. (But we still write $p_k^M$, etc,
for the objects at the $k$th level.)

In proving the $(k+1)$-solidity of a premouse $\widetilde{M}$, we will
want to show that
for certain $\widetilde{H}$ and near $k$-embeddings
\[ \widetilde{\pi}:\widetilde{H}\to\widetilde{M},\]
we
have
$\widetilde{H}\in\widetilde{M}$. In some cases we will deduce this from
Lemma \ref{lem:finite_gen_hull} (on projectum-finitely generated mice), in
others, we will quote
facts on iterable bicephali from
\cite{premouse_inheriting}, and in the remaining cases, where
the main work is, we will form and analyze a
comparison of a certain kind of bicephalus
$\widetilde{B}=(\widetilde{\delta},\widetilde{\gamma},\widetilde{\pi},\widetilde{H},\widetilde{M})$. The details of the relevant kind of bicephali are given in Definition \ref{dfn:relevant_B}, but its two models are $\widetilde{H}$ and $\widetilde{M}$, and $\widetilde{\gamma}=\crit(\widetilde{\pi})$ is an element of $p^{\widetilde{M}}$, and $\widetilde{\delta}=\card^{\widetilde{M}}(\widetilde{\gamma})$.

\subsection{The main argument}\label{sec:solidity_the_main_argument}

\begin{tm}[Solidity and universality]\label{thm:solidity}
Let $k<\om$ and let $M$ be a $k$-sound, $(k,\om_1+1)$-iterable premouse.
Then $M$ is $(k+1)$-solid and $(k+1)$-universal.\end{tm}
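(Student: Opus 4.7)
The proof proceeds by induction on the wellfounded order ${<^{\para}_k}\rest\Mm^\iter_k$ supplied by Lemma \ref{lem:param_order_wfd}. Fix $M\in\Mm^\iter_k$ and assume inductively that every $H\in\Mm^\iter_k$ with $H<^{\para}_k M$ is both $(k+1)$-solid and $(k+1)$-universal. By Lemma \ref{lem:p-pres_for_premouse_from_para_order}, this inductive hypothesis at once yields $p_{k+1}$-preservation along every terminally non-(model,degree)-dropping $k$-maximal tree $\Tt$ on $M$: namely $i^\Tt(p^M_{k+1})=p^{M^\Tt_\infty}_{k+1}$ and $\rho^{M^\Tt_\infty}_{k+1}=\sup i^\Tt``\rho^M_{k+1}$. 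This takes the place of weak Dodd--Jensen throughout.

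Suppose first, toward contradiction, that $M$ fails to be $(k+1)$-solid, and let $n$ be least such that the $n$-th generalized solidity witness $w_n$ for $(M,p^M_{k+1})$ is missing from $M$. Set $q=p^M_{k+1}\rest n$, $\gamma=(p^M_{k+1})_n$, $\delta=\card^M(\gamma)$, $H=\cHull^M_{k+1}(\gamma\cup\{q,\pvec^M_k\})$, and let $\pi:H\to M$ be the uncollapse, so $\crit(\pi)=\gamma$. By minimality of $n$, $(H,q^H)$ is $(k+1)$-solid with $\pi(q^H)=q$ and $\gamma\leq\min(q^H)$; moreover $q\conc\left<\gamma\right>\ins p^M_{k+1}$, so $B=(\delta,\gamma,\pi,H,M)$ is a degree-$k$ relevant bicephalus in the sense of Definition \ref{dfn:relevant_B}. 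A standard copying construction through $\pi$, with the book-keeping needed for mismatched dropping and anomalous nodes (Definition \ref{dfn:anomalous}), shows that $B$ inherits a $((k,k),\omega_1+1)$-iteration strategy from $M$.

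The main step is a degree-maximal self-comparison $(\Tt,\Uu)$ of $B$ driven by least-disagreement extender selection, augmented with tie-breaker rules (modelled on those in the proof of Lemma \ref{lem:finite_gen_hull}) that prefer moving into the $H$-side of each $B^\Tt_\alpha,B^\Uu_\alpha$ whenever doing so does not prematurely terminate the comparison. The $p_{k+1}$-preservation above, combined with Remark \ref{rem:abstract_iter_relevant_B_props}, ensures that at every bicephalus stage $\alpha\in\curlyB^\Tt$ the maps $i_{0\alpha},j_{0\alpha}$ remain $p_{k+1}$-preserving $k$-embeddings and each $B_\alpha$ remains a genuine relevant bicephalus; a variant of the closeness lemma \cite[6.1.5]{fsit} adapted to this setting preserves fine structure on both sides, and the comparison terminates by the usual length argument. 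The hard part is the analysis of the terminal configuration: by a case analysis on which side is ``ahead'' and whether the terminal node lies in $\curlyB$, $p_{k+1}$-preservation and the tie-breakers should force the final $H$-side model to appear as a proper segment of, or equal to, the final $M$-side iterate. In that situation $H<^{\para}_k M$, so by the inductive hypothesis $H$ is $(k+1)$-solid and $(k+1)$-universal, and hence Lemma \ref{lem:finite_gen_hull} and Corollary \ref{cor:def_over_core} both apply to $H$. Coupled with the condensation theorem \cite[Theorem 5.2]{premouse_inheriting} in its case $H\in M$ (where the solidity requirement is vacuous), this forces $H\pins M$, whence $w_n$ is coded in $H$ and hence lies in $M$, contradicting the choice of $n$.

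Having established $(k+1)$-solidity of $M$, I would deduce $(k+1)$-universality as follows. Assume some $A\in\pow(\rho^M_{k+1})\inter M$ lies outside $\cHull^M_{k+1}(\rho^M_{k+1}\cup\{\pvec^M_{k+1}\})$, so $C=\core_{k+1}(M)\neq M$. Form the pre-exact bicephalus $(\rho^M_{k+1},C,M)$ of Section \ref{sec:bicephali}; it is $((k,k),\omega_1+1)$-iterable by standard lifting. Its self-comparison, now analyzed exactly as in the proof of Lemma \ref{lem:finite_gen_hull} (with $(k+1)$-solidity of $M$ and $p_{k+1}$-preservation both available from the preceding paragraph and the induction hypothesis), is forced to collapse to $M=C$, the desired contradiction. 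The principal obstacle throughout is the terminal-state analysis of the bicephalus self-comparison in the solidity step: one must control every possible outcome of the two trees, including those in which mismatched dropping or anomalous nodes occur and those in which superstrong extenders cause adjacent extenders of equal length; only the combination of inductive $p_{k+1}$-preservation, the $H$-preferred tie-breakers, and the persistence of relevance along the bicephalus stages suffices to force the desired $H\in M$.
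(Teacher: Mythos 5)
Your general framework is on the right track: the induction on $<^{\para}_k$, $p_{k+1}$-preservation via Lemma \ref{lem:p-pres_for_premouse_from_para_order} as a substitute for weak Dodd--Jensen, and the relevant bicephalus $B=(\delta,\gamma,\pi,H,M)$ of Definition \ref{dfn:relevant_B} as the comparison object are all the right ingredients. But there are genuine gaps in both halves of your argument.

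For solidity, you propose a \emph{self}-comparison of $B$. The paper instead compares $B$ against the premouse $M$; a footnote in \S\ref{sec:solidity} notes that the self-comparison was the original 2015 approach and that comparing $B$ with $M$ (suggested by Steel) is the simplification actually used, precisely because the terminal-state analysis is much more tractable. More seriously, the deduction you run at the end is circular: you want to conclude that the comparison ``forces $H\pins M$'' by invoking \cite[Theorem 5.2]{premouse_inheriting} together with Lemma \ref{lem:finite_gen_hull} and Corollary \ref{cor:def_over_core}. But the only version of condensation available at this point of the induction is the one with the proviso ($*$) in clause 1f, namely that $H\in M$ or $M$ is $(k+1)$-solid --- exactly the things you are trying to prove. (The strengthened Theorem \ref{thm:condensation} dropping that proviso is a \emph{corollary} of the solidity theorem, deduced only in \S\ref{sec:conclusion}.) The paper handles the terminal configuration not by appealing to condensation to force $H\pins M$, but by working under the contradictory Assumption \ref{ass:H_notin_M} ($H\notin M$ and $M$ non-solid) and extracting a contradiction through the detailed case split (Claims \ref{clm:S^Tt_alpha=0} through \ref{clm:gamma^+^H_movement} and the subsequent Cases on whether $M|\gamma$ is passive or active). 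Your sketch --- ``the tie-breakers should force the final $H$-side model to appear as a proper segment\ldots'' --- is asserting the content of that case analysis without supplying it, and your claimed bridge from that assertion to $H\pins M$ does not close.

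The universality argument does not work as stated. Forming the pre-exact bicephalus $(\rho^M_{k+1},C,M)$ of \S\ref{sec:bicephali} requires $M$ to be $\rho$-finitely generated, which you have not established (the paper obtains this by first reducing to $\Mbar=\core(M)$, using Lemma \ref{lem:actual_lemma_needed} to get line (\ref{eqn:M_generated_by_rho^+}); you skipped that reduction). Even if you could form it, the analysis of Lemma \ref{lem:finite_gen_hull} concludes that $M$ is a finite normal iterate of $C$, not that $M=C$, so there is no contradiction with $C\neq M$. The paper instead derives universality cheaply from the solidity of $\Mbar$: if $M$ were non-universal, apply clause 1(d) of condensation to the core map $\pi:\Mbar\to M$; this would force $M|\rho$ to be active, contradicting that $\rho$ is a cardinal of $M$. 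That route, together with the preliminary reduction to $\Mbar$, is what the proposal is missing.
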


We gave an outline for the plan of the proof in \S\ref{sec:outline_solidity}. The reader might want to review that prior to beginning the proof below  to get the general idea in mind (though we won't actually rely on it).

\begin{proof}
It suffices to consider only premice in $\Mm_k^\iter\cap\HC$,
since given any $M\in\Mm_k^\iter$,
working in $L[M]$, we can find a countable elementary
substructure of $M$.

The proof is by induction on ${<^{\para}_k}\rest(\Mm_k^\iter\cap\HC)$ (see
\S\ref{sec:mouse_order}),
and this order is in large part a substitute for Dodd-Jensen.
So fix $M\in\Mm_k^\iter\cap\HC$
and suppose by induction that
\begin{equation}\label{eqn:solid_univ_induction}\all H \left[(H<^{\para}_k M)
\Longrightarrow
H\text{ is }(k+1)\text{-solid and }(k+1)\text{-universal}\right];\end{equation}
note here that  every $H<^{\para}_k M$ is in
$\HC$, since the trees $\Tt$ witnessing that $H<^{\para}_kM$ are finite.
We must prove that $M$ is $(k+1)$-solid and $(k+1)$-universal.

Before we begin the main argument,
sketched in \S\ref{sec:outline_solidity},
we want to  observe that we may assume that
$M=\Hull^M(\rho^M\un
\pvec^M_{k+1})$, and also reduce the problem to $(k+1)$-solidity (dispensing with $(k+1)$-universality).
Let $\rho=\rho^M$.
Let $\Mbar=\core(M)$ and let $\pi:\Mbar\to M$ be the uncollapse.
Then $\pi$ is a $k$-embedding and $\Mbar$ is $(k,\om_1+1)$-iterable.
Let $\pi(q)=p^M$. We have \begin{equation}\label{eqn:Mbar_is_Hull}\Mbar=\Hull^{\Mbar}(\rho\cup\{q,\pvec_k^{\Mbar}\}).\end{equation}

\begin{clmfive}\label{clm:p^Mbar=pbar} $\rho^\Mbar=\rho$ and
$p^\Mbar=q$.
\end{clmfive}
\begin{proof}
That $\rho^\Mbar=\rho$ is as usual:
By line (\ref{eqn:Mbar_is_Hull}),
 $\rho^\Mbar\leq\rho$.
Conversely, given  $\delta<\rho$
and $A\sub\delta$ which is
 $\bfrSigma_{k+1}^\Mbar$-definable, then since  $\pi$ is $\rSigma_{k+1}$-elementary
and $\crit(\pi)\geq\rho$, $A$ is also $\bfrSigma_{k+1}^M$-definable,  so $A\in M|\delta^{+M}\sub M|\rho\sub \Mbar$.
So $\rho^{\Mbar}=\rho$.

Since $\rho^{\Mbar}=\rho$ and by line (\ref{eqn:Mbar_is_Hull}),
 $p^\Mbar\leq q$.
Suppose $p^\Mbar<q$. Then
$\Mbar<^{\para}_k M$, as witnessed by $\Tt,\pi$, where $\Tt$ is trivial. So by
line
(\ref{eqn:solid_univ_induction}), $\Mbar$ is $(k+1)$-solid and
$(k+1)$-universal.
So letting $C=\core(\Mbar)$
and $\sigma:C\to\Mbar$ the core map,
we have $\rho^C=\rho$ and $\pi(p^C)=p^{\Mbar}$,
so  $\pi(\sigma(p^C))<p^M$. Therefore $C\in M$ (by the
minimality
of $p^M$) and $C<^{\para}_kM$, so $C$ is also $(k+1)$-solid,
so $C$ is $(k+1)$-sound.
So by line (\ref{eqn:Mbar_is_Hull}) and Corollary \ref{cor:def_over_core_from_extra_hypos}
applied to $\bar{M}$, the theory
\[ t=\Th_{\rSigma_{k+1}}^{\bar{M}}(\rho\cup\{q,\pvec_k^{\bar{M}}\}),\]
coded as a subset of $\rho$, is definable from parameters over $C$.
Since $C\in M$, therefore $t\in M$.
But $t$ is equivalent to
\[t'= \Th_{\rSigma_{k+1}}^M(\rho\cup\{p^M,\pvec_k^M\}),\]
so $t'\in M$, a contradiction.
\end{proof}

\begin{clmfive}\label{clm:Mbar_solid_implies_M_solid,univ} If $\Mbar$ is $(k+1)$-solid then $M$ is $(k+1)$-solid and
$(k+1)$-universal.\end{clmfive}
\begin{proof}
Suppose $\Mbar$ is $(k+1)$-solid. Then by Claim \ref{clm:p^Mbar=pbar} and line (\ref{eqn:Mbar_is_Hull}), $\Mbar$ is
$(k+1)$-sound, and
$\pi(p^\Mbar)=p^M$, so $M$ is $(k+1)$-solid. Consider $(k+1)$-universality.
We have $\Mbar=\core(M)\notin M$,
and the core map $\pi:\Mbar\to M$ satisfies
the requirements of
 condensation Fact \ref{fact:condensation}, with the
parameter
``$\rho$'' there being $\rho=\rho^{\Mbar}=\rho^M$.  If $\rho^{+\Mbar}<\rho^{+M}$,
then by Fact \ref{fact:condensation} part \ref{item:fact_condensation_H_notin_M}\ref{item:cond_1d},
$M|\rho$ is active, but $\rho$ is an $M$-cardinal.   So $\rho^{+\Mbar}=\rho^{+M}$.
But then $\Mbar||\rho^{+\Mbar}=M||\rho^{+M}$, since
$\crit(\pi)\geq\rho$ and if $\crit(\pi)=\rho$ then we can apply condensation for $\om$-sound mice (Fact \ref{fact:om_condensation}) to $\pi\rest N:N\to\pi(N)$ for each $N\pins\Mbar$ with $\rho_\om^N=\rho$, to see that $N\pins M$.
This gives in particular that $\pow(\rho)\cap \Mbar=\pow(\rho)\cap M$,
so $M$ is $(k+1)$-universal, as desired.
\end{proof}

So it suffices to prove that $\Mbar$ is $(k+1)$-solid.

\begin{clmfive} For every $H$, if $H<^{\para}_k\Mbar$ then
$H<^{\para}_kM$.\end{clmfive}

\begin{proof}
 By Claim \ref{clm:p^Mbar=pbar}, and
via copying as in the proof of transitivity of $<^{\para}_k$.
\end{proof}
So the
induction hypothesis, line (\ref{eqn:solid_univ_induction}),
still applies after replacing $\Mbar$ with $M$.
We reset notation, writing ``$M$''
instead of ``$\Mbar$''; this amounts to proving $M$ is $(k+1)$-solid under the added
assumption, which we now make, that
\begin{asstwo}\label{ass:M_generated_by_rho^+}$M=\Hull^M(\rho^M\un
\pvec^M_{k+1})$.
\end{asstwo}

For $q\ins p^M$, we will prove that $(M,q)$ is $(k+1)$-solid, by
induction on $\lh(q)$. So let $q\pins p^M$ be such that $(M,q)$
is $(k+1)$-solid and let
$n=\lh(q)$.
For $k$-sound premice $N$, write $q^N$ for $p^N\rest n$. Let
$\gamma=\max(p^M\cut q^M)$. Let
\[ H=\cHull^M(\{q^M,\pvec^M_k\}\un\gamma) \]
and let $\pi:H\to M$ be the uncollapse and $\pi(r)=q^M$ (we have since discarded
the core map $\pi$
defined earlier). We
have to see that $H\in M$.
So we now assume otherwise,
and will draw out a contradiction:

\begin{asstwo}\label{ass:H_notin_M}$H\notin M$.
\footnote{Once
we have proven that in fact, $H\in M$,
we will anyway be able to deduce a precise description
of $H$ in terms of proper segments of $M$
via Theorem \ref{thm:finite_gen_hull}, and thus
recover at that point information that might otherwise
have been obtained by arguing directly, without contradiction.}
\end{asstwo}

We now prepare for the main argument.
Let $B=(\delta,\gamma,\pi,H,M)$.
We will consider $B$ as a kind of bicephalus, for which $\delta$ is the primary exchange ordinal; so extenders $E$ with $\crit(E)<\delta$ will lift the entire bicephalus, whereas if $\crit(E)\geq\delta$ then $E$ will apply to  just one model.
The plan is to form and analyse a comparison of $B$ with $M$.
This bicephalus $B$ will be iterable because $M$ is, and as we will be able to lift trees
on $B$ to trees on $M$. The comparison between $B$ and $M$ will terminate via essentially the usual argument for comparison of mice. The fine structural properties of $B$ will be preserved nicely by iterations, so that iterates $B'$ will have similar properties. But using these properties, and the precise rules for forming the comparison, we will be able to argue that the comparison cannot terminate, giving the desired contradiction.

We will specify precisely how we form iteration trees on  $B$ in Definition \ref{dfn:tree_on_rel_B}.
But before that, it will be useful to establish some more of the fine structural properties of $B$, some of which
will be abstracted, in Definition  \ref{dfn:relevant_B}, into the kind of bicephali $B'$ which will arise as iterates of $B$. We will then establish the basic properties of iteration trees on $B$, adapting the picture for trees on premice and phalanxes. After that, immediately
following Claim \ref{clm:B_iterable_fs_pres}, we will specify the precise rules for the comparison,
and then proceed to the actual analysis of the comparison, which constitutes Claims \ref{clm:T,U_normality}--\ref{clm:solidity_M|gamma_not_active}.

\begin{clmfive}\label{clm:solidity_pi_is_k-embedding} $\pi$ is a $k$-embedding.
\end{clmfive}
\begin{proof}
Suppose not.
Then $\pi``\rho_k^H$ is bounded in $\rho_k^M$,
which  implies $H\in M$; see
\cite[Lemma 2.4]{premouse_inheriting}. This contradicts Assumption \ref{ass:H_notin_M}.
\end{proof}

Therefore $\crit(\pi)=\gamma<\rho_k^H$
and $\pi(\gamma)<\rho_k^M$. Let $\delta=\card^M(\gamma)$.
As usual, there is a significant break into two cases,
and using condensation for $\om$-sound mice (Fact \ref{fact:om_condensation}) in the usual manner like in the proof of Claim \ref{clm:Mbar_solid_implies_M_solid,univ}, we have
either:
\begin{enumerate}[label=\arabic*.,ref=\arabic*]
 \item $\delta=\gamma$ is a limit cardinal of $M$
 and inaccessible in $H$, $\gamma^{+H}\leq\gamma^{+M}$
and $H||\gamma^{+H}=M||\gamma^{+H}$, or
 \item $\gamma=\delta^{+H}$ and $\pi(\gamma)=\delta^{+M}$
 and $H||\gamma=M||\gamma$, and either:
 \begin{enumerate}[label=--]
 \item $M|\gamma$
 is passive and $H||\gamma^{+H}=M||\gamma^{+H}$,
 or
\item $M|\gamma$ is active with an $M$-total extender $F$
 and $H||\gamma^{+H}=\Ult(M,F)||\gamma^{+H}$.
 \end{enumerate}
\end{enumerate}

So in any case, we have $H||\gamma^{+H}\in M$.

\begin{clmfive}
 $H\nins M$, and if
 $M|\gamma$ is active then
$H\nins U=\Ult(M|\gamma,F^{M|\gamma})$.
\end{clmfive}
\begin{proof}
We have $H\neq M$, so if $H\ins M$ then $H\pins M$ and $H\in M$. And if
  $M|\gamma$ is active then $U$ (as above) is in $M$,
 so if $H\ins U$ then $H\in M$.
\end{proof}

\begin{clmfive}\label{clm:not_1_2_with_gamma_between}\
\begin{enumerate}[label=\arabic*.,ref=\arabic*]
\item If $\gamma=\lgcd(H)$ then
$H,M$ are active type 2 and $k=0$.
\item\label{item:not_1_2_with_gamma_between} If $H,M$ are active type 1 or 2
and $k=0$
then $\gamma\notin[\kappa,\kappa^{+M}]$,
where $\kappa=\crit(F^M)$.
\end{enumerate}
\end{clmfive}
\begin{proof}
 Suppose $\gamma=\lgcd(H)$. Then if
 $H$ is passive then $H=H||\gamma^{+H}\in M$. If $H$ is active
type 3
then $\rho_0^H=\gamma$,
contradicting that $\gamma<\rho_k^H$.
If $H$ is type 1
then
$\gamma>\delta$ and $\gamma=\nu(F^H)$,
so $k=0$, but then $\pi``\gamma$ is bounded in $\pi(\gamma)=\nu(F^M)$,
so by standard calculations,
$\pi$
is
bounded in $\OR^M=\rho_0^M=\rho_k^M$, a
contradiction. So $H$ is type 2, so $M$ is also, and also $\rho_1^H\leq\gamma$, so $k=0$.

Now suppose  $H,M$ are active type 1/2, $k=0$
and  $\gamma\in[\kappa,\kappa^{+M}]$.
Note that $\kappa<\gamma=\kappa^{+H}<\kappa^{+M}$
and
$\pi``\kappa^{+H}=\kappa^{+H}$ is bounded in $\kappa^{+M}$.
So again $\pi$ is bounded in
$\rho_k^M$.
\end{proof}

\begin{clmfive}\label{clm:pi(p_k+1^H)<p_k+1^M}
We have:
\begin{enumerate}[label=\arabic*.,ref=\arabic*]
 \item\label{item:pi(p^H)<p^M} $\pi(p^H)<p^M$, so $H<^{\para}_kM$,
 \item\label{item:H_solid_and_univ} $H$ is $(k+1)$-solid and $(k+1)$-universal
with $\rho^H\leq\gamma$,
\item\label{item:q^H=r_and_p^H=q^H_or_q^H_conc_s} $q^H=r$,
and either $p^H=q^H$ or $p^H=q^H\conc s$ for some $s\sub\gamma$,
\item\label{item:rho^M<rho^H} $\rho^M<\rho^H$.
\end{enumerate}
\end{clmfive}
\begin{proof}
Parts \ref{item:pi(p^H)<p^M}--\ref{item:q^H=r_and_p^H=q^H_or_q^H_conc_s}: Recall $\pi(r)=q^M$. So $H=\Hull^H(\gamma\cup\{r,\pvec_k^H\})$.
So $\rho^H\leq\gamma$ and either:
\begin{enumerate}[label=(\roman*)]
 \item\label{item:p^H_leq_r} $p^H\leq r$, so $\pi(p^H)\leq q^M<p^M$, or
 \item\label{item:p^H=r_conc_s} $p^H=r\conc s$ where $s\sub\gamma$,
 so $\pi(p^H)=q^M\conc\pi(s)=q^M\conc s<p^M$.
\end{enumerate}

So in either case $\pi(p^H)<p^M$,
so by line (\ref{eqn:solid_univ_induction}), $H$ is $(k+1)$-solid and $(k+1)$-universal.

We have verified parts \ref{item:pi(p^H)<p^M} and
\ref{item:H_solid_and_univ}.
Now consider part \ref{item:q^H=r_and_p^H=q^H_or_q^H_conc_s}. It is enough to see that either $p^H=r$ or $p^H=r\conc s$ for some $s\sub\gamma$, since this implies $q^H=r$, as we already know that $H$ is $(k+1)$-solid.
And since
\ref{item:p^H_leq_r} or \ref{item:p^H=r_conc_s} above holds,  it therefore suffices to see that $p^H\not<r$. So
suppose $p^H<r$. Let $C=\core(H)$. First note that since $H<^{\para}_kM$, also $C<^{\para}_kM$, $C$ is $(k+1)$-sound and $\rho^C=\rho^H$.

Now we will show that $C\in M$
and that $H$ is finitely generated above its projectum; that is, \begin{equation}\label{eqn:H_fin_proj_gend} H=\Hull^H(\rho^H\un\{x\})\text{ for some }x\in\core_0(H).\end{equation}
Given this, as in the proof of Claim
\ref{clm:p^Mbar=pbar},
we can use Lemma \ref{lem:finite_gen_hull} to deduce that
 $H\in M$.

So, since $p^H<r$, we have $\pi(p^H)<q^M$, and because $(M,q^M)$ is solid and
$\rho^H\leq\gamma=\crit(\pi)$,
therefore $C\in M$.

Now let us establish line (\ref{eqn:H_fin_proj_gend}).
Since $C\in M$
and $M|\delta\ins C$,
we have $\rho^H=\rho^C\geq\delta$. Also $M||\gamma\ins H$ and clearly $\rho^H\leq\gamma$.
So $\rho^C=\rho^H\in\{\delta,\gamma\}$
and
 if $\gamma$ is an $M$-cardinal then $\rho^H=\gamma=\delta$.
 But if $\rho^H=\gamma$ then line (\ref{eqn:H_fin_proj_gend}) holds, as witnessed by $x=(r,\pvec_k^H)$, so suppose
$\rho^H=\delta<\delta^{+H}=\gamma$.
Then since $H$ is $(k+1)$-universal, we have
$\gamma\sub\Hull^H((\delta+1)\un \{\pvec_{k+1}^H\})$ (note the hull is
uncollapsed), and
line (\ref{eqn:H_fin_proj_gend}) follows, as desired.

Part \ref{item:rho^M<rho^H}:
 Since $\crit(\pi)\geq\rho^M$,  easily $\rho^H\geq\rho^M$. So suppose $\rho=\rho^H=\rho^M$. Let
 \[ t=\Th_{\rSigma_{k+1}}^H(\rho\cup\{p^H,\pvec_k^H\})=
  \Th_{\rSigma_{k+1}}^M(\rho\cup\{\pi(p^H),\pvec_k^M\}).
 \]
Since $\pi(p^H)<p^M$
(by Claim \ref{clm:pi(p_k+1^H)<p_k+1^M}),
  $t\in M$, so $C=\core(H)\in M$.
 Since $H$ is $(k+1)$-universal,
 $C$ codes a surjection $\rho\to H\cap\pow(\rho)$, so it follows that $\rho^{+H}<\rho^{+M}$.
 Since $H|\delta=M|\delta$ and $\delta$ is an $M$-cardinal, therefore $\delta\leq\rho$, so $\gamma\in\{\rho,\rho^{+H}\}$.
 So if $\gamma=\rho$
 then then since $\pi(r)<p^M$,
 we have $H\in M$ by the minimality of $p^M$.
 And if $\gamma=\rho^{+H}$
 then since $H$ is $(k+1)$-universal, $H=\Hull^H(\rho\cup\{r,\alpha,\pvec_k^H\})$
 for some $\alpha<\gamma$,
 and since $\pi(r\conc\left<\alpha\right>)<p^M$, the minimality of $p^M$ again yields that $H\in M$.\footnote{An alternative to these last two sentences would be to argue via Lemma \ref{lem:finite_gen_hull} as in the proof of parts \ref{item:pi(p^H)<p^M}--\ref{item:q^H=r_and_p^H=q^H_or_q^H_conc_s},
 but  the argument provided is simpler.}
\end{proof}

 In the following definition we abstract out the key properties of $B$ that we have established so far, and which will also hold for the kinds of iterates $B'$ to be considered:

\begin{dfn}\label{dfn:relevant_B}
A structure
$B'=(\delta',\gamma',\pi',H',M')$
is a \dfnemph{pre-relevant bicephalus of degree $k$ and length $n$}
iff:
\begin{enumerate}[label=--]
 \item $M'$ is a $k$-sound pm,
$\lh(p^{M'})\geq n$,
 $(M',q^{M'})$ is
solid, $\delta'=\card^{M'}(\gamma')$ and
\[ M'=\Hull^{M'}((\gamma'+1)\cup\{q^{M'},
\pvec_k^{M'}\}),\]
 \item $H'$ is a $k$-sound pm,
$\lh(p^{H'})\geq n$,
 $(H',q^{H'})$ is solid, $\gamma'$ is an $H'$-cardinal,
 $\gamma'\leq\min(q^{H'})$ and
\[ H'=\Hull^{H'}(\gamma'\cup\{q^{H'},\pvec_k^{H'}\}),\]
\item $\pi':\core_0(H')\to\core_0(M')$
 is a $k$-embedding, $\crit(\pi')=\gamma'$ and  $\pi(q^{H'})=q^{M'}$.
\end{enumerate}

Let $B'$ be a pre-relevant bicephalus with notation as above.
We write $\delta^{B'}=\delta'$ and $\gamma^{B'}=\gamma'$ etc.
Note that
$H'=\cHull^{M'}(\gamma'\cup\{q^{M'},\pvec_k^{M'}\})$
and $\pi'$ is the uncollapse  map.
Note that $q^{M'}\ins p^{M'}$ and either
\begin{enumerate}[label=(\roman*)]
 \item\label{item:relevant_bicephalus}
$q^{M'}\conc\left<\gamma\right>\ins p^{M'}$,
or
\item$p^{M'}< q^{M'}\conc\left<\gamma'\right>$.
\end{enumerate}
We say that $B'$ is \dfnemph{relevant}
iff \ref{item:relevant_bicephalus}  holds.
\end{dfn}

\begin{dfn}\label{dfn:ult_of_relevant_bicephalus}
Let $B'$ be a pre-relevant bicephalus and $E$ be a short extender.
We say that $E$ is \dfnemph{weakly amenable to $B'$}
iff  $\crit(E)<\delta'$ and $E$ is weakly amenable to
$B'|\delta^{B'}=H'|\delta^{B'}=M'|\delta^{B'}$.
We (attempt to) define
\[
\Ult(B',E)=(\widetilde{\delta},\widetilde{\gamma},
\widetilde{\pi},\widetilde{H},\widetilde{M})\]
as follows. We set $\widetilde{M}=\Ult_k(M',E)$;
suppose this is wellfounded.
Letting $j=i^{M',k}_E$,
then $(\widetilde{\delta},\widetilde{\gamma})=j(\delta',\gamma')$,
\[
\widetilde{H}=\cHull^{\widetilde{M}}(\widetilde{\gamma}
\cup\{q^{\widetilde{M}},
\pvec_k^{\widetilde{M}}\}),\]
and $\widetilde{\pi}:\widetilde{H}\to\widetilde{M}$
is the uncollapse. Define $i^{B'}_E:H'\to\widetilde{H}$
as
$i^{B'}_E=\widetilde{\pi}^{-1}\com j\com\pi'$.
\end{dfn}
\begin{dfn}\label{dfn:abstract_relevant_B}
Let $B'$ be a relevant bicephalus of degree $k$.
An \dfnemph{abstract degree $k$ weakly amenable iteration}
of $B'$ is the obvious analogue of Definition \ref{dfn:abstract_iteration}:
a pair
$\left(\left<E_\alpha\right>_{\alpha<\lambda},\left<B_\alpha\right>_{
\alpha\leq\lambda}\right)$
where $B_0=B'$, $B_\alpha$ is a relevant bicephalus
for each $\alpha<\lambda$,
each $E_\alpha$ is a short extender weakly amenable to $B_\alpha$
(so $\crit(E_\alpha)<\delta^{B_\alpha}$),
$B_{\alpha+1}=\Ult(B_\alpha,E_\alpha)$,
and $B_\eta$ is the resulting direct limit when $\eta\leq\lambda$
is a limit. \dfnemph{Wellfoundedness} of the iteration requires
that $B_\lambda$ is wellfounded.\end{dfn}

\begin{lem}\label{lem:ult_relevant_B_props}
Continuing as in
Definition \ref{dfn:ult_of_relevant_bicephalus}, if $\widetilde{M}$ is
wellfounded then:
\begin{enumerate}[label=\arabic*.,ref=\arabic*]
\item\label{item:j_is_k-emb_etc} $j$ is a $k$-embedding with
$j(q^{M'})=q^{\widetilde{M}}$.
\item\label{item:H_in_M_iff_Ht_in_Mt} $H'\in M'$ iff
$\widetilde{H}\in\widetilde{M}$.
\item $\widetilde{B}$ is a pre-relevant bicephalus.
\item $\widetilde{\pi}$, $i^{B'}_E$ are $k$-embeddings,
$i^{B'}_E\rest(\gamma')^{+H'}\sub j$
and $\widetilde{\pi}\com i^{B'}_E=j\com\pi'$.
\item\label{item:iteration_pres_relevance_B} If $j$ is $p_{k+1}$-preserving and $B'$ is relevant
then $\widetilde{B}$ is relevant.
\end{enumerate}
Likewise for abstract degree $k$ weakly amenable
iterations of $B'$, with $B_\lambda$
replacing $\widetilde{B}$, $j_{0\lambda}$ replacing $j$, etc.\end{lem}
\begin{proof}
Part \ref{item:j_is_k-emb_etc} is completely routine.
Part \ref{item:H_in_M_iff_Ht_in_Mt} is a consequence
of this and $(z,\zeta)$-preservation
(Fact \ref{fact:z,zeta_pres}). That is, for the more subtle direction, suppose that $H'\notin M'$.
Then since $(M',q^{M'})$ is $(k+1)$-solid, we have $q^{M'}\ins z^{M'}$ and
$(z^{M'},\zeta^{M'})\leq(q^{M'},\gamma')$.
So $(z^{\widetilde{M}},\zeta^{\widetilde{M}})=(j(z^{M'}),\sup j``\zeta^{M'})\leq(j(q^{M'}),j(\gamma'))$,
but $j(q^{M'})=q^{\widetilde{M}}$ and $(\widetilde{M},q^{\widetilde{M}})$ is $(k+1)$-solid,
so
\[ \Th_{\rSigma_{k+1}}^{\widetilde{M}}(j(\gamma')\cup\{q^{\widetilde{M}},\pvec_k^{\widetilde{M}}\})\notin\widetilde{M},\]
so $\widetilde{H}\notin \widetilde{M}$.

For the remaining parts, one should first prove everything other than
the fact that $i\rest(\gamma')^{+H'}\sub j$, where $i=i^{B'}_E$,
and we leave those first parts to the reader and assume them.
Let us now deduce that $i\rest(\gamma')^{+H'}\sub
j$. We first show $i(\gamma')=j(\gamma')$.
Note that
\[ \pi'(\gamma')=\text{ the least
}\gamma^*\in\Hull^{M'}\big(\gamma'\cup\{q^{M'},\pvec_k^{M'}\}\big)\cut\gamma'.\]
We just need to see that
\[ j(\pi'(\gamma'))=\text{ the least
}\gamma^*\in\Hull^{\widetilde{M}}\big(j(\gamma')\cup\{q^{\widetilde{M}},
\pvec_k^{\widetilde{M}}\}\big)\cut j(\gamma').\]
But supposing that $\gamma^*\in[j(\gamma'),j(\pi'(\gamma')))$ is also in that hull,
then the existence of such a $\gamma^*$ is an
$\rSigma_{k+1}^{\widetilde{M}}$
assertion
of the parameter $j(q^{M'},\gamma',\pvec_k^{M'})$,
hence pulls back to $M'$, a contradiction.

So $i(\gamma')=j(\gamma')$. But then it is easy to deduce that
$i\rest\pow(\gamma')\cap H'\sub j$, using that
$\widetilde{\pi}\com i=j\com\pi'$ and $\crit(\pi')=\gamma'$
and $\crit(\widetilde{\pi})=j(\gamma')$. It follows
that $i\rest(\gamma')^{+H'}\sub j$.
\end{proof}

\begin{rem}\label{rem:abstract_iter_relevant_B_props}
Continuing as in Definition \ref{dfn:abstract_relevant_B},
suppose that $i^{M_\alpha,k}_{E_\alpha}$ is $p_{k+1}$-preserving
for each $\alpha<\lambda$,
and $M_\lambda$
is wellfounded.
Then note that $B_\lambda$ is also a relevant
bicephalus and
the maps
$j_{\alpha\lambda}:M_\alpha\to M_\lambda$
are $p_{k+1}$-preserving $k$-embeddings,
and the analogue of Lemma \ref{lem:ult_relevant_B_props} holds.\end{rem}

We now define the kind of iteration tree on $B$ which we will use for comparison: a \emph{degree-maximal iteration tree}.
These are analogous to those in \S\ref{sec:bicephali}, but there are some key
differences:
when forming an ultrapower  $\widetilde{B}=\Ult(B',E)$ of a bicephalus
and the associated maps $i:H'\to \widetilde{H}$ and $j:M'\to \widetilde{M}$,
we follow  \ref{dfn:ult_of_relevant_bicephalus},
and when $\delta^{B'}<\gamma^{B'}$ and $\crit(E)=\delta^{B'}$,
then we do not form a bicephalus, but we need to be careful about how to
proceed:
 in condition
in
\ref{dfn:tree_on_rel_B}(\ref{item:losing_bicephalus})\ref{item:crit=delta} below
we apply the extender $E_\alpha$ to some $Q\pins M_\beta$,
although one might have considered applying it to $H_\beta$.
One further difference, the analogue of the anomalous case
in phalanx iterations (see Footnote \ref{ftn:anomalous_footnote}), is that we can have non-premice
appearing in the tree (they arise in the situation
just mentioned, if $Q$ is type 3
with $\rho_0^Q=\delta_\beta$);
thus, we only say \emph{pre-ISC-}premouse
in condition \ref{dfn:tree_on_rel_B}(\ref{item:seg-premice}).

\begin{dfn}\label{dfn:tree_on_rel_B}
Let $B'$ be a degree $k$ relevant bicephalus.
A \dfnemph{degree-maximal iteration tree $\Tt$ on $B'$ of length $\lambda$}
is a system
\[
\Tt=\left({<^\Tt},\curlyB,
\dropset,\dropset_{\deg},\left<B_\alpha,\gamma_\alpha,\delta_\alpha,
\pi_\alpha,\sides_\alpha\right>_ {
\alpha<\lambda},
\left<M^{e}_{\alpha},\deg^e_\alpha,i^e_{\alpha\beta}\right>_{\alpha\leq
\beta<\lambda\text{ and }e<2}, \right. \]
\[ \left.\left<\exitside_\alpha,\exit_\alpha,E_\alpha,
B^*_{\alpha+1}\right>_{\alpha+1<\lambda},
\left<M^{e*}_{\alpha+1},
i^{e*}_{\alpha+1}\right>_{\alpha+1<\lambda\text{ and }e< 2}\right), \]
with the following properties
for all $\alpha<\lambda$,
where we write $H_\alpha=M^0_\alpha$,
$M_\alpha=M^1_\alpha$, $i_{\alpha\beta}=i^0_{\alpha\beta}$,
$j_{\alpha\beta}=i^1_{\alpha\beta}$, etc:
\begin{enumerate}[label=\arabic*.,ref=\arabic*]
  \item  ${<^\Tt}$ is an iteration tree order on $\lambda$,
 \item $\dropset\sub\lambda$ is the set of \dfnemph{dropping nodes},
 \item $\emptyset\neq\sides_\alpha\sub\{0,1\}$,
 \item $\alpha\in\mathscr{B}$
 iff $\sides_\alpha=\{0,1\}$,
\item\label{item:curly_B_closed_downward} $0\in\curlyB\sub\lambda$
and $\curlyB\cap[0,\alpha]^\Tt$
is a closed initial segment of $[0,\alpha]^\Tt$,
\item $B_0=B'$ and $(\deg^0_0,\deg^1_0)=(k,k)$,
\item If
$\alpha\in\curlyB$
then $B_\alpha=(\gamma_\alpha,\delta_\alpha,\pi_\alpha,
H_\alpha,M_\alpha)$
is a degree $k$ relevant bicephalus
and $(\deg^0_\alpha,\deg^1_\alpha)=(k,k)$.
\item\label{item:seg-premice} If $\alpha\notin\curlyB$
and $\sides_\alpha=\{e\}$
then $B_\alpha=M^e_\alpha$ is a $\deg^e_\alpha$-sound pre-ISC-premouse,
and $M^{1-e}_\alpha=\emptyset$,
\item if $\sides_\alpha=\{0\}$ then $B_\alpha=H_\alpha$ is a premouse,
\item If $\alpha+1<\lambda$
then  $e=\exitside_\alpha\in\sides_\alpha$, $\exit_\alpha\ins
M^e_\alpha$, and  $E_\alpha=F^{\exit_\alpha}\neq\emptyset$.
\item If $\alpha+1<\beta+1<\lh(\Tt)$ then $\lh(E_\alpha)\leq\lh(E_\beta)$.
\item\label{item:successor_step} Suppose $\alpha+1<\lh(\Tt)$
and let $\beta=\pred^\Tt(\alpha+1)$.
Then:
\begin{enumerate}
\item\label{item:solidity_biceph_tree_predecessor_value} $\beta$ is the least $\beta'$
such that $\crit(E_\alpha)<\exchnu(\exit_{\beta'})$.\footnote{Recall from Remark \ref{rem:superstrong_diff} that for an active pre-ISC-pm $S$,
$\exchnu(S)=\max(\nu(S),\lgcd(S))$.}
\item $\sides_{\alpha+1}\sub\sides_\beta$
\item $\alpha+1\in\curlyB$ iff
$\big[\beta\in\curlyB$ and $\crit(E_\alpha)<\delta_\beta$
and $E_\alpha$ is $B_\beta$-total$\big]$.
\item If $\alpha+1\in\curlyB$ then $B^{*}_{\alpha+1}=B_\beta$
and
$B_{\alpha+1}=\Ult(B_\beta,E_\alpha)$
and $i^{*}_{\alpha+1},j^{*}_{\alpha+1}$ are the associated
maps (all defined as in \ref{dfn:ult_of_relevant_bicephalus}).

\item\label{item:losing_bicephalus} if $\beta\in\curlyB^\Tt$ but
$\alpha+1\notin\curlyB^\Tt$
then:
\begin{enumerate}[label=(\roman*)]
\item\label{item:crit=delta} if $\delta^B<\gamma^B$
and $\exitside_\beta=0$ and $\gamma_\beta\leq\lh(E_\beta)$
and $\delta_\beta=\crit(E_\alpha)$, then
$\sides_{\alpha+1}=\{1\}$, and
\item otherwise,
$\sides_{\alpha+1}=\{\exitside_\beta\}$.
\end{enumerate}
\item If $\sides_{\alpha+1}=\{e\}$
then
 $M^{e*}_{\alpha+1}\ins M^e_\beta$ and $d=\deg^e_{\alpha+1}$
are determined as usual for degree-maximality
(with $d\leq k$ if
$(0,\alpha+1]^\Tt\cap\dropset=\emptyset$),
\[ M^e_{\alpha+1}=\Ult_{d}(M^{e*}_{\alpha+1},E_\alpha), \]
and $i^{e*}_{\alpha+1}$ is the ultrapower map.
Here if $M^{e*}_{\alpha+1}$ is type 3 with largest cardinal
$\crit(E_\alpha)$, then $d=-1$, so the ultrapower is just that formed using functions in $M^{e*}_{\alpha+1}$,  without squashing; see \S\ref{sec:notation_extenders_and_ultrapowers} and Definition \ref{dfn:anomalous} below. Also if $\deg^e_\beta=-1$
and $\alpha+1\notin\mathscr{D}$
then $d=-1$.
\end{enumerate}
\item The remaining objects are determined
as usual, with direct limits at limit $\eta$, so
$H_\eta,M_\eta$ are the direct limits under the iteration maps,
and for $\alpha<^\Tt\eta$, set
$\gamma_\eta=i_{\alpha\eta}(\gamma_\alpha)=j_{\alpha\eta}(\gamma_\alpha)$
and  likewise for $\delta_\eta$,
and $\pi_\eta\com
i_{\alpha\eta}=j_{\alpha\eta}\com\pi_\alpha$.
\end{enumerate}

Note that part of the definition is that for each $\alpha\in\mathscr{B}$,  $B_\alpha$
is a degree $k$ relevant bicephalus.
Also define
$\curlyH^\Tt=\{\alpha<\lambda\bigm|\sides_\alpha=\{0\}\}$
and $\curlyM^\Tt$ likewise but with $1$ instead of $0$.
And for $\alpha<\lh(\Tt)$, define
$\curlyB^\Tt_\downarrow(\alpha)=\max(\curlyB^\Tt\cap[0,\alpha]^\Tt)$.
\end{dfn}

\begin{dfn}\label{dfn:anomalous}
Continue with the notation from
\ref{dfn:tree_on_rel_B}.
As mentioned above, if $\delta<\gamma$
 and $\sides_\beta=\{0,1\}$
 and $\exitside_\beta=\{0\}$
 and $\gamma_\beta\leq\lh(E_\beta)$
 and $\beta=\pred(\alpha+1)$ and $\crit(E_\alpha)=\delta_\beta$,
 then we set $\sides_{\alpha+1}=\{1\}$ (which is important
 as $H_\beta$ need not be $\delta_\beta$-sound),
 and $M^{*}_{\alpha+1}=J_\beta=$ the least $J^*\pins M_\beta$
 such that $\rho_\om^{J^*}=\delta_\beta$ and $\gamma_\beta\leq\OR^{J^*}$.
 Note $J^*=j_{0\beta}(J_0)$. We
 say that $\alpha+1$ is a \dfnemph{mismatched dropping node}
 of $\Tt$, and all nodes $\xi$ such that $\alpha+1\leq^\Tt\xi$
 and $(\alpha+1,\xi]^\Tt\cap\dropset=\emptyset$
 we call \dfnemph{weakly anomalous nodes}.
 If $M'|\gamma'$ is also type 3, we call such nodes $\xi$
 \dfnemph{anomalous nodes}. In case $\alpha+1$ is an anomalous mismatched dropping node,
 $M^{*}_{\alpha+1}=M_\beta|\gamma_\beta$
 and $\deg_{\alpha+1}=-1$ and $\nu(M^*_{\alpha+1})=\delta_\beta=\crit(E_\alpha)$,
 so we  form $M_{\alpha+1}=\Ult_{-1}(M^*_{\alpha+1},E_\alpha)=\Ult(M^*_{\alpha+1},E_\alpha)$
 (see \S\ref{sec:notation_extenders_and_ultrapowers}). And in case  $\zeta=\pred^\Tt(\alpha'+1)$ is anomalous and $\alpha'+1\notin\mathscr{D}$,
 we have $\deg_{\alpha+1}=\deg_\zeta=-1$ and $M_{\alpha'+1}=\Ult_{-1}(M^e_\zeta,E_{\alpha'})$.
 If $\xi$ is anomalous then $M_{\xi}$ is not a premouse,
 as it fails the ISC.
 \end{dfn}
 \begin{rem}\label{rem:simple_ult}
 For anomalous $\xi$,
 if $E_\xi=F(M_\xi)$,
 then $\nu(E_\xi)=\sup_{\zeta<\xi}\nu(E_\zeta)$,
 so it is possible that $\nu(E_\xi)<\lgcd(M_\xi)$,
in which case $\nu(E_\xi)<\exchnu(\exit_\xi)=\lgcd(M_\xi)$.
We use $\lgcd(\exit_\xi)$ as the exchange ordinal in this situation
mainly because it is more convenient in the iterability
proof later.
\end{rem}

\begin{dfn}\label{dfn:relevance-putative}
Let $B'$ be a degree $k$ relevant bicephalus.
A \dfnemph{putative degree-maximal
tree $\Tt$ on $B'$}
is a system satisfying
all of the requirements of an iteration
tree on $B'$,
with all models formed  as in \ref{dfn:tree_on_rel_B},
except that if
$\lh(\Tt)=\alpha+1>1$
then we make no demands on the wellfoundedness of $B^\Tt_\alpha$,
nor its first order properties.
And $\Tt$ is \dfnemph{relevance-putative} iff it is putative
degree-maximal, and if
$\lh(\Tt)=\alpha+1$ then $B_\alpha$ is wellfounded,
and if $\alpha\in\curlyB^\Tt$ then $B_\alpha$ is pre-relevant.
\end{dfn}

Using Lemma \ref{lem:ult_relevant_B_props},
it is straightforward to verify:

\begin{clmfive}\label{clm:solidity_wellfounded_implies_relevance-putative}
If $\Tt$ is a putative degree-maximal tree on $B$ with wellfounded models, then $\Tt$ is relevance-putative.
\end{clmfive}

\begin{dfn}
A $((k,k),\om_1+1)$\dfnemph{-iteration strategy}
for a degree $k$ relevant bicephalus $B'$ is defined using the iteration game
defined with (putative) degree-maximal trees $\Tt$ on $B'$.
If a putative tree is
reached which is not a true
degree-maximal tree, then player I wins.

An \dfnemph{almost $((k,k),\om_1+1)$-iteration strategy} for a degree $k$ relevant bicephalus $B'$ is as above, except that if tree is reached which is relevance-putative
but not a true tree, then player II wins immediately.
\end{dfn}

\begin{rem}\label{rem:end_digression}
We have now completed our digression which began with Definition \ref{dfn:relevant_B} above, and return to the context of the proof of solidity.
Let $B=(\delta,\gamma,\pi,H,M)$
be from there.
Note that $B$  is a relevant bicephalus
(see \ref{dfn:relevant_B}).
As stated earlier, the plan is to form and analyse a comparison of $B$ with $M$, forming a degree-maximal tree on $B$ (see \ref{dfn:tree_on_rel_B}).\footnote{
In the original version of the argument presented
at the M\"unster conference 2015,
all comparison arguments  were
formed between bicephali and themselves
(see \cite{ralf_notes_solidity_talk}).
Afterward,  John Steel suggested that a comparison
between a bicephalus and a premouse might suffice,
and said that he had also been working on related arguments
toward \cite{ACPFMP}.
For the present proof, such a simplification did indeed work out.
The author did not see how to simplify
the proof of projectum-finite generation in this way,
though in that case, the bicephali
and rules for comparison are
simpler anyway.} Before we begin this,
we discuss in Claims \ref{clm:B_is_almost_iterable}--\ref{clm:B_iterable_fs_pres} below the iterability of $B$
and various preservation facts which are essential to the analysis of the
comparison. \end{rem}

\begin{clmfive}[Almost iterability]\label{clm:B_is_almost_iterable}
$B$ is almost $((k,k),\om_1+1)$-iterable.
\end{clmfive}

\begin{rem}\label{rem:copying_features}
The proof is a copying process
much like that used
in the proof of condensation
 from normal iterability,
\cite[Theorem 5.2]{premouse_inheriting}.
In order to first focus the more novel aspects of the solidity proof, we postpone the proof of Claim \ref{clm:B_is_almost_iterable} to \S\ref{sec:appendix}. In the  proof of Claim \ref{clm:p_k+1_pres} below, we will need the following details from the copying process:
Let $\Tt$ be a  putative degree-maximal
tree on $B$, of finite length $\alpha+1<\om$,
with $\alpha\in\curlyB^\Tt\cup\curlyM^\Tt$.
Then we will have a $k$-maximal
tree $\Uu$ on $M$ with $\lh(\Uu)=\alpha+1$,
such that $(0,\alpha]^\Uu\cap\dropset^\Uu_{\deg}=\emptyset$, and a $k$-embedding $\sigma:M^\Tt_\alpha\to M^\Uu_\alpha$
such that $\sigma\com j^\Tt_{0\alpha}=i^\Uu_{0\alpha}$.
The almost iterability proof is self-contained and can be read directly at this stage,
so the reader who prefers to proceed more linearly through the logic should make a detour there now.
\end{rem}

We next discuss closeness of extenders
to their target models in trees $\Tt$ on $B$. We restrict to  our particular $B$, as opposed to dealing with an arbitrary relevant bicephalus, because we will use Claim \ref{clm:not_1_2_with_gamma_between}
to rule out certain cases in which closeness would otherwise not obviously hold.

\begin{clmfive}[Closeness]\label{clm:solidity_closeness} Let $\Tt$ be any putative degree-maximal tree on $B$.
Let $\xi+1<\lh(\Tt)$ and
$\beta=\pred^\Tt(\xi+1)$. Then:
 \begin{enumerate}[label=\arabic*.,ref=\arabic*]
  \item If $1\in\sides^\Tt_{\xi+1}$ then $E^\Tt_\xi$
  is close to $M^{*\Tt}_{\xi+1}$.\footnote{However,
if $\sides^\Tt_{\xi+1}=\{0,1\}$,
then it is not relevant whether $E^\Tt_\xi$ is
close
to $H^\Tt_\beta$, as recall that $H^\Tt_{\xi+1}$
is not formed in general as
$\Ult_k(H^\Tt_\beta,E^\Tt_\xi)$, but as a certain hull of $M^\Tt_{\xi+1}$.}
  \item If $\sides^\Tt_{\xi+1}=\{0\}$
  then $E^\Tt_\xi$
  is close to $H^{*\Tt}_{\xi+1}$.
 \end{enumerate}
\end{clmfive}

The proof of closeness is very close to the usual one (see \cite[6.1.5]{fsit}), so it is also postponed to \S\ref{sec:appendix}.

\begin{clmfive}\label{clm:p_k+1_pres}
Let $\Tt$ be any relevance-putative
degree-maximal tree on $B$.
Let $\alpha\in\curlyB^\Tt\cup\curlyM^\Tt$
be such that $(0,\alpha]^\Tt\cap\dropset_{\deg}^\Tt=\emptyset$.
Then:
\begin{enumerate}[label=\arabic*.,ref=\arabic*]
 \item\label{item:Tt_is_degree-max} $\Tt$ is degree-maximal.
\item\label{item:j^Tt_p-pres} $\rho^{M^\Tt_\alpha}=\sup
i^\Tt_{0\alpha}``\rho^M\leq\delta^\Tt_\alpha$ and
 $j^\Tt_{0\alpha}$ is $p_{k+1}$-preserving.
 \item\label{item:rho^H_alpha>rho^M_alpha} If $\alpha\in\curlyB^\Tt$ then:
\begin{enumerate}
\item\label{item:H^Tt_alpha_is_gamma^Tt_alpha-sound}
$\rho^{H^\Tt_\alpha}\leq\gamma^\Tt_\alpha$,
 $H^\Tt_\alpha$ is $\gamma^\Tt_\alpha$-sound
and
$q^{H^\Tt_\alpha}=p^{H^\Tt_\alpha}\cut\gamma^\Tt_\alpha=i^\Tt_{0\alpha}
(p^H\cut\gamma)=i^\Tt_{0\alpha}(q^H)$, \footnote{If $\Tt$
is finite, then we also get that $H^\Tt_\alpha$
is $(k+1)$-solid and $(k+1)$-universal, like in the proof of the earlier parts.
But the author is not sure whether one can show that $H^\Tt_\alpha$
is $(k+1)$-solid and $(k+1)$-universal for infinite $\Tt$.}
\item\label{item:inner_rho^H_alpha>rho^M_alpha}
$\rho^{M^\Tt_\alpha}<\rho^{H^\Tt_\alpha}$,
\item\label{item:when_alpha_in_B^Tt_M^Tt_alpha_non-solid}  $H^\Tt_\alpha\notin M^\Tt_\alpha$ and $M^\Tt_\alpha$ is non-solid.
\end{enumerate}
 \end{enumerate}
\end{clmfive}

\begin{proof}
Part \ref{item:j^Tt_p-pres}: The proof is similar to that of
Lemma \ref{lem:p-pres_for_premouse_from_para_order},
but now using the Closeness Claim \ref{clm:solidity_closeness} for part of the
argument.
Let $N=M^\Tt_\alpha$
and $j=j^\Tt_{0\alpha}:M\to N$.
By Claim \ref{clm:solidity_closeness}, all extenders applied along the
branch
from $M$ to $N$ are close to their target model,
so together with Corollary \ref{cor:basic_fs_pres}
(and without considering
$<^{\para}_k$)
we have:
\begin{enumerate}[label=\tu{(}\alph*\tu{)}]
 \item\label{item:rho_p_sub_pres_biceph}
$\rho^{N}=\sup j``\rho^M$
and $p^{N}\leq j(p^M)$,
\item\label{item:solidity_equiv_biceph}
 $N$ is non-$(k+1)$-solid (as $M$ is non-$(k+1)$-solid as $H\notin M$).
\end{enumerate}
So it suffices to see that $j(p^M)=p^N$, so suppose $p^{N}<j(p^M)$.

Suppose that $\lh(\Tt)<\om$.
Then by Remark \ref{rem:copying_features} and as $\alpha\in\curlyB^\Tt\cup\curlyM^\Tt$,
we have a $k$-maximal tree $\Uu$  on $M$
such that $\lh(\Uu)<\om$ and $b^\Uu$ does not drop in model or degree, and
we have a
 $k$-embedding $\sigma:N\to M^\Uu_\alpha$ such that $\sigma\com j=i^\Uu_{0\alpha}$.
 Therefore $\sigma(j(p^M))=i^\Uu_{0\alpha}(p^M)$.

Since $p^N<j(p^M)$, we have $\sigma(p^N)<\sigma(j(p^M))=i^\Uu_{0\alpha}(p^M)$, so
 $N<_k^{\para}M$,
as witnessed by
$\Uu$
and $\sigma:N\to M^\Uu_\infty$.
So by our global inductive hypothesis
(line (\ref{eqn:solid_univ_induction})),
$N$ is $(k+1)$-solid,
contradicting \ref{item:solidity_equiv_biceph}.

So $\lh(\Tt)\geq\om$.
But like done in the proof of Lemma
\ref{lem:p-pres_for_premouse_from_para_order},
we can build a finite length tree $\bar{\Tt}$ capturing
$(\Tt,p^N)$, and this leads to a contradiction just as there.
(Define \emph{capturing} for
such trees like in
\ref{dfn:captures}, and construct $\bar{\Tt}$ via
a finite support argument like in \ref{lem:simple_embedding_exists}. We leave the details to the reader.)

Part \ref{item:Tt_is_degree-max}:
Since $\gamma\in p^M$, this is an immediate consequence of part \ref{item:j^Tt_p-pres}.

Part \ref{item:H^Tt_alpha_is_gamma^Tt_alpha-sound}:
This follows easily from the $\gamma$-soundness
of $H$ (use
preservation
of generalized solidity witnesses under  (near) $k$-embeddings).

Part \ref{item:inner_rho^H_alpha>rho^M_alpha}:
Write $B'=(\delta',\gamma',\pi',H',M')=B^\Tt_\alpha$,
and $i=i^\Tt_{0\alpha}$ and $j=j^\Tt_{0\alpha}$.
Using properties of $\pi'$ and that $j$ is
$p_{k+1}$-preserving, it is easy to reduce
to the case that
\begin{equation}\label{eqn:rho^H'=rho^M'=delta'}
\rho^{H'}=\rho^{M'}=\delta'<
\gamma'<(\rho^{M'})^{+M'},\end{equation}
so assume this. We will show that $H\in M$,
a contradiction.

Note that either $p^{H'}=q^{H'}$ or $p^{H'}=q^{H'}\conc\left<\beta\right>$ for some $\beta\in[\delta',\gamma')$.
And line (\ref{eqn:rho^H'=rho^M'=delta'})
and part \ref{item:j^Tt_p-pres} imply together
that $\rho^M=\delta<\gamma$
and $i,j$ are continuous at $\delta$.

Let $\bar{\Tt}$ be a finite tree on $B$
capturing
$(\Tt,\{\delta',\gamma',p^{H'}\},\{\delta',\gamma',p^{M'}\})$,
meaning here in particular that $\lh(\bar{\Tt})=\bar{\alpha}+1$
and $\bar{\alpha}\in\curlyB^{\bar{\Tt}}$
and  letting
$\bar{B}=B^{\bar{\Tt}}_{\bar{\alpha}}=(\bar{\delta},\bar{\gamma},\bar{\pi},\bar{
H},\bar{M})$,
we have $k$-embedding capturing maps
$\sigma:\bar{H}\to H'$
and $\tau:\bar{M}\to M'$ with
$\delta',\gamma',p^{H'}\in\rg(\sigma)$
and $\delta',\gamma',p^{M'}\in\rg(\tau)$
and $\pi'\com\sigma=\tau\com\bar{\pi}$,
and the capturing maps commute with the iteration maps. Note $\sigma(\bar{\delta})=\tau(\bar{\delta})=\delta'=\rho^{H'}=\rho^{M'}$.  Let $\bar{\rho}=\bar{\delta}$.
Let
$\sigma(\bar{p})=p^{H'}$. Let
 $\bar{i}=i^{\bar{\Tt}}_{0\bar{\alpha}}$
 and $\bar{j}=j^{\bar{\Tt}}_{0\bar{\alpha}}$.

Now  $\bar{H}<^{\para}_kM$. For
$\bar{\pi}(p^{\bar{H}})<p^{\bar{M}}$,
since $\bar{j}$ is $p_{k+1}$-preserving
and considering the hull of $\bar{M}$ that forms $\bar{H}$.
So we get $\bar{H}<^{\para}_kM$
by lifting $\bar{\Tt}$
to a tree $\bar{\Uu}$ on $M$ as in part \ref{item:j^Tt_p-pres} (again using Remark \ref{rem:copying_features}).

Since $\bar{H}<^{\para}_kM$, $\bar{H}$ is $(k+1)$-solid
and $(k+1)$-universal.

Recall $\sigma(\bar{p})=p^{H'}$
and $\sigma(\bar{\rho})=\delta'$. Let
\[
\bar{t}=\Th_{\rSigma_{k+1}}^{\bar{H}}(\bar{\rho}\cup\{\bar{p},\pvec_k^{\bar{H}}
\}).\]
Then $\bar{t}\notin\bar{H}$, because otherwise
$\sigma(\bar{t})\in H'$, and then from $\sigma(\bar{t})$ we can
recover the theory
\[ t' = \Th_{\rSigma_{k+1}}^{H'}(\rho'\cup\{p^{H'},\pvec_k^{H'}\}),\]
but $t'\notin H'$.
So $\rho^{\bar{H}}\leq\bar{\rho}$,
but by part \ref{item:j^Tt_p-pres} and since $j,\bar{j}$
are continuous at $\delta$,
we have $\rho^{\bar{M}}=\bar{\rho}=\bar{\delta}$,
which implies $\rho^{\bar{H}}\geq\bar{\rho}$.
So $\rho^{\bar{H}}=\bar{\rho}$,
and $p^{\bar{H}}\leq\bar{p}$.

By $(k+1)$-universality for $\bar{H}$,
it follows that there is $\beta<\bar{\gamma}$
such that
\[\bar{H}=\Hull^{\bar{H}}(\bar{\rho}\cup\{\beta,q^{\bar{H}},\pvec_k^{\bar{H}}\}
).\]
But $\bar{\pi}(q^{\bar{H}}\conc\left<\beta\right>)<p^{\bar{M}}$,
and it follows that $\bar{H}\in\bar{M}$,
so $H\in M$ by Lemma \ref{lem:ult_relevant_B_props}, as desired.

Part \ref{item:when_alpha_in_B^Tt_M^Tt_alpha_non-solid}
 follows from
  Lemma \ref{lem:ult_relevant_B_props},
 since $H\notin M$.
\end{proof}

\begin{clmfive}\label{clm:solidity_full_iterability}
There is a $((k,k),\om_1+1)$-iteration strategy for $B$.
Moreover, every almost $((k,k),\om_1+1)$-strategy for $B$ is an (actual) $((k,k),\om_1+1)$-strategy.
\end{clmfive}
\begin{proof}
This is an immediate consequence
of Claims \ref{clm:B_is_almost_iterable}
and \ref{clm:p_k+1_pres}.
\end{proof}

We now summarize the fine structural properties of iterates $H^\Tt_\alpha$ and $M^\Tt_\alpha$ when $\alpha\notin\curlyB^\Tt$. This complements Claim \ref{clm:p_k+1_pres}:

\begin{clmfive}\label{clm:B_iterable_fs_pres}
 Let $\Tt$
 be any degree-maximal tree on $B$. Then:
 \begin{enumerate}[label=\arabic*.,ref=\arabic*]
 \item\label{item:alpha_notin_curlyB_fs_pres} Suppose
$\alpha\notin\curlyB^\Tt$ and
$(0,\alpha]^\Tt\cap\dropset_{\deg}^\Tt=\emptyset$
 and let $\beta=\curlyB^\Tt_\downarrow(\alpha)$. Then:
 \begin{enumerate}
 \item Suppose $\alpha\in\curlyH^\Tt$. Then:
\begin{enumerate}[label=--]
\item $H^\Tt_\alpha$ is a $k$-sound premouse,
\item $\rho^{H^\Tt_\alpha}=\rho^{H^\Tt_\beta}\leq\gamma^\Tt_\beta
\leq\crit(i^\Tt_{\beta\alpha})$,
\item $H^\Tt_\beta$ is the
$\gamma^\Tt_\beta$-core
 of $H^\Tt_\alpha$ and
$i^\Tt_{\beta\alpha}$ is the $\gamma^\Tt_\beta$-core
map,
\item $i^\Tt_{\beta\alpha}$ is $p_{k+1}$-preserving.\footnote{But
the author does not know whether $i^\Tt_{0\alpha},i^\Tt_{0\beta}$
are $p_{k+1}$-preserving.}
\end{enumerate}
 \item Suppose $\alpha\in\curlyM^\Tt$. Then:
\begin{enumerate}[label=--]
\item $M^\Tt_\alpha$ is a $k$-sound premouse,
\item $\rho^{M^\Tt_\alpha}=\rho^{M^\Tt_\beta}
\leq\delta^\Tt_\beta\leq\crit(j^\Tt_{\beta\alpha})$,
\item $M^\Tt_\beta$ is the $\delta^\Tt_\beta$-core of $M^\Tt_\alpha$
and $j^\Tt_{\beta\alpha}$
 is the $\delta^\Tt_\beta$-core map,
 \footnote{Here
 the $\delta^\Tt_\beta$-core of $M^\Tt_\alpha$
 just means
$\cHull^{M^\Tt_\alpha}
(\delta^\Tt_\beta\cup\{p^{M^\Tt_\alpha}
\cut\delta^\Tt_\beta,\pvec_k^{M^\Tt_\alpha}\})$;
the terminology does not presuppose any solidity.}

 \item
 $j^\Tt_{0\beta}$, $j^\Tt_{\beta\alpha},j^\Tt_{0\alpha}$
 are $p_{k+1}$-preserving, and
 \item  $M^\Tt_\alpha$ is non-solid.
 \end{enumerate}
 \end{enumerate}
 \item\label{item:M_drop_fs_pres} If $\alpha\in\curlyM^\Tt$
 and $(0,\alpha]^\Tt\cap\dropset^\Tt_{\deg}\neq\emptyset$
 and $\alpha$ is
non-anomalous then letting
 $d=\deg^\Tt(\alpha)$
 and $\xi+1\leq^\Tt\alpha$ be largest such that
$\xi+1\in\dropset_{\deg}^\Tt$, and letting
$\beta=\pred^\Tt(\xi+1)$,
we have:
\begin{enumerate}[label=--]
\item $M^\Tt_\alpha$ is premouse,
and is $d$-sound, $(d+1)$-solid, $(d+1)$-universal, but fails to be
$(d+1)$-sound,
\item $M^{*\Tt}_{\xi+1}=\core_{d+1}(M^\Tt_\alpha)$ is $(d+1)$-sound
and $\core_{d+1}(M^\Tt_\alpha)\ins M^\Tt_\beta$,
\item $j^{*\Tt}_{\xi+1,\alpha}$ is the core map,
\item
$\rho_{d+1}(M^\Tt_\alpha)=\rho_{d+1}(\core_{d+1}(M^\Tt_\alpha))\leq\crit(j^{*\Tt}_{\xi+1,\alpha})$,
\item $j^{*\Tt}_{\xi+1,\alpha}$ is $p_{d+1}$-preserving.
\end{enumerate}

 \item\label{item:H_drop_fs_pres} If $\alpha\in\curlyH^\Tt$
 and $(0,\alpha]^\Tt\cap\dropset^\Tt_{\deg}\neq\emptyset$,
 it is like in part \ref{item:M_drop_fs_pres} (with $H^\Tt_\alpha$, not $M^\Tt_\alpha$).

 \item\label{item:anomalous_fs} Suppose $\alpha$ is anomalous. Then
 it is like in part \ref{item:M_drop_fs_pres}, except that $M^\Tt_\alpha$ is not a premouse.
 \end{enumerate}
\end{clmfive}

\begin{proof}
Let $\Tt$ be any relevance-putative tree of length $\alpha+1$.

Part \ref{item:alpha_notin_curlyB_fs_pres}:
Suppose $\alpha\in\curlyH^\Tt$. We have
$\gamma^\Tt_\beta\leq\crit(i^\Tt_{\beta\alpha})$
because otherwise  letting $\xi+1=\successor^\Tt(\beta,\alpha)$,
we would have
$\crit(E^\Tt_\xi)=\delta^\Tt_\beta$, but then
 $\xi+1$ should in fact be a mismatched dropping node,
so $\xi+1\notin\curlyH^\Tt$.
And by the closeness (Claim \ref{clm:solidity_closeness}),
all the extenders used along the branch
$(\beta,\alpha]^\Tt$
are close to the models to which they apply;
since $H^\Tt_\beta$ is $\gamma^\Tt_\beta$-sound
and $\rho(H^\Tt_\beta)\leq\gamma^\Tt_\beta$,
this suffices. If instead $\alpha\in\curlyM^\Tt$, use
Claims \ref{clm:solidity_closeness}
 and
\ref{clm:p_k+1_pres} (in particular for $p_{k+1}$-preservation),
together with Corollary \ref{cor:basic_fs_pres}.

The remaining parts
follow  in the usual manner from closeness (Claim \ref{clm:solidity_closeness}).
\end{proof}

We are now ready to proceed with the comparison.
We compare $B$ with $M$, defining padded
trees $\solTt,\solUu$ respectively, with $\solTt$ being degree-maximal on $B$
and $\solUu$ being $k$-maximal on $M$. We will also define $S^\solTt,\modelset^\solTt,\movin^\solTt$,
with $\emptyset\neq S^\solTt_\alpha\sub\sides^\solTt_\alpha$;
these bookkeeping devices  are defined completely analogously to those in the proof of Lemma \ref{lem:finite_gen_hull}
(see the paragraph immediately following the proof of Claim \ref{clm:proj-finitely_gend_basic_biceph_iter_pres} of that lemma's proof).
As before, if $S^\solTt_\alpha=\{0,1\}$,
we may \emph{move into} a model of
$\solTt$
at stage $\alpha$, setting $S^\solTt_{\alpha+1}=\{0\}$ or
$S^\solTt_{\alpha+1}=\{1\}$ and
$E^\solTt_\alpha=\emptyset=E^\solUu_\alpha$. And as usual, after selecting models
for potential exit extenders, we minimize
on $\exchnu(E)$ before actually selecting extenders $E$ (see Remark \ref{rem:superstrong_diff}).

Let us describe the rules for forming the comparison (how we move into models and select extenders). We start with $S^\solTt_0=\{0,1\}$ at stage $0$.
At stage $\alpha$,
if $S^\solTt_\alpha\neq\{0,1\}$,
we select extenders as usual (or terminate the comparison). Suppose
$S^\solTt_\alpha=\{0,1\}$.
If $B^\solTt_\alpha|\delta^\solTt_\alpha\neq M^\solUu_\alpha|\delta^\solTt_\alpha$
we select extenders as usual
(and we do not  move into any model in this case).
Suppose $B^\solTt_\alpha|\delta^\solTt_\alpha=
M^\solUu_\alpha|\delta^\solTt_\alpha$.
Then we move into a model
in $\solTt$, setting $E^\solTt_\alpha=\emptyset=E^\solUu_\alpha$.
If  $M^\solTt_\alpha=M^\solUu_\alpha$
then
we move into $H^\solTt_\alpha$ (set $S^\solTt_{\alpha+1}=\{0\}$),
and otherwise we move into $M^\solTt_\alpha$ (set $S^\solTt_{\alpha+1}=\{1\}$).
Of course, we use some $((k,k),\om_1+1)$-strategy
to form $\solTt$, and some $(k,\om_1+1)$-strategy
to form $\solUu$.
This completes the description of the comparison.

We now proceed to the analysis of the comparison. We want to see that
we reach a stage $\beta$ with $\beta\in\mathscr{B}^\solTt$,
and at that stage,
we move into $H^\solTt_\beta$ in $\solTt$,
 $\solTt$ does not use any extenders
after that point, and then the comparison terminates at a stage $\alpha$ shortly after $\beta$,  with $H^\solTt_\beta=H^\solTt_\alpha=M^\solUu_\alpha$.
 We will then analyse the fine structure of $H^\solTt_\alpha,M^\solTt_\alpha$,
 and using the fact that $H^\solTt_\alpha\notin M^\solTt_\alpha$, reach a contradiction.

The first thing to verify is that
the trees $\solTt,\solUu$ are normal.

\begin{clmfive}\label{clm:T,U_normality}
  Let $\zeta\leq\xi\leq\beta<\lh(\solTt,\solUu)$.
  Then:
  \begin{enumerate}[label=\arabic*.,ref=\arabic*]
   \item\label{item:lhs_prior_to_S=0,1} Suppose $\xi<\beta$ and
$S^\solTt_\beta=\{0,1\}$.
Then $\lh(E)\leq\delta^\solTt_\beta$ whenever
$E=E^\solTt_\xi\neq\emptyset$ or $E=E^\solUu_\xi\neq\emptyset$.

\item Suppose $\beta+1<\lh(\solTt,\solUu)$ and  $P,Q,R\neq\emptyset$ with
$P\in\{\exit^\solTt_\zeta,\exit^\solUu_\zeta\}$,
  $Q\in\{\exit^\solTt_\xi,\exit^\solUu_\xi\}$ and
$R\in\{\exit^\solTt_\beta,\exit^\solUu_\beta\}$.
 Then:
 \begin{enumerate}
  \item\label{item:OR^P_leq_OR^Q} $\OR^P\leq\OR^Q$ and $\exchnu^P\leq\exchnu^Q$.
  \item If $\zeta=\xi$ then $\OR^P=\OR^Q$ and $\exchnu^P=\exchnu^Q$.
\item Suppose $\zeta<\xi$ and $\exchnu^P=\exchnu^Q$.
Then $\xi=\zeta+1$
is an anomalous mismatched dropping node,
 $J=M|\gamma$ is active type 3,
  $E^\solTt_\zeta\neq\emptyset$ is superstrong,
  $Q=\exit^\solTt_{\zeta+1}=M^\solTt_{\zeta+1}$, $E^\solUu_{\zeta+1}=\emptyset$,
and if $\zeta+1<\beta$ then $\exchnu^Q<\exchnu^R$.
\end{enumerate}
 \end{enumerate}
\end{clmfive}
\begin{proof}
The proof is by induction on $\beta$.
Limits are easy. Suppose $\beta=\eta+1$ for some $\eta$.

 Part \ref{item:lhs_prior_to_S=0,1}:
 Since $S^\solTt_{\eta+1}=\{0,1\}$, either $E^\solTt_\eta\neq\emptyset$
 or $E^\solUu_\eta\neq\emptyset$. Let $\lambda=\lh(E^\solTt_\eta)$
 or $\lambda=\lh(E^\solUu_\eta)$, whichever is defined.
 If $E^\solTt_\eta\neq\emptyset$ then letting $\chi=\pred^\solTt(\eta+1)$,
 we have $\crit(E^\solTt_\eta)<\delta^\solTt_\chi$,
 so $\lambda\leq\delta^\solTt_{\eta+1}$.
 Suppose $E^\solTt_\eta=\emptyset\neq E^\solUu_\eta$
 and $\delta_{\eta+1}^\solTt<\lambda$. Then
 $S^\solTt_\eta=\{0,1\}$ and $B^\solTt_{\eta}=B^\solTt_{\eta+1}$,
and  $B^\solTt_\eta|\delta^\solTt_\eta=M^\solUu_\eta|\delta^\solTt_\eta$,
so by the rules of comparison, we move into either $H^\solTt_\eta$
or $M^\solTt_\eta$ at stage $\eta$, so
$E^\solTt_\eta=\emptyset=E^\solUu_\eta$
(and $S^\solTt_{\eta+1}\neq\{0,1\}$), a contradiction.

We leave the rest to the reader.\end{proof}

\begin{clmfive}\label{clm:no_matching_exts}
 There is no pair $(\zeta,\xi)$
 such that $\zeta<\lh(\solTt)$ and $\xi<\lh(\solUu)$
 and $E^\solTt_\zeta\rest\nu(E^\solTt_\zeta)=E^\solUu_\xi\rest\nu(E^\solUu_\xi)\neq\emptyset$.
\end{clmfive}
\begin{proof}
 Suppose $(\zeta,\xi)$ is so. As $E^\solUu_\xi\rest\nu(E^\solUu_\xi)$ satisfies the ISC,
 so does $E^\solTt_\zeta\rest\nu(E^\solTt_\zeta)$. So $\zeta$ is non-anomalous, so $\exchnu(\exit^\solTt_\zeta)=\nu(E^\solTt_\zeta)=\nu(E^\solUu_\xi)=\exchnu(\exit^\solUu_\xi)$.
So $\zeta\neq\xi$ by the comparison rules, and using Claim \ref{clm:T,U_normality},
 it follows that $\zeta=\xi+1$
 and $\zeta$ is anomalous,
  contradiction.
\end{proof}

\begin{clmfive}\label{clm:solidity_comparison_termination}
 The comparison terminates at some stage $\alpha<\om_1$.
\end{clmfive}

The proof is just a slight variant of the usual one, and is relegated to \S\ref{sec:appendix}.

Now that we know the comparison terminates,
we will analyze the manner in which it does, and use the properties of $H,M$
(in particular that $H\notin M$)
and the  preservation properties of the iteration maps, to arrive at a contradiction.
Let  $\alpha+1$ be the length of the full comparison $(\solTt,\solUu)$.
So the comparison terminates at stage $\alpha$, and in fact, in the following fashion:

\begin{clmfive}\label{clm:S^Tt_alpha=0}
$S^\solTt_\alpha=\{0\}$ and $H^\solTt_\alpha\ins  M^\solUu_\alpha$.
\end{clmfive}

In the proof and later,
given $\beta\in\curlyB^\solTt$,  write
$\delta^{+\solTt}_\beta=(\delta_\beta^\solTt)^{+M^\solTt_\beta}$
(so
$(\delta_\beta^\solTt)^{+H^\solTt_\beta}\leq\delta^{+\solTt}_\beta$).

\begin{proof}
Suppose $S^\solTt_\alpha=\{0,1\}$. Then $M^\solUu_\alpha\pins
B^\solTt_\alpha|\delta^\solTt_\alpha$. But then $b^\solUu\cap\dropset^\solUu_{\deg}=\emptyset$
and $M^\solUu_\alpha$
is solid, so by Corollary \ref{cor:basic_fs_pres},
$M$ is solid, a contradiction.
If  $S^\solTt_\alpha=\{0\}$ then we can similarly
rule out $M^\solUu_\alpha\pins H^\solTt_\alpha$,
giving the claim.
So suppose
$S^\solTt_\alpha=\{1\}$; we will reach a contradiction.

\begin{sclmfive}
 $M^\solTt_\alpha= M^\solUu_\alpha$, $b^\solTt,b^\solUu$ do not drop in model or degree, and
$j^\solTt,i^\solUu$ are $p_{k+1}$-preserving.\end{sclmfive}
\begin{proof}
If $M^\solUu_\alpha\pins M^\solTt_\alpha$ it is again as before.
If $M^\solTt_\alpha\pins M^\solUu_\alpha$
then
$M^\solTt_\alpha$ is a sound premouse, and
 by Claim \ref{clm:B_iterable_fs_pres},
$b^\solTt\cap\dropset^\solTt_{\deg}=\emptyset$ and
$M$ is solid.
So $M^\solTt_\alpha=M^\solUu_\alpha$.

Suppose both $b^\solTt,b^\solUu$ drop in model or degree. Then  the usual arguments combined with Claim \ref{clm:B_iterable_fs_pres}
yield a contradiction.
(Anomalous extenders are not ``partial'', so they do not interfere with the incompatibility of extenders relevant to this argument. That is, let $\theta+1\in\dropset^\solTt_{\deg}$
and $\theta+1\leq^\solTt\xi+1\leq^\solTt\alpha$.
Then  $\exit^\solTt_\xi$ is a premouse; that is, it is not the case that $\xi$ is anomalous and $E^\solTt_\xi=F(M^\solTt_\xi)$. For suppose otherwise.
Then  $\xi+1\in\curlyB^\solTt$.
For letting  $\chi=\curlyB^\solTt_\downarrow(\xi)$ and $J_\chi=M^\solTt_\chi|\gamma^\solTt_\chi$ and $j:J_\chi\to M^{\solTt}_\xi$ be the iteration map, $J_\chi$ is active type 3 with
 $\nu(F^{J_\chi})=\delta^\solTt_\chi=\crit(j)$,
so $\crit(E^\solTt_\xi)=\crit(F^{J_\chi})<\delta^\solTt_\chi$
and $E^\solTt_\xi$ is $\curlyB^\solTt_\chi$-total, and this implies
that $\pred^\solTt(\xi+1)\leq^\solTt\chi$
and $\xi+1\in\curlyB^\solTt$.)

If one side drops but the other does not, then
$M^\solTt_\alpha=M^\solUu_\alpha$ is
a premouse and is solid,
which again implies $M$ is solid. So neither side drops.
The  $p_{k+1}$-preservation
now follows Lemma \ref{lem:p-pres_for_premouse_from_para_order}
and Claims \ref{clm:p_k+1_pres} and \ref{clm:B_iterable_fs_pres}.
\end{proof}

\begin{sclmfive}
$\alpha\in\curlyB^\solTt$.\end{sclmfive}
\begin{proof}Suppose not, so $\sides^\solTt_\alpha=S^\solTt_\alpha=\{1\}$.
Let $\beta=\movin^\solTt(\alpha)$. So $\beta\leq^\solTt\alpha$
and (by Claim \ref{clm:B_iterable_fs_pres}) $j^\solTt_{\beta\alpha}$ is
$p_{k+1}$-preserving, $M^\solTt_\beta$ is the $\delta^\solTt_\beta$-core
of $M^\solTt_\alpha$, $j^\solTt_{\beta\alpha}$ is the $\delta^\solTt_\beta$-core map
and $\delta^\solTt_\beta\leq\crit(j^\solTt_{\beta\alpha})$.
So
$M^\solTt_\beta||{\delta^{+\solTt}_\beta}
=M^\solTt_\alpha||{\delta^{+\solTt}_\beta}$
and
\[p\eqdef j^\solTt_{\beta\alpha}(q^{M^\solTt_\beta}
\conc\left<\gamma^\solTt_\beta\right>)=p^{M^\solTt_\alpha}
\cut\delta^\solTt_\beta=i^\solUu_{0\alpha}(p^M\cut\delta).\]

\begin{ssclm*} There is $\beta'\leq^\solUu\alpha$
with $M^\solUu_{\beta'}=M^\solTt_\beta$.\end{ssclm*}
\begin{proof}We first show that there is no
$\xi+1\leq^\solUu\alpha$ with $E^\solUu_\xi\neq\emptyset$
and $\crit(E^\solUu_\xi)<\delta^\solUu_\beta<\nu(E^\solUu_\xi)$.
This is just via the (relevant version of)
the argument with the ISC and hull property
(in the sense of \ref{dfn:hull_prop})
from \cite[Example 4.3]{cmip},
combined with the $p_{k+1}$-preservation that we have.
That is, suppose $\xi$ is a counterexample.
Then $M^\solUu_\alpha$ does not have
the $(k+1,p)$-hull property at $\delta^\solTt_\beta$,
as $p\in\rg(i^\solUu_{0\alpha})$ (in fact, the ISC gives
$E=E^\solUu_\xi\rest\delta^\solTt_\beta\in M^\solUu_\alpha$,
but
\[
E\notin\Ult_k(M^{*\solUu}_{\xi+1},E)=\cHull^{M^\solUu_\alpha}(\delta^\solTt_\beta\cup\{
\pvec_k^{M^\solUu_\alpha},p\});\]
cf.~\cite{cmip} for more details).
But
$M^\solTt_\beta=\cHull^{M^\solTt_\alpha}(\delta^\solTt_\beta\cup\{\pvec_k^{
M^\solTt_\alpha},p\})$, so $M^\solTt_\alpha$ does have the $(k+1,p)$-hull
property at $\delta^\solTt_\beta$, contradicting
that $M^\solTt_\alpha=M^\solUu_\alpha$.

So let $\beta'\leq^\solUu\alpha$ be least such that
either $\beta'=\alpha$
or $\delta^\solTt_\beta\leq\crit(i^\solUu_{\beta'\alpha})$.
Noting that
$i^\solUu_{\beta'\alpha}(i^\solUu_{0\beta'}(p^M\cut\delta))=p$,
it now easily follows
that $M^\solUu_{\beta'}=M^\solTt_\beta$, as
desired.
\end{proof}

Now at stage $\beta$, in $\solTt$, we moved into $M^\solTt_\beta$,
so by the rules of comparison, $M^\solTt_\beta\neq M^\solUu_{\beta}$,
so $\beta\neq\beta'$.
But if $\beta'<\beta$, then note that $E^\solUu_{\xi}=\emptyset$
for all $\xi\in[\beta',\beta)$,
but then $M^\solTt_\beta=M^\solUu_\beta$, a contradiction.
So $\beta<\beta'$, and note that $E^\solTt_\xi=\emptyset$
for all $\xi\in[\beta,\beta')$. But then $M^\solTt_{\beta'}=M^\solUu_{\beta'}$,
and $S^\solTt_{\beta'}=\{1\}$ although $\sides^\solTt_{\beta'}=\{0,1\}$
(so $\beta'\in\curlyB^\solTt$).
Hence, the comparison terminates at stage $\beta'$, so $\beta'=\alpha$,
contradicting the assumption that $\sides^\solTt_\alpha=\{1\}$,
proving the subclaim.\end{proof}

So $S^\solTt_\alpha=\{1\}$ but $\alpha\in\curlyB^\solTt$.
Let $\beta=\movin^\solTt(\alpha)$. So $\beta<\alpha$ and at stage $\beta$, in $\solTt$
we move into $M^\solTt_\beta=M^\solTt_\alpha$ in $\solTt$; also,
$E^\solTt_\xi=\emptyset$ for all $\xi\in[\beta,\alpha)$,
and $E^\solUu_\beta=\emptyset$, but $E^\solUu_{\xi}\neq\emptyset$
for all $\xi\in(\beta,\alpha)$.
Since we moved into $M^\solTt_\beta$, we have $M^\solTt_\beta\neq M^\solUu_\beta$,
so $E^\solUu_{\beta+1}\neq\emptyset$.
Also, $M^\solTt_\beta|\delta^\solTt_\beta=M^\solUu_\beta|\delta^\solTt_\beta$,
which is passive, so $\delta^\solTt_\beta<\lh(E^\solUu_{\beta+1})$.
As $M^\solTt_\beta=M^\solUu_\alpha$
and $M^\solUu_\beta=M^\solUu_{\beta+1}$,
therefore
\[ M^\solTt_\beta|\delta^{+\solTt}_\beta=M^\solUu_\beta||\delta^{+\solTt}_\beta=M^\solUu_{\beta+1}||\delta_\beta^{+\solTt}\]
and
${\delta^{+\solTt}_\beta}=
(\delta^\solTt_\beta)^{+\exit^\solUu_{\beta+1}}$.
Let $\zeta\geq\beta+1$ be least such that $\zeta+1\leq^\solUu\alpha$.
Let $\kappa=\crit(E^\solUu_\zeta)$.
Since $M^\solTt_\beta=M^\solTt_\alpha$
is $\delta^\solTt_\beta$-sound and by $p_{k+1}$-preservation,
we have
$\kappa\leq\gamma^\solTt_\beta$,
hence $\kappa\leq\delta^\solTt_\beta$.
But $\delta^\solTt_\beta\in\rg(i^\solUu_{0\alpha})$, so  $i^{*\solUu}_{\zeta+1}(\kappa)=\delta^\solTt_\beta$.
Therefore $E^\solUu_\zeta$ is superstrong and $\zeta=\beta+1$
and $\lh(E^\solUu_\zeta)=\delta_\beta^{+\solTt}$.
So $\beta+2=\alpha$. Let $\varepsilon=\pred^\solUu(\beta+2)$.
Then  computing
with the hull property
like before, we get that there is $\chi<^\solTt\beta$
such that $M^\solTt_\chi=M^\solUu_\varepsilon$,
so $\chi\in\curlyB^\solTt$, and also get that $E^\solUu_{\beta+1}$
was also used in $\solTt$, a contradiction.
\end{proof}

By Claim \ref{clm:S^Tt_alpha=0}, $S^\solTt_\alpha=\{0\}$.
However, by the next claim,
$\sides^\solTt_\alpha=\{0,1\}$:

\begin{clmfive}
 $\alpha\in\curlyB^\solTt$.
\end{clmfive}

\begin{proof}
Suppose not.
By Claim \ref{clm:S^Tt_alpha=0}, then $\alpha\in\curlyH^\solTt$
and $H^\solTt_\alpha\ins M^\solUu_\alpha$,
but
$H^\solTt_\alpha$ is not sound.
So $ H^\solTt_\alpha=M^\solUu_\alpha$.
So $(0,\alpha]^\solTt$ does not drop in model or degree,
as otherwise $M$ is
solid.
Let $\beta=\movin^\solTt(\alpha)$.
By Claim \ref{clm:B_iterable_fs_pres},
$i^\solTt_{\beta\alpha}:H^\solTt_\beta\to H^\solTt_\alpha$
is the $\gamma^\solTt_\beta$-core map,
$\rho(H^\solTt_\alpha)=\rho(H^\solTt_\beta)\leq\gamma^\solTt_\beta
\leq\crit(i^\solTt_{\beta\alpha})$, etc.
But then we get a pair of identical extenders used in $\solTt$ and $\solUu$,
much like before; this is a contradiction.
\end{proof}

So  $\alpha\in\curlyB^\solTt$
but $S^\solTt_\alpha=\{0\}$. Let $\beta=\movin^\solTt(\alpha)$.
Let
$B^\solTt_\beta=B'=(\delta',\gamma',\pi',H',M')$.
So $H'=H^\solTt_\alpha\ins M^\solUu_\alpha$,
and in $\solTt$ we move into $H'$ at stage $\beta$,
and therefore \begin{equation}\label{eqn:M^Tt_beta=M^Uu_beta}M'=M^\solTt_\beta=M^\solUu_\beta,\end{equation}
 $E^\solTt_\beta=\emptyset=E^\solUu_\beta$,
and  note  that
$E^\solTt_\xi=\emptyset\neq E^\solUu_\xi$ for all $\xi>\beta$
with $\xi+1<\lh(\solTt,\solUu)$ (and by the next claim
and since $H'\neq M'=M^\solUu_\beta$, such a $\xi$ exists).

 \begin{clmfive}\label{clm:if_H'_pins_M^U_alpha} We have:
 \begin{enumerate}[label=(\roman*)]
 \item\label{item:(0,beta]^Uu_no_drop}  $(0,\beta]^\solUu$ does not drop in model or degree,
 \item\label{item:j^Tt_0,beta(gamma)=i^Uu_0,beta(gamma)}   $\gamma'=j^\solTt_{0\beta}(\gamma)=i^\solUu_{0\beta}(\gamma)$,
  \item\label{item:H'=M^Uu_alpha} $H'=M^\solUu_\alpha$,
  \item\label{item:E^Uu_beta+1_exists}  $E^\solUu_{\beta+1}\neq\emptyset$ exists, so $\alpha\geq\beta+2$.
  \end{enumerate}
 \end{clmfive}
\begin{proof}
Part \ref{item:(0,beta]^Uu_no_drop}:
By line (\ref{eqn:M^Tt_beta=M^Uu_beta}), $M^\solTt_\beta=M^\solUu_\beta$.
So if $(0,\beta]^\solUu$ drops
then $M^\solTt_\beta$ is $(k+1)$-solid, so $M$ is too, contradiction.

Part \ref{item:j^Tt_0,beta(gamma)=i^Uu_0,beta(gamma)}:
Using the previous part,
 $j^\solTt_{0\beta}$ and $i^\solUu_{0\beta}$ are $p_{k+1}$-preserving, and since $\gamma\in p^M$ and $M^\solTt_\beta=M^\solUu_\beta$, therefore
 $\gamma'=j^\solTt_{0\beta}(\gamma)=i^\solUu_{0\beta}(\gamma)$.

Part \ref{item:H'=M^Uu_alpha}:
By Claim \ref{clm:S^Tt_alpha=0}, $H'\ins M^\solUu_\alpha$.
So suppose $H'\pins M^\solUu_\alpha$.
We claim that either
\begin{enumerate}[label=--]
 \item $H'\pins M'$, or
 \item $\gamma>\delta$ and $M'|\gamma'$ is active with
$F'=E^\solUu_{\beta+1}$,
 and $H'\pins U\eqdef\Ult(M'|\gamma',F')$,
 \end{enumerate}
which suffices, since then $H'\in M'$,
so $H\in M$, contradicting
Assumption \ref{ass:H_notin_M}.

So suppose $H'\npins M'$.
Then $E^\solUu_{\beta+1}\neq\emptyset$.
But as $\rho^{H'}\leq\gamma'$,
it follows that $\lh(E^\solUu_{\beta+1})=\gamma'>\delta'$,
and so $E^\solUu_{\beta+1}=F'=F^{M'|\gamma'}$.
If $F'$ is non-superstrong, then  immediately
 $H'\pins U$.
Suppose $F'$ is superstrong.
Let $\zeta=\pred^\solUu(\beta+2)$.
Since $i^\solUu_{0\beta}(\gamma)=\gamma'$,
we have $F'\in\rg(i^\solUu_{0\beta})$, which gives
$\zeta\leq^\solUu\beta$. If $\zeta=\beta$
then we are done, so suppose $\zeta<^\solUu\beta$.
Let $\mu=\crit(F')$
and note $\mu^{++M^\solUu_\zeta}<\crit(i^\solUu_{\zeta\beta})$,
so
\[ M^\solUu_\zeta|\mu^{++M^\solUu_\zeta}=
M^\solUu_\beta|\mu^{++M^\solUu_\beta}, \]
\[U|(\delta')^{++U}= M^\solUu_{\zeta+1}|(\delta')^{++M^\solUu_{\zeta+1}},\]
so $H'\pins U$, as desired.

Part \ref{item:E^Uu_beta+1_exists}:
This follows immediately from the previous parts.
\end{proof}
Note at this point that it might not be obvious
that $H'$ itself is solid,
so we cannot immediately rule out having $H'=M^\solUu_\alpha$
with $(0,\alpha]^\solUu$ non-dropping.

\begin{clmfive}\label{clm:gamma^+^H<gamma^+^M}
 $\gamma^{+H}<\gamma^{+M}$.\footnote{As usual, if $\gamma=\lgcd(H)$ then $\gamma^{+H}$ denotes $\OR^H$.}
\end{clmfive}
\begin{proof}
Suppose not. Then note $\delta=\gamma$ and
$(\delta^\solTt_\alpha)^{+H^\solTt_\alpha}=\delta^{+\solTt}_\alpha$.
We have $E^\solUu_{\beta+1}\neq\emptyset$ by Claim \ref{clm:if_H'_pins_M^U_alpha}.
But $M^\solUu_\beta=M^\solTt_\beta$, so
 ${\delta^{+\solTt}_\alpha}<\lh(E^\solUu_{\beta+1})$,
which contradicts the fact that $H^\solTt_\alpha$ is $\delta^\solTt_\alpha$-sound
and projects $\leq\delta^\solTt_\alpha$.
\end{proof}

Recall that $E^\solUu_\beta=\emptyset$,
and after moving into $H'$ in $\solTt$, the only extenders
used in the comparison  are the $E^\solUu_{\beta+n}$ with $n>0$,
and there is at least one such extender.
Call these extenders \emph{terminal}. There
is either 1 or 2 of these:

\begin{clmfive}\label{clm:terminal_extenders}
 Either:
 \begin{enumerate}[label=\arabic*.,ref=\arabic*]
  \item \label{item:alpha=beta+2}
   $E^\solUu_{\beta+1}$ is the only
   terminal extender and either:
   \begin{enumerate}[label=(\roman*)]
   \item $\delta'<\gamma'=\lh(E^\solUu_{\beta+1})$, or
   \item $\gamma'<(\gamma')^{+H'}=\lh(E^\solUu_{\beta+1})$
   and $E^\solUu_{\beta+1}$ is type 1/3,
   \end{enumerate}
    or
   \item\label{item:alpha=beta+3}
   $E^\solUu_{\beta+1},E^\solUu_{\beta+2}$ are the only terminal extenders,
   and:
   \begin{enumerate}[label=--]
   \item $\delta'<\gamma'=\lh(E^\solUu_{\beta+1})$, and
 \item $\gamma'<(\gamma')^{+H'}=\lh(E^\solUu_{\beta+2})$,
 and  $E^\solUu_{\beta+2}$  is type 1/3.
 \end{enumerate}
 \end{enumerate}
\end{clmfive}
\begin{proof}
 This is because  $\solUu$ cannot use any extenders
 with generators $\geq\gamma'$,
 which follows from routine fine structure
 together with the facts that  $H'$ is $\gamma'$-sound
 with $p^{H'}\cut\gamma'=q^{H'}$,
 and if
 $b^\solUu\cap\dropset^\solUu_{\deg}=\emptyset$ then $i^\solUu$ is $p_{k+1}$-preserving.
\end{proof}

\begin{clmfive}\label{clm:gamma^+^H_movement}
We have:
\begin{enumerate}[label=\arabic*.,ref=\arabic*]
 \item\label{item:i,j_map_agmt}
$i^\solTt_{0\beta}\rest(H||\gamma^{+H})\sub j^\solTt_{0\beta}$, and
\item\label{item:gamma^+^H_movement}  $(\gamma')^{+H'}=\sup
i^\solTt_{0\beta}``\gamma^{+H}=\sup
j^\solTt_{0\beta}``\gamma^{+H}=\sup i^\solUu_{0\beta}``\gamma^{+H}$.
\end{enumerate}
\end{clmfive}
\begin{proof}
Part \ref{item:i,j_map_agmt}: By
Lemma \ref{lem:ult_relevant_B_props}.

Part \ref{item:gamma^+^H_movement}:
We first prove that $(\gamma')^{+H'}=\sup
i^\solTt_{0\beta}``\gamma^{+H}$.
This holds because
$i^\solTt_{0\beta}$
is a $k$-embedding, $H=\Hull^H(\gamma\cup\{q^H,\pvec_k^H\})$
and likewise for $H'$. That is, we can write $\core_0(H)$ as an increasing
union of
hulls
\[ \core_0(H)=\bigcup_{\xi<\rho_k^H}\widetilde{H}_\xi, \]
where for $\xi<\rho_k^H$, $\widetilde{H}_\xi$
is the set of all $x\in\core_0(H)$ with
$x\in\Hull^H(\gamma\cup\{q^H,\pvec_k^H\})$
as witnessed by some segment/theory below ``level $\xi$''
(using the stratification of $\rSigma_{k+1}$ as in \cite[Appendix to \S2]{fsit}).
Let $\eta_\xi=\widetilde{H}_\xi\cap\gamma^{+H}$.
Then each $\eta_\xi<\gamma^{+H}$,
and $\sup_{\xi<\rho_k^H}\eta_\xi=\gamma^{+H}$.
But $\widetilde{H}_\xi$ is determined by
the corresponding $\rSigma_k$ theory (or ``initial segment'')
$t_\xi\in H$, and $i^\solTt_{0\beta}(t_\xi)=t'_{i^\solTt_{0\beta}(\xi)}$,
where $t'_{\xi'}$ is defined analogously with respect to $H'$.
Thus, $i^\solTt_{0\beta}(\eta_\xi)=\eta'_{i^\solTt_{0\beta}(\xi)}$,
where $\eta'_{\xi'}$ is defined analogously to $\eta_\xi$.
We also have $\sup_{\xi'<\rho_k^{H'}}\eta'_{\xi'}=(\gamma')^{+H'}$,
and $i^\solTt_{0\beta}$ is a $k$-embedding,
so $\rho_k^{H'}=\sup i^\solTt_{0\beta}``\rho_k^H$,
so we are done.

So $(\gamma')^{+H'}=\sup i^\solTt_{0\beta}``\gamma^{+H}
=\sup j^\solTt_{0\beta}``\gamma^{+H}$,
using  part \ref{item:i,j_map_agmt}.
But now note that although $\solUu$ is a tree on $M$, not $B$,
we can make  all the
analogous definitions with $\solUu$,
defining $H^\solUu_\beta$ as the relevant collapsed hull of $M^\solUu_\beta$,
and defining $\pi^\solUu_\beta:H^\solUu_\beta\to M^\solUu_\beta$ etc.
Everything we have just done also applies there,
but since $M^\solTt_\beta=M^\solUu_\beta$ and $j^\solTt_{0\beta},i^\solUu_{0\beta}$
are $p_{k+1}$-preserving, and these parameters together
with the model determine the hull $H'$, it follows
that $\sup i^\solUu_{0\beta}``\gamma^{+H}=(\gamma')^{+H'}$ also.
(But note we don't claim $j^\solTt_{0\beta}\rest\gamma^{+H}\sub
i^\solUu_{0\beta}$.)
\end{proof}

The remaining two claims rule out all possibilities, giving the desired contradiction:
\begin{clmfive}\label{clm:solidity_M|gamma_passive} $M|\gamma$ is not passive.
\end{clmfive}
\begin{proof}
Suppose $M|\gamma$ is passive.
By condensation for $\om$-sound mice (Fact \ref{fact:om_condensation}), $M||\gamma^{+H}=H||\gamma^{+H}$.

 \begin{casefive}\label{scase:overlapping_G}
  $M|\gamma^{+H}$ is active with extender $G$
with $\kappa=\crit(G)<\delta$.

  We have $j^\solTt_{0\beta}(\gamma)=\gamma'$,
  so $M'|\gamma'$ is passive.
  And because $\kappa<\delta<\gamma^{+H}=\lh(G)$, $G$ is $M$-total
  and $\cof^M(\lh(G))=\kappa^{+M}$. It follows
  that $j^\solTt_{0\beta}$
  is continuous at $\gamma^{+H}$,
  so by Claim \ref{clm:gamma^+^H_movement},
  $j^\solTt_{0\beta}(\gamma^{+H})=(\gamma')^{+H'}$.
  Therefore $M'|(\gamma')^{+H'}$
  is active with $G'=j^\solTt_{0\beta}(G)$.
 So $E^\solUu_{\beta+1}=G'$, and by Claim \ref{clm:terminal_extenders},
 this is the only terminal extender.
 Also $i^\solUu_{0\beta}(G)=G'=j^\solTt_{0\beta}(G)$, by Claim
\ref{clm:gamma^+^H_movement} and  for
 the same continuity reasons as for $j^\solTt_{0\beta}$.
 (Note that we don't know that $j^\solTt_{0\beta}=i^\solUu_{0\beta}$
 though;  in fact if the embeddings are non-trivial, then they must
 be distinct.)

Let $\zeta=\pred^\solTt(\beta+2)$. Let
$\kappa'=\crit(G')=i^\solUu_{0\beta}(\kappa)<\delta'$.
Note that because
$G'=i^\solUu_{0\beta}(G)$,
we have $\zeta\leq^\solUu\beta$.\footnote{In fact,
$\zeta$ is the least $\zeta'\leq^\solUu\beta$
such that $\zeta'=\beta$
or $i^\solUu_{0\zeta'}(\kappa)<\crit(i^\solUu_{\zeta'\beta})$,
which is the least $\zeta'\leq\beta$ (as opposed to $\leq^\solUu$)
such that $\zeta'=\beta$ or $\kappa'<\nu(E^\solUu_{\zeta'})$.}
So $(0,\zeta]^\solUu$ does not drop in model or degree.
Moreover,  $G'$ is
$M^\solUu_\zeta$-total
and $\kappa'<i^\solUu_{0\zeta}(\delta)<\rho_k(M^\solUu_\zeta)$,\footnote{Note we refer here to $\rho_k$, not $\rho_{k+1}$.}
so $(0,\beta+2]^\solUu$ also does not drop in model or degree.
And either
$M^\solUu_\zeta=M^\solUu_\beta$ or $\kappa'<\crit(i^\solUu_{\zeta\beta})$.

Now if $\kappa'<\rho(M^\solUu_\zeta)$ then
by Lemma \ref{lem:basic_fs_pres} and as $H'=M^\solUu_{\beta+2}$, we have
\begin{equation}\label{eqn:rho^H'>=gamma'^+H'} \rho^{H'}=\rho(M^\solUu_{\beta+2})=\sup
i^\solUu_{\zeta,\beta+2}``\rho(M^\solUu_{\zeta})\geq\lh(G')=(\gamma')^{+H'}, \end{equation}
contradicting the fact that $\rho^{H'}\leq\gamma'$.

So $\rho(M^\solUu_{\zeta})\leq\kappa'<\crit(i^\solUu_{\zeta\beta})$.
Therefore, recalling line (\ref{eqn:M^Tt_beta=M^Uu_beta}), we have
\[
\rho^{M'}=\rho^{M^\solUu_\beta}=\rho^{M^\solUu_\zeta}=\rho^{M^\solUu_{\beta+2}}=\rho^{H'},\]
but by Claim \ref{clm:p_k+1_pres}(\ref{item:rho^H_alpha>rho^M_alpha}),
we have $\rho^{M'}<\rho^{H'}$, contradiction.

 \end{casefive}
\begin{casefive}\label{scase:solidity_M|gamma_passive_and_otherwise}
Otherwise.

So either $M|\gamma^{+H}$ is passive,
or active with an extender $G$
with $\crit(G)\geq\delta$.
Note that if $G$ exists then in fact
$G$ is type 1, $\crit(G)=\delta<\gamma<\gamma^{+H}$,
and $G$ is $M$-partial.

Suppose
$j^\solTt_{0\beta}(\gamma^{+H})=(\gamma')^{+H'}$.
Then $M'|(\gamma')^{+H'}$ is passive or active type 1.
Passivity
contradicts Claim \ref{clm:terminal_extenders},
so it is active type 1, but then note
\[ H'=M^\solUu_{\beta+2}=\Ult_k(M^{*\solUu}_{\beta+2},G') \]
with $M^{*\solUu}_{\beta+2}\pins M'$, and therefore
$H'\in M'$, contradiction.

So $j^\solTt_{0\beta}(\gamma^{+H})>(\gamma')^{+H'}$.
Let $\theta=\cof^M(\gamma^{+H})$.
Since $\gamma^{+H}<\rho_k^M$,
it follows that
$\theta$ is measurable in $M$
and there is some $\zeta<^\solTt\beta$
such that $\crit(j^\solTt_{0\beta})=j^\solTt_{0\zeta}(\theta)$.
Letting $\zeta$ be least such, then $j^\solTt_{0\zeta}$
is continuous at $\theta$ and hence at $\gamma^{+H}$.
Letting $f\in M$ be such that $f:\theta\to\gamma^{+H}$
is normal,
it follows that
\[ j^\solTt_{0\beta}(f)\rest j^\solTt_{0\zeta}(\theta)=j^\solTt_{\zeta\beta}\com
(j^\solTt_{0\zeta}(f)):j^\solTt_{0\zeta}(\theta)\to\sup j^\solTt_{0\beta}``\gamma^{+H} \]
is also a normal function, which  is in $M'$.
So (together with Claim \ref{clm:gamma^+^H_movement}),
we have $\cof^{M'}((\gamma')^{+H'})=j^\solTt_{0\zeta}(\theta)$,
which is inaccessible in $M'$. Therefore
$M'|(\gamma')^{+H'}$ is not active with an extender $F$ having $\crit(F)<\delta'$
(such an $F$ would be $M'$-total,
which contradicts the cofinality just computed).
But it is active by Claim \ref{clm:terminal_extenders}, hence with a non-$M'$-total extender $F$, which implies that $F$ is type 1
with $\crit(F)=\delta'<\gamma'<(\gamma')^{+H'}$, which
gives $H'\in M'$ as before.\qedhere
 \end{casefive}
\end{proof}

\begin{clmfive}\label{clm:solidity_M|gamma_not_active}
$M|\gamma$ is not active.\end{clmfive}
\begin{proof}
Suppose $M|\gamma$ is active. So $\gamma>\delta$ and $\crit(F)<\delta$ and $F$ is $M$-total.
Let $U=\Ult(M,F)$. By condensation for $\om$-sound mice (Fact \ref{fact:om_condensation}), $H||\gamma^{+H}=U||\gamma^{+H}$.
By Claim \ref{clm:if_H'_pins_M^U_alpha}, $\gamma'=j^\solTt_{0\beta}(\gamma)=i^\solUu_{0\beta}(\gamma)$,
so $E^\solUu_{\beta+1}=F'=j^\solTt_{0\beta}(F)=i^\solUu_{0\beta}(F)$.

\begin{casesix}\label{scase:solidity_M|gamma_active_U|gamma^+_active} $U|\gamma^{+H}$ is active with an extender $G$.

Then $j^\solTt_{0\beta}$ and $i^\solUu_{0\beta}$ are continuous at $\gamma^{+H}$.
For if $\crit(G)<\delta$,
this is as before,
and if $\crit(G)=\delta$
then $\cof^M(\gamma^{+H})=\cof^M(\delta^{+H})$,
definably over $M|\lh(G)=M|\gamma^{+H}$,
but since $\delta^{+H}=\gamma$ and
 $M|\gamma$ is active with $F$, the embeddings are continuous at $\gamma$,
 and hence at $\gamma^{+H}$.
 So in any case,  $E^\solUu_{\beta+2}$ exists
 and $E^\solUu_{\beta+2}=G'=j^\solTt_{0\beta}(G)=i^\solUu_{0\beta}(G)$.
 And since $H'$ is $\gamma'$-sound
 with $\rho^{H'}\leq\gamma'$
 and
  by $p_{k+1}$-preservation,
 $G'$ is type 1/3.

 We claim that $G$ is type 1, so $\crit(G)=\delta$.
For otherwise,  $G$ is type 3, so $\crit(G)<\delta$ and $\crit(G')<\delta'$ and
$G'\in\rg(j^\solUu_{0\beta})$,
so we can reach a contradiction as in Subcase
\ref{scase:overlapping_G}.

And we claim that $F$ is type 3. For otherwise $F$ is  type
2, so note that $H'=\Ult_0(M'|\gamma',G')\in M'$, a
contradiction.

So $F$ is type 3 and $G$ type 1, so  $\nu(F)=\delta=\crit(G)$. So $E^\solUu_{\beta+1}=F'=j^\solTt_{0\beta}(F)$ and $E^\solUu_{\beta+2}=G'=j^\solTt_{0\beta}(G)$.
But then we again reach a contradiction like in Subcase \ref{scase:overlapping_G}
(but just with $G'\com F'$
replacing the single extender $G'$ considered there).

\end{casesix}

\begin{casesix} $U|\gamma^{+H}$ is passive.

We already know $E^\solUu_{\beta+1}=F'$.
Suppose $E^\solUu_{\beta+2}$ exists.
Then as before and since $U|\gamma^{+H}$ is passive,
$i^\solUu_{0\beta}$ is discontinuous at $\gamma^{+H}$,
which, as in Case \ref{scase:solidity_M|gamma_passive_and_otherwise},
implies that  $\crit(E^\solUu_{\beta+2})=\delta'$. But then if $F'$ is type 2,
this gives $H'\in M'$,
and if $F'$ is type 3,
gives a contradiction like in Case \ref{scase:solidity_M|gamma_active_U|gamma^+_active}.

So $E^\solUu_{\beta+2}$ does not exist, so $H'=M^\solUu_{\beta+2}$.
Since $F'\in\rg(i^\solUu_{0\beta})$, as usual  $\zeta=\pred^\solUu(\beta+2)\leq^\solUu\beta$, so $(0,\beta+2]^\solUu$ does not drop in model or degree, and $H'=\Ult_k(M^\solUu_\zeta,F')$.
Let $\kappa'=\crit(F')$.

If $\kappa'<\rho(M^\solUu_\zeta)$
then much as in Case \ref{scase:overlapping_G}
and since $\rho^{H'}\leq\gamma'$, we have
\begin{equation}\label{eqn:rho^H'>=gamma'} \rho^{H'}=\rho(M^\solUu_{\beta+2})=\sup
i^\solUu_{\zeta,\beta+2}``\rho(M^\solUu_{\zeta})=\lh(F')=\gamma', \end{equation}
which implies that $F'$ is superstrong and $\rho(M^\solUu_\zeta)=(\kappa')^{+M^\solUu_\zeta}$. But $\lh(p^{M^\solUu_{\beta+2}})>n=\lh(q^M)=\lh(p^H\cut\gamma)$,
and since $i^\solUu_{0,\beta+2}$ is $p_{k+1}$-preserving,
therefore $\lh(p^{M^\solUu_{\beta+2}})>n$,
so $\lh(p^{H'})>n$,
which contradicts the fact that $H'$ is $\gamma'$-sound
with $\rho^{H'}=\gamma'$
and $\lh(p^{H'}\cut\gamma')=n$.

So $\rho^{M^\solUu_\zeta}\leq\kappa'$, which leads to a contradiction like at the end of Case \ref{scase:overlapping_G}.\qedhere
\end{casesix}
\end{proof}

The last two claims are incompatible,
yielding a contradiction which completes the
proof of solidity and universality.
\end{proof}

\subsection{Iterability, closeness and comparison termination}\label{sec:appendix}

In this section we give the proofs of Claims \ref{clm:B_is_almost_iterable},  \ref{clm:solidity_closeness} and \ref{clm:solidity_comparison_termination} from the proof of Theorem \ref{thm:solidity}.
\begin{proof}[Proof of Claim \ref{clm:B_is_almost_iterable}]

The proof is similar to
that in the proof of condensation
 from normal iterability,
\cite[Theorem 5.2 proof, Claim 5]{premouse_inheriting}.
However, the details of the bicephali/cephalanxes there and trees formed on them are somewhat different to the bicephali and trees currently under consideration. Moreover,
there is an issue which arises
in connection with superstrong extenders,
which was unfortunately ignored in \cite{premouse_inheriting},
leading to a minor mistake in the proof
of \cite[Theorem 5.2 proof, Claim 5]{premouse_inheriting}.
So we will handle it correctly and in some detail here.
The issue is just the analogue of that discussed at the end of the proof of Claim \ref{clm:Ds_phU_iterable}, within  the proof of Lemma \ref{lem:super-Dodd-soundness},
which led  to the consideration of \emph{essentially}-$(0,0,0)$-maximal trees on the phalanx
$\mathfrak{W}'_{\pi(\zeta)}$, instead of standard $(0,0,0)$-maximal trees.
(If one is not familiar with  the setup in \cite[Theorem 5.2]{premouse_inheriting},
nor the proof of Claim \ref{clm:Ds_phU_iterable} of \ref{lem:super-Dodd-soundness},
then one should simply read ahead for the present,
as
the details regarding the issue are reasonably self-contained. If more details are desired after reading here, then \cite{premouse_inheriting} might be useful,
though the setup there is somewhat different.)

First let us describe the basic mechanism we will use for copying extenders.
Recall the extender copying
function $\extcopy(\somevarpi,E)$
from Definition \ref{dfn:extcopy},
which is defined under the assumption that $\somevarpi$ is
a non-$\nu$-high embedding between premice.
This will not suffice for our present purposes,
as we will have to work with copy maps which might be $\nu$-high.
So we now define a variant $\concopy(\somevarpi,E)$,
which also works
when $\somevarpi$ is $\nu$-high, assuming enough condensation
(which we will have).
The desire is that when
copying
an iteration tree $\Tt$ to a tree $\Uu$,
we can copy $E^\Tt_\alpha$
 to $E^\Uu_\alpha=\concopy(\somevarpi,E)$, and moreover,
 in such a manner that $\Tt,\Uu$ have the same tree order (and more).\footnote{The improved method was noticed by the author in about 2015.
Prior to that point, the standard method for copying
in this situation was to use an extra extender in the upper tree,
so as to ``reveal'' the extender $\Shift(\somevarpi)(E)$ (see \S\ref{sec:notation_embeddings}),
and then to copy $E$ to that extender. But this
creates notational hassles, in particular
because the trees are indexed differently; the method we describe here
takes a little thought at the outset, but works out simpler
in the end.}

\begin{dfn}\label{dfn:concopy}
Let $P,N$ be $k$-sound premice such that all proper segments of $N$ satisfy standard condensation facts
(which facts
become clear below).
Let $\somevarpi:P\to N$ be  $k$-lifting.

Let $E\in\es_+^P$. Set $\concopy(\somevarpi,E)=\extcopy(\somevarpi,E)$ unless $\somevarpi$ is $\nu$-high and $E\in\es^P\cut\core_0(P)$, so assume this is the case.
In particular, $P,N$ are active type 3,
 $\nu(F^P)<\lh(E)<\OR^P$,
 and so $\widehat{\somevarpi}(E)\notin\es_+^N$
where $\widehat{\somevarpi}=\Shift(\somevarpi)$ (see \S\ref{sec:notation_embeddings}).
 We want to replace $\widehat{\somevarpi}(E)$ with a certain hull  in $\es^N$.
We will also define some other associated objects,
$Q\pins P$, $Q'\pins N$ and $\somevarpi':Q\to Q'$.

Let $Q\pins P$ be least such that $\rho_\om^Q=\nu$
and $E\in\es_+^Q$. Let $Q^\uparrow=\widehat{\somevarpi}(Q)$,
so $Q^\uparrow\pins\Ult(N,F^N)$ and
$\rho_\om^{Q^{\uparrow}}=\widehat{\somevarpi}(\nu)>\nu(F^N)$.
Let $q<\om$ be least such that $\rho_{q+1}^Q=\nu$
(so
if $\rho_0^Q=\nu$ then  $q=0$).
Let
\[ H'=\Hull_{q+1}^{Q^\uparrow}(\nu(F^N)\cup\{\pvec_{q+1}^{Q^{\uparrow}}\})
\]
 $Q'$ be the transitive collapse of $H'$,
and $\sigma:Q'\to H'$ the uncollapse.

Note that $Q'$ is $(q+1)$-sound,
because (i) $\nu(F^N)$ is a cardinal in $N$ and in $\Ult(N,F^N)$,
so $\rho_{q+1}^{Q'}=\nu(F^N)$;
and (ii) letting $w\in Q$ be the set of $(q+1)$-solidity witnesses
for $Q$, then there is $\gamma<\nu$ such that
$w\in\Hull_{q+1}^Q(\{\gamma,\pvec_{q+1}^Q\}$
which  implies $\widehat{\somevarpi}(w)\in
H'$,
but $\widehat{\somevarpi}(w)$ is the set of $(q+1)$-solidity
witnesses for $Q^{\uparrow}$.

So by condensation
with $\sigma$ (in $\Ult(N,F^N)$), we get $Q'\pins\Ult(N,F^N)$,
so by coherence,  $Q'\pins N$.
Let $\somevarpi':Q\to Q'$ be
$\somevarpi'=\sigma^{-1}\com\widehat{\somevarpi}\rest Q$.
Then $\somevarpi'$ is a $p_{q+1}$-preserving
near $q$-embedding and
$\somevarpi\sub\somevarpi'$.
And $\somevarpi'$ is $\nu$-preserving,
because $\widehat{\somevarpi}\rest Q:Q\to Q^\uparrow$
is $\nu$-preserving and by commutativity.
We now set $F=\extcopy(\somevarpi',E)$,
so $F\in\es_+^{Q'}\sub\es^N$.
Moreover, because $\somevarpi\sub\somevarpi'$,
$\somevarpi'$ has appropriate properties
to use as a copy  map at this stage.
\end{dfn}
\begin{dfn}\label{dfn:copy_for_pre-ISC-premice}
Let $P,N$ be   active pre-ISC-premice.
Let $\somevarpi:P\to N$ be $(-1)$-lifting (that is, $\Sigma_0$-elementary).\footnote{
See \S\ref{sec:notation_embeddings}. Recall the $\Sigma_0$-elementarity is with respect to  $\es^P,F^P$, where $F^P$ is encoded as
specified in Definition \ref{dfn:pre-ISC-premouse}. Also recall that $\dom(\somevarpi)=P$; there is no squashing being considered here.} Let $E\in\es_+^P$. Then $\concopy(\somevarpi,E)$
denotes $F^N$ if $E=F^P$,
and denotes $\somevarpi(E)$ otherwise.
\end{dfn}

Note that in all cases above,
we have $\Shift(\somevarpi)(\nu(E))\geq\nu(\concopy(\somevarpi,E))$.

Now fix a $(k,\om_1+1)$-strategy $\Sigma$ for $M$. By Lemma \ref{lem:ess_iter}, $\Sigma$ induces a canonical strategy $\Sigma'$ for essentially-$k$-maximal trees. (We will only need to make use of the ``essentially'' aspect of $\Sigma'$ when $\alpha+1$ in a special circumstance, explained below.)

We define an almost $((k,k),\om_1+1)$-strategy $\Gamma$ for $B$,
such that  trees $\Tt$ on $B$ via $\Gamma$ lift to (essentially-$k$-maximal) trees
$\Uu$ on $M$ via $\Sigma'$. This will suffice, since by Claim \ref{clm:solidity_wellfounded_implies_relevance-putative}, we just need to ensure wellfoundedness of the models of $\Tt$. Much as in Definition \ref{dfn:anomalous}, for $\alpha\in\mathscr{B}^\Tt$
let $J^\Tt_\alpha=j^\Tt_{0\alpha}(J)$
where $J\pins M$ is least such that $\gamma\leq\OR^J$ and $\rho_\om^J=\delta$.
We  define copy maps $\sigma_\alpha,\somevarpi_\alpha$, and structures $K^\Uu_\alpha\ins M^\Uu_\alpha$ (for some values of $\alpha$, some of these are irrelevant), with the following properties:
\begin{enumerate}[label=\arabic*.,ref=\arabic*]\item $\lh(\Uu)=\lh(\Tt)$
and ${<^\Uu}={<^\Tt}$.
\item
Suppose $\alpha\in\curlyB^\Tt$, so we have $B^\Tt_\alpha=(\delta_\alpha,\gamma_\alpha,\pi_\alpha,H^\Tt_\alpha,M^\Tt_\alpha)$. Then:
\begin{enumerate}[label=--]
\item $[0,\alpha]^\Uu\cap\dropset_{\deg}^\Uu=\emptyset$, so $\deg^\Uu_\alpha=k$,
\item $\sigma_\alpha:M_\alpha^\Tt\to M^\Uu_\alpha$ is a $\nu$-preserving $k$-embedding
and $\sigma_\alpha\com j^\Tt_{0\alpha}=i^\Uu_{0\alpha}$.
\item
$ \somevarpi_\alpha=\sigma_\alpha\com\pi_\alpha:H^\Tt_\alpha\to M^\Uu_\alpha$
is a $k$-embedding, and note
$\somevarpi_\alpha\rest\gamma_\alpha=\sigma_\alpha\rest\gamma_\alpha$.
\end{enumerate}
(However, $\pi_\alpha$, and therefore also $\somevarpi_\alpha$,
might be $\nu$-high.
In fact $\pi_0$ might be $\nu$-high. But $\pi_\alpha,\somevarpi_\alpha$ are non-$\nu$-low, since they are $k$-embeddings.)

\item
Suppose $\alpha\in\curlyH^\Tt$. Then:
\begin{enumerate}[label=--]\item
$ \somevarpi_\alpha:H^\Tt_\alpha\to M^\Uu_\alpha$ is
  c-preserving $\deg^{0\Tt}_\alpha$-lifting.
\item If $[0,\alpha]^\Tt\cap\dropset^\Tt_{\deg}=\emptyset$
then $[0,\alpha]^\Uu\cap\dropset^\Uu_{\deg}=\emptyset$ and $\somevarpi_\alpha\com i^\Tt_{0\alpha}=i^\Uu_{0\alpha}\com\pi_0$
 and
$\somevarpi_\alpha$ is a $k$-embedding (note $\pi_0=\somevarpi_0=\pi$).\end{enumerate}

\item Suppose $\alpha\in\curlyM^\Tt$ and $\alpha$ is not weakly anomalous. Then:
\begin{enumerate}[label=--]\item $\sigma_\alpha:M^\Tt_\alpha\to M^\Uu_\alpha$
is c-preserving $\deg^{1\Tt}_\alpha$-lifting.
\item If  $[0,\alpha]^\Tt\cap\dropset^\Tt_{\deg}=\emptyset$ then $[0,\alpha]^\Uu\cap\dropset^\Uu_{\deg}=\emptyset$ and $\sigma_\alpha\com j^\Tt_{0\alpha}=i^\Uu_{0\alpha}$
and $\sigma_\alpha$ is   a $k$-embedding.\end{enumerate}
\item\label{item:when_alpha_is_weakly_anomalous}
Suppose $\alpha\in\curlyM^\Tt$ and $\alpha$ is weakly anomalous.
Then:
\begin{enumerate}[label=--]\item $K^\Uu_\alpha\ins M^\Uu_\alpha$.
\item If $\alpha$ is non-anomalous then
$\sigma_\alpha:M^\Tt_\alpha\to K^\Uu_\alpha$
is c-preserving $\deg^{1\Tt}_\alpha$-lifting.
\item If $\alpha$ is anomalous (so $\deg^{1\Tt}_\alpha=-1$)
then $K^\Uu_\alpha$ is active type 3 and $\sigma_\alpha:M^\Tt_\alpha\to K^\Uu_\alpha$ is
$(-1)$-lifting.
Note that in this case, $M^\Tt_\alpha$ is a pre-ISC-premouse which fails the ISC, with $\nu(F(M^\Tt_\alpha))\leq\lgcd(M^\Tt_\alpha)$,
and $\sigma_\alpha$ is also c-preserving.
There is no squashing involved,
so $\dom(\sigma_\alpha)=M^\Tt_\alpha$ (even though $K^\Uu_\alpha$ is actually a premouse). See \S\ref{sec:notation_embeddings}.
\end{enumerate}

Moreover, let $\beta=\max(\mathscr{B}^\Tt\cap[0,\alpha]^\Tt)$ and  $\varepsilon+1=\succ^\Tt(\beta,\alpha)$. Then:
\begin{enumerate}[label=--]
\item $J^\Tt_\beta=M^{*\Tt}_{\varepsilon+1}\pins M^\Tt_\beta$
and $(\varepsilon+1,\alpha]^\Tt\cap\dropset^\Tt=\emptyset$ and $j^{*\Tt}_{\varepsilon+1,\alpha}:J^\Tt_\beta\to M^\Tt_\alpha$
and $\crit(j^{*\Tt}_{\varepsilon+1,\alpha})=\delta_\beta$.
\item Suppose there is no $\chi\in[\varepsilon+1,\alpha)^\Tt$
such that $\crit(j^\Tt_{\chi\alpha})\geq j^{*\Tt}_{\varepsilon+1,\chi}(\delta_\beta)$. Then $[0,\alpha]^\Uu\cap\dropset^\Uu_{\deg}=\emptyset$
and $K^\Uu_\alpha=i^\Uu_{0\alpha}(J)\pins M^\Uu_\alpha$
and \[ \sigma_\alpha\com j^{*\Tt}_{\varepsilon+1,\alpha}\com j^\Tt_{0\beta}\rest J=i^\Uu_{0\alpha}\rest J.\]
\item Suppose there is $\chi$ as above. Then
$\alpha$ is non-anomalous and $K^\Uu_\alpha=M^\Uu_\alpha$. Moreover,  taking $\chi$ least such, and $\xi+1=\succ^\Tt(\chi,\alpha)$, then $\dropset^\Uu\cap[0,\alpha]^\Uu=\{\xi+1\}$
and $M^{*\Uu}_{\xi+1}=K^\Uu_\chi=i^\Uu_{0\chi}(J)$ and $i^{*\Uu}_{\xi+1}:K^\Uu_\chi\to K^\Uu_\alpha=M^\Uu_\alpha$,
and
\[ \sigma_\alpha\com j^{*\Tt}_{\varepsilon+1,\alpha}\com j^\Tt_{0\beta}\rest J=i^{*\Uu}_{\xi+1,\alpha}\com i^\Uu_{0\chi}\rest J.\]

\end{enumerate}

\item\label{item:exitside=0_copy} Let $\alpha+1<\lh(\Tt)$.
If $\exitside^\Tt_\alpha=0$ then
$E^\Uu_\alpha=\concopy(\somevarpi_\alpha,E^\Tt_\alpha)$, whereas if $\exitside^\Tt_\alpha=1$
then  $E^\Uu_\alpha=\concopy(\sigma_\alpha,E^\Tt_\alpha)$.

\item Let $\alpha+1<\lh(\Tt)$. If $\alpha+1\in\mathscr{B}^\Tt\cup\mathscr{M}^\Tt$ then  $\sigma_{\alpha+1}$ is just the map given by the natural application of the Shift Lemma.
(If $\alpha+1$ is a mismatched dropping node and $\beta=\pred^\Tt(\alpha+1)$, then
apply the Shift Lemma to
$\sigma_\beta\rest J^\Tt_\beta:J^\Tt_\beta\to i^\Uu_{0\beta}(J)$
and the map $\psi:\exit^\Tt_\alpha\to\exit^\Uu_\alpha$ extracted from the definition of $\concopy$ and either $\somevarpi_\alpha$ (if $\exitside^\Tt_\alpha=0$)
or $\sigma_\alpha$ (if $\exitside^\Tt_\alpha=1$).)
If $\alpha+1\in\mathscr{H}^\Tt$
then $\somevarpi_\alpha$ is likewise produced by the Shift Lemma.
\item If $\eta<\lh(\Tt)$ is a limit
(so $[0,\eta)^\Tt=[0,\eta)^\Uu$
and $[0,\eta)^\Uu=\Sigma'(\Uu\rest\eta)$),
then $\sigma_\eta,\somevarpi_\eta$
are just formed  via commutativity as usual.
\end{enumerate}

These conditions determine how $\Tt,\Uu$ are formed, and also $\sigma_\alpha,\somevarpi_\alpha,K^\Uu_\alpha$ in the cases that they are relevant. We now make a couple of remarks on the less standard considerations here.

First, in the context of clause \ref{item:exitside=0_copy}, suppose $\somevarpi_\alpha$ is $\nu$-high
and $\nu(H^\Tt_\alpha)<\lh(E^\Tt_\alpha)<\OR(H^\Tt_\alpha)$.
Then defining $\somevarpi'_\alpha,Q,Q'$ as in \ref{dfn:concopy},
note that if $\beta+1<\lh(\Tt)$
and $\pred^\Tt(\beta+1)=\alpha$
and $\nu(H^\Tt_\alpha)\leq\crit(E^\Tt_\beta)$,
then $H^{*\Tt}_{\beta+1}\ins Q$
and $M^{*\Uu}_{\alpha+1}\ins Q'$,
and $\somevarpi'_\alpha\rest H^{*\Tt}_{\beta+1}\to M^{*\Uu}_{\alpha+1}$
is the appropriate map with which to apply the Shift Lemma
when defining $\somevarpi_{\alpha+1}$.

Second, let us consider the manner in which $\Uu$ can fail to be $k$-maximal, and what happens in that situation.
Suppose we have determined $\Tt\rest(\alpha+1)$ and $\Uu\rest(\alpha+1)$
and $\left<\somevarpi_\beta,\sigma_\beta,J^\Uu_\beta\right>_{\beta\leq\alpha}$,
and  $\exitside^\Tt_\alpha$ and $E^\Tt_\alpha$ and are
chosen appropriately.
In particular,
$E^\Tt_\alpha$ is $\Tt\rest(\alpha+1)$-normal; that is, $\lh(E^\Tt_\beta)\leq\lh(E^\Tt_\alpha)$ for all $\beta<\alpha$.
Then $E^\Uu_\alpha$ is determined by clause \ref{item:exitside=0_copy}.
We get $\lh(E^\Uu_\beta)\leq\lh(E^\Uu_\alpha)$
for all $\alpha\leq\beta$
much as usual,
except in the case that $\alpha=\varepsilon+1$ for some $\varepsilon$, $\varepsilon+1$ is a mismatched dropping node
and $E^\Uu_\varepsilon$ is of superstrong type (hence $E^\Tt_\varepsilon$ is also of superstrong type). Suppose this is the case. Let $\beta=\pred^\Tt(\varepsilon+1)=\max(\mathscr{B}^\Tt\cap[0,\varepsilon+1]^\Tt)$. So $\beta\leq^\Tt\varepsilon$ and $\beta=\max(\mathscr{B}^\Tt\cap[0,\varepsilon]^\Tt)$
and $\exitside^\Tt_\beta=0$
and $\gamma_\beta=\delta_\beta^{+H^\Tt_\beta}<\lh(E^\Tt_\beta)$.

Now $K^\Uu_{\varepsilon+1}=i^\Uu_{0,\varepsilon+1}(J)=i^\Uu_{\beta,\varepsilon+1}(J')$ where $J'=i^\Uu_{0\beta}(J)$,
and $J'\pins M^\Uu_\beta$ with $\rho_\om^{J'}=i^\Uu_{0\beta}(\delta)=\somevarpi_\varepsilon(\delta_\beta)=\somevarpi_\varepsilon(\crit(E^\Uu_\varepsilon))=\crit(E^\Uu_\varepsilon)$,
so
$K^\Uu_{\varepsilon+1}\pins M^\Uu_{\varepsilon+1}$ with $\rho_\om(K^\Uu_{\varepsilon+1})=i_{E^\Uu_\varepsilon}(\crit(E^\Uu_\varepsilon))=\lambda(E^\Uu_\varepsilon)=\nu(E^\Uu_\varepsilon)$ (since $E^\Uu_\varepsilon$ is superstrong).
But $\lh(E^\Uu_\varepsilon)=\nu(E^\Uu_\varepsilon)^{+M^\Uu_{\varepsilon+1}}$,
so $\OR(K^\Uu_{\varepsilon+1})<\lh(E^\Uu_\varepsilon)$.
Since $\sigma_{\varepsilon+1}:M^\Tt_{\varepsilon+1}\to K^\Uu_{\varepsilon+1}$
and $\sides^\Tt_{\varepsilon+1}=\{1\}$,
we must have $\exitside^\Tt_{\varepsilon+1}=1$ and $E^\Tt_{\varepsilon+1}\in\es_+(M^\Tt_{\varepsilon+1})$,
and as $E^\Uu_{\varepsilon+1}=\concopy(\sigma_{\varepsilon+1},E^\Tt_{\varepsilon+1})\in\es_+(K^\Uu_{\varepsilon+1})$,
so $\lh(E^\Uu_{\varepsilon+1})<\lh(E^\Uu_\varepsilon)$.
However,  $\lh(E^\Tt_\varepsilon)\leq\lh(E^\Tt_{\varepsilon+1})$
(with equality iff $M|\varepsilon$ is active iff $J=M|\varepsilon$), and as $E^\Tt_\alpha$ also has superstrong type,
we have $\sigma_{\varepsilon+1}(\nu(E^\Tt_\varepsilon))=\nu(E^\Uu_\varepsilon)$,
so either $\lh(E^\Tt_{\varepsilon})=\lh(E^\Tt_{\varepsilon+1})$
and $\exit^\Uu_{\varepsilon+1}=K^\Uu_{\varepsilon+1}$,
or $\lh(E^\Tt_\varepsilon)<\lh(E^\Tt_{\varepsilon+1})$
and \[ \Shift(\sigma_{\varepsilon+1})(\lh(E^\Tt_\varepsilon))=\nu(E^\Uu_\varepsilon)^{+K^\Uu_{\varepsilon+1}}<\lh(E^\Uu_{\varepsilon+1}).\]
In either case,
$\nu(E^\Uu_\varepsilon)\leq\nu(E^\Uu_{\varepsilon+1})$,
as required for the essential-$m$-maximality of $\Uu$.

Note also that if $\varepsilon+1$ is anomalous then
$J=M|\gamma$ and $J^\Tt_\beta=M^\Tt_\beta|\gamma_\beta=M^{*\Tt}_{\varepsilon+1}$ are type 3
and $\nu(F(J^\Tt_\beta))=\delta_\beta=\crit(E^\Tt_\varepsilon)$,
and
\[ M^\Tt_{\varepsilon+1}=\Ult_{-1}(J^\Tt_\beta,E^\Tt_\varepsilon)=\Ult(J^\Tt_\beta,E^\Tt_\varepsilon),\]
which is a pre-ISC-premouse
with largest cardinal
\[ i_{E^\Tt_\varepsilon}(\delta_\beta)=\lambda(E^\Tt_\varepsilon)=\nu(E^\Tt_\varepsilon)=\nu(F(M^\Tt_{\varepsilon+1})),\]
$K^\Uu_{\varepsilon+1}$ is a type 3 premouse
with largest cardinal $\nu(E^\Uu_\varepsilon)=\lambda(E^\Uu_\varepsilon)$, and $\sigma_{\varepsilon+1}:M^\Tt_{\varepsilon+1}\to K^\Uu_{\varepsilon+1}$ is $\Sigma_0$-elementary. In this case,
$E^\Tt_{\varepsilon+1}=F(M^\Tt_{\varepsilon+1})$,
since $\lh(E^\Tt_\varepsilon)=\OR(M^\Tt_{\varepsilon+1})$, and since $\lh(E^\Tt_\varepsilon)\leq\lh(E^\Tt_{\varepsilon+1})$,
$F(M^\Tt_{\varepsilon+1})$ is the only valid option for $E^\Tt_{\varepsilon+1}$. So $\nu(E^\Tt_\varepsilon)=\nu(E^\Tt_{\varepsilon+1})$, so the rules of normality ensure that there is no $\alpha$ with $\pred^\Tt(\alpha+1)=\varepsilon+1$,
so $\varepsilon+1$ is an end-node of $\Tt$,
and also of $\Uu$. On the other hand, if $\varepsilon+1$ is non-anomalous (but it is weakly anomalous in the present discussion)
then there is in general nothing preventing there
being $\alpha<\lh(\Tt)$ with $\varepsilon+1<^\Tt\alpha$,
hence  with $\nu(E^\Tt_\varepsilon)\leq\crit(i^\Tt_{\varepsilon+1,\alpha})$,
even with $(\varepsilon+1,\alpha]^\Tt\cap\dropset^\Tt=\emptyset$;
in this case we get $(0,\alpha]^\Uu\cap\dropset^\Uu=\emptyset$
and $K^\Uu_\alpha=M^\Uu_\alpha$. (It can be that $\deg^\Tt_{\alpha}<\deg^\Tt_{\varepsilon+1}$ here; it seems it might also be that $\deg^\Tt_\alpha<\deg^\Uu_\alpha$.)

The remaining details of the copying process are left to the reader; they are basically routine. The overall picture is also similar
to that in the proof of \cite[Theorem 5.2]{premouse_inheriting}, though there, the possibility that $\Uu$ fails to be $k$-maximal
was overlooked. That proof should be corrected by allowing $\Uu$ (as there) to be essentially-$k$-maximal. The proof there then adapts much as above.
\end{proof}

\begin{proof}[Proof of Claim \ref{clm:solidity_closeness}]
This is basically
as in the proof of Closeness \cite[Lemma 6.1.5]{fsit},
proceeding by induction on $\xi$.
However, there are some small, but important, differences.
They are mostly because we are iterating bicephali, as opposed
to premice. Similar issues arise in the classical proof
of solidity etc, when one iterates phalanxes, but some of these
things were not covered in detail in \cite{fsit}, for example.
So we discuss enough details to describe the differences.
One should also keep in mind that
we allow superstrong extenders on
$\es$, whereas this was not the case in \cite{fsit};
however, this does not have a significant impact here.

Let $\kappa=\crit(E^\Tt_\xi)$. So $\kappa<\exchnu(\exit^\Tt_\beta)$
and $\kappa^{+\exit^\Tt_\xi}\leq\OR(\exit^\Tt_\beta)$.
Let $e=\exitside^\Tt_\xi$ and $E=E^\Tt_\xi$,  so $E\in\es_+(M^{e\Tt}_\xi)$.

\begin{casefour}\label{case:solidity_closeness_xi+1_in_B^Tt} $\xi+1\in\curlyB^\Tt$.

So we must see that $E$ is close to $M^\Tt_\beta$.
We have $\beta\in\curlyB^\Tt$
and $\kappa<\delta^\Tt_\beta$ and
$\kappa^{+\exit^\Tt_\beta}=\kappa^{+B^\Tt_\beta}\leq\delta^\Tt_\beta$,
so $\kappa^{+B^\Tt_\beta}<\lh(E^\Tt_\beta)$.

\begin{scasefour} $\beta=\xi$.

 We may assume $E\in\es_+(H^\Tt_\beta)$.
We have
$H^\Tt_\beta||\kappa^{++H^\Tt_\beta}\sub M^\Tt_\beta$,
so if $E\in H^\Tt_\beta$ then
$E$ is close to $M^\Tt_\beta$.
Otherwise $E=F^{H^\Tt_\beta}$,
which is close to $M^\Tt_\beta$ because
$\pi_\beta:H^\Tt_\beta\to M^\Tt_\beta$
is $\rSigma_1$-elementary with $\kappa^{+H^\Tt_\beta}<\crit(\pi_\beta)$,
so $F^{H^\Tt_\beta}$ is a sub-extender of $F^{M^\Tt_\beta}$.
\end{scasefour}

\begin{scasefour}\label{scase:solidity_closeness_xi+1_in_B,beta<xi} $\beta<\xi$.

By the proof of \cite[6.1.5]{fsit}, we may assume
$E=F(M^{e\Tt}_\xi)$.
Likewise, we may assume either $\xi$ is anomalous,
or $M^{e\Tt}_\xi$ is active type 2 with
 $\rho_1(M^{e\Tt}_\xi)\leq\kappa^{+M^\Tt_\beta}$.

Let $\zeta$ be least with $\zeta\geq\beta$ and $\zeta+1\leq^\Tt\xi$.
It follows that $(\zeta+1,\xi]^\Tt\cap\dropset_{\deg}^\Tt=\emptyset$ and either (i) $\zeta+1\in\curlyB^\Tt$ and $k=0$,  or  (ii)
$\zeta+1,\xi\notin\curlyB^\Tt$ and $\deg^\Tt_{\zeta+1}=\deg^\Tt_\xi\in\{-1,0\}$
and $\rho_1(M^{e*\Tt}_{\zeta+1})=\rho_1(M^{e\Tt}_\xi)\leq\kappa^{+M^{e*\Tt}_{\zeta+1}}=\kappa^{+M^{e\Tt}_\xi}<\crit(E^\Tt_\zeta)$, and therefore $\pred^\Tt(\zeta+1)=\beta$
(as in the proof of \cite[6.1.5]{fsit}).

\begin{sscasefour} $\zeta+1\in\curlyB^\Tt$ and $k=0$.

First suppose $e=1$.
By induction, the extenders used along $(\beta,\xi]^\Tt$
are close to the $M^\Tt_\alpha$s. As
$\kappa<\crit(E^\Tt_\zeta)$, it follows
as usual that $E$ is close to $M^\Tt_\beta$, as desired.

Now suppose  $e=0$.
Let $\beta'=\curlyB^\Tt_\downarrow(\xi)$. Then the extenders used along $(\beta',\xi]^\Tt$ are close to the $H^\Tt_\alpha$s, and so $E$ is close to $H^\Tt_{\beta'}$.
But $\pi_{\beta'}:H^\Tt_{\beta'}\to M^\Tt_{\beta'}$ is $\rSigma_1$-elementary and $\crit(E)<\crit(E^\Tt_\zeta)<\delta_{\beta'}$,
so $E$ is close to $M^\Tt_{\beta'}$.
 But the extenders used along $(\beta,\beta']^\Tt$ are close to the $M^\Tt_\alpha$s,
 and it follows that $E$ is close to $M^\Tt_\beta$, as desired.
\end{sscasefour}

\begin{sscasefour} $\zeta+1\notin\curlyB^\Tt$
and $\deg^\Tt_{\zeta+1}=\deg^\Tt_{\xi}\in\{-1,0\}$.

By induction, the extenders used along $(\beta,\xi]^\Tt$
are close to their target models,
so essentially as usual,
$E$
is close to $M^{e*\Tt}_{\zeta+1}$,
and since $M^{e*\Tt}_{\zeta+1}\ins H^\Tt_\beta$
or $M^{e*\Tt}_{\zeta+1}\ins M^\Tt_\beta$, this suffices as before,
using $\pi_\beta$ to lift the $\bfrSigma_1$ definitions
of measures if
$M^{e*\Tt}_{\zeta+1}\ins H^\Tt_\beta$.
(Maybe
$\zeta+1,\xi$ are anomalous, in which case
$e=1$ and $M^\Tt_{\zeta+1},M^\Tt_\xi$ are not premice; in this case
\emph{close} is defined as usual, but
using (unsquashed) $\bfSigma_1$-definability,
and the usual calculations work.)
\end{sscasefour}
\end{scasefour}
\end{casefour}

\begin{casefour}\label{case:solidity_closeness_xi+1_notin_B^Tt} $\xi+1\notin\curlyB^\Tt$.

If $\xi+1$ is not a mismatched dropping node, this is
just like in \cite{fsit}. So suppose $\xi+1$ is mismatched dropping.
So $\beta\in\curlyB^\Tt$, $\xi+1\in\curlyM^\Tt$, $M^{*\Tt}_{\xi+1}=J^\Tt_\beta=i^\Tt_{0\beta}(J)$,
 $\exitside^\Tt_\beta=0$ and
$\kappa=\delta^\Tt_\beta<
\kappa^{+H^\Tt_\beta}=\gamma^\Tt_\beta<\lh(E^\Tt_\beta)$
(recall $\gamma<\OR^H$).

\begin{sclmfive}The component measures of $E$
are all in
$H^\Tt_\beta$.
\end{sclmfive}

\begin{proof} Suppose not. Suppose $\xi=\beta$.
Since $\exitside^\Tt_\beta=0$,
we must have
$E^\Tt_\beta=F^{H^\Tt_\beta}$ and $H^\Tt_\beta$ is type 1/2
and $\rho_1^{H^\Tt_\beta}\leq\kappa^{+H^\Tt_\beta}=\gamma^\Tt_\beta$.
But this contradicts Claim
\ref{clm:not_1_2_with_gamma_between}
part \ref{item:not_1_2_with_gamma_between}.

So $\xi>\beta$.
Note that
 $E_a\notin M^{e\Tt}_\xi$ for some $a$,
so
 $M^{e\Tt}_\xi$ is active type 2, $E=F(M^{e\Tt}_\xi)$  and
$\rho_1(M^{e\Tt}_\xi)\leq\kappa^{+H^\Tt_\beta}=\gamma^\Tt_\beta$.
Let $\zeta\geq\beta$ be least such that $\zeta+1\leq^\Tt\xi$.
Arguing much as usual, we get that $\pred^\Tt(\zeta+1)=\beta$
and $(\zeta+1,\xi]^\Tt\cap\dropset^\Tt_{\deg}=\emptyset$,
 $\kappa<\crit(E^\Tt_\zeta)$, etc, so $\zeta+1\in\curlyH^\Tt$
 and $H^{*\Tt}_{\zeta+1}\ins H^\Tt_\beta$.
 So by induction, $F^{H^\Tt_\xi}$
 is close to $H^{*\Tt}_{\zeta+1}$. So if $H^{*\Tt}_{\zeta+1}\pins
H^\Tt_\beta$ we are done.
 Otherwise $H^{*\Tt}_{\zeta+1}=H^\Tt_\beta$ is active type 2
 and $\rho_1(H^\Tt_\beta)\leq\gamma^\Tt_\beta$ and
  $k=0$,
  again contradicting Claim \ref{clm:not_1_2_with_gamma_between}.
\end{proof}

So the component measures are all in $H^\Tt_\beta||\chi$,
where $\chi=\kappa^{++H^\Tt_\beta}$.
But by condensation using $\pi_\beta$,\footnote{Should point out that this is is internal to $M^\Tt_\beta$, preserved by $j_{0\beta}$ from $M$.} either
\begin{enumerate}[label=(\roman*)]
 \item\label{item:agreement_passive} $H^\Tt_\beta||\chi=M^\Tt_\beta||\chi$, or
 \item\label{item:agreement_active} $M|\gamma^\Tt_\beta$ is active
 with extender $F$ and $H^\Tt_\beta||\chi=U||\chi$
 where $U=\Ult(M^\Tt_\beta,F)$.
\end{enumerate}
If \ref{item:agreement_passive} holds,
note that since $J\pins M$ and $\rho^J_\om=\kappa$,
we have $\chi\leq\OR^J$, so $H^\Tt_\beta||\chi=J||\chi$,
so the measures are in $J$, and so $E$ is close to $J$.
And if \ref{item:agreement_active} holds,
then $J=M|\gamma^\Tt_\beta$,
and then since the measures are in $U||\chi$,
it again follows that $E$ is close to $J$, as desired.
\end{casefour}
This completes the proof of closeness (or its approximation).
\end{proof}
\begin{proof}[Proof of Claim \ref{clm:solidity_comparison_termination}]
This is by the usual argument unless
the comparison produces a tree $\solTt$ uses
extenders $E^\solTt_\alpha$ which are not premouse extenders
(that is, not the active extender of a premouse),
so assume this is the case. So then $M|\gamma$
is active type 3, $\delta<\gamma$, and $\delta=\crit(E)$
for some $H$-total extenders on $\es_+^H$.

Running the usual argument, we take an elementary
$\varphi:A\to V_\Omega$
with some large enough $\Omega$ and $A$ countable transitive,
and $\varphi(\om_1^A)=\om_1$, and everything relevant in $\rg(\varphi)$.
Let $\kappa=\om_1^A=\crit(\varphi)$. As usual,
$\solTt\rest\om_1,\solUu\rest\om_1$ are both cofinally non-padded
and $\kappa<^\solTt\om_1$ and $\kappa<^\solUu\om_1$.
We may
assume that $\kappa\in\curlyB^\solTt$ (so $\om_1\in\curlyB^\solTt$ also)
as otherwise
the extenders used along $(\kappa,\om_1]^\solTt$
satisfy the ISC. We have that
 $M^\solTt_\kappa,M^\solTt_{\om_1},M^\solUu_\kappa,M^\solUu_{\om_1}$
all have the same $\pow(\kappa)$,
$\pow(\kappa)\cap H^\solTt_\kappa\sub\pow(\kappa)\cap M^\solTt_\kappa$,
and
\[
i^\solTt_{\kappa\om_1},j^\solTt_{\kappa\om_1},i^\solUu_{\kappa\om_1}
\sub\varphi.\]
Let $\alpha+1=\succ^\solTt((\kappa,\om_1]^\solTt)$
and $\beta+1=\succ^\solUu((\kappa,\om_1]^\solUu)$.
Then $E^\solTt_\alpha$ and $E^\solUu_\beta$ have critical point $\kappa$,
measure the same
$\pow(\kappa)$, and letting
$\iota=\min(\exchnu(\exit^\solTt_\alpha),\nu(E^\solUu_\beta))$,
we have
$E^\solTt_\alpha\rest\iota=E^\solUu_\beta\rest\iota$.

By the rules of comparison, $\alpha\neq\beta$.
If $\alpha<\beta$ then we would have $\exchnu(\exit^\solTt_\alpha)<\nu(E^\solUu_\beta)$,
but this contradicts the ISC as usual.
So $\beta<\alpha$.
So if $\exit^\solTt_\alpha$ is a premouse, then
by Claim \ref{clm:T,U_normality},
we get
$\nu(E^\solUu_\beta)=\exchnu(E^\solUu_\beta)<\exchnu(E^\solTt_\alpha)=\nu(E^\solTt_\alpha)$,
but then $\exit^\solTt_\alpha$ fails ISC, a contradiction.
So $\alpha$
is anomalous. Let $\zeta$ be least such that $\zeta+1\leq^\solTt\alpha$ and
$\zeta+1$ is anomalous. Let $Q^*=M^{*\solTt}_{\zeta+1}$,
so $Q^*$ is a type 3 premouse and $\crit(E^\solTt_\zeta)=\nu(Q^*)$.
We have $F^{Q^*}\rest\nu(Q^*)\sub E^\solTt_\alpha$,
and $F^{Q^*}\notin B^\solTt_{\alpha+1}$.
The ISC and compatibility then gives $F^{Q^*}=E^\solUu_\beta$.

But $\zeta$ is non-anomalous (anomalous extenders
apply to bicephali), so $E^\solTt_\zeta$ is a premouse extender.
Let $\xi+1=\succ^\solUu((\beta+1,\om_1]^\solUu)$.
Then standard extender factoring calculations give
 $E^\solUu_{\xi}=E^\solTt_\zeta$ (e.g.~\cite[\S5]{extmax}). This
contradicts Claim \ref{clm:no_matching_exts}.
\end{proof}

\section{Conclusion}\label{sec:conclusion}

Now that we know that solidity and universality holds, we can
deduce that the desired theorems on condensation,
super-Dodd-soundness and projectum-finite-generation hold,
as discussed in \S\ref{sec:plan}:

\begin{proof}[Proof of Theorems
\ref{thm:condensation}, \ref{thm:super-Dodd-soundness} and
\ref{thm:finite_gen_hull}]\

Theorem \ref{thm:condensation}:
Given a premouse $M$ which is $m$-sound and $(m,\om_1+1)$-iterable, by Theorem \ref{thm:solidity},
$M$ is $(m+1)$-solid,
which by Fact \ref{fact:condensation}
parts \ref{item:fact_condensation_H_notin_M}\ref{item:cond_fact_abbr_i} and \ref{item:fact_condensation_H_notin_M}\ref{item:cond_fact_abbr_ii} immediately  yields the result.

The other results hold because
by Theorem \ref{thm:solidity}, we have the necessary
solidity and universality to apply
and Lemmas \ref{lem:super-Dodd-soundness} and \ref{lem:finite_gen_hull}.
\end{proof}

Let us deduce a few simple corollaries to the main theorems.
The main one is  naturally:
\begin{cor}
 Let $R$ be countable and transitive,
 and suppose $R\sats$ ZFC + ``$\delta$ is Woodin''.
Suppose there is an $(\om_1+1)$-iteration strategy
$\Sigma$ for (coarse) normal iteration trees on $R$.
Let $\CC^R$ be the maximal $L[\es]$-construction
of $R$ (using extenders of $R$ to background
the construction in an appropriate manner).
Then $\CC^R$ does not break down;
it converges on a class $M$ of $R$
such that $M\sats$ ``$\delta$ is Woodin''.
\end{cor}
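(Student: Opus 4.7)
The plan is to proceed essentially as in the classical Mitchell--Steel construction of $K^c$ inside a Woodin background universe, but replacing appeals to iterability for stacks of normal trees (needed classically to establish the fine structural properties of the levels of the construction) with direct appeals to Theorem \ref{thm:solidity} and Theorem \ref{thm:condensation}. The background assumption that $R \sats \ZFC+{}$``$\delta$ is Woodin'', together with the coarse normal iteration strategy $\Sigma$, plays the dual role of (i) supplying enough extenders in $R$ to background the construction up through $\delta$, and (ii) enabling the standard lifting/enlargement argument that converts fine-structural $m$-maximal trees on levels $N_\xi$ of $\CC^R$ into coarse normal trees on $R$, so that $\Sigma$ induces an $(m,\omega_1+1)$-iteration strategy for each $N_\xi$ (and for relevant phalanxes/bicephali arising in the construction).

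First, I would set up $\CC^R$ precisely, as the maximal $L[\es]$-construction in which, at stage $\xi$, one adds a certified (background by some extender in $R$ satisfying the usual agreement and strength conditions) extender to the sequence whenever possible, and otherwise takes a rudimentary step; cores are taken whenever a level first projects below its previous projectum. Next, I would verify, by the standard enlargement argument (cf.\ \cite{fsit}, and its routine adaptation to the superstrong level as noted in Remark \ref{rem:superstrong_diff}), that for every $\xi$ at which $N_\xi$ is defined and every $m$ with $N_\xi$ being $m$-sound, $N_\xi$ is $(m,\omega_1+1)$-iterable via a strategy induced by $\Sigma$. This is the step where I rely on normal iterability being enough, and hence the crucial substantive point: by Theorem \ref{thm:solidity}, each such $N_\xi$ is $(m+1)$-solid and $(m+1)$-universal, and by Theorem \ref{thm:condensation}, condensation holds for the core embeddings. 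This lets one form $\core_{m+1}(N_\xi)$ at every stage demanded by the construction and prevents any breakdown: whenever a new projectum is found, the required core exists and is itself solid, universal, and projectum-condensation compliant, so the construction continues without obstruction through every ordinal below $\delta$ (and in fact beyond).

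The main obstacle I expect is not any of this first-order bookkeeping but the precise handling of iterability lifts when superstrong extenders may appear on the sequence of some $N_\xi$, together with the bicephali/phalanxes arising when comparing $N_\xi$ with its core as in the proof of Theorem \ref{thm:solidity}; one must check that $\Sigma$-induced strategies on these structures fall under the $((k,k),\omega_1+1)$-iteration games of Section \ref{sec:bicephali}. This is done by the standard copying construction using the background certificates, using Definition \ref{dfn:concopy} to handle $\nu$-high copy maps wherever they arise, exactly as in the iterability portion of the proof of Claim \ref{clm:B_iterable_fs_pres}.

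Finally, I would show $M \sats{}$``$\delta$ is Woodin'' by the familiar background-certification argument: given $A \in M$ with $A \subseteq \delta$, pull $A$ back to a class $A^* \in R$ via the definability of $M$ over $R$, apply the Woodinness of $\delta$ in $R$ to get, for each $f : \delta \to \delta$ in $M$, a $\kappa < \delta$ closed under $f$ and an $R$-extender $E^*$ with critical point $\kappa$ witnessing the relevant strength of $\kappa$ with respect to $A^*$. The restriction/derived extender $E$ on the $M$-sequence obtained from $E^*$ via the maximal construction then belongs to $\es^M$ (by maximality, once one checks $E$ is certified by $E^*$ and coheres with the construction) and witnesses the required strength of $\kappa$ with respect to $A$ in $M$. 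Hence $\delta$ is Woodin in $M$, completing the proof.
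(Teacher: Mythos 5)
Your proposal is correct and takes essentially the same approach as the paper's own (very terse) proof: run the standard construction from \cite[\S11]{fsit}, using the fine structural results of the paper (Theorems \ref{thm:solidity} and \ref{thm:condensation}) in place of the classical appeals to iterability for stacks, with the key point being that normal trees on the levels $N_\xi$ of $\CC^R$ lift to coarse normal trees on $R$ so that $\Sigma$ induces the needed normal iteration strategies. One minor remark: once $N_\xi$ is known to be $(m,\omega_1+1)$-iterable, Theorems \ref{thm:solidity} and \ref{thm:condensation} can be invoked as black boxes, so the bicephalus/phalanx iterability you mention (Claim \ref{clm:B_iterable_fs_pres}) is internal to those proofs and need not be separately re-verified at the level of the construction.
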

\begin{proof}
 One can run the standard proof from \cite[\S11]{fsit},
 using the results of the paper,
 since normal trees on creatures of $\CC^R$ can be lifted
 to normal trees on $R$.
\end{proof}

We can deduce  a canonical form
for all solidity witnesses:
\begin{cor}
 Let $M$ be a $k$-sound, $(k,\om_1+1)$-iterable premouse.
 Let $\gamma\in p_{k+1}^M$
 and $H=H_\gamma$ the solidity witness at $\gamma$ (the collapsed
 hull form).
 Let $\rho=\rho_{k+1}(H)$ and $C=\core_{k+1}(H)$. Then either
$C\pins M$ or
 $M|\rho$ is active with extender $F$
  and $C\pins U=\Ult(M,F)$.
Moreover, $H$ is an iterate of $C$
 via a $k$-maximal tree $\Tt$
 which is strongly finite,
 and almost-above $\rho$.
\end{cor}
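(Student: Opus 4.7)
The plan is to pipeline the three main theorems that have just been established. First, by Theorem \ref{thm:solidity}, $M$ is $(k+1)$-solid, so the witness
\[ H = H_\gamma = \cHull_{k+1}^M(\gamma\cup\{p_{k+1}^M\cut(\gamma+1),\pvec_k^M\}) \]
is an element of $M$, the uncollapse $\pi:H\to M$ is a (near) $k$-embedding with $\crit(\pi)=\gamma$, and $H$ is $\gamma$-sound with $\rho_{k+1}^H\leq\gamma$. Since $H\in M$, iterability of $H$ follows from that of $M$, and so by Theorem \ref{thm:solidity} applied to $H$, it is also $(k+1)$-solid and $(k+1)$-universal, whence $C=\core_{k+1}(H)$ is $(k+1)$-sound with $\rho_{k+1}^C=\rho$ and the core map $\sigma:C\to H$ is a $k$-embedding with $\crit(\sigma)=\rho$ (or $\sigma=\id$ if $\rho=\gamma$).

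Next, by construction $H=\Hull_{k+1}^H(\gamma\cup\{x_0\})$ for $x_0=\pi^{-1}(p_{k+1}^M\cut(\gamma+1))$; absorbing the ordinals of the finite set $[\rho,\gamma]\cap H$ into a finite parameter, this rewrites as $H=\Hull_{k+1}^H(\rho\cup\{x\})$ for some $x\in H$. Theorem \ref{thm:finite_gen_hull} then applies directly to $H$, producing the required terminally-non-dropping, strongly finite $k$-maximal tree $\Tt$ on $C$ with $M^\Tt_\infty=H$, with $i^\Tt_{0\infty}=\sigma$, and with $\Tt$ almost-above $\rho$. This handles the ``moreover'' clause.

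For the dichotomy, consider $\tau=\pi\circ\sigma:C\to M$. This is a $k$-embedding with $\crit(\tau)\geq\rho$, $C$ is $(k+1)$-sound with $\rho_{k+1}^C=\rho$, and the hypotheses of Theorem \ref{thm:condensation} are met (the hypothesis that $M$ be $(k+1)$-solid, formerly needed in clause 1f of the cited condensation theorem, has just been removed). Of the possible conclusions of that theorem, clause $H\pins M$ gives $C\pins M$, and the clause where $M|\rho$ is active with extender $F$ and $H\pins\Ult(M,F)$ gives $C\pins U$; the remaining conclusions (those asserting $C=\Hull_{k+1}^C((\rho+1)\cup\{\pvec_{k+1}^C\})$ or $\rho_\om^C=\rho$ with a specific configuration) are ruled out because the top generator of $C$ coming from $\pi^{-1}(p_{k+1}^M\cut(\gamma+1))$ lies strictly above $\rho$, so $C$ is not $(\rho+1)$-sound in the required sense. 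The only real point of friction is the bookkeeping of condensation's clauses in the case $\rho<\gamma$ with $M|\rho$ active: one must verify that $\tau$ indeed falls into the branch producing $C\pins\Ult(M,F^{M|\rho})$ rather than some degenerate alternative, but this is immediate from agreement $M||(\rho^+)^M = H||(\rho^+)^H = C||(\rho^+)^C$ and the elementarity of $\tau$.
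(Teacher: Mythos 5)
Your high-level route matches the paper's: use the solidity theorem for $M$ to get $H\in M$ (so $H$ inherits iterability and is solid and universal by the same theorem), apply Theorem~\ref{thm:finite_gen_hull} to $H$ for the tree, and apply Theorem~\ref{thm:condensation} to $\pi\circ\sigma:C\to M$ for the dichotomy. But the reduction to Theorem~\ref{thm:finite_gen_hull} has a gap. You claim $H=\Hull_{k+1}^H(\rho\cup\{x\})$ by ``absorbing the ordinals of the finite set $[\rho,\gamma]\cap H$'', but $[\rho,\gamma]\cap H$ is the full interval $[\rho,\gamma]$, which is infinite whenever $\rho<\gamma$, so it cannot be packed into a finite parameter just by listing it. The paper flags exactly what is missing here: one must use $(k+1)$-universality of $H$. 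Universality gives $\pow(\rho)^H\sub\cHull_{k+1}^H(\rho\cup\{\pvec_{k+1}^H\})$, hence every ordinal below $(\rho^+)^H$ lies in $\Hull_{k+1}^H(\rho\cup\{\pvec_{k+1}^H\})$; and since $\gamma\leq(\rho^+)^H$ (indeed $\rho=\card^H(\gamma)$, a fact that needs to be observed, not assumed), $\gamma$-soundness of $H$ then collapses to generation by $\rho$ together with one finite parameter. Without this universality argument, the hypothesis of Theorem~\ref{thm:finite_gen_hull} has not been established.

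Two secondary issues. First, you write $\crit(\sigma)=\rho$ for the core map $\sigma:C\to H$; but universality forces $\pow(\rho)^H=\pow(\rho)^C$, so $\crit(\sigma)\geq(\rho^+)^H$, not $\rho$. Second, your reason for dismissing the degenerate clauses of condensation is off: $C$ is $(k+1)$-sound with $\rho_{k+1}^C=\rho$, hence $\rho$-sound, and $C=\Hull_{k+1}^C((\rho+1)\cup\{\pvec_{k+1}^C\})$ \emph{does} hold, so the criterion you invoke cannot rule anything out. In fact nothing needs ruling out: when $\tau=\pi\circ\sigma$ is applied with $C$ sound and $\crit(\tau)\geq\rho=\rho_{k+1}^C$, the condensation disjunction already specializes to precisely the two alternatives in the corollary's statement.
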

\begin{proof}
This follows easily from the theorems,
together with Fact \ref{fact:condensation} part \ref{item:condensation_from_normal_H_in_M}, to get $C\pins M$ or $C\pins U$,
and using Theorem \ref{thm:finite_gen_hull} to get the rest.
(Since $H\in M$,
note that either $\rho=\gamma$ or $\rho=\card^M(\gamma)$ and $\gamma=\rho^{+H}$. Using this and $(k+1)$-universality, observe that $H$ is projectum-finitely generated.)
\end{proof}

\begin{cor}\label{cor:ult_is_it}
 Let $M$ be an $(m+1)$-sound, $(m,\om_1+1)$-iterable
 premouse, where $m<\om$.
 Let $E$ be a finitely generated short extender which is weakly amenable to $M$
with $\rho^M_{m+1}\leq\crit(E)<\rho_m^M$.
Let $U=\Ult_m(M,E)$. Then $U$  is $(m,\om_1+1)$-iterable
iff there is an $m$-maximal tree $\Tt$ on $M$
of finite length $n+1$ with $U=M^\Tt_n$
(moreover, if $\Tt$ exists, then it is unique,
$[0,n]^\Tt$ does not drop in model or degree, and $i^{M,m}_E$
is the iteration map).
\end{cor}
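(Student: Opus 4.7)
The $(\Leftarrow)$ direction is standard: since $b^\Tt$ does not drop, any $m$-maximal tree on $U=M^\Tt_n$ lifts through $i^\Tt$ to one on $M$, so $(m,\om_1+1)$-iterability of $M$ transfers to $U$.

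For $(\Rightarrow)$, assume $U$ is $(m,\om_1+1)$-iterable. Since $\crit(E)\geq\rho_{m+1}^M$, the map $i_E\eqdef i^{M,m}_E$ fixes every ordinal below $\rho_{m+1}^M$, so $\sup i_E``\rho_{m+1}^M=\rho_{m+1}^M$. By Theorem \ref{thm:solidity}, $U$ is $(m+1)$-solid and $(m+1)$-universal; combined with the $(m+1)$-solidity of $M$ and Lemma \ref{lem:basic_fs_pres}(5)--(6), this yields $\rho_{m+1}^U=\rho_{m+1}^M$ and $p_{m+1}^U=i_E(p_{m+1}^M)$, with no closeness hypothesis on $E$ required. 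Letting $a\in[\nu_E]^{<\om}$ be a finite generating set for $E$, the $(m+1)$-soundness of $M$ then gives
\[ U=\Hull_{m+1}^U\bigl(\rho_{m+1}^U\cup\{a,i_E(p_{m+1}^M),\pvec_m^U\}\bigr), \]
so $U$ is projectum-finitely generated. Theorem \ref{thm:finite_gen_hull} applied to $U$ produces a strongly finite, successor-length $m$-maximal tree $\Tt$ on $C\eqdef\core_{m+1}(U)$, almost-above $\rho_{m+1}^U$, with non-dropping main branch, $M^\Tt_\infty=U$, and $i^\Tt_{0\infty}$ equal to the core map $\sigma:C\to U$.

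The key identification is $C=M$ and $\sigma=i_E$. The image of $\sigma$ is $H\eqdef\Hull_{m+1}^U(\rho_{m+1}^U\cup\{p_{m+1}^U,\pvec_m^U\})$, and using the identities above together with $i_E\rest\rho_{m+1}^M=\id$ and the $\rSigma_{m+1}$-elementarity of $i_E$ applied to the $(m+1)$-sound $M$, one computes
\[ i_E``M=\Hull_{m+1}^U\bigl(i_E``\rho_{m+1}^M\cup\{i_E(p_{m+1}^M),\pvec_m^U\}\bigr)=H. \]
Taking transitive collapses yields $C=M$ and $\sigma=i_E$; thus $\Tt$ is a finite $m$-maximal tree on $M$ with $M^\Tt_\infty=U$, non-dropping main branch, and $i^\Tt=i_E$.

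For uniqueness, suppose $\Tt'$ is any finite $m$-maximal tree on $M$ with $M^{\Tt'}_\infty=U$. By Lemma \ref{lem:basic_fs_pres}(7), $U$ is not $(m+1)$-sound; a drop on $b^{\Tt'}$ in model or degree would force $\core_{m+1}(U)$ to be a proper segment of some iterate of $M$, contradicting the identification $\core_{m+1}(U)=M$ established above. Hence $b^{\Tt'}$ does not drop, and $i^{\Tt'}:M\to U$ is an $m$-embedding fixing $\rho_{m+1}^M$ and sending $p_{m+1}^M$ to $p_{m+1}^U$; such a map is uniquely determined by the $(m+1)$-soundness of $M$ via its unique $\rSigma_{m+1}$ definitions, so $i^{\Tt'}=i_E$. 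Since $E$ is finitely generated, the Dodd decomposition (Lemma \ref{lem:Dodd_decomposition}) of $i_E$ has finitely generated Dodd-cores, forcing $\Tt'$ to be strongly finite and pinning down the extender sequence and tree order, so $\Tt'=\Tt$. The principal technical subtlety is the lack of a closeness hypothesis on $E$ to $M$; this is circumvented by first invoking Theorem \ref{thm:solidity} on $U$, which supplies the needed $p_{m+1}$-preservation via Lemma \ref{lem:basic_fs_pres}(5)--(6) rather than through a direct closeness argument.
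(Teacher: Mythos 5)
Your proof follows essentially the same route as the paper's: solidity of $U$ (Theorem \ref{thm:solidity}) plus Lemma \ref{lem:basic_fs_pres}, parts \ref{item:characterize_N'_solid}--\ref{item:p_pres_if_N'_solid}, gives $\rho_{m+1}^U=\rho_{m+1}^M$ and $p_{m+1}^U=i_E(p_{m+1}^M)$, which shows $U$ is projectum-finitely generated, whence Theorem \ref{thm:finite_gen_hull} applies. The paper's own proof is terser and does not explicitly verify that $\core_{m+1}(U)=M$ with core map $i_E$ (it just asserts the theorem ``proves the corollary''); your spelled-out identification via $i_E``M=\Hull_{m+1}^U(\rho_{m+1}^U\cup\{p_{m+1}^U,\pvec_m^U\})$ is a correct and useful elaboration. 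The uniqueness argument is likewise an embellishment the paper glosses over. Your argument that drops are impossible is right in spirit, but the case of a drop in degree only is not the same as a drop in model: for a pure degree drop, $\core_{m+1}(U)$ need not be a proper segment of an iterate of $M$, and the contradiction is instead that a degree drop on $b^{\Tt'}$ would force $U$ to be non-$(d+1)$-sound for $d=\deg^{\Tt'}_\infty<m$, contradicting the $m$-soundness of $U=\Ult_m(M,E)$.

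The one place you go genuinely wrong is the $(\Leftarrow)$ direction. You write that a tree on $U$ ``lifts through $i^\Tt$ to one on $M$,'' but the copying construction via $i^\Tt\colon M\to U$ goes the opposite way (from trees on the domain $M$ to trees on the codomain $U$), so this does not transfer iterability from $M$ to $U$. What is actually needed is that the stack $\Tt\conc\Uu$ on $M$ — the finite tree $\Tt$ followed by a normal tree $\Uu$ on $U$ — is handled by an $(m,\om_1+1)$-strategy for $M$; this is precisely the content of Fact \ref{fact:Gg} (\cite[Theorem 9.6]{iter_for_stacks}), which the paper cites for this step. Without it, single-normal-tree iterability of $M$ does not by itself yield iterability of $U$.
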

\begin{proof}
If $U=M^\Tt_n$ for such a tree $\Tt$, then $U$ is $(m,\om_1+1)$-iterable
by Fact \ref{fact:Gg}.

Now suppose $U$ is $(m,\om_1+1)$-iterable. By
 Theorem \ref{thm:solidity}, $U$ is $(m+1)$-solid.
Since $E$ is weakly amenable to $M$ and $M$ is $(m+1)$-solid,
 by Lemma \ref{lem:basic_fs_pres},
we have
 $\rho_{m+1}^U=\rho_{m+1}^M$ and $p_{m+1}^U=i^{M,m}_E(p_{m+1}^M)$.
Letting $E$ be generated by $x$, it follows that
\[ U=\Hull_{m+1}^M(\rho_{m+1}^U\cup\{x\}),\]
so  Theorem \ref{thm:finite_gen_hull} applies and proves the corollary.
\end{proof}

\begin{rem}\label{rem:referee_obs}
An anonymous referee noticed that we get an alternate proof of
Theorem \ref{thm:measures_in_mice},
by arguing first as there, until the point at which $\bar{M},\bar{U},\bar{x}$ have been defined
and $\bar{M},\bar{U}$ shown iterable. Then, since $\bar{M}=\core_{m+1}(\bar{U})$ and $\bar{U}=\Hull_{m+1}^{\bar{U}}(\{\bar{x}\})$,
Theorem \ref{thm:finite_gen_hull}
gives that $\bar{U}$ is an iterate of $\bar{M}$ via a strongly finite terminally non-dropping $m$-maximal tree, which is a contradiction as in the proof of Theorem \ref{thm:measures_in_mice}.
\end{rem}

The theorems in this paper leave the following questions open:
\begin{ques}\label{ques:om_1-it_suffices?}
Does $\om_1$-iterability suffice
to prove fine structure (either for normal trees,
or for stacks)? That is, let
$m<\om$.
Is it true
that every $(m,\om_1,\om_1)$-iterable premouse
is $(m+1)$-solid?
Does this follow
from $(m,\om_1)$-iterability?
Likewise for the other fine structural properties.
\end{ques}

It is not only in comparison termination
that $(\om_1+1)$-iterability is used;
it is also used in the proof in \cite[Theorem 9.6]{iter_for_stacks},
which was in turn used to verify that ${<^{\para}_k}$ is wellfounded.
Of course these consequences
are used very heavily in the argument.
 On the other hand,
 the non-solid premouse constructed in \S\ref{sec:non-solid} is not $\om_1$-iterable, and it does not seem clear how one  would produce an $\om_1$-iterable one. So although the author is not aware of any applications of an answer to the question, it seems that resolving it would require some significant new ideas.

\begin{ques}
 Does normal iterability imply iterability for stacks
 (without assuming any condensation for the normal strategy)?
\end{ques}

\begin{ques}\label{ques:fsfni_for_long_exts}
Does normal iterability suffice to prove the basic fine structural results for mice with long extenders, in the $\kappa^{+}$-supercompactness  region? Here for definiteness, we adopt the rules for normal trees  as in the preprint \cite{kappa-plus_v2}.

What about for $\kappa^{+n}$-supercompactness? (In this case, the iteration rules would need to be determined in order to make the question precise.)
\end{ques}

The author does not have any reason to expect particular difficulties combining the methods of this paper with the known fine structural arguments for long extenders,
but there are plenty of details to work through, and he has not tried to do so.

\begin{ques}\label{ques:least_mouse_not_iterate_of_core}
Let $M$ be the least active sound mouse such that $F^M$ is of limit space type (see Definition \ref{dfn:limit_space_type}). Is there a mouse $U$ whose core is $M$, but
such that $U$  is not an iterate of $M$?
\end{ques}

Corollary \ref{cor:universal_weasel_not_iterate_of_K} used a mouse $M$ satisfying a slightly stronger hypothesis to get such a $U$.
If the answer is  affirmative, then
by Theorem \ref{tm:if_U_not_iterate_of_core},
$M$ would be least for which  such such a $U$ exists.

\begin{ques}\label{ques:sigma-closed_forcing_ground_of_mouse}
Let $M$ be a non-tame mouse modelling ZFC,
and $W\sub M$ be a ground of the universe $\univ{M}$ of $M$, via a forcing $\PP\in W$ such that $W\sats$ ``$\PP$ is $\sigma$-closed''. Is $W=\univ{M}$?
\end{ques}

 Theorem \ref{thm:sigma_grounds}
 does not answer this question,
 because that result assumes $M|\om_1^M\in W$.
If $M$ is instead assumed to be tame,
the answer is ``yes'', by \cite[Theorem 4.7]{odle_v2}.

\section*{Acknowledgements}

This work
partially supported by Deutsche Forschungsgemeinschaft (DFG, German
Research
Foundation) under Germany's Excellence Strategy EXC 2044-390685587,
Mathematics M\"unster: Dynamics-Geometry-Structure.

Work in \S\ref{sec:iterates_of_cores} and editing in general funded by the Austrian Science Fund (FWF) [10.55776/Y1498].

\bibliographystyle{plain}
\bibliography{../bibliography/bibliography}

\end{document}